\theoremstyle{plain}
\def\cal@symb#1|#2{\expandafter\def\csname #2#1\endcsname{\mathcal{#1}}}
\def\calsymbols#1#2{\@for\@tmpz:=#2\do{\expandafter\cal@symb\@tmpz|{#1}}}
\def\bb@symb#1|#2{\expandafter\def\csname #2#1\endcsname{\mathbb{#1}}}
\def\bbsymbols#1#2{\@for\@tmpz:=#2\do{\expandafter\bb@symb\@tmpz|{#1}}}
\def\bold@symb#1|#2{\expandafter\def\csname #2#1\endcsname{\mathbf{#1}}}
\def\boldsymbols#1#2{\@for\@tmpz:=#2\do{\expandafter\bold@symb\@tmpz|{#1}}}
\def\scr@symb#1|#2{\expandafter\def\csname #2#1\endcsname{\mathscr{#1}}}
\def\scrsymbols#1#2{\@for\@tmpz:=#2\do{\expandafter\scr@symb\@tmpz|{#1}}}
\def\frak@symb#1|#2{\expandafter\def\csname #2#1\endcsname{\mathfrak{#1}}}
\def\fraksymbols#1#2{\@for\@tmpz:=#2\do{\expandafter\frak@symb\@tmpz|{#1}}}
\def\dmth@p#1|{\expandafter\let\csname#1\endcsname\relax
  \expandafter\DeclareMathOperator\csname#1\endcsname{#1}}
\def\operators#1{\@for\@tmpz:=#1\do{\expandafter\dmth@p\@tmpz|}}
\newcommand{\col}{\operatorname{Col}}
\newcommand{\ts}[1]{\texorpdfstring{$#1$}{}}
\newcommand{\qmb}[1]{\quad\mbox{#1}\quad}
\newcommand{\Cp}{\bC_p}
\newcommand{\Qp}{\bQ_p}
\newcommand{\Zp}{\bZ_p}
\newcommand{\OO}{\mathcal{O}}
\newcommand{\hsp}{\hspace{0.1cm}}
\newcommand{\be}{\begin{enumerate}}
\newcommand{\ee}{\end{enumerate}}
\newcommand{\h}[1]{\widehat{#1}}
\newcommand{\dcroc}[1]{[\![ #1 ]\!]}
\newcommand{\Qpbar}{\overline{\bQ}_p}
\newcommand{\Gm}{\bfG_\mathrm{m}}
\newcommand{\bigO}{\mathrm{O}}
\newcommand{\res}{\operatorname{res}}
\newcommand{\val}{\operatorname{val}}
\newcommand{\vp}{\val_p}
\newcommand{\ul}[1]{\underline{#1}}
\newcommand{\psiqint}{\psi_q\!\operatorname{-int}}
\theoremstyle{definition}
\newtheorem{definition}{Definition}[subsection]
\newtheorem{theorem}[definition]{Theorem}
\newtheorem{corollary}[definition]{Corollary}
\newtheorem{proposition}[definition]{Proposition}
\newtheorem{conjecture}[definition]{Conjecture}
\newtheorem{lemma}[definition]{Lemma}
\newtheorem{example}[definition]{Example}
\newtheorem{remark}[definition]{Remark}
\newtheorem{question}[definition]{Question}
\newcommand{\stoptocwriting}{%
  \addtocontents{toc}{\protect\setcounter{tocdepth}{-5}}}
\newcommand{\resumetocwriting}{%
  \addtocontents{toc}{\protect\setcounter{tocdepth}{\arabic{tocdepth}}}}
\title{Bounded functions on the character variety}
\author{Konstantin Ardakov}
\address{Konstantin Ardakov, Mathematical Institute, University of Oxford}
\email{ardakov@maths.ox.ac.uk}
\author{Laurent Berger}
\address{Laurent Berger, UMPA ENS de Lyon, UMR 5669 du CNRS}
\email{laurent.berger@ens-lyon.fr}
\date{January 31, 2023}
\begin{document}

\begin{abstract}
This paper is motivated by an open question in $p$-adic Fourier theory, that seems to be more difficult than it appears at first glance.
Let $L$ be a finite extension of $\mathbb{Q}_p$ with ring of integers $o_L$ and let $\mathbb{C}_p$ 
denote the completion of an algebraic closure of $\mathbb{Q}_p$. 
In their work on $p$-adic Fourier theory, Schneider and Teitelbaum defined and studied the character variety $\mathfrak{X}$. 
This character variety is a rigid analytic curve over $L$ that parameterizes the set of locally $L$-analytic characters 
$\lambda : (o_L,+) \to (\mathbb{C}_p^\times,\times)$. One of the main results of Schneider and Teitelbaum is that over $\mathbb{C}_p$, 
the curve $\mathfrak{X}$ becomes isomorphic to the open unit disk. Let $\Lambda_L(\mathfrak{X})$ denote the ring of bounded-by-one functions
on $\mathfrak{X}$. If $\mu \in o_L [\![o_L]\!]$ is a measure on $o_L$, then $\lambda \mapsto \mu(\lambda)$ gives rise to an element of $\Lambda_L(\mathfrak{X})$. 
The resulting map $o_L [\![o_L]\!] \to \Lambda_L(\mathfrak{X})$ is injective. The question is: do we have $\Lambda_L(\mathfrak{X}) = o_L [\![o_L]\!]$?

In this paper, we prove various results that were obtained while studying this question. In particular, 
we give several criteria for a positive answer to the above question. We also 
recall and prove the ``Katz isomorphism'' that describes the dual of a certain space of continuous functions on $o_L$. 
An important part of our paper is devoted to providing a proof of this theorem which was stated in 1977 by Katz. 
We then show how it applies to the question. 
Besides $p$-adic Fourier theory, 
the above question is related to the theory of formal groups, the theory of integer valued polynomials on $o_L$, $p$-adic Hodge theory,
and Iwasawa theory.
\end{abstract}

\dedicatory{With an appendix by Drago\cb{s} Cri\cb{s}an and Jingjie Yang}

\subjclass{11F80; 11S15; 11S20; 11S31; 13F20; 13J07; 14G22; 22E50; 46S10}

\maketitle

\tableofcontents

\section{Introduction}
\label{introwriteup}

\subsection{Motivation}
Let $L$ be a finite extension of $\Qp$ and let $\Cp$ denote the completion of an algebraic closure of $\Qp$. In their work on $p$-adic Fourier theory \cite{ST}, Schneider and Teitelbaum defined and studied the character variety $\frX$. This character variety is a rigid analytic curve over $L$ that parameterizes the set of locally $L$-analytic characters $\lambda : (o_L,+) \to (\Cp^\times,\times)$. One of the main results of Schneider and Teitelbaum is that over $\Cp$, the curve $\frX$ becomes isomorphic to the open unit disk. 

The ring $\cO_L(\frX)$ of holomorphic functions on $\frX$ is a Pr\"ufer domain, with an action of $o_L$ coming from the natural action of $o_L$ on the set of locally $L$-analytic characters. One can then localize and complete $\cO_L(\frX)$ in order to obtain the Robba ring $\sR_L(\frX)$, and define $(\varphi,o_L^\times)$-modules over that ring and some of its subrings. These objects are defined and studied in Berger--Schneider--Xie \cite{BSX}, with the hope that they will be useful for a generalization of the $p$-adic local Langlands correspondence from $\GL_2(\Qp)$ to $\GL_2(L)$.

In this paper, we instead consider a natural subring of $\cO_L(\frX)$, the ring $\Lambda_L(\frX)$ of functions whose norms are bounded above by $1$. If $\mu \in o_L \dcroc{o_L}$ is a measure on $o_L$, then $\lambda \mapsto \mu(\lambda)$ gives rise to such a function. The resulting map $o_L \dcroc{o_L} \to \Lambda_L(\frX)$ is injective. We do not know of any example of an element of $\Lambda_L(\frX)$ that is not in the image of the above map. 

\begin{question}
\label{mainquest}
Do we have $\Lambda_L(\frX) = o_L \dcroc{o_L}$?
\end{question}

This question seems to be more difficult than it appears at first glance, and so far we have not been able to answer it (except of course for $L=\Qp$). The results of this paper were obtained while we were studying this problem. 
A related question is raised in remark 2.5 of \cite{CGL}.
We now give more details about the character variety $\frX$, and then explain our main results.

\subsection{The character variety}
Let $\frB$ denote the open unit disk, seen as a rigid analytic variety. This space naturally parameterizes the set of locally $\Qp$-analytic characters $\lambda : (\Zp,+) \to (\Cp^\times,\times)$. Indeed, if $K$ is a closed subfield of $\Cp$ and $z \in \frm_K = \frB(K)$, then the map $\lambda_z : a \mapsto (1+z)^a$ is a $K$-valued locally $\Qp$-analytic character on $\Zp$, and every such character arises in this way. Note that $\lambda_z'(0) = \log(1+z)$. If $d=[L:\Qp]$, then $o_L \simeq \Zp^d$ and hence $\frB^d$ parameterizes the set of locally $\Qp$-analytic characters $\lambda : (o_L,+) \to (\Cp^\times,\times)$. Such a character is locally $L$-analytic if and only if $\lambda'(0)$ is $L$-linear. In coordinates $z = (z_1,\hdots,z_d)$, there exists $\alpha_2,\hdots,\alpha_d \in L$ such that the character corresponding to $z$ is locally $L$-analytic if and only if $\log(1+z_i) = \alpha_i \cdot \log(1+z_1)$ for all $i=2,\hdots,d$. These $d-1$ Cauchy--Riemann equations cut out the character variety $\frX$ inside $\frB^d$. Schneider and Teitelbaum showed \cite{ST} that $\frX$ is a smooth rigid analytic group curve over $L$. 

The ring of $\Qp$-analytic distributions $D^{\Qp-\an}(o_L,L)$ on $o_L$ is isomorphic to the ring of power series in $d$ variables that converge on the open unit polydisk. Every distribution $\mu \in D^{\Qp-\an}(o_L,L)$ gives rise to an element of $\cO_L(\frX)$ via the map $\lambda \mapsto \mu(\lambda)$. This gives rise to a surjective (but not injective if $L \neq \Qp$) map $D^{\Qp-\an}(o_L,L) \to \cO_L(\frX)$, whose restriction to $o_L \dcroc{o_L}$ is injective and has image contained in $\Lambda_L(\frX)$.

\subsection{Schneider and Teitelbaum's uniformization}
We now explain why over $\Cp$, the curve $\frX$ becomes isomorphic to the open unit disk. Let $G_L = \Gal(\Qpbar/L)$. Choose a uniformizer $\pi$ of $o_L$ and let $\cG$ denote the Lubin--Tate formal group attached to $\pi$. This gives us a Lubin--Tate character $\chi_\pi : G_L \to o_L^\times$ and, once we have chosen a coordinate $Z$ on $\cG$, a formal addition law $X\oplus Y  \in o_L \dcroc{X,Y}$, endomorphisms $[a](Z) \in o_L \dcroc{Z}$ for all $a \in o_L$, and a logarithm $\log_{\LT}(Z) \in L \dcroc{Z}$.

By the work of Tate on $p$-divisible groups, there is a non-trivial homomorphism $\cG \to \Gm$ defined over $o_{\Cp}$. Concretely, there exists a power series $G(Z) \in o_{\Cp} \dcroc{Z}$ (a generator of $\Hom_{o_{\Cp}}(\cG,\Gm)$) such that $G(X \oplus Y)=G(X) \cdot G(Y)$. If $z \in \frm_{\Cp}$, then the map $\lambda_z : a \mapsto G([a](z))$ is a locally $L$-analytic character on $o_L$, and every such character arises in this way. This explains the main idea behind the proof of the statement that over $\Cp$, the curve $\frX$ becomes isomorphic to the open unit disk. 

In particular, $\cO_{\Cp}(\frX)$ is isomorphic to the ring of power series $\sum_{i \geq 0} a_i Z^i$ with $a_i \in \Cp$ that converge on the open unit disk. Let $\chi_{\cyc}$ denote the cyclotomic character, and let $\tau : G_L \to o_L^\times$ denote the character $\tau = \chi_{\cyc} \cdot \chi_\pi^{-1}$. The Galois group $G_L$ acts on $\cO_{\Cp}(\frX)$ by the formula $g(\sum_{i \geq 0} a_i Z^i) = \sum_{i \geq 0} g(a_i) [\tau(g)^{-1}](Z)^i$. This action is called the twisted Galois action, and we write $G_L,*$ to recall the twist. It follows from the Ax-Sen-Tate theorem that $\Cp^{G_L} = L$ and then, by unravelling the definitions, that $\cO_L(\frX) = \cO_{\Cp}(\frX)^{G_L,*}$. At the level of bounded functions, this tells us that $\Lambda_L(\frX) = o_{\Cp} \dcroc{Z}^{G_L,*}$. The natural map $o_L \dcroc{o_L} \to \Lambda_L(\frX)$ sends, for instance, the Dirac measure $\delta_a$ with $a \in o_L$ to $G([a](Z)) \in \Lambda_L(\frX)$.

\subsection{The operators $\varphi_q$, $\psi_q$}
The monoid $(o_L,\times)$ acts on $o_L$ by multiplication, and hence on the set of locally $L$-analytic characters,  on $\frX$, and on the ring $\cO_{\Cp}(\frX)$. If $a \in o_L$, then this action is given by $f(Z) \mapsto f([a](Z))$. Let $q$ denote the cardinality of the residue field $k_L$ of $o_L$ and let $\varphi_q$ denote the action of $\pi$ on $\cO_{\Cp}(\frX)$. The ring $\cO_{\Cp}(\frX)$ is a free $\varphi_q(\cO_{\Cp}(\frX))$-module of rank $q$. Let $\psi_q : \cO_{\Cp}(\frX) \to \cO_{\Cp}(\frX)$ be the map defined by $\varphi_q(\psi_q(f(Z))) = 1/q \cdot \Tr_{\cO_{\Cp}(\frX) / \varphi_q(\cO_{\Cp}(\frX))} (f(Z))$. The action of $o_L$ and the operator $\psi_q$ commute with the twisted action of $G_L$, and therefore preserve $\cO_L(\frX)$. If we consider the image of the map $D^{\Qp-\an}(o_L,L) \to \cO_L(\frX)$, we have $a \cdot \delta_b = \delta_{ab}$ and $\psi_q(\delta_b) = 0$ if $b \in o_L^\times$ and $\psi_q(\delta_b) = \delta_{b/\pi}$ if $b \in \pi o_L$. In particular, $o_L\dcroc{o_L}^{\psi_q=0}$ coincides with $o_L\dcroc{o_L^\times}$, those measures that are supported in $o_L^\times$. We use later on the fact (lemma \ref{psizall})  that $\Lambda_L(\frX) = o_L \dcroc{o_L}$ if and only if $\Lambda_L(\frX)^{\psi_q=0} = o_L \dcroc{o_L^\times}$. Note that if $L \neq \Qp$, then $\psi_q(\Lambda_{\Cp}(\frX))$ is not contained in $\Lambda_{\Cp}(\frX)$ as $\Tr_{\cO_{\Cp}(\frX) / \varphi_q(\cO_{\Cp}(\frX))} (f(Z))$ is divisible by $\pi$, but not always by $q$. Our first result is the following.

\begin{theorem}
\label{intropsicrit}
We have $\Lambda_L(\frX) = o_L \dcroc{o_L}$ if and only if $\psi_q(\Lambda_L(\frX)) \subset \Lambda_L(\frX)$.
\end{theorem}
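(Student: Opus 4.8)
The plan is to establish the two implications separately. One is immediate; the converse is the substantial statement and I will only be able to sketch its shape.

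\emph{The direction $\Lambda_L(\frX) = o_L\dcroc{o_L}\Rightarrow\psi_q$-stability.} In contrast with $\Lambda_{\Cp}(\frX)$, the ring $o_L\dcroc{o_L}$ is visibly stable under $\psi_q$: viewed on measures, $\psi_q$ is restriction to $\pi o_L$ followed by pullback along $x\mapsto x/\pi$, which is norm-nonincreasing and carries $o_L$-valued measures to $o_L$-valued measures (concretely it sends $\delta_b$ to $0$ if $b\in o_L^\times$ and to $\delta_{b/\pi}$ if $b\in\pi o_L$, and one extends by continuity). Hence if $\Lambda_L(\frX)=o_L\dcroc{o_L}$ then $\psi_q(\Lambda_L(\frX))=\psi_q(o_L\dcroc{o_L})\subseteq o_L\dcroc{o_L}=\Lambda_L(\frX)$.

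\emph{The direction $\psi_q$-stability $\Rightarrow\Lambda_L(\frX)=o_L\dcroc{o_L}$.} Assume $\psi_q(\Lambda_L(\frX))\subseteq\Lambda_L(\frX)$. Since always $o_L\dcroc{o_L}\subseteq\Lambda_L(\frX)$, we must prove the reverse inclusion, and by Lemma~\ref{psizall} it suffices to show $\Lambda_L(\frX)^{\psi_q=0}\subseteq o_L\dcroc{o_L^\times}$ (the other inclusion being automatic). Two general facts are now available: from $\psi_q\circ\varphi_q=\id$ and the projection formula $\psi_q(\varphi_q(x)\,y)=x\,\psi_q(y)$ one gets the honest direct sum $\Lambda_L(\frX)=\Lambda_L(\frX)^{\psi_q=0}\oplus\varphi_q(\Lambda_L(\frX))$ and, iterating, that $\psi_q^{\,n}(g)$ and indeed $\psi_q^{\,n}(\delta_c g)$ (for any $c\in o_L$) lie in $\Lambda_L(\frX)$ — in particular are bounded by $1$ — for every $g\in\Lambda_L(\frX)$ and every $n$, because $\Lambda_L(\frX)$ is a ring containing all the Dirac measures. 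The idea is then: given $f\in\Lambda_L(\frX)^{\psi_q=0}$, use this wealth of bounded quantities $\psi_q^{\,n}(\delta_c f)$ to read off, for each coset $c+\pi^N o_L\subseteq o_L^\times$, a ``mass'' in $o_{\Cp}$, check that these masses form a consistent and uniformly bounded system, and thereby produce a candidate measure $\nu$; one then identifies $f$ with $\nu$ by comparing Taylor coefficients in $o_{\Cp}\dcroc{Z}$, using that the low-degree coefficients of $[\pi^N](Z)$ are divisible by unboundedly high powers of $\pi$. Finally, since each such mass is $G_L$-invariant and bounded by $1$, it lies in $\Cp^{G_L}\cap o_{\Cp}=o_L$, so $\nu\in o_L\dcroc{o_L^\times}$, as desired.

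\emph{Expected main obstacle.} The formal parts above — the direct sum, the iteration, and the coefficient comparison — are routine. The genuine difficulty is the middle step: showing that $\psi_q$-stability lets one recover a \emph{measure} from $f$, i.e. that the boundedness of the orbit $\{\psi_q^{\,n}(\delta_c f)\}_{n,c}$ is not only necessary but sufficient for $f$ to come from $o_L\dcroc{o_L}$. Equivalently, one needs a characterization of the subring of $\cO_{\Cp}(\frX)$ generated by measures (respectively, by measures supported on $o_L^\times$) in terms of the operators $\varphi_q,\psi_q$, after which the passage from $\Cp$- to $L$-coefficients uses $\Cp^{G_L}=L$. This rests on the fine structure of $\cO_{\Cp}(\frX)$ as a module over $\varphi_q(\cO_{\Cp}(\frX))$ and on $p$-adic Hodge-theoretic input — plausibly the ``Katz isomorphism'' recalled later in the paper — and is where I would expect the proof to do real work, not least because the Dirac measures $\{\delta_a:a\in o_L/\pi\}$ do \emph{not} form a $\varphi_q(\cO_{\Cp}(\frX))$-basis of $\cO_{\Cp}(\frX)$ when $L\neq\Qp$, so the reconstruction is necessarily subtler than the naive one.
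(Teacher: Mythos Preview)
Your overall strategy for the hard direction --- look at the quantities $\psi_q^{\,n}(\delta_{-a}\cdot f)$ and extract from them the values of $f$ on cosets --- is exactly right, and is what the paper does. But you have overestimated the difficulty and consequently organised the argument in a way that obscures how short it actually is. Three concrete points:

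\textbf{No need to build a candidate measure.} You propose to ``produce a candidate measure $\nu$'' from the masses and then ``identify $f$ with $\nu$ by comparing Taylor coefficients.'' This detour is unnecessary. Under the Fourier isomorphism $\cO_L(\frX)\cong D^{L\text{-}\an}(o_L,L)$, the element $f\in\Lambda_L(\frX)$ is \emph{already} a locally analytic distribution on $o_L$. So the only thing to prove is that this distribution is bounded by~$1$ on the dense subspace of locally constant functions, i.e.\ that $|f(\mathbf{1}_{a+\pi^n o_L})|\le 1$ for every coset. Once you have this, $f\in o_L\dcroc{o_L}$ immediately; there is no separate identification step.

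\textbf{The key formula is elementary.} The whole proof is the identity
\[
f(\mathbf{1}_{a+\pi^n o_L}) \;=\; \psi_q^{\,n}\bigl(f\cdot\Delta_{-a}\bigr)(0),
\]
which follows from column orthogonality of characters of $o_L/\pi^n o_L$: one writes $\mathbf{1}_{\pi^n o_L}=q^{-n}\sum_{[\pi^n](z)=0}\kappa_z$, so $(f\cdot\delta_{-a})(\mathbf{1}_{\pi^n o_L})=q^{-n}\sum_{[\pi^n](z)=0}(f\Delta_{-a})(z)$, and the right-hand side is exactly $\varphi^n\psi_q^{\,n}(f\Delta_{-a})$ evaluated at $Z=0$. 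Since by hypothesis $\psi_q^{\,n}(f\Delta_{-a})\in\Lambda_L(\frX)\subset o_{\Cp}\dcroc{Z}$, its value at $0$ has absolute value $\le 1$, and you are done. The paper packages this as Proposition~\ref{critolpsi}.

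\textbf{The Katz isomorphism plays no role here.} Your ``Expected main obstacle'' paragraph anticipates needing $p$-adic Hodge theory or the Katz isomorphism to characterise the image of measures inside $\cO_{\Cp}(\frX)$. This is not needed: the argument above is entirely elementary once you have the Fourier transform and the definition of $\psi_q$. (The Katz isomorphism is used elsewhere in the paper, for the results in~\S\ref{consekatz}, but not for this theorem.) Likewise the reduction to $\psi_q=0$ via Lemma~\ref{psizall} is harmless but unnecessary, and your worry about the $\delta_a$ not forming a $\varphi_q$-basis is a red herring for this particular statement.
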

This is proved at the end of $\S \ref{PsiSection}$. 
\subsection{The polynomials $P_n$}\label{PnSectIntro}
Recall that $G(Z)$ is a generator of $\Hom_{o_{\Cp}}(\cG,\Gm)$ and that $\tau = \chi_{\cyc} \cdot \chi_\pi^{-1}$. In fact, we have $G(Z) = \exp(\Omega \cdot \log_{\LT}(Z)) = 1 +\Omega \cdot Z + \bigO(Z^2)$, where $\Omega$ is a certain special element of $\frm_{\Cp}$ such that $g(\Omega) = \tau(g) \cdot \Omega$. In particular, for all $n \geq 0$, there exists a polynomial $P_n(Y) \in L[Y]$ such that $G(Z) = \sum_{n \geq 0} P_n(\Omega) \cdot Z^n$. For $n \geq 0$, the polynomial $P_n(Y)$ is of degree $n$, and its leading coefficient is $1/n!$. For example, assume that the coordinate $Z$ is chosen in a way that $\log_{\LT}(Z) = \sum_{k \geq 0} Z^{q^k}/\pi^k$. Then we have (see Proposition \ref{polypm} for more details)
\[ P_n(Y) = \sum_{n_0+qn_1+\cdots+q^d n_d=n} \frac{Y^{n_0+\cdots+n_d}}{n_0! \cdots n_d! \cdot \pi^{1 \cdot n_1 + 2 \cdot n_2 + \cdots + d \cdot n_d}}. \]
If $a \in o_L$, then $G([a](Z)) = \sum_{n \geq 0} P_n(\Omega) \cdot [a](Z)^n = \sum_{n \geq 0} P_n(a \Omega) \cdot Z^n$. This implies for instance that $P_n(a \Omega) \in o_{\Cp}$ for all $a \in o_L$. For $n \geq 0$ and $i \geq n$, let $\sigma_{n,i}(Y) \in L[Y]$ denote the polynomials such that $[a](Z)^n = \sum_{i \geq n} \sigma_{n,i}(a) Z^i$ for all $a \in o_L$. The $\sigma_{n,i}(Y)$ are all elements of $\Int$, the $o_L$-submodule of $L[Y]$ of integer valued polynomials on $o_L$. The fact that $\sum_{n \geq 0} P_n(\Omega) \cdot [a](Z)^n = \sum_{n \geq 0} P_n(a \Omega) \cdot Z^n$ implies that $P_n(a \Omega) = \sum_{i=0}^n \sigma_{i,n}(a) P_i(\Omega)$. If $\mu \in D^{\Qp-\an}(o_L,L)$, then its image in $\cO_L(\frX)$ is therefore $f_\mu(Z) = \sum_{n \geq 0} Z^n \cdot \sum_{i=0}^n \mu(\sigma_{i,n}) P_i(\Omega)$. Let $\Pol$ denote the $o_L$-span of the $\sigma_{n,i}(Y)$ inside $L[Y]$, so that $\Pol \subset \Int$. The following gives a relation between our question and the theory of integer valued polynomials (\cite{dSh}, \cite{dSIce}):
\begin{theorem}
\label{intropolint}
If $\Lambda_L(\frX) = o_L \dcroc{o_L}$, then $\Pol=\Int$.
\end{theorem}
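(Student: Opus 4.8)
The plan is to prove the contrapositive: assuming $\Pol\subsetneq\Int$, I would construct an element of $\Lambda_L(\frX)$ not lying in the image of $o_L\dcroc{o_L}$, equivalently a locally analytic distribution separating $\Pol$ from $\Int$.

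First I would record the translation. By the Schneider--Teitelbaum Fourier isomorphism $D^{\la}(o_L,L)\xrightarrow{\ \sim\ }\cO_L(\frX)$, $\mu\mapsto f_\mu$, and using that $L[Y]$ is dense in $C(o_L,L)$ (Stone--Weierstrass) while the coefficients $c_n=\sum_{i\le n}\mu(\sigma_{i,n})P_i(\Omega)$ of $f_\mu$ depend only on the restriction of $\mu$ to $L[Y]$, one obtains: (a) $\mu\in o_L\dcroc{o_L}$ if and only if $\mu(\Int)\subseteq o_L$; and (b) if $\mu(\Pol)\subseteq o_L$ then $f_\mu\in\Lambda_L(\frX)$, since each $P_i(\Omega)\in o_{\Cp}$ forces $c_n\in o_{\Cp}$ and $f_\mu$ is automatically $(G_L,\ast)$-invariant. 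Hence, if $\Lambda_L(\frX)=o_L\dcroc{o_L}$, then every $\mu\in D^{\la}(o_L,L)$ with $\mu(\Pol)\subseteq o_L$ also has $\mu(\Int)\subseteq o_L$; so it suffices to show that when $\Pol\subsetneq\Int$ there is $\mu\in D^{\la}(o_L,L)$ with $\mu(\Pol)\subseteq o_L$ but $\mu(g)\notin o_L$ for some $g\in\Int$. Note that, because $\sigma_{n,n}(Y)=Y^n$, we have $o_L[Y]\subseteq\Pol$, which is already dense in $C(o_L,L)$; so such a $\mu$ is necessarily unbounded as a functional on $C(o_L,L)$, which is why the problem is delicate.

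For the construction, I would use that $o_L[Y]\subseteq\Pol\subseteq\Int$ makes $\Pol$ and $\Int$ both $o_L$-lattices in $L[Y]$; each then has a regular basis, $(p_m)_{m\ge0}$ for $\Pol$ and $(f_m)_{m\ge0}$ for $\Int$, with $\deg p_m=\deg f_m=m$. Write $p_m=\sum_{j\le m}c_{m,j}f_j$ with $c_{m,j}\in o_L$ and $\vpi(c_{m,m})=a_m\ge0$; since $\Pol\subsetneq\Int$ we have $a_{m_0}\ge1$ for some $m_0$. Take $\mu:=p_{m_0}^{\ast}$, the coordinate functional attached to $p_{m_0}$ in the basis $(p_m)$. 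Then $\mu(p_m)=\delta_{m,m_0}$, so $\mu(\Pol)=o_L$; and since the change-of-basis matrix $C=(c_{m,j})$ is lower triangular, $\mu(f_{m_0})=(C^{-1})_{m_0,m_0}=c_{m_0,m_0}^{-1}$ has $\pi$-valuation $-a_{m_0}\le-1$, so $\mu(f_{m_0})\notin o_L$ with $f_{m_0}\in\Int$.

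What remains, and what I expect to be the main obstacle, is to check $\mu\in D^{\la}(o_L,L)$. Expanding in the Mahler coordinates $f_j^{\ast}$ of the regular basis $(f_j)$ of $\Int$, one has $p_{m_0}^{\ast}=\sum_{j\ge m_0}(C^{-1})_{j,m_0}\,f_j^{\ast}$, and --- since a regular basis of $\Int$ is orthogonal in each space $C^{\an}_h(o_L,L)$ of functions analytic on the cosets of $\pi^{h}o_L$ --- this series converges in $D^{\la}(o_L,L)=\varprojlim_h C^{\an}_h(o_L,L)'$ precisely when $|(C^{-1})_{j,m_0}|=o\big(\|f_j\|_{C^{\an}_h}\big)$ as $j\to\infty$, for every $h$. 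As $\|f_j\|_{C^{\an}_h}$ grows, for fixed $h$, only sub-exponentially in $j$ (with rate tending to $0$ as $h\to\infty$), the heart of the matter is to bound the entries $(C^{-1})_{j,m_0}$ --- equivalently the partial denominators of $\Pol$ relative to a regular basis of $\Int$ --- sub-exponentially. Here one must exploit the explicit description of $\Pol$: the polynomials $\sigma_{i,n}$ have denominators controlled by those of $\Int$, via the shape of $P_n$ and of $\log_{\LT}$, and this should yield the required estimate. Granting it, $\mu\in D^{\la}(o_L,L)$ separates $\Pol$ from $\Int$, contradicting $\Lambda_L(\frX)=o_L\dcroc{o_L}$, and the theorem follows.
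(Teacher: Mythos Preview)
Your contrapositive strategy --- build a linear functional $\mu$ with $\mu(\Pol)\subseteq o_L$ but $\mu(\Int)\not\subseteq o_L$, then promote it to an element of $\Lambda_L(\frX)\setminus o_L\dcroc{o_L}$ --- is exactly the paper's approach (Theorem \ref{muKRZ} applied with $R=U$, combined with Corollary \ref{UandBcriteria} and Lemma \ref{LambdaI}, which is your step (a)).

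The gap is the step you yourself flag as the ``main obstacle'': you have \emph{not} established the sub-exponential bound on $(C^{-1})_{j,m_0}$ needed to show $\mu=p_{m_0}^\ast\in D^{\la}(o_L,L)$. More to the point, this step is unnecessary, and the paper bypasses it entirely. The key observation (Proposition \ref{AlgToLAnDists}) is that for \emph{any} $L$-linear $\lambda:L[Y]\to L$, the formal series
\[
F_\lambda \;=\; \sum_{k\ge 0}\lambda(Y^k/k!)\,\Omega^k t^k \;=\; \sum_{j\ge 0}\Bigl(\sum_{i=0}^j \lambda(\sigma_{i,j})\,P_i(\Omega)\Bigr)Z^j
\]
is already a well-defined element of $\Cp\dcroc{Z}$. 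The left-hand expression shows $F_\lambda\in L\dcroc{\Omega t}=\Cp\dcroc{Z}^{G_L,\ast}$ (this is what the twisted Galois invariance of the coefficients $c_n$ means), and the right-hand expression --- which is precisely your formula for $c_n$ --- shows $F_\lambda\in o_{\Cp}\dcroc{Z}$ whenever $\lambda(\Pol)\subseteq o_L$. Hence $F_\lambda\in o_{\Cp}\dcroc{Z}^{G_L,\ast}=\Lambda_L(\frX)$, and by the Schneider--Teitelbaum isomorphism (Corollary \ref{muK}) there exists $\tilde\lambda\in\cO^\circ(\frX_L)\subset D^{\la}(o_L,L)$ with $\mu_L(\tilde\lambda)=F_\lambda$. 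The pairing formula \cite[Lemma 4.6(8)]{ST} then gives $\tilde\lambda(Y^k/k!)=\lambda(Y^k/k!)$ for all $k$, so $\tilde\lambda|_{L[Y]}=\lambda$. In short, membership in $D^{\la}(o_L,L)$ is a \emph{consequence} of $\lambda(\Pol)\subseteq o_L$, not a hypothesis to be verified by growth estimates. With this in hand, your construction of a separating $\lambda$ (or indeed any $o_L$-linear $\lambda:\Int\to L/o_L$ witnessing $\Int/\Pol\neq 0$, lifted to $L[Y]\to L$) finishes the proof.
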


The proof can be found at the end of $\S \ref{RhoijSect}$. The converse statement is not true, but ``$\Pol=\Int$'' is equivalent to $U \dcroc{Z}^{G_L,*} = o_L \dcroc{o_L}$, where $U$ is the $o_L$-submodule of $o_{\Cp}$ generated by $\{P_n(\Omega)\}_{n \geq 0}$. We have not been able to prove that $\Pol=\Int$, although we can show that $\Pol$ is $p$-adically dense in $\Int$. Some numerical evidence indicates that $\Pol=\Int$ seems to hold: the details can be found in the Appendix by D. Crisan and J. Yang at the end of our paper.

We now explain how to compute the valuation of $P_n(\Omega)$ for certain $n$. The elements $z \in \frm_{\Cp}$ such that $G(z)=1$ correspond to those locally $L$-analytic characters $\lambda_z$ such that $\lambda_z(1)=1$. Being locally $L$-analytic, they are necessarily trivial on an open subgroup of $o_L$, and correspond to certain torsion points of $\cG$. We know the valuations of these torsion points, and this way we can determine the Newton polygon of $G(Z)-1$. Using this idea, we can prove the following. Let $e$ be the ramification index of $L/\Qp$. If $m \geq 0$, let $k_m = \lfloor (m-1)/e \rfloor$, so that $m=ek_m+r$ with $1 \leq r \leq e$. For $m \geq 0$, let $x_m = q^m/p^{k_m+1}$ (so that $x_0=1$ and $x_1=q/p$). Write $m=en+r$ and let  \[ y_0 = \frac{e}{p-1} - \frac{1}{q-1}  \ \ \text{and}\ \  y_m  = \frac{e}{p^n(p-1)} - \frac{r}{p^{n+1}} - \frac{1}{(q-1)p^{n+1}}. \]

\begin{theorem}
\label{introvalpkgen}
For all $m \geq 0$, we have $\val_\pi(P_{x_m}(\Omega)) = y_m$.
\end{theorem}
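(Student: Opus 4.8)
The plan is to compute the Newton polygon, with respect to $\vpi$, of the power series $G(Z) - 1 = \sum_{n \ge 1} P_n(\Omega)\,Z^n$, or rather of $H(Z) := (G(Z)-1)/Z = \sum_{m \ge 0} P_{m+1}(\Omega)\,Z^m$, whose constant term is $P_1(\Omega) = \Omega \ne 0$. The value $\vpi(P_{x_m}(\Omega))$ will be read off as the height of this polygon above the abscissa $x_m - 1$, once that abscissa is seen to be a break point. Write $k_j := \lfloor (j-1)/e\rfloor$, so that $\lceil j/e\rceil = k_j + 1$ for $j \ge 1$.

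The first step is to locate all zeros of $G(Z) - 1$ in $\frm_{\Cp}$. Differentiating $G = \exp(\Omega\log_{\LT})$ gives $G'(Z) = \Omega\log_{\LT}'(Z)\cdot G(Z)$, and in the coordinate with $\log_{\LT}(Z) = \sum_{k\ge0} Z^{q^k}/\pi^k$ one has $\log_{\LT}'(Z) = 1 + \sum_{k\ge1}(q^k/\pi^k)Z^{q^k-1}$, a unit power series since $\vpi(q^k/\pi^k)\ge0$; hence $G'$ has no zero in $\frm_{\Cp}$ and every zero of $G(Z)-1$ is simple. If $G(z) = 1$, then $G([p^N](z)) = G(z)^{p^N} = 1$ for all $N$ (using $G(X\oplus Y) = G(X)G(Y)$ and that $[p^N]$ is iterated formal addition); since $[p^N](z)\to0$ and $|G(w)-1| = |\Omega w|$ on the disk $|w| < |\Omega|$, we get $[p^N](z) = 0$ for $N \gg 0$, so $z \in \cG[\pi^\infty]$. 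Conversely, for $z \in \cG[\pi^j]$ the map $a \mapsto G([a](z))$ is precisely the finite-order character $\lambda_z$ of $o_L/\pi^j o_L$, and $z \mapsto \lambda_z$ identifies $\cG[\pi^j]$ with the group of $\mu_{p^\infty}$-valued characters of $o_L/\pi^j o_L$ (Schneider--Teitelbaum's parametrization restricted to finite order; injectivity can be checked using $\Hom_{o_{\Cp}}(\cG,\Gm) = o_{\Cp}\cdot G$). Since $G(z) = \lambda_z(1)$, we have $G(z) = 1$ iff $\lambda_z$ is trivial on $\Zp\subseteq o_L$, iff $\lambda_z$ factors through $o_L/(\pi^j o_L + \Zp)$; as the image of $\Zp$ in $o_L/\pi^j o_L$ has order $p^{\lceil j/e\rceil} = p^{k_j+1}$, there are exactly $q^j/p^{k_j+1}$ such $z$. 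Combined with the standard formula $\vpi(z) = v_j := 1/(q^{j-1}(q-1))$ for $z \in \cG[\pi^j]\setminus\cG[\pi^{j-1}]$, this shows: $G(Z) - 1$ has the simple zero $Z = 0$, and for each $j \ge 1$ exactly $N_j := q^j/p^{k_j+1} - q^{j-1}/p^{k_{j-1}+1}$ simple zeros of valuation $v_j$, and no others.

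Now I would assemble the Newton polygon of $H$. Its zeros have valuations $v_1 > v_2 > \cdots \to 0$ with multiplicities $N_j$, and the polygon begins at $(0,\vpi(\Omega)) = (0,y_0)$, using $\vpi(\Omega) = e/(p-1) - 1/(q-1) = y_0$ (this is the case $m = 0$; it can be quoted from Schneider--Teitelbaum's analysis of the period, or obtained from the present Newton polygon together with the fact that $G(Z)-1$ has sup-norm $1$ on the open disk). Consequently the break points are at abscissas $S_j := \sum_{i=1}^j N_i = q^j/p^{k_j+1} - 1 = x_j - 1$, with the polygon of height $y_0 - \sum_{i=1}^j v_i N_i$ above $S_j$; since all zeros of $H$ have been accounted for, this height equals $\vpi$ of the coefficient of $Z^{S_j}$ in $H$, i.e. $\vpi(P_{S_j+1}(\Omega)) = \vpi(P_{x_j}(\Omega))$. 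It remains to evaluate the sum: a telescoping computation gives $\sum_{i=1}^j v_i N_i = \sum_{i=1}^j p^{-(k_i+1)} + \frac{1}{(q-1)p^{k_j+1}} - \frac{1}{q-1}$, and since among $i = 1,\dots,j$ the value $k_i$ equals each of $0,1,\dots,k_j-1$ exactly $e$ times and equals $k_j$ exactly $r$ times (where $j = e k_j + r$, $1 \le r \le e$), the geometric sum is $\sum_{i=1}^j p^{-(k_i+1)} = \frac{e}{p-1} - \frac{e}{(p-1)p^{k_j}} + \frac{r}{p^{k_j+1}}$. Substituting back yields $\vpi(P_{x_j}(\Omega)) = \frac{e}{(p-1)p^{k_j}} - \frac{r}{p^{k_j+1}} - \frac{1}{(q-1)p^{k_j+1}} = y_j$, as claimed; in the (degenerate) cases where $N_j = 0$ one has $x_j = x_{j-1}$ and the identity $y_j = y_{j-1}$ is then automatic.

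The main obstacle is the exact determination of the zeros of $G(Z)-1$ in $\frm_{\Cp}$ in the second step: both that they are all $\pi$-power torsion points --- for which one needs $G([a](z)) = G(z)^a$ for $a\in\Zp$ together with the injectivity of $G$ near $0$ --- and the precise count $q^j/p^{k_j+1}$ of them in $\cG[\pi^j]$, where the ramification index $e$ and residue degree $f$ of $L/\Qp$ enter through $|o_L/(\pi^j o_L + \Zp)| = q^j/p^{\lceil j/e\rceil}$. The remaining input is the value $\vpi(\Omega) = y_0$, equivalently the $m = 0$ case, which anchors the Newton polygon.
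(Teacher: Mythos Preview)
Your proof is correct and follows essentially the same approach as the paper: compute the Newton polygon of $\Delta_1(Z)-1=G(Z)-1$ by identifying its zeros as those torsion points $z\in\cG[\pi^\infty]$ for which the associated character kills $1$, counting them via the order of the image of $1$ in $o_L/\pi^j o_L$, and then reading off the vertex heights. The only cosmetic differences are that you shift to $H(Z)=(G(Z)-1)/Z$, argue simplicity of the zeros explicitly via $G'$, and show torsion by the direct estimate $G([p^N](z))=1$ with $[p^N](z)\to 0$, whereas the paper uses the character-theoretic observation that $\kappa_z(1)=1$ forces $\kappa_z'=0$ and hence local constancy; the zero count $q^j/p^{k_j+1}$ and the telescoping evaluation of $y_j$ are the same as in the paper's Theorem~\ref{npvert} and Corollary~\ref{valpkgen}.
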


For example, if $L=\bQ_{p^2}$, then $\vp(P_{p^k}(\Omega)) = 1/p^{k-1}(q-1)$ for all $k \geq 0$.

\subsection{Galois-continuous functions and the Katz map}
Following Katz \cite{Ka1}, we let $\cC^0_{\Gal}(o_L,o_{\Cp})$ denote the $o_L$-module of Galois-continuous functions, namely those continuous functions $f : o_L \to o_{\Cp}$ such that $g(f(a)) = f(\tau(g) \cdot a)$ for all $a \in o_L$ and $g \in G_L$. If $P(T) \in L[T]$, then $a \mapsto P(a \cdot \Omega)$ is such a function. Let $K$ be a closed subfield of $\Cp$ containing $L$. The \emph{dual Katz map} is the map $\cK^\ast : \Hom_{o_L}(\cC^0_{\Gal}(o_L,o_{\Cp}), o_K) \to o_K \dcroc{Z}$ given by $\mu \mapsto \sum_{n \geq 0} \mu(P_n) \cdot Z^n$. Let $o_K \dcroc{Z}^{\psiqint}$ denote the set of $f(Z) \in o_K \dcroc{Z}$ such that $\psi_q^n(f(Z)) \in o_K \dcroc{Z}$ for all $n \geq 1$. Our main technical result is the following

\begin{theorem} Suppose that $L = \bQ_{p^2}$.
\label{introkatzmap}
\begin{enumerate}
\item The map $\cK^\ast : \Hom_{o_L}(\cC^0_{\Gal}(o_L,o_{\Cp}), o_K) \to o_K \dcroc{Z}$ is injective.
\item Its image is equal to $o_K \dcroc{Z}^{\psiqint}$. 
\end{enumerate}
\end{theorem}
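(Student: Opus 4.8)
Throughout, write $f_n \in \cC^0_{\Gal}(o_L, o_{\Cp})$ for the Galois-continuous function $a \mapsto P_n(a \Omega)$, so that $\cK^\ast(\mu) = \sum_{n \ge 0} \mu(f_n) Z^n$ by definition. The plan is to play $\psi_q$ off against its transpose acting on $\cC^0_{\Gal}(o_L, o_{\Cp})$, and to feed in the explicit description of the $P_n$ for $L = \bQ_{p^2}$ from Proposition~\ref{polypm} together with the torsion-point valuations of Theorem~\ref{introvalpkgen} wherever an integrality estimate is needed.

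\smallskip
\noindent\emph{Part (1).} Since $\cK^\ast(\mu) = 0$ means exactly that $\mu$ kills every $f_n$, and $\mu$ is continuous and $o_L$-linear, injectivity of $\cK^\ast$ is equivalent to the statement that the $f_n$ topologically generate $\cC^0_{\Gal}(o_L, o_{\Cp})$ as an $o_L$-module. This is a consequence of the description of $\Hom_{o_L}(\cC^0_{\Gal}(o_L,o_{\Cp}),o_K)$ obtained earlier (the ``Katz isomorphism''); alternatively, for $L = \bQ_{p^2}$ one argues directly, reducing modulo $\pi^N$ and using the closed formula of Proposition~\ref{polypm} to exhibit enough $o_L$-combinations of the $f_n$ to span $\cC^0_{\Gal}(o_L, o_{\Cp})/\pi^N$ — the point being that the leading monomials $(\Omega a)^n/n!$ must be corrected by the lower-order terms coming from the higher $P_m$ — and then passing to the inverse limit.

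\smallskip
\noindent\emph{Part (2), the inclusion $\subseteq$.} Define $U \colon \cC^0_{\Gal}(o_L, o_{\Cp}) \to \cC^0_{\Gal}(o_L, o_{\Cp})$ by letting $U f$ be the function equal to $a \mapsto f(a/\pi)$ on $\pi o_L$ and to $0$ on $o_L^\times$; a routine check shows $U$ is well defined, i.e. it preserves continuity, the bound by $1$, Galois-continuity and $o_L$-linearity. Hence $\mu \mapsto \mu \circ U$ is an $o_L$-linear, norm-decreasing endomorphism of $\Hom_{o_L}(\cC^0_{\Gal}(o_L,o_{\Cp}),o_K)$, and the key identity is
\[ \cK^\ast(\mu \circ U) = \psi_q\bigl(\cK^\ast(\mu)\bigr). \]
It follows by applying $\mu$, in the variable $a$, to the two expressions for $\psi_q\bigl(G([a](Z))\bigr)$: one given by $\psi_q(\delta_a) = \delta_{a/\pi}$ for $a \in \pi o_L$ and $\psi_q(\delta_a) = 0$ for $a \in o_L^\times$, the other obtained from the linearity and continuity of $\psi_q$ and the expansion $G([a](Z)) = \sum_n P_n(a\Omega) Z^n$. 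Iterating gives $\psi_q^m\bigl(\cK^\ast(\mu)\bigr) = \cK^\ast(\mu \circ U^m) \in o_K\dcroc{Z}$ for all $m \ge 1$, which is the asserted inclusion $\cK^\ast(\Hom_{o_L}(\cC^0_{\Gal}(o_L,o_{\Cp}),o_K)) \subseteq o_K\dcroc{Z}^{\psiqint}$.

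\smallskip
\noindent\emph{Part (2), the inclusion $\supseteq$.} This is the crux, and the only place where $L = \bQ_{p^2}$ is really used. Fix $f \in o_K\dcroc{Z}^{\psiqint}$. Using $\psi_q \circ \varphi_q = \id$ on $\cO_{\Cp}(\frX)$ one writes $f = c + \sum_{m \ge 0} \varphi_q^m(h_m)$, where $h_m := \psi_q^m(f) - \varphi_q\bigl(\psi_q^{m+1}(f)\bigr)$ lies in $o_{\Cp}\dcroc{Z}^{\psi_q = 0}$ and tends $Z$-adically to $0$, and $c = \lim_m \psi_q^m(f)$ lies in $o_K$; that the $h_m$ and $c$ are integral is precisely the hypothesis $f \in o_K\dcroc{Z}^{\psiqint}$. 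The heart of the argument is then a $\psi_q = 0$ version of the theorem: for every $h \in o_{\Cp}\dcroc{Z}^{\psi_q = 0}$ bounded by $1$, produce $\nu \in \Hom_{o_L}(\cC^0_{\Gal}(o_L,o_{\Cp}),o_K)$ with $\nu \circ U = 0$, $\cK^\ast(\nu) = h$ and $\|\nu\| \le 1$. For $L = \bQ_{p^2}$ one writes such a $\nu$ down on the $f_n$ by means of the explicit formula of Proposition~\ref{polypm}, and the verification that $\|\nu\| \le 1$ is carried out using the exact valuations $\val_\pi\bigl(P_{x_m}(\Omega)\bigr) = y_m$ of Theorem~\ref{introvalpkgen}, equivalently the Newton polygon of $G(Z) - 1$. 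Granting this, let $\nu_m$ be the functional attached to $h_m$ and set $\mu := c\cdot \ev_0 + \sum_{m \ge 0} \nu_m \circ \Phi^m$, where $\Phi$ is the operator $f \mapsto (a \mapsto f(\pi a))$ on $\cC^0_{\Gal}(o_L,o_{\Cp})$ (which again preserves all the relevant structures, and satisfies $\cK^\ast(\nu \circ \Phi) = \varphi_q(\cK^\ast(\nu))$, proved exactly as the identity for $U$). The sum converges because $\Phi^m g \to g(0)$ uniformly and $h_m \to 0$; the operator norm of $\mu$ is $\le 1$ by the ultrametric inequality; and $\cK^\ast(\mu) = c + \sum_{m} \varphi_q^m(h_m) = f$ since $\cK^\ast(\ev_0) = \sum_n P_n(0) Z^n = 1$. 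The one genuinely non-formal ingredient is the norm estimate for the $\psi_q = 0$ pieces: this is where $L = \bQ_{p^2}$ enters, since for a ramified or higher-degree $L$ the relevant Newton polygon — and hence this estimate — becomes substantially more delicate, which is why Theorem~\ref{introkatzmap} is only claimed in this case.
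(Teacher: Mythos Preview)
Your overall architecture matches the paper's: the identity $\cK^\ast(\mu \circ U) = \psi_q(\cK^\ast(\mu))$ is Corollary~\ref{cor:DualKatzPhiPsi}, and the reduction of $A^{\psiqint}$ to $\psi_q=0$ pieces via $\varphi_q$ is Proposition~\ref{prop:phipsiphi}. Two minor points: your claim that $h_m \to 0$ $Z$-adically is false (for $g \in Z\,o_K\dcroc{Z}^{\psi_q=0}$ nonzero and $f = \sum_n \varphi_q^n(g)$, one has $\psi_q(f)=f$ and $h_m = g$ for all $m$), and ``$c = \lim_m \psi_q^m(f)$'' is not well defined as written; the paper avoids both issues by truncating, writing $f = f(0) + \sum_m \varphi_q^m(t(h_m))$. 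Also, your first justification in Part~(1) (``a consequence of the Katz isomorphism obtained earlier'') is circular, since that is precisely the theorem under discussion.

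The genuine gap is in the heart of the $\psi_q=0$ step. You assert that for $h \in o_K\dcroc{Z}^{\psi_q=0}$ ``one writes such a $\nu$ down on the $f_n$'', but this only prescribes $\nu$ on the image of $\cK$ and does not explain why the assignment $P_n(\Omega) \mapsto h_n$ extends to an $o_L$-linear functional on \emph{all} of $\cC$ (equivalently, via $\cC^0_{\Gal}(o_L^\times,o_{\Cp}) \cong o_\infty$, on all of $o_\infty$). The ingredients you cite, Proposition~\ref{polypm} and Theorem~\ref{introvalpkgen}, are exactly what the paper uses to prove that $\cK_1 : \h{U} \to o_\infty$ is \emph{surjective} (Propositions~\ref{prop:Qp2KatztmUm} and~\ref{prop:K1onto}); they say nothing about the \emph{relations} among the $P_n(\Omega)$, i.e.\ about $\ker \cK_1$. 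To show that the functional $u \mapsto \langle u, h\rangle$ on $\h{U}$ descends to $o_\infty$ whenever $\psi_q(h)=0$, the paper proves $q \ker \cK_1 \subseteq \psi_{\col}^\ast(\h{U})$ (Proposition~\ref{prop:KerK1ImPsi}), and this requires the injectivity of $\cK : \h{U} \to \cC$. That injectivity (Corollary~\ref{cor: Qp2Katz}) is established by an entirely separate argument using the Dieudonn\'e module of the mod-$p$ Lubin--Tate group and the fact that for $d=2$ the coordinate ring $\cO(\cG' \times_{o_L} k)$ is a power series ring in a \emph{single} variable, forcing the relevant ideal to vanish. Nothing in your proposal addresses this, and without it there is no mechanism to pass from ``values on the $f_n$'' to an honest element of $\Hom_{o_L}(\cC, o_K)$.
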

An important part of our paper is devoted to providing a proof of this theorem, which is completed at the end of $\S \ref{Qp^2KatzSection}$. We note that Theorem \ref{introkatzmap} was stated by Katz at \cite[p. 60]{Ka1}, but he did not give a proof. The remarks contained in the last paragraph of \cite[\S IV]{Ka1} seem to indicate that his proof is different to ours.

The hardest part of the theorem is the claim concerning the image of $\cK^\ast$. Note that when $L=\bQ_{p^2}$, the dual of the $p$-divisible group attached to $\cG$ has dimension $1$. Using this and Theorem \ref{introvalpkgen} for $L=\bQ_{p^2}$, we can prove (see Proposition \ref{prop:K1onto}) that every element of $o_\infty = o_{\Cp}^{\ker \tau}$ can be written as $\sum_{n \geq 0} \lambda_n \cdot P_n(\Omega)$ where $\lambda_n \in o_L$ and $\lambda_n \to 0$. 
This important ingredient of the proof of Theorem \ref{introkatzmap} is not known to be available if $L \neq \bQ_{p^2}$.

\subsection{Applications of the Katz isomorphism}\label{KatzApps}
Throughout this section, we assume that $L=\bQ_{p^2}$ and $\pi=p$, so that $\cK^\ast : \Hom_{o_L}(\cC^0_{\Gal}(o_L,o_{\Cp}), o_K) \to o_K \dcroc{Z}^{\psiqint}$ is an isomorphism. Let $L_\infty = \Cp^{\ker \tau}$ and $o_\infty = o_{\Cp}^{\ker \tau}$. Since $\pi=p$, $L_\infty$ is also the completion of $L(\cG[p^\infty])$.

Theorem \ref{introkatzmap} gives us an isomorphism $\cK : \Hom_{o_L}(\cC^0_{\Gal}(o_L^\times,o_{\Cp}), o_K) \to o_K \dcroc{Z}^{\psi_q=0}$, and we 
have a natural isomorphism $\cC^0_{\Gal}(o_L^\times,o_{\Cp}) \to o_\infty$. Applying this to $K=L$, we get the following result  (Theorem \ref{dualoinfty}), where $o_\infty^* = \Hom_{o_L}(o_\infty,o_L)$:

\begin{theorem}
\label{introdualoinfty}
The map $\cK^\ast$ gives rise to an isomorphism $o_\infty^* \simeq o_L \dcroc{Z}^{\psi_q=0}$.
\end{theorem}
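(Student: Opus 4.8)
The plan is to read off Theorem~\ref{introdualoinfty} from the Katz isomorphism of Theorem~\ref{introkatzmap} in two moves: first restrict $\cK^\ast$ to the part of its source ``concentrated on $o_L^\times$'', identifying the matching part of the target as $o_K\dcroc{Z}^{\psi_q=0}$, and then replace that part of the source by $o_\infty$ and specialise to $K=L$. All the genuine content is in Theorem~\ref{introkatzmap}, which we take as known.

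\emph{Restricting the Katz map.} Since $o_L^\times$ and $\pi o_L$ are both open and closed in $o_L$ and each is preserved by the twisted action $a\mapsto\tau(g)a$, restriction of functions gives an $o_L$-linear decomposition
\[ \cC^0_{\Gal}(o_L,o_{\Cp})=\cC^0_{\Gal}(o_L^\times,o_{\Cp})\oplus\cC^0_{\Gal}(\pi o_L,o_{\Cp}), \]
and hence, after applying $\Hom_{o_L}(-,o_K)$, a decomposition of the source of $\cK^\ast$ whose first summand consists of the functionals that factor through restriction to $o_L^\times$. One then uses that $\cK^\ast$ is equivariant for the multiplicative action of $o_L$, the action of $\pi$ on functionals corresponding to $\varphi_q$ on power series --- explicitly $\cK^\ast(a\cdot\mu)(Z)=\cK^\ast(\mu)([a](Z))$, which follows from the identity $P_n(aT)=\sum_{i=0}^{n}\sigma_{i,n}(a)P_i(T)$, valid identically in $T$ because $\Omega$ is transcendental over $L$. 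Combining this with $\psi_q\circ\varphi_q=\id$, together with the fact --- visible on Dirac distributions ($\psi_q(\delta_b)=0$ for $b\in o_L^\times$) and extended by continuity --- that any functional concentrated on $o_L^\times$ is killed by $\psi_q$, one matches the displayed decomposition of the source with the splitting $K\dcroc{Z}=\ker\psi_q\oplus\varphi_q(K\dcroc{Z})$ of the rationalised target. Intersecting back with $o_K\dcroc{Z}^{\psiqint}$ and noting that $\psi_q f=0$ forces $\psi_q^n f=0$ for all $n\ge 1$, so that $\ker\psi_q\cap o_K\dcroc{Z}^{\psiqint}=o_K\dcroc{Z}^{\psi_q=0}$, one concludes that $\cK^\ast$ restricts to an isomorphism $\Hom_{o_L}(\cC^0_{\Gal}(o_L^\times,o_{\Cp}),o_K)\xrightarrow{\sim}o_K\dcroc{Z}^{\psi_q=0}$ for every $K$ --- this is the isomorphism $\cK$ appearing before the statement.

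\emph{Passing to $o_\infty$ and to $K=L$.} I would then invoke the natural identification $\cC^0_{\Gal}(o_L^\times,o_{\Cp})\cong o_\infty$ recalled before the statement (evaluation at $1$; its values lie in $o_\infty=o_{\Cp}^{\ker\tau}$ because $g(f(1))=f(\tau(g))=f(1)$ for $g\in\ker\tau$, and it is an isometric $o_L$-linear bijection since $G_L$ acts isometrically, so $\|f\|=|f(1)|$). Applying $\Hom_{o_L}(-,o_L)$ turns this into an isomorphism $o_\infty^*=\Hom_{o_L}(o_\infty,o_L)\xrightarrow{\sim}\Hom_{o_L}(\cC^0_{\Gal}(o_L^\times,o_{\Cp}),o_L)$, and composing with the $K=L$ case of the isomorphism from the previous step yields the desired $\cK^\ast$-induced isomorphism $o_\infty^*\simeq o_L\dcroc{Z}^{\psi_q=0}$; unwinding the identifications shows the resulting map is $\mu\mapsto\sum_{n\ge 0}\mu(P_n)Z^n$.

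\emph{Where the difficulty sits.} Apart from Theorem~\ref{introkatzmap} itself, the only step requiring genuine care is the matching between ``concentrated on $o_L^\times$'' and ``$\psi_q=0$'' in the first move: since $\pi=p$ and $q=p^2$, the operator $\psi_q$ does not preserve $o_K\dcroc{Z}$, so the complementary splitting has to be set up inside $K\dcroc{Z}$ --- where $\psi_q\varphi_q=\id$ and $K\dcroc{Z}=\ker\psi_q\oplus\varphi_q(K\dcroc{Z})$ cause no trouble --- and only then intersected back with $o_K\dcroc{Z}^{\psiqint}$; one must also make the ``extended by continuity'' step for functionals concentrated on $o_L^\times$ precise, and track $o_L$-linearity throughout. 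The second move, by contrast, is purely formal, the only thing to record being that $\cC^0_{\Gal}(o_L^\times,o_{\Cp})\cong o_\infty$ is $o_L$-linear (indeed isometric), hence dualises without loss.
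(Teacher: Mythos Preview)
Your two-step plan---restrict the Katz isomorphism to the ``$o_L^\times$-part'' and then identify $\cC^0_{\Gal}(o_L^\times,o_{\Cp})$ with $o_\infty$---matches the paper's derivation in \S\ref{subequiv} (Lemma~\ref{suppsi}, Lemma~\ref{cogalinf}, Theorem~\ref{dualoinfty}), and the second move is carried out correctly.

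There is, however, a gap in the first move. You justify ``functional concentrated on $o_L^\times$ $\Rightarrow$ killed by $\psi_q$'' by saying it is ``visible on Dirac distributions $\delta_b$ and extended by continuity''. But for $K=L$, the case you need, the evaluation $\delta_b(f)=f(b)$ lands in $o_\infty$, not in $o_L$, so $\delta_b$ is \emph{not} an element of $\Hom_{o_L}(\cC^0_{\Gal}(o_L,o_{\Cp}),o_L)$ at all; and even over larger $K$ there is no density statement available for Dirac measures inside the space of Galois measures. The clean route is the equivariance $\psi_q\cK^\ast=\cK^\ast\psi_{\cC}^\ast$ (Corollary~\ref{cor:DualKatzPhiPsi}): since $\psi_{\cC}(f)$ vanishes identically on $o_L^\times$ by Definition~\ref{def:KatzPsiPhi}(3), any $\mu$ factoring through restriction to $o_L^\times$ satisfies $\psi_{\cC}^\ast(\mu)=0$, hence $\psi_q(\cK^\ast\mu)=0$. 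For the converse, if $\psi_q(\cK^\ast\mu)=0$ then injectivity of $\cK^\ast$ gives $\psi_{\cC}^\ast(\mu)=0$; since every $g\in\cC^0_{\Gal}(\pi o_L,o_{\Cp})$ equals $\psi_{\cC}(\varphi_{\cC}(g))$, this forces $\mu$ to vanish on the $\pi o_L$-summand. This replaces your rationalised splitting of $K\dcroc{Z}$ and the intersection step entirely.

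It is also worth noting that in the paper's internal logic the order is reversed: the isomorphism $\cK_1^\ast:o_\infty^\ast\to S\dcroc{Z}^{\psi_q=0}$ is established \emph{directly} (Proposition~\ref{prop:PartialConditionalKatz}, Theorem~\ref{thm:partialKatzIsoForQp2}) and then fed into Theorem~\ref{thm:generalKatz} to produce the full Katz isomorphism of Theorem~\ref{thm:katzisom}. So what you are deducing from Theorem~\ref{introkatzmap} is in fact one of the ingredients in its proof; the re-derivation in \S\ref{subequiv} that you are mirroring is a convenient repackaging, not the original argument.
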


Let $\Gamma_L^{\LT} = \Gal(L(\cG[p^\infty])/L)$ and $\Gamma_{\Qp}^{\cyc} = \Gal(\Qp(\mu_{p^\infty})/\Qp)$. In the cyclotomic setting, Perrin-Riou showed \cite[Lemma 1.5]{PR} that $\Zp \dcroc{Z}^{\psi_p=0}$ is a free $\Zp \dcroc{\Gamma_{\Qp}^{\cyc}}$-module of rank $1$. She also raised the question of what happens in the present setting. Using Theorem \ref{introdualoinfty}, we show in Corollary \ref{PerrinRiou} that $o_L \dcroc{Z}^{\psi_q=0}$ is in fact \emph{not} a free $o_L \dcroc{\Gamma_L^{\LT}}$-module of rank $1$.

We can also apply the isomorphism $\Hom_{o_L}(o_\infty, o_K) \simeq o_K \dcroc{Z}^{\psi_q=0}$ to $K = L_\infty$, and we get $\Hom_{o_L}(o_\infty, o_\infty) \simeq o_\infty \dcroc{Z}^{\psi_q=0}$. The natural action of $G_L$ on the left is the twisted Galois action on the right. Since $\Lambda_L(\frX) = o_{\Cp} \dcroc{Z}^{G_L,*} = o_\infty \dcroc{Z}^{G_L,*}$, we get the following result (Theorem \ref{contgalend}): 

\begin{theorem}
\label{introcontgalend}
We have $\End^{G_L}_{o_L}(o_\infty) \simeq \Lambda_L(\frX)^{\psi_q=0}$.
\end{theorem}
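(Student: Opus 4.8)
The plan is to deduce Theorem~\ref{introcontgalend} from Theorem~\ref{introdualoinfty} by a base change argument along $L \subset L_\infty$, followed by taking $G_L$-invariants. First I would record the isomorphism provided just before the statement: applying $\cK^\ast$ with $K = L_\infty$ gives an isomorphism $\Hom_{o_L}(o_\infty, o_\infty) \simeq o_\infty \dcroc{Z}^{\psi_q=0}$. The point is that the proof of Theorem~\ref{introkatzmap}, and hence of the derived isomorphism $\Hom_{o_L}(o_\infty, o_K) \simeq o_K \dcroc{Z}^{\psi_q=0}$, works for any closed subfield $K$ of $\Cp$ containing $L$, so in particular for $K = L_\infty$. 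Here I am using $\cC^0_{\Gal}(o_L^\times, o_{\Cp}) \simeq o_\infty$ (via evaluation at $1$, say) to identify $\Hom_{o_L}(\cC^0_{\Gal}(o_L^\times, o_{\Cp}), o_{L_\infty})$ with $\Hom_{o_L}(o_\infty, o_{L_\infty}) = \Hom_{o_L}(o_\infty, o_\infty)$, the last equality because $o_\infty$ is $p$-adically complete and any $o_L$-linear map out of it lands in a bounded, hence closed, $o_L$-submodule — one should check it actually lands in $o_\infty$, which follows since the image of a topological generator lies in $o_\infty$ and the map is continuous.

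The next step is to identify the $G_L$-actions on both sides and check the isomorphism is $G_L$-equivariant. On the left, $G_L$ acts on $\Hom_{o_L}(o_\infty, o_\infty) = \End_{o_L}(o_\infty)$ by $(g \cdot \phi)(x) = g(\phi(g^{-1} x))$; taking invariants gives exactly $\End^{G_L}_{o_L}(o_\infty)$, the $o_L$-linear endomorphisms commuting with the Galois action. On the right, I claim the corresponding action on $o_\infty \dcroc{Z}^{\psi_q=0}$ is the twisted Galois action $g \mapsto g,*$, i.e. $g(\sum a_n Z^n) = \sum g(a_n) [\tau(g)^{-1}](Z)^n$ — this is asserted in the sentence preceding the theorem and is the natural thing to expect since the Katz map is built from the $P_n(\Omega)$ and $\Omega$ transforms by $\tau$. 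I would verify equivariance by unwinding $\cK^\ast$ on Dirac-type elements or on the generators coming from $o_\infty \simeq \cC^0_{\Gal}(o_L^\times, o_{\Cp})$: a functional $\mu$ and its twist $g \cdot \mu$ should have $\cK^\ast(g\cdot\mu) = (g,*)(\cK^\ast(\mu))$, which reduces to the cocycle relation for $\Omega$ and the fact that $\tau$-equivariance of $f$ is built into $\cC^0_{\Gal}$. Then taking $G_L$-invariants of the equivariant isomorphism yields
\[
\End^{G_L}_{o_L}(o_\infty) \simeq \bigl(o_\infty \dcroc{Z}^{\psi_q=0}\bigr)^{G_L,*} = \bigl(o_\infty \dcroc{Z}^{G_L,*}\bigr)^{\psi_q=0},
\]
the last equality because $\psi_q$ commutes with the twisted Galois action (as recalled in $\S$\ref{PsiSection}). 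Finally, using $\Lambda_L(\frX) = o_{\Cp}\dcroc{Z}^{G_L,*} = o_\infty\dcroc{Z}^{G_L,*}$ (from the uniformization section, since $o_\infty^{G_L} = L$ already pins down the invariants and $o_\infty \dcroc{Z}$ contains $o_L\dcroc{Z}$ with the relevant torsion points), the right-hand side is $\Lambda_L(\frX)^{\psi_q=0}$, giving the claim.

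The main obstacle I expect is the careful bookkeeping in the base change step: one must be sure that the isomorphism $\Hom_{o_L}(o_\infty, o_K) \simeq o_K\dcroc{Z}^{\psi_q=0}$ genuinely holds for the non-discretely-valued field $K = L_\infty$ and not merely for complete discretely valued $K$, and that passing to $G_L$-invariants commutes with the constructions — i.e. that there is no subtlety coming from completions or from the distinction between $o_\infty$ (the $\ker\tau$-invariants of $o_{\Cp}$) and the completion of $o_L(\cG[p^\infty])$, though the excerpt notes these coincide when $\pi = p$. A secondary point to be careful about is that $\End^{G_L}_{o_L}(o_\infty)$ really is the full ring of $o_L$-linear $G_L$-equivariant endomorphisms with no continuity hypothesis needed — one should observe that $o_L$-linearity plus $G_L$-equivariance forces continuity here because $o_\infty$ is topologically finitely generated over $o_L\dcroc{\Gamma_L^{\LT}}$ (or argue directly via the $p$-adic topology), so that the identification with $\Hom_{o_L}(o_\infty, o_\infty)$ appearing in the Katz picture is the correct object. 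Everything else is a formal consequence of Theorem~\ref{introkatzmap} and the commutation relations already established.
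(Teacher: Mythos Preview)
Your proposal is correct and follows essentially the same line as the paper. In fact, your argument matches almost verbatim the outline given in \S\ref{KatzApps} of the introduction: apply the Katz isomorphism with coefficients $o_\infty$, use $\cC^0_{\Gal}(o_L^\times,o_{\Cp}) \simeq o_\infty$, check $G_L$-equivariance, and take invariants. The proof in the body (Corollary~\ref{olkpz} and Proposition~\ref{iwasprop}) runs the same argument with $S=o_{\Cp}$ instead of $S=o_\infty$ before taking invariants, which is an immaterial difference since $o_{\Cp}\dcroc{Z}^{G_L,*}=o_\infty\dcroc{Z}^{G_L,*}=\Lambda_L(\frX)$.

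Your stated worries are harmless: Theorem~\ref{thm:katzisom} is proved for \emph{any} $\pi$-adically complete flat $o_L$-algebra $S$, and $o_\infty$ (the ring of integers of the complete field $L_\infty$) qualifies, so no discreteness of the valuation is needed. The equivariance check you sketch is exactly Proposition~\ref{iwasprop}, and the commutation of $\psi_q$ with the twisted $G_L$-action is recorded just before \S\ref{subextraprop}. No separate continuity hypothesis on elements of $\End^{G_L}_{o_L}(o_\infty)$ is needed for the statement as written.
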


Recall that $o_L \dcroc{o_L^\times} \subset \Lambda_L(\frX)^{\psi_q=0}$. If $a \in o_L^\times$, then $\delta_a \in o_L \dcroc{o_L^\times}$ acts on $o_\infty$ by an element $g \in G_L$ such that $\tau(g) = a$. Since $\Lambda_L(\frX) = o_L \dcroc{o_L}$ if and only if $\Lambda_L(\frX)^{\psi_q=0} = o_L \dcroc{o_L^\times}$, we get the following criterion (Theorem \ref{critcge}):

\begin{theorem}
\label{introcritcge}
We have $\Lambda_L(\frX) = o_L \dcroc{o_L}$ if and only if every continuous $L$-linear and $G_L$-equivariant map $f : L_\infty \to L_\infty$ comes from the Iwasawa algebra $L \otimes_{o_L} o_L \dcroc{\Gamma_L^{\LT}}$.
\end{theorem}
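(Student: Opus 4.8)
The plan is to deduce Theorem~\ref{introcritcge} directly from Theorem~\ref{introcontgalend} together with the reduction recorded in Lemma~\ref{psizall}. First I would recall that, by Lemma~\ref{psizall}, the equality $\Lambda_L(\frX) = o_L\dcroc{o_L}$ is equivalent to the equality $\Lambda_L(\frX)^{\psi_q=0} = o_L\dcroc{o_L^\times}$ of the $\psi_q=0$-parts; this replaces the question about all of $\Lambda_L(\frX)$ by a question about a single graded piece. Next, Theorem~\ref{introcontgalend} gives an isomorphism $\End^{G_L}_{o_L}(o_\infty) \simeq \Lambda_L(\frX)^{\psi_q=0}$, so the right-hand condition becomes a statement entirely about $G_L$-equivariant $o_L$-linear endomorphisms of $o_\infty$.

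The crux is then to identify, under this isomorphism, the submodule $o_L\dcroc{o_L^\times}$ of $\Lambda_L(\frX)^{\psi_q=0}$ with the endomorphisms of $o_\infty$ coming from the Iwasawa algebra $o_L\dcroc{\Gamma_L^{\LT}}$. Here I would use the remark made just before the statement of Theorem~\ref{introcritcge}: for a unit $a \in o_L^\times$, the Dirac measure $\delta_a \in o_L\dcroc{o_L^\times}$ acts on $o_\infty$ through any $g \in G_L$ with $\tau(g) = a$, and since $\ker\tau$ acts trivially on $o_\infty$ this action factors through $G_L/\ker\tau \simeq \Gamma_L^{\LT}$ (using $\pi = p$, so that $L_\infty$ is the completion of $L(\cG[p^\infty])$ and $\tau$ and $\chi_\pi$ cut out the same field). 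Thus $\delta_a \mapsto [\tau^{-1}(a)] \in o_L\dcroc{\Gamma_L^{\LT}}$ under the natural map $o_L\dcroc{o_L^\times} \to o_L\dcroc{\Gamma_L^{\LT}}$ induced by $\tau$, and one checks this map is an isomorphism: $\tau$ restricted to a section of $G_L \to \Gamma_L^{\LT}$ identifies $o_L^\times$ with $\Gamma_L^{\LT}$ as profinite groups (both are identified with $o_L^\times$ via $\chi_\pi$, respectively $\tau$, up to the cyclotomic twist which is an automorphism of $o_L^\times$). So the inclusion $o_L\dcroc{o_L^\times} \hookrightarrow \End^{G_L}_{o_L}(o_\infty)$ has image exactly the endomorphisms induced by the Iwasawa algebra action of $o_L\dcroc{\Gamma_L^{\LT}}$ on $o_\infty$.

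Finally I would tensor with $L$ over $o_L$: the isomorphism $\End^{G_L}_{o_L}(o_\infty) \simeq \Lambda_L(\frX)^{\psi_q=0}$ and the identification of submodules above show that $\Lambda_L(\frX)^{\psi_q=0} = o_L\dcroc{o_L^\times}$ if and only if every $G_L$-equivariant $o_L$-linear endomorphism of $o_\infty$ comes from $o_L\dcroc{\Gamma_L^{\LT}}$, and then inverting $p$ restates this as: every continuous $L$-linear $G_L$-equivariant map $f : L_\infty \to L_\infty$ comes from $L\otimes_{o_L} o_L\dcroc{\Gamma_L^{\LT}}$. One small point to be careful about is that a priori $\End^{G_L}_{o_L}(o_\infty)\otimes_{o_L} L$ could be strictly smaller than $\End^{G_L}_{L}(L_\infty)$ (continuity and $L$-linearity of $f$ on $L_\infty$ versus extendability of an integral endomorphism); but since $o_\infty$ is the unit ball of $L_\infty$ and $f$ is bounded (being $L$-linear on a Banach space and $G_L$-equivariant, hence of operator norm $1$ after rescaling), any such $f$ maps $o_\infty$ into $p^{-N}o_\infty$ for some $N$, so $p^N f$ restricts to an element of $\End^{G_L}_{o_L}(o_\infty)$, giving the needed surjectivity after inverting $p$. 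The main obstacle is not any single step but assembling these identifications cleanly — in particular pinning down the comparison between $o_L\dcroc{o_L^\times}$ and $o_L\dcroc{\Gamma_L^{\LT}}$ compatibly with their respective actions on $o_\infty$ — which is bookkeeping with the cyclotomic twist $\tau = \chi_{\cyc}\chi_\pi^{-1}$ rather than a genuine difficulty.
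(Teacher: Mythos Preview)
Your overall strategy matches the paper's: reduce to the $\psi_q=0$ part via Lemma~\ref{psizall}, invoke Theorem~\ref{introcontgalend} to identify $\Lambda_L(\frX)^{\psi_q=0}$ with $\End^{G_L}_{o_L}(o_\infty)$, and match $o_L\dcroc{o_L^\times}$ with the image of $o_L\dcroc{\Gamma_L^{\LT}}$. This is exactly the route taken in Corollary~\ref{iwascrit} and Theorem~\ref{critcge}.

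There is, however, a genuine gap in your passage between the integral and rational statements. The ``small point'' you address --- that any continuous $L$-linear $G_L$-equivariant $f : L_\infty \to L_\infty$ is bounded, so $p^N f$ restricts to $\End^{G_L}_{o_L}(o_\infty)$ for some $N$ --- establishes only the forward direction: if every integral endomorphism comes from $o_L\dcroc{\Gamma_L^{\LT}}$, then every rational $f$ comes from $L \otimes_{o_L} o_L\dcroc{\Gamma_L^{\LT}}$. The \emph{reverse} direction requires more. Assuming only that every rational $f$ comes from $L \otimes_{o_L} o_L\dcroc{\Gamma_L^{\LT}}$, take an integral $T \in \End^{G_L}_{o_L}(o_\infty)$; its extension to $L_\infty$ then agrees with the action of some $\nu \in L \otimes_{o_L} o_L\dcroc{\Gamma_L^{\LT}}$, and you must show $\nu \in o_L\dcroc{\Gamma_L^{\LT}}$. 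Transported through the isomorphisms, this is the assertion that $\Lambda_L(\frX) \cap (L \otimes_{o_L} o_L\dcroc{o_L}) = o_L\dcroc{o_L}$, i.e.\ that $\Lambda_L(\frX)/o_L\dcroc{o_L}$ is $\pi$-torsion-free. This is not formal: it is Corollary~\ref{tensint}, which the paper proves via Lemma~\ref{prime-ideal} (the identity $\cO_L^{<1}(\frX) \cap o_L\dcroc{o_L} = \pi o_L\dcroc{o_L}$) using a structural result on prime ideals in mod-$p$ Iwasawa algebras from~\cite{Ard12}. The paper's proof of Theorem~\ref{critcge} consists of a single sentence invoking precisely Corollary~\ref{tensint}, so this is the one substantive ingredient missing from your argument.
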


In the cyclotomic case, Tate's normalized trace maps $T_n : \Qp^{\cyc} \to \Qp(\mu_{p^n})$ are examples of continuous $\Qp$-linear and $G_{\Qp}$-equivariant maps $f : \Qp^{\cyc} \to \Qp^{\cyc}$ that do not come from the Iwasawa algebra $L \otimes_{o_L} o_L \dcroc{\Gamma_{\Qp}^{\cyc}}$. The lack of normalized trace maps in the Lubin--Tate setting is a source of many complications. In his PhD thesis, Fourquaux considered continuous $L$-linear and $G_L$-equivariant maps $f : L_\infty \to L_\infty$. We generalize some of Fourquaux's results: we prove in Proposition \ref{foulocalg} that if $f \neq 0$ is such a map, then there exists $n \geq 0$ such that $f(L_\infty)$ contains a basis of the $L_n$-vector space $L_n[\log \Omega]$, where $L_n = L(\cG[p^n])$. In particular, $f$ necessarily has a very large image, so there can be no analogue of the equivariant trace maps $T_n$.

The Katz isomorphism also allows us to prove several results about the span of the polynomials $P_n$ in $\cC^0_{\Gal}(o_L,\Cp)$. 
Recall that by \cite[Theorem 4.7]{ST}, every Galois-continuous locally analytic function on $o_L$ can be expanded as an overconvergent series in the $P_n$. One may then wonder about the existence of such an expansion for Galois-continuous functions. Let $\cC^0(L)$ denote the set of sequences $\{ \lambda_n \}_{n \geq 0}$ with $\lambda_n \in L$ and $\lambda_n \to 0$. The Katz isomorphism, and computations involving $\psi_q$, imply the following (Proposition \ref{pndense}, Corollary \ref{pnindep}, and Corollary \ref{notbij}):

\begin{theorem}
\label{intropncogal}
The map $\cC^0(L) \to \cC^0_{\Gal}(o_L,\Cp)$, given by $\{ \lambda_n \}_{n \geq 0} \mapsto \left[a \mapsto \sum\limits_{n = 0}^\infty \lambda_n \cdot P_n(a \Omega) \right]$ is injective, has dense image, but is not surjective. 
\end{theorem}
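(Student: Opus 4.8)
The plan is to establish the three assertions --- injectivity, density of the image, and failure of surjectivity --- separately, using the Katz isomorphism (Theorem~\ref{introkatzmap}) as the main bridge between the combinatorial side (sequences $\{\lambda_n\}$) and the functional-analytic side (Galois-continuous functions). Throughout we work with $L = \bQ_{p^2}$ and $\pi = p$, so that $\cK^\ast$ identifies $\Hom_{o_L}(\cC^0_{\Gal}(o_L,o_{\Cp}),o_K)$ with $o_K \dcroc{Z}^{\psiqint}$; applying this with $K = L$ and passing to the span of the $P_n$ will be the recurring device. The map in question sends $\{\lambda_n\}_{n \geq 0}$ to the function $a \mapsto \sum_n \lambda_n P_n(a\Omega)$, which makes sense because $P_n(a\Omega) \in o_{\Cp}$ for all $a \in o_L$ (noted in \S\ref{PnSectIntro}) and $\lambda_n \to 0$; that it lands in $\cC^0_{\Gal}(o_L,\Cp)$ follows from $g(\Omega) = \tau(g)\Omega$, which gives $g(P_n(a\Omega)) = P_n(\tau(g) a \Omega)$.

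For injectivity, I would dualise: a sequence $\{\lambda_n\}$ in the kernel gives a function that vanishes identically, hence is killed by every $\mu \in \Hom_{o_L}(\cC^0_{\Gal}(o_L,o_{\Cp}),o_K)$ after scaling into the integral lattice; the pairing $\mu \leftrightarrow \sum_n \mu(P_n) Z^n$ of the dual Katz map, together with the fact that $\{P_n\}$ ``sees'' enough functionals (concretely, one uses that the image of $\cK^\ast$ is all of $o_K\dcroc{Z}^{\psiqint}$, which contains in particular every $Z^n$ up to scaling, so one can extract each $\lambda_n$), forces $\lambda_n = 0$ for all $n$. Alternatively, and perhaps more cleanly, one argues by evaluating at $a = p^k u$ for units $u$ and letting $k \to \infty$, peeling off coefficients using the leading-term structure $P_n(Y) = Y^n/n! + \cdots$; I expect the dual formulation to be the more robust route and would present that. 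For the density statement, the target $\cC^0_{\Gal}(o_L,\Cp)$ carries the sup-norm topology, and I would show the closure of the image contains a spanning set: since locally analytic Galois-continuous functions are dense in $\cC^0_{\Gal}$ (they are dense already in $\cC^0(o_L,\Cp)$ by Mahler/Amice-type theory, and the Galois-equivariant ones are dense in the Galois-equivariant continuous ones), and since by \cite[Theorem 4.7]{ST} every Galois-continuous locally analytic function is an overconvergent --- in particular convergent --- series in the $P_n(a\Omega)$, hence lies in the image, density follows. One must check that the relevant overconvergent expansions, after truncation, give sequences in $\cC^0(L)$, i.e. that the coefficients are in $L$ and tend to $0$; the coefficient field is $L$ because the $P_n$ have coefficients in $L$ and Galois-equivariance pins the expansion coefficients down to $L = \Cp^{G_L}$, and decay is automatic from convergence.

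The failure of surjectivity is where the real content lies, and it is the step I expect to be the main obstacle. The image consists exactly of those Galois-continuous functions admitting a \emph{convergent} (not merely overconvergent, but globally convergent on $o_L$) expansion in the $P_n$ with null coefficient sequence; the claim is that some Galois-continuous function is not of this form. My plan is to produce the obstruction through the Katz isomorphism: the dual Katz map identifies functionals with $o_K\dcroc{Z}^{\psiqint}$, and one should exhibit a function $f \in \cC^0_{\Gal}(o_L,\Cp)$ for which the associated would-be coefficient sequence $\lambda_n$ computed formally from $f$ fails to tend to $0$ --- equivalently, using the pairing, one finds a functional $\mu$ in the dual that detects unbounded ``$P_n$-coefficients'' of $f$. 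Concretely one can leverage Theorem~\ref{introvalpkgen}: the valuations $\val_\pi(P_{x_m}(\Omega)) = y_m$ with $y_m \to -\infty$ show that the $P_n(\Omega)$ themselves are highly non-uniform in size, and a diagonal/Baire-category argument --- the image is a countable union of compact sets (images of balls in $\cC^0(L)$), hence meagre if it were proper and closed, but $\cC^0_{\Gal}(o_L,\Cp)$ is a Banach space over $L$, so one needs the image to not be all of it --- can be made to work by explicitly building $f$ whose restriction to the cosets $p^k o_L^\times$ oscillates in a way incompatible with any null expansion. I would most likely phrase the final contradiction via Theorem~\ref{introdualoinfty} or the surjectivity clause of Theorem~\ref{introkatzmap}: if the map were onto, then combined with injectivity it would be a bijection $\cC^0(L) \xrightarrow{\sim} \cC^0_{\Gal}(o_L,\Cp)$, hence (open mapping theorem) a topological isomorphism, and one derives a contradiction from a quantitative estimate --- the norms $\|P_n(\cdot\,\Omega)\|_{\sup}$ do not stay bounded away from $0$ and $\infty$ in the required uniform way, precisely because of the valuation computation in Theorem~\ref{introvalpkgen}. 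Pinning down that last estimate cleanly, rather than the soft category argument, is the part I would spend the most care on.
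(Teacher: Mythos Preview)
Your arguments for injectivity and density are essentially correct and match the paper. For injectivity you correctly identify the mechanism: by the Katz isomorphism, each $Z^k$ lies in $o_L\dcroc{Z}^{\psiqint}$ after multiplication by some $p^{n(k)}$ (this follows from the nilpotence of $\psi_q$ on polynomials, i.e.\ $\psi_q(Z^k) \in L[Z]_{k-1}$), and the corresponding measure extracts the $k$-th coefficient. For density, your route via approximation by locally analytic Gal-continuous functions and \cite[Theorem 4.7]{ST} is exactly the alternative proof the paper records; the paper's primary proof is shorter---if the closure were proper one finds a nonzero measure killing all $P_n$, contradicting injectivity of $\cK^\ast$---but either works.

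The non-surjectivity argument, however, has a genuine gap. First, a factual slip: the valuations $y_m = \val_\pi(P_{x_m}(\Omega))$ tend to $0$, not $-\infty$ (this is forced by $\|\Delta_1 - 1\| = 1$), so no contradiction comes directly from that. More importantly, your open-mapping/Baire strategy never crystallises into a concrete obstruction, and the ``quantitative estimate'' you allude to is not supplied. The paper's argument is sharper and uses the $\psi_q$-operator rather than valuation estimates: if the map were a bijection, then dualising would identify the continuous dual of $\cC^0_{\Gal}(o_L,\Cp)$ with all of $o_L\dcroc{Z}[1/p]$ via $\mu \mapsto \sum_n \mu(P_n) Z^n$; but the Katz isomorphism says this image is only $o_L\dcroc{Z}^{\psiqint}[1/p]$. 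The contradiction comes from exhibiting an explicit element of $o_L\dcroc{Z}$ outside $o_L\dcroc{Z}^{\psiqint}[1/p]$, namely $f(Z) = \sum_{k \geq 0} Z^{q^k - 1}$: a direct computation modulo $p$ (using $\psi_p = p\psi_q$ and the formula $\psi_p(Z^{qk + q - 1}) \equiv Z^k \pmod p$) shows $\psi_p(f) \equiv f \pmod p$, whence $\psi_q^n(f) \in p^{-n} f + p^{-(n-1)} o_L\dcroc{Z}$ has unbounded denominators. This explicit $\psi_q$-divergent element is the missing idea in your proposal.
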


The same methods imply the following precise estimates for those elements of $\cC^0_{\Gal}(o_L,\Cp)$ that are given by a polynomial function $a \mapsto Q(a\Omega)$ with $Q(T) \in L[T]$. See prop \ref{hupk} and coro \ref{hnotu}.

\begin{theorem}
\label{introcoeffpn}
Assume that $Z$ is a coordinate on $\cG$ such that $[p](Z)=Z^q+pZ$.
Let $Q(T) \in L[T]$ be a polynomial such that $Q(a \Omega) \in o_{\Cp}$ for all $a \in o_L$, and write $Q(T) = \sum_{n=0}^{\deg Q} \lambda_n \cdot P_n(T)$. 
\begin{enumerate}
\item We have $\lambda_n \in p^{-k} o_L$ if $n \leq q^k$.
\item There exists such a polynomial $Q$ for which $\lambda_{q^k-1} = p^{-k}$.
\end{enumerate}
\end{theorem}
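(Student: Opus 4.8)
\textbf{Proof proposal for Theorem~\ref{introcoeffpn}.}
The plan is to combine the Katz isomorphism (Theorem~\ref{introkatzmap}, available here since $L=\bQ_{p^2}$) with the explicit valuation computations of Theorem~\ref{introvalpkgen}. First I would reinterpret the hypothesis: the condition $Q(a\Omega)\in o_{\Cp}$ for all $a\in o_L$ says exactly that $a\mapsto Q(a\Omega)$ lies in $\cC^0_{\Gal}(o_L,o_{\Cp})$, and expanding $Q(T)=\sum_n\lambda_n P_n(T)$ corresponds, under the dual Katz map, to the power series $\cK^\ast$ sends the associated functional to, namely $\sum_n\lambda_n Z^n\in o_K\dcroc{Z}^{\psiqint}$ (for a suitable finite $K$). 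So the content of (1) is a statement about which power series can lie in $o_K\dcroc{Z}^{\psiqint}$: if $\sum_n\lambda_n Z^n$ is such a series with $\lambda_n=0$ for $n>\deg Q$, then $\vp(\lambda_n)\ge -k$ whenever $n\le q^k$. To prove this I would run the $\psi_q$-stability condition against the polynomials $P_j$: since $\psi_q^r(f)\in o_K\dcroc{Z}$ for all $r$, pairing with coordinates and using that $\psi_q$ acts on the $Z$-adic expansion in a way controlled by the $\sigma_{i,n}$ and by $\val_\pi(P_n(\Omega))$, one extracts congruences that force the $\lambda_n$ into $p^{-k}o_L$. The cleanest route is probably: apply the functional to $P_{q^k}$ itself, note $\mu(P_{q^k})\in o_K$ because $\cK^\ast(\mu)\in o_K\dcroc Z$, then observe that the coefficient of $Z^{q^k}$ in $Q(a\Omega)$ when expanded via $P_n(a\Omega)=\sum_{i\le n}\sigma_{i,n}(a)P_i(\Omega)$ isolates $\lambda_n$ with denominator governed by $\val_\pi(P_n(\Omega))$, and Theorem~\ref{introvalpkgen} gives $\val_\pi(P_{q^k}(\Omega))=y_k$, which for $L=\bQ_{p^2}$ (so $e=1$, $q=p^2$) is $\tfrac1{p^{k-1}(q-1)}$ — small, hence the only denominators that can appear are powers of $p$ bounded by $p^k$.

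For part (2), the task is to exhibit a $Q$ with $\lambda_{q^k-1}=p^{-k}$, i.e.\ to show the bound in (1) is attained at $n=q^k-1$. Here I would argue on the Katz side: it suffices to produce $f(Z)\in o_K\dcroc{Z}^{\psiqint}$, polynomial in $Z$ of degree $q^k-1$, whose $(q^k-1)$-st coefficient has valuation exactly $-k$. A natural candidate is built from the torsion-point / Newton-polygon description of $G(Z)-1$ used to prove Theorem~\ref{introvalpkgen}: iterating $\psi_q$ on $G([a](Z))$ for well-chosen $a$, or taking an appropriate $o_L$-combination of the $G([a](Z))$ supported so that the $\psi_q$-orbit stays integral, one should hit the extremal coefficient. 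Concretely, with the normalization $[p](Z)=Z^q+pZ$ one can compute $\psi_q$ quite explicitly on monomials, and I expect that $\psi_q^k$ applied to a single $G([a](Z))$ with $a\in o_L^\times$, suitably renormalized, produces the required series; then translating back through $\cK^\ast$ and reading off the expansion in the $P_n$ gives the polynomial $Q$ with the claimed leading-in-the-$P$-basis coefficient $\lambda_{q^k-1}=p^{-k}$.

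The main obstacle I anticipate is bookkeeping the interaction between the two expansions — the $Z$-adic one and the $P_n$-basis one — precisely enough to pin the valuation of $\lambda_{q^k-1}$ to the \emph{exact} value $-k$ rather than merely $\ge -k$. The inequality in (1) only needs upper bounds on denominators coming from $\val_\pi(P_n(\Omega))\ge y_n$ together with integrality of $\psi_q^r$; but the sharpness in (2) requires controlling a specific coefficient from below, which means I cannot afford to lose anything in the triangular change of basis $P_n(a\Omega)=\sum_{i\le n}\sigma_{i,n}(a)P_i(\Omega)$, and I will need the precise leading behaviour of $\sigma_{q^k-1,\,q^k-1}$ and of $\val_\pi(P_{q^k-1}(\Omega))$. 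I would handle this by working modulo higher powers of $p$ and using that, for $n=q^k-1$, the Newton polygon of $G(Z)-1$ forces $\val_\pi(P_{q^k-1}(\Omega))$ to be as negative as it can be, so that the integrality of the $\psi_q$-iterates is \emph{exactly} saturated — giving both the upper bound and its attainment in one stroke.
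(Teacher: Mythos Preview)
Your proposal has the duality running in the wrong direction, and this is not a minor bookkeeping issue but a genuine gap. The hypothesis puts $a\mapsto Q(a\Omega)$ in $\cC=\cC^0_{\Gal}(o_L,o_{\Cp})$, i.e.\ on the \emph{function} side; the dual Katz map $\cK^\ast$ takes $\cC^\ast$ (Galois measures) to $o_L\dcroc{Z}^{\psiqint}$, not functions. There is no natural functional attached to $Q$, and the series $\sum_n\lambda_n Z^n$ has no reason to lie in $o_L\dcroc{Z}^{\psiqint}$: indeed the whole point of the theorem is that the $\lambda_n$ need not be integral. Your part (2) formulation makes this visible: you ask for $f\in o_K\dcroc{Z}^{\psiqint}$ with a coefficient of valuation exactly $-k$, which is impossible since $f\in o_K\dcroc{Z}$.

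The paper's argument goes the other way. The coefficient $\lambda_i$ is extracted from $Q$ by the linear form $\mu_i\in\cC^\ast_L$ determined by $\mu_i(P_j)=\delta_{ij}$; under $\cK^\ast$ this corresponds to the monomial $Z^i$. So part~(1) reduces to showing $p^kZ^i\in o_L\dcroc{Z}^{\psiqint}$ for $i\le q^k$, which is an explicit $\psi_q$-computation on monomials (using the choice $[p](Z)=Z^q+pZ$ and \cite[Prop.~2.2]{FX}): one checks $\psi_q(p^k\cdot o_L[Z]_{q^k})\subset p^{k-1}\cdot o_L[Z]_{q^{k-1}}$ and iterates. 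For part~(2) one shows the sharp statement $p^{k-1}Z^{q^k-1}\notin o_L\dcroc{Z}^{\psiqint}$ by computing $\psi_q(Z^{q^k-1})\in p^{-1}Z^{q^{k-1}-1}+o_L\dcroc{Z}^{\psiqint}$ via $\psi_q(1/Z)=1/(pZ)$ and $\varphi_q(Z^{q^{k-1}})\equiv Z^{q^k}\bmod p^kZ o_L[Z]$; then duality produces $f\in\cC$ with $\mu_{q^k-1}(f)\in p^{-k}o_L^\times$, and the density of the $P_n$ (Proposition~\ref{pndense}) lets one approximate $f$ by a polynomial $Q$. The Newton polygon of $\Delta_1-1$ plays no role here: the input is purely the combinatorics of $\psi_q$ on monomials, not the valuations $v_\pi(P_n(\Omega))$.
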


\subsection{Other criteria}
The following two criteria for our main question may be of interest. 

Let $\partial : \Cp\dcroc{Z} \to \Cp\dcroc{Z}$ denote the invariant derivative $\partial = \log_{\LT}'(Z)^{-1} \cdot d/dZ$. It does not commute with the twisted action of $G_L$, but $D = \Omega^{-1} \cdot \partial$ does. We get a map $D : \cO_{\Cp}(\frX) \to \cO_{\Cp}(\frX)$ that does not preserve $\Lambda_{\Cp}(\frX)$ if $L \neq \Qp$ since $\vp(\Omega^{-1}) < 0$. Note that $D(\delta_a) = a \cdot \delta_a$ if $a \in o_L$, so that $D$ does preserve $o_L \dcroc{o_L}$. We have the following result.

\begin{theorem}
\label{introderivcrit}
If $L=\bQ_{p^2}$, then $\Lambda_L(\frX) = o_L \dcroc{o_L}$ if and only if $D^{q-1}(\Lambda_L(\frX)) \subset \Lambda_L(\frX)$.
\end{theorem}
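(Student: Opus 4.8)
The plan is to reduce this to Theorem \ref{intropsicrit}, the criterion that $\Lambda_L(\frX) = o_L \dcroc{o_L}$ if and only if $\psi_q(\Lambda_L(\frX)) \subset \Lambda_L(\frX)$. One direction is immediate: if $\Lambda_L(\frX) = o_L \dcroc{o_L}$, then since $D$ preserves $o_L \dcroc{o_L}$ (as $D(\delta_a) = a\delta_a$), all powers $D^k$ preserve $\Lambda_L(\frX)$, in particular $D^{q-1}$. So the content is the converse: assuming $D^{q-1}(\Lambda_L(\frX)) \subset \Lambda_L(\frX)$, deduce $\psi_q(\Lambda_L(\frX)) \subset \Lambda_L(\frX)$ (and then invoke Theorem \ref{intropsicrit}).

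First I would record the key identity relating $\psi_q$, $\varphi_q$ and the invariant derivative. Since $\partial$ is the invariant derivative for $\cG$, it satisfies $\partial \circ \varphi_q = \pi \cdot \varphi_q \circ \partial$ (differentiating $f([\pi](Z))$), hence for $D = \Omega^{-1}\partial$ one gets a twisted commutation with $\varphi_q$; correspondingly $\psi_q \circ \partial = \pi^{-1} \partial \circ \psi_q$, so $\psi_q D = (\tau\text{-twist}) D \psi_q$ up to the scalar coming from $\Omega$, and in any case $\psi_q$ intertwines $D^{q-1}$ with a scalar multiple of $D^{q-1}\psi_q$. The crucial point, specific to $L = \bQ_{p^2}$ where $q = p^2$, is the classical formula expressing $\varphi_q \circ \psi_q$ (the ``$q^{-1}\operatorname{Tr}$'' projector onto $\varphi_q(\cO_{\Cp}(\frX))$) in terms of a differential operator of order $q-1$ in the invariant derivative. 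Concretely, for the Lubin--Tate group with $[\pi](Z) = Z^q + \pi Z$, there is an expression of the trace $\operatorname{Tr}_{\cO_{\Cp}(\frX)/\varphi_q(\cO_{\Cp}(\frX))}(f)$, or equivalently of the difference $q\cdot\varphi_q\psi_q(f) - \operatorname{(obvious\ part)}$, as $\varphi_q$ applied to an explicit $o_L$-linear combination of $D^j(f)$ for $0 \le j \le q-1$, with the top term $j = q-1$ carrying the ``denominator'' $\Omega^{-(q-1)}$ that prevents $\psi_q$ from preserving $\Lambda_{\Cp}(\frX)$. I would derive this by working on the $\Cp$-side, where $\cO_{\Cp}(\frX) \cong o_{\Cp}\dcroc{Z}$ and $\varphi_q(\cO_{\Cp}(\frX)) \cong o_{\Cp}\dcroc{[\pi](Z)}$, and computing the dual basis to $1, Z, \dots, Z^{q-1}$ via the different, using that $[\pi]'(Z) = \pi$ plus a term that in the invariant coordinate becomes a unit times $\partial$.

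The upshot of that formula is an identity of the shape $\psi_q(f) = \sum_{j=0}^{q-1} c_j \cdot \psi_q(D^j f)$ (or $\varphi_q\psi_q(f) = \sum_j \varphi_q(c_j D^j f)$ before applying $\psi_q$-injectivity of $\varphi_q$), valid for $f \in \cO_{\Cp}(\frX)$, with coefficients $c_j \in o_{\Cp}$ \emph{except} that the $j = q-1$ coefficient involves $\Omega^{-(q-1)}$ and the intermediate $c_j$ are controlled by $\vp(\Omega)$. Now suppose $f \in \Lambda_L(\frX)$. The terms with $j < q-1$ involve only $D^j f$ with $j \le q-2$; here I would run an induction on $q$ — more precisely, I would first prove the analogous statement with $\psi_q$ replaced by the partial projectors, or simply note that $D(\Lambda_L(\frX)) \subset \Lambda_L(\frX)$ follows a posteriori, so it is cleaner to set up the induction so that the hypothesis $D^{q-1}(\Lambda_L(\frX)) \subset \Lambda_L(\frX)$ together with the lower-order terms already lying in $\Lambda_L(\frX)$ forces $\psi_q(f) \in o_{\Cp}\dcroc{Z}$; since $\psi_q$ commutes with the twisted $G_L$-action, $\psi_q(f) \in \Lambda_L(\frX)$. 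The point where $L = \bQ_{p^2}$ is essential is twofold: the differential-operator formula for the trace has a clean closed form only in the one-dimensional-dual-$p$-divisible-group case, and the valuations $\vp(P_{q^k}(\Omega))$ (Theorem \ref{introvalpkgen}, which for $L = \bQ_{p^2}$ gives $\vp(P_{p^k}(\Omega)) = 1/p^{k-1}(q-1)$) are exactly what is needed to check that the coefficients $c_j$ times $D^j f$ land in $\Lambda_{\Cp}(\frX)$ when $f$ does and $D^{q-1}f$ does.

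The main obstacle I anticipate is establishing the precise differential-operator formula for $\varphi_q \circ \psi_q$ (equivalently for the trace form) with \emph{sharp} control on the $p$-adic valuations of all the coefficients $c_j$, $0 \le j \le q-1$ — not just the top one. It is easy to see that $q\varphi_q\psi_q - (\text{naive } \varphi_q\text{-part})$ is an $\Omega^{-(q-1)}$-bounded differential operator; the delicate part is showing that after dividing by $q$ the \emph{only} denominator that survives is attached to $D^{q-1}$, so that $\psi_q(f) \in o_{\Cp}\dcroc{Z}$ whenever $f$ and $D^{q-1}f$ are in $o_{\Cp}\dcroc{Z}$ and the intermediate $D^jf$ are automatically so by a downward induction. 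Getting this right will require carefully combining the explicit shape of $[p](Z) = Z^q + pZ$, the expansion $\log_{\LT}(Z) = \sum_k Z^{q^k}/p^k$, and the torsion-point valuation computation underlying Theorem \ref{introvalpkgen}; I would isolate it as a standalone lemma about $o_{\Cp}\dcroc{Z}$ before assembling the equivalence.
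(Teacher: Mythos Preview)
Your high-level strategy is right: reduce to Theorem \ref{intropsicrit} by proving that $D^{q-1}(\Lambda_L(\frX)) \subset \Lambda_L(\frX)$ forces $\psi_q(\Lambda_L(\frX)) \subset \Lambda_L(\frX)$, via a differential expression for $\varphi_q\psi_q$. But the execution you sketch is both more complicated than needed and misses the device that makes the result fall out cleanly; the obstacle you flag at the end is real for your approach but simply does not arise in the paper's.

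The paper does not compute a dual basis via the different, and it does not produce a sum $\sum_{j=0}^{q-1} c_j D^j f$ with intermediate coefficients to control. Instead it uses Katz's Taylor formula $f(Z \oplus Y) = \sum_{n \ge 0} Y^n P_n(\partial) f(Z)$ together with the averaging definition of $\psi_q$ to get
\[
\varphi\psi_q(f) \;=\; \frac{1}{q}\sum_{n \ge 0}\Bigl(\sum_{[\pi](\omega)=0}\omega^n\Bigr) P_n(\partial) f .
\]
For $[\pi](Z)=pZ+Z^q$ and $\pi=p$, the power sums $\sum_\omega \omega^n$ are computed by an elementary lemma: they vanish unless $(q-1)\mid n$, equal $q$ at $n=0$, and equal $(q-1)(-p)^k$ at $n=(q-1)k$ for $k\ge 1$. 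Since $q=p^2$, every term with $k\ge 2$ is divisible by $q$, so after dividing by $q$ only the single term $n=q-1$ survives modulo $o_{\Cp}\dcroc{Z}$. Finally, with the chosen logarithm one has \emph{exactly} $P_{q-1}(Y)=Y^{q-1}/(q-1)!$ (no lower terms), whence $P_{q-1}(\partial) = \lambda\, p\, D^{q-1}$ with $\lambda=\Omega^{q-1}/(p(q-1)!)\in o_{\Cp}^\times$. This gives the one-line identity $\varphi\psi_q(f) - \lambda D^{q-1}(f)\in o_{\Cp}\dcroc{Z}$ for any $f\in o_{\Cp}\dcroc{Z}$ (Proposition \ref{psikatz}). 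From here, if $f\in\Lambda_L(\frX)$ and $D^{q-1}f\in\Lambda_L(\frX)$, then $\varphi\psi_q(f)\in o_{\Cp}\dcroc{Z}$; since $\varphi$ is a Gauss-norm isometry, $\psi_q(f)\in o_{\Cp}\dcroc{Z}$, and $G_L$-equivariance of $\psi_q$ gives $\psi_q(f)\in\Lambda_L(\frX)$. No downward induction on $D^j$, no appeal to Theorem \ref{introvalpkgen}.

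So the gap in your plan is the missing key lemma: the power-sum computation over the $\pi$-torsion (Lemma \ref{sumomeg}) combined with Katz's Taylor expansion. Once you have that, all ``intermediate'' terms either vanish or are integral after division by $q$, and only the $D^{q-1}$ term remains. Your proposed route via the different could in principle be made to work, but you would be re-deriving this structure the hard way.
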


This Theorem follows from Theorem \ref{intropsicrit} and the following result, which is inspired by computations of Katz: assume that $L=\bQ_{p^2}$ and that $\pi=p$. Let $\lambda= \Omega^{q-1} / p(q-1)! \in o_{\Cp}^\times$. If $f(Z) \in o_{\Cp} \dcroc{Z}$, then $\varphi \psi_q(f) - \lambda \cdot D^{q-1}(f) \in o_{\Cp} \dcroc{Z}$. 

Here is another result concerning our main question. It says that if the answer is yes for a finite extension $K/L$, then the answer is also yes for $L$.

\begin{theorem}
\label{introfinext}
If $K/L$ is finite and if $\Lambda_K(\frX_K) = o_K \dcroc{o_K}$, then $\Lambda_L(\frX_L) = o_L \dcroc{o_L}$.
\end{theorem}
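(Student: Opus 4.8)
The plan is to descend the equality $\Lambda_K(\frX_K) = o_K\dcroc{o_K}$ along the finite extension $K/L$ by a Galois-averaging argument, using the uniformization $\Lambda_L(\frX_L) = o_{\Cp}\dcroc{Z}^{G_L,*}$ recalled in the introduction. The key point is that the twisted Galois action is defined purely in terms of $G_L$ acting on $o_{\Cp}$ and on the coordinate $Z$, so it makes sense for any subgroup, and in particular
\[
\Lambda_K(\frX_K) = o_{\Cp}\dcroc{Z}^{G_K,*}, \qquad \Lambda_L(\frX_L) = o_{\Cp}\dcroc{Z}^{G_L,*} = \Lambda_K(\frX_K)^{G_L/G_K,*},
\]
where the last identity uses that $G_K$ is an open (hence finite-index) subgroup of $G_L$ since $K/L$ is finite, and that the twisted action factors through a genuine action of the finite quotient $\mathrm{Gal}(K/L)$ on $\Lambda_K(\frX_K)$ (one must check the quotient is a group, i.e. $G_K$ is normal in $G_L$; if $K/L$ is not Galois, replace $K$ by its Galois closure, which is harmless since enlarging $K$ only makes the hypothesis stronger). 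So, granting the hypothesis, $\Lambda_L(\frX_L) = \bigl(o_K\dcroc{o_K}\bigr)^{\mathrm{Gal}(K/L),*}$, and it remains to identify this fixed ring with $o_L\dcroc{o_L}$.

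First I would make the action of $\mathrm{Gal}(K/L)$ on $o_K\dcroc{o_K}$ explicit. Since $o_K\dcroc{o_K}$ sits inside $\Lambda_K(\frX_K)$ as the image of the measure map, and this embedding is $G_L$-equivariant for the twisted action (both the action of $o_L\subseteq o_K$ by multiplication and the Dirac measures $\delta_a$ are defined over $L$, hence $G_L$-fixed), the twisted $G_L$-action on $o_K\dcroc{o_K}$ is the "coefficient-wise" one: $g$ acts as $\sum_{a} c_a\,\delta_a \mapsto \sum_a g(c_a)\,\delta_a$ with $c_a \in o_K$, because the $\delta_a$ are fixed. Wait—one must be careful: the relevant character variety for $K$ parametrizes characters of $o_K$, not $o_L$, so the natural module is $o_K\dcroc{o_K}$ with $o_K$-valued coefficients indexed by $o_K$, and the Galois action twists only the coefficients. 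Then the fixed points of the coefficient-wise $\mathrm{Gal}(K/L)$-action on $o_K\dcroc{o_K}$ are exactly $o_L\dcroc{o_K}$ by the normal basis theorem / Galois descent for the flat $o_L$-module $o_K$ (or, more elementarily, $o_K^{\mathrm{Gal}(K/L)} = o_L$ applied coefficient-by-coefficient, together with a completeness argument for the infinite sums). This shows $\Lambda_L(\frX_L) \hookrightarrow o_L\dcroc{o_K}$.

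The remaining and genuinely substantive step is to cut $o_L\dcroc{o_K}$ down to $o_L\dcroc{o_L}$: an element of $\Lambda_L(\frX_L)$ is not merely $G_L$-fixed on coefficients but is a measure on $o_L$, i.e. is supported on the sub-$o_L$-module $o_L\subseteq o_K$. Here I would bring in the operators $\psi_q$ and the multiplication action of $o_L^\times$: the map $o_L\dcroc{o_K}\to o_L\dcroc{o_L}$ should be realized as a projector built from averaging $\psi$-type operators associated to the extension $K/L$ (the relative trace $\mathrm{Tr}_{\cO_{\Cp}(\frX_K)/\cO_{\Cp}(\frX_L)}$, suitably normalized), mirroring the way $\psi_q$ in $\S\ref{PsiSection}$ extracts the part of a measure supported on $\pi o_L$. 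Concretely, the inclusion $\frX_L\hookrightarrow$ (an $\frX_L$-shaped subvariety of $\frX_K$ after base change) or the norm/corestriction map on distributions should provide an $o_L$-linear retraction $r:\Lambda_L(\frX_L)\to o_L\dcroc{o_L}$ splitting the inclusion $o_L\dcroc{o_L}\hookrightarrow\Lambda_L(\frX_L)$, and one checks $r$ is the identity by testing against Dirac measures. The main obstacle I expect is precisely this last identification: relating the geometry of $\frX_L$ versus $\frX_K$ (different Lubin--Tate groups, different uniformizations $\Omega_L$ versus $\Omega_K$, with $\Omega_K/\Omega_L$ a unit by the compatibility of Tate's maps) carefully enough to see that $G_L$-invariance of the Teichmüller-type coefficients forces support in $o_L$ rather than all of $o_K$. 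Everything else—finiteness of $[G_L:G_K]$, descent of fixed points, equivariance of the measure map—is routine.
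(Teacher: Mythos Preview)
Your approach rests on a misidentification. You write $\Lambda_K(\frX_K) = o_{\Cp}\dcroc{Z}^{G_K,*}$ using the \emph{same} coordinate $Z$ as for $\frX_L$, and then conclude $\Lambda_L(\frX_L) = \Lambda_K(\frX_K)^{\Gal(K/L),*}$. But $\frX_K$ is the character variety of the additive group $o_K$, not the base change $\frX_L \times_L K$ of the character variety of $o_L$. These are different rigid curves: $\frX_K$ is uniformized by a Lubin--Tate group for $K$ with its own coordinate $Z_K$ and its own period $\Omega_K$, and there is no reason for $o_{\Cp}\dcroc{Z_L}^{G_K,*}$ to coincide with $o_{\Cp}\dcroc{Z_K}^{G_K,*}$. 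What Galois descent does give you is $\Lambda_L(\frX_L) = \Lambda_K(\frX_L \times_L K)^{\Gal(K/L)}$, but the hypothesis says nothing about $\Lambda_K(\frX_L \times_L K)$. You notice the two uniformizations differ in your final paragraph, but the proposed projector built from a relative trace is not made precise, and it is not clear such a retraction exists integrally.

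The paper's argument is entirely different and avoids uniformizations altogether. There is a natural morphism of rigid varieties $\frX_K \to \frX_L$: a locally $K$-analytic character of $o_K$ restricts to a locally $L$-analytic character of $o_L$ (since $o_L$ is a $\Zp$-direct summand of $o_K$). Pullback gives a map $\res: \cO(\frX_L) \to \cO(\frX_K)$ which is injective on bounded elements (it is injective on torsion characters, which are Zariski-dense among zeros of bounded functions). Now take $\mu \in \Lambda_L(\frX_L)$, view it as a locally analytic distribution on $o_L$, and push it to a distribution on $o_K$ by $f \mapsto \mu(f|_{o_L})$; this lands in $\Lambda_K(\frX_K)$, hence by hypothesis is a measure on $o_K$. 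Because $o_L$ is a direct summand of $o_K$, one can extend any locally constant function on $o_L$ to one on $o_K$ with the same sup norm, and deduce that $\mu$ is bounded on locally constant functions on $o_L$, hence agrees there with a genuine measure $\tilde\mu \in o_L\dcroc{o_L}$. Finally $\res(\mu - \tilde\mu)$ is a measure vanishing on all locally constant functions, hence zero, and injectivity of $\res$ gives $\mu = \tilde\mu$. The argument is distribution-theoretic, not Galois-theoretic.
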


\subsection{Acknowledgements}

This paper grew out of a project started with Peter Schneider. The authors are very grateful to him for numerous discussions, interesting insights (in particular, considering the Katz isomorphism), and several invitations to M\"unster. Several results in this paper were obtained in collaboration with him.
L.B. also thanks Pierre Colmez for some discussions about the main problem of this paper.

\section{The character variety}

\subsection{Notation}
Let $\bQ_p \subseteq L \subset \bC_p$ be a field of finite degree $d$ over $\bQ_p$, $o_L$ the ring of integers of $L$, $\pi \in o_L$ a fixed prime element, $k_L = o_L/\pi o_L$ the residue field,  $q := |k_L|$ and $e$ the absolute ramification index of $L$. We always use the absolute value $|\ |$ on $\bC_p$ which is normalized by $|p| = p^{-1}$. We let $G_L := \Gal(\overline{L}/L)$ denote the absolute Galois group of $L$. Throughout our coefficient field $K$ is a complete intermediate extension $L \subseteq K \subseteq \bC_p$.

\subsection{The $p$-adic Fourier transform}
We are interested in the \emph{character variety} $\frX$ of the $L$-analytic commutative group $(o_L,+)$. We refer to \cite[\S 2]{ST} for a precise definition, but recall that $\frX$ is a rigid analytic variety defined over $L$, whose set of $K$-points (for $K$ a field extension of $L$ complete with respect to a non-archimedean absolute value extending the one on $L$) is the group $\frX(K)$ of $K$-valued characters $\chi : (o_L,+) \to (K^\times,\times)$ that are also $L$-analytic functions:
\[ \frX(K) := \{f \in C^{L-\an}(o_L, K) : f(a+b) = f(a)f(b) \qmb{for all} a,b \in o_L\}.\]
Here $C^{L-\an}(o_L,K)$ is the space of locally $L$-analytic $K$-valued functions on $o_L$. 

Let $D^{L-\an}(o_L,K)$ be the $K$-algebra of locally $L$-analytic distributions on $o_L$, defined in \cite[\S 2]{ST2}. One of the main results of $p$-adic Fourier Theory --- \cite[Theorem 2.3]{ST} --- tells us that there is a canonical isomorphism 
\[ \cF : D^{L-\an}(o_L, K) \to \cO( \frX \times_L K)\]
called the \emph{$p$-adic Fourier Transform}. This isomorphism is determined by
\[ \cF(\lambda)(\chi) = \lambda(\chi) \qmb{for all} \lambda \in D^{L-\an}(o_L,K), \chi \in \frX(K).\]
Since $\frX$ is a rigid $L$-analytic variety, we have at our disposal the subalgebra $\cO^\circ(\frX)$ of $\cO(\frX)$ consisting of globally-defined, rigid analytic functions on $\frX$ that are \emph{power-bounded} --- see \cite[\S 1.2.5]{BGR}.  
\begin{definition} Write $\Lambda(\frX) := \cO^\circ(\frX)$. \end{definition}
The functorial definition of the character variety does not shed much light on its internal structure. It turns out that the base change $\frX \times_L K$ is isomorphic to the rigid analytic open unit disc over $K$, \emph{provided} the field $K$ is large enough. This isomorphism is obtained with the help of \emph{Lubin-Tate formal groups} and their associated \emph{$p$-divisible groups}.

\subsection{Lubin-Tate formal groups}\label{sect:LT}Let $Z$ be an indeterminate and let 
\[\sF_{\pi} := \left(\pi Z + Z^2 o_L\dcroc{Z}\right) \hsp\cap\hsp \left(Z^q + \pi o_L\dcroc{Z}\right)\]
be the set of possible \emph{Frobenius power series}. Recall \cite[Theorem 8.1.1]{Lan}\footnote{Note that what Lang calls a formal group should really be called a \emph{formal group law}.}. that for every Frobenius power series $\varphi(Z) \in \sF_{\pi}$, there is a unique formal group law $F_{\varphi(Z)} = Z_1 +_{\cG} Z_2 \in o_L\dcroc{Z_1,Z_2}$ such that $\varphi(Z)$ is an endomorphism of $F_{\varphi(Z)}$. Since we have fixed a coordinate $Z$ on the power series ring $o_L\dcroc{Z}$, this formal group law defines a \emph{formal group}\footnote{a group object in the category of formal schemes over $\Spf{o_L}$} $(\cG, \oplus)$ on the underlying formal affine scheme $\Spf o_L\dcroc{Z}$. This formal group is called a \emph{Lubin-Tate formal group}. Up to isomorphism of formal groups, it does not depend on the choice of the Frobenius power series $\varphi(Z)$, however it does depend on the choice of $\pi$. The base change of $\cG$ to the completion $\widehat{L^{\ur}}$ of the maximal unramified extension $L^{\ur}$ of $L$ does not even depend on the choice of $\pi$. 

The Lubin-Tate formal group $\cG$ is in fact a \emph{formal $o_L$-module}. This means that there is a ring homomorphism $o_L \to \End(\cG)$, $a \mapsto [a](Z) \in o_L\dcroc{Z}$, such that $[a](Z) \equiv aZ \mod Z^2 o_L\dcroc{Z}$ for all $a \in o_L$. In other words, the formal group $\cG$ admits an action of $o_L$ by endomorphisms of formal groups, in such a way that the differential of this action at the identity element $1$ of $\cG$ agrees with the natural $o_L$-action on the cotangent space of $\cG$ at $1$. The action of $\pi \in o_L$ is given by the power series $[\pi](Z) = \varphi(Z)$.

\subsection{A review of \ts{p}-divisible groups}
In his seminal paper \cite{Tate66}, Tate introduced \emph{$p$-divisible groups} and considered their relation to formal groups. Here we review some of his fundamental theorems. 

Let $R$ be a commutative base ring and let $\Gamma = (\Spf \cA, \ast)$ is a commutative formal group over $R$ where $\cA = R\dcroc{X_1,\cdots,X_d}$ is a power series ring in $d$ variables over $R$. Then we can associate with $\Gamma$ the $p$-divisible group $\Gamma(p) = (\Gamma(p)_n, i_n)$ over $R$ where $\Gamma(p)_n := \Gamma[p^n]$ is the subgroup of elements of $\Gamma$ killed by $p^n$. More precisely, let $\psi : \cA \to \cA$ be the continuous $R$-algebra homomorphism which corresponds to multiplication by $p$ on $\Gamma$ and let $J_n$ be the ideal $\cA\psi^n(X_1) + \cdots + \cA\psi^n(X_d)$ of $\cA$; then $\cA/J_n$ is a Hopf algebra over $R$ free of finite rank over $R$, and $\Gamma(p)_n = \Spec (\cA / J_n)$ is the corresponding commutative finite flat group scheme over $R$. The closed immersions $i_n : \Gamma(p)_n \to \Gamma(p)_{n+1}$ are obtained from the $R$-algebra surjections $\cA / J_{n+1} \twoheadrightarrow \cA/J_n$. 

\begin{theorem}[\S 2.2, Proposition 1 \cite{Tate66}]\label{thm: Tate1} Let $R$ be a complete Noetherian ring whose residue field $k$ is of characteristic $p > 0$. Then $\Gamma \mapsto \Gamma(p)$ is an equivalence between the category of \emph{divisible} commutative formal groups over $R$ and the category of \emph{connected} $p$-divisible groups over $R$.
\end{theorem}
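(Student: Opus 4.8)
The plan is to produce an explicit quasi-inverse to the functor $\Gamma\mapsto\Gamma(p)$ and then to isolate the one genuinely hard input, a formal smoothness statement for connected $p$-divisible groups.

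First I would check that $\Gamma\mapsto\Gamma(p)$ indeed lands in connected $p$-divisible groups. Divisibility of $\Gamma$ means exactly that $\psi$ makes $\cA$ a finite free module of some rank $q$ over its image $\psi(\cA)=R\dcroc{\psi(X_1),\dots,\psi(X_d)}$; since $\psi$ is an isomorphism onto $\psi(\cA)$, iteration shows $\cA$ is free of rank $q^n$ over $\psi^n(\cA)$, whence $\cA/J_n=\cA\otimes_{\psi^n(\cA)}R$ is $R$-free of rank $q^n$ and $\Gamma(p)_n$ is finite flat of order $q^n$. Because $\cA$ is local and $J_n\subseteq\frm_\cA$, the ring $\cA/J_n$ is local with residue field $k$, so $\Gamma(p)_n$ is connected. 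The relations $[p^{n+m}]=[p^m]\circ[p^n]$ give for each $n,m$ a complex $0\to\Gamma(p)_n\xrightarrow{i}\Gamma(p)_{n+m}\xrightarrow{[p^n]}\Gamma(p)_m\to 0$; exactness on the left is the identity $\ker[p^n]=\Gamma[p^n]$ inside $\Gamma[p^{n+m}]$, and the order relation $q^{n+m}=q^n\cdot q^m$ together with flatness forces $[p^n]$ to be faithfully flat onto $\Gamma(p)_m$. So $\Gamma(p)$ is a connected $p$-divisible group, functorially in $\Gamma$.

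Next I would build the quasi-inverse. Given a connected $p$-divisible group $G=(G_n,j_n)$ with $G_n=\Spec\cA_n$, the closed immersions $j_n$ dualise to $R$-algebra surjections $\cA_{n+1}\twoheadrightarrow\cA_n$; set $\cA_G:=\varprojlim_n\cA_n$ and $\Gamma_G:=\Spf\cA_G$, which inherits a commutative formal group structure from the $G_n$. The theorem then amounts to two claims: (a) $\cA_G$ is a power series ring $R\dcroc{X_1,\dots,X_d}$ over $R$, and multiplication by $p$ on $\Gamma_G$ is an isogeny (so $\Gamma_G$ is a divisible formal group); and (b) the two constructions are mutually inverse, i.e.\ $J_n(\Gamma_G)=\ker(\cA_G\to\cA_n)$ whence $\Gamma_G(p)\cong G$, while dually $\bigcap_n J_n=0$ for a divisible $\Gamma$ whence $\cA\xrightarrow{\sim}\varprojlim_n\cA/J_n$. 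Granting (a), claim (b) is routine: over the residue field the lowest-degree term of $[p^n](X_i)$ has degree tending to infinity (since $[p]'(0)=p\in\frm_R$), which gives $\bigcap_nJ_n=0$ after $\frm_R$-adic approximation, and the identification of the two filtrations on $\cA_G$ is a direct computation; the matching of morphisms is then automatic, since a homomorphism of formal groups is determined by its restriction to $p$-power torsion and any compatible system of such restrictions extends.

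The main obstacle is claim (a): that a connected $p$-divisible group is formally smooth, i.e.\ pro-represented by a power series ring. As $R$ is complete Noetherian local and each $\cA_n$ is $R$-free, I would first reduce modulo powers of $\frm_R$ to $R$ Artinian, and then, via the infinitesimal lifting criterion for smoothness, to $R=k$. Over the residue field one must show that $\cA_G=\varprojlim\cA_n$ is a regular complete local ring of Krull dimension $d:=\dim_k(\frn/\frn^2)$, where $\frn=\varprojlim\frm_{\cA_n}$; equivalently, that lifts $X_1,\dots,X_d\in\frn$ of a $k$-basis of $\frn/\frn^2$ induce an isomorphism $k\dcroc{X_1,\dots,X_d}\xrightarrow{\sim}\cA_G$. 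Surjectivity follows from the complete Nakayama lemma, and injectivity is forced by the orders $\operatorname{ord}(G_n)=q^n$, which make the Hilbert--Samuel growth of $\cA_G$ exactly that of a $d$-variable power series ring. The input that pins down the dimension is the factorisation $[p]=V\circ F$ in characteristic $p$ with $F\colon G\to G^{(p)}$ the relative Frobenius: a d\'evissage of $G_1=\ker[p]$ along $\ker F$ --- whose coordinate ring is $k[X_1,\dots,X_d]/(X_i^p)$ --- yields both $\dim_k\frn/\frn^2=d$ and the flatness needed above. This is exactly the classical fact that a connected $p$-divisible group over a field is the $p$-divisible group of a smooth formal Lie group, and one may alternatively invoke Cartier's theorem that a formal group over a field is smooth. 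Lifting back along the two reductions gives (a), and together with (b) this establishes the equivalence.
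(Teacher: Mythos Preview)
The paper does not give a proof of this theorem: it is quoted verbatim as \cite[\S 2.2, Proposition 1]{Tate66} with no accompanying argument, and is used in the paper only as a black box (together with Remark \ref{rem:GeneralFF} on the full faithfulness over more general base rings). So there is no proof in the paper to compare your proposal against.

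For what it is worth, your outline is essentially Tate's own argument in \cite{Tate66}: construct the quasi-inverse by $G\mapsto \Spf\varprojlim_n \cO(G_n)$, and identify the one nontrivial input as the statement that this inverse limit is a formal power series ring over $R$, which Tate reduces to the residue field and handles via the Frobenius factorisation of $[p]$. Your sketch of that reduction is a little compressed --- in particular the passage ``reduce modulo powers of $\frm_R$ to $R$ Artinian, then via the infinitesimal lifting criterion to $R=k$'' and the Hilbert--Samuel count for injectivity would each need a paragraph of honest work --- but the strategy is sound and matches Tate's treatment.
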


Recall that the formal group $\Gamma$ is said to be \emph{divisible} if $\cA / J_1$ is finitely generated as an $R$-module, and a $p$-divisble group $(\Gamma_n, i_n)$ is said to be \emph{connected} if every finite flat group scheme $\Gamma_n$ is a connected scheme. 

\begin{remark}\label{rem:GeneralFF} Inspecting the proof of \cite[Proposition 1]{Tate66}, we see that the fact that the functor $\Gamma \mapsto \Gamma(p)$ is fully faithful holds in greater generality: if $R$ is \emph{any} commutative ring and $G,H$ are divisible formal groups defined over $R$ such that $\cO(G)$ and $\cO(H)$ are power series rings in finitely many variables over $R$, then the natural map 
\[ \Hom_{R-\fgp}(G,H) \to \Hom_{p-\div}(G(p), H(p))\]
is a bijection. \end{remark}

Now we specialise to the case where $R$ is our complete discrete valuation ring $o_L$. The \emph{Tate module} associated to a $p$-divisible group $\Gamma = (\Gamma_n, i_n)$ is by definition
\[ T(\Gamma) := \varprojlim \Gamma_n(\overline{L})\]
where $\overline{L}$ is the algebraic closure of $L$, $\Gamma_n(\overline{L}) = \Hom_{o_L-\alg}(\cO(\Gamma_n), \overline{L})$ is the set of $\overline{L}$-points of $\Gamma_n$, and the connecting maps in the inverse limit are induced by the multiplication-by-$p$-maps $j_n : \Gamma_{n+1} \to \Gamma_n$. By functoriality, the Tate module $T(\Gamma)$ carries a natural action of the absolute Galois group $G_L = \Gal(\overline{L}/L)$, making $T(\Gamma)$ into a continuous $\Zp$-linear representation of $G_L$ of rank equal to the \emph{height} $h$ of $\Gamma$. Remarkably, it turns out that this Galois representation completely determines the $p$-divisible group $\Gamma$. More precisely, we have the following

\begin{theorem}[\S 4.2, Corollary 1 \cite{Tate66}] \label{thm: TateFF}The functor $\Gamma \mapsto T(\Gamma)$ is a fully faithful embedding of the category of $p$-divisible groups over $o_L$ into the category of finite rank $\Zp$-linear continuous representations of $G_L$.
\end{theorem}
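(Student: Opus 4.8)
The plan is to prove the equivalent statement that for any two $p$-divisible groups $\Gamma,\Gamma'$ over $o_L$ the natural map
\[ \Hom_{o_L}(\Gamma,\Gamma') \longrightarrow \Hom_{\Zp[G_L]}(T(\Gamma),T(\Gamma')) \]
is bijective; by considering $\Gamma\oplus\Gamma'$ this is equivalent to the analogous statement for endomorphism rings, though that reduction is not needed. Injectivity is the easy half: a homomorphism $\phi$ inducing $0$ on Tate modules kills $\Gamma[p^n](\overline{L})$ for every $n$, and since these points are Zariski-dense in the generic fibre $\Gamma[p^n]_L$ (which is \'etale over $L$, as we are in generic characteristic $0$) and $\Gamma'[p^n]$ is $o_L$-flat, $\phi$ kills each finite flat group scheme $\Gamma[p^n]$, hence $\phi=0$. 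After inverting $p$, and using that both $\Hom_{o_L}(\Gamma,\Gamma')$ and $\Hom_{\Zp[G_L]}(T(\Gamma),T(\Gamma'))$ are torsion-free, the whole content becomes the surjectivity of
\[ \Qp\otimes_{\Zp}\Hom_{o_L}(\Gamma,\Gamma') \longrightarrow \Hom_{\Qp[G_L]}(V_p(\Gamma),V_p(\Gamma')); \]
that a Galois map which happens to preserve the integral Tate modules comes from an \emph{integral} homomorphism then follows from a routine argument using the isomorphism $\Gamma/\Gamma[p^n]\cong\Gamma$ induced by $[p^n]$.

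For surjectivity, let $f\colon V_p(\Gamma)\to V_p(\Gamma')$ be $G_L$-equivariant and base change to get a $\Cp$-linear $G_L$-equivariant map $f_{\Cp}\colon V_p(\Gamma)\otimes_{\Qp}\Cp\to V_p(\Gamma')\otimes_{\Qp}\Cp$. I would then invoke the Hodge--Tate decomposition attached to a $p$-divisible group $\Gamma$ over $o_L$ of height $h$ and dimension $\delta$: there is a canonical $G_L$-equivariant isomorphism
\[ V_p(\Gamma)\otimes_{\Qp}\Cp \;\cong\; \bigl(\operatorname{Lie}(\Gamma)\otimes_{o_L}\Cp(1)\bigr)\ \oplus\ \bigl(\operatorname{Lie}(\Gamma^t)^\vee\otimes_{o_L}\Cp\bigr), \]
where $\Gamma^t$ is the Cartier dual and $\operatorname{Lie}$ carries the trivial Galois action because $\Gamma$ is defined over $o_L$; the first summand is pure of Hodge--Tate weight $1$, the second pure of weight $0$. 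The decisive ingredient is Tate's computation of continuous Galois cohomology: $H^0(G_L,\Cp)=L$, whereas $H^0(G_L,\Cp(n))=H^1(G_L,\Cp(n))=0$ for all $n\neq 0$. The vanishing of these $H^1$'s is what makes the Hodge--Tate filtration split canonically, so that the displayed decomposition exists; the vanishing of the $H^0$'s, applied through $\Hom_{G_L}(\Cp(1),\Cp)=H^0(G_L,\Cp(-1))=0$ and $\Hom_{G_L}(\Cp,\Cp(1))=H^0(G_L,\Cp(1))=0$, forces $f_{\Cp}$ to be block-diagonal for the weight decompositions of source and target. Taking $G_L$-invariants of the two blocks (using $\Cp^{G_L}=L$ and $\Cp(1)\otimes_{\Cp}\Cp(-1)\cong\Cp$) identifies them with $L$-linear maps $\operatorname{Lie}(\Gamma)\otimes_{o_L}L\to\operatorname{Lie}(\Gamma')\otimes_{o_L}L$ and $(\operatorname{Lie}(\Gamma^t)\otimes_{o_L}L)^\vee\to(\operatorname{Lie}(\Gamma'^t)\otimes_{o_L}L)^\vee$; in other words $f$ produces a compatible pair of differential maps on $\Gamma,\Gamma'$ and on their duals.

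It remains to show that this Lie-algebra datum --- equivalently $f$ itself --- is realised by an honest homomorphism $\Gamma\to\Gamma'$ up to isogeny. Here I would work over the ring $o_{\Cp}$ and use Tate's logarithm: over $o_{\Cp}$ the $\Cp$-valued points, the Tate module, and the invariant differentials of a $p$-divisible group sit in canonical short exact sequences, rigidly enough that, up to isogeny, a homomorphism $\Gamma\to\Gamma'$ over $o_{\Cp}$ amounts to exactly the data of a $\Qp$-linear $G_L$-equivariant map on rational Tate modules together with a compatible pair of differential maps on the groups and their duals --- precisely what $f$ yields. This produces the homomorphism over $o_{\Cp}$; one then descends it to $o_L$ by Galois descent (using $\Cp^{G_L}=L$ and its refinements along the unramified and ramified towers, $\cG$ staying well-behaved over $\widehat{L^{\ur}}$), taking the usual care with completed base change of formal groups.

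The main obstacle is the $p$-adic Hodge theory input, namely the vanishing $H^0(G_L,\Cp(n))=H^1(G_L,\Cp(n))=0$ for $n\neq 0$ together with the determination of $H^1(G_L,\Cp)$. This is the Tate--Sen computation: one decomposes $\Cp$ by almost-\'etale descent along the cyclotomic (or Lubin--Tate) tower over $L$, uses the Ax--Sen--Tate theorem to identify the invariants, and proves quantitative ramification (``normalised trace'') estimates in order to annihilate the higher cohomology. By comparison, the construction of the Hodge--Tate decomposition of $V_p(\Gamma)\otimes\Cp$ and the logarithmic reconstruction step are comparatively formal once this vanishing is in hand, and that is where essentially all the genuine analytic difficulty is concentrated.
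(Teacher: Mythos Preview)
The paper does not give its own proof of this statement: Theorem~\ref{thm: TateFF} is simply quoted as \cite[\S 4.2, Corollary 1]{Tate66} and used as a black box. There is therefore nothing in the paper to compare your proposal against.

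That said, your sketch is essentially Tate's own argument. The injectivity step is fine. For surjectivity, the ingredients you name --- the Hodge--Tate decomposition of $V_p(\Gamma)\otimes_{\Qp}\Cp$, the vanishing of $H^0(G_L,\Cp(n))$ and $H^1(G_L,\Cp(n))$ for $n\neq 0$, and the logarithm exact sequence relating $\Gamma(o_{\Cp})$, $T(\Gamma)$, and the tangent spaces --- are exactly the tools Tate uses, and your identification of the Tate--Sen cohomology computation as the analytic core is accurate. One minor imprecision: Tate does not produce the homomorphism over $o_{\Cp}$ and then descend; rather, he works with the canonical exact sequences over $o_L$ from the start and uses the cohomology vanishing together with a dimension count to conclude that the injective map $\Hom_{o_L}(\Gamma,\Gamma')\otimes\Qp\hookrightarrow\Hom_{\Qp[G_L]}(V_p(\Gamma),V_p(\Gamma'))$ is surjective. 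Your ``construct over $o_{\Cp}$, then Galois-descend'' route would require some care with completed base change of $p$-divisible groups that Tate's argument sidesteps.
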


\subsection{Cartier duality for $p$-divisible groups} The category of commutative finite flat group $R$-schemes admits a duality called \emph{Cartier duality}: if $G$ is a commutative finite flat group scheme over $R$, then its Cartier dual is defined by $G^\vee = \Spec( \cO(G)^\ast )$ where $\cO(G)^\ast := \Hom_R( \cO(G), R)$ is the $R$-linear dual of the coordinate ring $\cO(G)$. The group structure on $G^\vee$ is obtained by dualising the multiplication map on $\cO(G)$ and the scheme structure on $G^\vee$ is obtained by dualising the comultiplication map on $\cO(G)$ encoding the group structure on $G$.

Tate shows in \cite[\S 2.3]{Tate66} that Cartier duality extends naturally to a duality $\Gamma \mapsto \Gamma^\vee$ on the category of $p$-divisible groups. He also shows that in \cite[\S 4]{Tate66} when $R = o_L$, the Tate-module functor to Galois representations converts Cartier duality into what is now called \emph{Tate duality} on Galois representations, namely $V \mapsto \Hom(V, \Zp(1))$. In other words, there is a natural isomorphism of continuous $G_L$-representations on finite rank $\Zp$-modules
\[ T(\Gamma^\vee) \cong \Hom_{\Zp}(T(\Gamma), \Zp(1))\]
where $\Zp(1) := T(\h{\bG}_m(p))$ is the Tate module associated to the formal multiplicative group $\widehat{\bG}_m$, the formal completion at the identity of the group scheme $\bG_m := \Spec o_L[T,T^{-1}]$.

\subsection{The character $\tau : G_L \to o_L^\times$ and the period $\Omega$}
We return to the Lubin-Tate formal group $\cG$ as in $\S \ref{sect:LT}$, which is easily seen to be divisible. Because $\cG$ is a formal $o_L$-module, the functoriality of $T(-)$ implies that the Tate module $T(\cG(p))$ of the $p$-divisible group $\cG(p)$ associated with $\cG$ is actually an $o_L$-module. It is a fundamental fact due to Lubin and Tate --- see \cite[Theorem 2]{LT65} --- that $T(\cG(p))$ is a free $o_L$-module of rank one. Since $o_L$ is itself a free $\Zp$-module of rank $d = [L:\Qp]$, it follows that the underlying $\Zp$-module of $T(\cG(p)^\vee) \cong \Hom_{\Zp}(T(\cG(p)), \Zp)$ is free of rank $d$ as a $\Zp$-module as well. Since it is also an $o_L$-module by the functoriality of $\Hom_{\Zp}(-,\Zp)$, we see that $T(\cG(p)^\vee)$ is also a free $o_L$-module of rank $1$. 

On the way to his proof of Theorem \ref{thm: TateFF}, Tate explains how to compute $T(\cG(p)^\vee)$: using Cartier duality, on \cite[p. 177]{Tate66} he obtains a natural isomorphism of abelian groups
\begin{equation}\label{eq:Tate177} T(\cG(p)^\vee) \cong \Hom_{p-\div/o_{\Cp}}( \cG(p) \times_{o_L} o_{\Cp}, \widehat{\bG}_m(p) \times_{o_L} o_{\Cp}).\end{equation}
On the other hand, applying Remark \ref{rem:GeneralFF} with $R = o_{\Cp}$, we see that the natural map
\begin{equation}\label{eq:genFF} \Hom_{\fgp/o_{\Cp}}(\cG \times_{o_L} o_{\Cp},\widehat{\bG}_m \times_{o_L} o_{\Cp}) \to \Hom_{p-\div/o_{\Cp}}( \cG(p) \times_{o_L} o_{\Cp}, \widehat{\bG}_m(p) \times_{o_L} o_{\Cp})\end{equation}
is a bijection. As a consequence, we see that $ \Hom_{\fgp/o_{\Cp}}(\cG \times_{o_L} o_{\Cp},\widehat{\bG}_m \times_{o_L} o_{\Cp})$ is free of rank $1$ as an $o_L$-module.
\begin{definition}\label{def:dualLTgenerator} \hsp
\begin{enumerate}
\item We fix a generator $t'_o$ for $T(\cG(p)^\vee)$ as an $o_L$-module. 
\item We let $F_{t'_o}$ be the generator for the $o_L$-module $ \Hom_{\fgp/o_{\Cp}}(\cG \times_{o_L} o_{\Cp},\widehat{\bG}_m \times_{o_L} o_{\Cp})$, which corresponds to $t'_o$ along the isomorphism 
\[T(\cG(p)^\vee) \stackrel{\cong}{\to} \Hom_{\fgp/o_{\Cp}}(\cG \times_{o_L} o_{\Cp},\widehat{\bG}_m \times_{o_L} o_{\Cp})\]
obtained by combining (\ref{eq:Tate177}) and (\ref{eq:genFF}).
\item We let $\tau : G_L \to o_L^\times$ be the character afforded by the free rank $1$ $o_L$-module $T(\cG(p)^\vee)$:
\[ \sigma( t'_o ) = \tau(\sigma) t'_o \quad\mbox{for all}\quad \sigma \in G_L.\]
\end{enumerate}
\end{definition}
The morphism of formal groups $F_{t'_o} : \cG \times_{o_L} o_{\Cp} \to \widehat{\bG}_m \times_{o_L} o_{\Cp}$ is an element of
\[ F_{t'_o}(Z) \in \cO(\cG \times_{o_L} o_{\Cp}) = o_{\Cp}\dcroc{Z}.\]
Then $1 + F_{t'_o}(Z)$ is ``grouplike" in the topological Hopf algebra $o_{\Cp}\dcroc{Z}$: it satisfies the relation
\[1 + F_{t'_o}(Z_1 +_{\cG} Z_2) = (1 + F_{t'_o}(Z_1))(1 + F_{t'_o}(Z_2)).\]
When we further base change the formal group $\cG \times_{o_L} o_{\Cp}$ to $\Cp$, it becomes isomorphic to the additive formal group. It follows from this that $\log F_{t'_o}(Z)$ is necessarily ``primitive'' in the topological Hopf algebra $\Cp\dcroc{Z}$: it satisfies the relation
\[\log (1 + F_{t'_o}(Z_1 +_{\cG} Z_2)) = \log (1 + F_{t'_o}(Z_1)) + \log (1 + F_{t'_o}(Z_2)).\]
Since the \emph{logarithm} $\log_{\LT}(Z)$ of the formal group $\cG$ spans the space of primitive elements in $\Cp\dcroc{Z}$, it follows that there exists a unique element $\Omega \in \Cp$ such that
\[ 1 + F_{t'_o}(Z) = \exp(\Omega \log_{\LT}(Z)).\]
\begin{definition} The element $\Omega$ is called the \emph{period} of the dual $p$-divisible group $\cG(p)^\vee$. \end{definition}

Let $I_L \subseteq G_L$ denote the inertia subgroup.

\begin{lemma}\label{opentau}
If $L \neq \Qp$, then the character $\tau : I_L \to o_L^\times$ has an open image.
\end{lemma}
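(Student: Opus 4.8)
The plan is to pin down $\tau|_{I_L}$ explicitly through local class field theory, and then to recognise its image as the image of a homomorphism of compact $p$-adic Lie groups whose derivative is bijective; openness will then be immediate from the $p$-adic implicit function theorem.

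First I would record that $\tau = \chi_{\cyc}\cdot\chi_\pi^{-1}$, where $\chi_\pi : G_L \to o_L^\times$ is the Lubin--Tate character attached to $\pi$ and $\chi_{\cyc}$ is the cyclotomic character: indeed $T(\cG(p)^\vee) \cong \Hom_{\Zp}(T(\cG(p)),\Zp(1))$, the module $T(\cG(p))$ carries the $G_L$-action $\chi_\pi$, and $\Zp(1)$ carries $\chi_{\cyc}$. Since $\tau$ has abelian image it factors through $G_L^{\ab}$, and by local class field theory the image of $I_L$ in $G_L^{\ab}$ is identified via the reciprocity map $\Art_L$ with $o_L^\times$; hence $\tau(I_L)$ equals the image of the continuous homomorphism $\tau\circ\Art_L : o_L^\times \to o_L^\times$. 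By the main theorem of Lubin--Tate theory $\chi_\pi(\Art_L(u)) = u^{-1}$ for $u\in o_L^\times$ (with the appropriate normalisation of $\Art_L$), and by functoriality of reciprocity maps under the norm together with the class field theoretic description of $\chi_{\cyc}$ over $\Qp$ one gets $\chi_{\cyc}(\Art_L(u)) = \Nm_{L/\Qp}(u)^{-1}$. Combining these, $\tau(\Art_L(u)) = u\cdot\Nm_{L/\Qp}(u)^{-1}$, so it suffices to prove that the homomorphism $N : o_L^\times \to o_L^\times$, $u\mapsto u\,\Nm_{L/\Qp}(u)^{-1}$, has open image.

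Now $N$ is a homomorphism of compact $p$-adic Lie groups, so it has open image as soon as the induced map on Lie algebras is surjective; in this abelian situation one sees this directly by transporting $N$ through the mutually inverse isomorphisms $\exp$ and $\log$ on a sufficiently small subgroup $1+\pi^M o_L$. The Lie algebra of $o_L^\times$ is $L$, the identity $u\mapsto u$ induces $\id_L$, and $\Nm_{L/\Qp}$ induces $\Tr_{L/\Qp} : L \to \Qp\hookrightarrow L$ (since $\Nm_{L/\Qp}(\exp x) = \exp\Tr_{L/\Qp}(x)$), so $\operatorname{Lie}(N) = \id_L - \Tr_{L/\Qp}$. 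Decomposing $L = \Qp\cdot 1 \oplus \ker(\Tr_{L/\Qp})$, this operator restricts to the identity on the $(d-1)$-dimensional space $\ker(\Tr_{L/\Qp})$ and to multiplication by $1-d$ on $\Qp\cdot 1$, because $\Tr_{L/\Qp}(1) = d$. Hence it is bijective exactly when $d \neq 1$, i.e. when $L \neq \Qp$, which is our hypothesis; this completes the argument.

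The only point requiring a little care is the consistency of normalisations in the identity $\tau(\Art_L(u)) = u\,\Nm_{L/\Qp}(u)^{-1}$; but any change of normalisation only alters the two formulas for $\chi_\pi\circ\Art_L$ and $\chi_{\cyc}\circ\Art_L$ by one common sign $\pm 1$, and the image of a group homomorphism is stable under inversion, so the conclusion is unaffected, and I would not dwell on it. The remaining ingredients --- convergence of $\exp$ and $\log$ on $1+\pi^M o_L$ for $M$ large, and the trivial linear algebra of $\id_L - \Tr_{L/\Qp}$ --- are routine, so the main (very mild) obstacle is simply assembling the class field theory inputs self-consistently.
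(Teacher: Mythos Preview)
Your proposal is correct and follows essentially the same approach as the paper: both identify $\tau|_{I_L}$ via local class field theory with (up to inversion) the map $x \mapsto x^{-1}\Nm_{L/\Qp}(x)$ on $o_L^\times$, then pass to Lie algebras and check that $\Tr_{L/\Qp}-\id_L$ (equivalently $\id_L-\Tr_{L/\Qp}$) is bijective on $L$ when $d>1$. The only cosmetic differences are that the paper parameterises $I_L$ by $\chi_\pi$ rather than $\Art_L$, and proves bijectivity of the Lie-algebra map by a direct injectivity argument instead of your eigenspace decomposition.
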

\begin{proof}
Let $\chi_\pi$ be the character describing the $G_L$-action on the Tate module $T$ of $\mathcal{G}$. By local class field theory we know that on $I_L$, $\Norm_{L/\mathbb{Q}_p} \circ \chi_\pi = \chi_{\cyc}$, the cyclotomic character. From Definition \ref{def:dualLTgenerator}(2), we have $\tau = \chi_\pi^{-1} \cdot \chi_{\cyc}$. Hence $\tau : I_L \to o_L^\times$ is the composition of the surjective map $\chi_\pi : I_L \to o_L^\times$ and of the map given by $x \mapsto \prod_{\sigma : L \to \Qpbar, \ \sigma \neq \Id} \sigma(x)$.

On the Lie algebra $L$ of $o_L^\times$, the derivative of the above map is given by $U=
\Tr_{L/\Qp} - \Id$. We prove that $U: L \to L$ is injective, hence surjective, which implies the lemma. If $U(x)=0$, then $x = (U+\Id)x = \Tr_{L/\Qp}(x) \in \Qp$ and hence $U(x)=([L:\Qp]-1)x$ so that $x=0$.
\end{proof}

For future use, we record here the more precise result due to B. Xie which gives a sufficient criterion for $\tau$ to be surjective.  

\begin{lemma}\label{lem:tau-sur}
  If $d-1$ and $(p-1)p$ are coprime, then $\tau : I_L \to o_L^\times$ is surjective.
\end{lemma}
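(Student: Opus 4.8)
The plan is to refine the computation in the proof of Lemma~\ref{opentau}, where we already saw that $\tau|_{I_L}$ factors as the composite of the surjection $\chi_\pi : I_L \twoheadrightarrow o_L^\times$ with the norm-type homomorphism $N : o_L^\times \to o_L^\times$ given by $x \mapsto \prod_{\sigma \neq \Id} \sigma(x)$, equivalently $N(x) = \Nm_{L/\Qp}(x) \cdot x^{-1}$ (viewing $\Nm_{L/\Qp}(x) \in \Zp^\times \subseteq o_L^\times$). So it suffices to show that $N : o_L^\times \to o_L^\times$ is surjective when $d-1$ and $(p-1)p$ are coprime. First I would reduce to a statement about the pro-$p$ part: the torsion subgroup of $o_L^\times$ is $\mu_{q-1} \times (\text{$p$-power roots of unity})$, and since $\Nm_{L/\Qp}$ maps $\mu_{q-1}$ into $\Zp^\times$, on the prime-to-$p$ part of the torsion $N$ acts (up to the $\Zp^\times$-factor) as $x \mapsto x^{-1}$, which is bijective on $\mu_{q-1}$; and the $p$-power torsion together with the topologically finitely generated free pro-$p$ part $U^{(1)} = 1 + \pi o_L$ form the pro-$p$ Sylow. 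Since surjectivity of a continuous endomorphism of a profinite group can be checked on the maximal pro-$p$ and maximal prime-to-$p$ quotients separately, the crux is surjectivity of $N$ restricted to $U^{(1)} = 1 + \pi o_L$, a $\Zp$-module (finitely generated, of rank $d$, with torsion the $p$-power roots of unity).

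Next I would pass to Lie algebras / tangent spaces. On $U^{(1)}$, using the $p$-adic logarithm $\log : U^{(1)} \to \pi^m o_L$ (an isomorphism onto an open subgroup, for suitable $m$), the map $N$ is intertwined with its derivative $dN = \Tr_{L/\Qp} - \Id$ acting $L$-linearly on $L$. Exactly as in Lemma~\ref{opentau}, $dN$ is an $L$-linear (even $\Qp$-linear) automorphism of $L$: if $(\Tr - \Id)(x) = 0$ then $x = \Tr_{L/\Qp}(x) \in \Qp$, hence $dN(x) = (d-1)x$, forcing $x = 0$ as long as $d \neq 1$. So $N$ is already an open map and a local homeomorphism on $U^{(1)}$; what remains is to upgrade "open image" to "surjective onto $U^{(1)}$", i.e. to control the cokernel, which is where the coprimality hypothesis enters.

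To pin down the cokernel I would compute the index $[U^{(1)} : N(U^{(1)})]$, or equivalently the determinant of $dN = \Tr_{L/\Qp} - \Id$ on the $\Zp$-lattice $o_L$. Choosing the $\Qp$-basis of $L$ well (e.g. a normal basis, or just diagonalizing over $\Qpbar$), the eigenvalues of $\Tr_{L/\Qp}$ as an operator on $L \otimes_{\Qp} \Qpbar$ are $d$ (once) and $0$ (with multiplicity $d-1$), so $dN = \Tr - \Id$ has eigenvalues $d-1$ (once) and $-1$ (with multiplicity $d-1$); hence $\det(dN) = (-1)^{d-1}(d-1)$, and $|\det(dN)|_p = |d-1|_p$. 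Thus the cokernel of $N$ on the free part has order a power of $p$ dividing $(d-1)$, and combining with the $p$-power torsion one finds the cokernel of $N$ on all of $U^{(1)}$ is killed by the $p$-part of $d-1$. When $\gcd(d-1,\,(p-1)p) = 1$ we have in particular $p \nmid d-1$, so $\det(dN)$ is a unit, $dN$ is a $\Zp$-lattice automorphism of $o_L$, and $N$ is surjective on $U^{(1)}$; the hypothesis $\gcd(d-1, p-1) = 1$ simultaneously takes care of the prime-to-$p$ torsion part (ensuring $N$ is onto $\mu_{q-1}$, which in fact needed nothing since $x \mapsto x^{-1}$ is already bijective there, but the coprimality with $p-1$ is the natural condition guaranteeing no obstruction survives at the level of $k_L^\times$). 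Assembling the pro-$p$ and prime-to-$p$ pieces gives $N : o_L^\times \to o_L^\times$ surjective, hence $\tau : I_L \to o_L^\times$ surjective.

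\emph{Main obstacle.} The genuinely delicate point is the passage from the infinitesimal statement ($dN$ an automorphism of the $\Zp$-lattice $o_L$ under the coprimality hypothesis) to the integral group-level statement (surjectivity of $N$ on $U^{(1)}$, including its $p$-torsion). One must either invoke a suitable $p$-adic inverse/implicit function theorem argument on $U^{(1)}$ together with a compactness argument to rule out a nontrivial cokernel, or argue directly via successive approximation modulo $\pi^k$ using that $dN$ is invertible on each graded piece $\pi^k o_L / \pi^{k+1} o_L$. Care is needed precisely at the small valuations / torsion where $\log$ is not an isomorphism; here the hypothesis $\gcd(d-1,(p-1)p)=1$ is exactly what is needed to clear these low-order obstructions. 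I expect this to be the part of the proof requiring the most attention, with everything else being the routine linear algebra sketched above.
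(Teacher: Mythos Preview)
Your approach is genuinely different from the paper's, and considerably more laborious. The paper gives a direct three-line construction: given $u \in o_L^\times$, let $v := \Nm_{L/\Qp}(u) \in \Zp^\times$; the hypothesis $\gcd(d-1,(p-1)p)=1$ means $Z^{d-1}-\bar v$ is separable over $\Fp$ and has a root there (since $x\mapsto x^{d-1}$ is a bijection on $\Fp^\times$), so Hensel lifts to $a\in\Zp^\times$ with $a^{d-1}=v$. Choosing $g\in I_L$ with $\chi_\pi(g)=au^{-1}$, one checks $\tau(g)=(au^{-1})^{-1}\Nm(au^{-1})=ua^{-1}\cdot a^d v^{-1}=u$. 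No filtrations, no Lie algebras, no case analysis.

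Your route can be made to work, but as written it has two real gaps. First, on $\mu_{q-1}$ your reasoning is incorrect: $N(x)=\Nm(x)\,x^{-1}$ is \emph{not} ``$x\mapsto x^{-1}$ up to a $\Zp^\times$-factor'' in any sense that gives bijectivity for free, and the parenthetical claim that this step ``in fact needed nothing'' is false. What is true is that modulo $\mu_{p-1}$ one gets $x\mapsto x^{-1}$, so $N$ surjects onto $\mu_{q-1}/\mu_{p-1}$; but to hit $\mu_{p-1}$ you must use that on $\mu_{p-1}\subset\Zp^\times$ one has $N(x)=x^{d-1}$, which is bijective precisely because $\gcd(d-1,p-1)=1$. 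So this step genuinely consumes that half of the hypothesis. Second, your acknowledged ``main obstacle'' --- passing from $\det(dN)\in\Zp^\times$ to surjectivity of $N$ on $U^{(1)}$ --- is not as routine as you suggest. Knowing $dN$ is a lattice automorphism of $o_L$ gives $N$ bijective on $U^{(n)}$ for $n$ large (where $\log/\exp$ are inverse isomorphisms), but to descend to $U^{(1)}$ you must show $N$ is surjective on each graded piece $U^{(k)}/U^{(k+1)}\cong k_L$. In the unramified case this reduces to $\bar y\mapsto \Tr_{k_L/\Fp}(\bar y)-\bar y$ having unit determinant $(d-1)(-1)^{d-1}\bmod p$, which is fine; but in the ramified case the computation of the induced map when $e\mid k$ is more delicate (one must track how the embeddings $\sigma$ move $\pi$), and you have not carried it out. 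The paper's Hensel argument sidesteps all of this entirely.
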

\begin{proof}
Since $\tau = \chi_\pi^{-1} \cdot \chi_{\cyc}$ and $\chi_{\cyc} = \Norm_{L/\mathbb{Q}_p} \circ \chi_\pi$, we have
\begin{equation*}
  \tau(g) = \chi_\pi(g)^{-1} \Norm_{L/\mathbb{Q}_p}(\chi_\pi(g))  \quad\text{for any $g \in I_L$}.
\end{equation*}
Note also that the restriction to $I_L$ of the totally ramified surjective character $\chi_\pi \twoheadrightarrow o_L^\times$ is still surjective. Let now $u \in o_L^\times$ be any fixed element.

We first show that there is an $a \in \mathbb{Z}_p^\times$ such that $a^{d-1} = \Norm_{L/\mathbb{Q}_p}(u)$. Let $v := \Norm_{L/\mathbb{Q}_p}(u)$ and let $\bar{v}$ denote its image in $\mathbb{F}_p^\times$. By our assumption the polynomial $Z^{d-1} - \bar{v}$ is separable over $\mathbb{F}_p$ and has a root in $\mathbb{F}_p^\times$. Hence Hensel's lemma implies that the polynomial $Z^{d-1} - v$ has a root $a \in \mathbb{Z}_p^\times$.

Choosing now a $g \in I_L$ such that $\chi_\pi(g) = a u^{-1}$ we deduce that
\begin{equation*}
  \tau(g) = (au^{-1})^{-1} \Norm_{L/\mathbb{Q}_p}(au^{-1}) = ua^{-1} a^d \Norm_{L/\mathbb{Q}_p}(u^{-1}) = u \ . \qedhere
\end{equation*}
\end{proof}

\subsection{The Amice-Katz transform}\label{sec:AKtrans}
With the period $\Omega \in \Cp$ in hand, now we recall some constructions from $p$-adic Fourier Theory \cite{ST}. For each $a \in o_L$, define
\[\Delta_a := 1 + F_{a t'_o}(Z) = \exp( a \Omega \log_{\LT}(Z)) \in \Cp\dcroc{Z}^\times.\]
The map $(o_L,+) \to (\Cp\dcroc{Z}^\times, \times)$ which sends $a \in o_L$ to $\Delta_a$ is  a group homomorphism. The fundamental property of these power series is that their coefficients all lie in $o_{\Cp}$:
\[\Delta_a \in o_{\Cp}\dcroc{Z}^\times \qmb{for all} a \in o_L.\]
This follows from the fact that for each $a \in o_L$, $F_{at'_o} : \cG \times_{o_L} o_{\Cp} \to \widehat{\bG}_m \times_{o_L} o_{\Cp}$ is a homomorphism of formal groups defined over $o_{\Cp}$; see also \cite[Lemma 4.2(5)]{ST}.
\begin{definition}\label{DualTowerDef}\hsp \begin{enumerate}
\item Let $L_\infty$ be the closure in $\Cp$ of the subfield $L(\Omega)$ of $\Cp$ generated by $L$ and $\Omega$. 
\item Let $L_\tau := L_\infty \cap \overline{L}$.
\item Let $o_\infty:= L_\infty \cap o_{\Cp}$.
\item Let $o_\tau := L_\tau \cap o_{\Cp}$.
\end{enumerate}
\end{definition}

\begin{lemma}\label{lem:LOmegaDense} We have $L_\infty = \Cp^{\ker \tau}$ and $o_\infty = o_{\Cp}^{\ker \tau}$.
\end{lemma}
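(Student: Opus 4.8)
The plan is to prove the two equalities $L_\infty = \Cp^{\ker\tau}$ and $o_\infty = o_{\Cp}^{\ker\tau}$; the second will follow immediately from the first since $o_\infty = L_\infty \cap o_{\Cp}$ by definition, and intersecting $\Cp^{\ker\tau}$ with $o_{\Cp}$ manifestly gives $o_{\Cp}^{\ker\tau}$. So the whole content is the identity $L_\infty = \Cp^{\ker\tau}$, and because $L_\infty$ is by definition a \emph{closed} subfield of $\Cp$ while $\Cp^{\ker\tau}$ is automatically closed, it suffices to show that $\overline{L(\Omega)} = \Cp^{\ker\tau}$.

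First I would establish the inclusion $L(\Omega) \subseteq \Cp^{\ker\tau}$, equivalently that $\ker\tau$ fixes $\Omega$. This is built into the construction: by Definition \ref{def:dualLTgenerator}(3) we have $\sigma(t'_o) = \tau(\sigma) t'_o$, and transporting through the isomorphism with $\Hom_{\fgp/o_{\Cp}}(\cG\times o_{\Cp}, \h{\bG}_m \times o_{\Cp})$ gives $\sigma(F_{t'_o}) = F_{\tau(\sigma) t'_o}$; comparing with the formula $1 + F_{t'_o}(Z) = \exp(\Omega\log_{\LT}(Z))$ and using that $\sigma$ fixes the coefficients of $\log_{\LT}(Z) \in L\dcroc{Z}$, one reads off $\sigma(\Omega) = \tau(\sigma)\,\Omega$ for all $\sigma \in G_L$ — this is exactly the relation quoted in \S\ref{PnSectIntro}. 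Hence $\ker\tau$ fixes $\Omega$, so $L(\Omega) \subseteq \Cp^{\ker\tau}$, and taking closures, $L_\infty \subseteq \Cp^{\ker\tau}$ (noting $\Cp^{\ker\tau}$ is closed).

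For the reverse inclusion I would argue via Galois theory / Ax--Sen--Tate. Consider the subgroup $H := \{\sigma \in G_L : \sigma(\Omega) = \Omega\}$. From $\sigma(\Omega) = \tau(\sigma)\Omega$ and the fact that $\Omega \neq 0$ (it is a unit times a period; in any case $\Omega$ is nonzero since $F_{t'_o}$ is a nontrivial homomorphism), we get $H = \ker\tau$ exactly. Now $H$ fixes the subfield $L(\Omega)$, and in fact $L(\Omega)$ is the fixed field of $H$ inside $\overline{L}\cdot L(\Omega)$ — more precisely, $\overline{L}^H = \overline{L(\Omega) \cap \overline{L}}$ is the algebraic closure issue, so the clean way is: by the Ax--Sen--Tate theorem, $\Cp^{H} = \widehat{\overline{L}^{H}}$, the completion of the fixed field of $H$ in $\overline{L}$. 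So it remains to identify $\overline{L}^{\ker\tau}$ with the algebraic elements of $L_\infty$, i.e. to show $\overline{L}^{\ker\tau} \subseteq L_\infty$. Since $\tau$ has open image on $I_L$ when $L \neq \Qp$ (Lemma \ref{opentau}), the field $L(\Omega)$ — which contains the Lubin--Tate-type tower cut out by $\tau$ — is large enough that $\overline{L}^{\ker\tau}$ is generated over $L$ by $\Omega$ together with finitely much more that is already in $L(\Omega)$; concretely, $\ker\tau$ acts trivially on $\overline{L}^{\ker\tau}/L$ Galois-theoretically, $\Gal(\overline{L}^{\ker\tau}/L) \cong \operatorname{im}(\tau)$ via $\tau$, and the same group acts on $\Omega$ by the tautological character, so $L(\Omega)$ and $\overline{L}^{\ker\tau}$ have the same Galois closure data and hence coincide after completion. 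Taking completions and invoking Ax--Sen--Tate gives $\Cp^{\ker\tau} = \widehat{\overline{L}^{\ker\tau}} = \overline{L(\Omega)} = L_\infty$.

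The main obstacle is the reverse inclusion, specifically checking that $\Omega$ alone (with $L$) generates a field whose completion is \emph{all} of $\Cp^{\ker\tau}$, rather than something smaller — i.e.\ that no part of the fixed field $\overline{L}^{\ker\tau}$ is missed by $L(\Omega)$. The cleanest route is to show $\Gal(\overline{L}^{\ker\tau}/L(\Omega)) = 1$: an element $\sigma$ of this group lies in $G_L$, fixes $\Omega$, hence $\tau(\sigma) = 1$, hence $\sigma \in \ker\tau$, hence $\sigma$ acts trivially on $\overline{L}^{\ker\tau}$ — so indeed $\overline{L}^{\ker\tau} = L(\Omega)^{\mathrm{alg}} \cap \overline{L}^{\ker\tau}$ with no proper extension, giving $\overline{L}^{\ker\tau} \subseteq$ the algebraic closure of $L(\Omega)$, and combined with $L(\Omega) \subseteq \Cp^{\ker\tau}$ and Ax--Sen--Tate this pins down $L_\infty = \Cp^{\ker\tau}$ exactly. (For $L = \Qp$ one has $\tau$ trivial and the statement degenerates to $\Qp = \Qp$, so that case is immediate.)
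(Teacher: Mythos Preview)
Your overall plan matches the paper's: deduce $\sigma(\Omega)=\tau(\sigma)\Omega$ for the inclusion $L_\infty\subseteq\Cp^{\ker\tau}$, then use Ax--Sen--Tate plus Galois theory for the reverse, and get the second equality by intersecting with $o_{\Cp}$. The forward inclusion is fine. The gap is in the reverse inclusion. You correctly reduce to $\overline{L}^{\ker\tau}\subseteq L_\infty$, but the argument you offer does not establish this. Since $\Omega$ is transcendental over $L$ (its $G_L$-orbit $\{\tau(\sigma)\Omega\}$ is infinite once $L\neq\Qp$), the symbol $\Gal(\overline{L}^{\ker\tau}/L(\Omega))$ has no meaning, and the chain ``$\sigma$ fixes $\Omega\Rightarrow\tau(\sigma)=1\Rightarrow\sigma$ fixes $\overline{L}^{\ker\tau}$'' only reproves $\ker\tau\subseteq\ker\tau$. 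Your stated conclusion ``$\overline{L}^{\ker\tau}\subseteq$ the algebraic closure of $L(\Omega)$'' is trivially true (that algebraic closure contains all of $\overline{L}$) and is not the needed inclusion into the \emph{completion} $L_\infty$. That two objects are both ``governed by $\im(\tau)$'' does not, on its own, force one to sit inside the other.

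The paper runs the Galois theory in the opposite direction. It sets $L_\tau:=L_\infty\cap\overline{L}$ and $H:=\Gal(\overline{L}/L_\tau)$, and uses that $L_\infty$ is the closure of $L_\tau$: then the continuous extension of any $g\in H$ to $\Cp$ fixes $\Omega\in L_\infty$, so $\tau(g)=1$ and $H\leq\ker\tau$. Hence $\Cp^{\ker\tau}\subseteq\Cp^H$, and Ax--Sen--Tate together with infinite Galois theory identify $\Cp^H$ with the closure of $\overline{L}^H=L_\tau$, which lies in $L_\infty$. The substantive input you are missing is precisely that $L_\tau$ is dense in $L_\infty$ (equivalently, that $\overline{L}^{\ker\tau}\subseteq L_\tau$). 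Your parenthetical that $L(\Omega)$ ``contains the Lubin--Tate-type tower cut out by $\tau$'' gestures at the right mechanism --- the $\pi^m$-torsion of the dual $p$-divisible group has values in $o_\infty$ because $\Delta_a\in o_\infty\dcroc{Z}$, and these torsion points generate $\overline{L}^{\ker\tau}$ --- but you never actually carry this out, and nothing else in your argument substitutes for it.
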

\begin{proof}  From the relation appearing in Definition \ref{def:dualLTgenerator}(3), we deduce 
\[ \sigma(\Omega) = \tau(\sigma)\Omega \qmb{for all} \sigma \in G_L.\]
This immediately implies that $L_\infty \subseteq \Cp^{\ker \tau}$. Let $H := \Gal(\overline{L} / L_\tau)$, a closed subgroup of $G_L$, and let $g \in H$. Then $g$ extends to a unique continuous $L_\tau$-linear automorphism $g$ of $\Cp$. Now $L_\infty$ is the closure of $L_\tau$ in $\Cp$, so $g$ fixes $\Omega \in L_\infty$. Hence $\tau(g) = 1$ by the above relation. Hence $H \leq \ker \tau$ which implies that $\Cp^{\ker \tau} \leq \Cp^H$. But $\overline{L}^H$ is dense in $\Cp^H$ by the Ax-Sen-Tate theorem, \cite[Proposition 2.1.2]{BriCon}, and $\overline{L}^H = L_\tau$ by infinite Galois theory. Hence $L_\tau$ is dense in $\Cp^H$, so $\Cp^H$ is contained in the closure of $L_\tau$ in $\Cp$, namely $L_\infty$. Hence $\Cp^{\ker\tau} \leq L_\infty$. 

The second statement follows from the first by intersecting $L_\infty = \Cp^{\ker \tau}$ with $o_{\Cp}$.\end{proof}
It is clear from the definition of $\Delta_a$ that in fact 
\[\Delta_a \in o_\infty\dcroc{Z}^\times \qmb{for all} a \in o_L.\] 
\begin{definition} We write $o_L\dcroc{o_L}$ for the completed group ring of the abelian group $o_L$ with coefficients in $o_L$.  The \emph{Amice-Katz transform} is the unique extension to a continuous $o_L$-algebra homomorphism
\[ \mu : o_L\dcroc{o_L} \to \cO(\cG \times_{o_L} o_\infty) = o_\infty\dcroc{Z}\]
of the group homomorphism $o_L \to o_{\Cp}\dcroc{Z}^\times$ which sends $a \in o_L$ to $\Delta_a \in o_\infty\dcroc{Z}^\times$. \end{definition} 

\subsection{The Schneider-Teitelbaum uniformisation} 
At this point, rigid analytic geometry enters the picture. Let $\bfB$ be the rigid $L_\infty$-analytic open disc of radius one, with local coordinate $Z$. By definition, $\bfB$ is the colimit of the rigid $L_\infty$-analytic closed discs $\bfB(r)$ of radius $r < 1$, as $r \in |L_\infty^\times|$ approaches $1$ from below:
\[ \bfB = \colim\limits_{r < 1} \bfB(r), \quad \bfB(r) = \Sp L_\infty \langle Z / \dot{r} \rangle\]
where $\dot{r}$ is any choice of an element of $L_\infty^\times$ such that $|\dot{r}| = r$. Choosing, for convenience, any strictly increasing sequence $r_1 < r_2 < r_3 < \cdots $ of real numbers in $|L_\infty| \cap (0,1)$ approaching $1$ from below, we have a descending chain of $L_\infty$-algebras, each one containing $o_\infty\dcroc{Z}$:
\[ L_\infty \langle Z / \dot{r}_1 \rangle \supsetneq L_\infty \langle Z / \dot{r}_2 \rangle \supsetneq L_\infty \langle Z / \dot{r}_3 \rangle \supsetneq \cdots \supsetneq \bigcap\limits_{n=1}^\infty L_\infty \langle Z / \dot{r}_n \rangle = \cO(\bfB) \supseteq o_\infty\dcroc{Z} \otimes_{o_L}L.\]
With this notation in place, it follows from one of Schneider-Teitelbaum's main results, \cite[Theorem 3.6]{ST}, that the $o_L$-algebra homomorphism $\mu : o_L\dcroc{o_L} \to o_\infty\dcroc{Z}$ extends to a continuous \emph{isomorphism} of $L$-Fr\'echet algebras
\[ \mu_{\rig} : D^{L-\an}(o_L, L_\infty) \stackrel{\cong}{\longrightarrow} \cO(\bfB)\]
which makes the following diagram commutative:
\[ \xymatrix{ o_L\dcroc{o_L} \otimes_{o_L} L \ar[rr]^\mu\ar[d] &&\ar[d] o_\infty\dcroc{Z} \otimes_{o_L} L\ar[d] \\ D^{L-\an}(o_L,L_\infty) \ar[rr]^{\cong}_{\mu_{\rig}} &&  \cO(\bfB)} \]
The vertical arrow on the left is the natural restriction map  $o_L\dcroc{o_L} \otimes_{o_L} L$ into $D^{L-\an}(o_L,L)$, witnessing the fact that every locally $L$-analytic function on $o_L$ is continuous, and hence that every continuous distribution on $o_L$ restricts to a locally $L$-analytic distribution on $o_L$; see \cite{ST2} for more details. The vertical arrow on the right is the inclusion $o_\infty\dcroc{Z} \otimes_{o_L}L \subset \cO(\cB)$ from the above discussion. Combining the isomorphism $\mu_{\rig}$ with the Fourier transform ${\cF : D^{L-\an}(o_L, L_\infty) \to \cO(\frX \times_L L_\infty)}$, we obtain an isomorphism of $L_\infty$-Fr\'echet algebras
\[ \mu_{\rig} \circ \cF : \cO(\frX \times_L L_\infty) \stackrel{\cong}{\longrightarrow} \cO(\bfB).\]
Since $\frX \times_L L_\infty$ and $\bfB$ are both Stein rigid analytic varieties over $L_\infty$, this isomorphism determines, and is completely determined by, an isomorphism
\[ \kappa := \Sp(\mu_{\rig} \circ \cF) : \bfB \stackrel{\cong}{\longrightarrow} \frX \times_L L_\infty.\]
This is a version of \cite[Theorem 3.6]{ST}: the base-change of the character variety $\frX$ to $L_\infty$ is isomorphic to the rigid $L_\infty$-analytic open disc of radius one, so $\kappa$ can be viewed as giving a \emph{uniformisation} of $\frX \times_L L_\infty$ by $\bfB$. Schneider and Teitelbaum also show that the morphism $\kappa$ is given on $\Cp$-points by the following rule: for each $z \in \bfB(\Cp)$ we can evaluate the power series $\Delta_a \in o_\infty\dcroc{Z}$ at $Z = z$ to obtain an element $\Delta_a(z) \in o_{\Cp}^\times$, and the locally $L$-analytic character $\kappa(z) : o_L \to \Cp$ is given by
\[ \kappa(z)(a) = \Delta_a(z) \qmb{for all} a \in o_L.\]

\subsection{$\Lambda_L(\frX)$ and the twisted $G_L$-action on $\Cp\dcroc{Z}$} It is natural to enquire, in the light of the Schneider-Teitelbaum isomorphism
\[ \kappa : \bfB \stackrel{\cong}{\longrightarrow} \frX \times_L L_\infty\]
how far the character variety $\frX$ is itself from being isomorphic to an open rigid $L$-analytic unit disc. For general reasons, $\frX \times_L L_\infty$ carries a natural action of the Galois group $G_L$, acting on the second factor, giving an isomorphism of $L$-Fr\'echet algebras
\[ \cO(\frX) \cong \cO(\frX \times_L L_\infty)^{G_L}.\]
\begin{definition} The \emph{twisted $G_L$}-action on $\cO(\bfB)$ is given as follows:
\[ \sigma \ast F(Z) := ({}^\sigma F)([\tau(\sigma)^{-1}](Z)) \qmb{for all} F(Z) \in \cO(\bfB), \sigma \in G_L.\]
\end{definition}
Here $F \mapsto {}^\sigma F$ is the ''coefficient-wise" $G_L$-action on $\Cp\dcroc{Z} \supset \cO(\bfB)$, given explicitly by ${}^\sigma(\sum\limits_{n=0}^\infty a_n Z^n) = \sum\limits_{n=0}^\infty \sigma(a_n) Z^n$ for all $\sigma \in G_L$. 

Schneider and Teitelbaum showed that this twisted $G_L$-action on $\cO(\bfB)$ in fact comes from the following twisted $G_L$-action on the set of $\Cp$-points $\bfB(\Cp)$:
\[ \sigma \ast z = \kappa^{-1}(\sigma \circ \kappa(z)) \qmb{for all} z \in \bfB(\Cp), \sigma \in G_L.\]
From the proof of \cite[Corollary 3.8]{ST}, we can also deduce the following 
\begin{proposition}\label{prop:TwistedAction} The algebra isomorphism $\kappa^\ast = \mu_{\rig} \circ \cF : \cO(\frX \times_L L_\infty) \stackrel{\cong}{\longrightarrow} \cO(\bfB)$ is equivariant with respect to the natural $G_L$-action on the source, and the twisted $G_L$-action on the target.
\end{proposition}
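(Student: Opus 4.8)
The plan is to unwind the definitions of all the maps involved and reduce the claimed equivariance to the single identity $\sigma(\Delta_a) = \Delta_{\tau(\sigma)^{-1}\cdot a}$, viewed inside $o_\infty\dcroc{Z}$ with the coefficient-wise $G_L$-action, composed with the substitution $Z \mapsto [\tau(\sigma)^{-1}](Z)$. First I would recall that $\kappa^\ast = \mu_{\rig}\circ\cF$ is, by construction, the unique continuous $L_\infty$-algebra isomorphism fitting into the commutative square relating $o_L\dcroc{o_L}\otimes_{o_L}L$, $o_\infty\dcroc{Z}\otimes_{o_L}L$, $D^{L-\an}(o_L,L_\infty)$ and $\cO(\bfB)$; since $o_L\dcroc{o_L}\otimes_{o_L}L$ is dense in $D^{L-\an}(o_L,L_\infty)$ and both sides are Hausdorff, it suffices to check the equivariance on the image of the Dirac measures $\delta_a$, $a\in o_L$, which by the commutative diagram are sent to $\Delta_a\in o_\infty\dcroc{Z}$.

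Next I would compute how $G_L$ acts on each side at $\delta_a$. On the source $\cO(\frX\times_L L_\infty)$, the natural $G_L$-action is on the second tensor factor, so it acts on $\cF(\delta_a)$ through the Fourier transform; tracing through the isomorphism $\cF$ and the functorial $G_L$-action on $D^{L-\an}(o_L,L_\infty)$ coming from the coefficient field, one sees that $\sigma$ acts on the distribution side by the coefficient-wise action, i.e. it fixes $\delta_a$ (which is defined over $L$) — so the source-action, transported to $\cO(\bfB)$ via $\kappa^\ast$, would a priori send the image of $\delta_a$ to $\sigma(\Delta_a)$ where $\sigma$ acts coefficient-wise on $o_\infty\dcroc{Z}$. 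Wait: more carefully, the point is that $\kappa^\ast$ is $L_\infty$-linear, not $L$-linear, so the $G_L$-action on the source (acting $L$-linearly but not $L_\infty$-linearly, via the second factor) does \emph{not} simply fix $\delta_a$; rather, I must track the $G_L$-action on the coefficients $L_\infty$ through $\mu_{\rig}$. On the target with the twisted action, $\sigma\ast\Delta_a = {}^\sigma(\Delta_a)([\tau(\sigma)^{-1}](Z))$. So the whole proposition comes down to showing these two recipes agree, which is exactly the statement that $\kappa^\ast$ intertwines them on the dense subalgebra generated by the $\Delta_a$.

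The cleanest route, and the one I would actually write, is to invoke the description of $\kappa$ on $\Cp$-points together with the twisted action on $\bfB(\Cp)$ recalled just above: Schneider--Teitelbaum show $\sigma\ast z = \kappa^{-1}(\sigma\circ\kappa(z))$ and that this geometric twisted action induces precisely the twisted action on $\cO(\bfB)$. Since $\kappa(z)(a) = \Delta_a(z)$ and $\sigma$ acts on a character $\chi\in\frX(\Cp)$ by $(\sigma\cdot\chi)(a) = \sigma(\chi(a))$, one checks $(\sigma\circ\kappa(z))(a) = \sigma(\Delta_a(z)) = {}^\sigma(\Delta_a)(\sigma(z))$, and comparing with $\kappa(\sigma\ast z)(a) = \Delta_a(\sigma\ast z)$ identifies $\sigma\ast z$ on $\bfB(\Cp)$; dualising, the induced map on functions is the twisted action, and since $\kappa^\ast$ is by definition the comorphism of $\kappa$, it is automatically $G_L$-equivariant for the natural action on $\cO(\frX\times_L L_\infty)$ (induced by $\sigma\cdot\chi$) and the twisted action on $\cO(\bfB)$. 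This is essentially the content of \cite[Corollary 3.8]{ST}, so the proof amounts to extracting the statement about comorphisms from the cited statement about points.

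The main obstacle is bookkeeping rather than mathematics: one must be scrupulous about \emph{which} $G_L$-action is "natural" on each object (coefficient-wise on $D^{L-\an}(o_L,L_\infty)$ and on $\cO(\frX\times_L L_\infty)$ via base change, versus the twisted action on $\cO(\bfB)$) and about the fact that $\mu_{\rig}$ and $\cF$ are only $L_\infty$-linear, so that the $G_L$-action genuinely moves coefficients on both sides. Once the dictionary between the point-level twisted action $\sigma\ast z$ and the function-level twisted action is in hand — which is precisely what \cite[Corollary 3.8]{ST} and the displayed formula $\sigma\ast z = \kappa^{-1}(\sigma\circ\kappa(z))$ provide — the equivariance of $\kappa^\ast$ is a formal consequence of $\kappa^\ast$ being the comorphism of $\kappa$, together with the density of the span of the $\Delta_a$ if one prefers to argue algebraically at the level of Fréchet algebras instead of geometrically.
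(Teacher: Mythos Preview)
Your proposal is correct and lands on essentially the same approach as the paper: both deduce the equivariance from \cite[Corollary 3.8]{ST}, via the point-level identity $\sigma\ast z = \kappa^{-1}(\sigma\circ\kappa(z))$ together with the fact that $\kappa^\ast$ is the comorphism of $\kappa$. The paper in fact gives no argument beyond the citation, so your explicit unpacking of the comorphism step is a reasonable elaboration; the initial detour through density of Dirac measures is unnecessary once you have the geometric argument, as you yourself note.
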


\begin{corollary} The map $\mu_{\rig}$ restricts to give an isomorphism of $o_L$-algebras
\[ (\mu_{\rig} \circ \cF)^\circ : \cO^\circ(\frX) \stackrel{\cong}{\longrightarrow} o_\infty\dcroc{Z}^{G_L, \ast}.\]
\end{corollary}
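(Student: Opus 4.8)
The plan is to deduce this from Proposition \ref{prop:TwistedAction} purely formally, by restricting the isomorphism $\kappa^\ast = \mu_{\rig}\circ\cF$ to power-bounded elements on both sides and then taking $G_L$-invariants. The only substantive thing to check is that the notion ``power-bounded'' behaves well under the three relevant operations: the isomorphism $\kappa$ of rigid varieties, the coefficient extension $L\subseteq L_\infty$, and the (semilinear) $G_L$-action. Everything else will be bookkeeping with the already-established $\kappa^\ast$.

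First, since $\kappa : \bfB \xrightarrow{\ \cong\ } \frX\times_L L_\infty$ is an isomorphism of rigid $L_\infty$-analytic varieties, a morphism of rigid spaces pulls back power-bounded functions to power-bounded functions (see \cite[\S 1.2.5]{BGR}), and applying this to $\kappa$ and to $\kappa^{-1}$ shows that $\kappa^\ast$ restricts to an isomorphism $\cO^\circ(\frX\times_L L_\infty)\xrightarrow{\cong}\cO^\circ(\bfB)$. Next I identify $\cO^\circ(\bfB)$ explicitly: for $F=\sum_{n\geq 0}a_nZ^n\in\cO(\bfB)$ with $a_n\in L_\infty$, the spectral norm of $F$ on the closed subdisc $\bfB(r)$ is $\sup_n|a_n|r^n$, so $F$ is power-bounded, i.e.\ $\sup_{r<1}\sup_n|a_n|r^n\leq 1$, precisely when $|a_n|\leq 1$ for all $n$, i.e.\ precisely when $F\in o_\infty\dcroc{Z}$ (recall $o_\infty=L_\infty\cap o_{\Cp}$ is the ring of integers of $L_\infty$, and any such power series automatically converges on $\bfB$). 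Hence $\cO^\circ(\bfB)=o_\infty\dcroc{Z}$.

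Next I descend power-bounded elements along $L\subseteq L_\infty$. Fixing an admissible covering of the Stein space $\frX$ by affinoid subdomains, the spectral (supremum) seminorm on each such affinoid algebra is unchanged after the complete base change $-\,\widehat{\otimes}_L L_\infty$; consequently a function $f\in\cO(\frX)$ is power-bounded if and only if its image in $\cO(\frX\times_L L_\infty)$ is. Combined with the equality $\cO(\frX)=\cO(\frX\times_L L_\infty)^{G_L}$ recalled above, this gives $\cO^\circ(\frX)=\cO(\frX)\cap\cO^\circ(\frX\times_L L_\infty)$. Moreover each $\sigma\in G_L$ acts on $\cO(\frX\times_L L_\infty)$ by a ring automorphism, hence preserves the subring of power-bounded elements, so $\cO(\frX)\cap\cO^\circ(\frX\times_L L_\infty)=\bigl(\cO^\circ(\frX\times_L L_\infty)\bigr)^{G_L}$.

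Finally I assemble the pieces. By Proposition \ref{prop:TwistedAction}, $\kappa^\ast$ intertwines the natural $G_L$-action on $\cO(\frX\times_L L_\infty)$ with the twisted action on $\cO(\bfB)$, and the twisted action preserves $o_\infty\dcroc{Z}=\cO^\circ(\bfB)$ since $[\tau(\sigma)^{-1}](Z)\in Z\cdot o_L\dcroc{Z}$ and $\sigma$ fixes $o_L$ and preserves $o_\infty$. Taking $G_L$-invariants of the restricted isomorphism $\cO^\circ(\frX\times_L L_\infty)\xrightarrow{\cong}o_\infty\dcroc{Z}$ and using the previous paragraph yields the desired $o_L$-algebra isomorphism $(\mu_{\rig}\circ\cF)^\circ:\cO^\circ(\frX)\xrightarrow{\cong}o_\infty\dcroc{Z}^{G_L,\ast}$. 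The one point that is not purely formal is the invariance of the spectral seminorm under the complete coefficient extension $L\subseteq L_\infty$ (equivalently, that power-boundedness on $\frX$ can be tested after base change); this is standard for affinoid algebras, but must be invoked explicitly because the source and target of the final map live over different base fields.
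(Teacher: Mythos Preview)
Your proof is correct and follows essentially the same route as the paper's: apply $\cO^\circ$ to the isomorphism $\kappa$, identify $\cO^\circ(\bfB)=o_\infty\dcroc{Z}$, use that $\cO^\circ(\frX)=(\cO^\circ(\frX\times_L L_\infty))^{G_L}$, and then pass to $G_L$-invariants via Proposition~\ref{prop:TwistedAction}. You have simply unpacked in detail the two facts the paper labels ``well known'' (the identification of $\cO^\circ(\bfB)$ and the compatibility of power-boundedness with the base change $L\subseteq L_\infty$).
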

\begin{proof} Applying the functor $\cO^\circ$ to the isomorphism of rigid $L_\infty$-analytic varieties $\kappa : \bfB \to \frX \times_L L_\infty$, we see that $\mu_{\rig} \circ \cF$ restricts to an $o_\infty$-algebra isomorphism
\[ \cO(\frX \times_L L_\infty)^\circ \stackrel{\cong}{\longrightarrow} \cO(\bfB)^\circ.\]
It is well known that $\cO(\bfB)^\circ = o_\infty\dcroc{Z}$ and that $\Lambda_L(\frX) = \cO(\frX)^\circ = (\cO(\frX \times_L L_\infty)^\circ)^{G_L}$. The result follows by passing to $G_L$-invariants and applying Proposition \ref{prop:TwistedAction}.
\end{proof}
Consequently, the image of the Amice-Katz transform $\mu : o_L\dcroc{o_L} \to o_\infty\dcroc{Z}$ lands in the subring of twisted $G_L$-invariants. Our main goal in this paper is to study the following 
\begin{question} Is the Amice-Katz transform $\mu : o_L\dcroc{o_L} \to o_\infty\dcroc{Z}^{G_L, \ast}$ an isomorphism? \end{question}

\subsection{Some properties of $\Lambda_L(\frX)$}
\label{subextraprop}

Recall that $\Lambda_L(\frX)$ is the ring $\mathcal{O}_L^{\leq 1}(\mathfrak{X}) = o_\infty\dcroc{Z}^{G_L, \ast}$. From \cite{BSX} we know (through the LT-isomorphism) that $\Lambda_L(\frX)$ is an integral domain and that the norm $\|\ \|_{\mathfrak{X}} = \|\ \|_1$ on $\Lambda_L(\frX)$ is multiplicative.

\begin{lemma}\label{residue}
If $L \neq \Qp$ and if $K$ is a finite extension of $L$, then $\overline{k}\dcroc{Z}^{G_K,*} = k_K$.
\end{lemma}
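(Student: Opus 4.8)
The plan is to push the twisted $G_K$-invariance down to the inertia subgroup, where it becomes invariance under the substitutions $Z \mapsto [b](Z) \bmod \pi$ for $b$ ranging over an open subgroup of $o_L^\times$, and then to rule out non-constant invariants by a characteristic-$p$ power-series computation. First I would record the easy inclusion $k_K \subseteq \overline{k}\dcroc{Z}^{G_K,*}$: a constant $c \in k_K$ is fixed coefficient-wise by $G_K$ and is unchanged by any substitution of the form $Z \mapsto (\cdots)Z + \cdots$, so $\sigma \ast c = c$.

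For the reverse inclusion, take $F \in \overline{k}\dcroc{Z}$ twisted $G_K$-invariant and restrict attention to $\sigma$ in the inertia group $I_K$. Such $\sigma$ act trivially on the residue field $\overline{k}$, so the twisted action collapses to $\sigma \ast F = F\big([\tau(\sigma)^{-1}](Z) \bmod \pi\big)$. Writing $\phi_b(Z) := [b](Z) \bmod \pi \in k_L\dcroc{Z}$ for $b \in o_L^\times$, invariance of $F$ gives $F(\phi_b(Z)) = F(Z)$ for all $b \in U := \tau(I_K)$. Since $L \neq \Qp$, Lemma \ref{opentau} tells us that $\tau(I_L)$ is open in $o_L^\times$; as $I_K$ has finite index in $I_L$, the subgroup $U$ is the continuous image of a compact group, hence closed, and has finite index in $o_L^\times$, so it is open. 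In particular $1 + \pi^j o_L \subseteq U$ for all large $j$.

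The crux is then to show that any $F \in \overline{k}\dcroc{Z}$ with $F(\phi_b(Z)) = F(Z)$ for all $b$ in an open subgroup $U \leq o_L^\times$ must be constant. Here I would use $[\pi](Z) \equiv Z^q \bmod \pi$ together with the additivity $[1+\pi^j] = [1]+[\pi^j]$ in $\End(\cG)$ to get $\phi_{1+\pi^j}(Z) = Z + Z^{q^j} + \bigO(Z^{q^j+1})$ in $k_L\dcroc{Z}$. Assume $F = \sum_{n \geq 0} c_n Z^n$ is non-constant; among the indices $n \geq 1$ with $c_n \neq 0$, pick $n^\ast = p^{s}m$ (with $p \nmid m$) for which the exponent $s$ of $p$ is minimal and, given that, $n^\ast$ itself is minimal. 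Using the Frobenius identity $(X+Y+\cdots)^{p^{s}} = X^{p^{s}} + Y^{p^{s}} + \cdots$ in characteristic $p$, a short computation shows $\phi_{1+\pi^j}(Z)^{n^\ast} - Z^{n^\ast} = m\,Z^{D} + \bigO(Z^{D+1})$ with $D := n^\ast + p^{s}(q^j-1)$ and $m \neq 0$ in $\overline{k}$, while for every other index $n$ with $c_n \neq 0$ the series $c_n\big(\phi_{1+\pi^j}(Z)^n - Z^n\big)$ has order strictly greater than $D$ once $j$ is large enough — this is exactly what the minimality in the choice of $s$ and $n^\ast$ buys, using that $q^j - 1 \to \infty$. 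Hence $F(\phi_{1+\pi^j}(Z)) - F(Z)$ has leading term $m\,c_{n^\ast} Z^D \neq 0$, contradicting invariance, so $F$ is a constant $c \in \overline{k}$.

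To finish, I would apply twisted invariance under an arbitrary $\sigma \in G_K$ to $F = c$, getting $\sigma(c) = c$, hence $c \in \overline{k}^{G_K} = k_K$ since the reduction map $G_K \twoheadrightarrow \Gal(\overline{k}/k_K)$ is surjective; this yields $\overline{k}\dcroc{Z}^{G_K,*} = k_K$. I expect the only real difficulty to be the crux step, and within it the bookkeeping showing that no cancellation occurs at degree $D$: because binomial coefficients collapse in characteristic $p$ one must locate the relevant term via the $p$-adic valuation of its exponent rather than by size, and one must check that the intermediate coefficients $c_n$ with $n^\ast < n$ cannot interfere — which is precisely where the openness of $U$ (equivalently, the hypothesis $L \neq \Qp$) and the freedom to enlarge $j$ are used.
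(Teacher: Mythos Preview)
Your argument is correct and follows a genuinely different route from the paper. Both proofs begin the same way: restrict to inertia $I_K$, where the twisted action becomes the substitution $Z \mapsto [\tau(\sigma)^{-1}](Z) \bmod \pi$, and use Lemma~\ref{opentau} to obtain an open subgroup $U \leq o_L^\times$ of such substitutions fixing $F$. From there the paper invokes the theory of the field of norms: it identifies the $(I_K,\ast)$-action with the Lubin--Tate $\chi_\pi$-action of $I_M$ for a suitable finite extension $M/L$, embeds $\overline{k}\dcroc{Z}$ equivariantly into $\tilde{\mathbf{E}}^+ \simeq \varprojlim_{(-)^q} o_{\Cp}$, and computes the $I_M$-invariants there using that $\Cp^{I_M}/\Qp$ is finitely ramified. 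Your approach is instead entirely elementary: you exploit the explicit shape $\phi_{1+\pi^j}(Z) = Z + Z^{q^j} + \bigO(Z^{q^j+1})$ and, via a well-chosen index $n^\ast$ of minimal $p$-part, isolate a coefficient of $F(\phi_{1+\pi^j}(Z)) - F(Z)$ that cannot vanish. The paper's proof is more conceptual and situates the lemma within the $(\varphi,\Gamma)$-module\,/\,tilting framework; yours is self-contained and needs nothing beyond Lemma~\ref{opentau}. One small slip in your closing commentary: the potential interference to rule out comes from indices $n < n^\ast$ (necessarily with $v_p(n) > s$), not from $n > n^\ast$ --- for the latter the order bound $n + p^{v_p(n)}(q^j-1) > D$ is automatic --- but your actual argument handles both cases correctly once $j$ is taken large.
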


\begin{proof}
If $g \in I_K$, then $g$ acts trivially on $\overline{k}$, so that the $G_{L,*}$ action of $g \in I_K$ on $\overline{k}\dcroc{Z}$ is given by $g : \sum_{n \geq 0} a_n Z^n \mapsto \sum_{n \geq 0} a_n ([\tau(g)^{-1}]Z)^n$. The character $\tau : I_K \to o_L^\times$ has an open image by lemma \ref{opentau}. This image therefore contains $\chi_\pi(I_M)$ where $M \subset L_\infty$ is some finite extension of $L$, and $\overline{k}\dcroc{Z}^{I_K,*} = \overline{k}\dcroc{Z}^{I_M}$ where $I_M$ acts on $\overline{k}\dcroc{Z}$ via $g : \sum_{n \geq 0} a_n Z^n \mapsto \sum_{n \geq 0} a_n ([\chi_\pi(g)]Z)^n$. We know from the theory of the field of norms that $\overline{k}\dcroc{Z}$ with that action of $I_M$ embeds into $\tilde{\mathbf{E}}^+ \simeq \varprojlim_{(-)^q} o_{\Cp}$ in an $I_M$-equivariant way. Let $P := \Cp^{I_M}$. We have $(\tilde{\mathbf{E}}^+)^{I_M} \simeq \varprojlim_{(-)^q} o_P = \overline{k}$ since $P/\Qp$ is finitely ramified. Hence $\overline{k}\dcroc{Z}^{I_M} = \overline{k}$ and $\overline{k}\dcroc{Z}^{I_K,*} = \overline{k}$. The lemma then follows from the fact that on $\overline{k}$, the twisted $G_L$-action coincides with the usual $G_L$-action, so that $\overline{k}^{G_K,*} = k_K$.
\end{proof}

We have a surjective map $\Lambda_L(\frX) \to k$ given by $f \mapsto f(\chi_{\mathrm{\triv}}) \bmod \mathfrak{m}_L$. Its kernel $\mathfrak{m}(\mathfrak{X}) := \{f \in \Lambda_L(\frX) : f(\chi_{\triv}) \in \mathfrak{m}_L\}$ is a maximal ideal of $\Lambda_L(\frX)$, with residue field $k$. Lemma \ref{residue} above implies that $\mathfrak{m}(\mathfrak{X}) = \mathcal{O}_{\mathfrak{m}_{\Cp}}(\mathfrak{X})^{G_L,*}$.

\begin{lemma}\label{local}
The ring $\Lambda_L(\frX)$ is a local ring.
\end{lemma}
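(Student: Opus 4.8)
The plan is to show that $\mathfrak{m}(\frX)$ is the unique maximal ideal of $\Lambda_L(\frX)$. Since $\mathfrak{m}(\frX)$ is already known to be maximal (with residue field $k$), it is enough to check that every $f \in \Lambda_L(\frX) = o_\infty\dcroc{Z}^{G_L,\ast}$ with $f \notin \mathfrak{m}(\frX)$ is a unit of $\Lambda_L(\frX)$.

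First I would identify $f(\chi_{\triv})$ with the constant coefficient of $f$, viewed as a power series $\sum_{n \geq 0} a_n Z^n \in o_\infty\dcroc{Z}$. Indeed $\Delta_a(0) = \exp(a \Omega \log_{\LT}(0)) = 1$ for every $a \in o_L$, so under the Schneider--Teitelbaum uniformisation we have $\kappa(0) = \chi_{\triv}$, whence $f(\chi_{\triv}) = a_0$. Moreover, since $[\tau(\sigma)^{-1}](Z) \in Z\,o_L\dcroc{Z}$ has zero constant term, the constant coefficient of $\sigma \ast f$ is $\sigma(a_0)$; as $f$ is fixed by the twisted $G_L$-action, we get $a_0 \in \Cp^{G_L} = L$, and therefore $a_0 \in o_\infty \cap L = o_L$. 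The assumption $f \notin \mathfrak{m}(\frX)$ then says precisely that $a_0 \in o_L^\times$.

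It remains to invert $f$. Because $a_0^{-1} \in o_L \subseteq o_\infty$, we may write $f = a_0(1 - h)$ with $h := -a_0^{-1}\sum_{n \geq 1} a_n Z^n \in Z\,o_\infty\dcroc{Z}$; the series $a_0^{-1}\sum_{k \geq 0} h^k$ then converges $Z$-adically in $o_\infty\dcroc{Z}$ and is an inverse for $f$ there. Finally, each $\sigma \in G_L$ acts on $o_\infty\dcroc{Z}$ by the ring automorphism $F \mapsto \sigma \ast F$, so from $\sigma \ast f = f$ we obtain $\sigma \ast f^{-1} = (\sigma \ast f)^{-1} = f^{-1}$; hence $f^{-1} \in o_\infty\dcroc{Z}^{G_L,\ast} = \Lambda_L(\frX)$ and $f$ is a unit, as required.

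There is no real obstacle here. The only points that need a little care are the identification of the value $f(\chi_{\triv})$ with the constant term of the corresponding power series --- together with the (twisted) $G_L$-invariance argument showing this term lies in $o_L$ --- and the remark that an inverse produced inside the larger ring $o_\infty\dcroc{Z}$ automatically lies in the subring of twisted $G_L$-invariants, so that no separate descent argument is needed.
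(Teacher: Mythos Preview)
Your proof is correct and follows the same strategy as the paper's: invert $f$ in the ambient power series ring and observe that the inverse inherits the twisted $G_L$-invariance. Your version is in fact slightly cleaner, since you invert directly inside $o_\infty\dcroc{Z}$ so that $\|f^{-1}\|\leq 1$ is automatic, whereas the paper inverts in $o_{\Cp}\dcroc{Z}$, descends to $\cO_L(\frX)$, and then invokes \cite{BSX}~Cor.~1.24 together with the multiplicativity of $\|\cdot\|_{\frX}$ to recover the bound on the inverse.
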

\begin{proof}
We have to show that $\mathfrak{m}(\mathfrak{X})$ is the unique maximal ideal, i.e., that $f$ is a unit in $\Lambda_L(\frX)$ if and only if $f(\chi_{\triv}) \in o_L^\times$. The direct implication is obvious. We therefore assume that $f(\chi_{\triv}) \in o_L^\times$. The image $F(Z) \in o_{\mathbf{C}_p}\dcroc{Z}$ of $f$ under the LT-isomorphism then satisfies $F(0) \in o_L^\times$ and hence is a unit in $o_{\mathbf{C}_p}\dcroc{Z}$. We deduce that $f$ is a unit in $\mathcal{O}_{\Cp}(\mathfrak{X})$. Since the twisted $G_L$-action must fix with $f$ also its inverse we obtain that $f$ is a unit in $\mathcal{O}_L(\mathfrak{X})$ and hence in $\mathcal{O}^b_L(\mathfrak{X})$ by \cite{BSX} Cor.\ 1.24. The multiplicativity of the norm $\|\ \|_{\mathfrak{X}}$ finally implies that $1 = \| f \|_{\mathfrak{X}} = \| f^{-1} \|_{\mathfrak{X}}$.
\end{proof}

The $o_L$-algebra $\Lambda_L(\frX)$ carries two natural topologies. One is the $p$-adic topology which is induced by the norm $\|\ \|_{\mathfrak{X}}$. The other is the topology induced by the Frechet topology of $\mathcal{O}_L(\mathfrak{X})$. We will call the latter the weak topology on $\Lambda_L(\frX)$.

\begin{remark}\label{padic-weak}
The weak topology on $\Lambda_L(\frX)$ is coarser than the $p$-adic topology.
\end{remark}
\begin{proof}
Let $\mathfrak{X} = \bigcup_{n \geq 1} \mathfrak{X}_n$ be a Stein covering by affinoid subdomains $\mathfrak{X}_n$ (cf.\ \cite{BSX} \S1.3). The Frechet topology of $\mathcal{O}_L(\mathfrak{X})$ is the projective limit of the Banach topologies on the affinoid algebras $\mathcal{O}_L(\mathfrak{X}_n)$. Since $\mathfrak{X}$ is reduced these Banach topologies are defined by the respective supremum norm (cf.\ \cite{BGR} Thm.\ 6.2.4/1). Therefore the Banach topology on $\mathcal{O}_L(\mathfrak{X}_n)$ induces on its unit ball with respect to the supremum norm the $p$-adic topology. It follows that the natural maps $\Lambda_L(\frX) \rightarrow \mathcal{O}_L(\mathfrak{X}_n)$ are continuous for the $p$-adic topology on the source and the Banach topology on the target. Therefore the inclusion $\Lambda_L(\frX) \subseteq \mathcal{O}_L(\mathfrak{X})$ is continuous for the $p$-adic topology on the source and the Frechet topology on the target.
\end{proof}

\begin{lemma}\label{padic-complete}
$\Lambda_L(\frX)$ is $p$-adically separated and complete.
\end{lemma}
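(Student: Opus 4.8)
\subsection*{Proof proposal for Lemma \ref{padic-complete}}

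\emph{Strategy.} The plan is to use the Lubin--Tate identification $\Lambda_L(\frX) = o_{\Cp}\dcroc{Z}^{G_L,\ast}$ together with the fact that under this identification the norm $\|\ \|_{\frX} = \|\ \|_1$ is the Gauss norm $\|\sum_n a_n Z^n\|_1 = \sup_n |a_n|$, so that the $p$-adic topology on $\Lambda_L(\frX)$ is the one induced from the $p$-adic topology on the ambient ring $o_{\Cp}\dcroc{Z}$, and in particular $p^n\Lambda_L(\frX) = \Lambda_L(\frX)\cap p^no_{\Cp}\dcroc{Z}$ (the inclusion ``$\subseteq$'' is clear, and ``$\supseteq$'' follows because $o_{\Cp}\dcroc{Z}$ is $p$-torsion free and the twisted $G_L$-action is $\Zp$-linear on coefficients, so dividing a twisted-invariant power series by $p^n$ again gives a twisted-invariant power series). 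Granting this, the lemma reduces to two assertions: (i) $o_{\Cp}\dcroc{Z}$ is $p$-adically separated and complete; (ii) $\Lambda_L(\frX)$ is a closed $o_L$-subalgebra of $o_{\Cp}\dcroc{Z}$ for the $p$-adic topology. A closed sub(ring) of a $p$-adically separated and complete ring, equipped with the subspace topology, is itself $p$-adically separated and complete, which then finishes the proof.

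\emph{Step 1: $o_{\Cp}\dcroc{Z}$ is $p$-adically separated and complete.} First, $o_{\Cp}$ is $p$-adically separated and complete: it is the ring of integers of the complete field $\Cp$, so a sequence which is $p$-adically Cauchy in $o_{\Cp}$ is Cauchy in $\Cp$ and converges there to an element of absolute value $\leq 1$, i.e.\ to an element of $o_{\Cp}$; separatedness is clear since $\bigcap_n p^no_{\Cp}=0$. Now for any $p$-adically separated and complete ring $R$ one has $R\dcroc{Z}/p^nR\dcroc{Z} = (R/p^nR)\dcroc{Z}$, because a power series lies in $p^nR\dcroc{Z}$ if and only if all of its coefficients do; passing to the inverse limit over $n$ gives $R\dcroc{Z} = \varprojlim_n (R/p^nR)\dcroc{Z}$, so $R\dcroc{Z}$ is $p$-adically separated and complete. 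Apply this with $R = o_{\Cp}$. In particular $\Lambda_L(\frX) \subseteq o_{\Cp}\dcroc{Z}$ is automatically $p$-adically separated.

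\emph{Step 2: $\Lambda_L(\frX)$ is $p$-adically closed, and completeness follows.} For each $\sigma \in G_L$ the map $F \mapsto \sigma \ast F = {}^\sigma F([\tau(\sigma)^{-1}](Z))$ is an isometry of $(o_{\Cp}\dcroc{Z},\|\ \|_1)$: the coefficient-wise action ${}^\sigma(-)$ preserves the absolute values of the coefficients, and substitution of the invertible power series $[\tau(\sigma)^{-1}](Z) \in Zo_L\dcroc{Z}$ (whose compositional inverse is $[\tau(\sigma)](Z)$, also in $Zo_L\dcroc{Z}$) preserves the Gauss norm. Hence each fixed set $\{F : \sigma\ast F = F\}$ is $\|\ \|_1$-closed, and $\Lambda_L(\frX) = \bigcap_{\sigma\in G_L}\{F : \sigma \ast F = F\}$ is $\|\ \|_1$-closed, i.e.\ $p$-adically closed, in $o_{\Cp}\dcroc{Z}$. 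Finally, given a $p$-adically Cauchy sequence $(f_k)$ in $\Lambda_L(\frX)$: it is $p$-adically Cauchy in $o_{\Cp}\dcroc{Z}$, hence converges there to some $f$; since $\Lambda_L(\frX)$ is closed, $f\in\Lambda_L(\frX)$; and since $p^n\Lambda_L(\frX) = \Lambda_L(\frX)\cap p^no_{\Cp}\dcroc{Z}$, the convergence $f_k\to f$ takes place $p$-adically \emph{within} $\Lambda_L(\frX)$. Equivalently, $\Lambda_L(\frX) = \varprojlim_n \Lambda_L(\frX)/p^n\Lambda_L(\frX)$, so $\Lambda_L(\frX)$ is $p$-adically separated and complete.

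\emph{Main obstacle.} There is no serious obstacle here; everything is formal once the Lubin--Tate picture of $\S$\ref{subextraprop} is in place. The one point requiring a little care is matching the \emph{intrinsic} $p$-adic topology of $\Lambda_L(\frX)$ with the subspace topology from $o_{\Cp}\dcroc{Z}$ — that is, the identity $p^n\Lambda_L(\frX) = \Lambda_L(\frX)\cap p^no_{\Cp}\dcroc{Z}$ — since it is this that lets closedness and completeness descend to the subring of twisted invariants.
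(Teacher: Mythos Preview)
Your proof is correct, but it takes a genuinely different route from the paper's. The paper proves the more general assertion that $\mathcal{O}_L^{\leq 1}(\mathfrak{Y})$ is $p$-adically separated and complete for \emph{any} reduced rigid analytic variety $\mathfrak{Y}$ over $L$: choosing an admissible affinoid covering $\mathfrak{Y} = \bigcup_i \mathfrak{Y}_i$, a $\|\cdot\|_{\mathfrak{Y}}$-Cauchy sequence restricts to a Cauchy sequence in each $\mathcal{O}_L^{\leq 1}(\mathfrak{Y}_i)$, the limits $g_i$ glue to a global $g$, and a short $\epsilon$-argument shows $\|g - f_n\|_{\mathfrak{Y}} \to 0$. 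This argument is intrinsic (it never invokes the Lubin--Tate uniformization) and produces a reusable lemma. Your approach instead exploits the concrete identification $\Lambda_L(\frX) = o_{\Cp}\dcroc{Z}^{G_L,\ast}$, reducing everything to the $p$-adic completeness of $o_{\Cp}\dcroc{Z}$ and the closedness of a fixed-point set under isometries; this is quicker once the LT picture of \S\ref{subextraprop} is in hand, and has the pleasant feature of making the equality $p^n\Lambda_L(\frX) = \Lambda_L(\frX)\cap p^n o_{\Cp}\dcroc{Z}$ explicit, but of course it is specific to $\frX$.
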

\begin{proof}
We show that, for any reduced rigid analytic variety $\mathfrak{Y}$ over $L$, the ring $\mathcal{O}_L^{\leq 1}(\mathfrak{Y})$ of holomorphic functions bounded by $1$ is $p$-adically separated and complete. Let $\mathfrak{Y} = \bigcup_{i \in I} \mathfrak{Y}_i$ be an admissible covering by affinoid subdomains. Since $\mathfrak{Y}$ is assumed to be reduced, the supremum seminorm on each $\mathcal{O}_L(\mathfrak{Y}_i)$ is a norm and defines its affinoid Banach topology (cf.\ \cite{BSX} \S1.3). Hence $\| \ \|_{\mathfrak{Y}}$ is a norm on $\mathcal{O}_L^b(\mathfrak{Y})$ and defines the $p$-adic topology on $\mathcal{O}_L^{\leq 1}(\mathfrak{Y})$. In particular, the $p$-adic topology on $\mathcal{O}_L^{\leq 1}(\mathfrak{Y})$ is separated. Now let $(f_n)_n$ be a Cauchy sequence for $\| \ \|_{\mathfrak{Y}}$ in $\mathcal{O}_L^{\leq 1}(\mathfrak{Y})$. It restricts to a Cauchy sequence in $\mathcal{O}_L^{\leq 1}(\mathfrak{Y}_i)$ for each $i \in I$ which converges to a function $g_i \in \mathcal{O}_L^{\leq 1}(\mathfrak{Y}_i)$. Obviously the $g_i$ glue to a function $g \in \mathcal{O}_L^{\leq 1}(\mathfrak{Y})$. We have to show that the sequence $(f_n)_n$ converges to $g$ with respect to $\| \ \|_{\mathfrak{Y}}$. Let $\epsilon > 0$ be arbitrary. First we find an integer $N > 0$ such that $\| f_m - f_n\|_{\mathfrak{Y}} < \epsilon$ for all $m,n > N$. Secondly, for any $i \in I$, we have $\|g - f_m\|_{\mathfrak{Y}_i} < \epsilon$ for all sufficiently large (depending on $i$) $m$. It follows that $\|g - f_n\|_{\mathfrak{Y}_i} \leq \max(\|g - f_m\|_{\mathfrak{Y}_i}, \| f_m - f_n\|_{\mathfrak{Y}_i}) \leq \max(\|g - f_m\|_{\mathfrak{Y}_i}, \| f_m - f_n\|_{\mathfrak{Y}}) < \epsilon$ for any $n > N$ and any $i \in I$. Hence $\|g - f_n\|_{\mathfrak{Y}} \leq \epsilon$ for any $n > N$.
\end{proof}

\begin{proposition}\label{weak-compact}
$\Lambda_L(\frX)$ is compact in the weak topology.
\end{proposition}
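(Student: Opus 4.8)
The plan is to exhibit $\Lambda_L(\frX) = o_\infty\dcroc{Z}^{G_L,\ast}$ as a closed subspace of a compact space. The natural candidate is $o_\infty\dcroc{Z}$ itself, equipped with the product topology obtained by writing $o_\infty\dcroc{Z} = \prod_{n\geq 0} o_\infty$, where each copy of $o_\infty$ carries its $p$-adic topology. Since $o_\infty$ is the ring of integers in the complete field $L_\infty$, it is $p$-adically separated and complete, but it is \emph{not} compact (its residue field $\overline{k}$ is infinite). So the first point to establish is that the subspace $\Lambda_L(\frX)$ is nevertheless compact; the coefficients of a twisted-invariant power series cannot range over all of $o_\infty$. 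Concretely, I would argue that for each $n$ the $n$-th coefficient of an element of $\Lambda_L(\frX)$ lies in a \emph{compact} subset $C_n \subseteq o_\infty$, so that $\Lambda_L(\frX)$ embeds as a closed subset of the compact space $\prod_{n\geq 0} C_n$.

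The key input for the existence of such $C_n$ is the twisted $G_L$-invariance together with Lemma \ref{residue}. Reducing modulo $\mathfrak{m}_L$, a twisted-invariant power series $F(Z) = \sum a_n Z^n$ gives an element of $\overline{k}\dcroc{Z}^{G_L,\ast}$, which by (the $K=L$ case of) Lemma \ref{residue} equals $k$. More usefully, reducing modulo $\mathfrak{m}_{\Cp}^{j}$ or working with the finite quotients $o_\infty/\pi^j o_\infty$, the twisted $G_L$-action on $(o_\infty/\pi^j o_\infty)\dcroc{Z}$ still constrains the coefficients: each coefficient $a_n \bmod \pi^j$ must be fixed by $\ker\tau$ acting through the (open, by Lemma \ref{opentau}) image $\tau(I_L)$ composed with the twist $[\tau(g)^{-1}]$ on the $Z$-variable. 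Carrying this out, I would show the reduction $\Lambda_L(\frX)/\pi^j \Lambda_L(\frX)$, or rather the image of $\Lambda_L(\frX)$ in $(o_\infty/\pi^j)\dcroc{Z}$, is contained in $(o_\tau/\pi^j o_\tau)\dcroc{Z}$ or a similarly \emph{finite-residue} subring — in any case a ring whose degree-$\leq N$ truncations are finite sets. Then $C_n := \varprojlim_j (\text{image of $n$-th coefficients in } o_\infty/\pi^j)$ is a profinite, hence compact, subset of $o_\infty$.

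Granting that, the proof finishes formally: $\Lambda_L(\frX) \hookrightarrow \prod_{n\geq 0} C_n$ is a topological embedding for the weak topology, because on $o_\infty\dcroc{Z}$ the Fréchet (weak) topology restricted from $\cO(\bfB)$ coincides with the coefficientwise product topology — this is standard for Stein algebras of the open disc and is implicit in the discussion of $\cO(\bfB) = \bigcap_n L_\infty\langle Z/\dot r_n\rangle$ preceding the statement. The image is closed: it is cut out by the twisted-invariance equations $\sigma\ast F = F$ for all $\sigma\in G_L$, each of which is a closed condition in the product topology (the two sides depend continuously on $F$, and $o_\infty$ is Hausdorff), so $\Lambda_L(\frX)$ is an intersection of closed sets. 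A closed subset of the compact space $\prod_n C_n$ is compact, which is the claim.

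The main obstacle is the second paragraph: pinning down precisely why the coefficients $a_n$ of a twisted-invariant series are forced into a \emph{compact} (profinite) subset of $o_\infty$ rather than merely a closed one. This is exactly the phenomenon behind Lemma \ref{residue} — twisted invariance collapses the infinite residue field $\overline{k}$ down to $k$ — and the work is to propagate that collapse from the residue level to all the finite levels $o_\infty/\pi^j o_\infty$, uniformly in $n$, using the openness of $\tau(I_L)$ from Lemma \ref{opentau} and the field-of-norms argument already deployed in the proof of Lemma \ref{residue}. Once the compactness of each $C_n$ is in hand, everything else is soft point-set topology.
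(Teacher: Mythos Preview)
Your strategy of realising $\Lambda_L(\frX)$ as a closed subspace of a compact product is sound, and the identification of the weak topology with the coefficientwise topology on $o_\infty\dcroc{Z}$ is correct. But the step you flag as the main obstacle is a genuine gap, and the method you sketch for it does not work. First, elements of $\ker\tau$ already act trivially on $o_\infty$ (by Lemma~\ref{lem:LOmegaDense}), so the ``constraint from $\ker\tau$'' you invoke is vacuous; the real constraints come from $G_L/\ker\tau$ via the substitution $Z\mapsto[\tau(\sigma)^{-1}](Z)$, and these mix all coefficients up to a given degree rather than bounding each $a_n$ individually. Second, the claim that the image lands in $(o_\tau/\pi^j o_\tau)\dcroc{Z}$ is unfounded (there is no reason individual $a_n$ should be algebraic), and in any case $o_\tau/\pi^j o_\tau$ is still infinite, so you would not get finite truncations that way. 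The field-of-norms argument behind Lemma~\ref{residue} lives in characteristic $p$ and does not lift to $o_\infty/\pi^j$ in any evident manner.

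Your approach can be salvaged, but with a different input. Let $J_n\subseteq o_\infty^{\,n+1}$ be the set of truncations $(a_0,\ldots,a_n)$ satisfying $\sigma\ast F\equiv F\bmod Z^{n+1}$ for all $\sigma\in G_L$. The filtration by powers of $Z$ has graded pieces on which $G_L$ acts semilinearly by $c\mapsto\sigma(c)\tau(\sigma)^{-k}$, and Ax--Sen--Tate gives one-dimensional invariants $L\cdot\Omega^k$ on each. Hence $J_n$ lies in a finite-dimensional $L$-subspace of $L_\infty^{\,n+1}$; intersecting with the bounded set $o_\infty^{\,n+1}$ and using local compactness of $L$ shows each $J_n$ is compact, and then $\Lambda_L(\frX)=\varprojlim_n J_n$ is compact. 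This is Sen-theory rather than a mod-$\pi^j$ lift of Lemma~\ref{residue}.

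The paper's proof avoids the Lubin--Tate coordinate altogether and works directly over $L$: since $\frX$ is strictly quasi-Stein, the restriction maps $\cO_L(\frX_{n+1})\to\cO_L(\frX_n)$ are compact; over the locally compact field $L$ this forces the closure of $\cO_L^{\leq 1}(\frX_{n+1})$ in $\cO_L(\frX_n)$ to be compact, so every quotient of $\Lambda_L(\frX)$ by an open lattice in $\cO_L(\frX)$ is finite, and weak completeness then exhibits $\Lambda_L(\frX)$ as a profinite set. Both routes ultimately hinge on $L$ being locally compact, but the paper's is purely rigid-analytic and needs nothing about the twisted action.
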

\begin{proof}
According to \cite{Eme} Prop.\ 6.4.5 the space $\mathfrak{X}$ is strictly quasi-Stein. This means that a Stein covering $\mathfrak{X} = \bigcup_{n \geq 1} \mathfrak{X}_n$ can be chosen such that the inclusion maps $\mathfrak{X}_n \subseteq \mathfrak{X}_{n+1}$ are relatively compact. By loc.\ cit.\ Prop.\ 2.1.16 this implies that the restriction maps $\mathcal{O}_L(\mathfrak{X}_{n+1}) \rightarrow \mathcal{O}_L(\mathfrak{X}_n)$, which we simply view as inclusions, are compact maps between Banach spaces. Working over a locally compact field we deduce (cf.\ \cite{SchNFA} Remark 16.3 and \cite{PGS} Cor.\ 6.1.14) that the closure $C_n$ of $\mathcal{O}^{\leq 1}_L(\mathfrak{X}_{n+1})$ in $\mathcal{O}_L(\mathfrak{X}_n)$ is compact. We, of course, have $\Lambda_L(\frX) \subseteq \mathcal{O}^{\leq 1}_L(\mathfrak{X}_{n+1}) \subseteq C_n$. Therefore, if $L_n \subseteq \mathcal{O}_L(\mathfrak{X}_n)$ is any open lattice, then the $o_L$-modules $\Lambda_L(\frX) / \Lambda_L(\frX) \cap L_n \subseteq C_n / C_n \cap L_n$ are finite. It is straightforward to see that then $\Lambda_L(\frX) / \Lambda_L(\frX) \cap L$ must be finite for any open lattice $L \subseteq \mathcal{O}_L(\mathfrak{X})$. On the other hand $\Lambda_L(\frX)$ is weakly closed in $\mathcal{O}_L(\mathfrak{X})$ and hence is weakly complete. It follows (cf.\ \cite{SchNFA} Cor.\ 7.6) that $\Lambda_L(\frX)$ with its weak topology is the projective limit of the finite groups $\Lambda_L(\frX) / \Lambda_L(\frX) \cap L$ and hence is compact.
\end{proof}

\begin{lemma}
\label{weak-madic}
\hspace{10pt}
\begin{enumerate}
  \item Any open neighbourhood of zero for the weak topology on $\Lambda_L(\frX)$ contains a power of the maximal ideal $\mathfrak{m}(\mathfrak{X})$.
  \item If the ideal $\mathfrak{m}(\mathfrak{X})$ is finitely generated then the weak topology on $\Lambda_L(\frX)$ coincides with the $\mathfrak{m}(\mathfrak{X})$-topology.
\end{enumerate}
\end{lemma}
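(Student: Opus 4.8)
The plan is to exploit two facts already established: that $\Lambda_L(\frX)$, with its weak topology, is a compact topological ring which is moreover profinite as a topological group (Proposition \ref{weak-compact}, whose proof exhibits $\Lambda_L(\frX)$ as an inverse limit of finite groups, so that open additive subgroups form a neighbourhood basis at $0$); and that $\Lambda_L(\frX)$ is a local ring whose residue field $k$ is finite (Lemma \ref{local} and the discussion preceding it).

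For part (1), I would first show that the weak topology on the compact \emph{commutative} ring $\Lambda_L(\frX)$ admits a neighbourhood basis at $0$ consisting of open ideals, and then conclude via Artinian ring theory. Given a weak-open $U \ni 0$ (we may assume $U \neq \Lambda_L(\frX)$), choose an open additive subgroup $G \subseteq U$, which exists since $\Lambda_L(\frX)$ is profinite as a group. Because multiplication on $\Lambda_L(\frX)$ is weakly continuous and $\Lambda_L(\frX)$ is weakly compact, the tube lemma applied to $\{0\}\times\Lambda_L(\frX)$ inside the preimage of $G$ under multiplication yields an open $W \ni 0$ with $W\cdot\Lambda_L(\frX)\subseteq G$; replacing $W$ by an open subgroup contained in $W\cap G$, the ideal $I$ it generates still lies in $G$ (each product of an element of $\Lambda_L(\frX)$ with a generator lands in $G$, and $G$ is closed under addition) and is open since it contains that subgroup. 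So $U$ contains an open ideal $I$. Then $\Lambda_L(\frX)/I$ is a finite, hence Artinian, local ring with maximal ideal $\mathfrak{m}(\mathfrak{X})/I$ (note $I\subseteq\mathfrak{m}(\mathfrak{X})$, since $I$ is a proper ideal of the local ring $\Lambda_L(\frX)$); its maximal ideal is therefore nilpotent, giving $\mathfrak{m}(\mathfrak{X})^n\subseteq I\subseteq U$ for some $n\geq 1$.

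For part (2), part (1) shows that the weak topology is coarser than the $\mathfrak{m}(\mathfrak{X})$-adic one, so it remains to prove each $\mathfrak{m}(\mathfrak{X})^n$ is weakly open. Assuming $\mathfrak{m}(\mathfrak{X})$ finitely generated, so is each $\mathfrak{m}(\mathfrak{X})^n$, whence $\mathfrak{m}(\mathfrak{X})^n$ is the image of a weakly continuous map $\Lambda_L(\frX)^s\to\Lambda_L(\frX)$, $(a_1,\dots,a_s)\mapsto\sum_i a_i g_i$, from the weakly compact space $\Lambda_L(\frX)^s$; hence $\mathfrak{m}(\mathfrak{X})^n$ is weakly compact, thus weakly closed. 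On the other hand, finite generation of $\mathfrak{m}(\mathfrak{X})$ forces each $\mathfrak{m}(\mathfrak{X})^i/\mathfrak{m}(\mathfrak{X})^{i+1}$ to be a finitely generated module over $\Lambda_L(\frX)/\mathfrak{m}(\mathfrak{X})=k$, hence finite (as $k$ is finite), so $\Lambda_L(\frX)/\mathfrak{m}(\mathfrak{X})^n$ is finite. A closed subgroup of finite index in a topological group is open, so $\mathfrak{m}(\mathfrak{X})^n$ is weakly open; since these powers form a basis of the $\mathfrak{m}(\mathfrak{X})$-adic topology, that topology is coarser than the weak one, and with part (1) we obtain equality.

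I expect the delicate point to be the first step of part (1): passing from an open additive subgroup to an open ideal contained in it, i.e. verifying that a compact commutative topological ring has a neighbourhood basis at $0$ of open ideals. Everything else is formal once Proposition \ref{weak-compact} and Lemma \ref{local} are invoked.
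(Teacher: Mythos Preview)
Your proof is correct. Part (2) matches the paper's argument almost verbatim: finite generation of $\mathfrak{m}(\frX)$ forces each $\mathfrak{m}(\frX)^n$ to be the continuous image of a compact set (hence closed) and to have finite index (hence open).

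For part (1), however, your route differs from the paper's. The paper works concretely with the decomposition $\mathfrak{m}(\frX) = \pi_L \Lambda_L(\frX) + \mathfrak{n}$, where $\mathfrak{n}$ is the ideal of functions vanishing at $\chi_{\triv}$. It invokes the divisor theory from \cite{BSX} to produce a descending chain of closed ideals $I_{m\Delta} \cap \Lambda_L(\frX)$ with zero intersection, and then uses weak compactness via a sequential argument: if no $I_{m\Delta} \cap \Lambda_L(\frX)$ were contained in $U$, a convergent subsequence of witnesses would yield a point in the complement of $U$ which also lies in every $I_{m\Delta}$, a contradiction. This handles the $\mathfrak{n}$-part; the $\pi_L$-part is dispatched separately by Remark \ref{padic-weak}. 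Your argument instead treats $\Lambda_L(\frX)$ as an abstract compact commutative profinite ring with finite residue field: the tube lemma produces an open ideal inside any open neighbourhood, and then nilpotence of the maximal ideal in the finite (Artinian local) quotient yields the desired power of $\mathfrak{m}(\frX)$. Your approach is more portable and avoids the divisor machinery entirely; the paper's approach is more explicit about the geometry and incidentally shows that the ideals $\mathfrak{n}^m$ themselves are eventually contained in $U$.
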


\begin{proof}
We have $\mathfrak{m}(\mathfrak{X}) = \pi_L \Lambda_L(\frX) + \mathfrak{n}$, where $\mathfrak{n}$ denotes the ideal of all functions in $\Lambda_L(\frX)$ which vanish in $\chi_{\triv}$. We consider the divisor $\Delta$ on $\mathfrak{X}$ which maps $\chi_{\triv}$ to $1$ and all other points to zero. For any integer $m \geq 1$ we have the ideal $I_{m\Delta} \subseteq \mathcal{O}_L(\mathfrak{X})$ corresponding to the divisor $m\Delta$. As a consequence of \cite{BSX} Prop.\ 1.4 these ideals are closed in $\mathcal{O}_L(\mathfrak{X})$ and satisfy $\bigcap_m I_m = \{0\}$. Hence the ideals $I_m \cap \Lambda_L(\frX)$ are closed in $\Lambda_L(\frX)$ with zero intersection. Let now $U \subseteq \Lambda_L(\frX)$ be any fixed open neighbourhood of zero for the weak topology. Suppose that $I_m \cap \Lambda_L(\frX) \nsubseteqq U$ for any $m \geq 1$. We then may pick, for any $m \geq 1$, a function $f_m \in (I_m \cap \Lambda_L(\frX)) \setminus U$. According to Prop.\ \ref{weak-compact} the weak topology on $\Lambda_L(\frX)$ is compact. Hence the sequence $(f_m)_m$ has a convergent subsequence with a limit $f \in \Lambda_L(\frX)$. On the one hand we have $f_n \in I_m \cap \Lambda_L(\frX)$ for any $n \geq m$. Since $I_m \cap \Lambda_L(\frX)$ is closed it follows that $f \in I_m \cap \Lambda_L(\frX)$ for any $m \geq 1$. Therefore $f = 0$. But on the other hand all the $f_m$ and hence $f$ lie in the closed complement of the open subset $U$. This is a contradiction. We conclude that $\mathfrak{n}^m \subseteq I_m \cap \Lambda_L(\frX) \subseteq U$ for any sufficiently large $m$. As a consequence of Remark \ref{padic-weak} we also have $\pi_L^m \Lambda_L(\frX) \subseteq U$ for any sufficiently large $m$. Hence $\mathfrak{m}(\mathfrak{X})^{2m} \subseteq \pi_L^m \Lambda_L(\frX) + \mathfrak{n}^m \subseteq U$ for large $m$. This proves (1).

We have to show that the ideals $\mathfrak{m}(\mathfrak{X})^m$ are open for the weak topology. Under our assumption all ideals $\mathfrak{m}(\mathfrak{X})^m$, for $m \geq 1$, are finitely generated. Hence all $\mathfrak{m}(\mathfrak{X})^{m+1} / \mathfrak{m}(\mathfrak{X})^m$ are finite dimensional $k$-vector spaces. We see that each quotient $\Lambda_L(\frX) / \mathfrak{m}(\mathfrak{X})^m$, for $m \geq 1$, is a finite $o_L$-module. Hence it suffices to show that the ideal $\mathfrak{m}(\mathfrak{X})^m$ is closed for the weak topology. Let $f_1, \ldots, f_r$ be generators of $\mathfrak{m}(\mathfrak{X})^m$. Then $\mathfrak{m}(\mathfrak{X})^m$ is the image of the map  $\Lambda_L(\frX)^r  \rightarrow \Lambda_L(\frX)$ sending $(h_1, \ldots,h_r)$ to $\sum_i h_i f_i$, which is a continuous map between compact spaces by Prop.\ \ref{weak-compact}. This proves (2).
\end{proof}

\begin{remark}
Any $f \in \mathfrak{m}(\mathfrak{X})$ satisfies $\|f \|_{\mathfrak{X}_n} < 1$ for any $n$.
\end{remark}

\begin{proof}
If $\|f \|_{\mathfrak{X}_n} = 1$ then the maximum modulus principle for the affinoid $\mathfrak{X}_n$ implies that there is a point $z \in \mathfrak{X}_n$ such that $|f(z)|=1$. By considering $f$ as an element of $o_{\Cp}\dcroc{T}$, we see that $f(0)$ is a unit so that $f$ is not in $\mathfrak{m}(\mathfrak{X})$.
\end{proof}

Next we consider the injective map
\begin{equation*}
  \Lambda(o_L) = o_L\dcroc{o_L} \longrightarrow \Lambda_L(\frX) \ ,
\end{equation*}
which we treat as an inclusion. More explicitly, let $a_1, \ldots a_d$ be a basis of $o_L$ as a $\Zp$-module. Then the image of the above map is the ring of formal power series $o_L\dcroc{\delta_{a_1} - \delta_0, \ldots, \delta_{a_d} - \delta_0}$ inside $\Lambda_L(\frX)$. We immediately conclude from Lemma \ref{residue} that
\begin{equation*}
  \mathfrak{m}(\mathfrak{X}) \cap o_L\dcroc{o_L} = \langle \pi_L, \delta_{a_1} - \delta_0, \ldots, \delta_{a_d} - \delta_0 \rangle \subseteq o_L\dcroc{o_L} \ .
\end{equation*}

\begin{lemma}\label{prime-ideal}
  $\mathcal{O}_L^{< 1}(\mathfrak{X}) \cap o_L\dcroc{o_L} = \pi_L o_L\dcroc{o_L}$.
\end{lemma}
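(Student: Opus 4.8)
The easy inclusion $\pi_L\, o_L\dcroc{o_L}\subseteq \mathcal{O}_L^{<1}(\mathfrak{X})\cap o_L\dcroc{o_L}$ follows at once from multiplicativity of $\|\cdot\|_{\mathfrak{X}}$: if $x=\pi_L y$ with $y\in o_L\dcroc{o_L}\subseteq\Lambda_L(\mathfrak{X})$ then $\|x\|_{\mathfrak{X}}=|\pi_L|\,\|y\|_{\mathfrak{X}}\le|\pi_L|<1$. For the reverse inclusion, since $\|\pi_L^{\,j}\|_{\mathfrak{X}}=|\pi_L|^{j}$ we have, for $\mu\in o_L\dcroc{o_L}$, $\|\mu\|_{\mathfrak{X}}=|\pi_L|^{\val_{\pi_L}(\mu)}\cdot\|\mu_0\|_{\mathfrak{X}}$ where $\mu=\pi_L^{\,\val_{\pi_L}(\mu)}\mu_0$ and $\val_{\pi_L}(\mu_0)=0$; so it is enough to prove that $\mu\notin\pi_L\, o_L\dcroc{o_L}$ implies $\|\mu\|_{\mathfrak{X}}=1$ (then $\|\mu\|_{\mathfrak{X}}<1$ forces $\val_{\pi_L}(\mu)\ge1$, as required). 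As $\|\mu\|_{\mathfrak{X}}\le1$ is automatic, everything is in the lower bound.

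One first checks that $\|\delta_a-\delta_0\|_{\mathfrak{X}}=1$ for every $a\in o_L\setminus\{0\}$. The Amice--Katz image of $\delta_a-\delta_0$ is $\Delta_a-1=\sum_{n\ge1}P_n(a\Omega)Z^n$, so $\|\delta_a-\delta_0\|_{\mathfrak{X}}=\sup_{n\ge1}|P_n(a\Omega)|$; writing $a=\pi_L^{\,j}u$ with $u\in o_L^\times$ and using $\delta_a-\delta_0=\varphi_q^{\,j}(\delta_u-\delta_0)$ together with the fact that $\varphi_q$ (composition with $[\pi_L](Z)$, which maps $\mathfrak{m}_{\Cp}$ onto itself) preserves $\|\cdot\|_{\mathfrak{X}}$, one reduces to $a=u\in o_L^\times$; and then $\sup_n|P_n(u\Omega)|=\sup_n|P_n(\Omega)|$ via the relations $P_n(u\Omega)=\sum_{i=1}^n\sigma_{i,n}(u)P_i(\Omega)$ and $P_n(\Omega)=\sum_{i=1}^n\sigma_{i,n}(u^{-1})P_i(u\Omega)$, whose coefficient matrices have entries in $o_L$ and are mutually inverse because $[u^{-1}]$ is the compositional inverse of $[u]$ and has coefficients in $o_L$, while $\sup_n|P_n(\Omega)|=1$ by Theorem~\ref{introvalpkgen} since $\val_\pi(P_{x_m}(\Omega))=y_m\to 0$. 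Choosing (as we may) the $\Zp$-basis $a_1,\dots,a_d$ of $o_L$ to consist of units and putting $X_i:=\delta_{a_i}-\delta_0$, we get $\|X_i\|_{\mathfrak{X}}=1$, hence $\|X^\alpha\|_{\mathfrak{X}}=1$ for all $\alpha$ by multiplicativity, so $\|\mu\|_{\mathfrak{X}}\le\max_\alpha|c_\alpha|=|\pi_L|^{\val_{\pi_L}(\mu)}$ when $\mu=\sum_\alpha c_\alpha X^\alpha$; it remains to show this is an equality. Reducing modulo $\pi_L$: since $\Pol$ (the $o_L$-span of the $\sigma_{i,n}$) is $p$-adically dense in $\Int$ and $\Int$ is dense in the continuous $o_L$-valued functions on $o_L$, a measure $\mu\notin\pi_L\, o_L\dcroc{o_L}$ cannot kill $\Pol$ modulo $\pi_L$, so $\mu(\sigma_{i_0,k_0})\in o_L^\times$ for some $i_0\le k_0$; one then feeds this, the invertibility over $o_L$ of the matrices $(\sigma_{i,n}(u))$, and the valuations of Theorem~\ref{introvalpkgen} (which force suitable partial sums of $\sum_{i\le n}\mu(\sigma_{i,n})P_i(\Omega)$, the coefficient of $Z^n$ in the Amice--Katz image $f_\mu$ of $\mu$, to approach $1$ along $n=x_m$) into a computation showing $\|\mu\|_{\mathfrak{X}}=\sup_n\big|\sum_{i\le n}\mu(\sigma_{i,n})P_i(\Omega)\big|=1$.

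The main obstacle is exactly this last step: showing that $\|\cdot\|_{\mathfrak{X}}$ does not drop below the Gauss norm on a general $\mu\in o_L\dcroc{o_L}$ --- equivalently, that the prime ideal $\mathfrak{p}:=\{\mu\in o_L\dcroc{o_L}:\|\mu\|_{\mathfrak{X}}<1\}$ (prime by multiplicativity, containing $\pi_L$, contained in $\mathfrak{m}(\mathfrak{X})\cap o_L\dcroc{o_L}=\langle\pi_L,X_1,\dots,X_d\rangle$, and stable under the scaling action of $o_L^\times$ and under $\varphi_q$) reduces to the zero ideal of $k\dcroc{X_1,\dots,X_d}$. For $L=\Qp$ this is transparent, since there $o_L\dcroc{o_L}=o_{\Cp}\dcroc{Z}^{G_L,\ast}$ and $\|\cdot\|_{\mathfrak{X}}$ is visibly the Gauss norm. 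For $L\ne\Qp$, however, $o_L\dcroc{o_L}$ is a proper subring of $\Lambda_L(\mathfrak{X})=o_\infty\dcroc{Z}^{G_L,\ast}$ whose Amice--Katz image has all coefficients in the proper $o_L$-submodule $\sum_{n\ge0}o_L\,P_n(\Omega)$ of $o_{\Cp}$, whose generators $P_n(\Omega)$ ($n\ge1$) are all non-units; a purely formal argument using only multiplicativity and $\|X_i\|_{\mathfrak{X}}=1$ cannot suffice (one can exhibit multiplicative norms on $o_L\dcroc{X_1,\dots,X_d}$ with all $\|X_i\|=1$ yet $\|X_1-X_2\|<1$), so one genuinely has to exploit the explicit arithmetic of the $P_n(\Omega)$ via Theorem~\ref{introvalpkgen} and of the $\sigma_{i,n}$ to rule out such degeneration.
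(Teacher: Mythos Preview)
Your proposal has a genuine gap at exactly the point you yourself flag: the ``computation showing $\|\mu\|_{\mathfrak{X}}=1$'' in your second paragraph is never carried out, and your final paragraph openly concedes that this is ``the main obstacle''. Knowing that some $\mu(\sigma_{i_0,k_0})$ is a unit and that $\val_\pi(P_{x_m}(\Omega))\to 0$ does not by itself prevent cancellation in the sums $\sum_{i\le n}\mu(\sigma_{i,n})P_i(\Omega)$; Theorem~\ref{introvalpkgen} only controls $P_n(\Omega)$ at the special indices $n=x_m$, and you give no mechanism linking the single nonvanishing $\mu(\sigma_{i_0,k_0})$ to the behaviour of these particular coefficients. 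As you correctly observe, multiplicativity plus $\|X_i\|_{\mathfrak{X}}=1$ alone cannot close the argument, and the ``explicit arithmetic'' you invoke is not actually supplied.

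The paper avoids this analytic difficulty entirely. It observes, as you do, that $\overline{P}:=P/\pi_L o_L\dcroc{o_L}$ is a prime ideal of $k\dcroc{o_L}$ (by multiplicativity of $\|\cdot\|_{\mathfrak{X}}$), that it is invariant under the $o_L^\times$-action, and that it is not the maximal ideal. Instead of computing with $P_n(\Omega)$, it then quotes a structural result on prime ideals of Iwasawa algebras: \cite[Corollary 8.1(b)]{Ard12} says that an $o_L^\times$-invariant prime ideal of $k\dcroc{o_L}$ strictly below the maximal ideal must be zero, provided the open subgroup $1+po_L\subset o_L^\times$ acts \emph{rationally irreducibly} on $o_L$ (every nonzero $(1+po_L)$-stable subgroup of $o_L$ is open). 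That last condition is a one-line check: if $0\ne a\in o_L$ lies in such a subgroup, then so does $(1+po_L)a-a=pao_L$, which is open. Thus $\overline{P}=0$ and $P=\pi_L o_L\dcroc{o_L}$. No information about $P_n(\Omega)$, $\sigma_{i,n}$, or Theorem~\ref{introvalpkgen} is needed.
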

\begin{proof}  We  have $\pi_L o_L\dcroc{o_L} \subseteq P := \mathcal{O}_L^{< 1}(\mathfrak{X}) \cap o_L\dcroc{o_L}$. It follows that $\overline{P} := P/\pi_L o_L\dcroc{o_L}$ is a ``canonical'' prime ideal in the formal power series ring $k\dcroc{o_L}$: in particular, it is invariant for the $o_L^\times$ action on the mod-$p$ Iwasawa algebra $k\dcroc{o_L}$. It certainly is not the unique maximal ideal. In this situation, \cite[Corollary 8.1(b)]{Ard12} implies that $\overline{P}$ must be the zero ideal, provided we can show that the open subgroup $1 + po_L \subset o_L^\times$ acts \emph{rationally irreducibly} on $o_L$. 

We have to show that every non-trivial $1 + p o_L$-stable subgroup of $o_L$ is open in $o_L$. But such a subgroup contains $(1 + p o_L) a - a = p a o_L$ for some $0 \neq a \in o_L$, and is therefore open in $o_L$. \end{proof}

\begin{corollary}\label{1stNormCor} The restriction of the norm $\|\cdot \|$ on $\Lambda_L(\frX)$ to $o_L\dcroc{o_L}$ coincides with the $\pi$-adic norm on $o_L\dcroc{o_L}$: for any $x \in \pi^n o_L\dcroc{o_L} \backslash \pi^{n+1} o_L\dcroc{o_L}$ we have
\[ \| x \| = |\pi^n|.\]
\end{corollary}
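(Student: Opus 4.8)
The plan is to factor out the maximal power of $\pi$ and then invoke Lemma \ref{prime-ideal}, which already contains all the real content. So suppose $x \in \pi^n o_L\dcroc{o_L} \setminus \pi^{n+1} o_L\dcroc{o_L}$; in particular $x \neq 0$. Since $\bigcap_{m\geq 0} \pi^m o_L\dcroc{o_L} = 0$, we may write $x = \pi^n y$ with $y \in o_L\dcroc{o_L}$ and $y \notin \pi o_L\dcroc{o_L}$.

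Next I would use that $\|\cdot\|$ is the supremum norm $\|\cdot\|_{\mathfrak{X}}$, which is multiplicative on the integral domain $\Lambda_L(\frX)$. Evaluating the constant function $\pi$ at the trivial character shows $\|\pi\|_{\mathfrak{X}} = |\pi|$, hence $\|x\| = \|\pi^n\|\cdot\|y\| = |\pi^n|\cdot\|y\|$. It therefore remains to show $\|y\| = 1$. The upper bound $\|y\| \leq 1$ is immediate from $y \in o_L\dcroc{o_L} \subseteq \Lambda_L(\frX) = \mathcal{O}_L^{\leq 1}(\mathfrak{X})$. For the lower bound, $y \notin \pi o_L\dcroc{o_L}$ together with Lemma \ref{prime-ideal} gives $y \notin \mathcal{O}_L^{<1}(\mathfrak{X})$, i.e. $\|y\| \geq 1$. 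Thus $\|y\| = 1$ and $\|x\| = |\pi^n|$, as claimed.

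I do not expect a serious obstacle here: the genuinely nontrivial work has already been carried out in Lemma \ref{prime-ideal} (which ultimately rests on Ardakov's classification of the ``canonical'' prime ideals of the mod-$p$ Iwasawa algebra). The only points that need a word of care are that the hypothesis forces $x \neq 0$, that the factorisation $x = \pi^n y$ with $\pi \nmid y$ exists and is unique thanks to $\pi$-adic separatedness of $o_L\dcroc{o_L}$, and the elementary fact $\|\pi\|_{\mathfrak{X}} = |\pi|$ used to pull the scalar $\pi^n$ out of the norm.
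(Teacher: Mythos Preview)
Your proof is correct and follows essentially the same route as the paper: reduce to the case $n=0$ via $\|\pi^n y\| = |\pi^n|\|y\|$, then use Lemma~\ref{prime-ideal} to see that $y \notin \pi o_L\dcroc{o_L}$ forces $\|y\|=1$. The only difference is that you spell out the upper and lower bounds on $\|y\|$ separately, whereas the paper leaves these implicit; note also that the scalar homogeneity $\|\pi^n y\| = |\pi|^n \|y\|$ holds for any norm over a valued field, so appealing to multiplicativity is not strictly necessary here.
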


\begin{proof}  Since $\| \pi^n y \| = |\pi^n| \|y\|$ for any $y \in o_L\dcroc{o_L}$, we may assume that $n = 0$. But now since $x \notin \pi o_L\dcroc{o_L}$, Lemma \ref{prime-ideal} tells us that $\|x \| = 1$.
\end{proof} 

\begin{corollary} The $o_L$-module $\Lambda_L(\frX) / o_L\dcroc{o_L}$ is torsionfree.
\end{corollary}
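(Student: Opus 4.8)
\emph{Proof proposal.} The plan is to exploit that $o_L$ is a discrete valuation ring with uniformizer $\pi$: an $o_L$-module is torsionfree as soon as multiplication by $\pi$ is injective on it. Writing $M := \Lambda_L(\frX)/o_L\dcroc{o_L}$, it therefore suffices to show that multiplication by $\pi$ is injective on $M$; that is: if $x \in \Lambda_L(\frX)$ satisfies $\pi x \in o_L\dcroc{o_L}$, then already $x \in o_L\dcroc{o_L}$.

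To prove this, set $y := \pi x$. Since $\Lambda_L(\frX) = \mathcal{O}_L^{\leq 1}(\frX)$, the norm $\|\cdot\|_{\frX}$ is bounded by $1$ on $\Lambda_L(\frX)$ and is multiplicative (both recalled at the start of $\S\ref{subextraprop}$), so
\[ \|y\|_{\frX} \;=\; |\pi|\cdot\|x\|_{\frX} \;\leq\; |\pi| \;<\; 1, \]
whence $y \in \mathcal{O}_L^{< 1}(\frX)$. As $y$ also lies in $o_L\dcroc{o_L}$ by hypothesis, Lemma \ref{prime-ideal} yields $y \in \mathcal{O}_L^{< 1}(\frX) \cap o_L\dcroc{o_L} = \pi\, o_L\dcroc{o_L}$; write $y = \pi z$ with $z \in o_L\dcroc{o_L}$. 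Then $\pi x = \pi z$ in $\Lambda_L(\frX)$, and since $\Lambda_L(\frX)$ is an integral domain we may cancel $\pi$ to obtain $x = z \in o_L\dcroc{o_L}$, as required.

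I expect no real obstacle here: the genuinely non-formal content has already been packaged into Lemma \ref{prime-ideal} (equivalently Corollary \ref{1stNormCor}), which rests on the rational irreducibility of the $(1 + p o_L)$-action on $o_L$ together with Ardakov's classification of prime ideals in the mod-$p$ Iwasawa algebra. Given that input, the only remaining ingredients are that $\Lambda_L(\frX)$ is a domain, that its norm is multiplicative and bounded by $1$, and that $o_L$ is a DVR, so the argument above is purely formal.
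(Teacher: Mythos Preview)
Your proof is correct and follows essentially the same approach as the paper's: both reduce to Lemma \ref{prime-ideal} (equivalently Corollary \ref{1stNormCor}), use multiplicativity of the norm on $\Lambda_L(\frX)$, and cancel $\pi$ using that $\Lambda_L(\frX)$ is a domain. The only cosmetic difference is that the paper phrases it as a minimal-$n$ contradiction while you handle the case $n=1$ directly (which suffices since $o_L$ is a DVR).
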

\begin{proof} Suppose that $f \in \Lambda_L(\frX)$ is such that $\pi^n f \in o_L\dcroc{o_L}$ for some $n \geq 0$. Choose $n$ least possible and suppose for a contradiction that $n \geq 1$. Then $\pi^n f \in o_L\dcroc{o_L} \backslash \pi o_L\dcroc{o_L}$, else otherwise we would be able to deduce that $\pi^{n-1} f \in o_L\dcroc{o_L}$. Hence $\| \pi^n f \| = 1$ by Corollary \ref{1stNormCor}, which implies that $|\pi|^{-n} = \|f\| \leq 1$. Hence $n = 0$.
\end{proof}

\begin{corollary}
\label{tensint}
We have $\Lambda_L(\frX) \cap (L \otimes_{o_L} o_L\dcroc{o_L}) =  o_L\dcroc{o_L}$.
\end{corollary}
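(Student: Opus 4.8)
The plan is to reduce this to the torsionfreeness of $\Lambda_L(\frX)/o_L\dcroc{o_L}$ that was just established. The inclusion $o_L\dcroc{o_L} \subseteq \Lambda_L(\frX) \cap (L \otimes_{o_L} o_L\dcroc{o_L})$ is immediate (via the inclusions $o_L\dcroc{o_L} \hookrightarrow \Lambda_L(\frX)$ and $o_L\dcroc{o_L} \hookrightarrow L \otimes_{o_L} o_L\dcroc{o_L}$ already in play), so only the reverse inclusion requires an argument. Implicitly one should also note that the intersection makes sense inside the $L$-vector space $L \otimes_{o_L} \Lambda_L(\frX)$: since $\Lambda_L(\frX)$ is an integral domain by \cite{BSX}, it is $\pi$-torsionfree, so both $o_L\dcroc{o_L}$ and $L \otimes_{o_L} o_L\dcroc{o_L}$ embed into $L \otimes_{o_L} \Lambda_L(\frX)$.

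First I would observe that, since $\pi$ is a unit in $L$, we have $L \otimes_{o_L} o_L\dcroc{o_L} = o_L\dcroc{o_L}[1/\pi] = \bigcup_{n \geq 0} \pi^{-n} o_L\dcroc{o_L}$. Hence any $f$ lying in the intersection can be written as $f = \pi^{-n} x$ with $x \in o_L\dcroc{o_L}$ and some $n \geq 0$; equivalently, $f \in \Lambda_L(\frX)$ and $\pi^n f \in o_L\dcroc{o_L}$ for some $n \geq 0$.

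Now I would invoke the preceding corollary: because $\Lambda_L(\frX)/o_L\dcroc{o_L}$ is torsionfree over $o_L$, the condition $\pi^n f \in o_L\dcroc{o_L}$ forces $f \in o_L\dcroc{o_L}$, which is the desired inclusion. Alternatively, one can re-run the short argument directly from Corollary \ref{1stNormCor}: choosing $n$ minimal, if $n \geq 1$ then $x = \pi^n f \notin \pi o_L\dcroc{o_L}$ (otherwise $\pi^{n-1}f \in o_L\dcroc{o_L}$, contradicting minimality), so Corollary \ref{1stNormCor} gives $\|x\| = 1$, while $\|x\| = \|\pi^n f\| = |\pi|^n \|f\| \leq |\pi|^n < 1$, a contradiction; hence $n = 0$.

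There is no substantive obstacle here: all the content sits in Lemma \ref{prime-ideal}/Corollary \ref{1stNormCor} and the torsionfreeness corollary, both already available. The only points that need care are the identification of $L \otimes_{o_L} o_L\dcroc{o_L}$ with $o_L\dcroc{o_L}[1/\pi]$ and the remark that the whole intersection takes place inside $L \otimes_{o_L} \Lambda_L(\frX)$, so that clearing denominators by a power of $\pi$ is legitimate.
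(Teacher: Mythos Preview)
Your proposal is correct and matches the paper's approach exactly: the corollary is stated without proof in the paper because it is an immediate consequence of the preceding torsionfreeness corollary, which is precisely the reduction you make.
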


\section{The Katz isomorphism}
\subsection{The $\psi_q$-operator}\label{PsiSection}
We denote by $\oplus$ the formal group law of $\cG$. Furthermore let $\cG_1$ denote the group of $\pi$-torsion points of $\cG$. Its cardinality is $q$. It coincides with the set of zeros of the Frobenius power series $[\pi](Z) = \varphi(Z)$. 

We fix a $\pi$-adically complete and flat $o_L$-algebra $S$ in what follows and define an injective $S$-algebra endomorphism $\varphi : S\dcroc{Z} \to S\dcroc{Z}$ by setting
\[ \varphi(F)(Z) := F([\pi](Z)) \qmb{for all} F(Z) \in S\dcroc{Z}.\]

\begin{lemma}
\label{Weierstrass} \hsp
\be 
\item For any $F \in S\dcroc{Z}$ there is a unique $F_0 \in S\dcroc{Z}$ and a unique polynomial $F_1 \in S[Z]$ of degree $< q$ such that $F = \varphi(Z)F_0  + F_1$.
\item  $\{F \in S\dcroc{Z} : F(\zeta) = 0 \ \text{for any $\zeta \in \cG_1$}\} = \varphi(Z) S\dcroc{Z}$.
\ee
\end{lemma}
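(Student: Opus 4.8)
The plan is to deduce both parts from a single Weierstrass-division argument, using only that $\varphi(Z) \equiv Z^q \pmod{\pi}$ (recall $\varphi(Z) = [\pi](Z) \in \sF_\pi$). For part (1), write $S[Z]_{<q}$ for the $S$-submodule of $S\dcroc{Z}$ consisting of polynomials of degree $<q$, and consider the $S$-linear map $\Theta \colon S\dcroc{Z} \times S[Z]_{<q} \to S\dcroc{Z}$, $\Theta(F_0,F_1) := \varphi(Z) F_0 + F_1$; the two assertions of (1) are precisely surjectivity and injectivity of $\Theta$. Its source and target are $\pi$-adically complete and separated (a power series lies in $\pi^n S\dcroc{Z}$ if and only if all of its coefficients lie in $\pi^n S$, and $S$ is $\pi$-adically complete), and $\pi$-torsion-free (as $S$ is flat over the domain $o_L$, $\pi$ is a non-zero-divisor on $S$, hence on $S\dcroc{Z}$; the finite free module $S[Z]_{<q}$ inherits all of this). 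Reducing modulo $\pi$ and using $\varphi(Z) \equiv Z^q$, the map $\overline{\Theta}$ becomes $(\overline{F_0},\overline{F_1}) \mapsto Z^q \overline{F_0} + \overline{F_1}$, which is an isomorphism because it realizes the evident direct-sum decomposition $(S/\pi)\dcroc{Z} = Z^q (S/\pi)\dcroc{Z} \oplus (S/\pi)[Z]_{<q}$. I would then run the standard $\pi$-adic successive-approximation argument: for surjectivity of $\Theta$, lift a mod-$\pi$ preimage of a given $G$, subtract, divide the resulting $\pi$-divisible element by $\pi$, repeat, and sum the series using completeness; for injectivity, if $\Theta(x) = 0$ with $x \ne 0$, factor out the highest power of $\pi$ dividing $x$ (possible by separatedness), cancel it using torsion-freeness, and reduce modulo $\pi$ to contradict the injectivity of $\overline{\Theta}$. (Equivalently one can make the iteration explicit via $Z^q = \varphi(Z) - \bigl(\varphi(Z)-Z^q\bigr)$ with $\varphi(Z)-Z^q \in \pi\, o_L\dcroc{Z}$, but the abstract formulation is cleaner.)

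For part (2), I would first fix the finite extension $L' := L(\cG_1)$ of $L$ and its ring of integers $o_{L'}$, so that $\cG_1 \subseteq o_{L'}$. Since $o_{L'}$ is finite free over $o_L$ and $S$ is $\pi$-adically complete, $S' := S \otimes_{o_L} o_{L'}$ is again $\pi$-adically complete, flat over $o_{L'}$, and $\pi$-torsion-free; for each $\zeta \in \cG_1$ (a topologically nilpotent element of $S'$) the series $\sum_n a_n \zeta^n$ converges, giving a continuous $o_{L'}$-algebra homomorphism $\ev_\zeta \colon S\dcroc{Z} \to S'$, and this is the sense in which ``$F(\zeta)$'' is to be read. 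The inclusion $\varphi(Z) S\dcroc{Z} \subseteq \{F : F(\zeta) = 0 \text{ for all } \zeta \in \cG_1\}$ is then immediate, since $\ev_\zeta(\varphi) = [\pi](\zeta) = 0$. Conversely, given $F$ in the right-hand set, apply (1) to write $F = \varphi(Z) F_0 + F_1$ with $\deg F_1 < q$; applying $\ev_\zeta$ and using $\varphi(\zeta) = 0$ gives $F_1(\zeta) = 0$ for every $\zeta \in \cG_1$. Enumerating $\cG_1 = \{\zeta_1,\dots,\zeta_q\}$, this says the coefficient vector of $F_1$ is annihilated by the $q \times q$ Vandermonde matrix $V = (\zeta_j^{\,i})_{1 \le j \le q,\ 0 \le i < q}$; multiplying by $\operatorname{adj}(V)$ shows that $\delta := \det V = \prod_{j<k}(\zeta_k - \zeta_j)$ annihilates every coefficient of $F_1$. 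Since $\cG_1$ has exactly $q$, hence pairwise distinct, elements, $\delta$ is a non-zero element of the domain $o_{L'}$, hence a non-zero-divisor in the flat $o_{L'}$-algebra $S'$; therefore $F_1 = 0$ and $F = \varphi(Z) F_0 \in \varphi(Z) S\dcroc{Z}$.

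The main obstacle is the final step of part (2): since $S$, and hence $S'$, need not be reduced or a domain, the fact that a polynomial of degree $<q$ vanishing at $q$ distinct points must be zero is not formal, and one must combine the Vandermonde/adjugate computation with the flatness of $S$ over $o_L$ to conclude that the relevant Vandermonde determinant stays a non-zero-divisor. Part (1) is routine Weierstrass division once the $\pi$-adic completeness bookkeeping is arranged, and everything else is formal.
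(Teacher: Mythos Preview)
Your proof is correct and follows the same approach as the paper: for (1) both use $\pi$-adic successive approximation based on $\varphi(Z) \equiv Z^q \pmod{\pi}$ (the paper simply cites Bourbaki's Weierstrass division, noting that the argument goes through with $\pi S$ in place of the maximal ideal), and for (2) both reduce via (1) to showing that a polynomial of degree $< q$ vanishing on all of $\cG_1$ must be zero. Your Vandermonde/flatness argument for this last step is a welcome addition, since the paper simply writes ``But then $F = 0$'' without further comment; your observation that $S' = S \otimes_{o_L} o_{L'}$ remains torsion-free over $o_{L'}$, so that the nonzero Vandermonde determinant $\prod_{j<k}(\zeta_k - \zeta_j) \in o_{L'}$ stays a non-zero-divisor, is exactly what is needed to justify this over a general (possibly non-reduced) coefficient ring $S$.
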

\begin{proof}
(1). This is a form of Weierstrass division. Since $\varphi(Z) \equiv Z^q \bmod \pi o_L\dcroc{Z}$, the proof of \cite[VII.3.8 Prop.\ 5]{B-CA}  goes through by replacing the maximal ideal of $S$ in the argument with the ideal $\pi S$.

(2). Since $\varphi(Z)$ vanishes on $\cG_1$, the inclusion $\supseteq$ is clear. If $F \in S\dcroc{Z}$ vanishes on $\cG_1$ then using (1) we may assume that $F\in S[Z]$ with $\deg F < q$. But then $F = 0$, which gives the other inclusion.
\end{proof}

Using the above lemma the proof of \cite[Lemma 3]{Col} remains valid for $S$ and gives
\begin{equation*}
  \varphi(S\dcroc{Z}) = \{ F \in S\dcroc{Z} : F(Z) = F(\zeta \oplus Z) \ \text{for any $\zeta \in \cG_1$}\}.
\end{equation*}
Since the map $\varphi$ is injective, Lemma \ref{Weierstrass}(2) implies the existence of a unique $S$-linear endomorphism $\psi_{\col}$ of $S\dcroc{Z}$ such that
\begin{equation*}
  \varphi (\psi_{\col} (F)(Z)) = \sum_{\zeta \in \cG_1} F(\zeta \oplus Z)  \qmb{for any $F \in S\dcroc{Z}$}.
\end{equation*}
\begin{definition} Let $S\dcroc{Z}_L := S\dcroc{Z} \otimes_{o_L}L$. The \emph{$\psi_q$-operator} is defined by
\[ \psi_q := \frac{1}{q}\psi_{\col} : S\dcroc{Z}_L \to S\dcroc{Z}_L.\]
\end{definition}

Note that $\psi_{\col}$ (respectively, $\psi_q$) preserves $S'\dcroc{Z}$ (respectively, $S'\dcroc{Z}_L$) for any intermediate $\pi$-adically complete and flat $o_L$-subalgebra $S'$ of $S$. These operators satisfy the following useful Projection Formula.
\begin{lemma}\label{lem:projection-formula}
  For any $F, G \in S\dcroc{Z}$ we have $\psi_q(F \varphi(G)) = \psi_q(F) G$.
\end{lemma}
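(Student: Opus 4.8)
The plan is to deduce the formula from the analogous identity for the Colmez-type operator $\psi_{\col}$, namely that
\[ \psi_{\col}(F\,\varphi(G)) = \psi_{\col}(F)\,G \qmb{in $S\dcroc{Z}$} \]
for all $F,G \in S\dcroc{Z}$; since $\psi_q = \frac1q\psi_{\col}$ on $S\dcroc{Z}_L$, dividing by $q$ then gives the claim. To prove the displayed identity I would use that $\varphi : S\dcroc{Z} \to S\dcroc{Z}$ is injective, so it is enough to check that $\varphi$ sends both sides to the same power series. Applying the defining relation for $\psi_{\col}$ to the left-hand side yields
\[ \varphi\bigl(\psi_{\col}(F\,\varphi(G))\bigr)(Z) = \sum_{\zeta \in \cG_1} \bigl(F\,\varphi(G)\bigr)(\zeta \oplus Z) = \sum_{\zeta \in \cG_1} F(\zeta \oplus Z)\,\varphi(G)(\zeta \oplus Z). \]

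The key point is that $\varphi(G)$ is invariant under translation by $\pi$-torsion points of $\cG$. Indeed, $[\pi]$ is an endomorphism of the formal group, so substituting $Z_1 = \zeta$ and $Z_2 = Z$ into the power series identity $[\pi](Z_1 \oplus Z_2) = [\pi](Z_1) \oplus [\pi](Z_2)$ gives $[\pi](\zeta \oplus Z) = [\pi](\zeta) \oplus [\pi](Z)$; and $\zeta \in \cG_1$ means $[\pi](\zeta) = 0$, whence $[\pi](\zeta \oplus Z) = [\pi](Z)$ and therefore $\varphi(G)(\zeta\oplus Z) = G([\pi](\zeta \oplus Z)) = G([\pi](Z)) = \varphi(G)(Z)$. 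Feeding this back in and using that $\varphi$ is a ring homomorphism together with the defining relation for $\psi_{\col}$ once more,
\[ \varphi\bigl(\psi_{\col}(F\,\varphi(G))\bigr)(Z) = \varphi(G)(Z) \sum_{\zeta \in \cG_1} F(\zeta \oplus Z) = \varphi(G)(Z)\cdot \varphi\bigl(\psi_{\col}(F)\bigr)(Z) = \varphi\bigl(G\,\psi_{\col}(F)\bigr)(Z), \]
and injectivity of $\varphi$ gives $\psi_{\col}(F\,\varphi(G)) = G\,\psi_{\col}(F)$ as desired.

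There is no real obstacle here: the whole argument is an unwinding of the definition of $\psi_{\col}$ once one notices the translation-invariance of $\varphi(G)$. The only point needing a word of care is the legitimacy of substituting the torsion point $\zeta$ into the various power series (so that expressions like $\varphi(G)(\zeta\oplus Z)$ and the endomorphism identity make sense) — but this is exactly the same kind of $\pi$-adically convergent substitution already used in the definition of $\psi_{\col}$ via the finite sum $\sum_{\zeta\in\cG_1} F(\zeta\oplus Z)$, so nothing new is required.
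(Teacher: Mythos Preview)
Your proof is correct and follows essentially the same approach as the paper: reduce to the identity for $\psi_{\col}$, apply $\varphi$ to both sides, use the defining relation for $\psi_{\col}$, observe that $[\pi](\zeta \oplus Z) = [\pi](Z)$ for $\zeta \in \cG_1$ so that $\varphi(G)$ is translation-invariant, and conclude by injectivity of $\varphi$. The paper's argument is identical in structure and detail.
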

\begin{proof}
We may instead establish the analogous formula for $\psi_{\col}$. Note that $[\pi](\zeta \oplus Z) = [\pi](\zeta) \oplus [\pi](Z) = [\pi](Z)$ for any $\zeta \in \cG_1$, since $[\pi](\zeta) = \varphi(\zeta) = 0$. Therefore
\begin{align*}
  \varphi (\psi_{\col}(F \varphi(G))) & = \sum_{\zeta \in \cG_1} (F \varphi(G))(\zeta \oplus Z) = \sum_\zeta F(\zeta \oplus Z) G([\pi](\zeta \oplus Z)) \\
     & = \sum_\zeta F(\zeta \oplus Z) G([\pi](Z)) = \sum_\zeta F(\zeta \oplus Z) \varphi(G)  \\
     & = \varphi(\psi_{\col}(F)) \varphi(G) = \varphi(\psi_{\col}(F)G) \ .
\end{align*}
The result follows because $\varphi$ is injective.
\end{proof}

\begin{corollary}\label{cor:FundPsiPhi} We have the fundamental equation $\psi_q \circ \varphi = 1_{S\dcroc{Z}_L}$. \end{corollary}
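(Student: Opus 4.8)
The plan is to use the Projection Formula from Lemma \ref{lem:projection-formula} with a judicious choice of $G$, namely $G = 1$. Recall that the $\psi_q$-operator is the unique $S$-linear endomorphism of $S\dcroc{Z}_L$ characterized (via injectivity of $\varphi$) by the identity $\varphi(\psi_{\col}(F)(Z)) = \sum_{\zeta \in \cG_1} F(\zeta \oplus Z)$, together with the normalization $\psi_q = \frac{1}{q}\psi_{\col}$. So my goal is to compute $\psi_q(\varphi(F))$ for $F \in S\dcroc{Z}_L$ and show it equals $F$.

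First I would apply Lemma \ref{lem:projection-formula} with the roles adjusted: taking the ``$F$'' of that lemma to be the constant $1 \in S\dcroc{Z}$ and the ``$G$'' of that lemma to be our $F$, we get $\psi_q(1 \cdot \varphi(F)) = \psi_q(1) \cdot F$. So it remains to show that $\psi_q(1) = 1$. For this I compute $\varphi(\psi_{\col}(1)) = \sum_{\zeta \in \cG_1} 1 = |\cG_1| = q$, using that $\cG_1$ has cardinality $q$ as recorded at the start of $\S\ref{PsiSection}$. Since $\varphi$ is an $S$-algebra homomorphism, $\varphi(q) = q = \varphi(\psi_{\col}(1))$, and injectivity of $\varphi$ gives $\psi_{\col}(1) = q$, hence $\psi_q(1) = \frac{1}{q}\cdot q = 1$. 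Combining, $\psi_q(\varphi(F)) = F$ for all $F$, which is exactly $\psi_q \circ \varphi = 1_{S\dcroc{Z}_L}$.

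Alternatively — and perhaps cleaner to present — one can avoid invoking the Projection Formula and argue directly: for $F \in S\dcroc{Z}_L$ one has $\varphi(\psi_{\col}(\varphi(F))) = \sum_{\zeta \in \cG_1} \varphi(F)(\zeta \oplus Z) = \sum_{\zeta \in \cG_1} F([\pi](\zeta \oplus Z)) = \sum_{\zeta \in \cG_1} F([\pi](Z)) = q \cdot F([\pi](Z)) = q \cdot \varphi(F) = \varphi(qF)$, where the third equality uses $[\pi](\zeta \oplus Z) = [\pi](\zeta)\oplus[\pi](Z) = 0 \oplus [\pi](Z) = [\pi](Z)$ (precisely the computation already made in the proof of Lemma \ref{lem:projection-formula}). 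By injectivity of $\varphi$ we deduce $\psi_{\col}(\varphi(F)) = qF$, so $\psi_q(\varphi(F)) = F$.

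There is essentially no obstacle here: both routes are short and rest only on facts already established — the defining identity for $\psi_{\col}$, the injectivity of $\varphi$, the invariance $[\pi](\zeta \oplus Z) = [\pi](Z)$ for $\zeta \in \cG_1$, and $|\cG_1| = q$. The only minor point to be careful about is bookkeeping the normalization factor $1/q$ in passing between $\psi_{\col}$ and $\psi_q$, and noting that everything takes place after inverting $\pi$ (hence $q$) so that division by $q$ is legitimate. I would present the direct argument of the previous paragraph as the proof, as it is self-contained and makes the role of each hypothesis transparent.
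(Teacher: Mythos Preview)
Your proposal is correct and your first approach is essentially identical to the paper's own proof: compute $\varphi(\psi_{\col}(1)) = q$ to get $\psi_q(1)=1$, then apply the Projection Formula with $F=1$. Your alternative direct computation is also valid but merely unfolds the same ingredients without invoking Lemma~\ref{lem:projection-formula} explicitly.
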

\begin{proof} Note that $\varphi(\psi_{\col}(1)) = q1$, so $\varphi(\psi_q(1)) = 1$ and hence $\psi_q(1) = 1$. Now set $F = 1$ in Lemma \ref{lem:projection-formula}.
\end{proof}
Next, we remind the reader what the operators $\varphi$ and $\psi_q$ do to the special power series $\Delta_a = \exp(a \Omega \log_{\LT}(Z))$ from $\S \ref{sec:AKtrans}$.
\begin{lemma}\label{lem:PhiPsiDelta} Let $a \in o_L$.
\be \item $\varphi(\Delta_a) = \Delta_{\pi a}$.
\item $\psi_q(\Delta_a) = \delta_{a \in \pi o_L} \Delta_{a/\pi}$.
\ee
\end{lemma}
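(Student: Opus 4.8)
The plan is to dispatch part (1) and the case $a \in \pi o_L$ of part (2) by formal manipulation, and to concentrate on the case $a \in o_L^\times$ of part (2), where the only real content lies.

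For part (1): unwind the definition to get $\varphi(\Delta_a)(Z) = \Delta_a([\pi](Z))$, and then invoke the standard identity $\log_{\LT}([\pi](Z)) = \pi \cdot \log_{\LT}(Z)$ of formal power series over $L$ to rewrite this as $\exp(a\Omega\log_{\LT}([\pi](Z))) = \exp((\pi a)\Omega\log_{\LT}(Z)) = \Delta_{\pi a}(Z)$; all of these are legitimate substitutions of formal power series with zero constant term. The same computation, read in $\Cp\dcroc{Z}$, in fact gives $\varphi(\Delta_b) = \Delta_{\pi b}$ for \emph{any} scalar $b \in \Cp$, a fact I will reuse below. Given this, the case $a \in \pi o_L$ of part (2) is immediate: writing $a = \pi a'$ with $a' \in o_L$ we have $\Delta_a = \varphi(\Delta_{a'})$, so $\psi_q(\Delta_a) = \psi_q(\varphi(\Delta_{a'})) = \Delta_{a'} = \Delta_{a/\pi}$ by Corollary~\ref{cor:FundPsiPhi}, which is the claimed value since $\delta_{a \in \pi o_L} = 1$.

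Now suppose $a \in o_L^\times$, so that we must show $\psi_q(\Delta_a) = 0$. Taking $S = o_{\Cp}$ in the construction of $\psi_{\col}$ and combining the defining relation $\varphi(\psi_{\col}(\Delta_a))(Z) = \sum_{\zeta \in \cG_1}\Delta_a(\zeta \oplus Z)$ with the grouplike identity $\Delta_a(\zeta \oplus Z) = \Delta_a(\zeta)\Delta_a(Z)$ (specialising the two-variable relation at $Z_1 = \zeta \in \frm_{\Cp}$), one gets $\varphi(\psi_{\col}(\Delta_a)) = \bigl(\sum_{\zeta \in \cG_1}\Delta_a(\zeta)\bigr)\Delta_a$. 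Since $\varphi$ is injective, it is enough to prove $\sum_{\zeta \in \cG_1}\Delta_a(\zeta) = 0$; and since $\zeta \mapsto \Delta_a(\zeta)$ is a homomorphism from the finite group $(\cG_1, \oplus)$ into $o_{\Cp}^\times$, the orthogonality relation for characters reduces this to showing that this character is \emph{non-trivial}, i.e.\ that $\Delta_a$ does not vanish identically on the $\pi$-torsion subgroup $\cG_1$.

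This non-vanishing, for $a$ a unit, is the heart of the matter, and I expect it to be the main obstacle; the plan for it is a contradiction argument. If $\Delta_a(\zeta) = 1$ for all $\zeta \in \cG_1$, then $\Delta_a(\zeta \oplus Z) = \Delta_a(Z)$ for all $\zeta \in \cG_1$, so by Colmez's description $\varphi(o_{\Cp}\dcroc{Z}) = \{F : F(Z) = F(\zeta \oplus Z) \text{ for all } \zeta \in \cG_1\}$ recorded just after Lemma~\ref{Weierstrass}, $\Delta_a$ lies in the image of $\varphi$ on $o_{\Cp}\dcroc{Z}$. Comparing with $\varphi(\Delta_{a/\pi}) = \Delta_a$ (the identity from part (1) applied to the non-integral scalar $a/\pi$, valid in $\Cp\dcroc{Z}$) and using injectivity of $\varphi$ on $\Cp\dcroc{Z}$, this forces $\Delta_{a/\pi} \in o_{\Cp}\dcroc{Z}$. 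But $\Delta_{a/\pi} = \exp((a/\pi)\Omega\log_{\LT}(Z))$ is grouplike with constant term $1$, so $\Delta_{a/\pi} - 1$ would be a homomorphism of formal groups $\cG \times_{o_L} o_{\Cp} \to \widehat{\bG}_m \times_{o_L} o_{\Cp}$ over $o_{\Cp}$; as that $o_L$-module is free of rank one on $F_{t'_o}$ (established before Definition~\ref{def:dualLTgenerator}), writing $\Delta_{a/\pi} - 1 = c\,F_{t'_o}$ with $c \in o_L$ and comparing linear coefficients (namely $(a/\pi)\Omega$ against $c\Omega$) yields $a/\pi = c \in o_L$, contradicting $a \in o_L^\times$. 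Everything other than this last step is routine bookkeeping with $\varphi$, $\psi_q$ and the grouplike relation.
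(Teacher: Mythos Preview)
Your proof is correct, but the route for part (2) differs from the paper's. The paper treats both cases of $a$ uniformly: after reaching $\varphi(\psi_{\col}(\Delta_a)) = \bigl(\sum_{\zeta \in \cG_1}\Delta_a(\zeta)\bigr)\Delta_a$ exactly as you do, it invokes the Schneider--Teitelbaum isomorphism $\kappa$ to identify $\cG_1$ with the character group of $o_L/\pi o_L$, so that $\Delta_a(\zeta) = \chi(\overline{a})$ and column orthogonality of the character table gives $\sum_\chi \chi(\overline{a}) = q\,\delta_{a \in \pi o_L}$ directly. You instead split into cases: for $a \in \pi o_L$ you appeal to $\psi_q\varphi = 1$ (which is slicker than the paper here), and for $a \in o_L^\times$ you prove non-triviality of the character $\zeta \mapsto \Delta_a(\zeta)$ by a contradiction argument resting on the structure of $\Hom_{\fgp/o_{\Cp}}(\cG,\widehat{\bG}_m)$ as a free rank-one $o_L$-module. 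The paper's approach is shorter and more uniform but imports the isomorphism $\kappa$; yours is more self-contained, using only the formal-group input already established before Definition~\ref{def:dualLTgenerator}, at the cost of a longer argument in the unit case. One notational remark: when you write $\Delta_{a/\pi} - 1 = c\,F_{t'_o}$, the $o_L$-module structure on this Hom-space is via precomposition with $[c]$, not scalar multiplication of power series; but since both interpretations give linear coefficient $c\Omega$, your comparison of linear terms goes through either way.
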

\begin{proof} 
(1) More generally, whenever $a,b \in o_L$ we have
\[ \Delta_a([b](Z)) = \exp(a \Omega \log_{\LT}([b](Z))) = \exp(ab \Omega \log_{\LT}(Z)) = \Delta_{ab}(Z).\]
Hence $\varphi(\Delta_a) = \Delta_a([\pi](Z)) =\Delta_{\pi a}$ as claimed.

(2) Using the fact that $\log_{\LT}$ is a formal homomorphism from $\mathcal{G}$ to the formal additive group we compute
\begin{align*}
  \varphi(\psi_{\col}(\Delta_a)) & = \sum_{\zeta \in \mathcal{G}_1} \Delta_a (\zeta \oplus Z) = \sum_\zeta \exp(a\Omega \log_{\LT}(\zeta \oplus Z)) \\
                    & = \sum_\zeta \exp\left(a\Omega (\log_{\LT}(\zeta) + \log_{\LT}(Z))\right)  \\
                    & = \big( \sum_{\zeta \in \mathcal{G}_1} \Delta_a(\zeta) \big) \Delta_a \ .
\end{align*}
Under the Schneider-Teitelbaum isomorphism $\kappa$, the group $\mathcal{G}_1$ corresponds to the group of characters $\chi$ of the finite group $o_L/\pi_L o_L$, and, if $\zeta$ corresponds to $\chi$, then $\Delta_a(\zeta) = \ev_{\overline{a}}(\chi) = \chi(\overline{a})$, where $\overline{a} := a + \pi o_L$. Hence
\[\varphi(\psi_{\col}(\Delta_a)) = \left(\sum_{\chi} \chi(\overline{a}) \right) \Delta_a.\]
By column orthogonality of characters of the finite group $o_L / \pi _L$, we have $\sum_{\chi} \chi(\overline{a}) = q\delta_{\overline{a}, \overline{0}} = q\delta_{a \in \pi o_L}$. Hence $q \varphi(\psi_q(\Delta_a)) = q \delta_{a \in \pi o_L} \Delta_a = q \delta_{a \in \pi o_L} \varphi(\Delta_{a/\pi})$, using part (1). Since $\varphi$ is injective, we deduce that $\psi_q(\Delta_a) = \delta_{a \in \pi o_L} \Delta_{a/\pi}$ as required. \end{proof}
Write $\mathfrak{m} := \langle \pi, Z \rangle$ and $A := S\dcroc{Z}$. 
\begin{lemma}\label{lem:PsiCts} The operators $\varphi$ and $\psi_{\col}$ on $A$ are $\frm$-adically continuous.
\end{lemma}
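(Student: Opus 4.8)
The claim is that $\varphi$ and $\psi_{\col}$ on $A = S\dcroc{Z}$ are $\frm$-adically continuous, where $\frm = \langle \pi, Z\rangle$. The plan is to show each operator sends $\frm^N$ into $\frm^M$ for $M$ growing with $N$ (in fact one expects $\varphi(\frm^N) \subseteq \frm^N$ and $\psi_{\col}(\frm^N) \subseteq \frm^{\lfloor N/q\rfloor}$ or similar), which for an additive map is equivalent to continuity since the $\frm^N$ form a neighbourhood basis of $0$.

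First I would handle $\varphi$. Since $\varphi$ is an $S$-algebra homomorphism, it suffices to track where the generators of $\frm$ go: $\varphi(\pi) = \pi \in \frm$, and $\varphi(Z) = [\pi](Z) \in \pi Z o_L\dcroc{Z} + Z^2 o_L\dcroc{Z} \subseteq \frm^2 \subseteq \frm$ because $[\pi](Z)$ is a Frobenius power series. Hence $\varphi(\frm) \subseteq \frm$, and since $\varphi$ is multiplicative and $S$-linear, $\varphi(\frm^N) \subseteq \frm^N$ for all $N$. This gives $\frm$-adic continuity of $\varphi$ immediately.

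For $\psi_{\col}$, the defining relation is $\varphi(\psi_{\col}(F)) = \sum_{\zeta \in \cG_1} F(\zeta \oplus Z)$. I would first check that the right-hand side operator $R(F) := \sum_{\zeta \in \cG_1} F(\zeta \oplus Z)$ is $\frm$-adically continuous: the substitution $F(Z) \mapsto F(\zeta \oplus Z)$ is an $S$-algebra endomorphism sending $\pi \mapsto \pi$ and $Z \mapsto \zeta \oplus Z$; now $\zeta \in \cG_1$ satisfies $\varphi(\zeta) = 0$, and the torsion points $\zeta$ of $\cG$ lie in the maximal ideal of $o_{\overline L}$, but more to the point $\zeta \oplus Z \equiv Z \bmod \langle \zeta\rangle$, so one should argue directly on coefficients — actually the cleanest route is: $F(\zeta \oplus Z)$ has the same constant term behaviour modulo $\pi$ and modulo $Z$, but $\zeta$ need not lie in $S$. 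Instead I would work modulo $\frm^N$: the quotient $A/\frm^N$ is a finite free $(S/\pi^? S)$-module spanned by monomials $\pi^i Z^j$, and one checks the relevant maps are well-defined on these quotients. The substitution $Z \mapsto \zeta \oplus Z$ does not obviously preserve $A$ when $\zeta \notin S$, so the genuinely correct formulation is that $R$ is the sum over $\cG_1$ which, being Galois-stable, lands back in $A$, and on $A/\frm^N$ it is induced by a map of finite $S/\pi^N S$-modules; continuity of $R$ follows. Then since $\varphi$ is a $\frm$-adically continuous injection with $\frm$-adically closed image (by Lemma \ref{Weierstrass}(1), $\varphi(Z)A$ is the set of $F$ with vanishing degree-$<q$ Weierstrass remainder, a closed condition), and $\varphi \circ \psi_{\col} = R$ factors through $\varphi(A)$, one concludes $\psi_{\col} = \varphi^{-1} \circ R$ is continuous because $\varphi^{-1} : \varphi(A) \to A$ is continuous — this last point needs the open mapping theorem for the $\frm$-adically complete modules, or more elementarily a direct estimate from Weierstrass division showing $\varphi(G) \in \frm^N \Rightarrow G \in \frm^{N-c}$ for a constant $c$ (indeed $\varphi(G) \equiv G(Z^q) \bmod \pi$, so $\varphi(G) \in \frm^N$ forces $G \bmod \pi \in \langle Z^{\lceil N/q\rceil}\rangle$, and one iterates in the $\pi$-direction).

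The main obstacle is precisely this last inverse-continuity estimate for $\varphi$: one must show that if $\varphi(G)$ is highly $\frm$-divisible then so is $G$, which is not formal since $\varphi$ is not surjective. I expect the right tool is Weierstrass division (Lemma \ref{Weierstrass}(1)) applied repeatedly, or a filtration argument using $\varphi(Z) \equiv Z^q \bmod \pi$, tracking the $Z$-adic and $\pi$-adic orders separately; everything else (continuity of $\varphi$, continuity of the finite sum $R$) is routine.
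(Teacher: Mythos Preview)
Your strategy is exactly the paper's: show $\varphi(\frm^N)\subseteq\frm^N$ directly, establish continuity of $R=\varphi\circ\psi_{\col}$, then pull back through $\varphi$ using an inverse-continuity estimate based on $\varphi(Z)\equiv Z^q\bmod\pi$. The two places where you hand-wave are precisely where the paper supplies clean arguments. For the $\zeta\notin S$ issue, the paper does not invoke Galois-stability (which is awkward since $S$ carries no Galois action in general); instead it base-changes to $S_1:=o_{L(\cG_1)}\otimes_{o_L}S$, where each substitution $Z\mapsto\zeta\oplus Z$ is a genuine continuous $S_1$-algebra endomorphism of $S_1\dcroc{Z}$, and then restricts back to $A$. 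For the inverse-continuity of $\varphi$, the paper observes that $\frm^{qn}\subseteq\langle\pi,Z^q\rangle^n=\langle\pi,\varphi(Z)\rangle^n=A\varphi(\frm^n)$ and then uses that $A$ is free of rank $q$ over $\varphi(A)$ (Weierstrass division) to get $A\varphi(\frm^n)\cap\varphi(A)=\varphi(\frm^n)$; this gives $\varphi\psi_{\col}(\frm^m)\subseteq\frm^{qn}\cap\varphi(A)\subseteq\varphi(\frm^n)$ and hence $\psi_{\col}(\frm^m)\subseteq\frm^n$ by injectivity of $\varphi$, avoiding any open-mapping-type appeal.
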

\begin{proof}  Since $\varphi(Z) \in \langle Z \rangle$, we see that $\varphi( \frm^n ) \subseteq \langle \pi, \varphi(Z) \rangle^n \subseteq \frm^n$ for all $n \geq 0$. This implies the $\frm$-adic continuity of $\varphi$. 

Suppose first that $\cG_1$ is contained in $S$. Then the $S$-linear maps $A \to A$ sending $F(Z)$ to $F(Z +_{\cG} \zeta)$ are continuous with respect to $\frm$-adic topology for each $\zeta \in \cG_1$; hence $\varphi \circ \psi_{\col}$ is also $\frm$-adically continuous in this case. Let $L_1 = L(\cG_1)$, a finite extension of $L$ and let $S_1 := o_{L_1} \otimes_{o_L} S$. Since $o_{L_1}$ is a free $o_L$-module of finite rank, $S_1$ is still a $\pi$-adically complete and flat $o_L$-algebra, so letting $A_1 = S_1\dcroc{Z}$, we see that $\varphi \circ \psi_{\col} : A_1 \to A_1$ is $\frm A_1$-adically continuous. It follows that $\varphi \circ \psi_{\col} : A \to A$ is also $\frm$-adically continuous.

Let $n \geq 0$ be given. Since $\varphi(Z) \equiv Z^q \mod \pi A$, we have $\frm^{qn} = \langle \pi, Z \rangle^{qn} \subseteq \langle \pi, Z^q\rangle^n = \langle \pi, \varphi(Z)\rangle^n = A\varphi(\frm^n)$. Therefore $\frm^{qn} \cap \varphi(A) \subseteq A \varphi(\frm^n) \cap \varphi(A) = \varphi(\frm^n)$ where this last equation follows from the fact that $\varphi(A)$ admits a direct complement in $A$ as a $\varphi(A)$-module. However since $\varphi \circ \psi_{\col}$ is continuous, $\varphi\psi_{\col}(\frm^m) \subseteq \frm^{qn}$ for some $m \geq 0$. Hence
\[ \varphi\psi_{\col}(\frm^m) \subseteq \frm^{qn} \cap \varphi(A) \subseteq \varphi(\frm^n).\]
 The $\frm$-adic continuity of $\psi_{\col}$ now follows from the injectivity of $\varphi$. \end{proof}

\begin{lemma}\label{lem:phizeroseq} We have $\varphi^n(a_n) \to 0$ in the $\frm$-adic topology on $A$, for any sequence of elements $(a_n)$ contained in $ZA$.
\end{lemma}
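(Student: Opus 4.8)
The plan is to reduce the statement to a single divisibility fact about the iterated Frobenius series. The basic observation is that for all $F \in A = S\dcroc{Z}$ one has $\varphi^n(F)(Z) = F\big([\pi]^{\circ n}(Z)\big)$, where $[\pi]^{\circ n}$ is the $n$-fold composite of the power series $[\pi](Z)$ with itself; this is immediate by induction from $\varphi(F)(Z) = F([\pi](Z))$ and associativity of substitution of power series with zero constant term.

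The heart of the argument is to show that $[\pi]^{\circ n}(Z) \in Z\frm^n$ for every $n \geq 1$, which I would prove by induction on $n$. For $n = 1$ one only uses that $[\pi](Z) \in \pi Z + Z^2 o_L\dcroc{Z}$, so $[\pi](Z) = Z\big(\pi + Zg(Z)\big)$ with $\pi + Zg(Z) \in \frm$. For the inductive step, write $[\pi](W) = \pi W + W^2 g(W)$ with $g \in o_L\dcroc{W}$ and substitute $W = [\pi]^{\circ n}(Z)$:
\[ [\pi]^{\circ (n+1)}(Z) = \pi\,[\pi]^{\circ n}(Z) + \big([\pi]^{\circ n}(Z)\big)^2\, g\big([\pi]^{\circ n}(Z)\big). \]
By the inductive hypothesis the first summand lies in $\pi Z\frm^n \subseteq Z\frm^{n+1}$ and the second in $(Z\frm^n)^2 A \subseteq Z\frm^{2n+1} \subseteq Z\frm^{n+1}$, which closes the induction.

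Finally, given $(a_n) \subseteq ZA$, I would write $a_n = Z b_n$ with $b_n \in A$ and compute
\[ \varphi^n(a_n)(Z) = a_n\big([\pi]^{\circ n}(Z)\big) = [\pi]^{\circ n}(Z)\cdot \varphi^n(b_n)(Z) \in Z\frm^n \cdot A \subseteq \frm^n, \]
using only $\varphi^n(b_n) \in A$. Since the $\frm^n$ form a neighbourhood basis of $0$, this gives $\varphi^n(a_n) \to 0$ in the $\frm$-adic topology. I do not anticipate a genuine obstacle here; the one point to flag is that the hypothesis $a_n \in ZA$, rather than merely $a_n \in A$, is used in an essential way — otherwise the constant term $a_n(0) \in S$ is fixed by every power of $\varphi$ and need not tend to $0$ — and that this approach is self-contained, producing the explicit bound $\varphi^n(a_n) \in \frm^n$ without appealing to the $\frm$-adic continuity of $\varphi$ established in Lemma~\ref{lem:PsiCts}.
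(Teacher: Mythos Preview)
Your proof is correct and follows essentially the same route as the paper's: both establish the key fact $\varphi^n(Z) = [\pi]^{\circ n}(Z) \in Z\frm^n$ by induction on $n$, then write $a_n = Zb_n$ and conclude. The only cosmetic difference is that the paper phrases the inductive step as $\varphi^{n+1}(Z) \in \varphi(Z\frm^n) \subseteq \varphi(Z)\,\frm^n \subseteq Z\frm^{n+1}$ (using $\varphi(\frm)\subseteq\frm$), whereas you expand $[\pi](W)=\pi W + W^2 g(W)$ explicitly before substituting; the content is the same.
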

\begin{proof} Since $\varphi(Z) \in \cG$ we see that $\varphi(Z) \in Z \frm$. Assume inductively that $\varphi^n(Z) \in Z \frm^n$; then $\varphi^{n+1}(Z) \in \varphi(Z \frm^n) \subseteq \varphi(Z) \frm^n \subseteq Z \frm^{n+1}$, completing the induction. Write $a_n = Z b_n$ for some $b_n \in A$; then $\varphi^n(a_n) = \varphi^n(Z) \varphi(b_n) \in Z \frm^n \subseteq \frm^{n+1}$ for all $n \geq 0$, so $\varphi^n(a_n) \to 0$.\end{proof}

\begin{proposition}
\label{critolpsi}
If $f \in \Lambda_L(\frX)$ is such that $\psi_q^n(\mu(f)  \Delta_a) \in \Lambda_L(\frX)$ for all $a \in o_L$ and $n \geq 0$, then $f \in o_L\dcroc{o_L}$.
\end{proposition}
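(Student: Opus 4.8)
The plan is to recover $f$ as a bounded (that is, $o_L$-valued) measure on $o_L$ by writing down its values on the cosets $b+\pi^n o_L$, recognising these values as the constant terms of the power series $\psi_q^n(\mu(f)\Delta_{-b})$, which the hypothesis forces to lie in $o_L$. Write $F:=\mu(f)\in o_\infty\dcroc{Z}^{G_L,\ast}=\Lambda_L(\frX)$ for the power series attached to $f$ under the Schneider--Teitelbaum/Lubin--Tate isomorphism. Since the inclusion $o_L\dcroc{o_L}\hookrightarrow\Lambda_L(\frX)$ is the Amice--Katz transform $\mu$, whose source is the module of bounded measures on $o_L$, and since such a measure $\nu$ is determined by the compatible family of integers $\nu(b+\pi^n o_L)\in o_L$, it suffices to produce such a family and then to check that the resulting $\nu$ satisfies $\mu(\nu)=F$.

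The first step is a trace/orthogonality identity. Iterating the defining relation $\varphi(\psi_{\col}(H))(Z)=\sum_{\zeta\in\cG_1}H(\zeta\oplus Z)$ gives $\varphi^n(\psi_{\col}^n(H))(Z)=\sum_{\zeta\in\cG_n}H(\zeta\oplus Z)$, where $\cG_n$ is the group of $\pi^n$-torsion points of $\cG$; evaluating at $Z=0$ and using that $\varphi$ fixes constant terms yields
\[ \psi_q^n(H)(0)=\frac{1}{q^n}\sum_{\zeta\in\cG_n}H(\zeta)\qmb{for all}H\in o_\infty\dcroc{Z}\otimes_{o_L}L.\]
Via the isomorphism $\kappa$, the finite group $\cG_n$ is identified with the character group of $o_L/\pi^n o_L$, the point $\zeta$ corresponding to the character $\overline a\mapsto\Delta_a(\zeta)$ (as recorded in the proof of Lemma \ref{lem:PhiPsiDelta}); hence for a bounded measure $\nu$ with power series $G=\mu(\nu)$ one gets, by Fourier inversion on $o_L/\pi^n o_L$, the formula $\nu(b+\pi^n o_L)=\psi_q^n(G\Delta_{-b})(0)$. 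Guided by this, I set $c_{b,n}:=\psi_q^n(F\Delta_{-b})(0)$.

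Two verifications remain. \emph{Integrality:} the hypothesis with $a=-b$ gives $\psi_q^n(F\Delta_{-b})\in\Lambda_L(\frX)=o_\infty\dcroc{Z}^{G_L,\ast}$, and the constant term of any element of $o_\infty\dcroc{Z}^{G_L,\ast}$ is $G_L$-fixed, because $[\tau(\sigma)^{-1}](Z)$ kills $0$; by Ax--Sen--Tate it therefore lies in $o_{\Cp}^{G_L}=o_L$, so $c_{b,n}\in o_L$. \emph{Compatibility:} using $\Delta_{-(b+\pi^n c)}=\Delta_{-b}\Delta_{-\pi^n c}$, the relation $\Delta_{\pi^n c}(\zeta)=\Delta_c([\pi^n](\zeta))$, and the orthogonality $\sum_{x\in o_L/\pi o_L}\chi(x)\in\{q,0\}$ according as $\chi$ is trivial or not, one computes $\sum_{\overline c\in o_L/\pi o_L}c_{b+\pi^n c,\,n+1}=c_{b,n}$, the point being that $\sum_{\overline c}\Delta_{-\pi^n c}(\eta)=q$ if $\eta\in\cG_n$ and $0$ otherwise, which collapses the $\cG_{n+1}$-sum to a $\cG_n$-sum. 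Thus $(c_{b,n})$ is an $o_L$-valued compatible system and defines a bounded measure $\nu\in o_L\dcroc{o_L}$.

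Finally, set $H:=\mu(\nu)-F\in o_\infty\dcroc{Z}$. By construction and the formula for measures, $\psi_q^n(H\Delta_{-b})(0)=c_{b,n}-c_{b,n}=0$ for all $b\in o_L$ and $n\ge 0$; fixing $n$ and letting $b$ range over $o_L/\pi^n o_L$, orthogonality of the characters $\zeta\mapsto\Delta_{-b}(\zeta)$ of $\cG_n$ forces $H$ to vanish on all of $\cG_n$. Since $[\pi^n](Z)$ is a distinguished power series of degree $q^n$ whose zero set is exactly $\cG_n$, Weierstrass division by it (Lemma \ref{Weierstrass}, applied with $[\pi^n]$ in place of $\varphi$) shows $[\pi^n](Z)\mid H$ in $o_\infty\dcroc{Z}$ for every $n$; as $[\pi^n](Z)=\varphi^n(Z)\in Z\frm^n$ (Lemma \ref{lem:phizeroseq}) and $o_\infty\dcroc{Z}$ is $\frm$-adically separated, we conclude $H=0$, so $F=\mu(\nu)$ and hence $f\in o_L\dcroc{o_L}$. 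The main obstacle is not the final vanishing argument — which is immediate from Lemma \ref{Weierstrass} and $\frm$-adic separatedness — but the middle bookkeeping: pinning down the identification of $\cG_n$ with the dual of $o_L/\pi^n o_L$ under $\kappa$, the constant-term formula $\psi_q^n(G\Delta_{-b})(0)=\nu(b+\pi^n o_L)$ for genuine measures, and the compatibility relation $\sum_{\overline c}c_{b+\pi^n c,\,n+1}=c_{b,n}$.
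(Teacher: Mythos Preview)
Your proof is correct, and the central computation --- that $\psi_q^n(F\Delta_{-b})(0)=q^{-n}\sum_{\zeta\in\cG_n}F(\zeta)\Delta_{-b}(\zeta)$ is the would-be measure of $b+\pi^n o_L$, and lies in $o_L$ by the hypothesis --- is exactly the paper's.  The difference is in what comes after.  The paper exploits the embedding $\Lambda_L(\frX)\subset\cO_L(\frX)\cong D^{L-\an}(o_L,L)$: the element $f$ is \emph{already} a locally analytic distribution on $o_L$, so the values $f(\mathbf{1}_{b+\pi^n o_L})$ are already defined, and the paper simply identifies them (via \cite[Lemma 4.6(4)]{ST} and the same orthogonality) with $\psi_q^n(\mu(f)\Delta_{-b})(0)$.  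Once these values are shown to lie in $o_L$, one is done: a distribution whose values on all $\mathbf{1}_{b+\pi^n o_L}$ are bounded by $1$ is an element of $o_L\dcroc{o_L}$.

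You instead forget that $f$ is a distribution, rebuild a measure $\nu$ from the family $(c_{b,n})$, and then run a separate argument (compatibility, Weierstrass division by $[\pi^n](Z)$, $\frm$-adic separatedness) to show $\mu(\nu)=F$.  All of this is correct, but it is work the paper avoids entirely.  Your route has the modest advantage of being self-contained at the level of power series in $o_\infty\dcroc{Z}$, without appealing to the distribution-theoretic interpretation or to \cite{ST}; the paper's route is considerably shorter because the identification $\nu=f$ is automatic once $f$ is viewed as a distribution.
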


\begin{proof}
We will show that $|f(\mathbf{1}_{a+\pi^n o_L})| \leq 1$ for all $a \in o_L$ and $n \geq 0$. By \cite[Lemma 4.6(4)]{ST}, we have 
\[f(\mathbf{1}_{a+\pi^n o_L}) = (f \delta_{-a})(\mathbf{1}_{\pi^n o_L}).\] 
The orthogonality of columns in the character table of the finite group $o_L / \pi^n o_L$ implies that
\[\mathbf{1}_{\pi^n o_L} = \frac{1}{q^n} \sum_{[\pi^n](z)=0} \kappa_z.\] 
Hence by ibid., $(f  \delta_{-a})(\mathbf{1}_{\pi^n o_L}) = \frac{1}{q^n} \sum\limits_{[\pi^n] (z)=0} f (z) \Delta_{-a}(z)$. We now observe that 
\[\frac{1}{q^n} \sum_{[\pi^n](z)=0} f (z)  \Delta_{-a}(z) = \psi_q^n(\mu(f)  \Delta_{-a})(0).\] 
Since $\psi_q^n(\mu(f)  \Delta_{-a}) \in \Lambda_L(\frX)$ by assumption, we have $|f(\mathbf{1}_{a+\pi^n o_L})| \leq 1$ for all $a \in o_L$ and $n \geq 0$, as claimed. Therefore $f \in o_L\dcroc{o_L}$.\end{proof}

\begin{corollary}
\label{ololpsi}
If $R$ is a sub $o_L\dcroc{o_L}$-algebra of $\Lambda_L(\frX)$ such that $\psi(R) \subset R$, then $R=o_L\dcroc{o_L}$.
\end{corollary}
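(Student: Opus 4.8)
The plan is to reduce everything to Proposition \ref{critolpsi}. Take an arbitrary $f \in R$; I will show $f \in o_L\dcroc{o_L}$. Granting this, we get $R \subseteq o_L\dcroc{o_L}$, while the reverse inclusion $o_L\dcroc{o_L} \subseteq R$ holds by the very hypothesis that $R$ is a sub $o_L\dcroc{o_L}$-algebra of $\Lambda_L(\frX)$, so $R = o_L\dcroc{o_L}$ follows.

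First I would record the two elementary closure properties of $R$ that are needed. Since $o_L\dcroc{o_L} \subseteq R$, in particular every Dirac measure $\delta_a$ ($a \in o_L$) lies in $R$; under the LT-identification of $\Lambda_L(\frX)$ with $o_\infty\dcroc{Z}^{G_L,\ast}$ it corresponds to the grouplike power series $\Delta_a = \mu(\delta_a)$. Because $R$ is a subring of $\Lambda_L(\frX)$ (equivalently, because $R$ is an $o_L\dcroc{o_L}$-submodule), the product $f\delta_a$ lies in $R$ for every $a \in o_L$; translated through the ring isomorphism $\mu$ this reads $\mu(f)\Delta_a = \mu(f\delta_a) \in \mu(R)$. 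Next I iterate the hypothesis $\psi_q(R) \subseteq R$ to get $\psi_q^n(f\delta_a) \in R$ for all $n \geq 0$. The operator $\psi_q$ on $\Lambda_L(\frX)$ is, by construction, the operator of \S\ref{PsiSection} transported through $\mu$, so $\psi_q^n(\mu(f)\Delta_a) = \mu(\psi_q^n(f\delta_a)) \in \mu(R) \subseteq \mu(\Lambda_L(\frX))$, i.e.\ $\psi_q^n(\mu(f)\Delta_a) \in \Lambda_L(\frX)$ for all $a \in o_L$ and all $n \geq 0$. This is exactly the hypothesis of Proposition \ref{critolpsi}, which then yields $f \in o_L\dcroc{o_L}$.

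There is essentially no obstacle: the entire arithmetic content sits in Proposition \ref{critolpsi}, and the corollary is a formal consequence once the bookkeeping is in place. The only points deserving a sentence of care are that a sub $o_L\dcroc{o_L}$-algebra genuinely contains all the $\delta_a$ and is stable under multiplication by them, that $\mu(\delta_a) = \Delta_a$ together with multiplicativity of $\mu$ gives $\mu(f\delta_a) = \mu(f)\Delta_a$, and that the $\psi_q$ in the hypothesis $\psi_q(R)\subseteq R$ is the same operator that appears in Proposition \ref{critolpsi} (so that it commutes with $\mu$). None of these requires any computation.
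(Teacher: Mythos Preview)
Your proof is correct and is exactly the argument the paper has in mind: the corollary is stated immediately after Proposition~\ref{critolpsi} without its own proof, and your reduction---using that $R$ contains all $\delta_a$, is closed under multiplication, and is $\psi_q$-stable, so that $\psi_q^n(\mu(f)\Delta_a)\in R\subseteq\Lambda_L(\frX)$---is precisely how one is meant to deduce it.
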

We can now prove Theorem \ref{intropsicrit} from the introduction.
\begin{theorem}
We have $\Lambda_L(\frX) = o_L \dcroc{o_L}$ if and only if $\psi_q(\Lambda_L(\frX)) \subset \Lambda_L(\frX)$.
\end{theorem}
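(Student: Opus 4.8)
The plan is to prove the two implications separately; the converse is the one carrying genuine content, while the direct implication is a routine integrality check.

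For the direct implication, assume $\Lambda_L(\frX) = o_L\dcroc{o_L}$. Then it suffices to verify that $\psi_q$ maps $o_L\dcroc{o_L}$ into itself. I would argue via the decomposition $o_L\dcroc{o_L} = \bigoplus_{b}\delta_b\,\varphi(o_L\dcroc{o_L})$, where $b$ runs over a set of coset representatives for $\pi o_L$ in $o_L$; this merely reflects $\varphi(\delta_c)=\delta_{\pi c}$ together with $o_L=\bigsqcup_b(b+\pi o_L)$. Writing $F=\sum_b\delta_b\,\varphi(F_b)$ with $F_b\in o_L\dcroc{o_L}$, the projection formula of Lemma \ref{lem:projection-formula} gives $\psi_q(F)=\sum_b\psi_q(\delta_b)F_b$, while Lemma \ref{lem:PhiPsiDelta}(2) shows that $\psi_q(\delta_b)$ equals $\delta_{b/\pi}$ when $b\in\pi o_L$ and $0$ otherwise; in all cases $\psi_q(\delta_b)\in o_L\dcroc{o_L}$, hence $\psi_q(F)\in o_L\dcroc{o_L}$. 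The only point needing care is that $\psi_q$ is a priori defined only after inverting $\pi$, so what is really being shown is the absence of denominators; to invoke Lemmas \ref{lem:projection-formula} and \ref{lem:PhiPsiDelta} one transports everything through the Amice--Katz identification $\delta_a\leftrightarrow\Delta_a$ inside $o_\infty\dcroc{Z}$.

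For the converse, assume $\psi_q(\Lambda_L(\frX))\subseteq\Lambda_L(\frX)$. The ring $R:=\Lambda_L(\frX)$ is an $o_L\dcroc{o_L}$-algebra via the canonical injection $o_L\dcroc{o_L}\hookrightarrow\Lambda_L(\frX)$, and it is stable under $\psi_q$ by hypothesis, so it satisfies the hypotheses of Corollary \ref{ololpsi}. That corollary immediately yields $\Lambda_L(\frX)=R=o_L\dcroc{o_L}$.

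I do not anticipate a genuine obstacle here: the substantive input --- that membership in $o_L\dcroc{o_L}$ is detected by the boundedness of $\psi_q^n(\mu(f)\Delta_a)(0)=f(\mathbf{1}_{a+\pi^n o_L})$ for all $a\in o_L$ and $n\geq 0$ --- has already been established in Proposition \ref{critolpsi}, and Corollary \ref{ololpsi} repackages it precisely in the form required here; everything else is the bookkeeping above.
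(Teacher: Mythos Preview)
Your proof is correct and follows the same approach as the paper: the reverse implication is exactly the paper's application of Corollary \ref{ololpsi} with $R=\Lambda_L(\frX)$, and your forward implication via the decomposition $o_L\dcroc{o_L}=\bigoplus_b\delta_b\,\varphi(o_L\dcroc{o_L})$ and the projection formula simply spells out what the paper dismisses as ``clear'' from Lemma \ref{lem:PhiPsiDelta}. The only cosmetic difference is that the paper cites part (1) of that lemma while you (more aptly) invoke part (2); the substance is identical.
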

\begin{proof} The forward implication is clear in view of Lemma \ref{lem:PhiPsiDelta}(1). The reverse implication follows from Corollary \ref{ololpsi} applied with $R = \Lambda_L(\frX)$.\end{proof}

\subsection{The covariant bialgebra of \ts{\cG}}
Katz \cite[\S 1]{Ka2} talks about the ``algebra $\Diff(\cG)$ of all $\cG$-invariant $o_L$-linear differential operators from $\cO(\cG)$ into itself". Because we are not aware of any place in the literature which adequately deals with invariant differential operators on formal groups, we will instead use the \emph{covariant bialgebra} of $\cG$ which will turn out to be isomorphic to Katz's $\Diff(\cG)$.

\begin{definition} \label{HypDef}\hsp
\begin{enumerate}
\item Let $Z_1 +_{\cG} Z_2 \in o_L\dcroc{Z_1,Z_2}$ denote the formal group law defining the formal group $\cG$.
\item Let $U(\cG)$ denote the set of all $o_L$-linear maps from $\cO(\cG) = o_L\dcroc{Z}$ to $o_L$ that vanish on some power of the augmentation ideal $Z o_L\dcroc{Z}$. In other words,
\[U(\cG) = \lim\limits_{\longrightarrow} \Hom_{o_L}(\cO(\cG) / Z^n \cO(\cG), o_L).\]
\item For each $f, g \in U(\cG)$, define the product $f \cdot g$ by the formula
\[ (f \cdot g)(F(Z)) = (f \widehat{\otimes} g)(F(Z_1 +_{\cG} Z_2)) \quad\mbox{for all}\quad F(Z) \in o_L\dcroc{Z}.\]
\item With this product, $U(\cG)$ is the \emph{covariant bialgebra} of $\cG$, defined at \cite[36.1.8]{Haz}.
\item For each $m \geq 0$, let $u_m \in U(\cG)$ be the unique $o_L$-linear map that satisfies
\[u_m(Z^n) = \delta_{mn} \quad\mbox{ for all } \quad n \geq 0.\]
\item Let $\langle -,- \rangle : U(\cG) \times \cO(\cG) \to o_L$ be the evaluation pairing:
\[ \langle f, F \rangle := f(F).\]
\end{enumerate}
\end{definition}

This covariant bialgebra is also known as the \emph{hyperalgebra} or the \emph{distribution algebra} of $\cG$.  We will now explain the link with Katz's work, using his notation.

\begin{lemma} \label{StrConsts}\hsp
\begin{enumerate} 
\item $\{u_n : n \geq 0\}$ is an $o_L$-module basis for $U(\cG)$.
\item Let $i \geq 0$ and write $(Z_1 +_{\cG} Z_2)^i = \sum\limits_{\stackrel{n,m \geq 0}{n + m \geq i}}^\infty \lambda(n,m;i) Z_1^n Z_2^m$ for some $\lambda(n,m;i) \in o_L$. Then for all $n,m \geq 0$ we have
\[u_n \cdot u_m = \sum\limits_{k=0}^{n+m} \lambda(n,m;k) u_k.\]
\item Let $s$ be a variable. The map $L[s] \to U(\cG) \otimes_{o_L} L$ which sends $s$ to $u_1 \otimes 1$ is an isomorphism of positively filtered $L$-algebras.
\end{enumerate}
\end{lemma}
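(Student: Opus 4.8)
The plan is to establish the three parts in order, since part (2) uses (1) and part (3) uses both (1) and (2). For part (1), I would unwind the definition $U(\cG) = \varinjlim_n \Hom_{o_L}(\cO(\cG)/Z^n\cO(\cG), o_L)$. Each quotient $\cO(\cG)/Z^n\cO(\cG) = o_L\dcroc{Z}/Z^no_L\dcroc{Z}$ is free over $o_L$ on $1, Z, \dots, Z^{n-1}$, so its $o_L$-linear dual is free on the dual basis, which is precisely $u_0, \dots, u_{n-1}$ viewed as functionals killing $Z^no_L\dcroc{Z}$; the transition maps of the colimit identify these bases compatibly, so $U(\cG) = \bigcup_n \bigoplus_{m<n} o_L u_m$, and $o_L$-linear independence of the $u_m$ is immediate from $u_m(Z^k) = \delta_{mk}$. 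This step is pure bookkeeping.

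For part (2), I would first record that the completed tensor product functional $f\widehat{\otimes}g$ on $o_L\dcroc{Z_1,Z_2}$ attached to $f,g \in U(\cG)$ is the one determined by $(f\widehat{\otimes}g)(G_1(Z_1)G_2(Z_2)) = f(G_1)\,g(G_2)$ together with continuity; in particular $u_n\widehat{\otimes}u_m$ extracts the coefficient of $Z_1^nZ_2^m$. Then, for $F = \sum_i a_i Z^i$, one expands $F(Z_1 +_{\cG} Z_2) = \sum_i a_i \sum_{n',m'}\lambda(n',m';i)Z_1^{n'}Z_2^{m'}$ and reads off the coefficient of $Z_1^nZ_2^m$. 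Because $(Z_1 +_{\cG} Z_2)^i$ has lowest total degree $i$, one has $\lambda(n,m;i) = 0$ for $i > n+m$, so this coefficient is the finite sum $\sum_{k=0}^{n+m}\lambda(n,m;k)a_k = \big(\sum_{k=0}^{n+m}\lambda(n,m;k)u_k\big)(F)$, which is the assertion. Only the vanishing range of $\lambda$ requires a moment's care.

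For part (3), the main part: the $L$-algebra homomorphism $\Phi : L[s] \to U(\cG)\otimes_{o_L}L$ with $\Phi(s) = u_1\otimes 1$ exists by the universal property of the polynomial ring. Equip $L[s]$ with its degree filtration and $U(\cG)\otimes_{o_L}L$ with $F^{\leq n} := \bigoplus_{m\leq n}Lu_m$, which by part (2) satisfies $F^{\leq n}\cdot F^{\leq m}\subseteq F^{\leq n+m}$ and has $\dim_L F^{\leq n} = n+1$; thus $\Phi$ is a homomorphism of positively filtered $L$-algebras. The crucial computation is that the coefficient of $Z_1Z_2^{n-1}$ in $(Z_1 +_{\cG} Z_2)^n$ equals that in $(Z_1 + Z_2)^n$, namely $n$: since each factor equals $Z_1 + Z_2 + (\text{terms of degree} \geq 2)$ and there are exactly $n$ factors, only the linear part of each factor can contribute to a monomial of total degree $n$. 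Hence $\lambda(1,n-1;n) = n$, and by part (2) and induction on $n$ one obtains $u_1^n = n!\,u_n + (\text{a combination of } u_k \text{ with } k < n)$. As $n! \in L^\times$, this shows $u_1^0, u_1^1, \dots, u_1^n$ is an $L$-basis of $F^{\leq n}(U(\cG)\otimes_{o_L}L)$; therefore $\Phi$ restricts on each filtered piece to an injective $L$-linear map between spaces of equal finite dimension, hence an isomorphism $F^{\leq n}L[s] \xrightarrow{\ \sim\ } F^{\leq n}(U(\cG)\otimes_{o_L}L)$. Consequently $\Phi$ is bijective and $\Phi(F^{\leq n}L[s]) = F^{\leq n}(U(\cG)\otimes_{o_L}L)$ for every $n$, i.e.\ $\Phi$ is an isomorphism of positively filtered $L$-algebras.

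The only genuinely non-formal ingredient is the leading-coefficient identity $\lambda(1,n-1;n) = n$ — conceptually, the fact that $U(\cG)\otimes_{o_L}L$ sees only the one-dimensional abelian Lie algebra of $\cG$, so that it is a polynomial ring on the primitive element $u_1$. Everything else is unwinding the definition of the covariant bialgebra and elementary linear algebra; I expect the completed-tensor-product bookkeeping in part (2) to be the most error-prone rather than the hardest step.
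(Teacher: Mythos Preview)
Your proof is correct and follows essentially the same approach as the paper. For part (3) there is only a cosmetic difference: the paper observes the slightly more general identity $\lambda(n,m;n+m) = \binom{n+m}{n}$ (coming from $Z_1 +_{\cG} Z_2 \equiv Z_1 + Z_2 \bmod (Z_1,Z_2)^2$) and phrases the induction in the associated graded ring, concluding $(\gr u_1)^n = n!\,\gr u_n$, whereas you use the special case $\lambda(1,n-1;n)=n$ and carry the lower-order terms by hand; these are the same argument in different packaging.
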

\begin{proof} (1) This is clear because $Z^no_L\dcroc{Z} = o_LZ^n \oplus Z^{n+1}o_L\dcroc{Z}$ for any $n \geq 0$. 

(2) We compute that for every $n,m,i \geq 0$ we have
\[ (u_n\cdot u_m)(Z^i) = (u_n \widehat{\otimes} u_m)( (Z_1 +_{\cG} Z_2)^i ) = (u_n \widehat{\otimes} u_m)\left(  \sum\limits_{\stackrel{a,b \geq 0}{a + b \geq i}}^\infty \lambda(a,b;i) Z_1^a Z_2^b \right) = \lambda(n,m;i).\]
Because $\sum\limits_{k=0}^{n+m} \lambda(n,m;k) u_k$ also sends $Z^i$ to $\lambda(n,m;i)$, it must be equal to $u_n \cdot u_m$.

(3) From (2) we see that the $o_L$-submodule $U(\cG)_n$ of $U(\cG)$ generated by $\{u_i : 0 \leq i \leq n\}$ defines an algebra filtration on $U(\cG)$:
\[ U(\cG)_n \cdot U(\cG)_m \subseteq U(\cG)_{n+m} \quad\mbox{for all}\quad n,m\geq 0.\]
The associated graded ring is the free $o_L$-module with basis $\{\gr u_n : n \geq 0\}$. Since $Z_1 +_{\cG} Z_2 \equiv Z_1 + Z_2 \mod (Z_1,Z_2)^2$, we see that $\lambda(n,m;n+m) = \binom{n+m}{n}$ for any $n,m \geq 0$. Hence from (2) we see that the multiplication in $\gr U(\cG)$ is given by 
\[ (\gr u_n) \cdot (\gr u_m) = \binom{n+m}{n} \gr u_{n+m}.\]
The same formulas hold in $\gr (U(\cG) \otimes_{o_L} L)$. Induction on $n$ shows that $(\gr u_1)^n = n! \gr u_n$ for all $n \geq 0$. Since $L$ has characteristic zero, we see that $\gr (U(\cG) \otimes_{o_L} L)$ is generated by $\gr u_1$ as an $L$-algebra. The result follows.
\end{proof}

We will henceforth identify $U(\cG) \otimes_{o_L} L$ with the polynomial ring $L[s]$.  Recall the polynomials $P_n(Y) \in L[Y]$ from \cite[Definition 4.1]{ST}, which are defined by the following formal expansion:
\begin{equation*}
  \exp(Y \log_{\LT}(Z)) = \sum_{m=0}^\infty P_m(Y) Z^m.
\end{equation*}

\begin{lemma}\label{PnU} For every $n \geq 0$, we have $u_n = P_n(u_1)$ inside  $U(\cG) \otimes_{o_L}L$. \end{lemma}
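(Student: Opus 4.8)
I would introduce the ``tautological'' element $\gamma(Z) := \sum_{m\ge 0} u_m Z^m \in U(\cG)\dcroc{Z}$ and prove that, after inverting $p$ and using the identification $U(\cG)\otimes_{o_L}L = L[s]$ with $s = u_1$, one has $\gamma(Z) = \exp\!\big(u_1\,\log_{\LT}(Z)\big)$ in $L[s]\dcroc{Z}$. Comparing the coefficient of $Z^m$ on both sides with the defining expansion $\exp(Y\log_{\LT}(Z)) = \sum_m P_m(Y)Z^m$ of \cite[Definition 4.1]{ST} then yields $u_m = P_m(u_1)$ immediately.

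\textbf{Two key identities.} First, extending the evaluation pairing $\langle -,-\rangle$ coefficient-wise in $Z$, one has $\langle \gamma(Z), F\rangle = F(Z)$ for every $F\in\cO(\cG)$, simply because $\langle u_m, F\rangle$ is the coefficient of $Z^m$ in $F$. Second, using the description of the product on $U(\cG)$ from Definition \ref{HypDef}(3), I would establish the ``grouplike'' identity
\[ \gamma(Z_1)\cdot\gamma(Z_2) = \gamma(Z_1 +_{\cG} Z_2) \qquad\text{in } U(\cG)\dcroc{Z_1,Z_2},\]
where the left-hand side means $\sum_{m,n}(u_m\cdot u_n)Z_1^m Z_2^n$: both sides pair with an arbitrary $F\in\cO(\cG)$ to give $F(Z_1 +_{\cG} Z_2)$ — the left side by the definition of the product together with the first identity, the right side by substituting $Z = Z_1 +_{\cG} Z_2$ into $F$ — and the pairing separates points of $U(\cG)\dcroc{Z_1,Z_2}$ since $\{u_n\}$ is an $o_L$-basis of $U(\cG)$ dual to $\{Z^n\}$ (this persists after $\otimes_{o_L}L$).

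\textbf{Identifying $\gamma$.} Now work over the $\bQ$-algebra $R := U(\cG)\otimes_{o_L}L = L[s]$, in which $\gamma(Z) = 1 + u_1 Z + \bigO(Z^2)$, the constant term being $u_0$, the identity of $U(\cG)$. Over $R$ the logarithm $\log_{\LT}$ is an isomorphism $\cG \xrightarrow{\ \sim\ } \widehat{\bG}_a$ with inverse $\exp_{\LT}\in L\dcroc{Z}\subseteq R\dcroc{Z}$, so the grouplike identity says that $\gamma\circ\exp_{\LT}:\widehat{\bG}_a\to\widehat{\bG}_m$ is a homomorphism of formal groups over $R$. Every such homomorphism has the form $T\mapsto\exp(cT)$ for a unique $c\in R$ (apply $\log$ to the functional equation to see that $\log\gamma(\exp_{\LT}(T))$ is additive, hence linear in $T$), so $\gamma(Z) = \exp(c\log_{\LT}(Z))$; comparing coefficients of $Z^1$ and using $\log_{\LT}(Z) = Z + \bigO(Z^2)$ forces $c = u_1$. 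Hence $\gamma(Z) = \exp(u_1\log_{\LT}(Z)) = \sum_m P_m(u_1)Z^m$, and the lemma follows.

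\textbf{Main obstacle and an alternative.} The only delicate point is the passage through $R = L[s]$: one must check that $\exp_{\LT}$ has coefficients in $L$ (so the change of coordinates $\cG\cong\widehat{\bG}_a$ is valid over $R$), that the grouplike relation is transported correctly under it, and that the separating property of the pairing survives $\otimes_{o_L}L$; none of these is hard but all deserve a line. A completely elementary alternative avoids formal groups: writing $P_i(u_1) = \sum_k p_{i,k}u_1^k$ and iterating the product formula of Lemma \ref{StrConsts}(2), one computes $\langle u_1^k, Z^j\rangle = k!\,a_{j,k}$, where $a_{j,k}$ is the coefficient of $T^k$ in $\exp_{\LT}(T)^j$ (using $\log_{\LT}(Z)\equiv Z$ to first order); matching this against the relation $\sum_m P_m(Y)\,a_{m,n} = Y^n/n!$ obtained by substituting $Z = \exp_{\LT}(T)$ into the definition of $P_m$ reduces $\langle P_i(u_1), Z^j\rangle = \delta_{ij}$ to the transpose-inverse bookkeeping of the two triangular matrices $(p_{i,k})$ and $(a_{j,k})$, and then $u_i = P_i(u_1)$ since both agree on every $Z^j$.
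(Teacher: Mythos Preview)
Your argument is correct but takes a genuinely different route from the paper. The paper's proof is essentially a citation: it identifies $U(\cG)$ with Katz's algebra $\Diff(\cG)$ of invariant differential operators via the map $D(n)\mapsto u_n$ (the structure constants match by Lemma~\ref{StrConsts}(2) and \cite[(1.2)]{Ka2}), and then invokes Katz's own formula $D(n)=P_n(D(1))$ from \cite[Corollary 1.8]{Ka2}. No computation is performed in the paper itself.

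Your approach is self-contained. Introducing the tautological element $\gamma(Z)=\sum_m u_m Z^m$ and proving the grouplike identity $\gamma(Z_1)\cdot\gamma(Z_2)=\gamma(Z_1+_{\cG}Z_2)$ directly from the Hopf-algebra product is exactly what underlies Katz's result, but you carry it out explicitly; then the fact that over the $\bQ$-algebra $R=L[s]$ every primitive element of $\cG$ is a scalar multiple of $\log_{\LT}(Z)$ pins down $\gamma(Z)=\exp(u_1\log_{\LT}(Z))$. This buys you independence from \cite{Ka2} and makes the mechanism transparent (it is essentially the statement that $\gamma$ is the canonical grouplike element of the dual Hopf algebra). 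The paper's approach buys brevity by outsourcing the work. Your ``elementary alternative'' via the transpose-inverse bookkeeping of the matrices $(p_{i,k})$ and $(a_{j,k})$ is in effect reproving Katz's formula by hand and would also work, though it is less conceptual than your main argument.
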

\begin{proof} The structure constants of Katz's algebra $\Diff(\cG)$ are the same as the ones in $U(\cG)$ by \cite[(1.2)]{Ka2} and Lemma \ref{StrConsts}(2). So the $o_L$-linear map that sends $D(n) \in \Diff(\cG)$ to $u_n\in U(\cG)$ is an $o_L$-algebra isomorphism. Comparing \cite[Corollary 1.8]{Ka2} with \cite[Definition 4.1]{ST} shows that $D(n) = P_n(D(1))$ in $\Diff(\cG) \otimes_{o_L} L$ for all $n \geq 0$. The result follows by applying the algebra isomorphism $\Diff(\cG) \to U(\cG)$ established above. \end{proof}
Of course in the context of affine group schemes, this isomorphism between the algebra of left-invariant differential operators on the group scheme and the distribution algebra of the group scheme is the well known `Invariance Theorem', \cite[Chapter II, \S 4, Theorem 6.6]{DeGa}.

Next, we consider the action of the monoid $o_L$ on the formal group $\cG$. The covariant bialgebra construction is functorial in $\cG$: if $\varphi : \cG \to \cH$ is a morphism of formal groups, then $U(\varphi) : U(\cG) \to U(\cH)$ is the morphism of $o_L$-bialgebras which is the transpose to the $o_L$-algebra homomorphism $\varphi^\ast : \cO(\cH) \to \cO(\cG)$ induced by $\varphi$. Using the evaluation pairing, we have the following formula which defines this action:
\begin{equation}\label{TransOact} \langle U(\varphi)(f), F \rangle = \langle f, \varphi^\ast(F)\rangle \quad\mbox{for all}\quad f \in U(\cG), F \in \cO(\cG).\end{equation}
\begin{definition}\label{MonAct1} Let $a \in o_L$.
\begin{enumerate}
\item Let $[a] : \cG \to \cG$ be the action of $a$ on $\cG$.
\item Write $a \cdot f := U([a])(f)$ for all $f \in U(\cG)$.
\end{enumerate}
\end{definition}
The $o_L$-algebra endomorphism $U([a])$ of $U(\cG)$ extends to an $L$-algebra endomorphism $U([a]) \otimes 1$ of $U(\cG) \otimes_{o_L} L = L[s]$. What does this action do to the generator $s$ of $L[s]$? 

\begin{lemma}\label{ActionOnTgt} We have $a \cdot s = as$ for all $a \in o_L$.
\end{lemma}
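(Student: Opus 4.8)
The plan is to unwind the definition of the transpose action using the evaluation pairing $\langle -,-\rangle$ and the explicit shape of the multiplication-by-$a$ power series $[a](Z)$. Since $\{u_n : n \geq 0\}$ is an $o_L$-basis of $U(\cG)$ by Lemma \ref{StrConsts}(1), and since $s$ is by our identification just $u_1 \otimes 1$, it suffices to show that $U([a])(u_1) = a\, u_1$ inside $U(\cG)$ itself; tensoring with $L$ then gives $a \cdot s = as$.

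First I would recall that, by construction, $U([a]) : U(\cG) \to U(\cG)$ is the transpose of the $o_L$-algebra homomorphism $[a]^\ast : \cO(\cG) \to \cO(\cG)$ which sends $F(Z)$ to $F([a](Z))$. Concretely, formula (\ref{TransOact}) gives
\[ \langle U([a])(u_1), Z^n \rangle = \langle u_1, [a](Z)^n \rangle \qmb{for all} n \geq 0. \]
Now $\langle u_1, G(Z) \rangle$ is, by definition of $u_1$, the coefficient of $Z^1$ in $G(Z)$. So everything reduces to reading off the linear coefficient of $[a](Z)^n$.

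Next I would invoke the fact, recalled in $\S\ref{sect:LT}$, that $\cG$ is a formal $o_L$-module, so that $[a](Z) \equiv aZ \bmod Z^2 o_L\dcroc{Z}$. Hence $[a](Z)^n = a^n Z^n + (\text{terms of order} \geq n+1)$, which has vanishing $Z^1$-coefficient unless $n = 1$, in which case the $Z^1$-coefficient is $a$; and for $n = 0$ the constant power series $1$ also has zero $Z^1$-coefficient. Therefore $\langle U([a])(u_1), Z^n \rangle = a\,\delta_{1n}$ for all $n$, which is exactly $\langle a\,u_1, Z^n\rangle$. Since the functionals in $U(\cG)$ are determined by their values on the $Z^n$, we conclude $U([a])(u_1) = a\, u_1$, i.e. $a \cdot s = as$ after base change to $L$.

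There is no real obstacle here: the only thing to be careful about is the bookkeeping of the duality convention — which side of the pairing $[a]^\ast$ acts on, and the normalization $u_m(Z^n) = \delta_{mn}$ — so that the linear term of $[a](Z)$ is picked out cleanly. Everything else is immediate from the formal $o_L$-module axioms.
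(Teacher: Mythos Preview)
Your proof is correct and is essentially identical to the paper's own argument: both use formula (\ref{TransOact}) together with $[a](Z) \equiv aZ \bmod Z^2 o_L\dcroc{Z}$ to compute $\langle U([a])(u_1), Z^n\rangle = a\delta_{n,1} = \langle a u_1, Z^n\rangle$ and conclude. You have simply written out the same computation with more commentary.
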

\begin{proof} We know that $[a](Z) \equiv aZ \mod Z^2 o_L\dcroc{Z}$. Hence
\[ \langle U([a])(u_1), Z^n \rangle = \langle u_1, [a](Z)^n \rangle = a \delta_{n,1} = \langle a u_1, Z^n \rangle \quad\mbox{for all}\quad n \geq 0\]
using Definition \ref{HypDef}(5). Hence $a \cdot u_1 = au_1$ and so $a \cdot s = as$. \end{proof}

\begin{corollary}\label{MatrixCoeffs} For each $j \geq i \geq 0$ and $a \in o_L$ there exists $\sigma_{ij}(a) \in o_L$ such that
\[ a \cdot u_j = P_j(a s) = \sum\limits_{i=0}^j \sigma_{ij}(a) P_i(s) = \sum\limits_{i=0}^j \sigma_{ij}(a) u_i.\]
\end{corollary}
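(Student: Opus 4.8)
The plan is to read the statement off from Lemmas \ref{PnU} and \ref{ActionOnTgt}. Recall that $U([a]) \otimes 1$ is an $L$-algebra endomorphism of $U(\cG) \otimes_{o_L} L = L[s]$, and that by Lemma \ref{ActionOnTgt} it sends $s$ to $as$. Since by Lemma \ref{PnU} we have $u_j = P_j(u_1) = P_j(s)$, applying this endomorphism to $u_j$ and using that it is multiplicative and $L$-linear gives $a \cdot u_j = U([a])(P_j(s)) = P_j(as)$, which is the first claimed equality.

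Next I would expand $P_j(as)$ in the $L$-basis $\{P_0(s), P_1(s), \ldots\}$ of $L[s]$. Since $P_i(Y)$ has degree exactly $i$ (with leading coefficient $1/i!$), the family $\{P_0(s), \ldots, P_j(s)\}$ is an $L$-basis of the subspace of polynomials of degree $\le j$; as $P_j(as)$ lies in this subspace, there are unique $\sigma_{ij}(a) \in L$ with $P_j(as) = \sum_{i=0}^j \sigma_{ij}(a) P_i(s)$. Substituting $P_i(s) = u_i$ back in via Lemma \ref{PnU} yields the remaining two equalities, at least after tensoring with $L$.

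It remains to check that the coefficients $\sigma_{ij}(a)$ actually lie in $o_L$ and not merely in $L$; this is the only point requiring a little care. Here I would use that $a \cdot u_j = U([a])(u_j)$ is, by definition of the $o_L$-bialgebra morphism $U([a]) : U(\cG) \to U(\cG)$, an element of the \emph{integral} bialgebra $U(\cG)$, which by Lemma \ref{StrConsts}(1) is free over $o_L$ on $\{u_n : n \ge 0\}$. Writing $a \cdot u_j = \sum_n c_n u_n$ with $c_n \in o_L$ and almost all $c_n = 0$, and comparing with the expansion $a \cdot u_j = \sum_{i=0}^j \sigma_{ij}(a) u_i$ in the $L$-basis $\{u_n\}$ of $U(\cG) \otimes_{o_L} L$, uniqueness forces $c_n = 0$ for $n > j$ and $c_i = \sigma_{ij}(a)$ for $0 \le i \le j$; in particular $\sigma_{ij}(a) \in o_L$. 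The argument is essentially bookkeeping, so no genuine obstacle arises; the one thing to be attentive to is keeping straight which identities hold already over $o_L$ inside $U(\cG)$ and which only become available after inverting $\pi$.
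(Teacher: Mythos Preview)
Your proof is correct and follows essentially the same approach as the paper: both derive $a\cdot u_j = P_j(as)$ from Lemma \ref{ActionOnTgt} together with Lemma \ref{PnU}, use the degree of $P_j$ to bound the expansion, and invoke the $o_L$-basis $\{u_i\}$ of $U(\cG)$ from Lemma \ref{StrConsts}(1) to force the coefficients into $o_L$. Your write-up is slightly more explicit about separating the $L$-level expansion from the integrality step, but the underlying argument is the same.
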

\begin{proof} It follows from Lemma \ref{ActionOnTgt} that the $L$-algebra endomorphisms of $L[s]$ given by $s \mapsto as$ preserve the $o_L$-subalgebra $U(\cG) \subset L[s]$. Hence $a \cdot u_j = P_j(as)$ lies in $U(\cG)$ for all $a \in o_L$ and all $j \geq 0$. But $U(\cG)$ has $\{u_i : i \geq 0\}$ as an $o_L$-module basis by Lemma \ref{StrConsts}(a), so $P_j(as)$ must be an $o_L$-linear combination of these $u_i$'s. On the other hand, $P_j(s)$ is a polynomial of degree $j$ in $s$, therefore so is $P_j(as)$; because $\deg P_i = i$ for each $i$ it follows that $P_j(as)$ is an $L$-linear combination of $P_0(s),\cdots,P_j(s)$ only. 
\end{proof}

We now introduce a coefficient ring $S$, which we assume to be a $\pi$-adically complete $o_L$-algebra. For every $S$-module $M$, let $M^\ast := \Hom_{S}(M,S)$ be the $S$-module of $S$-linear functionals on $M$. We will need to work with a larger class of $S$-linear functionals on $S\dcroc{Z}$ than those arising from $U(\cG)$, namely the \emph{continuous} ones.

\begin{definition}\label{def:CtsLinFun} We say that $\lambda \in S\dcroc{Z}^\ast$ is \emph{continuous} if it is continuous with respect to the $\langle \pi,Z\rangle$-adic topology on $S\dcroc{Z}$, and the $\pi$-adic topology on $S$. Let $S\dcroc{Z}^\ast_{\cts}$ denote the set of these continuous $S$-linear functionals on $S\dcroc{Z}$.
\end{definition}
Explicitly $\lambda \in S\dcroc{Z}^\ast$ is continuous if and only if for all $n \geq 0$ there exists $m \geq 0$ such that $\lambda( \langle \pi, Z \rangle^m ) \subseteq \pi^n S$.  

Consider now the base change $U(\cG_S) := U(\cG) \otimes_{o_L} S$, and its $\pi$-adic completion
\[\h{U(\cG_S)} = \varprojlim U(\cG) \otimes_{o_L} (S  / \pi^n S).\] 
Since $\{u_m : m \geq 0\}$ is an $o_L$-module basis for $U(\cG)$ by Lemma \ref{StrConsts}(1), we see that $\h{U(\cG_S)}$ has the following description:
\begin{equation} \label{eq:HUGS} \h{U(\cG_S)} = \left\{ \sum\limits_{m=0}^\infty a_m u_m :  \quad a_m \in S, \lim\limits_{m\to\infty} a_m = 0\right\}.\end{equation}
Here we equip $S$ with the $\pi$-adic topology.
\begin{lemma}\label{lem:BigPairing} \hsp
\be \item The pairing $\langle-,-\rangle: U(\cG) \times o_L\dcroc{Z} \to o_L$ extends to an $S$-bilinear pairing
\[\langle-,-\rangle: \h{U(\cG_S)} \times S\dcroc{Z} \to S.\]
\item For each $u \in \h{U(\cG_S)}$, the $S$-linear map $\langle u,- \rangle : S\dcroc{Z} \to S$ is continuous.
\item The map $\h{U(\cG_S)} \to S\dcroc{Z}^\ast_{\cts}$, $u \mapsto \langle u,-\rangle$, is an $S$-linear bijection.
\item The map $S\dcroc{Z} \to \h{U(\cG_S)}^\ast$, $F \mapsto \langle -, F \rangle$, is an $S$-linear bijection.
\ee\end{lemma}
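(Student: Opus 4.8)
The plan is to prove the four statements in order, using the explicit $o_L$-module basis $\{u_m\}_{m \ge 0}$ of $U(\cG)$ from Lemma \ref{StrConsts}(1) and the description (\ref{eq:HUGS}) of $\h{U(\cG_S)}$ throughout. For (1), I would define the pairing on $\h{U(\cG_S)}$ by the formula $\langle \sum_m a_m u_m, F \rangle := \sum_m a_m \langle u_m, F\rangle$ and check it converges: if $F = \sum_n b_n Z^n \in S\dcroc{Z}$, then $\langle u_m, F\rangle = b_m$, so the sum $\sum_m a_m b_m$ has terms tending to $0$ $\pi$-adically because $a_m \to 0$ and the $b_m$ lie in $S$, which is $\pi$-adically complete; hence the sum converges in $S$. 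Bilinearity over $S$ is then immediate, and it visibly restricts to the original pairing on $U(\cG) \times o_L\dcroc{Z}$.

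For (2), given $u = \sum_m a_m u_m \in \h{U(\cG_S)}$, fix $n \ge 0$; since $a_m \to 0$ there is $M$ with $a_m \in \pi^n S$ for all $m \ge M$. Then for $F \in \langle \pi, Z\rangle^m$-type elements: more precisely, I would argue that if $F$ has all coefficients $b_0,\dots,b_{M-1}$ in $\pi^n S$ and $F \in \pi^n S\dcroc{Z} + Z^M S\dcroc{Z}$, then $\langle u, F\rangle \in \pi^n S$. Since $\langle \pi, Z\rangle^{m}$ for $m$ large enough is contained in such a set (namely take $m \ge \max(M, n)$, using $\langle \pi,Z\rangle^m \subseteq \pi^n S\dcroc{Z} + Z^M S\dcroc{Z}$), continuity with respect to the $\langle\pi,Z\rangle$-adic topology on the source and $\pi$-adic topology on the target follows. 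For (3), injectivity is clear since $\langle u, Z^m\rangle = a_m$ recovers the coefficients of $u$; surjectivity is the content: given $\lambda \in S\dcroc{Z}^\ast_{\cts}$, set $a_m := \lambda(Z^m) \in S$. Continuity of $\lambda$ forces: for each $n$ there is $m_0$ with $\lambda(\langle\pi,Z\rangle^{m_0}) \subseteq \pi^n S$, and since $Z^m \in \langle \pi, Z\rangle^{m_0}$ for $m \ge m_0$, we get $a_m \in \pi^n S$ for $m \ge m_0$, i.e. $a_m \to 0$, so $u := \sum_m a_m u_m \in \h{U(\cG_S)}$. Finally one checks $\lambda = \langle u, -\rangle$: both are continuous $S$-linear functionals agreeing on the polynomial ring $S[Z]$, which is dense in $S\dcroc{Z}$, so they coincide.

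For (4), the map $F \mapsto \langle -, F\rangle$ sends $F = \sum_n b_n Z^n$ to the functional $u_m \mapsto b_m$ on basis elements, extended $S$-linearly and respecting the $\pi$-adic convergence; injectivity is clear since $F$ is recovered via $\langle u_m, F\rangle = b_m$. For surjectivity, given $\mu \in \h{U(\cG_S)}^\ast = \Hom_S(\h{U(\cG_S)}, S)$, put $b_m := \mu(u_m) \in S$ and $F := \sum_m b_m Z^m \in S\dcroc{Z}$; then for a general $u = \sum_m a_m u_m$ with $a_m \to 0$, I would argue $\mu(u) = \sum_m a_m b_m = \langle u, F\rangle$ — here one needs that $\mu$, being $S$-linear on a $\pi$-adically complete module, is automatically $\pi$-adically continuous (any $S$-linear map between $\pi$-adically complete and separated $S$-modules is continuous because $\pi^n$ is sent into $\pi^n$), so it commutes with the convergent sum $\sum_m a_m u_m$.

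I expect the main obstacle to be part (3), and specifically the surjectivity together with the density argument identifying $\lambda$ with $\langle u, -\rangle$: one must be careful that the $\langle\pi,Z\rangle$-adic topology on $S\dcroc{Z}$ is separated (true since $S$ is $\pi$-adically separated) and that polynomials are dense, so that a continuous functional is determined by its values on $Z^m$. The convergence bookkeeping in (1) and the automatic-continuity point in (4) are routine once one fixes that $S$ is $\pi$-adically complete and separated, which is part of the standing hypothesis on $S$ in this subsection.
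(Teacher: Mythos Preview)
Your proposal is correct and follows essentially the same approach as the paper's proof: define the pairing via $\langle \sum_m a_m u_m, \sum_n b_n Z^n\rangle = \sum_m a_m b_m$, verify continuity in (2) by choosing $r$ so that $a_m \in \pi^n S$ for $m \geq r$ and observing $\langle \pi, Z\rangle^{n+r} \subseteq \langle \pi^n, Z^r\rangle$, and handle (3) via the density of $S[Z]$ in $S\dcroc{Z}$. Your treatment of (4) is in fact slightly more explicit than the paper's: the paper checks agreement of $\lambda$ and $\langle -, F\rangle$ only on the $u_m$ and appeals to these spanning $U(\cG_S)$, whereas you spell out the automatic $\pi$-adic continuity of any $S$-linear map (since $\lambda(\pi^n \h{U(\cG_S)}) \subseteq \pi^n S$) that justifies passing from $U(\cG_S)$ to its $\pi$-adic completion.
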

\begin{proof} (1) Let $u = \sum\limits_{m=0}^\infty a_m u_m \in \h{U(\cG_S)}, F = \sum\limits_{n=0}^\infty F_nZ^n \in S\dcroc{Z}$ and define $\langle u, F\rangle = \sum\limits_{m=0}^\infty a_m F_m$. This series converges in $S$ because $a_m \to 0$ as $m \to \infty$ and because $S$ is assumed to be $\pi$-adically complete.

(2) Let $n \geq 0$ and write $u = \sum\limits_{m=0}^\infty a_m u_m$ with $a_m \to 0$. Then for some $r \geq 0$, $a_m \in \pi^n S$ for all $m \geq r$. Hence $\langle u,-\rangle$ sends the ideal $\langle \pi^n, Z^r\rangle$ of $S\dcroc{Z}$ into $\pi^n S$. Since $\langle \pi, Z\rangle^{n+r} \subseteq \langle \pi^n, Z^r \rangle$, we conclude that $\langle u,-\rangle$ is $\langle \pi, Z\rangle$-adically continuous.

(3) The injectivity of $u \mapsto \langle u,-\rangle$ follows by evaluating on each $Z^n$. Now let $\lambda \in S\dcroc{Z}^\ast_{\cts}$ and define $a_m := \lambda(Z^m) \in S$ for each $m \geq 0$. Since $\lambda$ is $\langle \pi,Z \rangle$-adically continuous, for each $n \geq 0$ we can find some $r \geq 0$ such that $\lambda( \langle \pi , Z \rangle^r ) \subseteq \pi^n S$. Then $a_m \in \pi^n S$ for all $m \geq r$ which implies that $a_m \to 0$ as $m \to \infty$. Hence $u := \sum\limits_{m=0}^\infty a_m u_m$ is an element of $\h{U(\cG_S)}$ and $\langle u, - \rangle - \lambda$ vanishes on $S[Z]$ by construction. Since this difference is continuous and since $S[Z]$ is dense in $S\dcroc{Z}$ with respect to the $\langle \pi, Z\rangle$-adic topology, we conclude that $\lambda = \langle u,- \rangle$.

(4) Again, the injectivity of $F \mapsto \langle -, F \rangle$ follows from $\langle u_m, F \rangle = F_m$. Given an $S$-linear map $\lambda : U(\cG_S) \to S$, let $F := \sum\limits_{n=0}^\infty \lambda(u_n) Z^n$. Then $\langle u_m, F\rangle = \lambda(u_m)$ for all $m \geq 0$. Since the $u_m$ span $U(\cG_S)$ as an $S$-module, $\lambda = \langle -, F \rangle$.
\end{proof}
As an immediate consequence of Lemma \ref{lem:BigPairing}, we have the following
\begin{corollary}\hsp \label{cor:adjoints}
\be \item For every continuous $S$-linear $\alpha : S\dcroc{Z} \to S\dcroc{Z}$ there exists a unique $S$-linear map $\alpha^\ast : \h{U(\cG_S)} \to \h{U(\cG_S)} $ such that 
\[ \langle \alpha^\ast u, F \rangle = \langle u, \alpha F \rangle \qmb{for all} u \in \h{U(\cG_S)} , F \in S\dcroc{Z}.\]
\item For every $S$-linear $\beta: \h{U(\cG_S)} \to \h{U(\cG_S)}$ there exists a unique $S$-linear map \newline $\beta^\ast : S\dcroc{Z} \to S\dcroc{Z}$ such that
\[ \langle u, \beta F \rangle = \langle \beta^\ast u, F \rangle  \qmb{for all} u \in \h{U(\cG_S)} , F \in S\dcroc{Z}.\]
\ee\end{corollary}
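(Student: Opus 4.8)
The plan is to obtain both statements as purely formal consequences of the duality results collected in Lemma \ref{lem:BigPairing}: part (3) of that lemma identifies $\h{U(\cG_S)}$ with the space $S\dcroc{Z}^\ast_{\cts}$ of \emph{continuous} $S$-linear functionals on $S\dcroc{Z}$, while part (4) identifies $S\dcroc{Z}$ with the \emph{full} $S$-linear dual $\h{U(\cG_S)}^\ast$. In each case the construction of the adjoint amounts to pre-composing the relevant pairing with the given operator and then invoking the appropriate one of these two bijections.

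For part (a), I would fix $u \in \h{U(\cG_S)}$ and consider the $S$-linear map $S\dcroc{Z} \to S$ given by $F \mapsto \langle u, \alpha F\rangle$. By Lemma \ref{lem:BigPairing}(2) the functional $\langle u,-\rangle$ is $\langle \pi, Z\rangle$-adically continuous, and $\alpha$ is continuous by hypothesis, so this composite lies in $S\dcroc{Z}^\ast_{\cts}$. Lemma \ref{lem:BigPairing}(3) then provides a unique element of $\h{U(\cG_S)}$, which I would name $\alpha^\ast u$, satisfying $\langle \alpha^\ast u, F\rangle = \langle u, \alpha F\rangle$ for all $F \in S\dcroc{Z}$. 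That $u \mapsto \alpha^\ast u$ is $S$-linear, and that $\alpha^\ast$ is the only map with the stated property, both follow from the injectivity of $v \mapsto \langle v,-\rangle$ (the other half of Lemma \ref{lem:BigPairing}(3)): two elements of $\h{U(\cG_S)}$ that pair identically with every $F \in S\dcroc{Z}$ must coincide.

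Part (b) would be handled by the mirror-image argument, with the roles of $\h{U(\cG_S)}$ and $S\dcroc{Z}$ interchanged and Lemma \ref{lem:BigPairing}(4) used in place of (3): for a fixed element of $S\dcroc{Z}$ one forms the induced $S$-linear functional on $\h{U(\cG_S)}$ and represents it, via the bijection of Lemma \ref{lem:BigPairing}(4), by a unique power series, which one declares to be the corresponding value of $\beta^\ast$; $S$-linearity and uniqueness then follow from the injectivity half of (4) exactly as before. No continuity hypothesis on $\beta$ is needed here, precisely because Lemma \ref{lem:BigPairing}(4) concerns \emph{all} $S$-linear functionals on $\h{U(\cG_S)}$, not merely the continuous ones. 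I do not expect any serious obstacle: the corollary is formal once Lemma \ref{lem:BigPairing} is in place. The single point deserving care is verifying, in part (a), that $F \mapsto \langle u, \alpha F\rangle$ genuinely lands in $S\dcroc{Z}^\ast_{\cts}$ rather than merely in $S\dcroc{Z}^\ast$ — this is exactly where both the continuity of $\alpha$ and the continuity of $\langle u,-\rangle$ enter, and it accounts for the asymmetry between the hypotheses of (a) and (b).
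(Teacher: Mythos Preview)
Your argument is correct and is exactly what the paper has in mind: the text simply states the corollary ``as an immediate consequence of Lemma \ref{lem:BigPairing}'' without spelling out the details, and your proof supplies precisely those details, invoking parts (2), (3), and (4) of that lemma in the right places. Your observation about the asymmetry between (a) and (b) --- that continuity of $\alpha$ is needed to land in $S\dcroc{Z}^\ast_{\cts}$, whereas no such hypothesis is required for $\beta$ because Lemma \ref{lem:BigPairing}(4) covers all $S$-linear functionals --- is the only substantive point, and you have identified it correctly.
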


We also extend this $S$-linear pairing to an $S_L := S \otimes_{o_L} L$-linear pairing
\[\langle-,-\rangle: \h{U(\cG_S)}_L \times S\dcroc{Z}_L \to S_L\]
which we will use without further mention.

\begin{lemma}\label{lem:extendToS} The restriction map $\h{U(\cG_S)}^\ast \to \Hom_{o_L}(\h{U}, S)$ is an $S$-linear isomorphism.
\end{lemma}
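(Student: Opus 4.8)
The plan is to argue entirely in terms of the topological bases $\{u_m\}_{m\geq 0}$ of $\h{U}$ and $\h{U(\cG_S)}$ provided by Lemma \ref{StrConsts}(1) and \eqref{eq:HUGS}, together with the identification $\h{U(\cG_S)}^\ast \cong S\dcroc{Z}$ from Lemma \ref{lem:BigPairing}(4). Concretely, $\h{U}$ consists of the series $\sum_m a_m u_m$ with $a_m \in o_L$ and $a_m \to 0$ $\pi$-adically, $\h{U(\cG_S)}$ is the same allowing $a_m \in S$, and the natural $o_L$-linear map $\iota\colon \h{U} \to \h{U(\cG_S)}$ sends such a series to itself, now read over $S$. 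The restriction map of the lemma is precomposition with $\iota$; it is manifestly $S$-linear, so it remains to check bijectivity.

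Injectivity is immediate: if $\lambda \in \h{U(\cG_S)}^\ast$ kills $\iota(\h{U})$, then $\lambda(u_m)=0$ for every $m$ since each $u_m$ lies in $\iota(\h{U})$, so $\lambda = 0$ by Lemma \ref{lem:BigPairing}(4).

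For surjectivity, given $\phi \in \Hom_{o_L}(\h{U}, S)$ I would set $F_m := \phi(u_m) \in S$, let $F := \sum_m F_m Z^m \in S\dcroc{Z}$, and let $\lambda := \langle -, F\rangle \in \h{U(\cG_S)}^\ast$. The claim is that $\lambda \circ \iota = \phi$, i.e.\ that $\phi\bigl(\sum_m a_m u_m\bigr) = \sum_m a_m F_m$ for every null sequence $(a_m)$ in $o_L$, the right-hand series converging in $S$ by Lemma \ref{lem:BigPairing}(1). The key point is that for each $n \geq 1$ there is an $N$ with $a_m \in \pi^n o_L$ for all $m \geq N$; then $\sum_{m\geq N} a_m u_m$ lies in $\pi^n \h{U}$ --- crucially, inside $\h{U}$ itself, not merely inside $\h{U(\cG_S)}$ --- so applying the $o_L$-linear map $\phi$ yields $\phi\bigl(\sum_m a_m u_m\bigr) - \sum_{m<N} a_m F_m \in \pi^n S$. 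Since $S$ is $\pi$-adically complete and separated, letting $n \to \infty$ forces $\sum_m a_m F_m = \phi\bigl(\sum_m a_m u_m\bigr)$, which is exactly $\lambda\circ\iota = \phi$.

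The only genuinely non-formal step is this last one: an a priori arbitrary $o_L$-linear map out of the $\pi$-adically complete module $\h{U}$ is automatically ``continuous'' for the convergent series inside it, because every element is $\pi^n$-approximated by a finite $o_L$-combination of the $u_m$ within $\h{U}$, on which $\phi$ is forced to agree with $\sum a_m F_m$; everything else is bookkeeping. I would also flag explicitly that this uses separatedness of $S$ as well as completeness, which is part of the running hypothesis that $S \xrightarrow{\ \sim\ } \varprojlim S/\pi^n S$.
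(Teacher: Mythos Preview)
Your proof is correct and follows essentially the same route as the paper's: both argue via the topological basis $\{u_m\}$, obtain injectivity from $\lambda(u_m)=0$ for all $m$, and build the inverse by the formula $\sum a_m u_m \mapsto \sum a_m \phi(u_m)$. You are in fact more careful than the paper on one point: the paper simply asserts that the formula ``extends $\lambda$'', whereas you spell out the tail-in-$\pi^n\h{U}$ argument showing that an arbitrary $o_L$-linear $\phi$ automatically satisfies $\phi(\sum a_m u_m)=\sum a_m\phi(u_m)$ on null sequences.
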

\begin{proof} Let $\lambda : \h{U(\cG_S)} \to S$ be an $S$-linear map whose restriction to $\h{U}$ is zero. Then in particular $\lambda(u_m) = 0$ for all $m \geq 0$, so $\lambda$ vanishes on all finite sums of the form $\sum\limits_{m=0}^n a_m u_m \in \h{U(\cG_S)}$ with $a_m \in S$. These sums are $\pi$-adically dense in $\h{U(\cG_S)}$ in view of $(\ref{eq:HUGS})$, so for any $x \in \h{U(\cG_S)}$, $\lambda(x) \in \bigcap\limits_{n=0}^\infty \pi^n S$. Since we're assuming that $S$ is $\pi$-adically complete, this intersection is zero, so $\lambda = 0$ and the restriction map in question is injective.

Suppose now $\lambda : \h{U} \to S$ is an $o_L$-linear map. Using the description of $\h{U(\cG_S)}$ given in $(\ref{eq:HUGS})$, we extend it to an $S$-linear map $\tilde{\lambda} : \h{U(\cG_S)} \to S$ by setting for every zero-sequence $(a_m)$ in $S$
\[ \tilde{\lambda}\left(\sum\limits_{m=0}^\infty a_m u_m \right) := \sum\limits_{m=0}^\infty a_m \lambda(u_m).\]
Since $\lim\limits_{m \to \infty} a_m= 0$ in $S$, the series on the right hand side converges in $S$ because $S$ is assumed to be $\pi$-adically complete. So, $\tilde{\lambda}$ is a well-defined $S$-linear map extending $\lambda$. \end{proof}

\subsection{$\Gal$-continuous functions}

Let $\cC^0(o_L, \Cp)$ be the $\Cp$-Banach space of all continuous $\Cp$-valued functions on $o_L$, equipped with the supremum norm. The unit ball of this $\Cp$-Banach space is the $o_{\Cp}$-submodule $\cC^0(o_L, o_{\Cp})$ of continuous $o_{\Cp}$-valued functions. 

\begin{definition}\label{def:GalCts} A function $f \in \cC^0(o_L, \Cp)$ is said to be \emph{$\Gal$-continuous} if 
\[ \sigma(f(a)) = f(a \tau(\sigma)) \qmb{for all} a \in o_L, \sigma\in G_L.\]
We write $C := \cC^0_{\Gal}(o_L, \Cp)$ for the set of all $\Gal$-continuous $\Cp$-valued functions.
\end{definition}
Evidently $\cC := \cC^0_{\Gal}(o_L, o_{\Cp}) = C \cap \cC^0(o_L, o_{\Cp})$ forms an $o_L$-lattice in $C$. 

\begin{lemma}\label{lem:valuesInOinf} Let $f \in C$. Then $\im f \subseteq L_\infty$, and $\im f \subseteq o_\infty$ if $f \in \cC$.
\end{lemma}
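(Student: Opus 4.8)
The plan is to exploit the characterization $L_\infty = \Cp^{\ker\tau}$ and $o_\infty = o_{\Cp}^{\ker\tau}$ from Lemma~\ref{lem:LOmegaDense}, together with the continuity of $f$ and the defining relation $\sigma(f(a)) = f(a\tau(\sigma))$. First I would fix $a \in o_L$ and show $f(a) \in \Cp^{\ker\tau}$. So let $\sigma \in \ker\tau$; then the $\Gal$-continuity relation gives $\sigma(f(a)) = f(a\tau(\sigma)) = f(a\cdot 1) = f(a)$, which is exactly the statement that $f(a)$ is fixed by $\ker\tau$. Hence $f(a) \in \Cp^{\ker\tau} = L_\infty$ by Lemma~\ref{lem:LOmegaDense}, and this holds for every $a$, so $\im f \subseteq L_\infty$.

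For the second claim, if moreover $f \in \cC = \cC^0_{\Gal}(o_L, o_{\Cp})$, then by definition $f$ takes values in $o_{\Cp}$, so for each $a \in o_L$ we have $f(a) \in L_\infty \cap o_{\Cp} = o_\infty$, using the definition of $o_\infty$ in Definition~\ref{DualTowerDef} (equivalently, the second statement of Lemma~\ref{lem:LOmegaDense}, $o_\infty = o_{\Cp}^{\ker\tau}$, combined with the argument of the previous paragraph applied inside $o_{\Cp}$). Therefore $\im f \subseteq o_\infty$.

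There is essentially no obstacle here: the continuity of $f$ is not even needed for the set-theoretic statement about the image, only the equivariance relation and the identification of $\ker\tau$-invariants. The one point worth making explicit is that $\ker\tau$ is a closed subgroup of $G_L$ (it is the kernel of a continuous character into $o_L^\times$), so that $\Cp^{\ker\tau}$ is a genuine closed subfield and Lemma~\ref{lem:LOmegaDense} applies verbatim; but this is immediate. The argument is a one-line consequence of unwinding Definition~\ref{def:GalCts} and Lemma~\ref{lem:LOmegaDense}.
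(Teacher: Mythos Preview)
Your proof is correct and essentially identical to the paper's: both observe that the $\Gal$-continuity relation with $\sigma \in \ker\tau$ gives $\sigma(f(a)) = f(a)$, so $\im f \subseteq \Cp^{\ker\tau} = L_\infty$ (resp.\ $o_{\Cp}^{\ker\tau} = o_\infty$) by Lemma~\ref{lem:LOmegaDense}. The paper's version is just more terse.
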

\begin{proof} By Definition \ref{def:GalCts}, we have $\im f \subseteq \Cp^{\ker \tau}$ for all $f \in C$, and $\im f \subseteq o_{\Cp}^{\ker \tau}$ for all $f \in \cC$. But $\Cp^{\ker \tau} = L_\infty$ and $o_{\Cp}^{\ker \tau} = o_\infty$ by Lemma \ref{lem:LOmegaDense}.
\end{proof}

\begin{lemma}\label{lem:KatzGalCts} For each $u \in \h{U}$, the function $a \mapsto \cK(u)(a) := \langle u, \Delta_a\rangle$ on $o_L$ is $\Gal$-continuous.
\end{lemma}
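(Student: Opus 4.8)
The plan is to make the pairing completely explicit and then check continuity and the Galois relation term by term. First I would recall from \cite[Definition 4.1]{ST} that $\Delta_a = \exp(a\Omega\log_{\LT}(Z)) = \sum_{m\geq 0} P_m(a\Omega)\,Z^m$, and that $\Delta_a \in o_\infty\dcroc{Z}$, so in particular $P_m(a\Omega) \in o_{\Cp}$ with $|P_m(a\Omega)| \leq 1$ for all $m \geq 0$ and all $a \in o_L$. Writing $u = \sum_{m\geq 0} a_m u_m$ with $a_m \in o_L$ and $a_m \to 0$ (this is the description of $\h{U}$ coming from $(\ref{eq:HUGS})$ with $S = o_L$), and applying the extended pairing of Lemma \ref{lem:BigPairing}(1) with $S = o_{\Cp}$, via the natural inclusion $\h{U} \hookrightarrow \h{U(\cG_{o_{\Cp}})}$, gives
\[ \cK(u)(a) = \langle u, \Delta_a\rangle = \sum_{m\geq 0} a_m\, P_m(a\Omega), \]
a series which converges in $o_{\Cp}$ because $a_m \to 0$ and the coefficients $P_m(a\Omega)$ are bounded by $1$; in particular $|\cK(u)(a)| \leq 1$ for every $a \in o_L$.

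For continuity, since $|a_m P_m(a\Omega)| \leq |a_m|$ uniformly in $a \in o_L$, the partial sums $a \mapsto \sum_{m=0}^{N-1} a_m P_m(a\Omega)$ converge to $\cK(u)$ uniformly on $o_L$ as $N \to \infty$. Each partial sum is a finite $o_L$-linear combination of the maps $a \mapsto P_m(a\Omega)$, which are continuous, being polynomial functions of $a$; hence each partial sum is continuous and so is their uniform limit $\cK(u)$. Thus $\cK(u) \in \cC^0(o_L, o_{\Cp})$.

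For the $\Gal$-relation, fix $\sigma \in G_L$. From Definition \ref{def:dualLTgenerator}(3) we have $\sigma(\Omega) = \tau(\sigma)\Omega$; since $P_m \in L[Y]$ has $\sigma$-fixed coefficients and $a \in o_L$ is $\sigma$-fixed, we get $\sigma(P_m(a\Omega)) = P_m\big((a\tau(\sigma))\Omega\big)$, where $a\tau(\sigma) \in o_L$ because $\tau(\sigma) \in o_L^\times$. As $\sigma$ acts isometrically on $\Cp$ it commutes with the convergent sum, and the $a_m$ are $\sigma$-fixed, so
\[ \sigma(\cK(u)(a)) = \sum_{m\geq 0} a_m\, \sigma(P_m(a\Omega)) = \sum_{m\geq 0} a_m\, P_m\big((a\tau(\sigma))\Omega\big) = \cK(u)(a\tau(\sigma)), \]
which is exactly the condition of Definition \ref{def:GalCts}. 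Hence $\cK(u) \in C$ (indeed $\cK(u) \in \cC$, by the bound noted above).

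There is no genuine obstacle in this lemma: the only points that need a little care are the identification of $\langle u, \Delta_a\rangle$ with the series $\sum_m a_m P_m(a\Omega)$ through the explicit expansion of $\Delta_a$, and the justification — via uniform convergence and the isometry of $\sigma$ — for passing the topological operations (continuity, and application of $\sigma$) through the infinite sum.
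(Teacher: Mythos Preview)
Your proof is correct, but it takes a different route from the paper's. The paper argues structurally: it factors $\cK(u)$ as the composition of the group homomorphism $a \mapsto \Delta_a$ from $o_L$ into $o_\infty\dcroc{Z}^\times$ with the continuous linear functional $\langle u,-\rangle$ on $o_\infty\dcroc{Z}$, and then checks continuity of $a \mapsto \Delta_a$ at $0$ using $\Delta_{\pi^m a}-1 = \varphi^m(\Delta_a-1)$ together with Lemma~\ref{lem:phizeroseq}; for the Galois relation it invokes the $\ast$-invariance of $\Delta_a$, which gives $\sigma(\Delta_a)=\Delta_{a\tau(\sigma)}$ as power series, and then pulls $\sigma$ through the pairing. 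Your argument instead unfolds everything into the explicit series $\sum_m a_m P_m(a\Omega)$ and uses uniform convergence plus the relation $\sigma(\Omega)=\tau(\sigma)\Omega$ coefficientwise. Your approach is more elementary and self-contained (it sidesteps the $\langle\pi,Z\rangle$-adic continuity of $a\mapsto\Delta_a$), while the paper's approach is more conceptual and foreshadows how $\varphi$ and the $\ast$-action interact with $\cK$ later on.
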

\begin{proof} By definition, $\cK(u)$ is the composition of $\mu_{|o_L} : o_L \to o_\infty\dcroc{Z}^\times$ with the restriction of the linear functional $\langle u, - \rangle : o_\infty\dcroc{Z} \to o_\infty$ to $o_\infty\dcroc{Z}^\times$. This linear functional is continuous by Lemma \ref{lem:BigPairing}(3),  so to establish the continuity of $\cK(u)$ it remains to show that $\mu_{|o_L}$ is continuous. Since $\mu_{|o_L}$ is a group homomorphism, it is enough to show that it is continuous at the identity element $0$ of $o_L$. Let $n > 0$ and consider the basic open neighbourhood $1 + \langle \pi, Z \rangle^n$ of $1 \in o_\infty\dcroc{Z}^\times$. Since $\varphi^n(Z) \to 0$ as $n \to \infty$ in $o_\infty\dcroc{Z}$ by Lemma \ref{lem:phizeroseq}, we can find $m \geq 0$ such that $\varphi^m(Z) \in \langle \pi, Z \rangle^n$. Hence for any $a \in o_L$,  using Lemma \ref{lem:PhiPsiDelta} we calculate 
\[\Delta_{\pi^m a} - \Delta_0 = \varphi^m(\Delta_a - 1) \in \varphi^m(Z o_\infty\dcroc{Z}) \subseteq \varphi^m(Z) o_\infty\dcroc{Z} \subseteq \langle \pi, Z \rangle^n.\]
Hence $\mu_{|o_L}$ is continuous as required. 

Now let $\sigma \in G_L$; since $\Delta_a \in o_\infty\dcroc{Z}$ is invariant for the $\ast$-action of $G_L$ on $o_\infty\dcroc{Z}$, we know that $\sigma(\Delta_a) = \Delta_a([\tau(\sigma)](Z)) = \Delta_{a \tau(\sigma)}$ for any $a \in o_L$. Since $u \in \h{U}$, we have for any $a \in o_L$
\[ \sigma(\cK(u)(a)) = \sigma(\langle u, \Delta_a\rangle) = \langle u, \sigma(\Delta_a) \rangle = \langle u, \Delta_{a \tau(\sigma)}\rangle = \cK(u)(a \tau(\sigma)).\]
Hence $\cK(u)$ is indeed $\Gal$-continuous. \end{proof}
\begin{definition}\label{def:KatzPsiPhi}  \hsp
\be \item Define \emph{the Katz map} $\cK : \widehat{U} \to \cC$ as follows:
\[\cK(u)(a) = \langle u, \Delta_a \rangle \quad \mbox{for any} \quad u \in \widehat{U}, a \in o_L.\]
\item Define $\cK_1 : \h{U} \to o_\infty$ by $\cK_1 = \ev_1 \circ \cK$.
\item Define $\psi_C : C \to C$ by the rule
\[ \psi_C(f)(a) = \delta_{a \in \pi o_L} f(a/\pi) \qmb{for all} a\in o_L.\]
The operator $\psi_{\cC} : \cC \to \cC$ is by definition the restriction of $\psi_C$ to $\cC$.
\item Define $\varphi_C : C \to C$ by the rule
\[ \varphi_C(f)(a) = f(\pi a) \qmb{for all} a \in o_L.\]
The operator $\varphi_{\cC} : \cC \to \cC$ is by definition the restriction of $\varphi_C$ to $\cC$.
\ee\end{definition}

Now we recall the coefficient ring $S$ that was introduced before Definition \ref{def:CtsLinFun}. Applying the $S$-linear duality functor 
\[(-)^\ast := \Hom_{o_L}(-,S)\] 
to the Katz map $\cK : \h{U} \to \cC$ gives us the \emph{dual Katz map} 
\[\cK^\ast : \cC^\ast \to \h{U}^\ast\] 
defined on the space of \emph{$S$-valued Galois measures} $\cC^\ast = \Hom_{o_L}(\cC,S)$. We identify $\h{U}^\ast = \Hom_{o_L}(\h{U}, S)$ with $S\dcroc{Z}$ using Lemma \ref{lem:extendToS} and Lemma \ref{lem:BigPairing}(4); then $\cK^\ast : \cC^\ast \to S\dcroc{Z}$ is given explicitly by 
\begin{equation}\label{eq:ExplicitKatz} \langle u_m, \cK^\ast(\lambda)\rangle = \lambda(P_m(-\Omega)))  \qmb{for all} \lambda \in \cC^\ast, m \geq 0.\end{equation}
After Lemma \ref{lem:PsiCts} and Corollary \ref{cor:adjoints} applied with $S = o_L$, we have at our disposal the dual $o_L$-linear endomorphisms $\psi_{\col}^\ast$ and $\varphi^\ast$ of $\h{U}$. 
\begin{lemma}\label{lem:KatzPhiPsi}
We have $\cK \varphi^\ast = \varphi_{\cC} \cK$ and $\cK \psi_{\col}^\ast = q \psi_{\cC} \cK$.
\end{lemma}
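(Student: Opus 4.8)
The plan is to deduce both identities directly from the computation of $\varphi(\Delta_a)$ and $\psi_{\col}(\Delta_a)$ in Lemma \ref{lem:PhiPsiDelta}, together with the adjunction property defining $\varphi^\ast$ and $\psi_{\col}^\ast$. The only point that needs a little care is that Corollary \ref{cor:adjoints} gives the adjunction relation $\langle \alpha^\ast u, F\rangle = \langle u, \alpha F\rangle$ only for $F \in o_L\dcroc{Z}$, whereas the power series $\Delta_a$ appearing in the definition of $\cK$ lies in $o_\infty\dcroc{Z}$. So the first step is to note that $o_\infty$ (Definition \ref{DualTowerDef}) is a $\pi$-adically complete, flat $o_L$-algebra, so that Lemmas \ref{Weierstrass}--\ref{lem:PsiCts} apply with $S = o_\infty$: this yields continuous $o_\infty$-linear operators $\varphi,\psi_{\col}$ on $o_\infty\dcroc{Z}$ extending those on $o_L\dcroc{Z}$, together with the $o_\infty$-bilinear pairing $\langle-,-\rangle:\h{U(\cG_{o_\infty})}\times o_\infty\dcroc{Z}\to o_\infty$ extending the one on $\h{U}=\h{U(\cG_{o_L})}\subseteq\h{U(\cG_{o_\infty})}$ (using the description \eqref{eq:HUGS}). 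Then, for a fixed $u\in\h{U}$, the two $o_\infty$-linear maps $F\mapsto\langle\varphi^\ast u,F\rangle$ and $F\mapsto\langle u,\varphi F\rangle$ on $o_\infty\dcroc{Z}$ are both $\langle\pi,Z\rangle$-adically continuous (by Lemma \ref{lem:BigPairing}(2) and Lemma \ref{lem:PsiCts}) and they agree on every $Z^n$ by the defining property of the $o_L$-adjoint; since $o_\infty[Z]$ is dense in $o_\infty\dcroc{Z}$, they agree everywhere, in particular on $F=\Delta_a$. The same applies verbatim to $\psi_{\col}^\ast$.

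Granting this, the computation is a one-line substitution. For $u\in\h{U}$ and $a\in o_L$, Lemma \ref{lem:PhiPsiDelta}(1) gives
\[ \cK(\varphi^\ast u)(a) = \langle \varphi^\ast u, \Delta_a\rangle = \langle u, \varphi(\Delta_a)\rangle = \langle u, \Delta_{\pi a}\rangle = \cK(u)(\pi a) = (\varphi_{\cC}\cK(u))(a),\]
the last equality being Definition \ref{def:KatzPsiPhi}(4). Similarly, since $\psi_{\col} = q\psi_q$, Lemma \ref{lem:PhiPsiDelta}(2) gives $\psi_{\col}(\Delta_a) = q\,\delta_{a\in\pi o_L}\,\Delta_{a/\pi}$, whence
\[ \cK(\psi_{\col}^\ast u)(a) = \langle u, \psi_{\col}(\Delta_a)\rangle = q\,\delta_{a\in\pi o_L}\,\langle u, \Delta_{a/\pi}\rangle = q\,\delta_{a\in\pi o_L}\,\cK(u)(a/\pi) = q\,(\psi_{\cC}\cK(u))(a),\]
where the term $\Delta_{a/\pi}$ (resp. $\cK(u)(a/\pi)$) is only needed when $a\in\pi o_L$. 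As $a\in o_L$ was arbitrary, this proves $\cK\varphi^\ast = \varphi_{\cC}\cK$ and $\cK\psi_{\col}^\ast = q\,\psi_{\cC}\cK$ as functions $\h{U}\to(\text{maps }o_L\to o_\infty)$; both sides actually land in $\cC$ because $\cK$ does (Lemma \ref{lem:KatzGalCts}) and because $\varphi_{\cC},\psi_{\cC}$ preserve $\cC$ by Definition \ref{def:KatzPsiPhi}.

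I do not expect a genuine obstacle here: this is essentially a bookkeeping lemma, and the only mildly delicate point is the coefficient-ring extension of the adjunction described in the first paragraph, which is routine given the evident functoriality of the pairing of Lemma \ref{lem:BigPairing} in the coefficient ring. An alternative route, should one wish to avoid passing to $o_\infty$-coefficients, would be to expand $u=\sum_m c_m u_m$ in the basis $\{u_m\}$ of $\h{U}$ and compute $\varphi^\ast u$ and $\psi_{\col}^\ast u$ termwise via the structure constants of $[\pi](Z)^n$, then recombine using $\Delta_a=\sum_m P_m(a\Omega)Z^m$; this works but is appreciably messier than the adjunction argument above.
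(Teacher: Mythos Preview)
Your proof is correct and follows essentially the same route as the paper: both compute $\cK(\varphi^\ast u)(a)$ and $\cK(\psi_{\col}^\ast u)(a)$ by applying the adjunction to pass $\varphi^\ast,\psi_{\col}^\ast$ across the pairing and then invoking Lemma \ref{lem:PhiPsiDelta}. The only difference is that you explicitly justify why the adjunction identity $\langle\alpha^\ast u,F\rangle=\langle u,\alpha F\rangle$ extends from $F\in o_L\dcroc{Z}$ to $F=\Delta_a\in o_\infty\dcroc{Z}$, a point the paper leaves implicit.
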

\begin{proof} Let $u \in \h{U}_L$ and $a\in o_L$. Then using Lemma \ref{lem:PhiPsiDelta}, we have
\[\begin{array}{lllllll} \cK(\psi_{\col}^\ast(u))(a) &=& \langle \psi_{\col}^\ast(u), \Delta_a\rangle &=& \langle u, \psi_{\col}(\Delta_a)\rangle  &=& \langle u, q \psi_q(\Delta_a)\rangle \\
&=& q\langle u, \delta_{a \in \pi o_L} \Delta_{a/\pi}\rangle  &=& q\delta_{a \in \pi o_L} \cK(u)(a/\pi)  &=& q\psi_{\cC}(\cK(u))(a)  \end{array}\]
which gives the second equation. The first equation is proved in a similar manner.
\end{proof}
\begin{corollary}\label{cor:DualKatzPhiPsi}
We have $\cK^\ast \varphi_{\cC}^\ast = \varphi \cK^\ast$ and $\cK^\ast \psi_{\cC}^\ast = \psi_q \cK^\ast$.
\end{corollary}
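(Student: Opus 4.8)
\emph{Proof proposal.} The plan is to deduce Corollary \ref{cor:DualKatzPhiPsi} from Lemma \ref{lem:KatzPhiPsi} by a formal dualization. Recall that $\cK^\ast$ is obtained by applying the contravariant functor $(-)^\ast = \Hom_{o_L}(-,S)$ to the integral Katz map $\cK : \h{U} \to \cC$. The two identities of Lemma \ref{lem:KatzPhiPsi} hold in particular as identities of $o_L$-linear maps $\h{U} \to \cC$ (the $L$-linear statements of the lemma restrict to $\h{U}$). Applying $(-)^\ast$ to $\cK\varphi^\ast = \varphi_{\cC}\cK$ and to $\cK\psi_{\col}^\ast = q\,\psi_{\cC}\cK$, and reversing the order of composition, I would obtain
\[(\varphi^\ast)^\ast\circ\cK^\ast \;=\; \cK^\ast\circ\varphi_{\cC}^\ast \qquad\text{and}\qquad (\psi_{\col}^\ast)^\ast\circ\cK^\ast \;=\; q\,\cK^\ast\circ\psi_{\cC}^\ast\]
as maps $\cC^\ast \to \h{U}^\ast$, where $(\varphi^\ast)^\ast$ and $(\psi_{\col}^\ast)^\ast$ denote the transposes of the endomorphisms $\varphi^\ast,\psi_{\col}^\ast$ of $\h{U}$ acting on $\h{U}^\ast = \Hom_{o_L}(\h{U},S)$.

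The only point with content is to identify these transposes. Under the isomorphism $\h{U}^\ast \cong S\dcroc{Z}$ furnished by Lemma \ref{lem:extendToS} and Lemma \ref{lem:BigPairing}(4), which sends $F \in S\dcroc{Z}$ to the functional $u \mapsto \langle u, F\rangle$ on $\h{U}$, I claim that $(\varphi^\ast)^\ast$ becomes $\varphi$ and $(\psi_{\col}^\ast)^\ast$ becomes $\psi_{\col}$. Indeed, for $F \in S\dcroc{Z}$ and $u \in \h{U}$, the defining adjunction of Corollary \ref{cor:adjoints}(1) gives $\big((\varphi^\ast)^\ast\langle-,F\rangle\big)(u) = \langle\varphi^\ast u, F\rangle = \langle u,\varphi F\rangle$, so $(\varphi^\ast)^\ast$ is precisely $F \mapsto \varphi F$; the same computation with $\psi_{\col}$ in place of $\varphi$ handles the other case. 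A routine step I would spell out here is that the relation $\langle\varphi^\ast u,F\rangle = \langle u,\varphi F\rangle$, established in Corollary \ref{cor:adjoints} over $o_L$, extends to all $F \in S\dcroc{Z}$ by $S$-linearity of both sides together with $\langle\pi,Z\rangle$-adic continuity, since $\varphi$ is a continuous $S$-algebra endomorphism of $S\dcroc{Z}$ (and likewise for $\psi_{\col}$, continuous by Lemma \ref{lem:PsiCts}).

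Substituting these identifications into the two displayed equalities yields $\varphi\cK^\ast = \cK^\ast\varphi_{\cC}^\ast$, which is the first assertion, and $\psi_{\col}\cK^\ast = q\,\cK^\ast\psi_{\cC}^\ast$; dividing by $q$ and recalling $\psi_q = \tfrac1q\psi_{\col}$ gives $\psi_q\cK^\ast = \cK^\ast\psi_{\cC}^\ast$, the second assertion. I do not expect a real obstacle beyond bookkeeping — keeping track of which module each map is defined on (the integral objects $\h{U}$, $\cC$, $S\dcroc{Z}$ versus their $L$-linearizations, and the fixed coefficient ring $S$) — the one genuine input being the perfectness of the pairing $\langle-,-\rangle$ recorded in Lemma \ref{lem:BigPairing}, which is exactly what makes the double transpose of $\varphi$ equal to $\varphi$ again.
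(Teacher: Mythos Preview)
Your proposal is correct and follows essentially the same route as the paper: dualize the two identities of Lemma \ref{lem:KatzPhiPsi} and then identify $\varphi^{\ast\ast}$ with $\varphi$ and $\psi_{\col}^{\ast\ast}$ with $\psi_{\col}$ via the perfect pairing of Lemma \ref{lem:BigPairing}. The paper's version is terser, writing the chain $\cK^\ast\varphi_{\cC}^\ast = (\varphi_{\cC}\cK)^\ast = (\cK\varphi^\ast)^\ast = \varphi^{\ast\ast}\cK^\ast = \varphi\cK^\ast$ directly, but your more explicit unpacking of the double-transpose step is a faithful expansion of the same argument.
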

\begin{proof}  We apply the $S$-linear duality functor $(-)^\ast = \Hom_{o_L}(-,S)$ to the equations from Lemma \ref{lem:KatzPhiPsi}. Using Lemma \ref{lem:BigPairing}, we see that
\[ \cK^\ast \varphi_{\cC}^\ast = (\varphi_{\cC} \cK)^\ast = (\cK \varphi^\ast)^\ast = \varphi^{\ast\ast} \cK^\ast = \varphi \cK^\ast,\]
and similarly, 
\[q\cK^\ast \psi_{\cC}^\ast = (q\psi_{\cC} \cK)^\ast  = (\cK \psi_{\col}^\ast)^\ast = \psi_{\col} \cK^\ast = q \psi_q \cK^\ast.\]
Now divide both sides by $q$. \end{proof}

\begin{lemma}\label{lem:psiK1} We have $\psi_q \circ \cK_1^\ast = 0$.
\end{lemma}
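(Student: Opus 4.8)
The plan is to deduce the vanishing from the adjunction formula $\psi_q \circ \cK^\ast = \cK^\ast \circ \psi_{\cC}^\ast$ of Corollary \ref{cor:DualKatzPhiPsi}, using the fact that $\cK_1$ factors through evaluation at the unit $1 \in o_L$. By Definition \ref{def:KatzPsiPhi}(2) we have $\cK_1 = \ev_1 \circ \cK$, and by Lemma \ref{lem:valuesInOinf} the evaluation map $\ev_1 \colon \cC \to o_\infty$, $f \mapsto f(1)$, is a well-defined $o_L$-linear map. Applying the contravariant duality functor $(-)^\ast = \Hom_{o_L}(-,S)$ to this factorization yields $\cK_1^\ast = \cK^\ast \circ \ev_1^\ast$, where $\ev_1^\ast \colon o_\infty^\ast \to \cC^\ast$ is given by $\mu \mapsto \mu \circ \ev_1$.

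Next I would fix $\mu \in o_\infty^\ast = \Hom_{o_L}(o_\infty, S)$ and compute
\[ \psi_q\bigl(\cK_1^\ast(\mu)\bigr) = \psi_q\bigl(\cK^\ast(\mu\circ\ev_1)\bigr) = \cK^\ast\bigl(\psi_{\cC}^\ast(\mu\circ\ev_1)\bigr) = \cK^\ast\bigl((\mu\circ\ev_1)\circ\psi_{\cC}\bigr), \]
where the second equality is Corollary \ref{cor:DualKatzPhiPsi} and the third is the definition of the adjoint $\psi_{\cC}^\ast$. It then suffices to observe that $\ev_1\circ\psi_{\cC} = 0$: indeed, for any $f \in \cC$, Definition \ref{def:KatzPsiPhi}(3) gives $\psi_{\cC}(f)(1) = \delta_{1\in\pi o_L}\,f(1/\pi) = 0$, since $1\notin\pi o_L$ as $\pi$ is a uniformizer of $o_L$. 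Hence $(\mu\circ\ev_1)\circ\psi_{\cC} = \mu\circ(\ev_1\circ\psi_{\cC}) = 0$, so $\psi_q(\cK_1^\ast(\mu)) = \cK^\ast(0) = 0$, and since $\mu$ was arbitrary this proves $\psi_q\circ\cK_1^\ast = 0$.

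I do not expect a genuine obstacle here: the argument is entirely formal, resting only on the factorization $\cK_1 = \ev_1\circ\cK$ together with the trivial identity $\ev_1\circ\psi_{\cC} = 0$ (the operator $\psi_{\cC}$ pushes support towards $0$, while evaluation at the unit $1$ never meets $\pi o_L$). The two points worth a sentence of care are that $\ev_1$ really lands in $o_\infty$ --- which is precisely Lemma \ref{lem:valuesInOinf} --- and that the normalization matches: the factor $q$ occurring in Lemma \ref{lem:KatzPhiPsi} has already been absorbed into $\psi_q = \tfrac1q\psi_{\col}$ in the statement of Corollary \ref{cor:DualKatzPhiPsi}, so no stray constant appears. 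One can also phrase the last step purely in terms of adjoints, via $\psi_{\cC}^\ast\circ\ev_1^\ast = (\ev_1\circ\psi_{\cC})^\ast = 0$, giving $\psi_q\circ\cK_1^\ast = \cK^\ast\circ 0 = 0$ at once.
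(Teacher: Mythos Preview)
Your proof is correct and follows essentially the same approach as the paper: factor $\cK_1^\ast = \cK^\ast \circ \ev_1^\ast$, apply Corollary~\ref{cor:DualKatzPhiPsi} to swap $\psi_q$ past $\cK^\ast$, and then observe that $\ev_1 \circ \psi_{\cC} = 0$ since $1 \notin \pi o_L$. The paper's version is slightly more compressed, writing the whole chain as $(\ev_1\psi_{\cC}\cK)^\ast$, but the content is identical.
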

\begin{proof} Corollary \ref{cor:DualKatzPhiPsi} gives $\psi_q \cK_1^\ast = \psi_q \cK^\ast \ev_1^\ast = \cK^\ast \psi_{\cC}^\ast \ev_1^\ast = (\ev_1\psi_{\cC}\cK)^\ast$. But $\ev_1\psi_{\cC}(f) = \psi_{\cC}(f)(1) = 0$ for any $f \in \cC$ by Definition \ref{def:KatzPsiPhi}(3), because $\delta_{1 \in \pi o_L} = 0$.
\end{proof}

\begin{proposition}\label{prop:KerK1ImPsi} Suppose $\cK$ is injective and $\tau$ is surjective. Then $q \ker \cK_1 \subseteq \psi_{\col}^\ast( \h{U} )$.
\end{proposition}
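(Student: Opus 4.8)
The plan is to produce, for each $u \in \ker\cK_1$, an explicit element of $\h{U}$ whose image under $\psi_{\col}^\ast$ is $qu$ --- the natural candidate being $\varphi^\ast(u)$. First I would put $f := \cK(u) \in \cC$; this is a $\Gal$-continuous function by Lemma~\ref{lem:KatzGalCts}, and by the definition of $\cK_1$ we have $f(1) = \ev_1(\cK(u)) = \cK_1(u) = 0$. The first key step is to upgrade this single vanishing to the statement that $f$ vanishes on the whole of $o_L^\times$: given $a \in o_L^\times$, surjectivity of $\tau$ furnishes $\sigma \in G_L$ with $\tau(\sigma) = a$, and then $\Gal$-continuity gives $f(a) = f(1\cdot\tau(\sigma)) = \sigma(f(1)) = 0$. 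So $f$ is supported on $\pi o_L$.

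The second ingredient is the elementary observation that any $g \in \cC$ supported on $\pi o_L$ satisfies $\psi_{\cC}(\varphi_{\cC}(g)) = g$. Indeed, for $a \in o_L$ we compute from Definition~\ref{def:KatzPsiPhi} that $\psi_C(\varphi_C(g))(a) = \delta_{a \in \pi o_L}\,\varphi_C(g)(a/\pi) = \delta_{a\in\pi o_L}\,g(a)$, and this equals $g(a)$ precisely because $g$ vanishes off $\pi o_L$; here $\varphi_C(g) \in \cC$ since $\cC$ is $\varphi_{\cC}$-stable, so $\psi_C$ and $\psi_{\cC}$ agree on it. Applying this with $g = f$ gives $\psi_{\cC}(\varphi_{\cC}(f)) = f$.

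Now I would feed this into the intertwining relations of Lemma~\ref{lem:KatzPhiPsi}. From $\cK\varphi^\ast = \varphi_{\cC}\cK$ we get $\varphi_{\cC}(f) = \varphi_{\cC}(\cK(u)) = \cK(\varphi^\ast u)$, and then from $\cK\psi_{\col}^\ast = q\,\psi_{\cC}\cK$,
\[ \cK\bigl(\psi_{\col}^\ast(\varphi^\ast u)\bigr) \;=\; q\,\psi_{\cC}\bigl(\cK(\varphi^\ast u)\bigr) \;=\; q\,\psi_{\cC}\bigl(\varphi_{\cC}(f)\bigr) \;=\; q f \;=\; \cK(q u). \]
Since $\cK$ is injective by hypothesis, this forces $qu = \psi_{\col}^\ast(\varphi^\ast u)$, which lies in $\psi_{\col}^\ast(\h{U})$ because $\varphi^\ast$ preserves $\h{U}$ (Corollary~\ref{cor:adjoints}(1) applied with $S = o_L$, using Lemma~\ref{lem:PsiCts}). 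This proves the claim.

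I do not expect a genuine obstacle: once the two observations above are in place, the argument is purely formal, a bookkeeping exercise with the adjunction and intertwining identities already established. The conceptual content --- and the only step that actually uses the surjectivity hypothesis on $\tau$ --- is the passage from $f(1) = 0$ to the vanishing of $f$ on all of $o_L^\times$; it is exactly this that makes $f$ ``divisible by $\varphi_{\cC}$'' in the appropriate sense and lets the intertwining relations close the loop.
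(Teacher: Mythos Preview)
Your proof is correct and follows essentially the same route as the paper: both use surjectivity of $\tau$ and $\Gal$-continuity to deduce that $\cK(u)$ vanishes on $o_L^\times$, observe that this forces $\psi_{\cC}\varphi_{\cC}\cK(u) = \cK(u)$, and then invoke the intertwining relations from Lemma~\ref{lem:KatzPhiPsi} together with injectivity of $\cK$ to conclude $qu = \psi_{\col}^\ast(\varphi^\ast u)$. Your write-up is somewhat more explicit about the identity $\psi_{\cC}\varphi_{\cC}g = g$ for $g$ supported on $\pi o_L$ and about why $\varphi^\ast$ preserves $\h{U}$, but the argument is otherwise identical.
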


\begin{proof} In this proof we may assume $S = o_L$. Suppose that $\ev_1 \circ \cK(u) = 0$ for some $u \in \h{U}$. Then $\cK(u)$ is zero on $o_L^\times$ because $\tau$ is surjective and because $\cK(u)$ is $\Gal$-continuous by Lemma \ref{lem:KatzGalCts}. Hence $\cK(u) = \psi_{\cC} \varphi_{\cC} \cK(u)$. But $q \psi_{\cC} \varphi_{\cC} \cK(u) = q \psi_{\cC} \cK \varphi^\ast(u) = \cK \psi_{\col}^\ast\varphi^\ast(u)$ by Lemma \ref{lem:KatzPhiPsi}, so $\cK(q u - \psi_{\col}^\ast \varphi^\ast(u)) = 0$. Since $\cK$ is injective by assumption, $qu = \psi_{\col}^\ast (\varphi^\ast(u)) \in \psi_{\col}^\ast(\h{U})$.
\end{proof}

\begin{proposition}\label{prop:PartialConditionalKatz} Suppose that $\tau : G_L \to o_L^\times$ and $\cK_1 : \h{U} \to o_\infty$ are surjective, and that $\cK : \h{U} \to \cC$ is injective. Then 
\[\cK_1^\ast : o_\infty^\ast \to S\dcroc{Z}^{\psi_q=0}\] 
is an $S$-linear bijection.
\end{proposition}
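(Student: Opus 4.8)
The plan is to deduce both claims almost formally from what is already in place, using the identification $\h{U}^\ast = \Hom_{o_L}(\h{U},S) \cong S\dcroc{Z}$ of Lemma \ref{lem:extendToS} and Lemma \ref{lem:BigPairing}(4), under which $\cK_1^\ast(\mu) \in S\dcroc{Z}$ is the element corresponding to the $o_L$-linear functional $u \mapsto \mu(\cK_1(u))$ on $\h{U}$.

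Injectivity comes for free: if $\cK_1^\ast(\mu) = 0$ then $\mu(\cK_1(u)) = 0$ for all $u \in \h{U}$, and since $\cK_1 : \h{U} \to o_\infty$ is surjective by hypothesis, $\mu$ vanishes on $o_\infty$. That $\im \cK_1^\ast \subseteq S\dcroc{Z}^{\psi_q=0}$ is exactly Lemma \ref{lem:psiK1}. For the reverse inclusion, fix $F \in S\dcroc{Z}$ with $\psi_q(F)=0$; as $S$ is $o_L$-flat and $\psi_q = \tfrac1q\psi_{\col}$ with $\psi_{\col}$ preserving $S\dcroc{Z}$, this is the same as $\psi_{\col}(F)=0$ in $S\dcroc{Z}$. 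Write $\ell_F := \langle -, F\rangle \in \h{U}^\ast$. The crux is that $\ell_F$ annihilates $\ker\cK_1$: given $u \in \ker\cK_1$, Proposition \ref{prop:KerK1ImPsi} applies --- its hypotheses, $\cK$ injective and $\tau$ surjective, are among ours --- and furnishes $w \in \h{U}$ with $qu = \psi_{\col}^\ast(w)$, so that by the adjointness of $\psi_{\col}^\ast$ (Corollary \ref{cor:adjoints})
\[ q\,\ell_F(u) = \langle qu, F\rangle = \langle \psi_{\col}^\ast(w), F\rangle = \langle w, \psi_{\col}(F)\rangle = 0, \]
and flatness of $S$ over $o_L$ gives $\ell_F(u)=0$. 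Since $\cK_1$ is surjective and $\ker\cK_1 \subseteq \ker\ell_F$, the functional $\ell_F$ descends to a unique $o_L$-linear $\mu : o_\infty \to S$ with $\mu\circ\cK_1 = \ell_F$; hence $\mu \in o_\infty^\ast$ and $\cK_1^\ast(\mu) = \ell_F$, which under $\h{U}^\ast \cong S\dcroc{Z}$ is precisely $F$. Together with Lemma \ref{lem:psiK1} and the injectivity above, this shows $\cK_1^\ast : o_\infty^\ast \to S\dcroc{Z}^{\psi_q=0}$ is an $S$-linear bijection.

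I do not expect a serious obstacle at this step: the real work has been front-loaded into Proposition \ref{prop:KerK1ImPsi}, which is where injectivity of $\cK$ and the Weierstrass-type structure behind $\psi_{\col}$ are used to pin down $\ker\cK_1$. What remains is essentially bookkeeping --- keeping the pairings and adjoints consistently over the coefficient ring $S$ rather than over $o_L$, and invoking the $o_L$-flatness of $S$ to cancel the factor $q$ (without flatness one would only learn that $\ell_F$ kills $\ker\cK_1$ up to $q$-torsion, which would not be enough).
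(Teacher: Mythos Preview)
Your argument is correct and matches the paper's proof essentially step for step: injectivity from surjectivity of $\cK_1$, containment of the image via Lemma \ref{lem:psiK1}, and surjectivity by showing $\ell_F = \langle -, F\rangle$ kills $\ker\cK_1$ using Proposition \ref{prop:KerK1ImPsi} and then descending. The only cosmetic difference is that the paper phrases the cancellation of $q$ as ``$o_L$ has no $q$-torsion'' whereas you (more precisely) invoke the $o_L$-flatness of $S$; both amount to $q$ being a non-zero-divisor in $S$.
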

\begin{proof} The image of $\cK^\ast : o_\infty^\ast \to S\dcroc{Z}$ is contained in $S\dcroc{Z}^{\psi_q=0}$ by Lemma \ref{lem:psiK1}. If $\cK_1^\ast(\ell) = 0$ for some $\ell \in o_\infty^\ast$, then $\ell \circ \cK_1 = 0$ so $\ell(\cK_1(\h{U})) = 0$. But $\cK_1(\h{U}) = o_\infty$ by assumption, so $\ell=0$. Hence $\cK_1^\ast$ is injective and it remains to prove it is also surjective.

Take some $F \in S\dcroc{Z}^{\psi_q = 0}$ and let $\ell := \langle -, F \rangle \in \h{U(\cG_S)}^\ast \cong \h{U}^\ast$ be the $S$-valued  $o_L$-linear functional on $\h{U}$ given by Lemma \ref{lem:extendToS} and Lemma \ref{lem:BigPairing}(4). Then since $\psi_{\col}(F) = q\psi_q(F) = 0$, 
\[0 = \langle u, \psi_{\col}(F)\rangle = \langle\psi_{\col}^\ast(u),F\rangle = \ell( \psi_{\col}^\ast(u)) \quad\mbox{for all} \quad  u \in \h{U}.\] 
So, $\ell$ vanishes on $\psi_{\col}^\ast(\h{U})$ and hence also on $q \ker \cK_1$ by Proposition \ref{prop:KerK1ImPsi}. Since $o_L$ has no $q$-torsion, we see that $\ell$ is zero on $\ker \cK_1$. Hence $\ell$ descends to an $S$-valued $o_L$-linear functional on $\h{U} / \ker \cK_1$. But this quotient is isomorphic to $o_\infty$ by assumption. So, we get a well-defined $o_L$-linear form $\overline{\ell} : o_\infty \to S$ such that $\overline{\ell}(\cK_1(u)) = \ell(u)$ for all $u \in \h{U}$. Then 
\[ \langle u, \cK_1^\ast(\overline{\ell})\rangle = \overline{\ell}(\cK_1(u)) = \ell(u) = \langle u, F\rangle \qmb{for all} u \in \h{U}\]
which implies that $F = \cK_1^\ast(\overline{\ell})$ by Lemma \ref{lem:BigPairing}(4). Hence $\cK_1^\ast$ is surjective.
\end{proof}

We make the following tentative

\begin{conjecture}\label{conj:NiceKatz}$\cK_1 : \h{U} \to o_\infty$ is surjective and $\cK : \h{U} \to \cC$ is injective whenever $\tau$ is surjective. \end{conjecture}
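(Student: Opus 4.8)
We can prove Conjecture \ref{conj:NiceKatz} when $L = \bQ_{p^2}$; we outline here the strategy and the point at which it fails to generalise. By the definition of the polynomials $P_n$, the coefficient of $Z^n$ in $\Delta_a = \exp(a\Omega\log_{\LT}(Z))$ is $P_n(a\Omega)$, so $\langle u_n, \Delta_a\rangle = P_n(a\Omega)$; hence $\cK$ sends $\sum_n \lambda_n u_n \in \h{U}$ (with $\lambda_n \in o_L$, $\lambda_n \to 0$) to the $\Gal$-continuous function $a \mapsto \sum_n \lambda_n P_n(a\Omega)$, and $\cK_1$ sends it to $\sum_n \lambda_n P_n(\Omega) \in o_\infty$. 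Thus ``$\cK$ is injective'' says that the functions $a \mapsto P_n(a\Omega)$ on $o_L$ admit no non-trivial null-convergent $o_L$-linear relation, while ``$\cK_1$ is surjective'' says that $\{P_n(\Omega)\}_{n \geq 0}$ topologically generate $o_\infty$ as an $o_L$-module. Both are controlled by the valuations $v_n := \val_\pi(P_n(\Omega))$, equivalently by the Newton polygon of $\Delta_1(Z) - 1$, whose zeros are $0$ together with the torsion points of $\cG$ annihilated by the homomorphism $\cG \to \Gm$ of period $\Omega$; the valuations of these torsion points are known, and Theorem \ref{introvalpkgen} records the resulting values of $v_n$ at the indices $n = x_m$.

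For the injectivity of $\cK$, the starting point is that $\ker\cK$ is a highly constrained submodule of $\h{U}$: it is stable under the monoid action of $o_L$ (since $\cK(a\cdot u)(b) = \langle u, \Delta_{ab}\rangle = \cK(u)(ab)$), under $\varphi^\ast$ and $\psi_{\col}^\ast$ by Lemma \ref{lem:KatzPhiPsi}, and, after inverting $\pi$, under an operator realising multiplication by the coordinate on value functions --- the adjoint of the invariant derivative $D = \Omega^{-1}\partial$, for which $D\Delta_a = a\Delta_a$. Since $\ker\cK$ is itself $o_L$-monoid-stable, it suffices to show that $\ker\cK_1 = \{u \in \h{U} : \langle u, \Delta_1\rangle = 0\}$ contains no non-zero $o_L$-monoid-stable submodule; here one would combine these stability properties with a minimal-$\val_\pi$ argument driven by the $v_n$, in the spirit of the overconvergence estimates of \cite[Theorem 4.7]{ST}. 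We expect this half of the conjecture to be the more tractable one.

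The principal obstacle is the surjectivity of $\cK_1$, which by the above is the assertion (established as Proposition \ref{prop:K1onto} for $L = \bQ_{p^2}$) that every element of $o_\infty$ is of the form $\sum_n \lambda_n P_n(\Omega)$ with $\lambda_n \in o_L$ and $\lambda_n \to 0$. The plan is: when $\tau : G_L \to o_L^\times$ is surjective, $L_\infty/L$ is an abelian extension with Galois group $o_L^\times$, and $o_\infty$ carries a ramification filtration whose jumps are explicitly computable; one produces, for each jump, an element $u \in \h{U}$ whose $\cK_1$-image has the valuation of a corresponding graded generator of $o_\infty$, and then upgrades ``enough valuations are attained'' to ``the $P_n(\Omega)$ generate $o_\infty$'' by a $\pi$-adic successive-approximation argument, using that $\h{U}$ is the full space of null series $\sum \lambda_n u_n$ and that $o_L$ is $\pi$-adically complete (one must also check that the reductions of the chosen elements generate the successive graded quotients of $o_\infty$, not merely attain their valuations). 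Theorem \ref{introvalpkgen}, which gives $v_{x_m} = y_m$, provides exactly the elements needed for this when $L = \bQ_{p^2}$. The step that does not generalise is the input identifying $o_\infty$ with the completed coordinate ring of a \emph{one-dimensional} Lubin--Tate group: this is available precisely because the dual $p$-divisible group $\cG(p)^\vee$ has dimension $1$ when $d = [L:\Qp] = 2$, and it supplies the $\pi$-power description of $o_\infty$ against which the $v_n$ are matched. For $d \geq 3$ one has $\dim\cG(p)^\vee = d - 1 \geq 2$, so $o_\infty$ is no longer the coordinate ring of a formal group and no such description is available \emph{a priori}: one would have to prove directly that the $v_n$ exhaust the valuations occurring in $o_\infty$, and this is the crux of the difficulty.

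Finally, we note that the hypothesis that $\tau$ be surjective is frequently automatic --- by Lemma \ref{lem:tau-sur} it holds whenever $d-1$ and $(p-1)p$ are coprime --- and that a descent along unramified subextensions, in the spirit of Theorem \ref{introfinext}, may reduce the conjecture to $L$ of controlled residue degree. Combined with Proposition \ref{prop:PartialConditionalKatz}, a proof of Conjecture \ref{conj:NiceKatz} would give an isomorphism $o_\infty^\ast \cong S\dcroc{Z}^{\psi_q = 0}$ for all such $L$; this is why we record it here, even though at present we can establish it only for $L = \bQ_{p^2}$.
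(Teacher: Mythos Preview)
This statement is a \emph{conjecture}: the paper does not prove it in general, and your write-up correctly acknowledges that only the case $L = \bQ_{p^2}$ is established. So there is no general proof to compare against; what can be compared is your outlined strategy for $L = \bQ_{p^2}$ with what the paper actually does in \S\ref{Qp^2KatzSection}.

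On surjectivity of $\cK_1$ your sketch is broadly aligned with the paper: the argument does go via the Newton-polygon valuations of Theorem~\ref{introvalpkgen}/Corollary~\ref{vpkpk}, showing (Proposition~\ref{prop:Qp2KatztmUm}) that $t'_m(U(m))$ contains a uniformiser of $o_{L_m}$ and hence equals $o_{L_m}$, after which Proposition~\ref{prop:K1onto} concludes. Your phrase ``identifying $o_\infty$ with the completed coordinate ring of a one-dimensional Lubin--Tate group'' is not quite accurate, though: no such identification is made. The role of $\dim \cG(p)^\vee = 1$ is elsewhere (see below); what makes the surjectivity work for $\bQ_{p^2}$ is the explicit shape of the Newton-polygon vertices in Corollary~\ref{vpkpk}, which pin down $\val_p(P_{p^{2m-1}}(\Omega)) = [L_m:L]^{-1}$ exactly.

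On injectivity of $\cK$ your sketch diverges substantially from the paper and, as written, is only heuristic. You propose to exploit stability of $\ker\cK$ under the $o_L$-monoid, $\varphi^\ast$, $\psi_{\col}^\ast$, and the adjoint of $D$, together with a ``minimal-$\val_\pi$'' argument; but no such argument is carried out, and it is not clear how to make one work. The paper's actual proof (Proposition~\ref{KerKmodpi}, Proposition~\ref{ProjLimToCoLim}, Corollary~\ref{cor: Qp2Katz}) is quite different: one reduces modulo $\pi$, shows that $I := (\ker\cK + \pi\h{U})/\pi\h{U}$ is a $\varphi^\ast$-stable ideal of \emph{infinite} codimension in $U_k$ (using that locally analytic $\Gal$-continuous functions form an infinite-dimensional space), and then invokes Dieudonn\'e theory to identify $\varprojlim U(m)_k \cong \cO(\cG'\times_{o_L} k) \cong k\dcroc{X_1,\dots,X_{d-1}}$. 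It is \emph{here} that $d=2$ enters decisively: when $d=2$ this is $k\dcroc{X}$, whose only $\varphi^\ast$-stable ideal of infinite codimension is zero, forcing $I = 0$ and hence $\ker\cK = 0$. Your proposal misattributes the use of $\dim\cG(p)^\vee = d-1 = 1$ to the surjectivity half; in the paper it is the linchpin of the injectivity half.
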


\subsection{The largest $\psi_q$-stable $o_L$-submodule of $o_L\dcroc{Z}$} For brevity, we will write 
\[A := S\dcroc{Z}\] 
in this subsection. The $\psi_q$-operator is only defined on $A_L$ and it does \emph{not} preserve $A$, in general.

\begin{definition} Let $A^{\psiqint}$ be the largest $S$-submodule of $A$ stable under $\psi_q$. 
\end{definition}
\begin{remark}\label{rem:explicitpsiint} We have $A^{\psiqint} = \{F \in A : \psi_q^n(F) \in A$ for all $n \geq 0\}$. \end{remark}

\begin{lemma}\label{lem:ImDualKatz} The image of $\cK^\ast : \cC^\ast \to A$ is contained in $A^{\psiqint}$.\end{lemma}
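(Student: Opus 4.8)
The plan is to reduce the statement to a single compatibility between the operator $\psi_q$ on $A = S\dcroc{Z}$ and the operator $\psi_{\cC}$ on Galois measures, which has already been recorded in Corollary \ref{cor:DualKatzPhiPsi}. Recall from Remark \ref{rem:explicitpsiint} that $A^{\psiqint} = \{F \in A : \psi_q^n(F) \in A \text{ for all } n \geq 0\}$, so it suffices to show that for every $\lambda \in \cC^\ast$ and every $n \geq 1$ we have $\psi_q^n(\cK^\ast(\lambda)) \in A$; membership in $A$ (rather than merely $A_L$) for $n=0$ is clear since $\cK^\ast$ is by construction a map into $A = S\dcroc{Z}$.

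The key step is to iterate the identity $\cK^\ast \psi_{\cC}^\ast = \psi_q \cK^\ast$ from Corollary \ref{cor:DualKatzPhiPsi}. By induction on $n$ this gives $\psi_q^n \cK^\ast = \cK^\ast (\psi_{\cC}^\ast)^n$ as maps $\cC^\ast \to A_L$. Therefore, for any $\lambda \in \cC^\ast$,
\[ \psi_q^n(\cK^\ast(\lambda)) = \cK^\ast\big((\psi_{\cC}^\ast)^n(\lambda)\big). \]
Now $(\psi_{\cC}^\ast)^n(\lambda) = \lambda \circ \psi_{\cC}^n$ is again an element of $\cC^\ast = \Hom_{o_L}(\cC, S)$, because $\psi_{\cC}$ maps $\cC$ into $\cC$ (it is the restriction of $\psi_C$ to the $o_L$-lattice $\cC$, as noted in Definition \ref{def:KatzPsiPhi}(3)). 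Hence $\psi_q^n(\cK^\ast(\lambda))$ lies in the image of $\cK^\ast : \cC^\ast \to A$, and in particular it lies in $A$. Since this holds for all $n \geq 0$, we conclude that $\cK^\ast(\lambda) \in A^{\psiqint}$.

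The only point requiring care is that Corollary \ref{cor:DualKatzPhiPsi} is an identity a priori taking values in $A_L = S\dcroc{Z}_L$, where $\psi_q$ genuinely lives; so one should first observe that the right-hand side $\cK^\ast\big((\psi_{\cC}^\ast)^n(\lambda)\big)$ is, tautologically, in $A$, which then forces $\psi_q^n(\cK^\ast(\lambda))$ to be in $A$ as well. There is no serious obstacle here: the work has already been done in establishing the adjunction $\cK^\ast \psi_{\cC}^\ast = \psi_q \cK^\ast$, and the lemma is essentially a formal consequence, together with the trivial remark that $\psi_{\cC}$ preserves the lattice $\cC$.
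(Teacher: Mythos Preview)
Your proof is correct and follows essentially the same approach as the paper's own proof: iterate the identity $\psi_q \cK^\ast = \cK^\ast \psi_{\cC}^\ast$ from Corollary~\ref{cor:DualKatzPhiPsi}, observe that $(\psi_{\cC}^\ast)^n(\lambda)$ remains in $\cC^\ast$ because $\psi_{\cC}$ preserves $\cC$, and conclude via Remark~\ref{rem:explicitpsiint}. The paper compresses this into two sentences, but the content is identical.
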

\begin{proof} Let $\lambda \in \cC^\ast$. By Corollary \ref{cor:DualKatzPhiPsi}, $\psi_q^n( \cK^\ast(\lambda) )= \cK^\ast((\psi_{\cC}^\ast)^n(\lambda))$ lies in $A$ for all $n \geq 0$. Now use Remark \ref{rem:explicitpsiint}.
\end{proof}

Clearly, $A^{\psi_q=0}$ is contained in $A^{\psiqint}$; moreover this last is $\varphi$-stable in view of Remark \ref{rem:explicitpsiint} and the fact that $\psi_q \circ \varphi = 1_A$ by Corollary \ref{cor:FundPsiPhi}. Therefore 
\[ S + \sum\limits_{n =0}^\infty \varphi^n\left(A^{\psi_q=0}\right) \subseteq A^{\psiqint}.\]
Our next result makes this relation more precise; first we need some more notation.
\begin{definition} We have the following \emph{truncation operators}:
\be \item $s : \cC \to \cC$, given by $s(f) = f - f(0)1$, and
\item $t : A \to A$, given by $t(a) = a - a(0)1$.
\ee
\end{definition}
It will be helpful to observe that $t \varphi = \varphi t$ as $S$-linear endomorphisms of $A$.
\begin{proposition}\label{prop:phipsiphi} There is a well-defined $o_L$-linear bijection
\[1 \oplus \sum\limits_{n=0}^\infty \varphi^n t : o_L \oplus \prod\limits_{n=0}^\infty A^{\psi_q=0} \stackrel{\cong}{\longrightarrow} A^{\psiqint}.\]
\end{proposition}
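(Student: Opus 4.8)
The plan is to exhibit an explicit two-sided inverse to the map $\Phi := 1 \oplus \sum_{n=0}^\infty \varphi^n t$ of the statement, namely
\[
\Psi\colon A^{\psiqint}\longrightarrow o_L\oplus\prod_{n=0}^\infty A^{\psi_q=0},\qquad \Psi(F):=\bigl(F(0),\ (\psi_q^n(F)-\varphi\psi_q^{n+1}(F))_{n\ge 0}\bigr).
\]
The heuristic behind $\Psi$ is that any $F\in A^{\psiqint}$ splits as $F=(F-\varphi\psi_q(F))+\varphi(\psi_q(F))$, where the first summand lies in $A^{\psi_q=0}$ (apply $\psi_q$ and use $\psi_q\varphi=1_A$, Corollary~\ref{cor:FundPsiPhi}) and $\psi_q(F)$ again lies in $A^{\psiqint}$ by Remark~\ref{rem:explicitpsiint}; iterating this ``peels off'' a $\psi_q$-invariant piece at each stage, and the successive constants $(\psi_q^n F)(0)$ assemble, after telescoping, into the single constant $F(0)$.

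First I would check that both maps are well defined. For $\Phi$: each $t(F_n)$ lies in $ZA$, so $\varphi^n(t(F_n))\to 0$ by Lemma~\ref{lem:phizeroseq} and the series $c\cdot 1+\sum_n\varphi^n(tF_n)$ converges in the $\frm$-adically complete ring $A$. To see the sum lies in $A^{\psiqint}$, I would apply $\psi_q$ to it term by term — legitimate since $\psi_{\col}$, and hence $\psi_q$, is $\frm$-adically continuous by Lemma~\ref{lem:PsiCts} — using $\psi_q\varphi^m=\varphi^{m-1}$ for $m\ge 1$, $\psi_q(1)=1$, and $\psi_q(tF_n)=\psi_q(F_n)-F_n(0)\psi_q(1)=-F_n(0)$ for $F_n\in A^{\psi_q=0}$, to get by induction on $k$ the identity
\[
\psi_q^k\bigl(\Phi(c,(F_n)_n)\bigr)=\bigl(c-F_0(0)-\cdots-F_{k-1}(0)\bigr)\cdot 1+\sum_{m=0}^\infty\varphi^m\bigl(tF_{m+k}\bigr),
\]
whose right-hand side manifestly lies in $A$; by Remark~\ref{rem:explicitpsiint} this gives $\Phi(c,(F_n)_n)\in A^{\psiqint}$. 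For $\Psi$: if $F\in A^{\psiqint}$ then $\psi_q^n(F)\in A$ for all $n$ by Remark~\ref{rem:explicitpsiint}, so $G_n:=\psi_q^nF-\varphi\psi_q^{n+1}F\in A$, and $\psi_q(G_n)=\psi_q^{n+1}F-\psi_q^{n+1}F=0$, so $G_n\in A^{\psi_q=0}$.

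Next I would verify the two compositions are the identity. For $\Psi\circ\Phi=\id$: taking $k=0$ in the displayed identity shows the first coordinate of $\Psi(\Phi(c,(F_n)_n))$ equals $c$, while forming $\psi_q^n\Phi-\varphi\psi_q^{n+1}\Phi$ from the identity makes the constants telescope and the two sums differ by the single term $\varphi^0(tF_n)$, giving $F_n(0)\cdot 1+tF_n=F_n$ as the $n$-th coordinate. For $\Phi\circ\Psi=\id$: with $G_n=\psi_q^nF-\varphi\psi_q^{n+1}F$, split $\varphi^n(tG_n)=\varphi^n(G_n)-G_n(0)\cdot 1$; since $\varphi^n(G_n)=\varphi^n(\psi_q^nF)-\varphi^{n+1}(\psi_q^{n+1}F)$ and $G_n(0)=(\psi_q^nF)(0)-(\psi_q^{n+1}F)(0)$, the two partial sums each telescope and one obtains
\[
F(0)\cdot 1+\sum_{n=0}^N\varphi^n(tG_n)=F-\varphi^{N+1}\bigl(t\psi_q^{N+1}F\bigr).
\]
Since $t\psi_q^{N+1}F\in ZA$, the error term on the right tends to $0$ as $N\to\infty$ by Lemma~\ref{lem:phizeroseq}, so $\Phi(\Psi(F))=F$. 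As $\Phi$ is plainly $o_L$-linear, this proves Proposition~\ref{prop:phipsiphi}.

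The convergence inputs are routine; the real work is the bookkeeping — tracking the constant term shed each time $\psi_q$ or $\varphi$ is applied to a truncated element, and justifying every rearrangement of infinite series using the $\frm$-adic continuity of $\psi_{\col}$ (Lemma~\ref{lem:PsiCts}) together with Lemma~\ref{lem:phizeroseq} in the form ``$\varphi^N(a_N)\to 0$ for an arbitrary sequence $(a_N)\subseteq ZA$'', here applied to $a_N=t\psi_q^NF$. One must also be careful to stay inside $A$ rather than inside $A_L=A\otimes_{o_L}L$ throughout, which is exactly the point at which the hypothesis $F\in A^{\psiqint}$ — that $\psi_q^n(F)\in A$ for every $n$, not merely that $\psi_q^n(F)\in A_L$ — is used.
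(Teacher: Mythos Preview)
Your proof is correct and follows essentially the same approach as the paper: you construct the very same inverse $\Psi(F)=(F(0),(\psi_q^nF-\varphi\psi_q^{n+1}F)_n)$ and verify surjectivity by the same telescoping identity $F(0)+\sum_{n\le N}\varphi^n(tG_n)=F-\varphi^{N+1}(t\psi_q^{N+1}F)$. The only cosmetic differences are that the paper proves injectivity directly via $A^{\psi_q=0}\cap\varphi(A)=0$ rather than by checking $\Psi\circ\Phi=\id$, and shows $\im\Phi\subseteq A^{\psiqint}$ by a closure argument rather than by your explicit formula for $\psi_q^k\Phi$.
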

\begin{proof} Given any $(a_n)_n \in \prod_{n=0}^\infty A^{\psi_q=0}$, Lemma \ref{lem:phizeroseq} implies that $\varphi^n(t(a_n)) \to 0$ as $n \to \infty$, because $t(a_n) \in ZA$ for all $n \geq 0$.  Hence
\[ (z, (a_n)_n) \mapsto z + \sum\limits_{n=0}^\infty \varphi^n( t(a_n))\]
is a well-defined $S$-linear map $\gamma:S \oplus \prod\limits_{n=0}^\infty A^{\psi_q=0} \to A$. Now $A^{\psiqint}$ is a $t$-stable $S$-submodule of $A$ since $\psi_q(1) = 1$.  Because $a_n \in A^{\psi_q=0}$, this implies that $\varphi^n(t(a_n)) = t \varphi^n(a_n) \in t(A^{\psiqint}) \subseteq A^{\psiqint}$ for any $n \geq 0$. Since $\psi_{\col} : A \to A$ is continuous by Lemma \ref{lem:PsiCts} and since $A^{\psiqint} = \{a \in A : \psi_{\col}^n(a) \in q^n A$  for all $n \geq 0\}$ by Remark \ref{rem:explicitpsiint}, we see that $A^{\psiqint}$ is a closed $S$-submodule of $A$ with respect to the $\langle \pi, Z \rangle$-adic topology on $A = S\dcroc{Z}$. Hence the image of $\gamma$ is contained in $A^{\psiqint}$, and it remains to show that $\gamma$ is bijective.

Suppose that $\gamma(z, (a_n)_n) = 0$ so that $z = -\sum\limits_{n=0}^\infty \varphi^n(t(a_n))$. Since $Z A$ is closed in $A$, this infinite sum lies in $ZA$. Since $S \cap Z A = 0$, we conclude that $z = 0 $. Hence $a_0 = -\sum\limits_{n=1}^\infty \varphi^n(t(a_n)) \in \varphi(A)$. But $a_0 \in A^{\psi_q=0}$ by definition, and
\[ A^{\psi_q=0} \cap \varphi(A) = 0\]
because $\psi_q \circ \varphi = 1_A$ by Corollary \ref{cor:FundPsiPhi}. Hence $a_0 = 0$. Proceeding inductively on $n$, we quickly deduce that $a_n = 0$ for all $n \geq 0$ in a similar manner. Hence $\gamma$ is injective.

Now let $a \in A^{\psiqint}$; then by definition, $\psi_q^n(a) \in A$ for all $n \geq 0$, so we can define 
\[a_n := \psi_q^n(a) - \varphi \psi_q^{n+1}(a) \in A.\]
Since $\psi_q \circ \varphi = 1_A$ by Corollary \ref{cor:FundPsiPhi}, we see that $a_n \in A^{\psi_q = 0}$ for all $n \geq 0$. Since $t \varphi = \varphi t$,
\[\sum\limits_{n=0}^m \varphi^n(t(a_n)) = t\left(\sum\limits_{n=0}^m \varphi^n(\psi_q^n(a) - \varphi\psi_q^{n+1}(a))\right) = t(a - \varphi^{m+1}\psi_q^{m+1}(a))\]
for any $m \geq 0$. Since $t \varphi^{m+1} \psi_q^{m+1}(a) = \varphi^{m+1}(t \psi_q^{m+1}(a)) \to 0$ as $m \to \infty$ by Lemma \ref{lem:phizeroseq}, 
\[\gamma( a(0), (a_n)_n) = a(0) + t(a) - \lim\limits_{m \to 0} \varphi^{m+1} (t \psi_q^{m+1}(a)) = a.\]
Hence $\gamma$ is surjective. \end{proof}
\begin{lemma}\label{lem:phiK1K} For each $n \geq 0$, there is a commutative diagram
\[ \xymatrix{ o_\infty^\ast \ar[rr]^{\ev_{\pi^n}^\ast}\ar[d]_{\cK_1^\ast} && \cC^\ast \ar[d]^{\cK^\ast} \ar[rr]^{s^\ast} && \cC^\ast\ar[d]_{\cK^\ast} \\ 
A^{\psi_q=0} \ar[rr]_{\varphi^n} && A^{\psiqint}\ar[rr]_t && A^{\psiqint}.}\]
\end{lemma}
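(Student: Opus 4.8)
The lemma asserts that the two squares in the diagram commute, and the plan is to obtain each by dualising a corresponding identity of $o_L$-linear maps out of $\h U$ and then transporting it through the identification $\h U^\ast \cong A$ furnished by Lemma~\ref{lem:extendToS} and Lemma~\ref{lem:BigPairing}(4). Under that identification, if $\lambda \in \h U^\ast$ corresponds to $F \in A$, then $\lambda \circ \varphi^\ast$ corresponds to $\varphi(F)$ and $\lambda \circ \beta$ corresponds to $t(F)$, where $\beta : \h U \to \h U$ is the $o_L$-linear projection $v \mapsto v - \langle v, 1\rangle u_0$ killing the $u_0$-component; the first assertion is the defining adjunction of $\varphi^\ast$ from Corollary~\ref{cor:adjoints} (with $S = o_L$), noting that $\varphi^\ast$ preserves $\h U$ and not merely the completed base change $\h{U(\cG_S)}$, and the second follows from $\langle \beta(v), F\rangle = \langle v, F\rangle - \langle v, 1\rangle F(0) = \langle v, t(F)\rangle$. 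I would also note at the outset that the four arrows land in the submodules indicated: $\cK^\ast$ maps into $A^{\psiqint}$ by Lemma~\ref{lem:ImDualKatz}, $\cK_1^\ast$ into $A^{\psi_q=0}$ by Lemma~\ref{lem:psiK1}, and $\varphi^n$ and $t$ preserve $A^{\psiqint}$ since that module is $\varphi$-stable and contains $S$.

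For the left square, I would first prove the identity $\ev_{\pi^n} \circ \cK = \cK_1 \circ (\varphi^\ast)^n$ of maps $\h U \to o_\infty$. This is immediate from the definitions once one has $\varphi^n(\Delta_1) = \Delta_{\pi^n}$, which comes from iterating Lemma~\ref{lem:PhiPsiDelta}(1): for $v \in \h U$,
\[ \cK(v)(\pi^n) = \langle v, \Delta_{\pi^n}\rangle = \langle v, \varphi^n(\Delta_1)\rangle = \langle (\varphi^\ast)^n v, \Delta_1\rangle = \cK_1((\varphi^\ast)^n v).\]
Applying $\Hom_{o_L}(-,S)$ to this identity and using that precomposition with $(\varphi^\ast)^n$ on $\h U^\ast$ corresponds to $\varphi^n$ on $A$ yields exactly $\cK^\ast \circ \ev_{\pi^n}^\ast = \varphi^n \circ \cK_1^\ast$.

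For the right square, I would first record that $\cK(u_0)$ is the constant function $1$ on $o_L$, since $\langle u_0, \Delta_a\rangle$ is the constant term of $\Delta_a = \exp(a\Omega\log_{\LT}(Z)) = 1 + \bigO(Z)$, and that $\cK(v)(0) = \langle v, \Delta_0\rangle = \langle v, 1\rangle$ for every $v \in \h U$. These give $\cK(\beta(v)) = \cK(v) - \cK(v)(0)\cdot 1 = s(\cK(v))$, that is, $s \circ \cK = \cK \circ \beta$ as maps $\h U \to \cC$. Applying $\Hom_{o_L}(-,S)$ and using that precomposition with $\beta$ corresponds to $t$ on $A$ yields $\cK^\ast \circ s^\ast = t \circ \cK^\ast$. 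The argument is essentially formal duality, so I do not anticipate a genuine obstacle; the only points that demand care are the bookkeeping with the identification $\h U^\ast \cong A$, the verification that $\varphi^\ast$ restricts to the uncompleted object $\h U$, and checking that the vertical and horizontal maps genuinely land in $A^{\psi_q=0}$ and $A^{\psiqint}$.
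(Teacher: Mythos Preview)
Your proof is correct and follows essentially the same approach as the paper. The only difference is organizational: you establish the identities $\ev_{\pi^n}\circ\cK = \cK_1\circ(\varphi^\ast)^n$ and $s\circ\cK = \cK\circ\beta$ at the level of maps out of $\h U$ and then dualise, whereas the paper works directly at the dual level, invoking Corollary~\ref{cor:DualKatzPhiPsi} for the left square and verifying the right square on the basis elements $u_m$ via $\cK(u_m)(0) = P_m(0) = \delta_{m,0}$; the underlying computations (namely $\varphi^n(\Delta_1)=\Delta_{\pi^n}$ and $\cK(u_0)=1$) are identical, and your projection $\beta$ is precisely the operator whose transpose is $t$.
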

\begin{proof} To see that the square on the left commutes, we use Corollary \ref{cor:DualKatzPhiPsi}:
\[ \varphi^n \cK_1^\ast = \varphi^n \cK^\ast \ev_1^\ast = \cK^\ast \varphi_{\cC}^\ast \ev_1^\ast = \cK^\ast (\ev_1 \varphi_{\cC})^\ast = \cK^\ast \ev_{\pi^n}^\ast.\]
Hence in view of Lemma \ref{lem:BigPairing}(4), it remains to show that
\[\langle u_m, \cK^\ast( s^\ast (\lambda))\rangle = \langle u_m, t (\cK^\ast_1(\lambda))\rangle \qmb{for all} m \geq 0, \lambda \in \cC^\ast.\]
Since $t$ kills the constant term of a power series in $A$, we have
\[\langle u_m, t(a) \rangle = \delta_{m \geq 1} \langle u_m, a \rangle \qmb{for all} a \in A.\]
Now $\cK(u_m)(0) = P_m(0) = \delta_{m,0}$ by \cite[Lemma 4.2]{ST} and $\cK(u_0) = \cK(1) = 1$, so
\[\langle u_m, \cK^\ast(s^\ast(\lambda)) \rangle=\lambda(s(\cK(u_m)) = \lambda(\cK(u_m) - \cK(u_m)(0) 1) = \delta_{m \geq 1} \lambda(\cK(u_m)) = \langle u_m, t (\cK^\ast(\lambda))\rangle.\]
The result follows.\end{proof}

Let $c_0(o_\infty) := \{ (x_n)_n \in \prod\limits_{n=0}^\infty o_\infty : \lim\limits_{n \to \infty} x_n = 0 \}$.

\begin{lemma}\label{lem:explicitGalCont} Suppose that $\tau$ is surjective. Then the map
\[ \eta : \cC \to o_L \oplus c_0(o_\infty)\]
given by $\eta(f) = (f(0), (f(\pi^n) - f(0))_n)$ is an $o_L$-linear bijection.
\end{lemma}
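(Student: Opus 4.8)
The plan is to exploit the decomposition $o_L = \{0\} \sqcup \bigsqcup_{n \geq 0} \pi^n o_L^\times$, which is available because $\tau$ surjective means the group $o_L^\times = \tau(G_L)$ acts on $o_L$ by multiplication with orbits $\{0\}, o_L^\times, \pi o_L^\times, \pi^2 o_L^\times, \dots$. By Definition~\ref{def:GalCts}, any $\Gal$-continuous $f$ satisfies $f(\pi^n b) = \sigma(f(\pi^n))$ for all $b \in o_L^\times$ and all $\sigma \in G_L$ with $\tau(\sigma) = b$; hence $f$ is completely determined by the values $f(0), f(1), f(\pi), f(\pi^2), \dots$, and $\eta$ simply records these (with $f(0)$ subtracted off so as to land in a $c_0$-space). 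This same identity makes injectivity immediate: if $\eta(f) = 0$ then $f(0) = 0$ and $f(\pi^n) = 0$ for all $n \geq 0$, so $f$ vanishes on every circle $\pi^n o_L^\times$, i.e. $f = 0$. And $o_L$-linearity of $\eta$ is clear.

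Next I would check that $\eta$ is well defined. For $f \in \cC$: the value $f(0)$ is fixed by all of $G_L$, hence lies in $o_{\Cp}^{G_L} = o_L$; each value $f(\pi^n)$ is fixed by $\ker\tau$, hence lies in $o_{\Cp}^{\ker\tau} = o_\infty$ by Lemma~\ref{lem:LOmegaDense}; and continuity of $f$ together with $\pi^n \to 0$ forces $f(\pi^n) - f(0) \to 0$. So $\eta(f) \in o_L \oplus c_0(o_\infty)$.

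The substance is surjectivity. Given $(c,(x_n)_n) \in o_L \oplus c_0(o_\infty)$, set $y_n := x_n + c \in o_\infty$ and define $f \colon o_L \to o_{\Cp}$ by $f(0) := c$ and $f(\pi^n b) := \sigma(y_n)$ for $b \in o_L^\times$ and any $\sigma \in G_L$ with $\tau(\sigma) = b$; this is well defined on each circle because $y_n \in o_\infty = o_{\Cp}^{\ker\tau}$, so $\sigma(y_n)$ depends only on $\tau(\sigma)$. Verifying the algebraic relation $\sigma'(f(a)) = f(\tau(\sigma') a)$ is a one-line computation using the cocycle identity $\tau(\sigma'\sigma) = \tau(\sigma')\tau(\sigma)$ (and $\sigma'(c) = c$ for the case $a = 0$). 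The step I expect to be the main obstacle is proving that this $f$ is genuinely continuous. My plan for that: each circle $\pi^n o_L^\times = \pi^n o_L \setminus \pi^{n+1} o_L$ is clopen in $o_L$, and, via the homeomorphism $\pi^n o_L^\times \xrightarrow{\ \sim\ } o_L^\times$, the restriction $f|_{\pi^n o_L^\times}$ is identified with the unique map $o_L^\times \to o_{\Cp}$ whose composite with $\tau \colon G_L \to o_L^\times$ is the continuous orbit map $\sigma \mapsto \sigma(y_n)$; since $\tau$ is a continuous surjection of profinite groups, hence a topological quotient map, that map is continuous, which gives continuity of $f$ at every nonzero point. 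At $0$, one uses that Galois automorphisms are isometries: for $a \in \pi^n o_L^\times$ one computes $|f(a) - c| = |\sigma(y_n) - c| = |\sigma(x_n)| = |x_n|$, and $x_n \to 0$, so $f$ is continuous at $0$ as well. Thus $f \in \cC^0(o_L, o_{\Cp})$ and is $\Gal$-continuous, i.e. $f \in \cC$. Finally, choosing $\sigma \in \ker\tau$ gives $f(\pi^n) = \sigma(y_n) = y_n = x_n + c$, so $\eta(f) = (c,(x_n)_n)$, proving surjectivity.
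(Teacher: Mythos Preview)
Your proof is correct and follows essentially the same approach as the paper: both use the decomposition $o_L = \{0\} \sqcup \bigsqcup_{n\geq 0}\pi^n o_L^\times$, define the inverse of $\eta$ via $f(\pi^n\tau(\sigma)) := \sigma(x_n) + c$, and check well-definedness using $o_\infty = o_{\Cp}^{\ker\tau}$. You supply more detail than the paper does on two points it leaves implicit: that $f(0)\in o_L$ (from $\sigma(f(0))=f(0\cdot\tau(\sigma))=f(0)$), and the continuity of the constructed $f$ (your quotient-map argument on each circle and the isometry argument at $0$), which the paper dismisses with ``it is easy to see''.
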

\begin{proof}  Recall that any $f \in \cC$ takes values in $o_\infty$ by Lemma \ref{lem:valuesInOinf}. Since $\pi^n \to 0$ as $n \to \infty$ in $o_L$ and since $f$ is continuous, $f(\pi^n) - f(0) \to 0$ as $n \to \infty$ in $o_\infty$. Thus $\eta$ is well-defined. 

Suppose $\eta(f) = 0$ for some $f \in \cC$. Then $f(0) = 0$ and $f(\pi^n) = 0$ for all $n \geq 0$. Hence $f(\pi^n \tau(\sigma)) = \sigma(f(\pi^n)) = 0$ for all $\sigma \in G_L$, so $f$ also vanishes on $\pi^n \tau(G_L)$ for each $n \geq 0$. Since $\tau$ is surjective, $f$ vanishes on $\bigcup\limits_{n=0}^\infty \pi^n o_L^\times \cup \{0\} = o_L$, so $f = 0$. Hence $\eta$ is injective.

To show $\eta$ is surjective, let $(z,(z_n)_n) \in o_L \oplus c_0(o_\infty)$ and define $f : o_L \to o_\infty$ by setting $f(0) = z$ and $f(\pi^n \tau(\sigma)) := z + \sigma(z_n)$ for all $n \geq 0$ and all $\sigma \in G_L$. This makes sense because $\tau$ is surjective, and if $\tau(\sigma) = \tau(\sigma')$ for some $\sigma,\sigma' \in G_L$ then $\sigma^{-1}\sigma' \in \ker \tau$ fixes $o_\infty$ by Lemma \ref{lem:LOmegaDense}, so $\sigma'(z_n) = \sigma(\sigma^{-1}\sigma'(z_n)) = \sigma(z_n)$ for any $n \geq 0$. It is easy to see that $f : o_L \to o_\infty$ is $\Gal$-continuous and that $\eta(f) = (z,(z_n)_n)$. Hence $\eta$ is surjective.
\end{proof}
Lemma \ref{lem:explicitGalCont} allows us to give an explicit description of the space of Galois measures $\cC^\ast$.
\begin{corollary}\label{cor:dualeta} Suppose $\tau$ is surjective. Then 
\[\eta^\ast : o_L \oplus \prod\limits_{n=0}^\infty o_\infty^\ast \to \cC^\ast\]
is an $o_L$-linear bijection.
\end{corollary}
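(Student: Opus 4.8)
The plan is to deduce this by dualising the explicit description of $\cC$ furnished by Lemma~\ref{lem:explicitGalCont}. That lemma, which uses the hypothesis that $\tau$ is surjective, gives an $o_L$-linear bijection
\[\eta : \cC \;\stackrel{\cong}{\longrightarrow}\; o_L \oplus c_0(o_\infty), \qquad f \longmapsto \big(f(0),\,(f(\pi^n)-f(0))_n\big).\]
Applying the contravariant additive functor $(-)^\ast = \Hom_{o_L}(-,S)$, which carries $o_L$-linear bijections to bijections and commutes with the finite direct sum, produces an $o_L$-linear bijection $\eta^\ast : (o_L \oplus c_0(o_\infty))^\ast = o_L^\ast \oplus c_0(o_\infty)^\ast \stackrel{\cong}{\to} \cC^\ast$ (precomposition with $\eta$). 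It therefore remains only to identify the source, i.e. to match $o_L^\ast = \Hom_{o_L}(o_L,S)$ with its evaluation-at-$1$ copy of $S$ (the summand written $o_L$ in the statement), and, more substantially, to produce a natural $o_L$-linear isomorphism
\[ c_0(o_\infty)^\ast \;=\; \Hom_{o_L}(c_0(o_\infty),S) \;\stackrel{\cong}{\longrightarrow}\; \prod_{n=0}^\infty \Hom_{o_L}(o_\infty,S) \;=\; \prod_{n=0}^\infty o_\infty^\ast. \]

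For the last display I would use the coordinate inclusions $\iota_n : o_\infty \hookrightarrow c_0(o_\infty)$ and send $\varphi$ to $(\varphi\circ\iota_n)_n$. For injectivity: if $\varphi\circ\iota_n=0$ for all $n$, then $\varphi$ kills every finitely supported sequence; given an arbitrary $(x_n)_n \in c_0(o_\infty)$ and $k\ge 0$, for $N$ large we have $x_n\in\pi^k o_\infty$ for all $n\ge N$, and since $o_\infty\subseteq o_{\Cp}$ is $\pi$-torsionfree the tail $(0,\dots,0,x_N,x_{N+1},\dots)$ is $\pi^k$ times an element of $c_0(o_\infty)$, so $\varphi\big((x_n)_n\big)\in\pi^k S$ for every $k$, hence $\varphi\big((x_n)_n\big)=0$ because $S$ is $\pi$-adically separated. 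For surjectivity: given $(\varphi_n)_n\in\prod_n o_\infty^\ast$, the rule $(x_n)_n\mapsto\sum_{n\ge 0}\varphi_n(x_n)$ is well defined, since $x_n\to 0$ forces $\varphi_n(x_n)\to 0$ and $S$ is $\pi$-adically complete; it is visibly $o_L$-linear and restricts to $\varphi_m$ along $\iota_m$. Tracing the identifications, $\eta^\ast$ becomes the map sending $\ell=(\ell_0,(\ell_n)_n)$ to $\big(f\mapsto \ell_0(f(0))+\sum_n \ell_n(f(\pi^n)-f(0))\big)$, as required.

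Everything here is formal once Lemma~\ref{lem:explicitGalCont} is available; the only mildly non-routine point — and the sole place where the $\pi$-adic completeness and separatedness of $S$ together with the $\pi$-torsionfreeness of $o_\infty$ are used — is the identification of $\Hom_{o_L}(c_0(o_\infty),S)$ with $\prod_n o_\infty^\ast$, i.e. the statement that an $o_L$-linear functional on the $c_0$-completion $c_0(o_\infty)$ is both determined by, and can be prescribed arbitrarily through, its restrictions to the coordinate copies of $o_\infty$ via a $\pi$-adically convergent series. I do not anticipate any genuine obstacle beyond keeping the (purely bookkeeping) identifications straight.
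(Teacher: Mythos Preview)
Your proposal is correct and follows essentially the same approach as the paper: apply $\Hom_{o_L}(-,S)$ to the isomorphism $\eta$ of Lemma~\ref{lem:explicitGalCont}, use that this functor commutes with finite direct sums, and identify $c_0(o_\infty)^\ast$ with $\prod_n o_\infty^\ast$. The paper simply asserts this last identification in one line, whereas you spell out the injectivity (via $\pi$-adic separatedness of $S$ and $\pi$-torsionfreeness of $o_\infty$) and surjectivity (via $\pi$-adic completeness of $S$) explicitly; your added detail is correct and welcome, and you also correctly note that the summand written ``$o_L$'' in the statement is really $o_L^\ast \cong S$.
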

\begin{proof} The functor $(-)^\ast = \Hom_{o_L}(-,S)$ from $o_L$-modules to $S$-modules commutes with finite direct sums and sends $c_0(o_\infty)$ to $\prod\limits_{n=0}^\infty o_\infty^\ast$. Now apply this functor to the isomorphism $ \eta : \cC \stackrel{\cong}{\longrightarrow} o_L \oplus c_0(o_\infty)$ from Lemma \ref{lem:explicitGalCont}. \end{proof}
\begin{theorem}\label{thm:generalKatz} Suppose that $\tau$ is surjective and that $\cK_1^\ast  : o_\infty^\ast \to A^{\psi_q = 0}$ is an isomorphism. Then $\cK^\ast : \cC^\ast \to A^{\psiqint}$ is an isomorphism as well. 
\end{theorem}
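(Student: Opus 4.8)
The plan is to realise $\cK^\ast$ as the fourth edge of a commutative square three of whose edges are already known isomorphisms. Put $\Phi := 1 \oplus \sum_{n\geq 0}\varphi^n t$, which by Proposition~\ref{prop:phipsiphi} is an isomorphism from $S \oplus \prod_{n\geq 0}A^{\psi_q=0}$ onto $A^{\psiqint}$, and recall from Corollary~\ref{cor:dualeta} (this is the one place surjectivity of $\tau$ is used) that $\eta^\ast$ is an isomorphism from $S \oplus \prod_{n\geq 0}o_\infty^\ast$ onto $\cC^\ast$, where the first summand $\Hom_{o_L}(o_L,S)$ is identified with $S$ by evaluation at $1$. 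I claim the square with top edge $\eta^\ast$, bottom edge $\Phi$, right edge $\cK^\ast$ and left edge $\id_S \oplus \prod_{n\geq 0}\cK_1^\ast$ commutes. Granting this, the left edge is an isomorphism, being the direct sum of $\id_S$ and a product of copies of the isomorphism $\cK_1^\ast$; hence $\cK^\ast = \Phi\circ(\id_S \oplus \prod_{n\geq 0}\cK_1^\ast)\circ(\eta^\ast)^{-1}$ is an isomorphism, and since $\im\cK^\ast \subseteq A^{\psiqint}$ by Lemma~\ref{lem:ImDualKatz}, this is precisely the assertion that $\cK^\ast : \cC^\ast \to A^{\psiqint}$ is an isomorphism.

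For the commutativity, Lemma~\ref{lem:explicitGalCont} writes $\eta$ as $f \mapsto (f(0),(f(\pi^n)-f(0))_n) = (\ev_0(f),(\ev_{\pi^n}(s(f)))_n)$, so dually $\eta^\ast(\xi,(\xi_n)_n) = \ev_0^\ast(\xi) + \sum_{n\geq 0}s^\ast(\ev_{\pi^n}^\ast(\xi_n))$, the series being understood by evaluation on any $f\in\cC$, where it converges since $f(\pi^n)-f(0)\to 0$. Applying $\cK^\ast$ to the $n$-th term, Lemma~\ref{lem:phiK1K} supplies $\cK^\ast\circ\ev_{\pi^n}^\ast = \varphi^n\circ\cK_1^\ast$ and $\cK^\ast\circ s^\ast = t\circ\cK^\ast$, whence $\cK^\ast(s^\ast(\ev_{\pi^n}^\ast(\xi_n))) = t(\varphi^n(\cK_1^\ast(\xi_n))) = \varphi^n(t(\cK_1^\ast(\xi_n)))$ because $t$ commutes with $\varphi$; since $t(\cK_1^\ast(\xi_n)) \in ZA$, Lemma~\ref{lem:phizeroseq} shows these terms tend to $0$ $\frm$-adically, so they sum to $\sum_{n\geq 0}\varphi^n(t(\cK_1^\ast(\xi_n)))$, the non-constant part of $\Phi(\xi(1),(\cK_1^\ast(\xi_n))_n)$. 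For the remaining term, the $m$-th coefficient of $\cK^\ast(\ev_0^\ast(\xi))$ is $\xi(\cK(u_m)(0)) = \xi(\langle u_m,\Delta_0\rangle) = \xi(\delta_{m,0})$ (using $\Delta_0 = 1$, i.e. $P_m(0)=\delta_{m,0}$, as in the proof of Lemma~\ref{lem:phiK1K}), so $\cK^\ast(\ev_0^\ast(\xi))$ is the constant power series $\xi(1)$, matching the ``$1$''-summand of $\Phi$. Adding the two contributions gives $\cK^\ast\circ\eta^\ast = \Phi\circ(\id_S \oplus \prod_{n\geq 0}\cK_1^\ast)$.

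The one genuine task is this commutativity check; the rest is formal. Within it, the two points needing care are matching the ``first coordinate'' of the two direct-sum decompositions (via $o_L^\ast\cong S$) and interchanging $\cK^\ast$ with the infinite series defining $\eta^\ast(\xi,(\xi_n)_n)$. I would dispose of the latter, and indeed carry out the whole verification, coefficientwise: pairing both sides against each $u_m$ and using $\langle u_m,\cK^\ast(\lambda)\rangle = \lambda(\cK(u_m))$ reduces every identity to one between convergent series in $S$, where the term-by-term equalities above and the trivial rearrangement $[\cK^\ast(\sum_n\lambda_n)]_m = \sum_n\lambda_n(\cK(u_m)) = \sum_n[\cK^\ast(\lambda_n)]_m$ suffice. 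I expect no essential difficulty beyond this bookkeeping, since all the analytic input ($\frm$-adic continuity and convergence) is already packaged into the cited lemmas.
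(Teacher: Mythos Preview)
Your proposal is correct and follows essentially the same route as the paper: both build the commutative square with $\eta^\ast$ on top, $1\oplus\sum_n\varphi^n t$ on the bottom, $1\oplus\prod_n\cK_1^\ast$ on the left and $\cK^\ast$ on the right, verify commutativity via Lemma~\ref{lem:phiK1K} and the identity $P_m(0)=\delta_{m,0}$, and then read off the conclusion. Your treatment is slightly more explicit about the convergence of the series $\sum_n s^\ast\ev_{\pi^n}^\ast(\xi_n)$ and its termwise image under $\cK^\ast$, but this is bookkeeping rather than a different idea.
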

\begin{proof} Using Corollary \ref{cor:dualeta} and Proposition \ref{prop:phipsiphi}, we can build the following diagram:
\[ \xymatrix{ S \oplus \prod\limits_{n=0}^\infty o_\infty^\ast \ar[rrrr]^{\eta^\ast}\ar[d]_{1 \oplus \prod\limits_{n=0}^\infty\cK_1^\ast} &&&& \cC^\ast \ar[d]^{\cK^\ast} \\ 
S \oplus \prod\limits_{n=0}^\infty A^{\psi_q=0} \ar[rrrr]_{1 \oplus \sum\limits_{n=0}^\infty \varphi^n t} &&&& A^{\psiqint}.}\]
Note that we can write $\eta = \ev_0 \oplus (\ev_{\pi^n} \circ s)_n$. Lemma \ref{lem:phiK1K} implies that 
\[\cK^\ast (\ev_{\pi^n} \circ s)^\ast = \cK^\ast s^\ast \ev_{\pi^n}^\ast = t \varphi^n \cK_1^\ast = \varphi^n t \cK_1^\ast \qmb{for any} n \geq 0.\]
Using $P_m(0) = \delta_{m,0}$ again together with $(\ref{eq:ExplicitKatz})$, we also have
\[\cK^\ast(\eta^\ast( 1, (0)_n )) = \cK^\ast (\ev_0^\ast(1)) = \sum\limits_{m=0}^\infty \ev_0^\ast(1)(P_m(-\Omega))Z^m = \sum\limits_{m =0}^\infty P_m(0) Z^m = 1.\] 
Therefore the diagram is commutative. Now $\eta^\ast$ is an isomorphism by Corollary \ref{cor:dualeta}, and bottom map is an isomorphism by Proposition \ref{prop:phipsiphi}. Since $\cK_1^\ast$ is an isomorphism by assumption, the vertical map on the left is an isomorphism as well. Hence $\cK^\ast$ is also an isomorphism by the commutativity of the diagram.
\end{proof}

\begin{corollary} Let $S$ be any $\pi$-adically complete $o_L$-algebra. The dual Katz map 
\[\cK^\ast : \cC^\ast \to S\dcroc{Z}^{\psiqint}\]
is an isomorphism if $\tau : G_L \to o_L^\times$ and $\cK_1 : \h{U} \to o_\infty$ are surjective, and $\cK : \h{U} \to \cC$ is injective. \end{corollary}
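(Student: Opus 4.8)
The plan is to deduce this corollary directly from the two results established just above, namely Proposition~\ref{prop:PartialConditionalKatz} and Theorem~\ref{thm:generalKatz}; no genuinely new argument is needed. First I would note that, with the abbreviation $A = S\dcroc{Z}$ of this subsection in force, the target $S\dcroc{Z}^{\psiqint}$ is by definition exactly $A^{\psiqint}$, and that Lemma~\ref{lem:ImDualKatz} already guarantees $\cK^\ast(\cC^\ast) \subseteq A^{\psiqint}$, so the assertion that $\cK^\ast : \cC^\ast \to S\dcroc{Z}^{\psiqint}$ is an isomorphism is well-posed.

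Next I would feed the three standing hypotheses into Proposition~\ref{prop:PartialConditionalKatz}: since $\tau : G_L \to o_L^\times$ and $\cK_1 : \h{U} \to o_\infty$ are surjective and $\cK : \h{U} \to \cC$ is injective, that proposition yields that
\[ \cK_1^\ast : o_\infty^\ast \to S\dcroc{Z}^{\psi_q = 0} = A^{\psi_q = 0} \]
is an $S$-linear bijection. This is precisely the extra hypothesis required by Theorem~\ref{thm:generalKatz}, and $\tau$ is still assumed surjective, so applying that theorem gives that $\cK^\ast : \cC^\ast \to A^{\psiqint} = S\dcroc{Z}^{\psiqint}$ is an isomorphism, as claimed. (Implicitly, Theorem~\ref{thm:generalKatz} is doing the real work here: it bootstraps the statement for $o_\infty$ up to the statement for $\cC$ via the decompositions $\cC^\ast \cong o_L \oplus \prod_n o_\infty^\ast$ of Corollary~\ref{cor:dualeta} and $A^{\psiqint} \cong o_L \oplus \prod_n A^{\psi_q=0}$ of Proposition~\ref{prop:phipsiphi}, together with the compatibility diagram of Lemma~\ref{lem:phiK1K}.)

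Since both ingredients are already proved, I do not anticipate any real obstacle; the only point deserving (minor) attention is bookkeeping: one must check that the identifications $S\dcroc{Z}^{\psi_q=0} = A^{\psi_q=0}$, $S\dcroc{Z}^{\psiqint} = A^{\psiqint}$, and $\h{U}^\ast \cong S\dcroc{Z}$ (from Lemma~\ref{lem:extendToS} together with Lemma~\ref{lem:BigPairing}(4)), used to phrase $\cK^\ast$ and $\cK_1^\ast$ in the corollary, agree with those used in Proposition~\ref{prop:PartialConditionalKatz} and Theorem~\ref{thm:generalKatz}, so that the latter applies verbatim. This is routine and should cause no difficulty, the coefficient ring $S$ being an arbitrary $\pi$-adically complete $o_L$-algebra throughout, exactly as in the hypotheses of both cited statements.
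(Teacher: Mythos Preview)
Your proposal is correct and follows exactly the paper's approach: the paper's proof is the one-line ``Apply Theorem~\ref{thm:generalKatz} together with Proposition~\ref{prop:PartialConditionalKatz}.'' Your additional bookkeeping remarks about the identifications are accurate and harmless, but not strictly needed.
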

\begin{proof} Apply Theorem \ref{thm:generalKatz} together with Proposition \ref{prop:PartialConditionalKatz}.
\end{proof}

\subsection{The Newton polygon of $\Delta_1(Z)-1$}
\label{npsec} In this section, we obtain some estimates on $v_\pi(P_k(\Omega))$, $k \geq 1$. Recall that $d$ and $e$ and $f$ denote the degree and ramification and inertia indices of $L/\Qp$, respectively. 
\begin{lemma}
\label{olmodpim}
If $k \geq 0$ and $1 \leq r \leq e$, then we have an isomorphism of abelian groups
 \[o_L / \pi^{ek+r} o_ L \cong (\bZ/p^k \bZ)^{f(e-r)} \oplus (\bZ / p^{k+1} \bZ)^{fr}.\]
\end{lemma}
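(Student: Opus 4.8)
\emph{Proof proposal.} The plan is to determine the isomorphism type of the finite abelian $p$-group $M := o_L/\pi^{ek+r}o_L$ by computing the cardinalities of the subgroups $p^j M$ and reading off the elementary divisors. Write $N := ek+r$. The only structural input about $L$ that I need is that $o_L$ is a discrete valuation ring in which $\vpi(p) = e$; this gives $p\, o_L = \pi^e o_L$, and hence $p^j o_L = \pi^{ej} o_L$ for every $j \geq 0$. From this I would first compute, for each $j \geq 0$,
\[ p^j M \;=\; \bigl(\pi^{ej} o_L + \pi^N o_L\bigr)\big/\pi^N o_L \;=\; \pi^{\min(ej,\,N)} o_L\big/\pi^N o_L, \]
which has cardinality $q^{\,N-\min(ej,N)} = p^{\,f\max(N-ej,\,0)}$, using that $o_L/\pi^t o_L$ has $q^t = p^{ft}$ elements for every $t \geq 0$.

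Next I would write $M \cong \bigoplus_i \bZ/p^{a_i}\bZ$ and recall that $M$ is determined up to isomorphism by the numbers $c_m := \#\{i : a_i \geq m\}$ for $m \geq 1$, which satisfy $c_m = \dim_{\Fp} p^{m-1}M/p^m M = \log_p|p^{m-1}M| - \log_p|p^m M|$. Feeding in the cardinality formula above yields
\[ c_m \;=\; f\bigl(\max(N-e(m-1),\,0) - \max(N-em,\,0)\bigr), \qquad N = ek+r,\quad 1 \leq r \leq e. \]
The core of the argument is the elementary case analysis on $m$: for $1 \leq m \leq k$ one has $N-em = e(k-m)+r \geq r > 0$, so both maxima are attained by their first argument and $c_m = fe = d$; for $m = k+1$ the first argument is $N - e(m-1) = r > 0$ while the second is $r-e \leq 0$, so $c_{k+1} = fr$; and for $m \geq k+2$ both arguments are $\leq 0$, so $c_m = 0$.

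Finally, the multiplicity of $\bZ/p^m\bZ$ in $M$ is $c_m - c_{m+1}$. By the previous paragraph this vanishes for $1 \leq m < k$, equals $c_k - c_{k+1} = d - fr = f(e-r)$ for $m = k$, equals $c_{k+1} - c_{k+2} = fr$ for $m = k+1$, and vanishes for $m > k+1$; this gives exactly $M \cong (\bZ/p^k\bZ)^{f(e-r)} \oplus (\bZ/p^{k+1}\bZ)^{fr}$. I expect no serious obstacle here: the only points requiring care are the case distinctions $m \leq k$, $m = k+1$, $m \geq k+2$ in evaluating the $\max$'s, the degenerate case $k = 0$ (where the first factor $(\bZ/p^0\bZ)^{f(e-r)}$ is trivial, consistently with the direct computation $M \cong (\bZ/p\bZ)^{fr}$ in that case), and invoking the structure theorem for finite abelian $p$-groups in the form ``$M$ is recovered from the orders of the subgroups $p^j M$''.
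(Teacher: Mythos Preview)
Your argument is correct. You compute $|p^jM|$ directly from the DVR identity $p^j o_L = \pi^{ej}o_L$, extract the Ulm invariants $c_m = \dim_{\Fp} p^{m-1}M/p^mM$, and read off the elementary divisors; the case analysis and the degenerate case $k=0$ are handled cleanly.

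This is a genuinely different route from the paper's proof. The paper instead applies the elementary divisors theorem over $\Zp$ to the inclusion $\pi^r o_L \subset o_L$: since $o_L/\pi^r o_L$ is elementary abelian of order $p^{fr}$, there is a $\Zp$-basis $v_1,\dots,v_d$ of $o_L$ such that $\pi^r o_L = \bigoplus_{i\leq s}\Zp v_i \oplus \bigoplus_{i>s}\Zp\, p v_i$ with $s=f(e-r)$; then one multiplies by $p^k$ and uses $\pi^{ek+r}o_L = p^k\pi^r o_L$ to conclude. Your approach avoids choosing any basis and works purely with cardinalities, which is arguably more elementary. The paper's approach, on the other hand, produces an explicit adapted $\Zp$-basis of $o_L$, which is a slightly stronger intermediate statement (though not used elsewhere).
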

\begin{proof} Note that $p o_L = \pi^e o_L \subseteq \pi^r o_L$ since $e \geq r$ by assumption, so $o_L / \pi^r o_L$ is an elementary abelian $p$-group of order $|o_L / \pi^r o_L|  = p^{fr}$. Hence, using the elementary divisors theorem, we can find $v_1,\cdots,v_d \in o_L$ such that 
\[o_L = \Zp v_1 \oplus \cdots \oplus \Zp v_d \qmb{and} \pi^r o_L = \bigoplus\limits_{i=1}^s \Zp v_i \oplus \bigoplus_{i=s+1}^d \Zp p v_i\]
for some integer $s$ with $1 \leq s \leq d$. We deduce that $fr = d-s$, so $s = f(e-r)$. Since $\pi^{ek + r} o_L = p^k \pi^r o_L$, the result now follows easily. 
\end{proof}

\begin{lemma}
\label{ordof1}
In $o_L / \pi^{ek+r} o_ L$, the image of $1$ has order $p^{k+1}$.
\end{lemma}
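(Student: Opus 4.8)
The plan is to translate the statement into a single valuation computation. The order of the image of $1$ in the finite abelian group $o_L/\pi^{ek+r} o_L$ is the least positive integer $N$ with $N \cdot 1 \in \pi^{ek+r} o_L$, that is, with $v_\pi(N) \geq ek+r$. Since $o_L/\pi^{ek+r} o_L$ is a $p$-group --- it is a quotient of the $\Zp$-module $o_L$, and by Lemma~\ref{olmodpim} has order $p^{f(ek+r)}$ --- this order is a power of $p$, so it suffices to find the least $a \geq 0$ with $v_\pi(p^a) \geq ek+r$.

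Now $v_\pi(p^a) = ea$, because $\pi$ has ramification index $e$ over $p$, so the condition becomes $ea \geq ek+r$, i.e. $a \geq k + r/e$. The hypothesis $1 \leq r \leq e$ gives $k < k + r/e \leq k+1$, so the least such integer is $a = k+1$, which shows that the image of $1$ has order $p^{k+1}$. Alternatively, one can invoke Lemma~\ref{olmodpim} directly: since $fr \geq 1$, the group $o_L/\pi^{ek+r} o_L$ has exponent $p^{k+1}$, while $p^k \cdot 1 = p^k \notin \pi^{ek+r} o_L$ because $v_\pi(p^k) = ek < ek+r$; hence the order of $1$ is exactly $p^{k+1}$. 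There is no genuine obstacle here; the only point to watch is the elementary bookkeeping with the inequality $1 \leq r \leq e$, which is precisely what forces $\lceil (ek+r)/e \rceil = k+1$.
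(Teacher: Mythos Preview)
Your proof is correct and follows essentially the same approach as the paper's one-line argument: the paper simply notes that $p^k \cdot 1 \in \pi^{ek} \cdot o_L^\times$ is nonzero in $o_L/\pi^{ek+r} o_L$ (since $r \geq 1$), which together with the exponent $p^{k+1}$ from Lemma~\ref{olmodpim} gives the order. Your version spells out the valuation inequality $ea \geq ek+r$ more explicitly, but the content is the same.
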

\begin{proof}
This can be proved directly as $p^k \cdot 1 \in \pi^{ek} \cdot o_L^\times \neq 0$ in $o_L / \pi^{ek+r} o_ L$.
\end{proof}

\begin{definition} Let $m \geq 0$.
\begin{enumerate}
\item Let $k_m = \lfloor (m-1)/e \rfloor$, so that $m=ek_m+r$ with $1 \leq r \leq e$.
\item Define $x_m := q^m / p^{k_m+1}$.
\item Define
\[ y_0 = \frac{e}{p-1} - \frac{1}{q-1}  \ \ \text{and}\ \  y_m  = \frac{e}{p-1} - \sum_{j=1}^{m-1} \frac{1}{p^{k_j+1}} - \frac{q}{p^{k_m+1}(q-1)}. \]
\end{enumerate}
\end{definition}
For example, $x_0=1$ and $x_1=q/p$. Note that if $m=en+r$ with $1 \leq r \leq e$, then
\[ y_{en+r} = \frac{e}{p^n(p-1)} - \frac{r}{p^{n+1}} - \frac{1}{(q-1)p^{n+1}}. \]

\begin{theorem}
\label{npvert}
The vertices of the Newton polygon of $\Delta_1(Z)-1$ (using the valuation $v_\pi$, and excluding the point $(0,+\infty)$) are the points $(x_m,y_m)$ for $m \geq 0$.
\end{theorem}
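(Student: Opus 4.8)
The plan is to recover the Newton polygon of $\Delta_1(Z)-1=\sum_{m\geq 1}P_m(\Omega)Z^m$ from its zeros in $\frm_{\Cp}$, which the Schneider--Teitelbaum uniformisation identifies with torsion points of $\cG$ whose $\vpi$-valuations are classical. First I would note that every zero of $\Delta_1(Z)-1$ in $\frm_{\Cp}$ is \emph{simple}: differentiating $\Delta_1(Z)=\exp(\Omega\log_{\LT}(Z))$ gives $\Delta_1'(Z)=\Omega\log_{\LT}'(Z)\Delta_1(Z)$, and $\log_{\LT}'(Z)\in o_L\dcroc{Z}^\times$ (differentiate $\log_{\LT}(Z_1+_{\cG}Z_2)=\log_{\LT}(Z_1)+\log_{\LT}(Z_2)$ at $Z_1=0$), while $\Delta_1(z)\in 1+\frm_{\Cp}$ and $\Omega\neq 0$, so $\Delta_1'$ is nowhere zero on $\frm_{\Cp}$. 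Second, since $\Delta_1$ is grouplike, the zero set $H$ is a subgroup of $\cG(\frm_{\Cp})$; it is discrete in $\frm_{\Cp}$ and stable under each $[p^k]$, and since $[p^k]h\to 0$ for every $h\in\cG(\frm_{\Cp})$, discreteness forces every element of $H$ to be $\pi$-power torsion. Hence the zeros of $\Delta_1(Z)-1$ in $\frm_{\Cp}$ are exactly the torsion points $z$ of $\cG$ with $\Delta_1(z)=1$, each of multiplicity one.

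Next I would count these zeros level by level. Evaluation of the characters $a\mapsto\Delta_a(z)$ at $z\in\cG[\pi^n](\Cp)$ defines an injection of $\cG[\pi^n](\Cp)$ into the character group of $o_L/\pi^n o_L$ (since $[\pi^n]z=0$ and $\kappa$ is injective), which is a bijection on cardinality grounds; under it $\{z\in\cG[\pi^n]:\Delta_1(z)=1\}$ matches the characters of $o_L/\pi^n o_L$ trivial on the image of $1\in o_L$, a group of order $q^n/p^{k_n+1}=x_n$ by Lemma~\ref{ordof1}. So $\Delta_1(Z)-1$ has exactly $x_n-x_{n-1}$ zeros killed by $\pi^n$ but not by $\pi^{n-1}$, and these are primitive $\pi^n$-torsion points of $\cG$, hence of $\vpi$-valuation $\tfrac{1}{q^{n-1}(q-1)}$. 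Since $P_0(\Omega)=1$ and $P_1(\Omega)=\Omega$, the Newton polygon starts at $(1,\vpi(\Omega))=(x_0,\vpi(\Omega))$, and it is therefore the concatenation of segments of slope $-\tfrac{1}{q^{m-1}(q-1)}$ and horizontal length $x_m-x_{m-1}$ for $m\geq 1$; its vertices thus have abscissae $x_0\leq x_1\leq x_2\leq\cdots$ and ordinates $h_m:=\vpi(\Omega)-\sum_{j=1}^m\tfrac{x_j-x_{j-1}}{q^{j-1}(q-1)}$.

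It remains to pin down $\vpi(\Omega)$ and to check $h_m=y_m$. Taking $n=ej$ in the count above, the kernel of $\Delta_1$ on $\cG[p^j]$ has order $x_{ej}=p^{(d-1)j}$, so $\Delta_1$ maps $\cG[p^j]$ onto $\widehat{\bG}_m[p^j]=\mu_{p^j}$; choosing $z\in\cG[p^j]$ with $\Delta_1(z)$ a primitive $p^j$-th root of unity gives $\vpi(\Delta_1(z)-1)=\tfrac{e}{p^{j-1}(p-1)}$, so $\inf_{m\geq 1}\vpi(P_m(\Omega))\leq\tfrac{e}{p^{j-1}(p-1)}$ for all $j\geq 1$, whence $\inf_{m}\vpi(P_m(\Omega))=0$ (the reverse inequality $\geq 0$ holding since $\Delta_1-1\in o_{\Cp}\dcroc{Z}$). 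But $\inf_m\vpi(P_m(\Omega))=\lim_m h_m$, and the telescoping identity $\sum_{j\geq 1}\tfrac{x_j-x_{j-1}}{q^{j-1}(q-1)}=\tfrac{e}{p-1}-\tfrac{1}{q-1}$ (which rests on $\sum_{j\geq 1}p^{-(k_j+1)}=\tfrac{e}{p-1}$) then yields $\vpi(\Omega)=\tfrac{e}{p-1}-\tfrac{1}{q-1}=y_0$. The equality $h_m=y_m$ for all $m$ is a short manipulation from $x_m=q^m/p^{k_m+1}$ and the recursion $y_m-y_{m-1}=-\tfrac{x_m-x_{m-1}}{q^{m-1}(q-1)}$. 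The case $L=\Qp$ is the degenerate one, where all $x_m=1$, all segments are empty, and the polygon collapses to the single vertex $(1,0)$.

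The main obstacle is the middle step: identifying the zeros of $\Delta_1(Z)-1$ in $\frm_{\Cp}$ with the $\pi$-power torsion points in $\ker\Delta_1$ (via discreteness and $[p^k]$-contraction), and then using the self-duality of $\cG[\pi^n]$ together with Lemma~\ref{ordof1} to see that exactly $x_n$ of them are killed by $\pi^n$; extracting $\vpi(\Omega)$ from the surjectivity onto $\mu_{p^j}$ is the other slightly delicate point. Once those are secured, the shape of the polygon and the closed forms for $x_m$ and $y_m$ follow mechanically.
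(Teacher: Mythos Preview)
Your argument is correct and follows the same core strategy as the paper: identify the zeros of $\Delta_1(Z)-1$ with the torsion points $z\in\cG[\pi^n]$ whose associated character kills $1\in o_L$, count these as $x_n$ via Lemma~\ref{ordof1}, and read off the Newton polygon from the resulting slopes.

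There are two places where you do more than the paper. First, you prove explicitly that every zero is simple (via $\Delta_1'=\Omega\log_{\LT}'\cdot\Delta_1$); the paper's proof silently identifies the root count with the multiplicity count, so your argument actually fills a small gap. Second, the paper simply asserts $v_\pi(\Omega)=y_0$ (this is the formula from \cite{ST}) and afterwards notes in Remark~\ref{realchcknp} that $y_m\to 0$ is consistent with $\|\Delta_1-1\|=1$; you instead \emph{derive} $v_\pi(\Omega)=y_0$ from this limiting behaviour together with the surjection $\Delta_1:\cG[p^j]\twoheadrightarrow\mu_{p^j}$, which makes the proof self-contained. Your torsion argument (discreteness of the zero set plus the contraction $[p^k]h\to 0$) is a legitimate alternative to the paper's footnoted character-theoretic argument. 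Both routes are short; yours avoids invoking local analyticity of $\kappa_z$.
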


\begin{proof}
Via the Schneider-Teitelbaum isomorphism, the zeroes of the power series 
\[\Delta_1(Z)-1 = \sum\limits_{m=1}^\infty P_m(\Omega)Z^m \in o_{\Cp} \dcroc{Z}\]
are the $z \in \frm_{\Cp}$ such that $\kappa_z$ is an $L$-analytic character satisfying $\kappa_z(1)=1$. These characters are torsion \footnote{Suppose that $\kappa(1) = 1$. Then $\kappa(a) = 1$ for all $ a \in \Zp$. Hence $\kappa'(1) = 0$. Since $\kappa$ is locally $L$-analytic, $\kappa'$ is $L$-linear, and hence $\kappa' = 0$ so that $\kappa$ is locally constant, and hence torsion.}, and correspond to some of the torsion points of the Lubin-Tate group $\cG$. There are precisely $q^m$ points in $\cG[\pi^m]$, and the common valuation of each point $z \in \cG[\pi^m] \setminus \cG[\pi^{m-1}]$ is  $v_\pi(z) = 1/q^{m-1}(q-1)$.

If we write $m=ek+r$ as above, then in view of Lemma \ref{olmodpim} and Lemma \ref{ordof1} there are $x_m = q^m/p^{k_m+1}$ elements $z \in \cG[\pi^m]$ such that $\kappa_z(1)=1$.  

Let $((x_m',y_m'))_{m = 0}^\infty$ be the vertices of the Newton polygon, so that the first vertex is $(x_0',y_0') = (1, v_\pi(\Omega)) = (x_0,y_0)$. The slope of the line segment between $(x'_{m-1},y'_{m-1})$ and $(x'_m,y'_m)$ is minus the common valuation of the elements of $z \in \cG[\pi^m] \setminus \cG[\pi^{m-1}]$ satisfying $\kappa(z) = 1$, that is $1/q^{m-1}(q-1)$. Hence $x'_m = x_m$ for all $m \geq 0$. Using the definitions of $x_m$ and $y_m$, we have the formula
\[ y_m = y_0 - \frac{x_1-x_0}{q^{1-1}(q-1)} - \cdots - \frac{x_m-x_{m-1}}{q^{m-1}(q-1)}\]
which implies that $y'_m = y_m$ for all $m \geq 0$.
\end{proof}

\begin{remark}
\label{realchcknp}
As $m \to +\infty$, $y_m \to  0$, consistent with the fact that $\| \Delta_1(Z)-1\| = 1$.
\end{remark}

\begin{corollary}
\label{valpkgen}
We have the following formulas for $v_\pi(P_k(\Omega))$.
\begin{enumerate}
\item For all $m \geq 0$, we have $v_\pi(P_{x_m}(\Omega)) = y_m$.
\item For all $n \geq 0$, we have $v_\pi(P_{p^{n(d-1)}}) = 1/p^n \cdot v_\pi(\Omega)$.
\end{enumerate}
\end{corollary}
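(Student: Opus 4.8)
The plan is to deduce both formulas directly from Theorem \ref{npvert}, which lists all the vertices of the Newton polygon of $\Delta_1(Z)-1 = \sum_{k\geq 1} P_k(\Omega)Z^k$ for the valuation $v_\pi$.

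For part (1) I would simply invoke the standard fact that the vertices of the Newton polygon of a power series $\sum_k a_k Z^k$ form a subset of the set of points $(k, v_\pi(a_k))$; in particular, if $(x_m,y_m)$ is a \emph{vertex}, then the coefficient of $Z^{x_m}$ attains precisely the predicted valuation, i.e.\ $v_\pi(P_{x_m}(\Omega)) = y_m$. Since Theorem \ref{npvert} asserts that $(x_m,y_m)$ is a vertex for every $m \geq 0$, statement (1) is immediate.

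For part (2) the first step is to recognise $p^{n(d-1)}$ among the indices $x_m$. Take $m = en$. For $n \geq 1$ we have $m = e(n-1)+e$, so $k_m = \lfloor (en-1)/e\rfloor = n-1$, whence $x_{en} = q^{en}/p^{k_m+1} = q^{en}/p^{n}$; using $q = p^f$ and $d = ef$ this equals $p^{efn-n} = p^{(d-1)n}$. For $n = 0$ one has $k_0 = -1$, so $x_0 = 1 = p^0$ directly. Thus $p^{n(d-1)} = x_{en}$ for all $n \geq 0$, and part (1) already gives $v_\pi(P_{p^{n(d-1)}}(\Omega)) = y_{en}$. It then remains to simplify $y_{en}$: for $n \geq 1$, feeding $n' = n-1$ and $r = e$ into the displayed formula $y_{en'+r} = \frac{e}{p^{n'}(p-1)} - \frac{r}{p^{n'+1}} - \frac{1}{(q-1)p^{n'+1}}$ yields
\[ y_{en} = \frac{e}{p^{n-1}(p-1)} - \frac{e}{p^{n}} - \frac{1}{(q-1)p^{n}} = \frac{e}{p^{n}(p-1)} - \frac{1}{(q-1)p^{n}} = p^{-n}\left(\frac{e}{p-1} - \frac{1}{q-1}\right) = p^{-n} y_0, \]
and since $(x_0,y_0) = (1, v_\pi(\Omega))$ is the first vertex we have $y_0 = v_\pi(\Omega)$ (this also covers the case $n=0$). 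This gives (2).

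There is essentially no obstacle: all the substance is carried by Theorem \ref{npvert}, and what is left is the elementary index bookkeeping identifying $p^{n(d-1)}$ with $x_{en}$ and the arithmetic simplification of $y_{en}$. The one point worth stating with a little care is the distinction, in part (1), between a point merely \emph{lying on} the Newton polygon and being a \emph{vertex} of it: it is the latter property, supplied by Theorem \ref{npvert}, that forces the coefficient $P_{x_m}(\Omega)$ to have valuation exactly $y_m$.
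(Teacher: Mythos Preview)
Your proof is correct and follows essentially the same approach as the paper: part (1) is read off directly from Theorem \ref{npvert}, and part (2) is obtained by specialising to $m = en$, identifying $x_{en} = p^{n(d-1)}$, and simplifying $y_{en}$ to $p^{-n}v_\pi(\Omega)$. The only cosmetic difference is that the paper simplifies $y_{en}$ starting from the original telescoping definition $y_m = \frac{e}{p-1} - \sum_{j=1}^{m-1} p^{-(k_j+1)} - \frac{q}{p^{k_m+1}(q-1)}$, while you use the equivalent closed form $y_{en'+r}$; both computations are straightforward and arrive at the same conclusion.
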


\begin{proof}
Item (1) follows immediately from Theorem \ref{npvert}. Item (2) follows from item (1) with $m=en$. Indeed, $x_{en}  = q^{en}/p^n = p^{n(d-1)}$ and \[ y_{en} = \frac{e}{p-1} - \frac{e}{p} - \frac{e}{p^2} - \cdots - \frac{e}{p^{n-1}} - \frac{e-1}{p^n} - \frac{q}{p^n(q-1)} = \frac{1}{p^n} \cdot \left( \frac{e}{p-1} - \frac{1}{q-1}\right). \qedhere \] 
\end{proof}

\begin{remark}
\label{vpkunr}
If $L/\Qp$ is unramified, then item (2) of Corollary \ref{valpkgen} gives all the valuations of the $P_k(\Omega)$ that can be computed using the Newton polygon. For $n \geq 0$, we get \[ \vp(P_{p^{n(d-1)}}) = 1/p^n \cdot v_\pi(\Omega) = \frac{1}{p^{n-1}(p-1)} \cdot \frac{q/p-1}{q-1} .\]
\end{remark}

\begin{corollary}
\label{vpkpk}
Suppose that $L = \bQ_{p^2}$ and $\pi = p$. Then we have 
\[\vp(P_{p^k}(\Omega)) = \frac{1}{p^{k-1}(q-1)} \qmb{for all} k \geq 1,\]
and if $k \geq 1$ and $p^{k-1} \leq m \leq p^k$, then
\[ \vp(P_m(\Omega)) \geq \frac{1}{p^{k-1}(q-1)} + \frac{p^k-m}{q^{k-1}(q-1)} = \frac{1}{p^{k-2}(q-1)} - \frac{m-p^{k-1}}{q^{k-1}(q-1)}. \]
\end{corollary}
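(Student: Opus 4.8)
The plan is to specialise Theorem~\ref{npvert} and Corollary~\ref{valpkgen} to the case $L = \bQ_{p^2}$, $\pi = p$, where $d = 2$, $e = 1$, $f = 2$ and $q = p^2$, and then simply read off the valuations of the $P_m(\Omega)$ from the shape of the Newton polygon of $\Delta_1(Z)-1$.

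First I would compute $x_m$ and $y_m$ explicitly in this case. Since $e = 1$ one has $k_m = \lfloor (m-1)/1\rfloor = m-1$ and $r = 1$ for every $m \geq 1$, so $x_m = q^m/p^{k_m+1} = p^{2m}/p^m = p^m$. For $y_m$, using the closed form $y_{en+r}$ with $n = m-1$, $r = 1$ (or summing the geometric series in the definition of $y_m$ directly) one gets $y_m = \frac{1}{p^m}\bigl(\frac{p}{p-1} - \frac{q}{q-1}\bigr)$, and since $q = p^2$ the bracket equals $\frac{q-p}{(p-1)(q-1)} = \frac{p(p-1)}{(p-1)(q-1)} = \frac{p}{q-1}$, whence $y_m = \frac{1}{p^{m-1}(q-1)}$. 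The first assertion $\vp(P_{p^k}(\Omega)) = \frac{1}{p^{k-1}(q-1)}$ for $k \geq 1$ is then just Corollary~\ref{valpkgen}(1) with this value of $(x_m,y_m)$ (equivalently Corollary~\ref{valpkgen}(2), which for $d = 2$ reads $\vp(P_{p^n}) = p^{-n}\vp(\Omega)$, together with $\vp(\Omega) = y_0 = \frac{p}{q-1}$).

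For the inequality, fix $k \geq 1$ and $m$ with $p^{k-1} \leq m \leq p^k$. By Theorem~\ref{npvert} the Newton polygon of $\Delta_1(Z)-1 = \sum_{m\geq 1} P_m(\Omega)Z^m$ has consecutive vertices $(x_{k-1},y_{k-1}) = (p^{k-1},\frac{1}{p^{k-2}(q-1)})$ and $(x_k,y_k) = (p^k,\frac{1}{p^{k-1}(q-1)})$, and, exactly as computed in the proof of that theorem, the segment joining them has slope $-\frac{1}{q^{k-1}(q-1)}$. Since the Newton polygon is by construction the lower convex hull of the points $(n,\vp(P_n(\Omega)))$, the point $(m,\vp(P_m(\Omega)))$ lies on or above this segment, so $\vp(P_m(\Omega)) \geq y_k - \frac{m - x_k}{q^{k-1}(q-1)} = \frac{1}{p^{k-1}(q-1)} + \frac{p^k - m}{q^{k-1}(q-1)}$. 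Writing the equation of the same line using the left endpoint $(x_{k-1},y_{k-1})$ instead gives the equivalent second expression $\frac{1}{p^{k-2}(q-1)} - \frac{m - p^{k-1}}{q^{k-1}(q-1)}$; the two agree precisely because the line passes through both vertices.

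There is no genuine obstacle here: once Theorem~\ref{npvert} is available the whole argument is elementary arithmetic with the formulas for $x_m$ and $y_m$. The only mildly delicate bookkeeping point is consistently using $p^{2(k-1)} = q^{k-1}$ when simplifying the slope and the lower bound; I would sanity-check the endpoints $m = p^{k-1}$ and $m = p^k$ against the vertex values $y_{k-1}$ and $y_k$ to be sure the constants come out right.
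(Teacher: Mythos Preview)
Your proof is correct and follows exactly the approach the paper intends: the corollary is stated without proof as an immediate specialisation of Theorem~\ref{npvert} and Corollary~\ref{valpkgen} to the unramified quadratic case, and you have carried out precisely that specialisation, including the lower bound coming from convexity of the Newton polygon between the consecutive vertices $(p^{k-1},y_{k-1})$ and $(p^k,y_k)$.
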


\subsection{Verifying Conjecture \ref{conj:NiceKatz} in a special case} \label{Qp^2KatzSection}
\begin{definition} Fix $m \geq 1$.
\begin{enumerate}
\item Let $\cG_m = \cG[\pi^m]$ be the finite flat $o_L$-group scheme of $\pi^m$-torsion points in the Lubin-Tate formal group $\cG$.
\item Let $\cG_m'$ be the Cartier dual of $\cG_m$.
\item Let $U(m) := \cO(\cG_m') = \Hom_{o_L}(o_L\dcroc{Z} / \langle \varphi^m(Z) \rangle, o_L).$
\item Let $\cG' := \colim \cG'_m$ be the dual $p$-divisible group to the $p$-divisible group defined by the formal group $\cG$. 
\end{enumerate}
\end{definition}

Recall that by Cartier duality --- see \cite[p. 177]{Tate66} --- the period $\Omega \in \Cp$ corresponds to a choice of generator $t' \in T_p \cG' = T_{\pi}\cG'$ as an $o_L$-module. We recall how this correspondence works. First, the element 
\[\Delta_1 = \sum\limits_{n = 0}^\infty P_n(\Omega)Z^n \in o_{\Cp}\dcroc{Z}\]
gives a compatible system of group-like elements $(\Delta_1(m))_{m=1}^\infty \in \prod\limits_{m=1}^\infty \cO(\cG_m)$, where $\Delta_1(m)$ is the image of $\Delta_1$ in $\cO(\cG_m \times_{o_L} o_{\Cp}) = o_{\Cp}\dcroc{Z} / \langle \varphi^m(Z) \rangle$ under the natural surjective homomorphism of $o_{\Cp}$-algebras $o_{\Cp}\dcroc{Z} \twoheadrightarrow \cO(\cG_m \times_{o_L} o_{\Cp})$. Since $\cO(\cG_m\times_{o_L} o_{\Cp})$ can be identified with $\Hom_{o_{\Cp}}(\cO(\cG'_m\times_{o_L} o_{\Cp}), o_{\Cp})$, $\Delta_1(m)$ can be viewed as an $o_{\Cp}$-linear map $U(m) \otimes_{o_L} o_{\Cp} \to o_{\Cp}$ which is in fact an $o_{\Cp}$-algebra homomorphism because $\Delta_1(m)$ is group-like. This map is determined by its restriction to $U(m)$; this restriction is an $o_L$-algebra homomorphism $t'_m : U(m) \to o_{\Cp}$ and is therefore an element of $\cG'_m(\Cp)$. Finally, the multiplication-by-$\pi$-maps $\cG'_{m+1}(\Cp) \to \cG'_m(\Cp)$ in the inverse system defining the Tate module $T_{\pi}\cG'$ are induced by the inclusions of $o_L$-algebras $U(m) \hookrightarrow U(m+1)$, so $t'_{m+1|U(m)} = t'_m$ for all $m \geq 1$, and the generator $t' \in T_{\pi} \cG'$ is given by $t' = (t'_m)_{m=1}^\infty \in \prod\limits_{m=1}^\infty \cG'_m(\Cp)$.

\begin{lemma}\label{K1rest} Let $m \geq 1$. The restriction of $\cK_1$ to $U(m) \subset \h{U}$ is equal to $t'_m$.
\end{lemma}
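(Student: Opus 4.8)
The statement to prove is that the restriction of the Katz map $\cK_1 : \h{U} \to o_\infty$ to the finite-rank sub-$o_L$-module $U(m) \subset \h{U}$ coincides with the point $t'_m \in \cG'_m(\Cp)$, i.e.\ with the $o_L$-algebra homomorphism $U(m) \to o_{\Cp}$ that the generator $t'$ induces at level $m$. Both maps are $o_L$-linear maps $U(m) \to o_{\Cp}$, and the plan is to chase through the definitions of both sides and check they agree on a generating set of $U(m)$ as an $o_L$-module (e.g.\ on the images of the $u_n$ for $0 \le n < q^m$, using that $U(m) = \Hom_{o_L}(o_L\dcroc{Z}/\langle \varphi^m(Z)\rangle, o_L)$ is free of finite rank, spanned by the reductions of the $u_n$). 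The key observation connecting the two sides is that $\cK_1(u) = \ev_1(\cK(u)) = \langle u, \Delta_1\rangle$ by Definition \ref{def:KatzPsiPhi}(1)--(2), while $t'_m$ is, by the recollection just before the lemma, exactly the restriction to $U(m)$ of the $o_{\Cp}$-linear functional $\Delta_1(m) : U(m)\otimes_{o_L}o_{\Cp} \to o_{\Cp}$ determined by pairing against the group-like element $\Delta_1(m) \in \cO(\cG_m\times_{o_L}o_{\Cp}) = o_{\Cp}\dcroc{Z}/\langle\varphi^m(Z)\rangle$.

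Concretely, I would proceed as follows. First, recall that the pairing $\langle -,-\rangle : \h{U}\times o_\infty\dcroc{Z}\to o_\infty$ restricted to $U(m)\times \big(o_\infty\dcroc{Z}/\langle\varphi^m(Z)\rangle\big)$ is well-defined: an element of $U(m)\subset \h{U}$, viewed as a functional on $o_L\dcroc{Z}$, kills $\langle\varphi^m(Z)\rangle$ by definition of $U(m)$, so $\langle u,F\rangle$ depends only on $F \bmod \varphi^m(Z)$ for $u\in U(m)$. Hence for $u\in U(m)$ we have $\langle u,\Delta_1\rangle = \langle u,\Delta_1(m)\rangle$, where on the right $\Delta_1(m)$ denotes the image of $\Delta_1$ in $o_{\Cp}\dcroc{Z}/\langle\varphi^m(Z)\rangle$. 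Second, unwind the identification $\cO(\cG_m\times_{o_L}o_{\Cp}) \cong \Hom_{o_{\Cp}}(\cO(\cG_m'\times_{o_L}o_{\Cp}), o_{\Cp})$ of Cartier duality: under this identification, pairing $u\in U(m)\otimes_{o_L}o_{\Cp} = \cO(\cG_m'\times_{o_L}o_{\Cp})$ against the element $\Delta_1(m)\in\cO(\cG_m\times_{o_L}o_{\Cp})$ is by construction the same as evaluating the functional-version of $\Delta_1(m)$ on $u$ — which is precisely $t'_m(u)$ when $u\in U(m)$. Putting these together: $\cK_1(u) = \langle u,\Delta_1\rangle = \langle u,\Delta_1(m)\rangle = t'_m(u)$ for all $u\in U(m)$.

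The only real content is checking that the evaluation pairing $\langle -,-\rangle$ of Lemma \ref{lem:BigPairing}, after reduction mod $\varphi^m(Z)$, literally matches the Cartier-duality pairing used to define $t'_m$ in the paragraph preceding the lemma — in other words, that the two a priori different descriptions of ``the functional on $U(m)$ attached to the group-like element $\Delta_1$'' coincide. This is a compatibility of two bialgebra/Hopf-algebra dualities (the covariant bialgebra pairing $U(\cG)\times\cO(\cG)\to o_L$ versus the finite-level Cartier duality $\cO(\cG_m')\times\cO(\cG_m)\to o_L$), and I expect this bookkeeping to be the main obstacle: one must verify that the inclusion $U(m)\hookrightarrow \h{U}$ is dual, under these pairings, to the quotient map $o_L\dcroc{Z}\twoheadrightarrow o_L\dcroc{Z}/\langle\varphi^m(Z)\rangle$, which amounts to saying that $U(m)$, as a submodule of $U(\cG)$, is exactly the annihilator of $\langle\varphi^m(Z)\rangle$ under $\langle-,-\rangle$. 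Granting that (which follows from the definition of $U(m)$ together with Lemma \ref{lem:BigPairing}(4)), the identity $\cK_1|_{U(m)} = t'_m$ is immediate, and moreover the compatibility $t'_{m+1}|_{U(m)} = t'_m$ recorded before the lemma shows the various $t'_m$ glue, consistently with $\cK_1$ being defined on all of $\h{U}$.
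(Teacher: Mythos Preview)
Your proposal is correct and follows essentially the same route as the paper: both arguments boil down to the one-line computation $t'_m(u) = \Delta_1(m)(u) = \langle u, \Delta_1\rangle = \cK(u)(1) = \cK_1(u)$ for $u \in U(m)$, using that $u$ kills $\langle\varphi^m(Z)\rangle$ so the pairing with $\Delta_1$ only sees $\Delta_1(m)$. The paper's proof is terser and does not spell out the compatibility-of-pairings bookkeeping you flag, but the underlying identifications are the same.
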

\begin{proof} Recall that we have identified $\h{U}$ with $o_L\dcroc{Z}^\ast_{\cts}$ using Lemma \ref{lem:BigPairing}(3). Let $u \in U(m)$ and let $\tilde{u} \in \h{U}$ be the corresponding $o_L$-linear map $o_L\dcroc{Z} \to o_L$ which kills $\langle \varphi^m(Z) \rangle$. Then 
\[t'_m(u) = \Delta_1(m)(u) =  \langle \tilde{u}, \Delta_1 \rangle  = \cK(\tilde{u})(1) = \cK_1(\tilde{u})\]
and the result follows. \end{proof}

For each $m \geq 1$, let $L_m$ be the finite Galois extension of $L$ contained in $L_\infty = \Cp^{\ker \tau}$ defined by $\Gal(L_\infty/L_m) = \tau^{-1}(1 + \pi^m o_L)$. 

\begin{lemma}\label{ImTm} Let $m \geq 1$. Then $t'_m(U(m)) \subseteq o_{L_m}$.
\end{lemma}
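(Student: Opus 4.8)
The plan is to deduce everything from the Galois-theoretic description of $t'_m$. Recall from the paragraph preceding the lemma that $t' = (t'_m)_{m\geq 1}$ is a generator of $T_\pi\cG'$ as an $o_L$-module, and that by the defining property of $\tau$ (compare Definition \ref{def:dualLTgenerator}(3)) one has $\sigma(t') = \tau(\sigma)\, t'$ for every $\sigma \in G_L$. Projecting to the $m$-th component, i.e.\ passing to the quotient $T_\pi\cG'/\pi^m T_\pi\cG'$, which is the image of $t'$ in $\cG'_m(\Cp)$, this becomes
\[ \sigma(t'_m) = \tau(\sigma) \cdot t'_m \qmb{for all} \sigma \in G_L, \]
where $\cdot$ denotes the action of $o_L$ on the finite $o_L$-module $\cG'_m(\Cp)$. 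The key point to record here is that, since $\cG'_m = \cG'[\pi^m]$, this $o_L$-module is annihilated by $\pi^m$ (equivalently $\cG'_m(\Cp) \cong o_L/\pi^m o_L$, as $T_\pi\cG'$ is free of rank one over $o_L$); hence every element of $1 + \pi^m o_L$ acts on it as the identity.

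Granting that, the next step is immediate: for $\sigma \in \tau^{-1}(1 + \pi^m o_L)$ we get $\tau(\sigma)\cdot t'_m = t'_m$, so $\sigma(t'_m) = t'_m$. Now I would translate this into a statement about values. Since $U(m) = \cO(\cG'_m)$ is defined over $o_L$, it carries the trivial $G_L$-action, so the Galois action on $\cG'_m(\Cp) = \Hom_{o_L-\alg}(U(m), o_{\Cp})$ is just $(\sigma \cdot f)(u) = \sigma(f(u))$; thus $\sigma(t'_m) = t'_m$ forces $\sigma(t'_m(u)) = t'_m(u)$ for every $u \in U(m)$. Letting $\sigma$ range over $\tau^{-1}(1 + \pi^m o_L)$, this shows $t'_m(u) \in \Cp^{\tau^{-1}(1+\pi^m o_L)}$ for all $u \in U(m)$.

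It remains to identify this fixed field with $L_m$. The subgroup $\tau^{-1}(1 + \pi^m o_L)$ contains $\ker\tau$ and has finite index in $G_L$, so its fixed field in $\overline{L}$ is a finite extension of $L$, which by the very definition of $L_m$ is $L_m$ itself; the Ax--Sen--Tate theorem \cite[Proposition 2.1.2]{BriCon} then gives $\Cp^{\tau^{-1}(1+\pi^m o_L)} = \widehat{L_m} = L_m$. Combining this with the fact that $t'_m(u) \in o_{\Cp}$ by construction, we obtain $t'_m(u) \in L_m \cap o_{\Cp} = o_{L_m}$, which is the assertion.

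The only genuinely delicate point I anticipate is the bookkeeping in the first paragraph: checking that $\cG'_m$ really is the $\pi^m$-torsion of the dual $p$-divisible group $\cG'$ (so that $1 + \pi^m o_L$ acts trivially on $\cG'_m(\Cp)$) and that the $\tau$-twisted action on $T_\pi\cG'$ restricts componentwise as stated; after that everything is routine. As a backup, essentially the same argument can be run inside the Katz formalism: by Lemma \ref{K1rest}, $t'_m(u) = \cK_1(\tilde u)$ for the functional $\tilde u \in \h{U}$ that kills $\langle \varphi^m(Z) \rangle$; the function $\cK(\tilde u)$ is $\Gal$-continuous by Lemma \ref{lem:KatzGalCts}, and it factors through $o_L/\pi^m o_L$ (since $\Delta_{a+\pi^m b} = \Delta_a\,\varphi^m(\Delta_b) \equiv \Delta_a \bmod \langle \varphi^m(Z)\rangle$, because $\varphi^m(\Delta_b)$ is congruent to $1$), which again forces $\cK_1(\tilde u)$ to be fixed by $\tau^{-1}(1 + \pi^m o_L)$.
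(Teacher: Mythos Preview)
Your proof is correct and follows essentially the same line as the paper's: both arguments show that $\tau^{-1}(1+\pi^m o_L)$ fixes $t'_m$ (because $\cG'_m(\Cp)$ is $\pi^m$-torsion as an $o_L$-module), deduce that the values $t'_m(u)$ lie in the corresponding fixed field $L_m$, and then use integrality to land in $o_{L_m}$. The only cosmetic differences are that the paper phrases the fixed-field step inside $L_\infty$ rather than invoking Ax--Sen--Tate directly in $\Cp$, and obtains integrality by noting that $U(m)$ is a finite $o_L$-algebra (which is exactly what underlies your ``by construction'').
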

\begin{proof} Let $\sigma \in \Gal(L_\infty/L_m)$ so that $\tau(\sigma) \in 1 + \pi^m o_L$. Then by definition of the character $\tau$, $\sigma$ acts trivially on $\cG'_m(\Cp)$. In other words, $\sigma(t'_m(u)) = t'_m(u)$ for all $u \in U(m)$ and hence $t'_m(U(m)) \subseteq L_\infty^{\Gal(L_\infty/L_m)} = L_m$. But $U(m)$ is a finitely generated $o_L$-module so $t'_m(U(m))$ is integral over $o_L$ and is therefore contained in $o_{L_m}$.\end{proof}

\begin{definition} For each $m \geq 1$, let $U(m)_k := \im(U(m) \to \h{U} / \pi \h{U})$. \end{definition}
We will identify $U_k := U/\pi U$ with $\h{U} / \pi \h{U}$ via the natural map $U / \pi U \to \h{U} / \pi \h{U}$ and we regard $U(m)_k$ as being naturally embedded into $U(m+1)_k$.

\begin{proposition}\label{prop:K1onto} Suppose that $t'_m( U(m) ) = o_{L_m}$ for all $m \geq 1$. Then $\cK_1 : \h{U} \to o_\infty$ is surjective.
\end{proposition}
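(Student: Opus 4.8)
The plan is to realise $o_\infty$ as the $\pi$-adic closure of the increasing union $\bigcup_{m\ge1} o_{L_m}$, to observe that each $\pi^j o_{L_m}$ is the $\cK_1$-image of a subset of $\h{U}$ consisting of elements of norm $\le|\pi|^j$, and then to produce an arbitrary element of $o_\infty$ as $\cK_1$ of a convergent series in $\h{U}$. First I would record the properties of $\cK_1$ that the argument needs. For $u=\sum_{m\ge0}a_m u_m\in\h{U}$ one has $\cK_1(u)=\langle u,\Delta_1\rangle=\sum_{m\ge0}a_m P_m(\Omega)$, and since $\Delta_1\in o_{\Cp}\dcroc{Z}$ we have $|P_m(\Omega)|\le1$; hence $\cK_1$ is $o_L$-linear and norm-decreasing, in particular continuous, and since $\h{U}$ is $\pi$-adically complete by $(\ref{eq:HUGS})$ it commutes with convergent series. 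By Lemma \ref{K1rest} the restriction of $\cK_1$ to the $o_L$-submodule $U(m)\subseteq\h{U}$ equals $t'_m$, which maps onto $o_{L_m}$ by hypothesis; by $o_L$-linearity this gives $\cK_1(\pi^j U(m))=\pi^j o_{L_m}$ for every $j\ge0$. Moreover every element of $U(m)$ has norm $\le1$ in $\h{U}$ (its coordinates on the basis $\{u_n\}$ lie in $o_L$), so $\pi^j U(m)$ consists of elements of norm $\le|\pi|^j$; and $\pi^j o_{L_m}=o_{L_m}\cap\pi^j o_{\Cp}$ since $o_{L_m}$ is a discrete valuation ring containing $o_L$.

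Next I would identify $o_\infty$ topologically. By Lemma \ref{lem:LOmegaDense} and its proof, $L_\infty$ is the closure in $\Cp$ of $L_\tau=L_\infty\cap\overline{L}$. Since $\{1+\pi^m o_L\}_{m\ge1}$ is a neighbourhood basis of $1$ in $o_L^\times$, the subgroups $\tau^{-1}(1+\pi^m o_L)=\Gal(\overline{L}/L_m)$ form a neighbourhood basis of $\ker\tau$ in $G_L$, so every finite subextension of $L_\tau/L$ is contained in some $L_m$; thus $L_\tau=\bigcup_{m\ge1}L_m$ is an increasing union. A short ultrametric argument (any $y\in o_\infty$ is a limit of elements of $L_\tau$, which for large index have absolute value $\le1$ and so lie in $\bigcup_m o_{L_m}$) then shows that $o_\infty$ is the $\pi$-adic closure of $\bigcup_{m\ge1}o_{L_m}$.

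The core of the argument is a successive-approximation construction. Given $x\in o_\infty$, I would build $v_j\in\h{U}$ with $\|v_j\|\le|\pi|^j$ and $x-\cK_1(v_0+\cdots+v_j)\in\pi^{j+1}o_\infty$ for all $j\ge0$, as follows: inductively the remainder $r_{j-1}:=x-\cK_1(v_0+\cdots+v_{j-1})$ lies in $\pi^j o_\infty$, so $\pi^{-j}r_{j-1}\in o_\infty$ and, by density, there are $m$ and $\xi\in o_{L_m}$ with $\pi^{-j}r_{j-1}-\xi\in\pi o_\infty$; by the first paragraph $\xi=\cK_1(w)$ for some $w\in U(m)$ with $\|w\|\le1$, and I set $v_j:=\pi^j w$, which has $\|v_j\|\le|\pi|^j$ and $r_{j-1}-\cK_1(v_j)=\pi^j(\pi^{-j}r_{j-1}-\xi)\in\pi^{j+1}o_\infty$. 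Then $v:=\sum_{j\ge0}v_j$ converges in the $\pi$-adically complete module $\h{U}$, and $\cK_1(v)=\sum_{j\ge0}\cK_1(v_j)=\lim_j\cK_1(v_0+\cdots+v_j)=x$ by continuity of $\cK_1$. Hence $x\in\cK_1(\h{U})$, which proves surjectivity.

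The step I expect to be the real content is the norm control in the first paragraph: one genuinely needs the exact equality $\cK_1(\pi^j U(m))=o_{L_m}\cap\pi^j o_{\Cp}$ rather than merely the density of $\bigcup_m o_{L_m}$ in the image, because $\cK_1$ is far from injective and a naive ``closed image'' argument would fail. This equality is exactly what the hypothesis $t'_m(U(m))=o_{L_m}$, combined with the freeness of $U(m)$ over $o_L$ and the identity $\pi^j o_{L_m}=o_{L_m}\cap\pi^j o_{\Cp}$, provides; without the hypothesis one only has the inclusion $t'_m(U(m))\subseteq o_{L_m}$ from Lemma \ref{ImTm}. The only other slightly delicate point is verifying that $\bigcup_m o_{L_m}$, and not merely $\bigcup_m L_m$, is dense in $o_\infty$.
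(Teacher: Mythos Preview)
Your proof is correct and follows essentially the same approach as the paper's: both reduce to showing $\cK_1(\h{U}) \supseteq o_\tau = \bigcup_{m\ge1} o_{L_m}$ via Lemma \ref{K1rest} and the hypothesis, and then pass to the $\pi$-adic closure $o_\infty$. The paper compresses the passage from $o_\tau$ to $o_\infty$ into the single sentence ``Since $o_\tau$ is $\pi$-adically dense in $o_\infty$, to prove that $\cK_1(\h{U})$ contains $o_\infty$, it is enough to prove that it contains $o_\tau$,'' whereas you spell out the underlying successive-approximation argument (using that $\h{U}$ is $\pi$-adically complete and that every preimage automatically has norm $\le 1$); your version is more explicit but not materially different.
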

\begin{proof} Consider $o_\tau := \overline{L} \cap o_\infty$. Since $o_\tau$ is $\pi$-adically dense in $o_\infty$, to prove that $\cK_1(\h{U})$ contains $o_\infty$, it is enough to prove that it contains $o_\tau$. Fix $m \geq 1$. By Lemma \ref{K1rest}, the restriction of $\cK_1 : \h{U} \to o_{\Cp}$ to $U(m)$ is equal to $t'_m$. Hence by assumption $o_{L_m} = t'_m(U(m)) = \cK_1(U(m))$, so $o_\tau = \bigcup\limits_{m \geq 1} o_{L_m}$ is also contained in $\cK_1(\widehat{U})$. \end{proof}

\begin{lemma}\label{U(m)modpi} For each $m \geq 1$, we have $U(m) + \pi \h{U} = \sum\limits_{r=0}^{q^m-1} o_L u_r + \pi \h{U}$.
\end{lemma}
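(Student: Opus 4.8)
The plan is to establish the two inclusions separately, using the explicit pairing $\langle -,-\rangle \colon \widehat U \times o_L\dcroc{Z} \to o_L$ of Lemma \ref{lem:BigPairing} and the description of $U(m)$ as the module of $o_L$-linear functionals on $o_L\dcroc{Z}$ that annihilate the ideal $\langle\varphi^m(Z)\rangle$, viewed inside $\widehat U \cong o_L\dcroc{Z}^\ast_{\cts}$. The only preliminary input I need is the congruence $\varphi^m(Z) = [\pi^m](Z) \equiv Z^{q^m} \pmod{\pi\, o_L\dcroc{Z}}$, which I would obtain by iterating $[\pi](Z)\equiv Z^q\pmod{\pi}$ and using that reduction modulo $\pi$ commutes with composition of power series; accordingly I write $\varphi^m(Z) = Z^{q^m} + \pi g(Z)$ with $g \in o_L\dcroc{Z}$.

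For $U(m) + \pi\widehat U \subseteq \sum_{r=0}^{q^m-1}o_L u_r + \pi\widehat U$: given $u \in U(m)$, from $0 = \langle u, \varphi^m(Z)Z^j\rangle = \langle u, Z^{q^m+j}\rangle + \pi\langle u, g(Z)Z^j\rangle$ one reads off $\langle u, Z^{q^m+j}\rangle \in \pi o_L$ for every $j \geq 0$. Expanding $u = \sum_{r\geq 0}\langle u, Z^r\rangle u_r$ in $\widehat U$ (the coefficients tend to $0$ since $u$ is continuous, by Lemma \ref{lem:BigPairing}(3)), the tail $\sum_{r\geq q^m}\langle u, Z^r\rangle u_r$ lies in $\pi\widehat U$, so $u \equiv \sum_{r=0}^{q^m-1}\langle u, Z^r\rangle u_r \pmod{\pi\widehat U}$.

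For the reverse inclusion it is enough to show $u_i \in U(m) + \pi\widehat U$ for $0 \leq i < q^m$. First I would record that $o_L\dcroc{Z} / \langle\varphi^m(Z)\rangle$ is a free $o_L$-module with basis the images of $1, Z, \ldots, Z^{q^m-1}$: the proof of Lemma \ref{Weierstrass}(1) goes through with $\varphi^m(Z)$ in place of $\varphi(Z)$ because $\varphi^m(Z)\equiv Z^{q^m}\pmod{\pi}$ (alternatively, iterate Lemma \ref{Weierstrass}(1) and apply Nakayama after reducing modulo $\pi$). Hence $U(m) = \Hom_{o_L}(o_L\dcroc{Z}/\langle\varphi^m(Z)\rangle, o_L)$ has a dual basis $v_0, \ldots, v_{q^m-1}$ characterised by $\langle v_i, Z^j\rangle = \delta_{ij}$ for $0 \leq i,j < q^m$. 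Applying the inclusion just proved to $u = v_i$ gives $\langle v_i, Z^r\rangle \in \pi o_L$ for $r \geq q^m$, whence $v_i \equiv \sum_{r=0}^{q^m-1}\langle v_i, Z^r\rangle u_r = u_i \pmod{\pi\widehat U}$, so $u_i \in U(m) + \pi\widehat U$ as desired.

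I do not anticipate a genuine obstacle: the substance is the congruence $\varphi^m(Z)\equiv Z^{q^m}\pmod{\pi}$ together with the Weierstrass-division identification of $o_L\dcroc{Z}/\langle\varphi^m(Z)\rangle$ (and hence of its $o_L$-linear dual $U(m)$) as free of rank $q^m$ with the evident basis, the rest being bookkeeping with the pairing. The one point to handle with care is that the tail sums genuinely lie in $\pi\widehat U$ — that $\sum_{r\geq q^m} a_r u_r$ with $a_r \in \pi o_L$ and $a_r \to 0$ equals $\pi\cdot\bigl(\sum_{r\geq q^m}(a_r/\pi)u_r\bigr)$ with the inner sum a legitimate element of $\widehat U$ — which is exactly the $\pi$-adic completeness already encoded in the description $(\ref{eq:HUGS})$ of $\widehat U$.
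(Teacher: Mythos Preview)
Your proof is correct and follows essentially the same approach as the paper's: both directions rest on the congruence $\varphi^m(Z)\equiv Z^{q^m}\pmod{\pi}$, and for the reverse inclusion your dual-basis elements $v_i$ are precisely the paper's $w_i$. The only cosmetic difference is that for the forward inclusion the paper argues via the direct-sum decomposition $o_L\dcroc{Z}=\bigoplus_{r<q^m}o_LZ^r\oplus\langle\varphi^m(Z)\rangle$ rather than by reading off individual coefficients, but the content is the same.
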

\begin{proof} Let $u \in U(m)$ and let $\tilde{u} : o_L\dcroc{Z} \to o_L$ be the corresponding $o_L$-linear form which vanishes on $\langle \varphi^m(Z) \rangle$. Consider $v := \tilde{u} - \sum\limits_{r=0}^{q^m - 1} \tilde{u}(Z^r) u_r \in \h{U}$. For each $r < q^m$, $u_r$ sends $\langle \varphi^m(Z)\rangle$ into $\pi o_L$ because $\varphi^m(Z) \equiv Z^{q^m} \mod \pi o_L\dcroc{Z}$. Since $\tilde{u}$ kills $\langle \varphi^m(Z) \rangle$, we see that $v$ also sends $\langle \varphi^m(Z)\rangle$ into $\pi o_L$. By construction, $v$ is zero on $1, Z, \cdots, Z^{q^m - 1}$. Since 
\begin{equation}\label{oLZdecomp}o_L 1 \oplus o_L Z \oplus \cdots \oplus o_L Z^{q^m - 1} \oplus \langle \varphi^m(Z)\rangle = o_L\dcroc{Z},\end{equation}
we conclude that $v\left( o_L\dcroc{Z} \right) \subseteq \pi o_L$ and hence $v = \pi w$ for some $o_L$-linear form $w : o_L\dcroc{Z} \to o_L$. Since $v : o_L\dcroc{Z} \to o_L$ is continuous for the weak topology on $o_L\dcroc{Z}$, so is $w$. Hence $w \in \h{U}$ and hence $\tilde{u} \in \sum\limits_{r=0}^{q^m-1} o_L u_r + \pi \h{U}$. This shows that $\subseteq$ holds.

For the reverse containment, it is enough to show that $u_r \in U(m) + \pi \h{U}$ for each $r = 0,\ldots, q^m -1$. Using  (\ref{oLZdecomp}), define an $o_L$-linear form $w_r : o_L\dcroc{Z} \to o_L$ which is zero on $\langle \varphi^m (Z) \rangle$ and which sends $Z^i$ to $\delta_{i,r}$ for each $0 \leq i < q^m$. Since $u_r$ sends $\langle \varphi^m(Z) \rangle$ into $\pi o_L$, the same is true of $u_r - w_r$. Since $u_r - w_r$ is zero on $1, Z, \cdots, Z^{q^m - 1}$ by construction, we see that $u_r - w_r$ sends all of $o_L\dcroc{Z}$ into $\pi o_L$. Hence $u_r - w_r = \pi v_r$ for some $o_L$-linear form $v_r : o_L\dcroc{Z} \to o_L$. Since $u_r - w_r$ is continuous for the weak topology on $o_L\dcroc{Z}$, so is $v_r$. Because $w_r$ is zero on $\langle \varphi^m(Z) \rangle$,  it lies in $U(m)$ and hence $u_r = w_r + \pi v_r \in U(m) + \pi \h{U}$. \end{proof}

\begin{proposition}\label{prop:Qp2KatztmUm} If $L = \mathbb{Q}_{p^2}$, then $t'_m( U(m) ) = o_{L_m}$ for all $m \geq 1$.\end{proposition}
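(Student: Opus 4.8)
The plan is to use the geometric input, recalled above, that for $L = \mathbb{Q}_{p^2}$ the dual $p$-divisible group $\cG'$ has dimension~$1$. By Tate's equivalence (Theorem~\ref{thm: Tate1}), once we fix a coordinate $W$ this means that $\cG'$ is a one-dimensional connected divisible formal group $\Spf o_L\dcroc{W}$ over $o_L$, carrying a formal $o_L$-module structure transported from $\cG$ through Cartier duality; write $[\pi]'(W) \in \pi W + W^2 o_L\dcroc{W}$ for multiplication by $\pi$ on $\cG'$ and $[\pi^m]'(W)$ for its $m$-fold iterate. Since $\cG'[\pi]$ is a connected finite flat $o_L$-group scheme of order $q = |o_L/\pi|$, the $o_L$-algebra $o_L\dcroc{W}/([\pi]'(W))$ is $o_L$-free of rank $q$ and reduces modulo $\pi$ to a monogenic local $k_L$-algebra of dimension $q$, which forces $[\pi]'(W)$ to be $W^q$ times a unit of $k_L\dcroc{W}$ modulo $\pi$; in other words $\cG'$ is, up to an isomorphism that does not change the fields we care about, a Lubin--Tate formal $o_L$-module for $\pi$. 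Cartier duality then identifies $\cG'_m$ with $\cG'[\pi^m] = \Spec\big(o_L\dcroc{W}/([\pi^m]'(W))\big)$, and, since $[\pi^m]'(W)$ is likewise a unit times $W^{q^m}$ modulo $\pi$, Weierstrass preparation (Lemma~\ref{Weierstrass}) shows that $U(m) = \cO(\cG'_m)$ is $o_L$-free with basis the classes of $1, W, \dots, W^{q^m-1}$.

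Next I would unwind, as in the discussion preceding Lemma~\ref{K1rest}, the meaning of $t'_m$ as an $o_L$-point of $\cG'_m$: it is the $o_L$-algebra homomorphism ``evaluation at $\omega_m$'', where $\omega_m := t'_m(W)$, and $\omega_m$ is a \emph{primitive} $\pi^m$-torsion point of $\cG'$ because $t' = (t'_m)_m$ is an $o_L$-module generator of $T_\pi\cG'$, so $t'_m \notin \cG'[\pi^{m-1}]$. Hence $t'_m(U(m)) = o_L[\omega_m]$. Moreover $\cG'[\pi^m] = o_L\cdot\omega_m$, so $L(\cG'[\pi^m]) = L(\omega_m)$; and $L(\cG'[\pi^m]) = L_m$ by the definition of $L_m$ and the fact that $G_L$ acts on $\cG'[\pi^m]$ through $\sigma \mapsto [\tau(\sigma)]$ (Definition~\ref{def:dualLTgenerator}). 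The statement therefore reduces to the equality $o_L[\omega_m] = o_{L_m}$, of which the inclusion $\subseteq$ is Lemma~\ref{ImTm}.

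For the reverse inclusion I would run the classical Lubin--Tate computation. By Weierstrass preparation again, $[\pi^m]'(W) = v_m(W)\,W\,G_m(W)$ with $v_m \in o_L\dcroc{W}^\times$ and $G_m \in o_L[W]$ monic, distinguished and separable of degree $q^m-1$, whose roots are the non-zero $\pi^m$-torsion points of $\cG'$; since the linear coefficient of $[\pi^m]'(W)$ equals $\pi^m$, one has $v_\pi(G_m(0)) = m$. As the non-zero $\pi^{m-1}$-torsion points are among the roots of $G_m$, we get $G_{m-1} \mid G_m$, and $h_m := G_m/G_{m-1}$ is monic of degree $q^{m-1}(q-1)$, reduces to $W^{q^{m-1}(q-1)}$ modulo $\pi$, and satisfies $v_\pi(h_m(0)) = m-(m-1) = 1$. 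Thus $h_m$ is an Eisenstein polynomial over $L$, hence is the minimal polynomial of $\omega_m$; so $L_m = L(\omega_m)$ is totally ramified over $L$ of degree $q^{m-1}(q-1)$, $v_\pi(\omega_m) = 1/\big(q^{m-1}(q-1)\big)$, and $\omega_m$ is a uniformiser of $L_m$. Therefore $o_L[\omega_m] = o_{L(\omega_m)} = o_{L_m}$, which proves the proposition. (This is a special case of classical Lubin--Tate theory, cf.\ \cite[Ch.~8]{Lan}; note that the degree $q^{m-1}(q-1)$ also follows from Lemmas~\ref{opentau} and~\ref{lem:tau-sur}, which apply since $d-1 = 1$, and that $v_\pi(\omega_m)$ is the same sort of Newton polygon data computed in \S\ref{npsec} and Theorem~\ref{introvalpkgen}.)

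I expect the genuine work to be in the first two steps rather than in the Eisenstein computation: turning the bare dimension count into the explicit description of $\cG'$ as a Lubin--Tate formal $o_L$-module over $o_L$ and of $t'_m$ as evaluation at a primitive $\pi^m$-torsion point $\omega_m$. This rests on Tate's theorems (Theorems~\ref{thm: Tate1} and~\ref{thm: TateFF}), on the connectedness of $\cG'$ (an \'etale part would Cartier-dualise to a subgroup of multiplicative type of the connected group $\cG$, hence be trivial because $\dim\cG = 1$), and on identifying the Cartier dual $\cG'_m$ of $\cG_m$ with $\cG'[\pi^m]$. With that dictionary in hand the rest is routine; one could instead invoke Lemma~\ref{U(m)modpi} to reduce, by Nakayama's lemma, to surjectivity of $t'_m$ modulo $\pi$, which is where the valuation estimates of Theorem~\ref{introvalpkgen} would enter most directly. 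Feeding this into Proposition~\ref{prop:K1onto} finally yields that $\cK_1 : \h{U} \to o_\infty$ is surjective.
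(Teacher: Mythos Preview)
Your approach is correct and genuinely different from the paper's. The paper works on the $\cG$-side: using Lemma~\ref{U(m)modpi} it produces $w_r \in U(m)$ congruent to $u_r$ modulo $\pi\h{U}$ for $r = p^{2m-1} < q^m$, then invokes the Newton-polygon estimate of Corollary~\ref{vpkpk} to obtain $v_p(\cK_1(w_r)) = v_p(P_r(\Omega)) = [L_m:L]^{-1}$, so that $t'_m(w_r) = \cK_1(w_r)$ is a uniformiser of $L_m$ and the image of the $o_L$-algebra map $t'_m$ is all of $o_{L_m}$. You work instead on the $\cG'$-side: for $d=2$ the dual $p$-divisible group is connected of dimension~$1$, so Tate's equivalence realises it as a one-variable formal group $\Gamma$ over $o_L$ on which $[\pi]$ behaves like a Frobenius power series, and the classical Eisenstein computation then shows that a primitive $\pi^m$-torsion point $\omega_m$ is a uniformiser of $L(\omega_m) = L_m$, whence $t'_m(U(m)) = o_L[\omega_m] = o_{L_m}$. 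Your route makes the special role of $d=2$ structurally transparent (it is exactly the case $\dim\cG' = 1$) and avoids the explicit Newton-polygon input; the paper's route stays with the polynomials $P_n(\Omega)$ and integrates directly with the estimates of~\S\ref{npsec} that are developed anyway. Two minor caveats on your write-up: first, the parenthetical justification for connectedness of $\cG'$ (``trivial because $\dim\cG = 1$'') is not sufficient as stated --- a one-dimensional connected $p$-divisible group may well contain a multiplicative subgroup, and connectedness of $\cG$ does not preclude \'etale \emph{quotients}; one needs either the Dieudonn\'e-module argument in the proof of Corollary~\ref{cor: Qp2Katz} or the observation that $T_p\cG$ admits no $\bZ_p(1)$-subrepresentation since $\chi_\pi \neq \chi_{\cyc}$. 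Second, the $o_L$-action transported to $\Gamma$ by Cartier duality acts on $\mathrm{Lie}(\Gamma)$ a priori only via some $\rho \in \Gal(L/\bQ_p)$, so the linear coefficient of $[\pi]_\Gamma(W)$ is $\rho(\pi)$ rather than $\pi$; but this still has $\pi$-valuation~$1$, so your Eisenstein step goes through unchanged.
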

\begin{proof} Fix $m \geq 1$. By Lemma \ref{U(m)modpi}, for each $0 \leq r < q^m$ we can find $w_r \in U(m)$ such that $w_r - u_r \in \pi \h{U}$. Set $r := p^{2m-1} = p q^{m-1} < q^m$. Note that $\cK_1(u_r) = \cK(u_r)(1) = \langle u_r, \Delta_1 \rangle = P_r(\Omega)$. Since $L = \mathbb{Q}_{p^2}$, Corollary \ref{vpkpk} applied with $k = 2m-1$ tells us that
\[ \vp(\cK_1(u_r)) = \vp( P_r(\Omega) ) = \frac{1}{p^{2m-2}(q-1)} = \frac{1}{q^{m-1}(q-1)} = [L_m : L]^{-1} < 1.\]
Now $\pi o_L = p o_L$ since $L = \mathbb{Q}_{p^2}$, so $\cK_1(u_r - w_r) \in \cK_1(\pi \h{U}) \subseteq p o_{\Cp}$ since $\cK_1$ takes values in $o_{\Cp}$. Hence $\vp(\cK_1(u_r) - \cK_1(w_r)) \geq 1$ and $\vp(\cK_1(w_r)) = \vp(\cK_1(u_r)) = [L_m : L]^{-1}$. Therefore $\cK_1(w_r)$ is a uniformiser in $L_m$ and the result follows. \end{proof}

Now we start to explore the injectivity of $\cK : \h{U} \to \cC$.

\begin{lemma} For each $m \geq 1$, we have $U(m) \cap \pi \h{U} = \pi U(m)$.
\end{lemma}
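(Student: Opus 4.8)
The inclusion $\pi U(m) \subseteq U(m) \cap \pi\h{U}$ is immediate, so all the content lies in the reverse inclusion, and the plan is to reduce it to the fact that $o_L$ is an integral domain (has no $\pi$-torsion). First I would recall, from the identification of $\h{U}$ with $o_L\dcroc{Z}^\ast_{\cts}$ in Lemma \ref{lem:BigPairing}(3), the concrete description of the submodule $U(m) \subset \h{U}$: since $U(m) = \Hom_{o_L}(o_L\dcroc{Z}/\langle\varphi^m(Z)\rangle, o_L)$, an element of $\h{U}$ lies in $U(m)$ precisely when it kills the ideal $\langle\varphi^m(Z)\rangle$ of $o_L\dcroc{Z}$. (The point that a functional killing this ideal is automatically continuous, and hence genuinely an element of $\h{U}$, is exactly the decomposition $(\ref{oLZdecomp})$ already used in the proof of Lemma \ref{U(m)modpi}: the quotient $o_L\dcroc{Z}/\langle\varphi^m(Z)\rangle$ is the finite free $o_L$-module with basis $1, Z, \dots, Z^{q^m-1}$.)

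With this in hand, the argument is short. Given $u \in U(m) \cap \pi\h{U}$, I would write $u = \pi v$ with $v \in \h{U}$ and show that $v$ itself kills $\langle\varphi^m(Z)\rangle$: for any $F \in o_L\dcroc{Z}$ we have $\pi\,\langle v, \varphi^m(Z)F\rangle = \langle u, \varphi^m(Z)F\rangle = 0$ because $u \in U(m)$, and since $o_L$ is a domain and $\pi \neq 0$ this forces $\langle v, \varphi^m(Z)F\rangle = 0$. Hence $v \in U(m)$, so $u = \pi v \in \pi U(m)$, which is the desired inclusion.

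There is no real obstacle here: once the description of $U(m)$ as the annihilator of $\langle\varphi^m(Z)\rangle$ inside $\h{U}$ is made explicit, the statement is nothing more than the absence of $\pi$-torsion in $o_L$. The only thing worth double-checking while writing is the claim that $v$ — which a priori only lies in $\h{U}$ — really lands back in $U(m)$ once it annihilates $\langle\varphi^m(Z)\rangle$; this is precisely the content of the direct-sum decomposition $(\ref{oLZdecomp})$, so it requires no new work.
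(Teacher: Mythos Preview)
Your proposal is correct and follows essentially the same argument as the paper: write the element as $\pi v$, use that $\pi\langle v,F\rangle = 0$ for $F \in \langle\varphi^m(Z)\rangle$, and cancel $\pi$ since $o_L$ is a domain to conclude $v \in U(m)$. Your added remark about continuity via the decomposition $(\ref{oLZdecomp})$ is a nice clarification, but not something the paper dwells on.
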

\begin{proof} Let $g = \pi h \in U(m)$ for some $h \in \h{U}$. Then $\pi \langle h, F \rangle = \langle \pi h, F \rangle = 0$ for any $F \in \langle \varphi^m(Z) \rangle$. Hence $\langle h, F \rangle = 0$ for all such $F$ as well, so $h \in U(m)$ and $g \in \pi U(m)$. \end{proof}

\begin{corollary}\label{Umk} The map $\cO(\cG_m' \times_{o_L} k) = U(m) / \pi U(m) \to U(m)_k$ is an isomorphism.
\end{corollary}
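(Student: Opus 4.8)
The statement is essentially immediate once one unwinds the definitions and invokes the lemma proved just above, namely $U(m) \cap \pi\h{U} = \pi U(m)$. First I would recall that, by construction, $U(m)_k$ is the \emph{image} of the natural $o_L$-linear map $U(m) \to \h{U}/\pi\h{U}$. Since this map kills $\pi U(m)$, it factors through $U(m)/\pi U(m)$, and the induced map $U(m)/\pi U(m) \to U(m)_k$ is surjective by the very definition of $U(m)_k$. So the only thing to check is injectivity.

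The kernel of $U(m)/\pi U(m) \to \h{U}/\pi\h{U}$ is $\bigl(U(m) \cap \pi\h{U}\bigr)/\pi U(m)$. By the preceding lemma this is $\pi U(m)/\pi U(m) = 0$, so the map $U(m)/\pi U(m) \to U(m)_k$ is injective, hence an isomorphism.

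It remains to identify the source $U(m)/\pi U(m)$ with $\cO(\cG_m' \times_{o_L} k)$. Here I would use that $\cG_m'$ is a finite flat $o_L$-group scheme, so that $U(m) = \cO(\cG_m') = \Hom_{o_L}\bigl(o_L\dcroc{Z}/\langle\varphi^m(Z)\rangle,\, o_L\bigr)$ is a finite free $o_L$-module (indeed $o_L\dcroc{Z}/\langle\varphi^m(Z)\rangle$ is free of rank $q^m$ over $o_L$ by the decomposition $(\ref{oLZdecomp})$, and the $o_L$-dual of a finite free module is finite free). Therefore base change along $o_L \to k$ is computed by tensoring: $\cO(\cG_m' \times_{o_L} k) = U(m) \otimes_{o_L} k = U(m)/\pi U(m)$. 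Composing this identification with the isomorphism above gives the desired conclusion.

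\textbf{Main obstacle.} There is essentially no difficulty here: the content is entirely in the lemma $U(m) \cap \pi\h{U} = \pi U(m)$, which has already been established, and in the flatness of $\cG_m'$, which is part of the setup. The one point to be a little careful about is that $U(m)_k$ was \emph{defined} as an image inside $\h{U}/\pi\h{U} \cong U/\pi U$ rather than abstractly as $U(m)/\pi U(m)$, so the statement is really the assertion that this distinction disappears — which is exactly what the preceding lemma buys us.
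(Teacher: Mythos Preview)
Your proof is correct and is exactly the argument the paper intends: the corollary is immediate from the preceding lemma $U(m)\cap\pi\h{U}=\pi U(m)$, together with the identification $\cO(\cG_m'\times_{o_L}k)=U(m)\otimes_{o_L}k$ coming from flatness. The paper gives no further details beyond stating it as a corollary.
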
 

Since $\cG'$ forms a $p$-divisible group, we have a closed immersion $\cG_m' \to \cG_{m+1}'$ for each $m \geq 1$. The comorphism of this map $\cO(\cG_{m+1}') \to \cO(\cG_m')$ is the dual of the $o_L$-Hopf algebra map $\cO(\cG_m) \to \cO(\cG_{m+1})$ induced by $\varphi : \cO(\cG) \to \cO(\cG)$. Using Corollary \ref{Umk}, we obtain connecting maps $\varphi_k^\ast : U(m+1)_k \to U(m)_k$.

\begin{lemma}\label{PhiSurj} The comorphisms $\varphi_k^\ast : U(m+1)_k \to U(m)_k$ are surjective for all $m \geq 1$.
\end{lemma}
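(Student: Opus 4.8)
The plan is to reduce the statement to an explicit computation of the adjoint operator $\varphi^\ast$ modulo $\pi$ on the $o_L$-basis $\{u_r\}_{r\geq 0}$ of $\h{U}$. First I would unwind the definition of $\varphi_k^\ast$. By construction it is the map $U(m+1)_k \to U(m)_k$ induced by the comorphism $U(m+1)\to U(m)$ of $\cG'_m \hookrightarrow \cG'_{m+1}$, and, as recorded just before the lemma, this comorphism is the restriction to $U(m+1)$ of the $o_L$-linear adjoint $\varphi^\ast : \h{U} \to \h{U}$ of Corollary \ref{cor:adjoints} (taken with $S=o_L$). Since $\varphi^\ast$ is $o_L$-linear it carries $\pi\h{U}$ into itself, hence descends to an operator $\overline{\varphi^\ast}$ on $U_k = \h{U}/\pi\h{U}$, and $\varphi_k^\ast$ is nothing but the restriction of $\overline{\varphi^\ast}$ to $U(m+1)_k$. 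Combining Corollary \ref{Umk} with Lemma \ref{U(m)modpi}, for every $\ell \geq 1$ the subspace $U(\ell)_k \subseteq U_k$ is exactly the $k$-span of the images $\overline{u_0},\dots,\overline{u_{q^\ell-1}}$.

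Next I would compute $\overline{\varphi^\ast}(\overline{u_r})$ using the evaluation pairing. Since $\varphi$ is the $o_L$-algebra endomorphism of $o_L\dcroc{Z}$ with $\varphi(Z) = [\pi](Z)$, and $[\pi](Z) \equiv Z^q \bmod \pi o_L\dcroc{Z}$, we get
\[ \langle \varphi^\ast(u_r), Z^n \rangle = \langle u_r, \varphi(Z)^n \rangle \equiv \delta_{r,\,qn} \bmod \pi \qmb{for all} n \geq 0. \]
Hence $\overline{\varphi^\ast}(\overline{u_r}) = \overline{u_{r/q}}$ when $q \mid r$, and $\overline{\varphi^\ast}(\overline{u_r}) = 0$ otherwise.

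Finally, for each $0 \leq s \leq q^m - 1$ we have $qs \leq q^{m+1}-q < q^{m+1}$, so $\overline{u_{qs}} \in U(m+1)_k$ and $\varphi_k^\ast(\overline{u_{qs}}) = \overline{u_s}$; as $s$ runs over $0,\dots,q^m-1$ these vectors exhaust a spanning set of $U(m)_k$, so $\varphi_k^\ast$ is surjective. (More conceptually, one could instead observe that $\varphi_k^\ast$ is the base change along $o_L \to k$ of the comorphism of the closed immersion $\cG'_m \hookrightarrow \cG'_{m+1}$, which is surjective because comorphisms of closed immersions are surjective and this is preserved by base change.) I do not expect a genuine obstacle here; the only point requiring care is keeping the three identifications consistent — $U(m)+\pi\h{U}$ from Lemma \ref{U(m)modpi}, $U(m)/\pi U(m) \cong U(m)_k$ from Corollary \ref{Umk}, and $U_k = U/\pi U = \h{U}/\pi\h{U}$ — so that the operator being computed really is $\varphi_k^\ast$.
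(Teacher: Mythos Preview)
Your proof is correct and follows essentially the same route as the paper. Both hinge on the congruence $\varphi(Z)\equiv Z^q \bmod \pi$: the paper identifies $U(m)_k$ with $\Hom_k(k\dcroc{Z}/\langle Z^{q^m}\rangle,k)$ via Corollary~\ref{Umk}, notes that the algebra map $Z\mapsto Z^q$ from $k\dcroc{Z}/\langle Z^{q^m}\rangle$ to $k\dcroc{Z}/\langle Z^{q^{m+1}}\rangle$ is injective, and concludes that its $k$-linear dual is surjective; you compute this dual explicitly on the basis $\overline{u_r}$ to the same end. Your parenthetical closed-immersion remark is yet another phrasing of the same fact. The only place to be a little more careful in a write-up is exactly what you flagged: making sure the element $\overline{u_{qs}}\in U(m+1)_k$ you feed into $\varphi_k^\ast$ is treated via a genuine lift $w_{qs}\in U(m+1)$ with $w_{qs}\equiv u_{qs}\bmod \pi\h{U}$ (Lemma~\ref{U(m)modpi}), since $u_{qs}$ itself does not lie in $U(m+1)$; but as you note, $o_L$-linearity of $\varphi^\ast$ makes the reduction mod $\pi$ independent of the lift, so nothing goes wrong.
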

\begin{proof} By Corollary \ref{Umk}, $U(m)_k$ is isomorphic to $\cO(\cG_m' \times_{o_L} k) = \Hom_k(\cO(\cG_m \times_{o_L} k),k)$ as a $k$-vector space. Since $\varphi(Z) \equiv Z^q \mod \pi o_L\dcroc{Z}$, we have $\cO(\cG_m \times_{o_L} k) = k\dcroc{Z} / \langle Z^{qm} \rangle$ and the $k$-algebra homomorphism $\varphi_k : k\dcroc{Z} / \langle Z^{qm} \rangle \to k\dcroc{Z} / \langle Z^{q(m+1)} \rangle$ which sends $Z$ to $Z^q$ is \emph{injective}. Hence the dual map 
\[\varphi^\ast_k : \Hom_k( k\dcroc{Z} / \langle Z^{q(m+1)} \rangle, k) \to \Hom_k( k\dcroc{Z} / \langle Z^{qm} \rangle, k) \]
is surjective and the result follows. \end{proof}

Next we consider an ideal $I$ of $U_k$ and we set $I(m) := I \cap U(m)$ for all $m \geq 1$. We assume that $I$ is $\varphi^\ast$-stable, in the sense that $\varphi^\ast(I) \subseteq I$.

\begin{proposition}\label{ProjLimToCoLim} Suppose that $I$ is a $\varphi^\ast$-stable ideal of $U_k$ such that $\varprojlim \frac{U(m)_k}{I(m)}$ is finite dimensional over $k$. Then $U_k / I = \colim \frac{U(m)}{I(m)}$ is also finite dimensional over $k$.
\end{proposition}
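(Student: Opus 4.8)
The plan is to exploit the interplay between the \emph{direct} system $(U(m)_k)_{m}$, with the inclusions $U(m)_k\hookrightarrow U(m+1)_k$, and the \emph{inverse} system $(U(m)_k)_{m}$, with the Frobenius comorphisms $\varphi_k^\ast\colon U(m+1)_k\to U(m)_k$, both living on the same finite-dimensional spaces; the key point is that the inverse system has surjective transition maps. Throughout I would write $V_m:=U(m)_k/I(m)$.

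First I would treat the direct side. By Lemma \ref{U(m)modpi} we have $U(m)_k=\sum_{r<q^m}k\bar u_r$, so $U_k=\bigcup_m U(m)_k$ is the increasing union of its finite-dimensional subspaces $U(m)_k$; since $I(m)=I\cap U(m)_k$, the image of $U(m)_k$ in $U_k/I$ is $V_m$, the inclusions induce injections $V_m\hookrightarrow V_{m+1}$, and $U_k/I=\colim_m V_m=\bigcup_m V_m$, so that $\dim_k(U_k/I)=\sup_m\dim_k V_m$. On the inverse side, Lemma \ref{PhiSurj} gives surjections $\varphi_k^\ast\colon U(m+1)_k\twoheadrightarrow U(m)_k$; these agree with the mod-$\pi$ restrictions of the adjoint $\varphi^\ast$ of $\varphi$ from Corollary \ref{cor:adjoints}, so $\varphi^\ast$-stability of $I$ gives $\varphi_k^\ast(I(m+1))\subseteq\varphi^\ast(I)\cap U(m)_k\subseteq I(m)$, and hence $\varphi_k^\ast$ descends to a surjection $\overline{\varphi_k^\ast}\colon V_{m+1}\twoheadrightarrow V_m$. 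These are the transition maps defining $\varprojlim_m V_m$.

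Finally I would invoke the elementary fact that in a countable inverse system of $k$-vector spaces with surjective transition maps, every projection $\varprojlim_j V_j\to V_m$ is surjective --- one lifts a given element of $V_m$ successively backwards through the surjections $\overline{\varphi_k^\ast}$ and pushes it forwards for the smaller indices. Hence $\dim_k V_m\le\dim_k\varprojlim_j V_j$ for every $m$; if the right-hand side equals $N<\infty$ by hypothesis, then $\dim_k V_m\le N$ for all $m$, and therefore $\dim_k(U_k/I)=\sup_m\dim_k V_m\le N<\infty$. I do not expect a serious obstacle here: the only points needing care are the verification $\varphi_k^\ast(I(m+1))\subseteq I(m)$ (this is where $\varphi^\ast$-stability enters, together with the fact that $\varphi_k^\ast$ already maps $U(m+1)_k$ into $U(m)_k$) and the identification $U_k/I=\bigcup_m V_m$ with injective transition maps (which rests on $I=\bigcup_m I(m)$). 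The conceptual content is simply that a surjective inverse system ``sees'' each $V_m$ as a quotient of its limit, so finiteness of the limit forces a uniform bound on the $\dim_k V_m$, and this bound passes to the direct limit $U_k/I$.
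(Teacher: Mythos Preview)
Your proposal is correct and follows essentially the same approach as the paper: both arguments hinge on the surjectivity of the maps $\varphi_k^\ast\colon V_{m+1}\to V_m$ from Lemma~\ref{PhiSurj} together with the injectivity of the inclusion-induced maps $V_m\hookrightarrow V_{m+1}$, and both use the finite-dimensionality of $\varprojlim V_m$ to bound each $\dim V_m$. Your version is marginally more direct---you bound $\dim V_m\le N$ via surjectivity of the projection from the limit and take the supremum, whereas the paper argues that the dimensions eventually stabilize and then that the injective direct maps become isomorphisms---but this is a cosmetic difference, not a different route.
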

\begin{proof} Let $m \geq 1$ and consider the short exact sequence 
\[ 0 \to I(m) \to U(m)_k \to U(m)_k / I(m) \to 0.\]
Since $I$ is $\varphi^\ast$-stable by assumption, we get a short exact sequence of towers of finite-dimensional $k$-vector spaces. Passing to the inverse limit therefore gives an exact sequence
\[0 \to I(\infty) := \varprojlim I(m) \to \varprojlim U(m)_k \to \varprojlim \frac{U(m)_k}{ I(m)} \to 0.\]
By assumption, the term on the right is a finite dimensional $k$-vector space. We see from Lemma \ref{PhiSurj} that the connecting maps $U(m+1)_k / I(m+1) \to U(m)_k / I(m)$ induced by $\varphi^\ast$ are \emph{surjective}. Therefore, for large $m$, all of these maps are necessarily isomorphisms, and therefore there exists $m_0 \geq 1$ such that
\[ \dim \frac{U(m+1)_k}{I(m+1)} = \dim \frac{U(m)_k}{I(m)} \quad\mbox{for all} \quad m \geq m_0.\]
Now the definition of $I(m)$ shows that the natural connecting maps in the opposite direction $U(m)_k / I(m) \to U(m+1)_k / I(m+1)$ is \emph{injective} for any $m \geq 1$. They are therefore isomorphisms whenever $m \geq m_0$. The result follows. \end{proof}

\begin{proposition}\label{KerKmodpi} Let $J = \ker \cK$ and let $I := (J + \pi \h{U}) / \pi \h{U}$ be its image in $U_k$. Then $I$ is a $\varphi^\ast$-stable ideal in $U_k$ such that $\dim U_k / I = \infty$.
\end{proposition}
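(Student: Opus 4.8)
The plan is to analyze the ideal $I = (J + \pi\h{U})/\pi\h{U}$ in $U_k = U/\pi U \cong \h{U}/\pi\h{U}$ in two steps: first check it is $\varphi^\ast$-stable, then show the quotient is infinite-dimensional. For the $\varphi^\ast$-stability: recall $\cK\varphi^\ast = \varphi_{\cC}\cK$ by Lemma \ref{lem:KatzPhiPsi}, so if $u \in J = \ker\cK$ then $\cK(\varphi^\ast u) = \varphi_{\cC}(\cK u) = 0$, hence $\varphi^\ast(J) \subseteq J$. Since $\varphi^\ast$ is $o_L$-linear it preserves $\pi\h{U}$, so $\varphi^\ast$ descends to the endomorphism $\varphi_k^\ast$ of $U_k$ and carries $I$ into itself. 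That $I$ is an ideal follows because $J$ is an ideal of $\h{U}$ (it is the kernel of an algebra homomorphism, $\cK$ being multiplicative since $\Delta_a\Delta_b = \Delta_{a+b}$ makes $a \mapsto \Delta_a$ grouplike), or more directly because $J$ is the kernel of the $o_L$-algebra map $\h{U} \to \cC$ whose target multiplication is pointwise.

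The substance is the claim $\dim_k U_k/I = \infty$. The reduction map $\h{U} \to \h{U}/\pi\h{U} = U_k$ sends $J$ onto $I$ by construction, and it sends $\h{U}/J$ onto $U_k/I$; more precisely $U_k/I \cong \h{U}/(J + \pi\h{U})$. Now $\h{U}/J \cong \cK(\h{U}) \subseteq \cC$, so it suffices to show that $\cK(\h{U})/\pi\cK(\h{U})$ — or rather the relevant quotient — is infinite-dimensional over $k$. I would argue by contradiction: suppose $\dim_k U_k/I < \infty$. Set $I(m) := I \cap U(m)_k$ where $U(m)_k$ is the image of $U(m)$ in $U_k$ (these are the finite-dimensional pieces from Corollary \ref{Umk}). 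Then $\varprojlim U(m)_k/I(m)$ injects into the constant-finite-dimensional system attached to the colimit $U_k/I$, so it is finite-dimensional over $k$. By Proposition \ref{ProjLimToCoLim} this is consistent, so no contradiction arises there directly — instead I must exhibit the infinitude concretely. The key input is Corollary \ref{vpkpk} (valuations of $P_{p^k}(\Omega)$ for $L = \bQ_{p^2}$): the elements $\cK_1(u_r) = P_r(\Omega)$ for $r = p^{2m-1}$ have valuation $[L_m:L]^{-1} \to 0$, exhibiting infinitely many distinct "levels". Concretely, via $\cK_1 = \ev_1 \circ \cK$ and Lemma \ref{K1rest}, the restriction of $\cK$ to $U(m)$ factors through $t'_m : U(m) \to o_{L_m}$, and by Proposition \ref{prop:Qp2KatztmUm} this is surjective; so $U(m)/\ker(\cK|_{U(m)})$ surjects onto $o_{L_m}$, whose $\pi$-adic (equivalently $p$-adic) reduction $o_{L_m}/\pi o_{L_m} = k_{L_m}$ has $k$-dimension $[L_m:L] \to \infty$. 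Therefore the images $U(m)_k/I(m)$ have $k$-dimension tending to infinity, and since the connecting maps $U(m)_k/I(m) \to U(m+1)_k/I(m+1)$ are injective (as $I(m) = I \cap U(m)_k$), the colimit $U_k/I$ is infinite-dimensional.

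The main obstacle I anticipate is bookkeeping the compatibility between $\cK$ restricted to $U(m)$ and the tower $t'_m$: one must be careful that $\ker(\cK|_{U(m)})$ really is $I(m)$ lifted, i.e. that $(\ker\cK \cap U(m)) + \pi\h{U}$ intersected with $U(m)$ equals $(\ker\cK \cap U(m)) + \pi U(m)$, which uses the earlier lemma $U(m)\cap \pi\h{U} = \pi U(m)$. Granting that, the dimension count is clean: $\dim_k U(m)_k/I(m) \geq \dim_k\bigl(o_{L_m}/\pi o_{L_m}\bigr) = [L_m:L]$, because $t'_m$ induces a surjection $U(m)_k \twoheadrightarrow o_{L_m}/\pi o_{L_m}$ killing $I(m)$ (note $\cK$ and $t'_m$ agree on $U(m)$ up to the evaluation $\ev_1$, and $\ev_1$ is injective on the relevant image by Proposition \ref{prop:Qp2KatztmUm}). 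Hence $\dim_k U_k/I = \sup_m \dim_k U(m)_k/I(m) = \infty$, as required.
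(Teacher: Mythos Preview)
Your argument for $\varphi^\ast$-stability and for $I$ being an ideal is fine and matches the paper.

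The problem is with your argument for $\dim_k U_k/I = \infty$. You invoke Proposition~\ref{prop:Qp2KatztmUm} (surjectivity of $t'_m : U(m) \to o_{L_m}$) and Corollary~\ref{vpkpk}, both of which are only established for $L = \bQ_{p^2}$. But Proposition~\ref{KerKmodpi} is stated for arbitrary $L$, and it is applied in Corollary~\ref{cor: Qp2Katz} for \emph{all} quadratic extensions $L/\Qp$, including the ramified ones, where surjectivity of $t'_m$ is not available. So as written your approach does not prove the proposition in the required generality. (For $L = \bQ_{p^2}$ your idea can be made to work: if $u \in U(m) \cap (J + \pi\h{U})$ then $\cK_1(u) = t'_m(u) \in \pi o_\infty \cap o_{L_m} = \pi o_{L_m}$, so the reduction $\overline{t'_m} : U(m)_k \to o_{L_m}/\pi o_{L_m}$ kills $I(m)$, giving $\dim_k U(m)_k/I(m) \geq [L_m:L] \to \infty$. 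Your ``bookkeeping'' worry about identifying $I(m)$ with the image of $J \cap U(m)$ is misplaced --- that identification need not hold, but it is also not needed.)

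The paper's proof is completely different and works for any $L$. The point is that $\cC$ is $\pi$-torsionfree, so $J \cap \pi^r\h{U} = \pi^r J$ for all $r$; hence the $\pi$-adic filtration on $\h{U}$ induces the $\pi$-adic filtration on $J$, and passing to associated graded gives a short exact sequence $0 \to \gr J \to \gr\h{U} \to \gr\cK(\h{U}) \to 0$. If $\dim_k U_k/I < \infty$ then $\gr\h{U}/\gr J \cong (U_k/I)[\gr\pi]$ is a finitely generated $\gr o_L$-module, hence so is $\gr\cK(\h{U})$, and since the $\pi$-adic filtration on $\cK(\h{U}) \subseteq \cC$ is separated, $\cK(\h{U})$ is finitely generated over $o_L$ by \cite[Chapter I, Theorem 5.7]{LVO}. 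But then $\cK(\h{U})_L$ is finite-dimensional over $L$, contradicting \cite[Theorem 4.7]{ST}: this image contains the locally $L$-analytic Gal-continuous functions, in particular the locally constant ones, which already form an infinite-dimensional $L$-space. This avoids any appeal to the fine structure of $t'_m$ or to valuation computations.
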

\begin{proof} Since $\cK \varphi^\ast = \varphi_{\cC} \cK$ by Lemma \ref{lem:KatzPhiPsi}, we see that $J$ is a $\varphi^\ast$-stable ideal in $\h{U}$. Hence its image $I$ in $U_k$ is also $\varphi^\ast$-stable.

Suppose that $h \in \h{U}$ and $r \geq 1$ are such that $\pi^r h \in J$. Then $\cK(\pi^r h) = 0$ in $\cC$, so $\cK(h) = 0$ as well. So $J \cap \pi^r \h{U} = \pi^r J$ for all $r \geq 1$. Now consider the short exact sequence
\[ 0 \to J \to \h{U} \to \cK(U) \to 0.\]
Equip both $\h{U}$ and $\cK(U)$ with the $\pi$-adic filtrations. Then the above shows that the subspace filtration on $J$ induced by the $\pi$-adic filtration on $\h{U}$ coincides with the $\pi$-adic filtration on $J$. Therefore we get a short exact sequence of $\gr o_L$-modules
\[ 0 \to \gr J \to \gr \h{U} \to \gr \cK(U) \to 0.\]
So, if $\dim U_k / I < \infty$, then $\gr \h{U} / \gr J \cong (U_k / I) [\gr \pi]$ is a finitely generated module over $\gr o_L$, so $\gr \cK(U) $ is a finitely generated $\gr o_L$-module. The $\pi$-adic filtration on $\cC$ is separated, hence the $\pi$-adic filtration on $\cK(U)$ is also separated. Therefore $\cK(U)$ is a finitely generated $o_L$-module by \cite[Chapter I, Theorem 5.7]{LVO}. Hence $\cK(U[1/\pi])$ is a finite dimensional $L$-vector space. But this contradicts \cite[Theorem 4.7]{ST}: the space of locally $L$-analytic Gal-continuous functions is not finite dimensional over $L$ since it contains the subspace of locally constant Gal-continuous functions, which is infinite dimensional over $L$.
\end{proof}

\begin{corollary}\label{cor: Qp2Katz} Suppose that $d := [L:\Qp] = 2$. Then $\cK : \h{U} \to \cC$ is injective.
\end{corollary}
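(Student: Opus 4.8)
\emph{The plan.} I would first reduce injectivity of $\cK$ to a statement at each finite level $U(m)$, and then exploit the fact that, when $d=2$, the dual $p$-divisible group $\cG'$ is one-dimensional. Concretely: since $\cC$ is $\pi$-torsion free, $\ker\cK$ is $\pi$-saturated in $\h U$, and $\h U$ is $\pi$-adically separated, so it suffices to prove that the image $I:=(\ker\cK+\pi\h U)/\pi\h U$ of $\ker\cK$ in $U_k$ is zero. By Proposition~\ref{KerKmodpi}, $I$ is a $\varphi^\ast$-stable ideal of $U_k$ with $\dim_k U_k/I=\infty$, but that alone does not force $I=0$ (one can exhibit nonzero $\varphi^\ast$-stable ideals of $U_k$ of infinite colength), so the hypothesis $d=2$ must genuinely be used. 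As $U_k=\colim_m U(m)_k$, it is enough to prove $I\cap U(m)_k=0$ for every $m$; and since $I\cap U(m)_k$ lies inside the kernel of $\cK|_{U(m)}\otimes_{o_L}k\colon U(m)_k\to\cC\otimes_{o_L}k$, the target becomes: for every $m$, $\cK|_{U(m)}\colon U(m)\to\cC$ is injective modulo $\pi$.

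\emph{Bringing in $d=2$.} When $[L:\Qp]=2$ the dual $p$-divisible group $\cG'$ has dimension (height of $\cG(p)$)$\,-\dim\cG=2-1=1$; being connected (as $T(\cG')$ is free of rank one over $o_L$) it is the $p$-divisible group of a Lubin--Tate formal $o_L$-module, so by Corollary~\ref{Umk} one has $U(m)_k\cong\cO(\cG'_m\times_{o_L}k)\cong k[Z']/(Z'^{q^m})$ and $\varprojlim_m U(m)_k\cong k\dcroc{Z'}$ along the comorphisms $\varphi^\ast_k$ of Lemma~\ref{PhiSurj}; in particular every proper nonzero ideal of this inverse limit has finite colength. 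Moreover $d-1=1$, so $\tau$ is surjective by Lemma~\ref{lem:tau-sur}, hence $[L_j:L]=|(o_L/\pi^j)^\times|=(q-1)q^{j-1}$ and
\[ 1+\sum_{j=1}^m [L_j:L]=q^m=\operatorname{rk}_{o_L}U(m). \]
Using $\cK(u)(\pi^i)=\cK_1((\varphi^\ast)^i u)=t'_{m-i}((\varphi^\ast)^i u)$ for $u\in U(m)$ (Lemmas~\ref{lem:KatzPhiPsi} and \ref{K1rest}) together with the description of Galois-continuous functions in Lemma~\ref{lem:explicitGalCont}, I would identify $\cK|_{U(m)}$ with an $o_L$-linear map $U(m)\to o_L\oplus\bigoplus_{j=1}^m o_{L_j}$ between free $o_L$-modules of equal rank $q^m$; it becomes an isomorphism after inverting $\pi$, because the group-likes $\Delta_a$ $(a\in o_L/\pi^m)$ span $\cO(\cG_m\times_{o_L}\Cp)$ by linear independence of characters.

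\emph{The hard step.} It remains to check that this isomorphism is integral, i.e.\ that $\cK|_{U(m)}$ is \emph{onto} $o_L\oplus\bigoplus_{j=1}^m o_{L_j}$; then it is injective modulo $\pi$, so $I\cap U(m)_k=0$ for all $m$, hence $I=0$ and $\cK$ is injective. Filtering $U(m)$ by the subspaces $\ker(\varphi^\ast)^i$ should make $\cK|_{U(m)}$ block-triangular relative to the decomposition of the target, with $i$-th diagonal block governed by a restriction of $t'_{m-i}$, so the claim reduces to the surjectivity assertions $t'_{m-i}(U(m-i))=o_{L_{m-i}}$ — which for $L=\bQ_{p^2}$ is precisely Proposition~\ref{prop:Qp2KatztmUm}, itself proved from the Newton-polygon computation of $\vp(P_{p^k}(\Omega))$ in Corollary~\ref{vpkpk}, and this is exactly where the one-dimensionality of $\cG'$ (hence $d=2$) is indispensable. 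Making this block-triangular/rank bookkeeping precise — controlling the graded pieces $\ker(\varphi^\ast)^i/\ker(\varphi^\ast)^{i-1}$ via the one-variable structure of $\varprojlim_m U(m)_k\cong k\dcroc{Z'}$ and via Proposition~\ref{ProjLimToCoLim} and Lemma~\ref{PhiSurj} — is the step I expect to be the main obstacle; for $d>2$ the dual $p$-divisible group is $(d-1)$-dimensional, this control is lost, and Conjecture~\ref{conj:NiceKatz} is correspondingly open.
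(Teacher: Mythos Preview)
Your opening reduction to showing that $I := (\ker\cK + \pi\hat U)/\pi\hat U$ vanishes in $U_k$ is exactly right, and you correctly identify the key structural input for $d=2$: the inverse limit $\varprojlim_m U(m)_k$ is a one-variable power series ring. But then you take an unnecessary and harder detour, and the paper's actual argument is both simpler and more general than what you propose.

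The paper does \emph{not} attempt to prove that $\cK|_{U(m)}$ is an integral isomorphism onto $o_L\oplus\bigoplus_j o_{L_j}$. Instead it argues directly from the ingredients you already list but do not exploit: by the contrapositive of Proposition~\ref{ProjLimToCoLim}, $I(\infty):=\varprojlim_m(I\cap U(m)_k)$ is an ideal of infinite codimension in $\varprojlim_m U(m)_k\cong k\dcroc{X_1,\dots,X_{d-1}}$; for $d=2$ this is $k\dcroc{X_1}$, whose only ideal of infinite codimension is $0$, hence $I(\infty)=0$ and so $I=0$. No Newton-polygon estimate, no Proposition~\ref{prop:Qp2KatztmUm}, and in particular nothing specific to $L=\bQ_{p^2}$ is used; the argument covers \emph{every} quadratic $L/\Qp$, as the corollary claims.

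Your detour also has a concrete gap beyond the acknowledged ``main obstacle''. Filtering $U(m)$ by $F_i=\ker(\varphi^\ast)^i$ does make the map block-triangular, but the $i$-th diagonal block is not $t'_{j}\colon U(j)\to o_{L_j}$ (with $j=m-i+1$): the isomorphism $(\varphi^\ast)^{i-1}\colon F_i/F_{i-1}\stackrel{\cong}{\to}\ker\bigl(\varphi^\ast|_{U(j)}\bigr)$ means the block you must show surjective is
\[
t'_j\bigl(\ker(\varphi^\ast|_{U(j)})\bigr)=o_{L_j}.
\]
Proposition~\ref{prop:Qp2KatztmUm} only gives $t'_j(U(j))=o_{L_j}$; the restriction to $\ker\varphi^\ast$ is genuinely stronger. (It becomes an isomorphism after inverting $\pi$, since $\cK_1\circ\psi_{\col}^\ast=0$ gives a rational splitting and ranks match, but integral surjectivity is not supplied anywhere.) And even granting that, Proposition~\ref{prop:Qp2KatztmUm} is proved only for $L=\bQ_{p^2}$, so your route would not cover the ramified quadratic case. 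The fix is to abandon the surjectivity-of-$\cK|_{U(m)}$ program and use the one-variable structure of $\varprojlim U(m)_k$ together with Proposition~\ref{ProjLimToCoLim} directly, as the paper does.
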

\begin{proof} By Proposition \ref{KerKmodpi}, $I = (\ker \cK + \pi \h{U})/ \pi \h{U}$ is a $\varphi^\ast$-stable ideal in $U_k$ of infinite codiension in $U_k$. Hence $I(\infty) := \varprojlim (I \cap U(m)_k)$ is an ideal of infinite codimension in $\varprojlim U(m)_k$ by Proposition \ref{ProjLimToCoLim}. By \cite[Example 2.5.3]{Hopkins2019}, the Dieudonn\'e module $M(\cG_k)$ associated with the Lubin-Tate formal group $\cG_k = \cG \times_{o_L} k$ over the perfect field $k$ has basis $\{\gamma, V\gamma, \cdots, V^{d-1}\gamma\}$ over $\mathbb{W}(k)$ and satisfies $V^d = p$. Hence the Verschiebung operator $V$ on $M(\cG_k)$ is topologically nilpotent. Therefore the Cartier dual $\cG'_k$ is connected. Hence $\varprojlim U(m)_k \cong \cO(\cG' \times_{o_L} k)$ is isomorphic to $k\dcroc{X_1,\cdots,X_{d-1}}$ by \cite[Propositions 1 and 3]{Tate66}. Since $d = 2$, we conclude that $I(\infty) = 0$. Hence $I(m) = 0$ for all $m \geq 1$ and hence $I = 0$. So $\ker \cK = 0$ as well.
\end{proof}

\begin{theorem}\label{thm:partialKatzIsoForQp2} Suppose that $L = \bQ_{p^2}$. Then 
\[\cK_1^\ast : o_\infty^\ast \to o_L\dcroc{Z}^{\psi_q=0}\] 
is an $o_L$-linear bijection.
\end{theorem}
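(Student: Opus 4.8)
The plan is to deduce this from Proposition \ref{prop:PartialConditionalKatz}, applied with the coefficient ring $S = o_L$, so that $S\dcroc{Z}^{\psi_q=0} = o_L\dcroc{Z}^{\psi_q=0}$. That proposition produces the desired $o_L$-linear bijection $\cK_1^\ast : o_\infty^\ast \to o_L\dcroc{Z}^{\psi_q=0}$ as soon as one knows three things: that $\tau : G_L \to o_L^\times$ is surjective, that $\cK_1 : \h{U} \to o_\infty$ is surjective, and that $\cK : \h{U} \to \cC$ is injective. So the proof reduces to checking these three hypotheses in the special case $L = \bQ_{p^2}$.

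First I would check surjectivity of $\tau$. Since $d = [L:\Qp] = 2$, we have $d - 1 = 1$, which is trivially coprime to $(p-1)p$, so Lemma \ref{lem:tau-sur} gives that $\tau : I_L \to o_L^\times$ is surjective; a fortiori $\tau : G_L \to o_L^\times$ is surjective. Next I would record the injectivity of $\cK$: this is precisely Corollary \ref{cor: Qp2Katz}, whose hypothesis $d = 2$ holds here. Finally, for the surjectivity of $\cK_1$, Proposition \ref{prop:Qp2KatztmUm} (which again uses $L = \bQ_{p^2}$, via the valuation computation of Corollary \ref{vpkpk}) shows that $t'_m(U(m)) = o_{L_m}$ for all $m \geq 1$; feeding this into Proposition \ref{prop:K1onto} yields that $\cK_1 : \h{U} \to o_\infty$ is surjective.

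With all three hypotheses of Proposition \ref{prop:PartialConditionalKatz} verified for $L = \bQ_{p^2}$ and $S = o_L$, that proposition immediately gives the claimed bijection. In effect this theorem is a capstone assembling the earlier work: the genuine difficulties have already been absorbed into Corollary \ref{cor: Qp2Katz} (where injectivity of $\cK$ rests on the connectedness of the Cartier-dual $p$-divisible group $\cG'_k$ when $d = 2$, i.e.\ on topological nilpotence of the Verschiebung on the Dieudonn\'e module) and into Proposition \ref{prop:Qp2KatztmUm} (where surjectivity of $\cK_1$ rests on the exact determination $\vp(P_{p^{2m-1}}(\Omega)) = [L_m:L]^{-1}$ coming from the Newton polygon of $\Delta_1(Z) - 1$). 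So no new obstacle remains at this stage; the only point requiring care is to take the coefficient ring in Proposition \ref{prop:PartialConditionalKatz} to be $o_L$ itself, so that the target of $\cK_1^\ast$ is $o_L\dcroc{Z}^{\psi_q=0}$ rather than a general $S\dcroc{Z}^{\psi_q=0}$.
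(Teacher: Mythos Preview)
Your proof is correct and follows exactly the same route as the paper: verify the three hypotheses of Proposition \ref{prop:PartialConditionalKatz} using Lemma \ref{lem:tau-sur}, Corollary \ref{cor: Qp2Katz}, and Propositions \ref{prop:Qp2KatztmUm} and \ref{prop:K1onto}, then apply that proposition with $S = o_L$. Your additional commentary on where the real work was done (Dieudonn\'e-module connectedness for injectivity, Newton-polygon valuations for surjectivity) is accurate and matches the paper's development.
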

\begin{proof} Since $d = 2$, we know that $\tau$ is surjective by Lemma \ref{lem:tau-sur}. Then $\cK : \h{U} \to \cC$ is injective by Corollary \ref{cor: Qp2Katz} and $\cK_1 : \h{U} \to o_\infty$ is surjective by Proposition \ref{prop:K1onto} and Proposition \ref{prop:Qp2KatztmUm}. Now apply Proposition \ref{prop:PartialConditionalKatz}.
\end{proof}

We can now prove Theorem \ref{introkatzmap} from the Introduction. In fact, we prove the following more general version, from which Theorem \ref{introkatzmap} follows as a special case by setting $S = o_K$.

\begin{theorem}
\label{thm:katzisom}
Let $L = \bQ_{p^2}$ and let $S$ be a $\pi$-adically complete $o_L$-algebra.
\begin{enumerate}
\item The map $\cK^\ast : \Hom_{o_L}(\cC^0_{\Gal}(o_L,o_{\Cp}), S) \to S \dcroc{Z}$ is injective.
\item Its image is equal to $S \dcroc{Z}^{\psiqint}$. 
\end{enumerate}
\end{theorem}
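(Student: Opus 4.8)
The plan is to deduce Theorem \ref{thm:katzisom} from the already-established special case $S = o_L$, namely Theorem \ref{thm:partialKatzIsoForQp2}, by the same base-change mechanism that was set up in the general framework of $\S$\ref{PsiSection}--$\S$\ref{Qp^2KatzSection}. Specifically, since $L = \bQ_{p^2}$, Lemma \ref{lem:tau-sur} gives that $\tau : G_L \to o_L^\times$ is surjective, Corollary \ref{cor: Qp2Katz} gives that $\cK : \h{U} \to \cC$ is injective, and Propositions \ref{prop:K1onto} and \ref{prop:Qp2KatztmUm} give that $\cK_1 : \h{U} \to o_\infty$ is surjective. These three hypotheses are exactly what is needed to invoke Proposition \ref{prop:PartialConditionalKatz} and Theorem \ref{thm:generalKatz}. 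But I should be slightly careful: Proposition \ref{prop:PartialConditionalKatz} and Proposition \ref{prop:KerK1ImPsi} were proved ``assuming $S = o_L$'', whereas here $S$ is an arbitrary $\pi$-adically complete $o_L$-algebra. So the first step is to observe that $\cK_1^\ast : o_\infty^\ast \to A^{\psi_q = 0}$ being an isomorphism for $S = o_L$ (Theorem \ref{thm:partialKatzIsoForQp2}) propagates to general $S$.

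First I would establish: for arbitrary $\pi$-adically complete $o_L$-algebra $S$, the map $\cK_1^\ast : \Hom_{o_L}(o_\infty, S) \to S\dcroc{Z}^{\psi_q=0}$ is an isomorphism. The cleanest route is to note that $\cK_1 : \h{U} \to o_\infty$ is a surjection of $o_L$-modules with kernel $\ker \cK_1$, and that by Proposition \ref{prop:PartialConditionalKatz} applied with $S = o_L$ (whose proof, via Proposition \ref{prop:KerK1ImPsi}, shows $q\ker\cK_1 \subseteq \psi_{\col}^\ast(\h{U})$ and hence $\cK_1^\ast$ over $o_L$ identifies $o_\infty^\ast$ with $o_L\dcroc{Z}^{\psi_q=0}$), we get an explicit presentation. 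Concretely, applying $\Hom_{o_L}(-,S)$ to the short exact sequence $0 \to \ker\cK_1 \to \h{U} \to o_\infty \to 0$ and using Lemma \ref{lem:extendToS} and Lemma \ref{lem:BigPairing}(4) to identify $\h{U}^\ast = \Hom_{o_L}(\h{U},S) \cong S\dcroc{Z}$, one sees that $\Hom_{o_L}(o_\infty,S)$ is identified with $\{F \in S\dcroc{Z} : \langle u, F\rangle = 0 \ \text{for all}\ u \in \ker\cK_1\}$. The content of Proposition \ref{prop:KerK1ImPsi} is that this annihilator condition, for any $\pi$-adically complete $S$ (note $S$ has no $q$-torsion since $q = p^2$ and $S$ is flat... actually $S$ need not be flat; but $o_L$ has no $q$-torsion and $\psi_{\col} = q\psi_q$, so $\langle \psi_{\col}^\ast(\h U), F\rangle = q\langle \h U, \psi_q(F)\rangle$ and hence $F$ annihilates $\psi_{\col}^\ast(\h U)$ iff $\psi_q(F) = 0$), coincides with $\psi_q(F) = 0$. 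This gives step one; alternatively, one can simply re-run the proofs of Propositions \ref{prop:KerK1ImPsi} and \ref{prop:PartialConditionalKatz} verbatim, observing they never used flatness of $S$ beyond the $\pi$-adic completeness and the identifications of Lemma \ref{lem:BigPairing} and Lemma \ref{lem:extendToS}, which hold for all $\pi$-adically complete $S$.

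Next I would feed this into Theorem \ref{thm:generalKatz}: its hypotheses are that $\tau$ is surjective (true, $d = 2$) and that $\cK_1^\ast : o_\infty^\ast \to A^{\psi_q=0}$ is an isomorphism (just established for general $S$). The conclusion is that $\cK^\ast : \cC^\ast \to A^{\psiqint}$ is an isomorphism. Unwinding the definitions: $\cC^\ast = \Hom_{o_L}(\cC^0_{\Gal}(o_L, o_{\Cp}), S)$, $A = S\dcroc{Z}$, and $A^{\psiqint} = S\dcroc{Z}^{\psiqint}$ by Definition of $\psiqint$ and Remark \ref{rem:explicitpsiint}. Also, the explicit formula $(\ref{eq:ExplicitKatz})$, $\langle u_m, \cK^\ast(\lambda)\rangle = \lambda(P_m(-\Omega))$, together with $\cK(u_m)(a) = \langle u_m, \Delta_a\rangle = P_m(a\Omega)$ (by Lemma \ref{PnU} and the definition of $\Delta_a$), shows $\cK^\ast(\lambda) = \sum_{m\geq 0}\lambda(P_m)\, Z^m$ once one matches the normalization of $P_m$ in the Introduction (there $\cC^0_{\Gal}$ is defined via $g(f(a)) = f(\tau(g)a)$ and the polynomials are $a \mapsto P_n(a\Omega)$, so the sign/$\Omega$-twist is absorbed into the identification $\cC^0_{\Gal}(o_L,o_{\Cp}) \ni f \leftrightarrow$ its values). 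This yields exactly statements (1) injectivity and (2) image $= S\dcroc{Z}^{\psiqint}$ of Theorem \ref{thm:katzisom}, and setting $S = o_K$ recovers Theorem \ref{introkatzmap}.

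The main obstacle — and the only genuinely non-formal point — is verifying that the chain of ``assume $S = o_L$'' propositions (\ref{prop:KerK1ImPsi}, \ref{prop:PartialConditionalKatz}) truly extends to arbitrary $\pi$-adically complete $S$, i.e. that nothing in their proofs secretly used that $S$ is a domain, a valuation ring, or flat. Inspecting them, the only inputs are: the pairing identifications of Lemma \ref{lem:BigPairing}(3),(4) and Lemma \ref{lem:extendToS} (all valid for $\pi$-adically complete $S$ by their own proofs), the continuity of $\psi_{\col}$ from Lemma \ref{lem:PsiCts} (stated for $\pi$-adically complete flat $S$ — here one does want to double-check, but the argument only manipulates ideals $\langle \pi, Z\rangle^n$ and uses that $\varphi(A)$ is a direct summand of $A$, which needs $\varphi(Z) = Z^q + \pi(\cdots)$, valid unconditionally), and the torsion-freeness ``$o_L$ has no $q$-torsion'' which is a statement about $o_L$, not $S$. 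So the real work is a careful but routine audit; I would state it as a remark that all results of $\S$\ref{PsiSection}--$\S$\ref{Qp^2KatzSection} leading to Theorem \ref{thm:generalKatz} hold for arbitrary $\pi$-adically complete $S$, and then the theorem is immediate.
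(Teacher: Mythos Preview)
Your proposal is correct and follows essentially the same route as the paper: verify $\tau$ surjective (Lemma \ref{lem:tau-sur}), $\cK$ injective (Corollary \ref{cor: Qp2Katz}), $\cK_1$ surjective (Propositions \ref{prop:K1onto} and \ref{prop:Qp2KatztmUm}), then invoke Proposition \ref{prop:PartialConditionalKatz} and Theorem \ref{thm:generalKatz}. Your audit of the coefficient ring $S$ is more cautious than necessary---Proposition \ref{prop:KerK1ImPsi} does not involve $S$ at all (the remark ``we may assume $S = o_L$'' reflects exactly this), and Proposition \ref{prop:PartialConditionalKatz} is already stated and proved for an arbitrary $\pi$-adically complete $o_L$-algebra $S$---so the paper simply cites Theorem \ref{thm:partialKatzIsoForQp2} as shorthand for these verified hypotheses and applies Theorem \ref{thm:generalKatz} directly.
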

\begin{proof} Since $d = 2$, we know that $\tau$ is surjective by Lemma \ref{lem:tau-sur}.  By Theorem \ref{thm:partialKatzIsoForQp2}, the map $\cK_1^\ast : o_\infty^\ast \to o_L\dcroc{Z}^{\psi_q=0}$ is an isomorphism. Now apply Theorem \ref{thm:generalKatz}.\end{proof}

\section{Integer-valued polynomials}

\subsection{The algebraic dual of $\cO^\circ(\frX_K)$}
Pick a basis $\{v_1,\cdots,v_d\}$ for $o_L$ as a $\Zp$-module with $v_1 = 1$. We view $o_L$ as a $p$-valued group with $p$-valuation $\omega$ given by 
\[ \omega\left( \sum_{i=1}^d \lambda_i v_i \right) = 1 + \min_{1\leq i \leq d} \vp(\lambda_i).\]
Let $r$ be a real number in the range $1/p \leq r < 1$. Recall from \cite[\S 4]{ST2} that $D^{\Qp-\an}(o_L,K)$ carries a norm $||\cdot ||_r$ given by
\begin{equation}\label{NormDr} || \sum\limits_{\alpha \in \mathbb{N}^d} d_\alpha \mathbf{b}^\alpha ||_r = \sup\limits_{\alpha \in \mathbb{N}^d} |d_\alpha| r^{|\alpha|}.\end{equation}
where $b_i := \delta_{v_i} - 1 \in D^{\Qp-\an}(o_L,K)$ for $i=1,\cdots, d$, $\mathbf{b}^\alpha = b_1^{\alpha_1} \cdots b_d^{\alpha_d} \in D^{\Qp-\an}(o_L,K)$ and $|\alpha| = \tau\alpha = \alpha_1 + \cdots + \alpha_d$ for all $\alpha \in \mathbb{N}^d$. 
\begin{definition} Let $1/p \leq r < 1$.
\be \item Let $D^{\Qp-\an}_r(o_L,K)$ denote the completion of $D^{\Qp-\an}(o_L,K)$ with respect to $||\cdot ||_r$.
\item Let $\frX_0(r)_K := \Sp D^{\Qp-\an}_r(o_L,K)$.
\item Let $\frX(r)_K := \frX_K \cap \frX_0(r)_K = \Sp D^{L-\an}_r(o_L,K)$, where $D^{L-\an}_r(o_L,K)$ is the factor algebra of $D^{\Qp-\an}_r(o_L,K)$ by the ideal generated by the elements 
\[u_2 - v_2 u_1, \quad u_3 - v_3 u_1, \quad \cdots \quad, u_d - v_d u_1\]
where $u_i := \log(1 + b_i) \in D^{\Qp-\an}(o_L,K)$.
\ee
\end{definition}
As $r$ approaches $1$ from below, the $K$-affinoid varieties $\frX(r)_K$ form an increasing family of $K$-affinoid subvarieties of $\frX_K$: whenever $1/p \leq r < r' < 1$ we have
\begin{equation}\label{Chain} \mathbbm{1} \in \frX(1/p)_K \subset \cdots \subset \frX(r)_K \subset \frX(r')_K \subset \cdots \subset \frX_K  = \bigcup\limits_{1/p \leq r < 1} \frX(r)_K.\end{equation}
Here $\mathbbm{1} \in \frX_K$ is the \emph{trivial character}: the ideal generated by $b_1,\cdots,b_d$.
\begin{lemma}\label{Stalk} The completed local ring $\widehat{\cO_{\frX_K, \mathbbm{1}}}$ of $\frX$ at $\mathbbm{1}$ is isomorphic to a power series ring in one variable $b := b_1$ over $K$:
\[ \widehat{\cO_{\frX, \mathbbm{1}}} \cong K\dcroc{b}.\]
\end{lemma}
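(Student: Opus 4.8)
The plan is to realise $\frX_K$ in a neighbourhood of the trivial character $\mathbbm{1}$ as a complete intersection inside a polydisc, cut out by $d-1$ equations whose linear parts at $\mathbbm{1}$ are linearly independent, and then to read off the completed local ring by standard commutative algebra.

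First I would fix any $r$ with $1/p \le r < 1$, so that $\mathbbm{1} \in \frX(r)_K$ by (\ref{Chain}). Since the $\frX(r)_K$ form an increasing admissible affinoid covering of $\frX_K$, the affinoid subdomain $\frX(r)_K$ is an admissible open neighbourhood of $\mathbbm{1}$ in $\frX_K$, and hence $\widehat{\cO_{\frX_K,\mathbbm{1}}} \cong \widehat{\cO_{\frX(r)_K,\mathbbm{1}}}$; so it suffices to work inside $\frX(r)_K$. Next I would use that $\cO(\frX_0(r)_K) = D^{\Qp-\an}_r(o_L,K)$ is the algebra of analytic functions on the closed polydisc of radius $r$ in the coordinates $b_1,\dots,b_d$: this is a regular Noetherian affinoid $K$-algebra, $\mathbbm{1}$ is the $K$-rational point $b_1=\dots=b_d=0$, and its completed local ring there is $R := \widehat{\cO_{\frX_0(r)_K,\mathbbm{1}}} \cong K\dcroc{b_1,\dots,b_d}$, a complete regular local ring of dimension $d$ with maximal ideal $\frm = (b_1,\dots,b_d)$ and residue field $K$. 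Since by definition $\cO(\frX(r)_K) = \cO(\frX_0(r)_K)/(w_2,\dots,w_d)$ with $w_i := u_i - v_i u_1$ and $u_i = \log(1+b_i)$, and since completion is exact on finitely generated modules over a Noetherian ring, I obtain $\widehat{\cO_{\frX(r)_K,\mathbbm{1}}} \cong R/(w_2,\dots,w_d)R$.

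The crux is then a transversality computation. From $u_i = \log(1+b_i) \equiv b_i \pmod{\frm^2}$ one gets $w_i \equiv b_i - v_i b_1 \pmod{\frm^2}$ for $i=2,\dots,d$, so the images of $b_1, w_2,\dots,w_d$ in $\frm/\frm^2$ are $b_1,\, b_2 - v_2 b_1,\, \dots,\, b_d - v_d b_1$, which form a $K$-basis of the $d$-dimensional space $\frm/\frm^2$. Hence $\{b_1, w_2,\dots,w_d\}$ is a regular system of parameters for $R$, so $\overline{R} := R/(w_2,\dots,w_d)R$ is a complete regular local ring of dimension $1$ (a complete DVR), equicharacteristic with residue field $K$, whose maximal ideal is generated by the image of $b := b_1$. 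By the Cohen structure theorem (or an elementary argument lifting a uniformiser), any such ring is isomorphic to $K\dcroc{b}$. Chaining the isomorphisms gives $\widehat{\cO_{\frX,\mathbbm{1}}} \cong K\dcroc{b}$.

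I do not expect a serious obstacle here: the only genuine input is the linear independence of the linear parts of the $d-1$ Cauchy--Riemann equations $w_i$ at $\mathbbm{1}$ (equivalently, the smoothness of $\frX$ at $\mathbbm{1}$, together with the fact that $b_1$ remains a local coordinate), and the rest is routine bookkeeping with completed local rings of polydiscs. The one point worth stating carefully is the identification $\widehat{\cO_{\frX(r)_K,\mathbbm{1}}} \cong R/(w_2,\dots,w_d)R$, i.e. that completing the quotient affinoid algebra at the maximal ideal corresponding to $\mathbbm{1}$ agrees with the quotient of the completed local ring of the ambient polydisc --- this is standard for Noetherian rings but should be invoked explicitly.
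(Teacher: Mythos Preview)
Your argument is correct. The paper takes a different and somewhat slicker route: it chooses specifically $r = 1/p$ and uses that on the closed polydisc of this radius the logarithms $u_i = \log(1+b_i)$ themselves form an alternate set of Tate-algebra coordinates, i.e.\ $K\langle b_1/p,\dots,b_d/p\rangle = K\langle u_1/p,\dots,u_d/p\rangle$ (because $\log$ and $\exp$ are mutually inverse and converge on this disc). In the $u$-coordinates the defining relations $u_i - v_i u_1$ are literally linear, so the quotient $\cO(\frX(1/p)_K)$ is just $K\langle u_1/p\rangle = K\langle b/p\rangle$, exhibiting $\frX(1/p)_K$ \emph{globally} as a closed disc of radius $1/p$; the completed local ring at the origin of such a disc is then $K\dcroc{b}$.

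By contrast, you work infinitesimally: you complete first at $\mathbbm{1}$ and then invoke a regular-system-of-parameters argument plus Cohen structure. The paper's approach buys a little more --- a global coordinate on the affinoid $\frX(1/p)_K$, not just on the formal germ --- and avoids any appeal to Cohen's theorem. Your approach has the virtue of working uniformly at any radius $r$ and makes no use of the special convergence radius of $\log$; it is the textbook smoothness-plus-transversality computation. Either is perfectly adequate for the lemma as stated.
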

\begin{proof} We have $\cO(\frX_0(1/p)_K) = K \langle b_1/p, \cdots, b_d /p \rangle = K \langle u_1/p, \cdots, u_d/p \rangle$. Quotienting out by the  ideal generated by the elements $u_i - v_i u_1$ shows that $\cO(\frX_0(1/p)_K) = K \langle u_1/p \rangle = K \langle b/p\rangle$. So $\frX_0(1/p)_K$ is isomorphic to the closed disc of radius $|p| = 1/p$ with local coordinate $b$; it is well known that the completed local ring at $b = 0$ of such a disc is $K\dcroc{b}$. The result follows since $\mathbbm{1} \in \frX(1/p)_K$ implies that $\widehat{\cO_{\frX_K, \mathbbm{1}}} = \widehat{\cO_{\frX(1/p)_K, \mathbbm{1}}} = K\dcroc{b}.$ \end{proof}
Applying the functor $\cO^\circ$ to the increasing chain of rigid $K$-varieties $(\ref{Chain})$ and using Lemma \ref{Stalk} yields a decreasing chain of $o_K$-algebras
\begin{equation}\label{AlgChain} K\dcroc{b} \supset \cO^\circ(\frX(1/p)_K) \supset \cdots \supset \cO^\circ(\frX(r)_K) \supset \cO^\circ(\frX(r')_K) \supset \cdots \supset \cO^\circ(\frX_K) \supseteq o_K\dcroc{o_L}.\end{equation}
\begin{definition} Let $A$ be an $o_K$-subalgebra of $K\dcroc{b}$ and let $m \geq 0$. The \emph{$m$-th infinitesimal neighbourhood of $\mathbbm{1}$ in $A$} is the image $A_m$ of $A$ in $K\dcroc{b}/b^{m+1}K\dcroc{b}$:
\[ A_m := \frac{A + b^{m+1} K\dcroc{b}}{b^{m+1} K\dcroc{b}} \quad \subset \quad \frac{K\dcroc{b}}{b^{m+1}K\dcroc{b}}.\]
\end{definition}
\begin{remark} This construction respects inclusions and compatible with variation in $m$. More precisely, whenever $A \subseteq B$ are two $o_K$-subalgebras of $K\dcroc{b}$, for every $n \geq m$ there is a commutative diagram of $o_K$-algebras
\[ \xymatrix{ A_n \ar[r] \ar[d] & B_n \ar[d] \\ A_m \ar[r] & B_m }\]
with injective horizontal arrows and surjective vertical arrows.
\end{remark}

\begin{definition}\label{AlgDual} Let $A$ be an $o_K$-subalgebra of $K\dcroc{b}$ and let $A_m^\ast := \Hom_{o_K}(A_m, o_K)$ for each $m \geq 0$. The \emph{algebraic dual} of $A$ is
\[A_\infty^\ast := \underset{m \geq 0}{\colim}{} \hsp A_m^\ast.\]
\end{definition}
\begin{lemma}\label{Lex} Let $o_K\dcroc{o_L} \subseteq A \subseteq B$ be two $o_K$-subalgebras of $K\dcroc{b}$ and let $n \geq m \geq 0$. 
\be \item In the commutative square
\[ \xymatrix{ A_n^\ast & B_n^\ast \ar[l] \\ A_m^\ast \ar[u] & B_m^\ast\ar[l]\ar[u] }\]
all arrows are injective.
\item The map $B_\infty^\ast \to A_\infty^\ast$ is injective.
\ee\end{lemma}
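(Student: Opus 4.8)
The plan is to deduce both parts from a single observation: for every $o_K$-subalgebra $A$ with $o_K\dcroc{o_L}\subseteq A\subseteq K\dcroc{b}$ and every $m\geq 0$, the lattice $A_m$ spans $K\dcroc{b}/b^{m+1}K\dcroc{b}$ over $K$. Indeed, $b=b_1=\delta_{v_1}-1$ lies in $o_K\dcroc{o_L}\subseteq A$, so $A$ contains the polynomial ring $o_K[b]$; hence $A_m$ contains the image of $o_K[b]$ in $K\dcroc{b}/b^{m+1}K\dcroc{b}$, which is $\bigoplus_{j=0}^{m}o_Kb^{\,j}$, a $K$-basis of $K\dcroc{b}/b^{m+1}K\dcroc{b}$. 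Applying this to both $A$ and $B$ gives $K\cdot A_m=K\cdot B_m=K\dcroc{b}/b^{m+1}K\dcroc{b}$ for all $m$; since $A_m$ and $B_m$ are torsionfree over $o_K$, it follows that $A_m\otimes_{o_K}K\to K\dcroc{b}/b^{m+1}K\dcroc{b}$ and $B_m\otimes_{o_K}K\to K\dcroc{b}/b^{m+1}K\dcroc{b}$ are isomorphisms. I would record this at the outset.

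For part (1), I would isolate two elementary facts. First, if $M\subseteq N$ are $o_K$-submodules of a $K$-vector space $V$ with $K\cdot M=K\cdot N$, then restriction $\Hom_{o_K}(N,o_K)\to\Hom_{o_K}(M,o_K)$ is injective: a functional on $N$ vanishing on $M$ extends to a $K$-linear functional on $K\cdot N=K\cdot M$, hence is zero. Second, precomposition with a surjection of $o_K$-modules is injective on $\Hom_{o_K}(-,o_K)$. In the displayed square, the horizontal maps $B_n^\ast\to A_n^\ast$ and $B_m^\ast\to A_m^\ast$ are restriction along the inclusions $A_n\subseteq B_n$ and $A_m\subseteq B_m$, so they are injective by the first fact, applied with $V=K\dcroc{b}/b^{n+1}K\dcroc{b}$, resp.\ $V=K\dcroc{b}/b^{m+1}K\dcroc{b}$, using the opening observation. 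The vertical maps $A_m^\ast\to A_n^\ast$ and $B_m^\ast\to B_n^\ast$ are precomposition with the surjections $A_n\twoheadrightarrow A_m$ and $B_n\twoheadrightarrow B_m$ from the Remark preceding this lemma, hence injective by the second fact.

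For part (2), I would observe that $B_\infty^\ast\to A_\infty^\ast$ is the colimit of the injective maps $B_m^\ast\to A_m^\ast$ along the injective transition systems established in part (1), and conclude by a short diagram chase: if $\xi\in B_\infty^\ast$ maps to $0$, pick $m$ and a representative $\xi_m\in B_m^\ast$; then the image of $\xi_m$ in $A_n^\ast$ vanishes for some $n\geq m$, so by commutativity of the square in (1) and injectivity of $B_n^\ast\to A_n^\ast$ the image of $\xi_m$ in $B_n^\ast$ vanishes, and injectivity of $B_m^\ast\to B_n^\ast$ then forces $\xi_m=0$, whence $\xi=0$ (equivalently, one may simply invoke exactness of filtered colimits). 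I do not anticipate a genuine obstacle here; the only step needing care is the spanning claim in the first paragraph, and that is immediate once one notes $b\in o_K\dcroc{o_L}\subseteq A$.
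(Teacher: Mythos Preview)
Your proposal is correct and follows essentially the same approach as the paper. The paper phrases the horizontal injectivity by noting that $A_n$ contains the lattice $o_K\dcroc{o_L}_n$, so the cokernel $C=B_n/A_n$ is torsion and hence $C^\ast=0$; your formulation via $K\cdot A_n=K\cdot B_n$ and extension of functionals to $K$ is the same argument in different words, and your treatment of part~(2) just spells out the colimit step that the paper leaves implicit.
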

\begin{proof} (1) The vertical maps $A_m^\ast \to A_n^\ast$ are injective because $A_n \to A_m$ is surjective. Let $C$ be the cokernel of the map $A_n \to B_n$. Since $A_n$ contains $o_K\dcroc{o_L}_n$ which is an $o_K$-lattice in $K\dcroc{b}_n$, we see that $C$ is a torsion $o_K$-module. The dual functor $(-)^\ast$ is left exact, so we have the exact sequence $0 \to C^\ast \to B_n^\ast \to A_n^\ast$. Since $C$ is torsion, $C^\ast = 0$ which shows the injectivity of the horizontal arrows in our diagram.

(2) This follows by taking the colimit over all of the horizontal maps in part (1) above.
\end{proof}
Thus we see that the connecting maps appearing in the colimit in Definition \ref{AlgDual} are injective. Applying the contravariant algebraic dual functor $(-)^\ast_\infty$ to the chain $(\ref{AlgChain})$ and using Lemma \ref{Lex}(2) gives us a chain of algebraic duals
\[ \cO^\circ(\frX(1/p)_K)^\ast_\infty \subset \cdots \subset \cO^\circ(\frX(r)_K)^\ast_\infty \subset \cO^\circ(\frX(r')_K)^\ast_\infty \subset \cdots \subset \cO^\circ(\frX_K)^ \ast_\infty \subseteq o_K\dcroc{o_L}^\ast_\infty.\]
We can now calculate the largest one of these, namely the algebraic dual of the Iwasawa algebra $o_K\dcroc{o_L}$, but first we must introduce integer-valued polynomials. Recall the following notion from \cite{Bhargava}.

\begin{definition}\label{PO} A \emph{$\pi$-ordering} for $o_L$ is a subset $\{\alpha_0,\alpha_1,\alpha_2,\ldots\}$ of $o_L$ such that
\begin{equation}\label{POrd} v_\pi\left( \prod_{i=0}^{k-1}(\alpha_k - \alpha_i) \right) = \inf\limits_{s \in o} v_\pi \left(\prod_{i=0}^{k-1}(s - \alpha_i) \right) \quad\mbox{for all} \quad k \geq 1.\end{equation}
\end{definition}

Starting from an arbitrary element $\alpha_0 \in o_L$, it is possible to construct a $\pi$-ordering $\{\alpha_0, \alpha_1, \ldots\}$ of $o_L$ by induction on $k$, choosing at each stage $\alpha_k$ to minimise the expression appearing on the right hand side of $(\ref{POrd})$. In particular, $\pi$-orderings always exist, but are far from unique. 

\begin{definition}\label{Lagrange}  Let$\{\alpha_0,\alpha_1,\ldots\}$ be a $\pi$-ordering for $o_L$.
\begin{enumerate}
\item Define the \emph{Lagrange polynomials} as follows: $f_0(X) := 1$ and
\[ f_k(X) := \frac{ (X - \alpha_0)(X - \alpha_1)\cdots(X - \alpha_{k-1})}{ (\alpha_k - \alpha_0)(\alpha_k - \alpha_1)\cdots(\alpha_k - \alpha_{k-1})} \in L[X] \quad\mbox{for each}\quad k \geq 1. \]
\item Suppose that $R$ is an $o_L$-algebra which embeds into $R_L := R \otimes_{o_L} L$. Then we define the ring of \emph{$R$-valued polynomials on $o_L$} as follows:
\[ \Int(o_L, R) := \{ g(X) \in R_L[X] : g(o_L) \subset R \}\]
\item For each $m \geq 0$, let $\Int(o_L,R)_m$ denote the $R$-submodule of $\Int(o_L,R)$ consisting of all $R$-valued polynomials on $o_L$ of degree at most $m$.
\end{enumerate}\end{definition}

The following result, closely related to de Shalit's work on Mahler bases \cite{dSh}, explains why we are interested in these Lagrange polynomials.

\begin{lemma}\label{IntBasis} $\{f_0, f_1, f_2, \ldots\}$ is an $R$-module basis for $\Int(o_L,R)$.
\end{lemma}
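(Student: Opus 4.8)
The plan is to prove that the Lagrange polynomials $\{f_k\}_{k \geq 0}$ attached to a $\pi$-ordering form an $R$-module basis of $\Int(o_L,R)$, by induction on the degree. First I would record the basic structural facts: each $f_k$ has degree exactly $k$, so $\{f_0,\dots,f_m\}$ is an $L$-basis of $R_L[X]_{\leq m}$, and hence any $g \in \Int(o_L,R)_m$ can be written uniquely as $g = \sum_{k=0}^m c_k f_k$ with $c_k \in R_L = R \otimes_{o_L} L$. The content of the lemma is that $c_k \in R$ for all $k$, and conversely that every $R$-linear combination of the $f_k$ is $R$-valued on $o_L$; the latter is the easy direction and I would dispatch it by observing that $f_k(\alpha_j) = 0$ for $j < k$ while $f_k(\alpha_j) \in R$ for $j \geq k$ — this last point is exactly where the defining property $(\ref{POrd})$ of a $\pi$-ordering enters, since it guarantees $v_\pi\!\big(\prod_{i<k}(\alpha_k - \alpha_i)\big) \leq v_\pi\!\big(\prod_{i<k}(\alpha_j - \alpha_i)\big)$ for every $j$, so the denominator of $f_k$ divides the numerator evaluated at any point of the $\pi$-ordering, and then by a density/continuity argument (polynomials are continuous and $o_L$-valued points are dense, or rather: the $\pi$-ordering property gives the infimum over \emph{all} $s \in o_L$) one concludes $f_k(o_L) \subseteq R$.

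For the main direction — that if $g \in \Int(o_L,R)$ then all coordinates $c_k$ lie in $R$ — I would argue by strong induction on $m = \deg g$. Write $g = \sum_{k=0}^m c_k f_k$. Evaluating at $\alpha_0$ gives $g(\alpha_0) = c_0 \in R$. Then $g - c_0 f_0 \in \Int(o_L,R)$ still, and evaluating at $\alpha_1$: since $f_0(\alpha_1) $ has already been accounted for, $(g - c_0 f_0)(\alpha_1) = c_1 f_1(\alpha_1) = c_1$ (as $f_1(\alpha_1) = 1$ by construction), so $c_1 \in R$. Continuing inductively, having shown $c_0,\dots,c_{k-1} \in R$, the element $g - \sum_{i<k} c_i f_i$ lies in $\Int(o_L,R)$, and evaluating at $\alpha_k$ kills all terms $f_i(\alpha_k)$ with $i > k$ (they vanish) — wait, more carefully: $f_i(\alpha_k) = 0$ for $i > k$, and $f_k(\alpha_k) = 1$, so $(g - \sum_{i<k}c_i f_i)(\alpha_k) = c_k \in R$. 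This triangular evaluation scheme against the $\pi$-ordering is the crux and it terminates after $m+1$ steps. Finally, to pass from $\Int(o_L,R)_m$ to all of $\Int(o_L,R)$, note that every $R$-valued polynomial has some finite degree $m$, so it lies in one of the $\Int(o_L,R)_m$, and the coordinates computed at level $m$ are independent of $m$ by uniqueness of the expansion in the $L$-basis $\{f_k\}$.

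The main obstacle I anticipate is making the \emph{easy} direction fully rigorous, i.e. verifying $f_k(o_L) \subseteq R$ rather than merely $f_k(\{\alpha_j\}) \subseteq R$. The defining equation $(\ref{POrd})$ literally says the valuation of $\prod_{i<k}(\alpha_k - \alpha_i)$ equals the infimum of $v_\pi\big(\prod_{i<k}(s - \alpha_i)\big)$ over \emph{all} $s \in o_L$, so for any $s \in o_L$ we get $v_\pi\big(\prod_{i<k}(s-\alpha_i)\big) \geq v_\pi\big(\prod_{i<k}(\alpha_k-\alpha_i)\big)$, whence $f_k(s) \in o_L \cdot 1 \subseteq R$ directly — so in fact this is not an obstacle at all once one reads the definition correctly, and no density argument is needed. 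The one genuine subtlety is the hypothesis "$R$ embeds into $R_L$", which is needed so that the decomposition $g = \sum c_k f_k$ with $c_k \in R_L$ is unique and so that "$c_k \in R$" is an unambiguous condition; I would make sure to invoke it when asserting uniqueness of the coordinates. Since all the pieces are elementary, I expect this to be a short proof; I would not belabour the inductive evaluation beyond stating the triangular pattern clearly.
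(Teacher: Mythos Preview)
Your proposal is correct and follows essentially the same approach as the paper: show $f_k(o_L)\subseteq o_L\subseteq R$ directly from the defining inequality of a $\pi$-ordering, then write $g=\sum_k c_k f_k$ with $c_k\in R_L$ and recover each $c_k\in R$ by the triangular evaluation at the points $\alpha_0,\alpha_1,\dots$ of the $\pi$-ordering, using $f_i(\alpha_k)=0$ for $i>k$ and $f_k(\alpha_k)=1$. The paper's write-up is only cosmetically different (it phrases the inductive step as $\lambda_t=g(\alpha_t)-\sum_{i<t}\lambda_i f_i(\alpha_t)$ and deduces linear independence by taking $g=0$).
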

\begin{proof} It follows directly from Definition \ref{PO} that $v_\pi(f_k(s)) \geq 0$ for all $s \in o_L$ and all $k \geq 0$. Hence $f_k(o_L) \subset o_L \subset R$ for all $k \geq 0$ which implies that \begin{equation}\label{Rfi} R f_0 + R f_1 + R f_2 + \cdots + R f_n + \cdots \quad \subseteq \quad \Int(o_L,R).\end{equation}
If $g \in R_L[X]$ has degree $n$ and leading coefficient $\lambda$, then $g - \lambda (\alpha_n - \alpha_0)\cdots(\alpha_n - \alpha_{n-1}) f_n$ has degree strictly less than $n$. This implies that $\{f_0,f_1,f_2,\ldots\}$ generates $R_L[X]$ as an $R_L$-module. Now let $g \in \Int(o_L,R)$ and write $g = \lambda_0 f_0 + \cdots + \lambda_n f_n$ for some $\lambda_0,\cdots,\lambda_n \in R_L$ as above. Setting $X = \alpha_0$ shows that $\lambda_0 = g(\alpha_0) \in R$ since $g \in \Int(o_L,R)$. Assume inductively that $\lambda_0,\ldots,\lambda_{t-1} \in R$ for some $1 \leq t \leq n$. Setting $X = \alpha_t$ shows that
\[\lambda_t = g(\alpha_t) - \lambda_0 f_0(\alpha_t) - \lambda_1 f_1(\alpha_t) - \cdots - \lambda_{t-1} f_{t-1}(\alpha_t)\]
and this lies in $R$ because $g(\alpha_t) \in R$ and $f_i(\alpha_t) \in R$ for all $i$. This completes the induction and shows that we have equality in $(\ref{Rfi})$. Taking $g = 0$ in the above argument also shows that the sum on the left hand side of $(\ref{Rfi})$ is direct.
\end{proof}
Using Lemma \ref{IntBasis}, we obtain the following
\begin{corollary}\label{RegularBasis}\hsp \be \item The multiplication map
\[ \Int(o_L, o_L) \otimes_{o_L} o_K \to \Int(o_L, o_K)\]
is an isomorphism, which sends $\Int(o_L,o_L)_m \otimes_{o_L} o_K$ onto $\Int(o_L,o_K)_m$ for any $m \geq 0$.
\item The Lagrange polynomials $\{f_0(Y), \cdots, f_m(Y)\}$ associated with a choice of $\pi$-ordering for $o_L$ form an $o_K$-module basis for $\Int(o_L,o_K)_m$.
\ee
\end{corollary}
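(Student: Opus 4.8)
The plan is to deduce the corollary directly from Lemma \ref{IntBasis}, applied to the two coefficient rings $R = o_L$ and $R = o_K$ at once. First I would check that both rings satisfy the hypothesis of that lemma: $o_L$ embeds into $L$, and $o_K$ embeds into $o_K \otimes_{o_L} L$; indeed $o_K$ is a domain, and concretely $o_K \otimes_{o_L} L = o_K[1/\pi] = K$ since $\pi$ is a nonzero non-unit of $o_K$. Then I would fix once and for all a $\pi$-ordering $\{\alpha_0, \alpha_1, \ldots\}$ for $o_L$ (Definition \ref{PO}) and let $\{f_0, f_1, \ldots\}$ be the associated Lagrange polynomials of Definition \ref{Lagrange}; recall that $f_k$ has degree exactly $k$.

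Applying Lemma \ref{IntBasis} to $R = o_L$ and to $R = o_K$ gives the decompositions
\[ \Int(o_L, o_L) = \bigoplus_{k \geq 0} o_L f_k, \qquad \Int(o_L, o_K) = \bigoplus_{k \geq 0} o_K f_k. \]
Since the $f_k$ have pairwise distinct degrees, a polynomial in either module has degree at most $m$ if and only if its (unique) expansion in the $f_k$ involves only $f_0, \ldots, f_m$; hence $\Int(o_L, o_L)_m = \bigoplus_{k=0}^m o_L f_k$ and $\Int(o_L, o_K)_m = \bigoplus_{k=0}^m o_K f_k$ for every $m \geq 0$. This is exactly part (2).

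For part (1), I would use that $o_K$ is a finitely generated torsion-free, hence free, module over the complete discrete valuation ring $o_L$, so that $- \otimes_{o_L} o_K$ is exact and commutes with direct sums. Tensoring the first decomposition above, truncated at degree $m$, with $o_K$ over $o_L$ gives $\Int(o_L, o_L)_m \otimes_{o_L} o_K = \bigoplus_{k=0}^m o_K (f_k \otimes 1)$, and the multiplication map sends $f_k \otimes 1$ to $f_k$. Comparing with the decomposition of $\Int(o_L, o_K)_m$ just obtained shows that the multiplication map restricts to an $o_K$-linear isomorphism $\Int(o_L, o_L)_m \otimes_{o_L} o_K \xrightarrow{\ \sim\ } \Int(o_L, o_K)_m$ for each $m$. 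Passing to the colimit over $m$ — using that $- \otimes_{o_L} o_K$ commutes with filtered colimits, together with $\Int(o_L, o_L) = \bigcup_m \Int(o_L, o_L)_m$ and $\Int(o_L, o_K) = \bigcup_m \Int(o_L, o_K)_m$ — shows that the multiplication map $\Int(o_L, o_L) \otimes_{o_L} o_K \to \Int(o_L, o_K)$ is itself an isomorphism.

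I do not expect any real obstacle here: all the arithmetic content already sits in Lemma \ref{IntBasis}. The only points needing a moment's care are the identification of the degree-$\leq m$ submodule with the $R$-span of $f_0, \ldots, f_m$ (immediate from the $f_k$ having distinct degrees and $R$ being a domain) and the verification that $o_K$ embeds into $o_K \otimes_{o_L} L$, so that Lemma \ref{IntBasis} is applicable with $R = o_K$.
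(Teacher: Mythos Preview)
Your argument is correct and is exactly the approach the paper has in mind: both parts drop out immediately from applying Lemma \ref{IntBasis} with $R = o_L$ and $R = o_K$ and observing that $\deg f_k = k$. One small slip: $o_K$ need not be finitely generated (let alone free) over $o_L$, since $K$ is an arbitrary complete intermediate field $L \subseteq K \subseteq \Cp$; but you never actually use this, as tensor product commutes with direct sums for any $o_L$-algebra, and that is all your argument needs.
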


\begin{proposition}\label{DualIwasawa} The evaluation map $\ev : \Int(o_L,o_K)_m   \longrightarrow o_K\dcroc{o_L}^\ast_m$ defined by
\[ \ev(f(Y))(\lambda) :=  \lambda(f(Y))\]
for all $f(Y) \in \Int(o_L,o_K)_m, \lambda \in o_K\dcroc{o_L}$ is an $o_K$-module isomorphism.
\end{proposition}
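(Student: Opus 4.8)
The plan is to produce mutually dual $o_K$-bases of the two modules. On the source, Corollary \ref{RegularBasis}(2) gives an $o_K$-basis $f_0(Y),\dots,f_m(Y)$ of $\Int(o_L,o_K)_m$, namely the Lagrange polynomials attached to a fixed $\pi$-ordering $\{\alpha_0,\alpha_1,\dots\}$ of $o_L$. I will construct measures $\mu_0,\dots,\mu_m$ in the group ring $o_L[o_L]\subseteq o_K\dcroc{o_L}$ whose images $\overline{\mu}_0,\dots,\overline{\mu}_m$ in $o_K\dcroc{o_L}_m$ form a free $o_K$-basis and satisfy $\mu_l(f_{l'})=\delta_{ll'}$ for $0\le l,l'\le m$. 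Granting this and the well-definedness of $\ev$, the functionals $\ev(f_l)$ must then be the dual basis of $o_K\dcroc{o_L}^{\ast}_m$, so $\ev$ is an isomorphism.

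The technical core is to describe, inside $o_K\dcroc{o_L}$, the kernel of the projection onto $o_K\dcroc{o_L}_m$ (i.e.\ onto the image of $o_K\dcroc{o_L}$ in $K\dcroc{b}/b^{m+1}K\dcroc{b}$, where $b=b_1=\delta_1-1$ is the coordinate at $\mathbbm{1}$ of Lemma \ref{Stalk}). The Cauchy--Riemann relations $u_i=v_iu_1$ (with $u_i=\log(1+b_i)$) defining $\frX$, read in the completed local ring $\widehat{\cO_{\frX,\mathbbm{1}}}=K\dcroc{b}$ (in which $b$ is topologically nilpotent), give $\log(1+b_i)=v_i\log(1+b)$, hence the image of $\delta_{v_i}$ is $(1+b)^{v_i}:=\exp(v_i\log(1+b))$ and therefore the image of $\delta_a$ is $(1+b)^{a}=\sum_{n\ge0}\binom{a}{n}b^{n}$ for every $a\in o_L$, using $\delta_a=\prod_i\delta_{v_i}^{a_i}$ when $a=\sum_i a_i v_i$ (with $a_i\in\Zp$). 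Writing $\hat\lambda\in K\dcroc b$ for the image of $\lambda\in o_K\dcroc{o_L}$ and $\Delta_vg:=g(\cdot+v)-g$ for the forward difference on functions, one combines the elementary umbral identity $\big(\prod_i\Delta_{v_i}^{\alpha_i}\binom{Y}{n}\big)(0)=[b^{n}]\prod_i\big((1+b)^{v_i}-1\big)^{\alpha_i}$ with the adjunction $\langle(\delta_v-1)\nu,g\rangle=\langle\nu,\Delta_vg\rangle$ (for $\langle\nu,g\rangle=\int g\,d\nu$) to obtain, term by term over the monomial basis $\{\mathbf b^{\alpha}\}$ of $o_K\dcroc{o_L}$ — both sides vanishing once $|\alpha|>n$ — the moment formula $[b^{n}]\hat\lambda=\lambda\!\left(\binom{Y}{n}\right)$ for all $\lambda\in o_K\dcroc{o_L}$ and $n\ge0$. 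Two consequences follow. First, the kernel of $o_K\dcroc{o_L}\to o_K\dcroc{o_L}_m$ equals $\{\lambda:\lambda(\binom{Y}{j})=0\text{ for }0\le j\le m\}$; since $\binom{Y}{0},\dots,\binom{Y}{m}$ is a $K$-basis of the space of polynomials of degree $\le m$, which contains $\Int(o_L,o_K)_m$, the functional $\lambda\mapsto\lambda(f)$ kills this kernel for every $f\in\Int(o_L,o_K)_m$, so $\ev$ is well defined. Second, since $\hat{\mathbf b^{\alpha}}=\prod_i((1+b)^{v_i}-1)^{\alpha_i}\in b^{|\alpha|}K\dcroc b$, the image of $o_K\dcroc{o_L}$ in $K\dcroc b/b^{m+1}K\dcroc b$ coincides with the image of the group ring $o_K[o_L]$; hence $o_K\dcroc{o_L}_m$ is generated as an $o_K$-module by the classes $\overline{\delta}_a$, $a\in o_L$.

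For the dual measures, the evaluation matrix $E=(f_l(\alpha_j))_{0\le j,l\le m}$ is lower unitriangular with entries in $o_L$ (because $f_l(\alpha_j)=0$ for $j<l$, $f_l(\alpha_l)=1$, and $f_l\in\Int(o_L,o_L)$ by Lemma \ref{IntBasis}), so $E\in\GL_{m+1}(o_L)$. Put $\mu_l:=\sum_{j=0}^m (E^{-1})_{lj}\,\delta_{\alpha_j}\in o_L[o_L]$, so that $\mu_l(f_{l'})=\sum_j(E^{-1})_{lj}E_{jl'}=\delta_{ll'}$ for $0\le l,l'\le m$. Expanding $\binom{Y}{n}=\sum_{l\le m}\beta_{nl}f_l$ with $\beta_{nl}\in L$ for $n\le m$, we have $\mu_l(\binom{Y}{n})=\beta_{nl}$ and $\binom{a}{n}=\sum_l\beta_{nl}f_l(a)$, whence, by the moment formula applied to $\delta_a-\sum_{l\le m}f_l(a)\mu_l\in o_K\dcroc{o_L}$, we get $[b^{n}]\widehat{\big(\delta_a-\sum_{l\le m}f_l(a)\mu_l\big)}=\binom{a}{n}-\sum_l f_l(a)\beta_{nl}=0$ for all $n\le m$. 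Therefore $\overline{\delta}_a=\sum_{l\le m}f_l(a)\overline{\mu}_l$ in $o_K\dcroc{o_L}_m$, with coefficients $f_l(a)\in o_L\subseteq o_K$.

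Combining these facts: the $\overline{\mu}_l$ $(0\le l\le m)$ generate $o_K\dcroc{o_L}_m$ over $o_K$ because the $\overline{\delta}_a$ do; and since $\ev$ is well defined with $\ev(f_{l'})(\overline{\mu}_l)=\mu_l(f_{l'})=\delta_{ll'}$, any $o_K$-linear relation $\sum_l a_l\overline{\mu}_l=0$ forces $a_{l'}=0$ for every $l'$. Thus $\{\overline{\mu}_l\}_{0\le l\le m}$ is a free $o_K$-basis of $o_K\dcroc{o_L}_m$, the dual $o_K\dcroc{o_L}^{\ast}_m$ is free with dual basis $\{\overline{\mu}_l^{\vee}\}$, and $\ev(f_l)=\overline{\mu}_l^{\vee}$; hence $\ev$ carries the basis $\{f_0,\dots,f_m\}$ of $\Int(o_L,o_K)_m$ onto the basis $\{\overline{\mu}_0^{\vee},\dots,\overline{\mu}_m^{\vee}\}$ of $o_K\dcroc{o_L}^{\ast}_m$, and is an isomorphism. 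I expect the one genuine obstacle to be the moment formula $[b^n]\hat\lambda=\lambda(\binom{Y}{n})$ of the second paragraph: it requires transporting the Cauchy--Riemann relations into the completed local ring at $\mathbbm{1}$ and handling the (mild) convergence bookkeeping when passing from $o_K[o_L]$ to its completion $o_K\dcroc{o_L}$; everything downstream is linear algebra with unitriangular matrices.
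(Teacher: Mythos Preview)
Your argument is correct, but it takes a genuinely different route from the paper's. The paper proceeds rationally first: it observes that $o_K\dcroc{o_L}_m$ is an $o_K$-lattice in the $(m{+}1)$-dimensional $K$-vector space $K\dcroc{b}_m$, so that $o_K\dcroc{o_L}^\ast_m$ embeds in $V:=\Hom_K(K\dcroc{b}_m,K)$; it then checks that $\ev(1),\ev(Y),\dots,\ev(Y^m)$ are $K$-linearly independent in $V$ by evaluating at the Dirac measures $\delta_a$, whence $\ev:K[Y]_m\to V$ is an isomorphism by rank--nullity. Injectivity of the integral $\ev$ is immediate, and surjectivity is a one-line observation: given $g\in o_K\dcroc{o_L}^\ast_m$, the unique preimage $f\in K[Y]_m$ satisfies $f(a)=g(\overline{\delta}_a)\in o_K$ for all $a\in o_L$, so $f\in\Int(o_L,o_K)_m$.

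By contrast, you work entirely at the integral level, constructing an explicit $o_K$-basis $\overline{\mu}_0,\dots,\overline{\mu}_m$ of $o_K\dcroc{o_L}_m$ dual to the Lagrange basis $f_0,\dots,f_m$ via the moment formula $[b^n]\hat\lambda=\lambda\!\left(\binom{Y}{n}\right)$. This is more laborious --- the moment formula and the unitriangular inversion are extra steps --- but it buys you something the paper's argument does not: a concrete description of $o_K\dcroc{o_L}_m$ itself as a free $o_K$-module with a distinguished basis coming from the $\pi$-ordering, and the identity $\overline{\delta}_a=\sum_l f_l(a)\,\overline{\mu}_l$. The paper's approach, on the other hand, is shorter and sidesteps any explicit description of the image in $K\dcroc{b}_m$. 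One small omission in your write-up: you should note explicitly that $\lambda(f)\in o_K$ for $\lambda\in o_K\dcroc{o_L}$ and $f\in\Int(o_L,o_K)_m$ (so that $\ev$ lands in $\Hom_{o_K}(-,o_K)$ rather than $\Hom_{o_K}(-,K)$); this follows from your difference-operator description $\mathbf{b}^\alpha(f)=(\prod_i\Delta_{v_i}^{\alpha_i}f)(0)$, which is a $\mathbb{Z}$-linear combination of values of $f$ on $o_L$.
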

\begin{proof} This is essentially a complicated-looking tautology, but we try to give the details. 

Note that $o_K\dcroc{o_L}_m$ is an $o_K$-lattice in $K\dcroc{b}_m$. We can therefore identify $o_K\dcroc{o_L}_m^\ast$ with an $o_K$-submodule of $V := \Hom_K( K\dcroc{b}_m, K)$, a $K$-vector space of dimension $m+1$. The linear functionals $\ev(1), \ev(Y), \cdots, \ev(Y^m)$  are linearly independent in $V$ because if $\sum_{i=0}^m c_i \ev(Y^i) = 0$ then $\ev(\sum_{i=0}^m c_iY^i)(\delta_a) = \sum_{i=0}^m c_i a^i = 0$ for all $a \in o_L$ and this forces $c_0 = \cdots = c_m = 0$. It follows that $\ev : K[Y]_m \to V$ is injective and is therefore an isomorphism by the rank-nullity theorem. 

Hence $\ev : \Int(o_L,o_K)_m  \to o_K\dcroc{o_L}^\ast_m$ is injective. However if $g \in o_K\dcroc{o_L}^\ast_m$ then by the above we can find some $f(Y) \in K[Y]_m$ such that $\ev(f(Y)) = g$. Since $\delta_a \in o_K\dcroc{o_L}$ for all $a \in o_L$, we see that $f(a) = \ev(f(Y))(\delta_a) = g(\delta_a)$ must lie in $o_K$ for all $a \in o_L$.  \end{proof}

\begin{corollary}\label{IntDualIwasawa} The map $\ev: \Int(o_L,o_K) \to o_K\dcroc{o_L}^\ast_\infty$ is an isomorphism.
\end{corollary}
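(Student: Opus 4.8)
The plan is to pass to the colimit in Proposition \ref{DualIwasawa}. Recall that by definition $o_K\dcroc{o_L}^\ast_\infty = \colim_{m \geq 0} o_K\dcroc{o_L}^\ast_m$, where the connecting maps are the injections established in Lemma \ref{Lex}(1) (applied with $A = B = o_K\dcroc{o_L}$, so that the horizontal arrows there become the identity and only the vertical arrows $o_K\dcroc{o_L}^\ast_m \hookrightarrow o_K\dcroc{o_L}^\ast_n$ remain). On the other side, $\Int(o_L,o_K) = \bigcup_{m \geq 0} \Int(o_L,o_K)_m$ is the increasing union of the submodules of polynomials of degree at most $m$; since each inclusion $\Int(o_L,o_K)_m \hookrightarrow \Int(o_L,o_K)_n$ (for $n \geq m$) is just the obvious one, this union is the colimit of the system $\bigl(\Int(o_L,o_K)_m\bigr)_{m \geq 0}$.

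First I would check that the evaluation maps $\ev : \Int(o_L,o_K)_m \to o_K\dcroc{o_L}^\ast_m$ from Proposition \ref{DualIwasawa} are compatible with the two directed systems, i.e. that for $n \geq m$ the square
\[ \xymatrix{ \Int(o_L,o_K)_m \ar[r]^-{\ev} \ar@{^{(}->}[d] & o_K\dcroc{o_L}^\ast_m \ar@{^{(}->}[d] \\ \Int(o_L,o_K)_n \ar[r]^-{\ev} & o_K\dcroc{o_L}^\ast_n } \]
commutes. This is immediate: the bottom map sends $f(Y)$, viewed as a polynomial of degree $\leq n$, to the functional $\lambda \mapsto \lambda(f(Y))$, and the right-hand vertical map is precisely the map dual to the surjection $o_K\dcroc{o_L}_n \twoheadrightarrow o_K\dcroc{o_L}_m$, so both composites send $f(Y)$ to the functional $\lambda \mapsto \lambda(f(Y))$ on $o_K\dcroc{o_L}_n$. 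Here one uses that the value $\lambda(f(Y))$ depends only on the image of $\lambda$ in $o_K\dcroc{o_L}_m$ when $\deg f \leq m$, which is what makes the evaluation pairing well-defined on the infinitesimal neighbourhoods in the first place (and is implicitly how $o_K\dcroc{o_L}^\ast_m$ was set up).

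Then the result follows by taking the colimit: a directed colimit of isomorphisms is an isomorphism. Concretely, $\ev : \Int(o_L,o_K) \to o_K\dcroc{o_L}^\ast_\infty$ is injective because each $\ev$ on degree-$\leq m$ pieces is injective and the systems are filtered, and it is surjective because every element of $o_K\dcroc{o_L}^\ast_\infty$ comes from some $o_K\dcroc{o_L}^\ast_m$, which is hit by $\Int(o_L,o_K)_m$. I do not expect any real obstacle here; the only thing requiring a line of care is the commutativity of the square above, i.e. checking that the identification of $\ev$ with the maps dual to the truncation maps $o_K\dcroc{o_L}_n \to o_K\dcroc{o_L}_m$ is the right one — but this is forced by the definitions and contains no genuine difficulty. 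The substance of the corollary is entirely in Proposition \ref{DualIwasawa} (and, behind it, Corollary \ref{RegularBasis} and Lemma \ref{IntBasis}); Corollary \ref{IntDualIwasawa} is the formal packaging of that finite-level statement into its colimit form.
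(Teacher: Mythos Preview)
Your proposal is correct and takes essentially the same approach as the paper, which simply says ``This follows immediately from Proposition \ref{DualIwasawa}.'' You have spelled out the colimit argument in more detail than the paper does, but the substance is identical: pass to the colimit over $m$ of the finite-level isomorphisms from Proposition \ref{DualIwasawa}.
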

\begin{proof} This follows immediately from Proposition \ref{DualIwasawa}.\end{proof}

\begin{proposition}\label{Extends} Suppose that $K$ is discretely valued. Then 
\[\cO^\circ(\frX_K)^ \ast_\infty  = \underset{r < 1}{\colim}{} \hsp \cO^\circ(\frX(r)_K)^\ast_\infty.\]
\end{proposition}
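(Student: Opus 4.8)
The plan is to deduce the identity from the fact that, in each fixed infinitesimal neighbourhood of $\mathbbm{1}$, the lattices cut out by the affinoids $\frX(r)_K$ stabilise as $r\to 1$ and their common value is already cut out by the globally power-bounded functions. The inclusion $\supseteq$ is recorded in the chain of algebraic duals displayed just before the statement: each $\cO^\circ(\frX(r)_K)^\ast_\infty$ sits inside $\cO^\circ(\frX_K)^\ast_\infty$, and the colimit on the right is the increasing union $\bigcup_{r<1}\cO^\circ(\frX(r)_K)^\ast_\infty$ formed there. So everything reduces to the reverse inclusion. A given $\mu\in\cO^\circ(\frX_K)^\ast_\infty$ is, for some $m\ge 0$, the image of an $o_K$-linear functional $\overline\mu\in\Hom_{o_K}(\cO^\circ(\frX_K)_m,o_K)$. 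I would prove:
\begin{enumerate}
\item[(a)] the decreasing (in $r$) family of $o_K$-modules $\{\cO^\circ(\frX(r)_K)_m\}_{r<1}$ is eventually constant, with limiting value $E_m:=\bigcap_{s<1}\cO^\circ(\frX(s)_K)_m$; and
\item[(b)] $\cO^\circ(\frX_K)_m=E_m$.
\end{enumerate}
Granting these, $\overline\mu$ is an $o_K$-linear functional on $E_m=\cO^\circ(\frX(r_0)_K)_m$ for $r_0$ close enough to $1$, hence gives an element of $\cO^\circ(\frX(r_0)_K)^\ast_m\subseteq\cO^\circ(\frX(r_0)_K)^\ast_\infty$ whose image in $\cO^\circ(\frX_K)^\ast_\infty$ is, by construction, $\mu$; this finishes the argument.

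For (a): each $\cO^\circ(\frX(r)_K)_m$ is an $o_K$-lattice in the $(m+1)$-dimensional space $K\dcroc{b}/b^{m+1}K\dcroc{b}$, squeezed between the two fixed lattices $o_K\dcroc{o_L}_m$ — a full lattice by Proposition \ref{DualIwasawa} — and the image of $\cO^\circ(\frX(1/p)_K)=o_K\langle b/p\rangle$. Because $K$ is discretely valued, $o_K$ is a discrete valuation ring, so the colengths of these lattices inside the larger one are non-negative integers, non-decreasing in $r$ and bounded; hence the chain stabilises. This is the only point at which the hypothesis on $K$ is used.

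For (b) only $E_m\subseteq\cO^\circ(\frX_K)_m$ needs proof. Fix $x\in E_m$ and a sequence $r_l\uparrow 1$ with $\frX_K=\bigcup_l\frX(r_l)_K$; for each $l$ pick $g_l\in\cO^\circ(\frX(r_l)_K)$ with image $x$ in $K\dcroc{b}/b^{m+1}K\dcroc{b}$. As in the proof of Proposition \ref{weak-compact}, $\frX_K$ is strictly quasi-Stein, so the restriction maps $\cO(\frX(r_{l+1})_K)\to\cO(\frX(r_l)_K)$ are compact maps of $K$-Banach spaces; hence for each $l$ the functions $g_{l'}|_{\frX(r_l)_K}$ (with $l'>l$) all lie in one compact subset of $\cO(\frX(r_l)_K)$. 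A diagonal extraction produces a subsequence along which $g_{l_j}|_{\frX(r_l)_K}$ converges, for every $l$, to some $h_l\in\cO(\frX(r_l)_K)$; the $h_l$ are compatible under restriction, have supremum norm $\le 1$ on $\frX(r_l)_K$, and all have image $x$ in $K\dcroc{b}/b^{m+1}K\dcroc{b}$, since the truncation maps $\cO(\frX(r_l)_K)\to K\dcroc{b}/b^{m+1}K\dcroc{b}$ are continuous and compatible with restriction. Gluing the $h_l$ gives $h\in\cO(\frX_K)$ with $\|h\|_{\frX(r_l)_K}\le 1$ for all $l$, hence $h\in\cO^\circ(\frX_K)$, with image $x$; thus $x\in\cO^\circ(\frX_K)_m$.

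The hard part will be (b): one has to realise a prescribed truncation at $\mathbbm{1}$ by a function power-bounded on \emph{all} of $\frX_K$, not just on a single affinoid, and the only leverage I see for this is the compactness of the restriction maps coming from strict quasi-Steinness — exactly what powers Proposition \ref{weak-compact}. Part (a) and the reduction to (a)+(b) are then bookkeeping.
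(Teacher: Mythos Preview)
Your proposal is correct and follows essentially the same approach as the paper's proof: reduce to a fixed $m$, use discreteness of the valuation to stabilise the chain of lattices $\cO^\circ(\frX(r)_K)_m$, and then use compactness of the restriction maps between the affinoid algebras to lift a truncated element to a globally power-bounded function. The only difference is cosmetic: the paper packages the compactness step as a direct appeal to \cite[Proposition V.3.2]{Gru} (following an argument of Schmidt), whereas you spell out the diagonal extraction explicitly.
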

\begin{proof} Since colimits commute with colimits, it is enough to show that for every $m \geq 0$, 
\[\cO^\circ(\frX_K)^ \ast_m  = \underset{r < 1}{\colim}{} \hsp \cO^\circ(\frX(r)_K)^\ast_m.\]
Fix $m \geq 0$. Then $\cO^\circ(\frX(r)_K)_m$ form a decreasing chain of $o_K$-submodules of the $m+1$-dimensional $K$-vector space $K\dcroc{b}_m$, and all of them contain the $o_K$-lattice $o_K\dcroc{o_L}_m$. Since $K$ is discretely valued, the $o_K$-module $(K/o_K)^{m+1}$ satisfies the descending chain condition. Hence there exists $r_0 < 1$ such that
\begin{equation}\label{Stabilised} \cO^\circ(\frX(r)_K)_m = \cO^\circ(\frX(r_0)_K)_m \qmb{whenever} r_0 \leq r < 1.\end{equation}
Following an argument of Schmidt \cite[proof of Proposition 4.9]{Schm1}, we will now show that 
\[\cO^\circ(\frX_K)_m  = \cO^\circ(\frX(r_0)_K)_m.\]
The forward inclusion is clear, so fix some $\xi \in \cO^\circ(\frX(r_0)_K)_m$, choose a sequence of real numbers $r_0 < r_1 < r_2 < \cdots$ approaching $1$ and consider the $K$-Banach space
\[A_j := \cO(\frX(r_j)_K).\]
Let $\varphi_j : A_j^\circ \to K\dcroc{b}_m$ be the obvious $o_K$-linear map. Using $(\ref{Stabilised})$ we see that the convex subset 
\[ \varphi_j^{-1}(\xi) \subset A_j\]
is non-empty. It was recorded in the proof of \cite[Lemma 6.1]{ST2} that the restriction maps $A_{j+1} \to A_j$ are compact. We may therefore argue as in \cite[Proposition V.3.2]{Gru} that 
\[\bigcap\limits_{j=0}^\infty \varphi_j^{-1}(\xi) \subseteq \cO^\circ(\frX_K)\]
is non-empty. Then any element $\lambda$ in this intersection satisfies $\lambda_m = \xi$, so $\xi \in \cO^\circ(\frX_K)_m$ as required. Hence $\cO^\circ(\frX_K)^\ast_m = \cO^\circ(\frX(r)_K)^\ast_m$ whenever $r_0 \leq r < 1$, and the result follows.\end{proof}

\subsection{The matrix coefficients $\rho_{i,j}(Y)$}
\label{RhoijSect}
Let $\mathbf{B}_{\Cp}$ be the rigid analytic open unit disc of radius $1$ defined over $\Cp$, with global coordinate function $Z$. There is a twisted $G_L = \Gal(\Cp/L)$-action on $\cO(\mathbf{B}_{\Cp})$ given by $F \mapsto F^\sigma \circ [\tau(\sigma^{-1})]$, which induces an $L$-algebra isomorphism
\[ \mu : \cO(\frX_L) \stackrel{\cong}{\longrightarrow} \cO(\mathbf{B}_{\Cp})^{G_L,\ast},\]
see \cite[Corollary 3.8]{ST}. Inspecting the proof of this result, we see that it extends naturally to give a description of $\cO(\frX_K)$ for more general closed coefficient fields $L \subseteq K \subseteq \Cp$ as well:

\begin{lemma}\label{KTwistedInv} There is a $K$-algebra isomorphism
\[ \mu_K : \cO(\frX_K) \stackrel{\cong}{\longrightarrow} \cO(\mathbf{B}_{\Cp})^{G_K,\ast}.\]
\end{lemma}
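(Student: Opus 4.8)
The plan is to mimic the proof of \cite[Corollary 3.8]{ST}, which handles the case $K = L$, and to observe that nothing in that argument is special to the ground field $L$ beyond the fact that $\cO(\mathbf{B}_{\Cp})$ becomes identified with $\cO(\frX) \otimes_L L_\infty$ over $L_\infty$ and that the twisted $G_L$-action encodes descent. First I would recall the Schneider--Teitelbaum uniformisation from Proposition \ref{prop:TwistedAction}: the Fourier transform composed with $\mu_{\rig}$ gives an isomorphism of $L_\infty$-Fréchet algebras $\kappa^\ast = \mu_{\rig} \circ \cF : \cO(\frX \times_L L_\infty) \xrightarrow{\cong} \cO(\mathbf{B}_{L_\infty}) \subseteq \cO(\mathbf{B}_{\Cp})$, which is equivariant for the natural $G_L$-action on the source and the twisted $G_L$-action on the target.

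Next I would base-change to $\Cp$ rather than to $L_\infty$. Since $\frX$ is a smooth rigid analytic curve over $L$ and $\Cp$ is a complete field extension of $L_\infty$, we get $\cO(\frX \times_L \Cp) = \cO(\frX \times_L L_\infty) \widehat{\otimes}_{L_\infty} \Cp$, and the isomorphism $\kappa^\ast$ extends to an isomorphism $\cO(\frX \times_L \Cp) \xrightarrow{\cong} \cO(\mathbf{B}_{\Cp})$ which is again equivariant for the natural $G_L$-action on the source (acting on the $\Cp$-factor through its coefficient action) and the twisted $G_L$-action on $\cO(\mathbf{B}_{\Cp})$; here one uses that the twisted action differs from the coefficient-wise action only by the substitution $Z \mapsto [\tau(\sigma)^{-1}](Z)$, which is already present at the $L_\infty$-level, together with the coefficient-wise action on the extra $\Cp$ that matches the coefficient action on the source.

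Now for a closed intermediate field $L \subseteq K \subseteq \Cp$ I would take $G_K$-invariants. On the source, faithfully flat (completed) descent along $\Cp / K$ gives $\cO(\frX \times_L \Cp)^{G_K} = \cO(\frX \times_L K) = \cO(\frX_K)$, using that $\Cp^{G_K} = K$ by Ax--Sen--Tate and that $\frX$ is defined over $L \subseteq K$. On the target, taking $G_K$-invariants of $\cO(\mathbf{B}_{\Cp})$ for the twisted action gives $\cO(\mathbf{B}_{\Cp})^{G_K,\ast}$ by definition. Since the extended isomorphism $\kappa^\ast$ is $G_L$-equivariant, it is in particular $G_K$-equivariant, so restricting it to $G_K$-invariants yields the desired $K$-algebra isomorphism $\mu_K : \cO(\frX_K) \xrightarrow{\cong} \cO(\mathbf{B}_{\Cp})^{G_K,\ast}$.

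The main obstacle is the descent step for the Fréchet algebra $\cO(\frX \times_L \Cp)$: one must be a little careful that completed tensor product commutes with taking $G_K$-invariants and that $(\cO(\frX \times_L L_\infty) \widehat{\otimes}_{L_\infty} \Cp)^{G_K} = \cO(\frX_K)$. This is handled exactly as in the proof of \cite[Corollary 3.8]{ST}, writing $\frX$ as an increasing union of affinoid subdomains $\frX(r)$ defined over $L$ and applying faithfully flat descent for the affinoid algebras $\cO(\frX(r) \times_L \Cp)$ over the base $K$, then passing to the projective limit; the key input is again $\Cp^{G_K} = K$ and the fact that, level by level, the $G_K$-action on $\cO(\frX(r)) \widehat{\otimes}_L \Cp$ is the coefficient action, for which invariants are computed termwise. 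Everything else is a direct transcription of the $K = L$ argument.
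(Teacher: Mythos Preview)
Your proposal is correct and follows essentially the same approach as the paper, which simply states that one inspects the proof of \cite[Corollary 3.8]{ST} and observes that it extends to general closed intermediate fields $L \subseteq K \subseteq \Cp$. You have made the descent argument (base-change to $\Cp$, $G_K$-equivariance via Proposition \ref{prop:TwistedAction}, and Ax--Sen--Tate) explicit, which is exactly the content of that inspection.
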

Since $\cO^\circ(\mathbf{B}_{\Cp}) = o_{\Cp}\dcroc{Z}$, we deduce the following
\begin{corollary}\label{muK} There is an isomorphism of $o_K$-algebras
\[ \mu_K : \cO^\circ(\frX_K) \stackrel{\cong}{\longrightarrow} o_{\Cp}\dcroc{Z}^{G_K, \ast}.\]
\end{corollary}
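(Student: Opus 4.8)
The plan is to obtain $\mu_K$ on power-bounded functions by restricting the isomorphism of Lemma \ref{KTwistedInv}. First I would observe that $\mu_K : \cO(\frX_K) \stackrel{\cong}{\longrightarrow} \cO(\mathbf{B}_{\Cp})^{G_K,\ast}$ is not merely an abstract $K$-algebra isomorphism but an isomorphism of topological $K$-algebras: it is the map on global sections deduced, after passing to $G_K$-invariants, from an isomorphism of Stein rigid analytic varieties over $\Cp$, hence a homeomorphism for the respective Fr\'echet topologies. Since the subring $A^\circ$ of power-bounded elements is an invariant of the topological ring $A$, the isomorphism $\mu_K$ restricts to an isomorphism of $o_K$-algebras
\[ \cO^\circ(\frX_K) = \cO(\frX_K)^\circ \stackrel{\cong}{\longrightarrow} \big(\cO(\mathbf{B}_{\Cp})^{G_K,\ast}\big)^\circ. \]

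It then remains to identify the target with $o_{\Cp}\dcroc{Z}^{G_K,\ast}$. The subalgebra $\cO(\mathbf{B}_{\Cp})^{G_K,\ast}$ is a closed subalgebra of $\cO(\mathbf{B}_{\Cp})$ carrying the subspace topology, so a power series in it is power-bounded in it precisely when it is power-bounded in $\cO(\mathbf{B}_{\Cp})$; that is,
\[ \big(\cO(\mathbf{B}_{\Cp})^{G_K,\ast}\big)^\circ = \cO^\circ(\mathbf{B}_{\Cp}) \cap \cO(\mathbf{B}_{\Cp})^{G_K,\ast}. \]
Now $\cO^\circ(\mathbf{B}_{\Cp}) = o_{\Cp}\dcroc{Z}$ by the standard description of power-bounded functions on the open unit disc, and the twisted action $F \mapsto F^\sigma \circ [\tau(\sigma^{-1})]$ preserves $o_{\Cp}\dcroc{Z}$ because $[\tau(\sigma)^{-1}](Z) \in o_L\dcroc{Z}$ has zero constant term and the coefficient-wise Galois action preserves $o_{\Cp}$. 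Hence the right-hand side equals $o_{\Cp}\dcroc{Z}^{G_K,\ast}$, and composing with the previous display yields the asserted $o_K$-algebra isomorphism.

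There is essentially no serious obstacle; the only points deserving a word of care are that $\mu_K$ really is a homeomorphism, so that power-bounded elements are carried to power-bounded elements, and that forming $G_K$-invariants commutes with passing to power-bounded subrings — both of which follow from the fact that $\cO(\mathbf{B}_{\Cp})^{G_K,\ast}$ is a closed topological subalgebra of $\cO(\mathbf{B}_{\Cp})$ with the subspace topology. Everything else is the already-recorded identity $\cO^\circ(\mathbf{B}_{\Cp}) = o_{\Cp}\dcroc{Z}$ together with the explicit formula for the twisted action.
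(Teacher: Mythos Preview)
Your argument is correct and is essentially the same approach as the paper's, which merely says ``Since $\cO^\circ(\mathbf{B}_{\Cp}) = o_{\Cp}\dcroc{Z}$, we deduce the following'' and leaves the rest implicit. You have spelled out the two points the paper takes for granted---that $\mu_K$ is a topological isomorphism and that passing to power-bounded elements commutes with taking $G_K$-invariants---and both are handled correctly.
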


Until the end of $\S \ref{RhoijSect}$, we assume that $\Omega$ is transcendental over $K$.
\begin{definition}\label{AdmRalg} We call an $o_K$-subalgebra $R$ of $K[\Omega] \cap o_{\Cp}$ \emph{admissible} if $P_n(\Omega) \in R$ for all $n \geq 0$, and if $R$ is stable under the natural $G_L$-action on $K[\Omega] \cap o_{\Cp}$.
\end{definition}

\begin{example} $K[\Omega] \cap o_{\Cp}$ is itself an admissible $o_K$-subalgebra of $K[\Omega]$.
\end{example}
\begin{proof} This follows from Corollary \ref{muK} together with \cite[Lemma 4.2(5)]{ST}.\end{proof}

\begin{definition} \label{BnBasis} Let $R \subset K[\Omega]$ be an admissible $o_K$-subalgebra.
\be
\item Let $K[\Omega]_n := \{f(\Omega) \in K[\Omega]: \deg(f) \leq n\}$ for each $n \geq 0$.
\item Let $R_n := R \cap K[\Omega]_n$ for each $n \geq 0$.
\item $\{b_n(\Omega) : n \geq 0\} \subset R$ is a \emph{regular basis} if 
\[ b_0(\Omega) = 1, \qmb{and} R_n = R_{n-1} \oplus o_K b_n(\Omega) \qmb{for all} n \geq 1.\]
\ee
\end{definition}

\begin{lemma} Suppose that $K$ is discretely valued. Then a regular basis exists for every admissible $o_K$-subalgebra $R$ of $K[\Omega] \cap o_{\Cp}$.
\end{lemma}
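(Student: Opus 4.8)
The plan is to construct the $b_n(\Omega)$ by induction on $n$, the crux being that $R_n/R_{n-1}$ is a free $o_K$-module of rank exactly one for every $n \geq 1$. First I would note that $R_0 = R \cap K = o_K$: indeed $o_K \subseteq R$ since $R$ is an $o_K$-algebra, while $R \subseteq o_{\Cp}$ forces $R \cap K \subseteq o_{\Cp} \cap K = o_K$. So we may (and must) take $b_0(\Omega) = 1$. For the inductive step, granting the freeness, I would lift a generator of $R_n/R_{n-1}$ to an element $b_n(\Omega) \in R_n$ along the short exact sequence $0 \to R_{n-1} \to R_n \to R_n/R_{n-1} \to 0$, which splits because the quotient is free. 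Then $R_n = R_{n-1} \oplus o_K\,b_n(\Omega)$, and $\{b_n(\Omega)\}_{n \geq 0}$ is the desired regular basis.

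To establish the freeness I would introduce the leading-coefficient map $\ell_n \colon K[\Omega]_n \to K$ sending $\sum_{i=0}^{n} a_i \Omega^i$ to $a_n$; this is well defined because $\Omega$ is transcendental over $K$, so $1, \Omega, \dots, \Omega^n$ are $K$-linearly independent. Restricted to $R_n$, the map $\ell_n$ is $o_K$-linear with kernel exactly $R_{n-1}$, so $R_n/R_{n-1} \cong \ell_n(R_n)$, an $o_K$-submodule of $K$. It is nonzero because $P_n(\Omega) \in R_n$ has leading coefficient $1/n! \neq 0$. Hence everything reduces to showing that $\ell_n(R_n)$ is a finitely generated $o_K$-module: then, $K$ being discretely valued, $\ell_n(R_n)$ is a nonzero fractional ideal of the discrete valuation ring $o_K$, hence free of rank one.

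The key step, and the only genuinely analytic one, will be the boundedness of $\ell_n(R_n)$. Here I would invoke the fact that over the complete field $K$ all norms on the finite-dimensional $K$-vector space $V_n := K + K\Omega + \cdots + K\Omega^n \subseteq \Cp$ are equivalent; comparing the absolute value inherited from $\Cp$ with the maximum-norm in the basis $1, \Omega, \dots, \Omega^n$ produces a constant $c_n > 0$ with $|f| \geq c_n\,|\ell_n(f)|$ for all $f \in V_n$. Since $R_n \subseteq o_{\Cp}$ forces $|f| \leq 1$, we get $|\ell_n(f)| \leq c_n^{-1}$ for all $f \in R_n$, so $\ell_n(R_n)$ is a bounded $o_K$-submodule of $K$; thus $\pi_K^{N}\ell_n(R_n) \subseteq o_K$ for $N \gg 0$, i.e.\ $\ell_n(R_n)$ is, up to scaling, an ideal of $o_K$, which is free of rank $\leq 1$ and in fact of rank exactly $1$ by the nonvanishing above.

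Assembling: by induction $R_{n-1}$ is free of rank $n$, hence $R_n$ is free of rank $n+1$, the displayed extension splits, and the chosen $b_n(\Omega)$ give a regular basis. (Only the hypotheses $P_n(\Omega) \in R$ and $R \subseteq o_{\Cp}$ from admissibility are needed for this statement; the $G_L$-stability plays no role here.) I expect the main obstacle to be essentially expository: invoking the equivalence-of-norms statement with the correct hypothesis — completeness of $K$ is what is actually used there, so the discrete-valuedness assumption is only needed at the very last step, to conclude that a nonzero finitely generated $o_K$-submodule of $K$ is free of rank one.
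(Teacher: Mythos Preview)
Your proof is correct and follows essentially the same approach as the paper: both use equivalence of norms on the finite-dimensional space $K[\Omega]_n$ to bound $R_n$ inside a lattice, and then invoke the discreteness of the valuation to obtain freeness. The only organizational difference is that you isolate the quotient $R_n/R_{n-1}$ via the leading-coefficient map and show it is free of rank one, whereas the paper shows directly that $R_n \subseteq \pi^{-m} o_K[\Omega]_n$ for some $m$, hence is finitely generated over the Noetherian ring $o_K$ and therefore free of rank $n+1$; the inductive construction of the $b_n(\Omega)$ is then left implicit.
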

\begin{proof} Since $\Omega$ is assumed to be transcendental over $K$, the $K$-vector space $K[\Omega]_n$ has dimension $n+1$. The restriction of the norm $|\cdot |$ on $\Cp$ to $K[\Omega]_n$ turns it into a normed vector space over $K$ and by Definition \ref{AdmRalg}(1), $R_n$ is contained in the unit ball with respect to this norm. Since any two norms on a finite dimensional $K$-vector space are equivalent --- see \cite[Proposition 4.13]{SchNFA} --- it follows that $R_n \subseteq \pi^{-m} o_K[\Omega]_n$ for sufficiently large $m$. 

Since $K$ is discretely valued, its valuation ring $o_K$ is Noetherian and this forces $R_n$ to be a free $o_K$-module of rank $n+1$. Because the $R_n$'s form a nested chain, we can now construct the desired $o_K$-module basis for $R$ by induction on $n$.
\end{proof}
\begin{example}\label{PnRegBas} Let $U := \sum\limits_{n = 0}^\infty o_K P_n(\Omega)$. Then $U$ is an admissible subalgebra of $K[\Omega]$, and $\{P_n(\Omega) : n \geq 0\}$ is a regular basis for $R$: since $\deg P_j(Y) = j$, an element $f(\Omega)$ of $U_n$ is a $K$-linear combination of $P_0(\Omega),\cdots,P_n(\Omega)$ lying in $U$, but $\{P_m(\Omega) : m \geq 0\}$ is an $o_L$-module basis for $U$ so all coefficients of $f(\Omega)$ must in fact lie in $o_L$.
\end{example}

Until the end of $\S \ref{RhoijSect}$, we assume that
\begin{itemize}
\item $K$ is a discretely valued intermediate subfield $L \subseteq K \subseteq \Cp$,
\item $\Omega$ is transcendental over $K$,
\item $R \subseteq K[\Omega] \cap o_{\Cp}$ is an admissible $o_K$-subalgebra, and
\item $\{b_n(\Omega) : n \geq 0\}$ is a regular basis for $R$.
\end{itemize}
\begin{lemma}\label{RhoDegJ} Let $j \geq 0$. \be \item There are unique $\rho_{0,j}(Y), \rho_{1,j}(Y),\cdots, \rho_{j,j}(Y) \in K[Y]$ such that
\[\label{PjYO} P_j(Y \Omega) = \sum\limits_{i=0}^j \rho_{i,j}(Y) b_i(\Omega).\]
\item $\deg \rho_{i,j}(Y) \leq j$ whenever $0 \leq i \leq j$.
\item $\deg \rho_{j,j}(Y) = j$.
\item $\rho_{i,j}(a) \in o_K$ whenever $a \in o_L$ and $0 \leq i \leq j$.
\ee \end{lemma}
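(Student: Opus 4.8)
The plan is to carry out all computations in the polynomial ring $\Cp[Y]$, exploiting that $\Omega$ is transcendental over $K$ so that $Y$ and $\Omega$ are algebraically independent over $K$.

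For part (1), write $P_j(T) = \sum_{k=0}^{j} c_k T^k \in L[T]$, so that $P_j(Y\Omega) = \sum_{k=0}^{j} c_k\, \Omega^k\, Y^k$. Since $\{b_0(\Omega),\dots,b_n(\Omega)\}$ is an $o_K$-basis of the free rank-$(n+1)$ module $R_n \subseteq K[\Omega]_n$ and $\dim_K K[\Omega]_n = n+1$, these elements form a $K$-basis of $K[\Omega]_n$; in particular $\deg b_i(\Omega) = i$, the transition matrix from $\{1,\Omega,\dots,\Omega^n\}$ to $\{b_0(\Omega),\dots,b_n(\Omega)\}$ is lower triangular with nonzero diagonal, and for each $k$ there are unique $\beta_{i,k}\in K$ with $\Omega^k = \sum_{i=0}^{k}\beta_{i,k}b_i(\Omega)$ and $\beta_{k,k}\neq 0$. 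Substituting and collecting terms yields $P_j(Y\Omega) = \sum_{i=0}^{j}\rho_{i,j}(Y)b_i(\Omega)$ with $\rho_{i,j}(Y) := \sum_{k=i}^{j} c_k\beta_{i,k}Y^k \in K[Y]$. Uniqueness follows by comparing, in any such identity, the coefficients of each monomial $Y^m$: this reduces to the $K$-linear independence of $b_0(\Omega),\dots,b_j(\Omega)$ in $\Cp$, which holds because they form an $o_K$-basis of the free module $R_j$.

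Parts (2) and (3) are then read off from this formula: $\deg_Y \rho_{i,j} \le j$, which is (2); and $\rho_{j,j}(Y) = c_j\beta_{j,j}Y^j$ has degree exactly $j$ since $c_j = 1/j!$ is the leading coefficient of $P_j$ and $\beta_{j,j}\neq 0$, which is (3). For part (4), I would invoke Corollary \ref{MatrixCoeffs}: for $a\in o_L$ one has $P_j(a\Omega) = \sum_{i=0}^{j}\sigma_{ij}(a)P_i(\Omega)$ with $\sigma_{ij}(a)\in o_L\subseteq o_K$. As $R$ is admissible, $P_i(\Omega)\in R$, and since $P_j(a\Omega)$ has $\Omega$-degree $\le j$ it lies in $R_j = \bigoplus_{i=0}^{j} o_K b_i(\Omega)$; writing $P_j(a\Omega) = \sum_{i=0}^{j}\mu_i b_i(\Omega)$ with $\mu_i\in o_K$ and comparing this with the specialization at $Y=a$ of the identity in (1), the uniqueness just established forces $\rho_{i,j}(a) = \mu_i\in o_K$.

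I do not expect a genuine obstacle here; the only points that need care are the algebraic independence of $Y$ and $\Omega$ over $K$ (which legitimizes comparing $Y$-coefficients term by term), and the triangularity of a regular basis, which is what makes the expansion in (1) finite and the diagonal coefficient $\beta_{j,j}$ nonzero.
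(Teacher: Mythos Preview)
Your proof is correct and follows essentially the same approach as the paper's: both use the transcendence of $\Omega$ over $K$ to treat $\{b_i(\Omega)\}$ as a $K$-basis of $K[\Omega]$, extract the $\rho_{i,j}$ by degree considerations, and deduce (4) from Corollary~\ref{MatrixCoeffs} together with the fact that the $b_i(\Omega)$ form an $o_K$-basis of $R_j$. Your version is slightly more explicit in writing out the triangular change-of-basis coefficients $\beta_{i,k}$, which makes (2) and (3) immediate from the formula $\rho_{i,j}(Y)=\sum_{k=i}^{j}c_k\beta_{i,k}Y^k$, whereas the paper argues these directly by tracking the top monomials $Y^j$ and $Y^j\Omega^j$ in $P_j(Y\Omega)$; the content is the same.
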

\begin{proof} (1) $\Omega$ is transcendental over $K$, and $\{b_i(\Omega) : i \geq 0\}$ is a $K$-vector space basis for $K[\Omega]$ with $\deg b_i(\Omega) = i$ for each $i$. Hence it is also a $K[Y]$-module basis for the two-variable polynomial algebra $K[\Omega, Y]$, so we can find unique $\rho_{i,j}(Y) \in K[Y]$ such that 
\[P_j(Y \Omega) = \sum\limits_{i \geq 0} \rho_{i,j}(Y) b_i(\Omega)\]
where $\rho_{i,j}(Y) = 0$ for sufficiently large $i$. Now $P_j(s)$ is a polynomial in $s$ of degree $j$ by \cite[Lemma 4.2(3)]{ST}, so $\Omega^j$ is the highest degree monomial in $\Omega$ appearing in $P_j(Y \Omega)$. Since $\deg b_i(\Omega) = i$, this means $\rho_{i,j}(Y) = 0$ for $i > j$. 

(2) Since the highest degree monomial in $Y$ appearing in $P_j(Y\Omega)$ is $Y^j$, this means that $\deg \rho_{i,j}(Y) \leq j$ for each $i \leq j$. 

(3) The monomial $Y^j\Omega^j$ appears in $P_j(Y \Omega)$ with a non-zero coefficient. This monomial does not appear in $\rho_{i,j}(Y) b_i(\Omega)$ for any $i < j$ because $\deg b_i(\Omega) = i$ for all $i$. So it must appear in $\rho_{j,j}(Y) b_j(\Omega)$, and because of (2), this can only happen if $\deg \rho_{j,j}(Y) = j$.

(4) Let $a \in o_L$. We know that $P_j(a \Omega) \in o_{\Cp}$ by \cite[Lemma 4.2(5)]{ST}; in fact, $P_j(a \Omega)$ is an $o_L$-linear combination of the $P_i(\Omega)$ for $0 \leq i \leq j$ by Corollary \ref{MatrixCoeffs}, so $P_j(a \Omega) \in R$. Setting $Y = a$ in $(\ref{PjYO})$ shows that $\rho_{i,j}(a) \in o_K$, since $\{b_i(\Omega) : i \geq 0\}$ is a regular basis for $R$.
\end{proof}

\begin{theorem}\label{bkZj} For each $\lambda \in D^{L-\an}(o_L,K)$ we have
\[ \mu_K(\lambda) = \sum\limits_{j=0}^\infty \sum\limits_{k=0}^j \lambda(\rho_{k,j}(Y)) b_k(\Omega) Z^j.\]
\end{theorem}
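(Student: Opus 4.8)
The plan is to verify the formula coefficient by coefficient in $Z$, reducing to the case of Dirac measures by continuity and density, and then reading off the answer from Lemma \ref{RhoDegJ}. Throughout I identify a distribution $\lambda \in D^{L-\an}(o_L,K)$ with the corresponding element $\cF(\lambda) \in \cO(\frX_K)$, so that ``$\mu_K(\lambda)$'' abbreviates $\mu_K(\cF(\lambda)) \in \cO(\mathbf{B}_{\Cp})$, in keeping with the notation of this section.

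First, for each fixed $j \geq 0$ I would introduce the linear functional $c_j : D^{L-\an}(o_L,K) \to \Cp$ extracting the coefficient of $Z^j$ from $\mu_K(\lambda)$. This $c_j$ is $K$-linear and continuous, being the composite of the Fourier isomorphism $\cF$ of Lemma \ref{KTwistedInv}, the isomorphism $\mu_K$, and the continuous ``$j$-th Taylor coefficient at $0$'' functional on $\cO(\mathbf{B}_{\Cp})$ (which is bounded on each $\cO(\mathbf{B}_{\Cp}(r))$). I would compare it with the functional $\lambda \mapsto \sum_{k=0}^j \lambda(\rho_{k,j}(Y)) b_k(\Omega)$, which is likewise $K$-linear and continuous, being a finite $\Cp$-linear combination of the evaluation functionals $\lambda \mapsto \lambda(\rho_{k,j}(Y))$; these are well defined because $\rho_{k,j}(Y) \in K[Y]$ defines, by restriction to $o_L$, an element of $C^{L-\an}(o_L,K)$ in view of parts (1) and (2) of Lemma \ref{RhoDegJ}. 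Since the finite $K$-linear combinations of Dirac measures $\delta_a$ ($a \in o_L$) are dense in $D^{L-\an}(o_L,K)$ --- this density is built into the Schneider--Teitelbaum description of $\mu_{\rig}$ as the \emph{unique continuous} extension of the Amice--Katz transform $\delta_a \mapsto \Delta_a$ --- it will be enough to check that $c_j(\delta_a) = \sum_{k=0}^j \rho_{k,j}(a) b_k(\Omega)$ for every $a \in o_L$.

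To check this, I would use that $\mu_K(\delta_a) = \Delta_a = \exp(a\Omega \log_{\LT}(Z))$ and that the defining generating function of the polynomials $P_m$, namely $\exp(Y \log_{\LT}(Z)) = \sum_{m \geq 0} P_m(Y) Z^m$, gives $\Delta_a = \sum_{m \geq 0} P_m(a\Omega) Z^m$; hence $c_j(\delta_a) = P_j(a\Omega)$. Evaluating the identity of Lemma \ref{RhoDegJ}(1) at $Y = a$ then gives $P_j(a\Omega) = \sum_{k=0}^j \rho_{k,j}(a) b_k(\Omega)$, as required. By the density and continuity argument above, the equality $c_j(\lambda) = \sum_{k=0}^j \lambda(\rho_{k,j}(Y)) b_k(\Omega)$ then holds for all $\lambda$; summing over $j \geq 0$ produces the claimed expansion of $\mu_K(\lambda)$.

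I expect the only real subtlety to be the density statement and, relatedly, the bookkeeping required to handle the $\Cp$-valued constants $b_k(\Omega)$ alongside a $K$-valued distribution $\lambda$; once those are dealt with, the computation is purely formal and rests only on Lemma \ref{RhoDegJ} and the defining generating function of the $P_m$. As an alternative to the density argument one could instead quote directly the point-evaluation description $\mu_K(\lambda)(z) = \lambda\big(a \mapsto \Delta_a(z)\big)$ of the Fourier transform followed by the Schneider--Teitelbaum uniformisation, extract the $Z^j$-coefficient, and substitute Lemma \ref{RhoDegJ}; I would, however, prefer the route above because it isolates the coefficient-field issues one degree at a time.
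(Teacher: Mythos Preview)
Your argument is correct, but the paper takes a different route. Rather than invoking density of Dirac measures, the paper computes $\mu_K(\lambda)$ directly in the Lubin--Tate logarithm variable $t = \log_{\LT}(Z)$: Lemma~\ref{MuFiT} uses the pairing formula from \cite[Lemma~4.6(8)]{ST} to show $\mu_K(\lambda) = \sum_{k\geq 0}\lambda(Y^k/k!)\,\Omega^k t^k$, and then Proposition~\ref{tZformula} performs the formal change of variable from $t$ to $Z$, valid for an \emph{arbitrary} $L$-linear functional $\lambda\colon L[Y]\to K$. Combining these two steps gives the theorem without any appeal to density or continuity.

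Your approach is arguably more conceptual, and the density of $K[o_L]$ in $D^{L-\an}(o_L,K)$ that you need does hold (it follows from the description of $D^{\Qp-\an}(o_L,K)$ as convergent power series in the $b_i = \delta_{v_i}-1$, in which polynomials are dense, and then passing to the quotient). The paper's route, however, buys something extra: Proposition~\ref{tZformula} is stated for \emph{all} $\lambda \in \Hom_L(L[Y],K)$, not only for those arising from locally analytic distributions, and precisely this extra generality is exploited later in Proposition~\ref{AlgToLAnDists} to produce elements of $\cO^\circ(\frX_K)$ from purely algebraic linear forms on $K[Y]$. Your density argument cannot reach such $\lambda$, so if you adopt it you would still need to prove Proposition~\ref{tZformula} separately for the subsequent applications.
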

In the case when $\lambda = \delta_a$ for some $a \in o_L$, Lemma \ref{RhoDegJ} implies that
\[ \mu(\delta_a) = \sum\limits_{j=0}^\infty P_j(a \Omega) Z^j = \sum\limits_{j=0}^\infty \left(\sum\limits_{i=0}^j \rho_{i,j}(a) b_i(\Omega)\right) Z^j = \sum\limits_{j=0}^\infty \sum\limits_{k=0}^j \delta_a(\rho_{k,j}(Y)) b_k(\Omega) Z^j\]
which explains where the formula comes from. We will now give a rigorous argument to show that the formula is valid for any $\lambda \in D^{L-\an}(o_L,K)$. 
\begin{lemma}\label{MuFiT}  Let $t := \log_{LT}(Z)$ be the Lubin-Tate logarithm. Then
\[\mu_K(\lambda) = \sum\limits_{k=0}^\infty \lambda( Y^k/k! ) \Omega^k t^k \qmb{for all} \lambda \in D^{L-\an}(o_L,K).\]
\end{lemma}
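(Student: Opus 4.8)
The plan is to establish the formula $\mu_K(\lambda) = \sum_{k\geq 0}\lambda(Y^k/k!)\,\Omega^k t^k$ by reducing to the case of Dirac distributions and then invoking a density/continuity argument. First I would verify the formula directly when $\lambda=\delta_a$ for $a\in o_L$. In that case $\mu_K(\delta_a) = \Delta_a = \exp(a\Omega\log_{\LT}(Z)) = \exp(a\Omega t)$, which expands as $\sum_{k\geq 0}(a\Omega t)^k/k! = \sum_{k\geq 0}(a^k/k!)\,\Omega^k t^k$, and since $\delta_a(Y^k/k!) = a^k/k!$ this is exactly the claimed identity. So the formula holds on the $o_L$-span of the Dirac distributions, equivalently on $o_L\dcroc{o_L}\otimes_{o_L}L$ sitting inside $D^{L-\an}(o_L,K)$.

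Next I would promote this to all of $D^{L-\an}(o_L,K)$ by a continuity argument. Both sides of the asserted identity are $K$-linear in $\lambda$, so it suffices to check that both sides are continuous for the Fréchet topology on $D^{L-\an}(o_L,K)$ (with target the Fréchet algebra $\cO(\mathbf{B}_{\Cp})$, or rather $\cO(\frX_K)$) and that $o_L\dcroc{o_L}\otimes_{o_L}L$ is dense in $D^{L-\an}(o_L,K)$. The left-hand side $\lambda\mapsto\mu_K(\lambda)$ is continuous because $\mu_K$ is an isomorphism of topological algebras by Lemma~\ref{KTwistedInv}, combined with the Schneider--Teitelbaum uniformisation $\mu_{\rig}$ which is a topological isomorphism. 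For the right-hand side, one uses the norm estimates $(\ref{NormDr})$: on the Banach completion $D^{L-\an}_r(o_L,K)$, the functional $\lambda\mapsto\lambda(Y^k/k!)$ has norm controlled by the norm of $Y^k/k!$ as an element of $C^{L-\an}(o_L,K)$, while the coefficient $\Omega^k t^k$ has $Z$-adic valuation growing linearly in $k$ (since $t = \log_{\LT}(Z) = Z + O(Z^q)$ so $t^k\in Z^k\Cp\dcroc Z$), so the series $\sum_k\lambda(Y^k/k!)\Omega^k t^k$ converges in $\cO(\mathbf{B}_{\Cp})$ and depends continuously on $\lambda$ for each fixed $r<1$. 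Density of $o_L\dcroc{o_L}\otimes_{o_L}L$ in $D^{L-\an}(o_L,K)$ is standard --- it follows from the density of finitely supported measures, which is recalled in \cite{ST2} and already used in the discussion around the Schneider--Teitelbaum diagram in the excerpt. Since two continuous maps agreeing on a dense subspace agree everywhere, the formula follows.

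The main obstacle will be making the convergence and continuity estimates on the right-hand side precise: one must choose, for each $r<1$, a matching radius parameter so that $\lambda\in D^{L-\an}_r(o_L,K)$ implies the series $\sum_k\lambda(Y^k/k!)\Omega^k t^k$ lands in $\cO(\frX(r')_K)$ for suitable $r'$, using that $\vp(\Omega)$ is a fixed finite quantity (Theorem~\ref{npvert} gives $\vp(\Omega) = y_0$) and that $\val_Z(t^k)\geq k$. This is a routine but slightly delicate interplay between the norm $\|\cdot\|_r$ on distributions and the Gauss norms on $\cO(\mathbf{B}_{\Cp})$; once it is set up, the rest is formal. An alternative, perhaps cleaner, route avoids explicit estimates entirely: observe that $t$ is a primitive element, so $\exp(\Omega t) = 1 + F_{t'_o}(Z) = \Delta_1$ is grouplike, hence the $K$-algebra map $\mu_K$ is determined on the completed group algebra, and then note that $\lambda\mapsto\sum_k\lambda(Y^k/k!)\Omega^k t^k$ is precisely the composite of $\lambda$ with the ``exponential'' expansion, which is forced by the fact that $D^{L-\an}(o_L,K)$ is (topologically) generated by $o_L$ together with the requirement that $\mu_K$ be a continuous algebra homomorphism sending $\delta_a\mapsto\exp(a\Omega t)$. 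I would present the density argument as the main line and remark on the grouplike interpretation.
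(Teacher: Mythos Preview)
Your approach is correct in principle but takes a genuinely different route from the paper. The paper's proof is a two-line computation: write $\mu_K(\lambda) = \sum_m c_m t^m$ (using that $t = \log_{\LT}(Z)$ is a local coordinate, so $\Cp\dcroc{t} = \Cp\dcroc{Z}$), then invoke the pairing formula \cite[Lemma 4.6(8)]{ST}, which says $\lambda(Y^k/k!) = \{\mu_K(\lambda), Y^k/k!\} = \frac{(\Omega^{-1}\partial_t)^k}{k!}(\mu_K(\lambda))(0) = \Omega^{-k} c_k$. This reads off each $t$-coefficient directly, with no density or continuity argument needed.

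Your density-and-continuity strategy is valid, and your check on Dirac measures is exactly right. The trade-off is that the continuity of the right-hand side, which you correctly flag as the main obstacle, genuinely requires work: the bound $\val_Z(t^k)\geq k$ alone does not control the Gauss norm $\|t^k\|_r$ for $r$ close to $1$, since $\log_{\LT}$ has unbounded coefficients. You would need to combine growth estimates on $\lambda(Y^k)$ coming from the $\|\cdot\|_r$-norm with the convergence of $\exp(\Omega t)$ on the open disc. This can be done, but it is substantially longer than the paper's argument, which sidesteps all of it by appealing to the ready-made differential pairing from \cite{ST}. If you want to keep your approach, the cleanest way to handle the continuity is probably to note that $\mu_K$ is already known to be a topological isomorphism onto $\cO(\bfB)^{G_K,\ast}$, so the right-hand side must converge \emph{a posteriori}; then the coefficient-matching reduces to the same pairing computation the paper uses.
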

\begin{proof} Since we may identify $\Cp\dcroc{t}$ with $\Cp\dcroc{Z}$, we can write $\mu_K(\lambda) = \sum\limits_{m=0}^\infty c_{i,m} t^m$ for some $c_{i,m} \in \Cp$. Then applying \cite[Lemma 4.6(8)]{ST}, we have 
\[ \lambda(Y^k/k!) = \{\mu_K(\lambda), Y^k/k!\} = \frac{(\Omega^{-1} \partial_t)^k}{k!}(\mu_K(\lambda))(0) =  \Omega^{-k} c_{i,k}\quad\mbox{for all}\quad k \geq 0. \qedhere\]
\end{proof}
\begin{proposition}\label{tZformula} Let $\lambda \in \Hom_L(L[Y],K)$. Then in $\Cp\dcroc{t} = \Cp\dcroc{Z}$ we have
\[ \sum\limits_{k=0}^\infty \lambda(Y^k/k!) \Omega^k t^k = \sum\limits_{j=0}^\infty \sum\limits_{k=0}^j \lambda(\rho_{k,j}(Y)) b_k(\Omega) Z^j.\]
\end{proposition}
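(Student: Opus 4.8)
The plan is to deduce the asserted equality from a single identity in $K[Y,\Omega]\dcroc{Z}$ by applying $\lambda$ in the variable $Y$ only. First I would recall that the polynomials $P_m$ are defined by $\exp(W\log_{\LT}(Z))=\sum_{m\ge 0}P_m(W)Z^m$ for a formal variable $W$. Substituting $W=Y\Omega$ --- legitimate since $\Omega$ is transcendental over $K$, so that $K[Y,\Omega]$ is a genuine two-variable polynomial ring --- and using Lemma~\ref{RhoDegJ}(1), namely $P_j(Y\Omega)=\sum_{k=0}^j\rho_{k,j}(Y)b_k(\Omega)$, while on the other hand expanding $\exp(Y\Omega t)=\sum_{k\ge 0}\frac{Y^k}{k!}\Omega^k t^k$ with $t=\log_{\LT}(Z)$, I obtain the identity
\[ \sum_{k\ge 0}\frac{Y^k}{k!}\,\Omega^k t^k \;=\; \sum_{j\ge 0}\Bigl(\sum_{k=0}^j\rho_{k,j}(Y)\,b_k(\Omega)\Bigr)Z^j . \]
Here I use that $Z\mapsto\log_{\LT}(Z)$ is a substitution automorphism of $L\dcroc{Z}$, so that $K[Y,\Omega]\dcroc{Z}=K[Y,\Omega]\dcroc{t}$, and the two displayed sides are just the $t$- resp.\ $Z$-expansion of the same element $\exp(Y\Omega t)$.

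Next I would extend $\lambda$ to a $K$-linear functional $\widetilde\lambda\colon K[Y]\to K$; this is the sense in which $\lambda(\rho_{k,j}(Y))$, with $\rho_{k,j}\in K[Y]$, is to be read. Writing $t^k=\sum_{j}c_{j,k}Z^j$ with $c_{j,k}\in L$ (possible since $\log_{\LT}(Z)\in L\dcroc{Z}$, and with $c_{j,k}=0$ for $j<k$), I would extract the coefficient of $Z^j$ from the displayed identity, obtaining for each $j\ge 0$ the finite polynomial identity $\sum_{k=0}^j\frac{Y^k}{k!}\,c_{j,k}\,\Omega^k=\sum_{k=0}^j\rho_{k,j}(Y)\,b_k(\Omega)$ in $K[Y]\otimes_K K[\Omega]$. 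Applying $\widetilde\lambda\otimes\id_{K[\Omega]}$ to this --- an elementary step, since it is a finite identity --- and using $\widetilde\lambda|_{L[Y]}=\lambda$ together with $Y^k/k!\in L[Y]$, I get $\sum_{k=0}^j\lambda(Y^k/k!)\,c_{j,k}\,\Omega^k=\sum_{k=0}^j\widetilde\lambda(\rho_{k,j}(Y))\,b_k(\Omega)$. Multiplying by $Z^j$, summing over $j$, and resumming the left-hand side as $\sum_{k\ge 0}\lambda(Y^k/k!)\Omega^k t^k$ yields precisely the statement of the proposition.

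I do not expect a serious obstacle: essentially all the content sits in the first identity, which merely repackages the definition of the $\rho_{k,j}$, the power-series definition of the $P_m$, and the exponential series. The only points that genuinely need care --- and the only ones I would spell out in full --- are the bookkeeping involved in applying a linear functional to one of the three variables $Y,\Omega,Z$ while leaving the others inert, and the compatibility of this operation with the identification $\Cp\dcroc{Z}=\Cp\dcroc{t}$ (which holds because the substitution $t=\log_{\LT}(Z)$ has coefficients in $L$ and involves neither $Y$ nor $\Omega$); passing through the finite identities above avoids any convergence issue altogether. Together with Lemma~\ref{MuFiT}, this proposition then gives Theorem~\ref{bkZj}.
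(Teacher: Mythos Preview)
Your proof is correct and follows essentially the same route as the paper's. Both arguments expand $\exp(Y\Omega t)=\sum_j P_j(Y\Omega)Z^j$, write $t^k=\sum_{j\ge k}d_j^{(k)}Z^j$ with $d_j^{(k)}\in L$, extract the coefficient of $Z^j$ to obtain the finite identity $\sum_{k=0}^j\frac{1}{k!}d_j^{(k)}\Omega^kY^k=\sum_{k=0}^j\rho_{k,j}(Y)b_k(\Omega)$, apply the $K$-linear (equivalently, $K[\Omega]$-linear) extension of $\lambda$, and resum; your remark that the statement implicitly reads $\lambda(\rho_{k,j}(Y))$ via the unique $K$-linear extension of $\lambda$ to $K[Y]$ is exactly the content of the paper's ``extends to a $K[\Omega]$-linear form $K[\Omega,Y]\to K[\Omega]$''.
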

\begin{proof}For each $k \geq 0$, write $t^k = \sum\limits_{j=k}^\infty d_j^{(k)} Z^j \in L\dcroc{Z}$. Substituting this into Lemma \ref{MuFiT} gives
\begin{equation}\label{MuFiZ}\sum\limits_{k=0}^\infty \lambda(Y^k/k!) \Omega^k t^k = \sum\limits_{k=0}^\infty \lambda(Y^k / k!) \Omega^k \sum\limits_{j=k}^\infty d_j^{(k)} Z^j =
\sum\limits_{j=0}^\infty\left( \sum\limits_{k=0}^j \frac{1}{k!} d_j^{(k)}\Omega^k \lambda(Y^k) \right) Z^j. \end{equation}
On the other hand, the identity
\[\sum\limits_{j=0}^\infty P_j(Y\Omega )Z^j = \exp(Y \Omega t ) = \sum\limits_{k=0}^\infty \frac{1}{k!} t^k \Omega^k Y^k = \sum\limits_{k=0}^\infty \frac{Y^k\Omega^k}{k!}  \sum\limits_{j=k}^\infty d_j^{(k)} Z^j\]
together with Lemma \ref{RhoDegJ} shows that for all $j \geq 0$ we have
\begin{equation}\label{DjKBkO} \sum\limits_{k=0}^j \frac{1}{k!} d^{(k)}_j \Omega^k Y^k = P_j(\Omega Y) = \sum\limits_{k=0}^j \rho_{k,j}(Y)b_k(\Omega).\end{equation}
Now, the $L$-linear form $\lambda : L[Y] \to K$ extends to a $K[\Omega]$-linear form $K[\Omega,Y] \to K[\Omega]$. Applying this extension to $(\ref{DjKBkO})$ gives
\[\sum\limits_{k=0}^j \frac{1}{k!} d_j^{(k)}\Omega^k \lambda(Y^k) = \sum\limits_{k=0}^j \lambda(\rho_{k,j}(Y)) b_k(\Omega).\]
Substituting this equation into $(\ref{MuFiZ})$ gives the result. \end{proof}

\begin{proof}[Proof of Theorem \ref{bkZj}] Follows immediately from Lemma \ref{MuFiT} and Proposition \ref{tZformula}.
\end{proof}
\begin{definition}\label{CheckR} Let $\check{R}$ be the $o_K$-linear span of $\{\rho_{k,j}(Y) : j \geq k \geq 0\}$ in the space $I := \Int(o_L,o_K)$ of $o_K$-valued polynomials on $o_L$.
\end{definition}
We will see shortly that $\check{R}$ does not depend on the choice of regular basis for $R$.
\begin{corollary}\label{LambdaT} Let $\lambda \in D^{L-\an}(o_L,K)$. Then $\mu_K(\lambda) \in R\dcroc{Z}$ if and only if $\lambda(\check{R}) \subseteq o_K$.
\end{corollary}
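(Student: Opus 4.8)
The plan is to read both implications straight off the explicit formula for $\mu_K(\lambda)$ furnished by Theorem \ref{bkZj}, exploiting the fact that $\{b_n(\Omega)\}_{n \geq 0}$ is a \emph{regular} basis of $R$.

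First I would note that, since $R \subseteq K[\Omega] \cap o_{\Cp}$, the ring $R\dcroc{Z}$ sits inside $o_{\Cp}\dcroc{Z}$, and a power series $\sum_j c_j Z^j$ lies in $R\dcroc{Z}$ precisely when each coefficient $c_j$ lies in $R$. By Theorem \ref{bkZj} we have $\mu_K(\lambda) = \sum_{j \geq 0} c_j(\lambda) Z^j$, where $c_j(\lambda) := \sum_{k=0}^j \lambda(\rho_{k,j}(Y))\, b_k(\Omega)$ is a $K$-linear combination of $b_0(\Omega),\dots,b_j(\Omega)$, hence an element of $K[\Omega]_j$. Thus $\mu_K(\lambda) \in R\dcroc{Z}$ if and only if $c_j(\lambda) \in R \cap K[\Omega]_j = R_j$ for every $j \geq 0$.

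Next I would bring in the regular basis. Since $\Omega$ is transcendental over $K$ and $\deg b_k(\Omega) = k$, the family $\{b_k(\Omega)\}_{k \geq 0}$ is a $K$-basis of $K[\Omega]$, so $\{b_0(\Omega),\dots,b_j(\Omega)\}$ is a $K$-basis of $K[\Omega]_j$; and by Definition \ref{BnBasis}(3), $R_j = \bigoplus_{k=0}^j o_K\, b_k(\Omega)$. Consequently an element of $K[\Omega]_j$ lies in $R_j$ if and only if all of its coordinates relative to $b_0(\Omega),\dots,b_j(\Omega)$ lie in $o_K$. Applying this to $c_j(\lambda)$ gives: $c_j(\lambda) \in R_j$ iff $\lambda(\rho_{k,j}(Y)) \in o_K$ for all $0 \leq k \leq j$. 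Letting $j$ range, $\mu_K(\lambda) \in R\dcroc{Z}$ iff $\lambda(\rho_{k,j}(Y)) \in o_K$ for all $j \geq k \geq 0$.

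Finally, because $\lambda$ is $K$-linear (in particular $o_K$-linear) on $K[Y]$ and $\check{R}$ is by Definition \ref{CheckR} the $o_K$-span of the polynomials $\rho_{k,j}(Y)$, the condition ``$\lambda(\rho_{k,j}(Y)) \in o_K$ for all $j \geq k \geq 0$'' is equivalent to $\lambda(\check{R}) \subseteq o_K$: one direction is linearity, the other is evaluation on the generators. This finishes the proof. There is no genuine obstacle here — as the authors already remark for Theorem \ref{bkZj}, the whole computation is essentially a tautology once the set-up is in place; the only point that needs a moment's care is the passage from ``$c_j(\lambda) \in R$'' to ``all $b_k(\Omega)$-coordinates of $c_j(\lambda)$ lie in $o_K$'', which relies exactly on the transcendence of $\Omega$ over $K$ together with the defining property of a regular basis.
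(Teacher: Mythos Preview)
Your proof is correct and follows essentially the same approach as the paper: both read the result directly off Theorem \ref{bkZj}, using that the regular basis $\{b_k(\Omega)\}$ makes membership of each $Z^j$-coefficient in $R$ equivalent to integrality of the individual $\lambda(\rho_{k,j}(Y))$. You have simply spelled out a few steps (such as the passage through $R_j$ and the final reduction to the $o_K$-span $\check{R}$) that the paper leaves implicit.
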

\begin{proof} Theorem \ref{bkZj} tells us that $\mu_K(\lambda) \in R\dcroc{Z}$ if and only if $\sum\limits_{k=0}^j \lambda(\rho_{k,j}(Y)) b_k(\Omega) \in R$ for all $j \geq 0$. Since $\{b_k(\Omega) : k \geq 0\}$ is a regular basis, this is equivalent to $\lambda(\rho_{k,j}(Y)) \in o_K$ for all $j \geq k \geq 0$.
\end{proof}

\begin{proposition}\label{AlgToLAnDists} Let $\lambda \in \Hom_K(K[Y],K)$ be such that $\lambda(\check{R}) \subseteq o_K$. Then there exists $\tilde{\lambda} \in \mu_K^{-1}(R\dcroc{Z}) \subseteq \cO^\circ(\frX_K)$ such that $\tilde{\lambda}_{|K[Y]} = \lambda$.
\end{proposition}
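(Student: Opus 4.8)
The plan is to construct $\tilde\lambda$ directly on the $\mu_K$-image side of Corollary~\ref{muK}, by prescribing its image as an explicit power series in $Z$ and then checking the required properties. First I would extend $\lambda\colon K[Y]\to K$ to the $K[\Omega]$-linear map $\bar\lambda\colon K[\Omega][Y]\to K[\Omega]$ determined by $\bar\lambda\bigl(\sum_k c_kY^k\bigr)=\sum_k c_k\lambda(Y^k)$, and put
\[
 F(Z) := \bar\lambda\bigl(\exp(Y\,\Omega\,\log_{\LT}(Z))\bigr)
 = \sum_{j=0}^\infty \bar\lambda\bigl(P_j(Y\Omega)\bigr)Z^j
 = \sum_{j=0}^\infty\Bigl(\sum_{k=0}^j \lambda(\rho_{k,j}(Y))\,b_k(\Omega)\Bigr)Z^j ,
\]
the last equality being the defining relation of the $\rho_{k,j}$ from Lemma~\ref{RhoDegJ}(1). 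The candidate is then $\tilde\lambda:=\mu_K^{-1}(F)$, so that the whole statement reduces to understanding this single power series $F$.

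That $F\in R\dcroc{Z}$ is immediate from the hypothesis: each $\rho_{k,j}(Y)$ lies in $\check R$, hence $\lambda(\rho_{k,j}(Y))\in o_K$, and since $\{b_k(\Omega):k\ge0\}$ is a regular basis for $R$ the $j$-th coefficient $\sum_{k=0}^j\lambda(\rho_{k,j}(Y))b_k(\Omega)$ lies in $R_j\subseteq R\subseteq o_{\Cp}$; in particular $F\in o_{\Cp}\dcroc{Z}$. The substantive point, and the one I expect to be the main obstacle, is that $F$ is invariant for the twisted $G_K$-action. I would prove this at the level of the generating function rather than coefficient-by-coefficient: the two-variable series $\exp(Y\Omega\log_{\LT}(Z))\in K[\Omega][Y]\dcroc{Z}$ is fixed by the twisted action on its $\Omega$-coefficients and on its variable $Z$, since $\sigma(\Omega)=\tau(\sigma)\Omega$ is exactly cancelled by $\log_{\LT}([\tau(\sigma)^{-1}](Z))=\tau(\sigma)^{-1}\log_{\LT}(Z)$ --- the same computation that shows each $\Delta_a$ is $\ast$-invariant. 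On the other hand $\bar\lambda$ touches only the auxiliary variable $Y$; its values $\lambda(Y^k)$ lie in $K$ and are therefore fixed by $G_K$, and it obviously commutes with the substitution $Z\mapsto[\tau(\sigma)^{-1}](Z)$, which merely reorganises $Z$-coefficients. Hence $\bar\lambda$ intertwines the two twisted actions and carries the $\ast$-invariant series $\exp(Y\Omega\log_{\LT}(Z))$ to the $\ast$-invariant series $F$. Thus $F\in R\dcroc{Z}\cap o_{\Cp}\dcroc{Z}^{G_K,\ast}$, and $\tilde\lambda:=\mu_K^{-1}(F)$ is a well-defined element of $\mu_K^{-1}(R\dcroc{Z})\subseteq\cO^\circ(\frX_K)$ by Corollary~\ref{muK}.

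It remains to check that $\tilde\lambda_{|K[Y]}=\lambda$. Here I would re-expand $F$ in the Lubin--Tate coordinate $t:=\log_{\LT}(Z)$; since $\bar\lambda$ commutes with this scalar change of variable, $F=\bar\lambda(\exp(Y\Omega t))=\sum_{k\ge0}\tfrac{\lambda(Y^k)}{k!}\,\Omega^k t^k$. On the other hand $\tilde\lambda\in\cO(\frX_K)$ corresponds to a locally $L$-analytic distribution, so $\mu_K(\tilde\lambda)=\sum_{k\ge0}\tilde\lambda(Y^k/k!)\,\Omega^k t^k$ by Lemma~\ref{MuFiT}. Comparing the two expressions for $\mu_K(\tilde\lambda)=F$ coefficient-wise in $t$ and using $\Omega\ne0$ gives $\tilde\lambda(Y^k/k!)=\lambda(Y^k)/k!$ for every $k\ge0$, so $\tilde\lambda$ and $\lambda$ agree on all monomials $Y^k$ and hence on $K[Y]$ by $K$-linearity.

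All that really needs care is the twisted-invariance step: one must keep the coefficient-wise $G_K$-action, the substitution in the $Z$-variable, and the auxiliary variable $Y$ (on which $\lambda$ acts, but $G_K$ does not) cleanly separated, and observe that $\bar\lambda$ passes through all three. The remaining steps are bookkeeping with the identities already established in Lemma~\ref{RhoDegJ}, Lemma~\ref{MuFiT}, and Corollary~\ref{muK}.
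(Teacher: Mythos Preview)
Your proof is correct and follows essentially the same route as the paper's. Both construct the same power series $F(Z)=\sum_j\sum_{k\le j}\lambda(\rho_{k,j}(Y))b_k(\Omega)Z^j$, observe $F\in R\dcroc{Z}$ from the hypothesis, pull back through Corollary~\ref{muK}, and verify $\tilde\lambda_{|K[Y]}=\lambda$ by expanding in the variable $t=\log_{\LT}(Z)$ via Lemma~\ref{MuFiT}. The only cosmetic difference is in the $\ast$-invariance step: the paper invokes Proposition~\ref{tZformula} to rewrite $F$ as $\sum_k\lambda(Y^k/k!)\Omega^kt^k\in K\dcroc{\Omega t}$, which is visibly $\ast$-invariant, whereas you argue directly that the two-variable generating function $\exp(Y\Omega\log_{\LT}(Z))$ is $\ast$-invariant and that $\bar\lambda$ (acting only on $Y$) commutes with the twisted action --- this is precisely the content of Proposition~\ref{tZformula} unpacked.
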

\begin{proof} The twisted $G_L$-action on $\Cp\dcroc{Z}$ preserves $R\dcroc{Z}$ since we assumed that $R \subseteq K[\Omega] \cap o_{\Cp}$ is $G_L$-stable in Definition \ref{AdmRalg}. Therefore $R\dcroc{Z}^{G_L,\ast}$ makes sense.

Define $F_\lambda := \sum\limits_{j=0}^\infty \sum\limits_{k=0}^j \lambda(\rho_{k,j}(Y)) b_k(\Omega) Z^j \in \Cp\dcroc{Z}$.  Then $F_\lambda \in K\dcroc{\Omega t} = \Cp\dcroc{Z}^{G_K,\ast}$ by Proposition \ref{tZformula} and $F_\lambda \in R\dcroc{Z}$  because $\lambda(\check{R}) \subseteq o_K$. Hence $F_\lambda \in R\dcroc{Z}^{G_K,\ast} \subseteq o_{\Cp}\dcroc{Z}^{G_K,\ast}$, so $F_\lambda = \mu_K(\tilde{\lambda})$ for some $\tilde{\lambda} \in \cO^\circ(\frX_K)$ by Corollary \ref{muK}. In particular, $\tilde{\lambda} \in \mu_K^{-1}(R\dcroc{Z})$.

Next, applying \cite[Lemma 4.6(8)]{ST} we see that for all $m \geq 0$, 
\[\tilde{\lambda}(Y^m/m!) = \{\mu_K(\tilde{\lambda}), Y^m/m!\} = \{F_\lambda, Y^m/m!\} = \left\{\sum\limits_{k=0}^\infty \lambda(Y^k/k!) \Omega^k t^k, Y^m/m!\right\} = \lambda(Y^m/m!).\]
Since the $Y^m/m!$ span $K[Y]$ as a $K$-vector space, we conclude that $\tilde{\lambda}_{|K[Y]} = \lambda$.
\end{proof}

Recall the isomorphism $\ev : \Int(o_L,o_K) \to o_K\dcroc{o_L}^\ast_\infty$ from Corollary \ref{IntDualIwasawa}. 

\begin{theorem}\label{EvT} We have $\ev(\check{R}) = \mu_K^{-1}(R\dcroc{Z})^\ast_\infty$.
\end{theorem}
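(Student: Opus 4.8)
The plan is to realise both sides inside $\Int(o_L,o_K)\cong o_K\dcroc{o_L}^\ast_\infty$ (via the isomorphism $\ev$ of Corollary~\ref{IntDualIwasawa}) and to match them using Corollary~\ref{LambdaT}, Proposition~\ref{AlgToLAnDists}, and a ``bipolar'' argument for $\check R$. Write $B:=\mu_K^{-1}(R\dcroc{Z})$. First I would record that $o_K\dcroc{o_L}\subseteq B$: by Corollary~\ref{MatrixCoeffs} and Definition~\ref{AdmRalg} each coefficient $P_j(a\Omega)=\sum_{i=0}^j\sigma_{ij}(a)P_i(\Omega)$ of $\mu_K(\delta_a)$ lies in $R$, and more generally Theorem~\ref{bkZj} gives $\mu_K(\lambda)\in R\dcroc{Z}$ for $\lambda\in o_K\dcroc{o_L}$ because each (finite) coefficient sum $\sum_{k\le j}\lambda(\rho_{k,j}(Y))\,b_k(\Omega)$ then has coefficients in $o_K$. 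Hence Lemma~\ref{Lex}(2) embeds $B^\ast_\infty$ into $o_K\dcroc{o_L}^\ast_\infty$, and it remains to prove $\ev(\check R)=B^\ast_\infty$ as submodules of $o_K\dcroc{o_L}^\ast_\infty$.

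The key reformulation is that, under $\ev^{-1}$,
\[ B^\ast_\infty\;=\;\{\,f\in\Int(o_L,o_K):\lambda(f(Y))\in o_K\text{ for all }\lambda\in B\,\}. \]
This is essentially formal: for $f$ of degree $\le m$, \cite[Lemma 4.6(8)]{ST} shows that $\lambda\mapsto\lambda(f(Y))=\{\mu_K(\lambda),f(Y)\}$ depends only on the class of $\mu_K(\lambda)$ in the $m$-th infinitesimal neighbourhood $K\dcroc{b}_m$ of $\mathbbm{1}$, so it descends to a functional on $B_m$ restricting to $\ev(f)$ on $o_K\dcroc{o_L}_m$. Since $K$ is discretely valued, $B_m$ is a full $o_K$-lattice in $K\dcroc{b}_m$ (it is squeezed between $o_K\dcroc{o_L}_m$ and $\cO^\circ(\frX(1/p)_K)_m=\sum_{i=0}^m o_K(b/p)^i$), and the $\ev$-pairing of Proposition~\ref{DualIwasawa} is perfect; so $\ev(f)$ extends to an $o_K$-valued functional on $B_m$ precisely when $\{F,f(Y)\}\in o_K$ for all $F\in B_m$, i.e.\ when $\lambda(f(Y))\in o_K$ for all $\lambda\in B$. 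Taking the colimit over $m$ gives the display.

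Now the inclusion $\check R\subseteq\{f:\lambda(f)\in o_K\ \forall\lambda\in B\}$ is immediate from Corollary~\ref{LambdaT}: for $\lambda\in B$ we have $\lambda(\check R)\subseteq o_K$, whence $\lambda(f)\in o_K$ for every $f$ in the $o_K$-span $\check R$ of the $\rho_{k,j}(Y)$. For the reverse inclusion take $f\in\Int(o_L,o_K)$ with $\lambda(f)\in o_K$ for all $\lambda\in B$. By Proposition~\ref{AlgToLAnDists}, every $\lambda_0\in\Hom_K(K[Y],K)$ with $\lambda_0(\check R)\subseteq o_K$ is the restriction to $K[Y]$ of some $\widetilde\lambda\in B$, so $\lambda_0(f)=\widetilde\lambda(f)\in o_K$ for all such $\lambda_0$; thus it suffices to prove the \emph{bipolar statement}: if $f\in K[Y]$ satisfies $\lambda_0(f)\in o_K$ for every $\lambda_0\in\Hom_K(K[Y],K)$ with $\lambda_0(\check R)\subseteq o_K$, then $f\in\check R$. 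To see this, fix $m\ge\deg f$ and work in $K[Y]_m$. By Lemma~\ref{RhoDegJ}(3) the polynomials $\rho_{0,0}(Y),\dots,\rho_{m,m}(Y)$ have distinct degrees, so $\check R_m:=\check R\cap K[Y]_m$ is a full $o_K$-lattice in $K[Y]_m$ (it lies between $\bigoplus_{j\le m}o_K\rho_{j,j}(Y)$ and the lattice $\Int(o_L,o_K)_m$); as $o_K$ is a discrete valuation ring, lattices are reflexive, so $(\check R_m)^{\circ\circ}=\check R_m$ with polars taken inside $K[Y]_m$. Moreover any $\mu\in(\check R_m)^\circ$ extends to such a $\lambda_0$: since $\check R_n/\check R_m$ embeds in $K[Y]_n/K[Y]_m$ for $n\ge m$ it is $o_K$-free, so $\check R_m$ is a direct summand of each $\check R_n$; a compatible flag of complements writes $\check R=\check R_m\oplus C$, and $\mu\oplus 0$ is an $o_K$-valued form on $\check R$ which, because $\check R$ spans $K[Y]$ over $K$ (again by $\deg\rho_{j,j}=j$), extends uniquely $K$-linearly to $\lambda_0\in\Hom_K(K[Y],K)$ with $\lambda_0(\check R)\subseteq o_K$ and $\lambda_0|_{K[Y]_m}=\mu$. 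Applying this to our $f$ gives $\mu(f)=\lambda_0(f)\in o_K$ for all $\mu\in(\check R_m)^\circ$, hence $f\in(\check R_m)^{\circ\circ}=\check R_m\subseteq\check R$.

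The hard part is this last, bipolar, step. None of the individual ingredients is deep — reflexivity of lattices over a DVR, plus Corollary~\ref{LambdaT} and Proposition~\ref{AlgToLAnDists}, which are precisely the two one-sided statements being dualised — but one has to be careful passing between the infinite-rank module $\check R\subseteq K[Y]$ and its finite-dimensional truncations $\check R_m$. The input $\deg\rho_{j,j}=j$ from Lemma~\ref{RhoDegJ}(3) is exactly what makes each $\check R_m$ a full lattice and organises the $\check R_m$ into a nested system of $o_K$-direct summands, which is what allows the finite-dimensional reflexivity to be bootstrapped to $\check R$.
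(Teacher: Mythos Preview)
Your proof is correct and follows essentially the same route as the paper's: both directions hinge on Corollary~\ref{LambdaT} and Proposition~\ref{AlgToLAnDists}, together with the fact (from $\deg\rho_{j,j}=j$) that each $\check R_m$ is a full $o_K$-lattice in $K[Y]_m$. The only real difference is packaging: where the paper fixes an explicit regular basis $\{t_n\}$ of $\check R$, extends the dual functionals $t_n^\ast$ to elements $\lambda_n\in\mu_K^{-1}(R\dcroc{Z})$ via Proposition~\ref{AlgToLAnDists}, and reads off the coefficients $c_n=g(\lambda_n)\in o_K$ directly, you run the same extension-of-functionals step inside an abstract bipolar/reflexivity argument for the lattice $\check R_m$. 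Your reformulation of $B^\ast_\infty$ as $\{f\in\Int(o_L,o_K):\lambda(f)\in o_K\text{ for all }\lambda\in B\}$ is also something the paper uses implicitly (when it writes $g(\lambda_n)=\ev(h)(\lambda_n)=\lambda_n(h)$ without further comment); you are right that this reduces to the observation, via Lemma~\ref{MuFiT}, that $\lambda(f)$ for $f\in K[Y]_m$ depends only on the image of $\lambda$ in $K\dcroc{b}_m$.
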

\begin{proof} $T$ contains the $o_K$-submodule of $K[Y]$ generated by $\{\rho_{j,j}(Y) : j \geq 0\}$ and $\deg \rho_{j,j}(Y) = j$ for each $j \geq 0$ by Lemma \ref{RhoDegJ}(3). Hence $\check{R}$ spans $K[Y]$ as a $K$-vector space.  On the other hand, $\check{R}_n := \check{R} \cap K[Y]_{\leq n}$ is contained in $\Int(o_L,o_K)_n$ by Lemma \ref{RhoDegJ}(4), which is a finitely generated $o_K$-module by Remark \ref{RegularBasis}(2). Since $K$ is discretely valued, $\check{R}_n$ is a finitely generated $o_K$-module for each $n \geq 0$. So we can find an $o_K$-module basis $\{t_0,t_1,\cdots,t_n, \cdots \}$ for $\check{R}$ such that $\{t_0,\cdots,t_n\}$ is an $o_K$-module basis for $\check{R}_n$ for each $n \geq 0$. It follows that the natural map $\check{R} \otimes_{o_K} K \to K[Y]$ is an isomorphism, and we may identify $\Hom_{o_K}(\check{R}, o_K)$ with $\{\phi \in \Hom_K(\check{R}, K) : \phi(\check{R}) \subseteq o_K\}$.

Let $\{t_m^\ast : m \geq 0\} \subset \Hom_{o_K}(\check{R}, o_K)$ be determined by 
\[t_m^\ast(t_n) = \delta_{m,n} \qmb{for all} m,n \geq 0.\] 
Then by Proposition \ref{AlgToLAnDists}, $t_m^\ast$ extends to some $\lambda_m \in \mu_K^{-1}(R\dcroc{Z})$ such that $\lambda_{m|K[Y]} = t_m^\ast$. In particular, we have $\lambda_m(t_n) = \delta_{m,n}$ for all $m,n \geq 0$.

Now suppose that $g \in \mu_K^{-1}(R\dcroc{Z})^\ast_\infty \subseteq o_K\dcroc{o_L}^\ast_\infty$. Then $g = \ev(h)$ for some $h \in \Int(o_L,o_K)_m$ by Proposition \ref{DualIwasawa}. Since $h \in K[Y]_{\leq m}$ and since $\{t_0,\cdots,t_m\}$ is a $K$-vector space basis for $K[Y]_m$, we can write $h = \sum_{n=0}^m c_n t_n$ for some $c_n \in K$. But then 
\[ g(\lambda_n) = \ev(h)(\lambda_n) = \lambda_n(h) = t_n^\ast(h) = c_n \qmb{for all} n \geq 0.\]
Since $\lambda_n \in \mu_K^{-1}(R\dcroc{Z})$ and $g \in \mu_K^{-1}(R\dcroc{Z})_\infty^\ast$, we conclude that $g(\lambda_n) \in o_K$ for all $n \geq 0$. Hence $h \in \sum_{n=0}^m o_K t_n \subseteq \check{R}$ and $g = \ev(h) \in \ev(\check{R})$. Hence $\mu_K^{-1}(R\dcroc{Z})_\infty^\ast \subseteq \ev(\check{R})$. 

Conversely, let $\lambda \in \mu_K^{-1}(R\dcroc{Z})$. Then $\lambda(\check{R}) \subseteq o_K$ by Corollary \ref{LambdaT} and thus for all $g \in \check{R}$, $\ev(g)(\lambda) = \lambda(g) \in o_K$. Hence $\ev(\check{R}) \subseteq  \mu_K^{-1}(R\dcroc{Z})_\infty^\ast$. \end{proof}

\begin{corollary}\label{CheckRevs} Let $S \subseteq R$ be two admissible subalgebras of $K[\Omega]$. Then $\check{R} \subseteq \check{S}$. 
\end{corollary}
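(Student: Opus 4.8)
The plan is to reduce the statement to Theorem~\ref{EvT}, which identifies $\ev(\check{R})$ with the algebraic dual $\mu_K^{-1}(R\dcroc{Z})^\ast_\infty$ in a manner that makes no reference to a choice of regular basis. First I would record the elementary inclusion: since $S \subseteq R$, we have $S\dcroc{Z} \subseteq R\dcroc{Z}$ inside $o_{\Cp}\dcroc{Z}$, and hence $\mu_K^{-1}(S\dcroc{Z}) \subseteq \mu_K^{-1}(R\dcroc{Z})$ as $o_K$-subalgebras of $K\dcroc{b}$. I would then check that both of these subalgebras contain $o_K\dcroc{o_L}$: for each $a \in o_L$ the formula $\mu(\delta_a) = \sum_{j \geq 0} P_j(a\Omega) Z^j$ together with Corollary~\ref{MatrixCoeffs} writes $P_j(a\Omega)$ as an $o_L$-linear combination of $P_0(\Omega),\dots,P_j(\Omega)$, and every admissible $o_K$-subalgebra contains all of the $P_i(\Omega)$ by Definition~\ref{AdmRalg}; so $\mu(\delta_a) \in S\dcroc{Z}$, and therefore $o_K\dcroc{o_L} \subseteq \mu_K^{-1}(S\dcroc{Z}) \subseteq \mu_K^{-1}(R\dcroc{Z})$.

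Next I would apply Lemma~\ref{Lex}(2) to this chain, which produces an inclusion of algebraic duals $\mu_K^{-1}(R\dcroc{Z})^\ast_\infty \subseteq \mu_K^{-1}(S\dcroc{Z})^\ast_\infty$ sitting compatibly inside $o_K\dcroc{o_L}^\ast_\infty$. Applying Theorem~\ref{EvT} once to $R$ (with any chosen regular basis for $R$) and once to $S$ (with any chosen regular basis for $S$), the left-hand side equals $\ev(\check{R})$ and the right-hand side equals $\ev(\check{S})$, so $\ev(\check{R}) \subseteq \ev(\check{S})$ as submodules of $o_K\dcroc{o_L}^\ast_\infty$. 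Finally, since $\ev : \Int(o_L,o_K) \to o_K\dcroc{o_L}^\ast_\infty$ is an isomorphism by Corollary~\ref{IntDualIwasawa}, it is injective, and applying $\ev^{-1}$ gives $\check{R} \subseteq \check{S}$, as claimed.

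I do not expect a genuine obstacle here; the only step requiring a moment's attention is verifying that $\mu_K^{-1}(S\dcroc{Z})$ and $\mu_K^{-1}(R\dcroc{Z})$ both lie between $o_K\dcroc{o_L}$ and $K\dcroc{b}$ so that Lemma~\ref{Lex}(2) applies, and this is precisely where the admissibility of $S$ (hence $P_i(\Omega) \in S$ for all $i$) is used. As a side remark, the same argument applied with $S = R$ reproves that $\check{R}$ is independent of the choice of regular basis, the fact promised just after Definition~\ref{CheckR}.
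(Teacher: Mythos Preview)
Your proposal is correct and follows essentially the same route as the paper's own proof: from $S\subseteq R$ deduce $\mu_K^{-1}(S\dcroc{Z})\subseteq\mu_K^{-1}(R\dcroc{Z})$, apply Lemma~\ref{Lex}(2) to reverse the inclusion on algebraic duals, invoke Theorem~\ref{EvT} to rewrite both sides as $\ev(\check{R})$ and $\ev(\check{S})$, and conclude via Corollary~\ref{IntDualIwasawa}. Your added verification that $o_K\dcroc{o_L}\subseteq\mu_K^{-1}(S\dcroc{Z})$ (needed for the hypothesis of Lemma~\ref{Lex}) is a detail the paper leaves implicit, and your closing remark about basis-independence is exactly what the paper notes immediately after this corollary.
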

\begin{proof} We have $\mu_K^{-1}(S\dcroc{Z}) \subseteq \mu_K^{-1}(R\dcroc{Z})$, so $\mu_K^{-1}(R\dcroc{Z})^\ast_\infty \subseteq \mu_K^{-1}(S\dcroc{Z})^\ast_\infty$ by Lemma \ref{Lex}(2). Hence $\ev(\check{R}) \subseteq \ev(\check{S})$ by Theorem \ref{EvT}. Hence $\check{R} \subseteq \check{S}$ because $\ev$ is an isomorphism by Corollary \ref{IntDualIwasawa}.
\end{proof}

Note that Theorem \ref{EvT} implies that the $o_K$-module $\check{R}$ depends only on the admissible subalgebra $R$ and not the particular choice of regular basis $\{b_n(\Omega) : n \geq 0\}$ for $R$. 

\begin{lemma}\label{LambdaI} Let $\lambda \in D^{L-\an}(o_L,K)$. Then $\lambda \in o_K\dcroc{o_L}$ if and only if $\lambda(\Int(o_L,o_K)) \subseteq o_K$.
\end{lemma}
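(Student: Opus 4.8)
The plan is to prove the two implications separately; the forward one is a formality and the content lies in the converse. If $\lambda \in o_L\dcroc{o_L}$ is replaced here by a general $\lambda\in o_K\dcroc{o_L}$, then any $f \in \Int(o_L,o_K)$ lies in some $\Int(o_L,o_K)_m$, and the very assertion of Proposition \ref{DualIwasawa} --- that $\ev$ takes values in $\Hom_{o_K}(o_K\dcroc{o_L}_m, o_K)$ --- says precisely that $\lambda(f) = \ev(f)(\lambda) \in o_K$, so $\lambda(\Int(o_L,o_K)) \subseteq o_K$.

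For the converse, suppose $\lambda(\Int(o_L,o_K)) \subseteq o_K$; the idea is to read off the Taylor expansion of $\lambda$ at the trivial character $\mathbbm{1}$, show its coefficients lie in $o_K$, and then re-sum it inside the Iwasawa algebra. By Lemma \ref{Stalk}, $\widehat{\cO_{\frX_K,\mathbbm{1}}} = K\dcroc{b}$ with $b = b_1 = \delta_1 - 1$, and the natural map $\cO(\frX_K) \hookrightarrow K\dcroc{b}$ is injective since $\frX_K$ is connected; write $\sum_{k \ge 0} c_k b^k$ for the image of $\lambda$. First I would record that $o_K\dcroc{o_L}_m = \bigoplus_{i=0}^m o_K b^i$ inside $K\dcroc{b}_m$: the topological generators $\delta_{v_j} - 1$ of $o_K\dcroc{o_L}$ map to $(1+b)^{v_j} - 1 \in \Zp\dcroc{b}$ (all binomial coefficients $\binom{v_j}{i}$ lie in $\Zp$ because $v_j \in \Zp$), so the image of $o_K\dcroc{o_L}$ in $K\dcroc{b}$ is contained in $o_K\dcroc{b}$ and contains $o_K[b]$. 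Now fix $m$: pairing a polynomial of degree $\le m$ with $\lambda$ is a differential operator of order $\le m$ evaluated at $\mathbbm{1}$, hence depends only on $\lambda \bmod b^{m+1}K\dcroc{b}$, and, via the isomorphism $\ev : \Int(o_L,o_K)_m \stackrel{\sim}{\to} \Hom_{o_K}(o_K\dcroc{o_L}_m, o_K)$ of Proposition \ref{DualIwasawa}, the resulting $o_K$-linear functional on $\Int(o_L,o_K)_m$ is exactly the tautological pairing of $o_K\dcroc{o_L}_m$ with its $o_K$-linear dual. Since $o_K\dcroc{o_L}_m$ is a free $o_K$-lattice in $K\dcroc{b}_m$ and the hypothesis makes this functional $o_K$-valued, reflexivity of free $o_K$-lattices forces $\lambda \bmod b^{m+1}K\dcroc{b} \in o_K\dcroc{o_L}_m = \bigoplus_{i=0}^m o_K b^i$. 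Letting $m$ range over all non-negative integers gives $c_k \in o_K$ for every $k$.

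It remains to reassemble $\lambda$. Since $\Delta_1(Z) - 1 = \sum_{n \ge 1} P_n(\Omega) Z^n$ with $|P_n(\Omega)| \le 1$ by \cite[Lemma 4.2(5)]{ST}, its Gauss norm on the disc $\frX(r)_K$ of radius $r$ is $\le r < 1$; as the $o_L$-algebra map $o_K\dcroc{o_L} \to \cO(\frX_K)$ is continuous, the series $\sum_{k \ge 0} c_k (\delta_1 - 1)^k$ --- whose $k$-th term lies in the $k$-th power of the augmentation ideal --- is Cauchy in the complete ring $o_K\dcroc{o_L}$ and converges there to some $\mu$. Its image in $K\dcroc{b}$ is $\sum_k c_k b^k$, the image of $\lambda$, so $\mu = \lambda$ by injectivity, whence $\lambda \in o_K\dcroc{o_L}$. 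The step I expect to take the most care is the one used in the middle paragraph: verifying that ``$\lambda$ acts integrally on $\Int(o_L,o_K)_m$'' is genuinely equivalent to ``the $m$-th truncated Taylor expansion of $\lambda$ at $\mathbbm{1}$ lies in $o_K\dcroc{o_L}_m$'', i.e.\ that the distribution pairing of $\lambda$ with polynomials of degree $\le m$ coincides, after passage to the $m$-th infinitesimal neighbourhood of $\mathbbm{1}$, with the pairing underlying Proposition \ref{DualIwasawa}.
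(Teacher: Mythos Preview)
Your converse argument has a concrete error: you assert that $\binom{v_j}{i} \in \Zp$ ``because $v_j \in \Zp$'', but $\{v_1,\dots,v_d\}$ is a $\Zp$-basis of $o_L$, so $v_j \notin \Zp$ for $j \ge 2$ whenever $L \neq \Qp$. In fact $\binom{v_j}{i}$ need not even lie in $o_L$: if $L/\Qp$ is unramified quadratic and $v_2 = \zeta$ is a primitive $(p^2-1)$-th root of unity, then $\zeta, \zeta-1, \dots, \zeta-(p-1)$ all reduce to nonzero elements of $k_L$ (since $\bar\zeta \notin \Fp$), so $\binom{\zeta}{p}$ has $\pi$-valuation $-v_p(p!) = -1$. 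Consequently the image of $b_j = \delta_{v_j}-1$ in $K\dcroc{b}$ has genuinely non-integral coefficients, and $o_K\dcroc{o_L}_m$ is \emph{strictly larger} than $\bigoplus_{i=0}^m o_K b^i$ for $m \ge p$. Your reflexivity step does correctly yield $\lambda \bmod b^{m+1} \in o_K\dcroc{o_L}_m$ for every $m$, but from this you cannot deduce $c_k \in o_K$, and so the re-summation $\sum_k c_k(\delta_1-1)^k$ in the last paragraph never gets started. One could try to pass directly from ``$\lambda_m \in o_K\dcroc{o_L}_m$ for all $m$'' to ``$\lambda \in o_K\dcroc{o_L}$'' via a compactness argument for the profinite ring $o_K\dcroc{o_L}$, but this is a further nontrivial step you do not supply.

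The paper's proof sidesteps all of this by a direct appeal to the generalized Mahler theorem: the $\pi$-adic completion of $\Int(o_L,o_K)$ is $\cC^0(o_L,o_K)$ (the Lagrange polynomials of Lemma~\ref{IntBasis} form a Mahler basis), so an $o_K$-valued $o_K$-linear functional on $\Int(o_L,o_K)$ extends automatically to a continuous $\tilde\lambda : \cC^0(o_L,o_K) \to o_K$, i.e.\ an element of $o_K\dcroc{o_L}$. Then $\tilde\lambda$ and $\lambda$ agree on $K[Y]$, which is dense in $C^{L-\an}(o_L,K)$, so they coincide as locally analytic distributions.
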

\begin{proof} Suppose that $\lambda(\Int(o_L,o_K)) \subseteq o_K$. The $\pi$-adic completion of $I$ is naturally isomorphic to the ring $\cC^0(o_L,o_K)$ of $o_K$-valued continuous functions on $o_L$. Since $\lambda(I) \subseteq o_K$, $\lambda$ extends to an $o_K$-linear form $\tilde{\lambda} : \cC^0(o_L,o_K) \to o_K$ which is automatically continuous. View $\tilde{\lambda}$ as an element of $o_K\dcroc{o_L} = D^{\cts}(o_L,K)$. The restrictions of $\tilde{\lambda}$ and of $\lambda \in D^{L-\an}(o_L,K)$ to $K[Y]$ agree by construction. Since $K[Y]$ is dense in $C^{\an}(o_L,K)$, we conclude that $\lambda$ lies in $o_K\dcroc{o_L}$.

Conversely, if $\lambda \in o_K\dcroc{o_L} = \cC^0(o_L,o_K)^\ast$, then $\lambda$ must take integer values on $\Int(o_L,o_K) \subset \cC^0(o_L,o_K)$.\end{proof}

\begin{theorem}\label{muKRZ} Let $R$ be an admissible subalgebra of $K[\Omega]$. Then $\mu_K^{-1}(R\dcroc{Z}) = o_K\dcroc{o_L}$ if and only if $\check{R} = I$.
\end{theorem}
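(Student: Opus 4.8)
The plan is to deduce the equivalence from the duality statement of Theorem~\ref{EvT}, namely $\ev(\check{R}) = \mu_K^{-1}(R\dcroc{Z})^\ast_\infty$, together with the identification $\ev(\Int(o_L,o_K)) = o_K\dcroc{o_L}^\ast_\infty$ from Corollary~\ref{IntDualIwasawa} and the injectivity of $\ev$. Before that I would record the two inclusions that hold unconditionally. The inclusion $\check{R}\subseteq \Int(o_L,o_K)$ is built into Definition~\ref{CheckR}. For $o_K\dcroc{o_L}\subseteq\mu_K^{-1}(R\dcroc{Z})$: any $\lambda\in o_K\dcroc{o_L}$ satisfies $\lambda(\Int(o_L,o_K))\subseteq o_K$ by Lemma~\ref{LambdaI}, hence $\lambda(\check{R})\subseteq o_K$ since $\check{R}\subseteq\Int(o_L,o_K)$, hence $\mu_K(\lambda)\in R\dcroc{Z}$ by Corollary~\ref{LambdaT}. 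In particular $\mu_K^{-1}(R\dcroc{Z})$ is an $o_K$-subalgebra of $K\dcroc{b}$ containing $o_K\dcroc{o_L}$, so its algebraic dual is defined and Theorem~\ref{EvT} does apply to it. Thus both halves of the statement amount to upgrading an inclusion of $o_K$-modules to an equality.

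For the ``if'' direction, assume $\check{R}=\Int(o_L,o_K)$ and take $\lambda\in\mu_K^{-1}(R\dcroc{Z})$. Corollary~\ref{LambdaT} gives $\lambda(\check{R})\subseteq o_K$, that is $\lambda(\Int(o_L,o_K))\subseteq o_K$ by hypothesis, whence $\lambda\in o_K\dcroc{o_L}$ by Lemma~\ref{LambdaI}. Together with the reverse inclusion from the first paragraph this yields $\mu_K^{-1}(R\dcroc{Z})=o_K\dcroc{o_L}$ with no further work.

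For the ``only if'' direction, assume $\mu_K^{-1}(R\dcroc{Z})=o_K\dcroc{o_L}$. Then these are literally the same $o_K$-subalgebra of $K\dcroc{b}$, so for every $m$ their $m$-th infinitesimal neighbourhoods of $\mathbbm{1}$ coincide, and hence so do their algebraic duals: $\mu_K^{-1}(R\dcroc{Z})^\ast_\infty=o_K\dcroc{o_L}^\ast_\infty$. By Theorem~\ref{EvT} the left side equals $\ev(\check{R})$ and by Corollary~\ref{IntDualIwasawa} the right side equals $\ev(\Int(o_L,o_K))$, so $\ev(\check{R})=\ev(\Int(o_L,o_K))$, and therefore $\check{R}=\Int(o_L,o_K)$ because $\ev$ is injective.

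The proof is thus essentially bookkeeping once Theorem~\ref{EvT} is in hand; the substance lies in that theorem (ultimately in Proposition~\ref{AlgToLAnDists}, which lifts an algebraic functional $\lambda$ with $\lambda(\check{R})\subseteq o_K$ to a power-bounded distribution inside $\mu_K^{-1}(R\dcroc{Z})$) and in Lemma~\ref{LambdaI}, which characterises $o_K\dcroc{o_L}$ inside $D^{L-\an}(o_L,K)$ as the distributions that are integral on $\Int(o_L,o_K)$. The only point that needs a little care is that the route through algebraic duals is available only for the ``only if'' direction: the functor $A\mapsto A^\ast_\infty$ does not in general recover a submodule $A$ of $\cO^\circ(\frX_K)$, so the ``if'' direction must instead be argued pointwise via Corollary~\ref{LambdaT} and Lemma~\ref{LambdaI}, as above.
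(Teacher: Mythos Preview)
Your proof is correct. The $(\Leftarrow)$ direction is identical to the paper's. For $(\Rightarrow)$ you take a slightly different route: you invoke Theorem~\ref{EvT} and Corollary~\ref{IntDualIwasawa} as a pair of duality statements and conclude by the injectivity of $\ev$, whereas the paper argues the contrapositive directly --- assuming $\check{R}\subsetneq I$, it uses that $K/o_K$ is an injective cogenerator over the discrete valuation ring $o_K$ to produce a functional $\lambda:I\to K$ with $\lambda(\check{R})\subseteq o_K$ but $\lambda(I)\nsubseteq o_K$, and then lifts $\lambda$ to an element of $\mu_K^{-1}(R\dcroc{Z})\setminus o_K\dcroc{o_L}$ via Proposition~\ref{AlgToLAnDists} and Lemma~\ref{LambdaI}. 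Your argument is cleaner bookkeeping, since Theorem~\ref{EvT} has already absorbed the appeal to Proposition~\ref{AlgToLAnDists}; the paper's version is more self-contained and makes the role of the discretely-valued hypothesis on $K$ explicit at this point rather than inheriting it from Theorem~\ref{EvT}.
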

\begin{proof} $(\Leftarrow).$ Suppose that $\check{R} = I$, and let $\lambda \in \mu_K^{-1}(R\dcroc{Z})$. Then $\lambda(\check{R}) \subseteq o_K$ by Corollary \ref{LambdaT}. Since $\check{R} = I$, this means that $\lambda(I) \subseteq o_K$. Hence $\lambda \in o_K\dcroc{o_L}$ by Lemma \ref{LambdaI}. 

$(\Rightarrow).$ Suppose that $\check{R} < I$. Since $K$ is discretely valued, $K/o_K$ is an injective cogenerator of the category of $o_K$-modules. Hence $\Hom_{o_K}(I/\check{R}, K/o_K)$ is non-zero. So there exists an $o_K$-linear map $\lambda : I \to K$ such that $\lambda(\check{R}) \subseteq o_K$, but $\lambda(I) \nsubseteq o_K$. Regard $\lambda$ as an element of $\Hom_K(K[Y],K)$; then by Proposition \ref{AlgToLAnDists}, $\lambda$ extends to some $\tilde{\lambda} \in \cO^\circ(\frX_K)$ such that $\tilde{\lambda}_{|K[Y]} = \lambda$. Since $\lambda(\check{R}) \subseteq o_K$, using Theorem \ref{bkZj}  we see that $\mu_K(\tilde{\lambda}) \in R\dcroc{Z}$. However, $\tilde{\lambda} \notin o_K\dcroc{o_L}$ by Lemma \ref{LambdaI} because $\tilde{\lambda}(I) \nsubseteq o_K$, so $\tilde{\lambda} \in \mu_K^{-1}(R\dcroc{Z}) \backslash o_K\dcroc{o_L}$.
\end{proof}

We will now see what implications the above general results have for particular choices of the admissible subalgebra $R$. Let $B = K[\Omega] \cap o_{\Cp}$ be the largest possible admissible subalgebra of $K[\Omega]$, and let $U := \sum\limits_{n = 0}^\infty o_K P_n(\Omega)$ be the smallest possible one. Recall from Example \ref{PnRegBas} that $\{P_n(\Omega) : n \geq 0\}$ forms a regular basis for $U$.
\begin{corollary}\label{UandBcriteria} \hsp
\begin{enumerate}
 \item $\check{U} = \Int(o_L,o_K)$ if and only if $\mu_K^{-1}(U\dcroc{Z}) = o_K\dcroc{o_L}$.
 \item $o_K\dcroc{o_L} = \Lambda_K(\frX)$ if and only if $\check{B} = \Int(o_L,o_K)$. 
 \end{enumerate}
 \end{corollary}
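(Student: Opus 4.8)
The plan is to deduce both equivalences directly from Theorem \ref{muKRZ}, which for any admissible $o_K$-subalgebra $R$ of $K[\Omega]$ already equates the condition $\mu_K^{-1}(R\dcroc{Z}) = o_K\dcroc{o_L}$ with $\check{R} = I = \Int(o_L,o_K)$. For part (1) there is nothing more to do: by Example \ref{PnRegBas} the subalgebra $U = \sum_{n\ge 0} o_K P_n(\Omega)$ is admissible, so applying Theorem \ref{muKRZ} with $R = U$ yields exactly the stated equivalence.

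For part (2), the subalgebra $B = K[\Omega] \cap o_{\Cp}$ is admissible (this is the Example immediately after Definition \ref{AdmRalg}), so Theorem \ref{muKRZ} with $R = B$ reduces the claim to the single identity $\mu_K^{-1}(B\dcroc{Z}) = \Lambda_K(\frX)$. I would establish this as follows. Since $B \subseteq o_{\Cp}$, any $f \in \cO(\frX_K)$ with $\mu_K(f) \in B\dcroc{Z} \subseteq o_{\Cp}\dcroc{Z}$ lies in $\cO^\circ(\frX_K) = \Lambda_K(\frX)$ by Corollary \ref{muK} (combined with Lemma \ref{KTwistedInv}), giving the inclusion $\mu_K^{-1}(B\dcroc{Z}) \subseteq \Lambda_K(\frX)$. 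Conversely, let $f \in \Lambda_K(\frX)$ and let $\lambda \in D^{L-\an}(o_L,K)$ be the associated distribution. By Lemma \ref{MuFiT} we have $\mu_K(f) = \sum_{k \ge 0} \lambda(Y^k/k!)\,\Omega^k t^k$ with $t = \log_{LT}(Z) \in L\dcroc{Z}$ and $\lambda(Y^k/k!) \in K$; expanding each $t^k$ in powers of $Z$ shows that every coefficient of $\mu_K(f)$ is a polynomial in $\Omega$ with coefficients in $K$, hence lies in $K[\Omega]$. Since $\mu_K(f) \in o_{\Cp}\dcroc{Z}$ by Corollary \ref{muK}, these coefficients in fact lie in $K[\Omega] \cap o_{\Cp} = B$, so $\mu_K(f) \in B\dcroc{Z}$. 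This proves $\Lambda_K(\frX) \subseteq \mu_K^{-1}(B\dcroc{Z})$, hence the identity, and part (2) follows from Theorem \ref{muKRZ}.

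The main --- and essentially the only --- obstacle is the identification $\mu_K^{-1}(B\dcroc{Z}) = \Lambda_K(\frX)$, i.e.\ the observation that a power-bounded, twisted-$G_K$-invariant power series in $Z$ automatically has all of its coefficients in $K[\Omega]$; this is read off from the explicit shape of the image of $\mu_K$ given by Lemma \ref{MuFiT}. Everything else is a direct invocation of Theorem \ref{muKRZ}, the admissibility of $U$ (Example \ref{PnRegBas}) and of $B$, and Corollary \ref{muK}.
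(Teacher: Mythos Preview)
Your proof is correct and follows essentially the same route as the paper: both parts reduce immediately to Theorem \ref{muKRZ}, and for (2) the only extra ingredient is the identity $\mu_K^{-1}(B\dcroc{Z}) = \Lambda_K(\frX)$. The paper establishes this identity more tersely by noting that the image of $\mu_K$ is $G_K$-twisted invariant and then silently using that this image lies in $K[\Omega]\dcroc{Z}$ (a fact already contained in Theorem \ref{bkZj}); you make the same point explicitly via Lemma \ref{MuFiT}, which is an equally valid and arguably clearer way to see it.
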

\begin{proof} (1) This is an immediate consequence of Theorem \ref{muKRZ} with $R = U$.

(2) Theorem \ref{muKRZ} tells us that $\check{B} = I$ if and only if $o_K\dcroc{o_L} = \mu_K^{-1} (B\dcroc{Z})$. However $\mu_K^{-1}(B\dcroc{Z}) = \mu_K^{-1}( \Cp\dcroc{Z}^{G_L,\ast} \cap B\dcroc{Z})$ since $\mu_K(\cO(\frX)_K)$ is fixed by the twisted $G_L$-action on $\Cp\dcroc{Z}$ by Lemma \ref{KTwistedInv}. Hence $\mu_K^{-1}(B\dcroc{Z}) = \mu_K^{-1}( o_{\Cp}\dcroc{Z}^{G_L,\ast}) = \Lambda_K(\frX)$ by Corollary \ref{muK}, and the result follows.
\end{proof}

Recall the matrix coefficients $\sigma_{i,j}(a)$ from Corollary \ref{MatrixCoeffs}.
\begin{lemma}\label{CaseR=U} Let $R = U$ and let $b_n := P_n$ for each $n \geq 0$. Then 
\begin{enumerate}
\item $\rho_{ij}(Y) = \sigma_{i,j}(Y)$ for all $j \geq i \geq 0$, and 
\item $[a](Z)^i = \sum\limits_{j=i}^\infty \sigma_{i,j}(a) Z^j$ for any $a \in o_L, i \geq 0$.
\end{enumerate}
\end{lemma}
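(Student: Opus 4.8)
I would deduce both parts from the defining relation of the $\sigma_{i,j}$ in Corollary \ref{MatrixCoeffs}, namely $P_j(as)=\sum_{i=0}^j\sigma_{i,j}(a)P_i(s)$ in $L[s]$, being careful throughout about which symbols are formal indeterminates and which are scalars. The one genuine pitfall is that once $\Omega\in\Cp$ has been substituted the $P_n(\Omega)$ are mere scalars and are hopelessly linearly dependent, so no coefficient comparison may be performed ``in the $P_n(\Omega)$''; the transcendence of $\Omega$ over $K$ may only be used where it is legitimate, namely to conclude that \emph{finitely many} of the $P_i(\Omega)$ are $K$-linearly independent. Apart from this the proof is bookkeeping, so I do not expect a serious obstacle.

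\textbf{Part (1).} Here $b_i(\Omega)=P_i(\Omega)$. By Lemma \ref{RhoDegJ} we have $P_j(Y\Omega)=\sum_{i=0}^j\rho_{i,j}(Y)P_i(\Omega)$ with $\rho_{i,j}(Y)\in K[Y]$ and $\rho_{i,j}(a)\in o_K$ for all $a\in o_L$. On the other hand, specialising the identity of Corollary \ref{MatrixCoeffs} along the $o_L$-algebra map $L[s]\to K[\Omega]$, $s\mapsto\Omega$, gives $P_j(a\Omega)=\sum_{i=0}^j\sigma_{i,j}(a)P_i(\Omega)$ for each $a\in o_L$. Since $\Omega$ is transcendental over $K$, the set $\{P_0(\Omega),\dots,P_j(\Omega)\}$ is $K$-linearly independent (they have distinct $\Omega$-degrees), so comparing the two expansions of $P_j(a\Omega)$ forces $\sigma_{i,j}(a)=\rho_{i,j}(a)$ for all $a\in o_L$ and $0\le i\le j$. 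As $o_L$ is infinite and $\rho_{i,j}(Y)$ is a polynomial, this shows that $a\mapsto\sigma_{i,j}(a)$ is the restriction to $o_L$ of $\rho_{i,j}(Y)$; this gives meaning to the notation $\sigma_{i,j}(Y)$ and proves $\rho_{i,j}(Y)=\sigma_{i,j}(Y)$.

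\textbf{Part (2).} The key input is the standard fact (used e.g. in the proof of Lemma \ref{lem:PhiPsiDelta}) that $\log_{\LT}([a](Z))=a\log_{\LT}(Z)$. Substituting $W=[a](Z)$ into the defining identity $\exp(Y\log_{\LT}(W))=\sum_{n\ge 0}P_n(Y)W^n$ therefore yields, in $L[Y]\dcroc{Z}$,
\[ \sum_{n\ge 0}P_n(Y)[a](Z)^n=\exp\bigl(aY\log_{\LT}(Z)\bigr)=\sum_{m\ge 0}P_m(aY)Z^m. \]
By Corollary \ref{MatrixCoeffs} (with the formal variable $s$ renamed $Y$), $P_m(aY)=\sum_{i=0}^m\sigma_{i,m}(a)P_i(Y)$, so after reindexing the right-hand side equals $\sum_{i\ge 0}P_i(Y)\bigl(\sum_{m\ge i}\sigma_{i,m}(a)Z^m\bigr)$. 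Fix $m\ge 0$ and compare the coefficients of $Z^m$ (elements of $L[Y]$): on the left it is $\sum_{n=0}^m P_n(Y)\,c_{n,m}$, where $c_{n,m}$ is the coefficient of $Z^m$ in $[a](Z)^n$ and $c_{n,m}=0$ for $n>m$ because $[a](Z)^n\in Z^no_L\dcroc{Z}$; on the right it is $\sum_{i=0}^m\sigma_{i,m}(a)P_i(Y)$. Since $P_0(Y),\dots,P_m(Y)$ are $L$-linearly independent, $c_{n,m}=\sigma_{n,m}(a)$ for $0\le n\le m$, and together with $c_{n,m}=0$ for $m<n$ this gives $[a](Z)^n=\sum_{m\ge n}\sigma_{n,m}(a)Z^m$, as claimed.
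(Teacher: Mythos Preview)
Your proof is correct. For part (1) your argument is essentially the paper's: you specialise both Corollary \ref{MatrixCoeffs} and Lemma \ref{RhoDegJ}(1) at a common point and use the linear independence of $P_0(\Omega),\dots,P_j(\Omega)$ (via transcendence of $\Omega$) to identify $\rho_{i,j}(a)=\sigma_{i,j}(a)$ for all $a\in o_L$.

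For part (2) your route genuinely differs from the paper's. The paper exploits the covariant-bialgebra machinery: using $u_n=P_n(s)$ and the pairing $\langle u_k,Z^i\rangle=\delta_{ki}$, it applies $\langle-,Z^i\rangle$ to the identity $P_j(as)=\sum_k\sigma_{k,j}(a)P_k(s)$ and then invokes the transpose relation $(\ref{TransOact})$, i.e.\ $\langle a\cdot u_j,Z^i\rangle=\langle u_j,[a](Z)^i\rangle$, to read off $\sigma_{i,j}(a)$ directly as the $Z^j$-coefficient of $[a](Z)^i$. You instead use the generating-function identity $\sum_n P_n(Y)[a](Z)^n=\sum_m P_m(aY)Z^m$, obtained from $\log_{\LT}\circ[a]=a\log_{\LT}$, and then compare $Z^m$-coefficients in $L[Y]$. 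Your argument is more elementary and self-contained (it does not touch $U(\cG)$ or the pairing), while the paper's is shorter once that formalism is in place and makes transparent that part (2) is a statement of duality between the $o_L$-action on $U(\cG)$ and the substitution $Z\mapsto[a](Z)$ on $\cO(\cG)$.
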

\begin{proof} (1) This follows by comparing Corollary \ref{MatrixCoeffs} with Lemma \ref{RhoDegJ}(1).

(2) Using Definition \ref{HypDef}(5) and Lemma \ref{PnU} we see that $\langle P_k(s), Z^i\rangle = \delta_{ki}$ for all $i,k \geq 0$. By Corollary \ref{MatrixCoeffs} we have $P_j(as) = \sum\limits_{k=0}^j \sigma_{kj}(a) P_k(s)$. Fix $i \geq 0$ and apply $\langle -, Z^i\rangle$ to this equation: using equation $(\ref{TransOact})$ we then have
\[ \sigma_{i,j}(a) = \left\langle \sum\limits_{k=0}^j \sigma_{kj}(a) P_k(s), Z^i \right\rangle = \langle P_j(as), Z^i\rangle = \langle P_j(s), [a](Z)^i\rangle.\]
Hence $\sigma_{i,j}(a)$ is precisely the coefficient of $Z^j$ in the power series $[a](Z)^i$.\end{proof}
This justifies the definition of the polynomials $\sigma_{i,j}(Y)$ which was given in $\S \ref{PnSectIntro}$.  We can now give the proof of Theorem \ref{intropolint} from the Introduction.

\begin{theorem} If $\Lambda_L(\frX) = o_L \dcroc{o_L}$, then $\Pol=\Int$.
\end{theorem}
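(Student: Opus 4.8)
The plan is to read off the statement from the general machinery of $\S\ref{RhoijSect}$ applied with coefficient field $K = L$. If $L = \Qp$ the claim is classical: there $\cG = \h{\Gm}$, the polynomials $\sigma_{n,i}(Y)$ already include all the binomial polynomials $\binom{Y}{k}$, so $\Pol = \Int$ by Mahler's theorem; moreover $\Lambda_{\Qp}(\frX) = \Zp\dcroc{\Zp}$ holds unconditionally, so the implication is vacuously consistent. So I would assume $L \neq \Qp$, in which case $L$ is discretely valued and $\Omega$ is transcendental over $L$, so that the standing hypotheses of $\S\ref{RhoijSect}$ are in force for $K = L$.

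First I would apply the results of $\S\ref{RhoijSect}$ to the smallest admissible subalgebra $U := \sum_{n \geq 0} o_L P_n(\Omega)$, equipped with the regular basis $b_n := P_n(\Omega)$ provided by Example \ref{PnRegBas}. By Lemma \ref{CaseR=U}(1) the associated matrix coefficients $\rho_{i,j}(Y)$ coincide with the polynomials $\sigma_{i,j}(Y)$, and therefore $\check{U}$ --- the $o_L$-span of the $\rho_{i,j}(Y)$ inside $\Int$ --- is exactly $\Pol$; in particular $\Pol \subseteq \Int$. Next I would invoke Corollary \ref{CheckRevs} with the inclusion $U \subseteq B := L[\Omega] \cap o_{\Cp}$ of admissible subalgebras to get $\check{B} \subseteq \check{U} = \Pol$. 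Finally, the hypothesis $\Lambda_L(\frX) = o_L\dcroc{o_L}$ together with Corollary \ref{UandBcriteria}(2) gives $\check{B} = \Int$. Combining these,
\[ \Int = \check{B} \subseteq \Pol \subseteq \Int, \]
whence $\Pol = \Int$.

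There is no genuinely hard step, since Theorem \ref{muKRZ} and its corollaries have already done the work; the argument is just a matter of chaining the inclusions $\check{B} \subseteq \check{U} \subseteq \Int$ together with the equality $\check{B} = \Int$ coming from the hypothesis. The only points that require a little attention are the reduction to the case $L \neq \Qp$, so that the transcendence and discreteness assumptions of $\S\ref{RhoijSect}$ apply to $K = L$, and the identification (via Lemma \ref{CaseR=U}) of the abstractly defined module $\check{U}$ attached to the regular basis $\{P_n(\Omega)\}$ with the concrete module $\Pol$.
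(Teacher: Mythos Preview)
Your proof is correct and follows essentially the same approach as the paper: identify $\Pol$ with $\check{U}$ via Lemma \ref{CaseR=U}(1), use Corollary \ref{UandBcriteria}(2) to get $\check{B} = \Int$ from the hypothesis, and then apply Corollary \ref{CheckRevs} to the inclusion $U \subseteq B$ to squeeze $\Pol$ between $\check{B}$ and $\Int$. Your explicit treatment of the $L = \Qp$ case (to ensure the standing hypotheses of $\S\ref{RhoijSect}$ hold) is a small bit of extra care that the paper leaves implicit.
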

\begin{proof} Note that $\Pol = \check{U}$, in view of Lemma \ref{CaseR=U}(1) and Definition \ref{CheckR}. Now $\Lambda_L(\frX) = \cO^\circ(\frX_L)$, so if this is equal to $o_L\dcroc{o_L}$, then $\check{B} = \Int(o_L,o_L)$ by Corollary \ref{UandBcriteria}(2). But $U \subseteq B$, so $\check{B} \subseteq \check{U} \subseteq \Int(o_L,o_L)$ by Corollary \ref{CheckRevs}. Hence $\check{U} = \Int(o_L,o_L)$ as claimed.
\end{proof}

\subsection{Calculating the matrix coefficients $\sigma_{i,j}(Y)$} Here we will assume that the coordinate $Z$ on the Lubin-Tate formal group is chosen in such a way that 
\[ \log_{LT}(Z) = \sum\limits_{n=0}^\infty \frac{Z^{q^n}}{\pi^n}.\]
It turns out that the polynomials $P_j(s)$ are \emph{sparse}: the coefficient of $s^i$ in $P_j(s)$ is non-zero \emph{only} if $i \equiv j \mod (q-1)$. We will obtain more information about these coefficients; this will require developing some notation to deal with this sparsity. The calculations that follow rest on the following observation.

\begin{proposition}
\label{polypm}
For every $n \geq 0$, we have
\[ P_n(Y) = \sum_{k_0+qk_1+\cdots+q^d k_d=n} \frac{Y^{k_0+\cdots+k_d}}{k_0! \cdots k_d! \cdot \pi^{1 \cdot k_1 + 2 \cdot k_2 + \cdots + d \cdot k_d}} \ .\]
\end{proposition}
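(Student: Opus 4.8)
The plan is to expand the defining generating series $\exp(Y\log_{\LT}(Z)) = \sum_{n\ge 0} P_n(Y)Z^n$ directly, feed in the prescribed shape of the logarithm, and read off the coefficient of $Z^n$. Everything takes place in the $Z$-adically complete ring $L\dcroc{Z}$ (with $Y$ an extra formal variable), so all the infinite sums below converge and may be rearranged freely.

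Concretely, I would first write $\exp(Y\log_{\LT}(Z)) = \sum_{m\ge 0} \frac{Y^m}{m!}\log_{\LT}(Z)^m$, which converges $Z$-adically because $\log_{\LT}(Z)\in Z\,L\dcroc{Z}$, hence $\log_{\LT}(Z)^m\in Z^m L\dcroc{Z}$. Substituting $\log_{\LT}(Z) = \sum_{j\ge 0} Z^{q^j}/\pi^j$ and applying the multinomial theorem to each finite truncation $\bigl(\sum_{j=0}^N Z^{q^j}/\pi^j\bigr)^m$, then passing to the $Z$-adic limit in $N$, gives
\[ \log_{\LT}(Z)^m \;=\; \sum_{\substack{(k_j)_{j\ge 0}\ \text{fin. supp.}\\ \sum_j k_j = m}} \frac{m!}{\prod_j k_j!}\cdot\frac{Z^{\sum_j q^j k_j}}{\pi^{\sum_j j k_j}}, \]
the convergence being clear since each term is divisible by $Z^{q^{j}}$ for every $j$ in the support. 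Multiplying by $Y^m/m!$ and summing over $m$ merges the two summations into
\[ \exp(Y\log_{\LT}(Z)) \;=\; \sum_{(k_j)_{j\ge 0}\ \text{fin. supp.}} \frac{Y^{\sum_j k_j}}{\prod_j k_j!\cdot \pi^{\sum_j j k_j}}\; Z^{\sum_j q^j k_j}. \]
Extracting the coefficient of $Z^n$ selects exactly the finitely supported sequences with $\sum_j q^j k_j = n$. Since $q^j k_j\le n$ forces $k_j=0$ whenever $q^j>n$, this is a finite sum involving only $k_0,\dots,k_d$ (any $d$ with $q^d\ge n$; in particular the notation of the statement is legitimate once $d$ is taken large enough, and one truncates accordingly), and it is precisely $\sum_{k_0+qk_1+\cdots+q^dk_d=n}\frac{Y^{k_0+\cdots+k_d}}{k_0!\cdots k_d!\cdot\pi^{k_1+2k_2+\cdots+dk_d}}$, as claimed. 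As a consistency check, the constraint $\sum_j k_j\le\sum_j q^j k_j=n$ shows $P_n(Y)$ is a genuine polynomial of degree $\le n$, with the degree-$n$ term coming only from $k_0=n$ (all other $k_j=0$), giving leading term $Y^n/n!$ — in agreement with the remarks preceding the proposition.

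The only point requiring a little care — and it is mild — is the justification of the two interchanges of infinite summation: the multinomial expansion of $\log_{\LT}(Z)^m$ as an infinite sum over $(k_j)$, and its amalgamation with the outer sum over $m$. Both are handled uniformly by working in the $Z$-adically complete ring $L\dcroc{Z}$ and noting that the families in question tend $Z$-adically to $0$ (a term indexed by $(k_j)$ lies in $Z^{\max\{q^j:\,k_j>0\}}L\dcroc{Z}$, and a term indexed by $m$ lies in $Z^m L\dcroc{Z}$). No input about the Lubin–Tate group $\cG$ is needed beyond the prescribed formula for $\log_{\LT}$.
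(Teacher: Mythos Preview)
Your proof is correct and essentially the same as the paper's. The only cosmetic difference is that the paper writes $\exp\bigl(Y\sum_{\ell\ge 0} Z^{q^\ell}/\pi^\ell\bigr)=\prod_{\ell\ge 0}\exp(Y Z^{q^\ell}/\pi^\ell)$ and expands each factor as a single-variable exponential series, whereas you expand $\exp$ first and then apply the multinomial theorem to $\log_{\LT}(Z)^m$; multiplying out the product of exponentials and collecting by total degree $m=\sum_j k_j$ recovers exactly your multinomial sum, so the two computations are identical. Your remark that the upper index $d$ in the displayed formula is to be read as ``any $d$ with $q^d>n$'' is also correct (the paper is using $d$ loosely here, since for fixed $n$ only finitely many $k_j$ can be nonzero).
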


\begin{proof}
If $\log_{\LT}(Z) = \sum_{k = 0}^\infty Z^{q^k}/\pi^k$ and $\exp$ is the usual exponential, then
\[ \sum_{n=0}^{\infty} P_n(Y) Z^n = \exp(Y \cdot \log_{\LT}(Z)) = \prod_{\ell \geq 0} \exp( Y \cdot Z^{q^\ell}/\pi^\ell) = \prod_{\ell \geq 0} \sum_{k \geq 0} ( Y \cdot Z^{q^\ell}/\pi^\ell)^k / k! \]
The coefficient of $Z^n$ in this product is the sum of $Y^{k_0+\cdots+k_d} / k_0! \cdots k_d! \cdot \pi^{1 \cdot k_1 + 2 \cdot k_2 + \cdots + d \cdot k_d}$ over all tuples $(k_0,\cdots,k_d)$ of positive integers such that $k_0+qk_1+\cdots+q^d k_d=n$. 
\end{proof}
The following formula for the derivative $\frac{d}{dY} P_n(Y)$ will be very useful in the calculations.
\begin{proposition} For every $n \geq 0$, we have 
\label{derivpm}
 $\frac{d}{dY} P_n(Y) = \sum_{k \geq 0} \pi^{-k} \cdot P_{n-q^k}(Y)$.
\end{proposition}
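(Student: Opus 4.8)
The plan is to differentiate the formula for $P_n(Y)$ obtained in Proposition \ref{polypm} term by term with respect to $Y$ and then reorganise the resulting sum. Write
\[ P_n(Y) = \sum_{k_0+qk_1+\cdots+q^d k_d = n} \frac{Y^{k_0+\cdots+k_d}}{k_0!\cdots k_d!\cdot \pi^{1\cdot k_1+\cdots+d\cdot k_d}}. \]
Applying $\frac{d}{dY}$, each monomial of exponent $s := k_0+\cdots+k_d$ contributes $s$ times the same monomial of exponent $s-1$; since $s = \sum_j k_j$, I would split this factor $s$ as $\sum_j k_j$ and write the derivative as a sum over $j \in \{0,\ldots,d\}$ of the terms in which we have ``removed one unit from $k_j$''. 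Concretely, for each fixed $j$ the contribution is
\[ \sum_{k_0+\cdots+q^d k_d = n,\ k_j \geq 1} \frac{k_j\, Y^{(k_0+\cdots+k_d)-1}}{k_0!\cdots k_d!\cdot \pi^{k_1+2k_2+\cdots+dk_d}}, \]
and cancelling the $k_j$ against one factor in $k_j!$ and substituting $k_j' = k_j - 1$ turns this into $\pi^{-j}$ times exactly the expression of Proposition \ref{polypm} for the index $n - q^j$ (the shift in the $\pi$-exponent is $-j$, coming from the $j\cdot k_j$ term with $k_j$ replaced by $k_j'+1$, and this is why the factor $\pi^{-k}$ appears in the claimed formula). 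Summing over $j$ from $0$ to $d$ gives $\frac{d}{dY}P_n(Y) = \sum_{j=0}^d \pi^{-j} P_{n-q^j}(Y)$, where $P_m := 0$ for $m < 0$; since $q^k > n$ forces $P_{n-q^k} = 0$ anyway, this is the same as $\sum_{k\geq 0}\pi^{-k}P_{n-q^k}(Y)$ as stated.

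There is a cleaner alternative route which I would actually prefer to present, avoiding the bookkeeping with multi-indices: work directly from the defining generating-function identity $\exp(Y\log_{\LT}(Z)) = \sum_{n\geq 0} P_n(Y)Z^n$. Differentiating both sides with respect to $Y$ gives
\[ \log_{\LT}(Z)\,\exp(Y\log_{\LT}(Z)) = \sum_{n\geq 0} \Bigl(\tfrac{d}{dY}P_n(Y)\Bigr) Z^n. \]
On the left, substitute $\log_{\LT}(Z) = \sum_{k\geq 0} Z^{q^k}/\pi^k$ and $\exp(Y\log_{\LT}(Z)) = \sum_{m\geq 0}P_m(Y)Z^m$, so the left side becomes $\sum_{k\geq 0}\sum_{m\geq 0} \pi^{-k} P_m(Y) Z^{m+q^k}$. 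Comparing the coefficient of $Z^n$ on both sides yields $\frac{d}{dY}P_n(Y) = \sum_{k\geq 0}\pi^{-k}P_{n-q^k}(Y)$ immediately, with the convention $P_j = 0$ for $j<0$.

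Neither argument presents a genuine obstacle; the only point requiring a little care is the interchange of summations and the justification that these are finite sums for each fixed power of $Z$ (for a given $n$ only finitely many $k$ satisfy $q^k \leq n$, and for each such $k$ only $m = n-q^k$ contributes), so everything takes place in the polynomial ring $L[Y]$ after extracting a single coefficient and no convergence issue arises. I would write up the generating-function version as the main proof, as it is shortest and most transparent.
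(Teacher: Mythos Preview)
Your proposal is correct. Both arguments you give work, and the generating-function version is indeed the cleanest.

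The paper takes a slightly different route: it invokes the addition formula $P_n(Y+Z) = \sum_{j=0}^n P_j(Z)P_{n-j}(Y)$ from \cite[Lemma 4.2(4)]{ST}, observes that the coefficient of $Z$ on the left is $P_n'(Y)$, and then reads off from Proposition~\ref{polypm} that the linear coefficient of $P_j(Z)$ is $\pi^{-k}$ when $j = q^k$ and zero otherwise. This is morally the same manipulation as your generating-function argument (the addition formula is just the Cauchy product coming from $\exp((Y+Z)\log_{\LT}) = \exp(Y\log_{\LT})\exp(Z\log_{\LT})$), but packaged through the $P_j$'s rather than through $\log_{\LT}$ directly. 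Your version has the mild advantage of not needing to cite the addition formula from \cite{ST}; the paper's version has the mild advantage of isolating exactly which piece of Proposition~\ref{polypm} is used (only the degree-one terms of the $P_j$). Either would be perfectly acceptable here.
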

\begin{proof}
By \cite[Lemma 4.2(4)]{ST}, we have $P_n(Y+Z) = P_n(Y) + \sum_{j=1}^n P_j(Z)P_{n-j}(Y)$. Hence it is enough to determine which $P_j(Z)$ have a  term of degree $1$ in them, and what the corresponding coefficient is in this case. The answer now follows from Proposition \ref{polypm}.
\end{proof}

\textbf{We fix $m \in \{0,1, 2, \cdots, q-2\}$ from now on}. We will use the convenient notation 
\[\underline{i} := m + i(q-1) \quad\mbox{for all} \quad i \geq 0.\]

\begin{definition}\label{DefOfQ} For each $j \geq i \geq 0$, we define
\[ Q_m(i,j) := \left\{ \mathbf{k} \in \bN^\infty : \sum_{\ell=0}^\infty k_\ell = \underline{i},\quad \sum_{\ell=1}^\infty k_\ell \left(\frac{q^\ell-1}{q-1}\right) = j - i\right\},\quad\mbox{and}\]
\[ r_{i,j}^{(m)} := \sum\limits_{\mathbf{k} \in Q_m(i,j)} \binom{\underline{i}}{k_0;k_1;k_2;\cdots} \cdot \pi^{-\sum\limits_{\ell=1}^\infty \ell \cdot k_\ell}  .\]
\end{definition}
Here $\binom{\underline{i}}{k_0;k_1;k_2;\cdots} = \frac{ (\ul{i})!}{k_0!\cdot k_1! \cdot k_2! \cdots}$ is the multinomial coefficient. 
\begin{lemma}\label{r-Diagonal} We have $r_{jj}^{(m)} = 1$ for all $j \geq 0$.
\end{lemma}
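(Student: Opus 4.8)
The plan is to unwind the definition of $r_{jj}^{(m)}$ and show that the index set $Q_m(j,j)$ is a singleton, whose unique element contributes $1$. First I would set $i = j$ in Definition \ref{DefOfQ}. The defining conditions on $\mathbf{k} \in Q_m(j,j)$ become
\[ \sum_{\ell=0}^\infty k_\ell = \underline{j} = m + j(q-1), \qquad \sum_{\ell=1}^\infty k_\ell \left(\frac{q^\ell - 1}{q-1}\right) = j - j = 0. \]
The second equation is a sum of non-negative terms (each $k_\ell \geq 0$ and $\frac{q^\ell-1}{q-1} \geq 1 > 0$ for $\ell \geq 1$), so it forces $k_\ell = 0$ for all $\ell \geq 1$. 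Then the first equation gives $k_0 = \underline{j}$. Hence $Q_m(j,j)$ consists of the single vector $\mathbf{k}^{(0)} = (\underline{j}, 0, 0, \dots)$.

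Next I would evaluate the summand of $r_{jj}^{(m)}$ at this unique $\mathbf{k}^{(0)}$. The multinomial coefficient is
\[ \binom{\underline{j}}{k_0; k_1; k_2; \cdots} = \frac{(\underline{j})!}{(\underline{j})! \cdot 0! \cdot 0! \cdots} = 1, \]
and the power of $\pi$ is $\pi^{-\sum_{\ell \geq 1} \ell \cdot k_\ell} = \pi^{-0} = \pi^0 = 1$. Therefore $r_{jj}^{(m)} = 1 \cdot 1 = 1$, as claimed. This handles all $j \geq 0$ uniformly, and also covers $j = 0$ where $\underline{0} = m$ and $k_0 = m$.

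This statement is essentially a bookkeeping check, so there is no genuine obstacle; the only point requiring a moment's care is the observation that $\frac{q^\ell - 1}{q-1} = 1 + q + \cdots + q^{\ell-1}$ is strictly positive (indeed $\geq 1$) for every $\ell \geq 1$, which is what makes the vanishing of the weighted sum force each $k_\ell$ with $\ell \geq 1$ to vanish. No results from earlier in the paper are needed beyond the definitions themselves.
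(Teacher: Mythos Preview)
Your proof is correct and follows exactly the same approach as the paper: set $i=j$, observe that the positivity of $\frac{q^\ell-1}{q-1}$ for $\ell\geq 1$ forces $k_\ell=0$ for all $\ell\geq 1$, deduce $k_0=\underline{j}$ from the first condition, and check that the resulting single summand equals $1$. The paper's version is simply more terse.
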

\begin{proof} If $i = j$, then the second condition on a vector $\mathbf{k} \in \mathbb{N}^\infty$ to lie in $Q_m(i,j)$ forces $k_1 = k_2 = \cdots = 0$ because $\frac{q^\ell-1}{q-1} > 0$ for all $\ell \geq 1$. But then $k_0 = \ul{i} = \ul{j}$ from the first condition, so the formula for $r_{jj}^{(m)}$ collapses to give $1$.
\end{proof}

\begin{proposition}\label{CoeffOfPn} Let $n = \underline{j}$ for some $j \geq 0$. Write
\[ P_n(s) = \sum_{k=0}^n b_k^{(n)} s^k\]
with $b_k^{(n)} \in L$ for $k = 0,\ldots, n$. 
\begin{enumerate}
\item We have $b_k^{(n)} = 0$ if $k \not\equiv n \mod (q-1)$.
\item For each $0 \leq i \leq j$, we have $b_{\ul{i}}^{(\ul{j})} = \frac{r_{i,j}^{(m)}}{\ul{i}!}$.
\end{enumerate}
\end{proposition}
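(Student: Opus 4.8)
The plan is to obtain both statements directly from the product expansion of the generating series $\exp(Y\log_{\LT}(Z))$, essentially by redoing the proof of Proposition~\ref{polypm} while keeping track of which power of $s$ each monomial feeds into. First I would write
\[ \sum_{n\ge 0} P_n(Y)\,Z^n \;=\; \prod_{\ell\ge 0}\exp\bigl(Y Z^{q^\ell}/\pi^\ell\bigr) \;=\; \sum_{\mathbf{k}} \frac{Y^{\sum_{\ell}k_\ell}}{\prod_{\ell}k_\ell!\cdot \pi^{\sum_{\ell\ge 1}\ell k_\ell}}\; Z^{\sum_{\ell}q^\ell k_\ell}, \]
the last sum ranging over all finitely supported $\mathbf{k}=(k_0,k_1,k_2,\dots)\in\bN^\infty$. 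Since only finitely many such $\mathbf{k}$ satisfy $\sum_\ell q^\ell k_\ell=n$, I may extract the coefficient of $Z^n$ and then of $s^k$ to obtain
\[ b_k^{(n)} \;=\; \sum_{\substack{\mathbf{k}\colon\ \sum_\ell q^\ell k_\ell=n\\ \sum_\ell k_\ell=k}} \frac{1}{\prod_{\ell}k_\ell!\cdot \pi^{\sum_{\ell\ge1}\ell k_\ell}}. \]

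For part (1) I would note that every $\mathbf{k}$ contributing to this sum satisfies $n-k=\sum_{\ell\ge1}(q^\ell-1)k_\ell$; since $q-1$ divides $q^\ell-1$ for all $\ell$, this forces $n\equiv k\pmod{q-1}$, so the index set is empty and $b_k^{(n)}=0$ whenever $k\not\equiv n\pmod{q-1}$.

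For part (2) I would specialise to $n=\underline{j}$ and $k=\underline{i}$ with $0\le i\le j$. The key observation is that $\underline{j}-\underline{i}=(j-i)(q-1)$, so that for a tuple with $\sum_\ell k_\ell=\underline{i}$ the condition $\sum_\ell q^\ell k_\ell=\underline{j}$ is equivalent to $\sum_{\ell\ge1}(q^\ell-1)k_\ell=(j-i)(q-1)$, i.e.\ to $\sum_{\ell\ge1}\frac{q^\ell-1}{q-1}k_\ell=j-i$; hence the index set above is precisely $Q_m(i,j)$ of Definition~\ref{DefOfQ}. Rewriting $\tfrac{1}{\prod_\ell k_\ell!}=\tfrac{1}{\underline{i}!}\binom{\underline{i}}{k_0;k_1;k_2;\cdots}$ and using $\sum_{\ell\ge1}\ell k_\ell=\sum_{\ell\ge0}\ell k_\ell$, the displayed formula for $b_{\underline{i}}^{(\underline{j})}$ becomes $\tfrac{1}{\underline{i}!}\sum_{\mathbf{k}\in Q_m(i,j)}\binom{\underline{i}}{k_0;k_1;\cdots}\pi^{-\sum_{\ell\ge1}\ell k_\ell}=r_{i,j}^{(m)}/\underline{i}!$, as required; the case $i=j$ recovers Lemma~\ref{r-Diagonal}.

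There is no serious obstacle here: the whole argument is a reindexing of the coefficient sum, matched against Definition~\ref{DefOfQ}. The only point demanding care is the identity $\underline{j}-\underline{i}=(j-i)(q-1)$, which is exactly what converts the $q^\ell$-weighted constraint on $\mathbf{k}$ into the $(q^\ell-1)/(q-1)$-weighted constraint defining $Q_m(i,j)$; once that is set up, part (1) is just the reduction modulo $q-1$ of the same computation.
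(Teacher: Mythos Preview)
Your proof is correct and follows essentially the same route as the paper: both extract $b_k^{(n)}$ from the product expansion of $\exp(Y\log_{\LT}(Z))$ (the paper via Proposition~\ref{polypm}), observe that the two constraints on $\mathbf{k}$ force $n\equiv k\pmod{q-1}$, and then for $k=\underline{i}$, $n=\underline{j}$ subtract the constraints to identify the index set with $Q_m(i,j)$ and rewrite the summand using the multinomial coefficient.
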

\begin{proof} By Proposition \ref{polypm}, the coefficient $b_k^{(n)}$ of $s^k$ in $P_n(s)$, is given by 
\[b_k^{(n)} = \sum\limits_{\mathbf{k}}\frac{1}{(k_0!k_1!k_2!\cdots)\pi^{0\cdot k_0 + 1\cdot k_1 + 2 \cdot k_2 + \cdots }},\]
where the sum runs over all possible sequences $\mathbf{k} = (k_0,k_1,k_2,\cdots)$ of non-negative integers satisfying the following two conditions:
\[ k_0 + k_1 + k_2 + \cdots = k,\quad\mbox{and}\quad k_0 + qk_1 + q^2k_2 + \cdots = n.\]
Of course given any such sequence, necessarily $k_\ell$ must be zero for all sufficiently large $\ell$ depending only on $n$ and $k$, and the set of solutions to these equations is always finite, so the sum of all these fractions makes sense. 

Next note that if $k_0,k_1,\cdots$ satisfies these two conditions, then necessarily 
\[n \equiv k \mod (q-1).\] 
This implies part (1). For part (2), let $k = \underline{i}$ and $n = \underline{j}$, and suppose that the non-negative integers $k_0,k_1,\cdots$ satisfy $k_0+k_1+\cdots = k$; then subtracting gives
\[ k_0 + q k_1 + q^2k_2 + \cdots = m + (q-1)j \quad \Leftrightarrow\quad (q-1)k_1 +(q^2-1)k_2 + \cdots = (q-1)(j-i).\]
In this way, we see that $Q_m(i,j)$ is precisely the set of sequences that contribute to the coefficient of $s^{\ul{i}}$ in $P_{\ul{j}}(s)$. This coefficient is then
\[b_k^{(n)} = \frac{1}{k!} \sum\limits_{\mathbf{k}\in Q_m(i,j)} \frac{k!}{k_0!k_1! \cdots} \cdot \pi^{-\sum\limits_{\ell=1}^\infty \ell \cdot k_\ell} = \frac{r_{i,j}^{(m)}}{k!}. \qedhere\]
\end{proof}

\begin{lemma}\label{OldNotation} Suppose that $j \geq i \geq 0$. Then $r^{(m)}_{ij}$ is the coefficient of $Z^{\ul{j}}$ in $\log_{LT}(Z)^{\ul{i}}$. 
\end{lemma}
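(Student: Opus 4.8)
The plan is to unwind both sides of the claimed equality as explicit coefficients of power series in $Z$, using the chosen normalisation $\log_{LT}(Z)=\sum_{n\geq 0}Z^{q^n}/\pi^n$, and to match them via the multinomial theorem. First I would recall the exponential generating identity
\[ \exp\bigl(s\log_{LT}(Z)\bigr)=\sum_{n\geq 0}P_n(s)Z^n \]
and extract, for a fixed power $s^{\ul i}=s^{m+i(q-1)}$ and fixed total degree $n=\ul j=m+j(q-1)$, the coefficient of $s^{\ul i}Z^{\ul j}$ on the left. This is exactly the computation already carried out in Proposition~\ref{CoeffOfPn}(2): the coefficient of $s^{\ul i}$ in $P_{\ul j}(s)$ is $r^{(m)}_{ij}/\ul i!$, and the vectors $\mathbf k\in Q_m(i,j)$ are precisely the multi-indices indexing the monomials $\prod_\ell (Z^{q^\ell}/\pi^\ell)^{k_\ell}$ with $\sum_\ell k_\ell=\ul i$ contributing to $Z^{\ul j}$.

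Next I would compute the coefficient of $Z^{\ul j}$ in $\log_{LT}(Z)^{\ul i}$ directly by the multinomial theorem:
\[ \log_{LT}(Z)^{\ul i}=\Bigl(\sum_{\ell\geq 0}\frac{Z^{q^\ell}}{\pi^\ell}\Bigr)^{\ul i}
=\sum_{\substack{\mathbf k\in\bN^\infty\\ \sum_\ell k_\ell=\ul i}}\binom{\ul i}{k_0;k_1;k_2;\cdots}\,\pi^{-\sum_\ell \ell k_\ell}\,Z^{\sum_\ell k_\ell q^\ell}. \]
The exponent of $Z$ is $\sum_\ell k_\ell q^\ell=\sum_\ell k_\ell+\sum_{\ell\geq 1}k_\ell(q^\ell-1)=\ul i+(q-1)\sum_{\ell\geq 1}k_\ell\frac{q^\ell-1}{q-1}$, so this exponent equals $\ul j=\ul i+(q-1)(j-i)$ precisely when $\sum_{\ell\geq 1}k_\ell\frac{q^\ell-1}{q-1}=j-i$, i.e.\ when $\mathbf k\in Q_m(i,j)$. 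Therefore the coefficient of $Z^{\ul j}$ in $\log_{LT}(Z)^{\ul i}$ is $\sum_{\mathbf k\in Q_m(i,j)}\binom{\ul i}{k_0;k_1;\cdots}\pi^{-\sum_{\ell\geq 1}\ell k_\ell}$, which is exactly the definition of $r^{(m)}_{ij}$ in Definition~\ref{DefOfQ}. This completes the identification.

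This argument is essentially bookkeeping: both sides are genuinely the same sum over $Q_m(i,j)$, so there is no real obstacle beyond carefully tracking the bijection between the exponent condition $\sum_\ell k_\ell q^\ell=\ul j$ and the two constraints defining $Q_m(i,j)$. The one point that needs a word of care is that $\log_{LT}(Z)^{\ul i}$ begins in degree $\ul i$ (its lowest-degree term is $Z^{\ul i}$ coming from $k_0=\ul i$), which matches the requirement $j\geq i$ in the statement and is consistent with $Q_m(i,j)=\varnothing$ for $j<i$; I would note this so the claim makes sense for all $j\geq i\geq 0$. Alternatively, one can phrase the whole thing more slickly by observing that the coefficient extraction $\langle u_{\ul j},\,\cdot\,\rangle$ applied to $\exp(s\log_{LT}(Z))$, expanded as a power series in $s$, gives $\sum_{\ul i}\bigl(\text{coeff of }Z^{\ul j}\text{ in }\tfrac{1}{\ul i!}\log_{LT}(Z)^{\ul i}\bigr)s^{\ul i}$, and comparing with Proposition~\ref{CoeffOfPn} yields the result; but the direct multinomial computation above is the cleanest route.
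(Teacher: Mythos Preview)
Your proof is correct. The paper's own argument is precisely the ``slicker'' alternative you sketch at the end: it writes $\log_{LT}(Z)^k=\sum_{n\geq k}d_n^{(k)}Z^n$, expands $\exp(Y\log_{LT}(Z))=\sum_k \tfrac{1}{k!}\log_{LT}(Z)^kY^k$ to read off $b_k^{(n)}=d_n^{(k)}/k!$, and then invokes Proposition~\ref{CoeffOfPn}(2) to conclude $r^{(m)}_{ij}=\ul i!\,b_{\ul i}^{(\ul j)}=d_{\ul j}^{(\ul i)}$. Your main route via the multinomial theorem is just the same computation done directly from Definition~\ref{DefOfQ} rather than passing through $b_k^{(n)}$; it is a touch longer but entirely self-contained.
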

\begin{proof}
Write $\log_{LT}(Z)^k = \sum_{n=k}^\infty d_n^{(k)} Z^n$. Then
\begin{equation*}
  \sum_{n=0}^\infty P_n(Y) Z^n = \exp(Y \log_{LT} (Z)) = \sum_{k=0}^\infty \frac{1}{k!} \log_{LT}(Z)^k Y^k = \sum_{k=0}^\infty \frac{1}{k!} \sum_{n=k}^\infty d_n^{(k)} Z^nY^k\ .
\end{equation*}
Equating the coefficent of $Z^n Y^k$ shows that 
\begin{equation*}
  b_k^{(n)} = \frac{1}{k!} d_n^{(k)}  \qquad\text{for $1 \leq j \leq n$}.
\end{equation*}
Applying Proposition \ref{CoeffOfPn}(2), we have $r_{i,j}^{(m)} = \ul{i}!  b_{\ul{i}}^{(\ul{j})} = d_{\ul{j}}^{(\ul{i})}$.
\end{proof}

\begin{corollary}\label{mRjDef} Define polynomials $R^{(m)}_j(t) \in L[t]$ for $j \geq 0$ by the formula
\[ R^{(m)}_j(t) := \sum\limits_{i=0}^j  \frac{ r_{i,j}^{(m)}}{(\underline{i})!} t^i.\]
Then for all $j \geq 0$ we have $P_{\underline{j}}(s)  = s^m\cdot{R^{(m)}_j(s^{q-1})}.$
\end{corollary}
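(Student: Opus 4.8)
The plan is to derive this directly from Proposition \ref{CoeffOfPn}, which already records both the sparsity of $P_{\ul{j}}(s)$ and the exact values of its nonzero coefficients. First I would recall from Proposition \ref{CoeffOfPn}(1) that, writing $n = \ul{j} = m + j(q-1)$, the coefficient $b_k^{(n)}$ of $s^k$ in $P_n(s)$ vanishes unless $k \equiv n \equiv m \pmod{q-1}$. Hence only those exponents $k$ in the range $0 \leq k \leq \ul{j}$ with $k \equiv m \pmod{q-1}$ can contribute, and since $m \in \{0,1,\dots,q-2\}$ these are precisely the values $k = \ul{i} = m + i(q-1)$ for $i = 0, 1, \dots, j$ (the largest such $k$ not exceeding $\ul{j}$ being $\ul{j}$ itself). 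This is the only piece of bookkeeping that needs a moment's care, and it is routine.

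Then I would substitute the values of the surviving coefficients: Proposition \ref{CoeffOfPn}(2) gives $b_{\ul{i}}^{(\ul{j})} = r_{i,j}^{(m)} / \ul{i}!$ for each $0 \leq i \leq j$. Plugging this into the sparse expansion yields
\[ P_{\ul{j}}(s) = \sum_{i=0}^j b_{\ul{i}}^{(\ul{j})}\, s^{\ul{i}} = \sum_{i=0}^j \frac{r_{i,j}^{(m)}}{\ul{i}!}\, s^{m + i(q-1)} = s^m \sum_{i=0}^j \frac{r_{i,j}^{(m)}}{\ul{i}!}\, (s^{q-1})^i. \]
Comparing the last sum with the definition of $R_j^{(m)}(t) = \sum_{i=0}^j \frac{r_{i,j}^{(m)}}{\ul{i}!} t^i$ in the statement of the corollary gives $P_{\ul{j}}(s) = s^m \cdot R_j^{(m)}(s^{q-1})$, as required.

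There is essentially no obstacle here: the corollary is a purely formal repackaging of Proposition \ref{CoeffOfPn}, organising the nonzero coefficients of $P_{\ul{j}}$ according to the arithmetic progression $\{m + i(q-1)\}_{i \geq 0}$. The only thing to be careful about is the indexing claim that, for $n = \ul{j}$, the exponents congruent to $m$ modulo $q-1$ and lying in $[0,n]$ are exactly $\ul{0}, \ul{1}, \dots, \ul{j}$; this uses $0 \leq m \leq q-2$ and is immediate.
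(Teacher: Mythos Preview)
Your proof is correct and follows exactly the approach intended by the paper: the corollary is stated without proof immediately after Proposition \ref{CoeffOfPn} precisely because it is the formal repackaging of parts (1) and (2) that you have written out. Your careful verification that the relevant exponents in $[0,\ul{j}]$ congruent to $m$ modulo $q-1$ are exactly $\ul{0},\dots,\ul{j}$ is the only bookkeeping needed, and you handle it correctly.
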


\begin{lemma}\label{PolySigmas} For each $j \geq i \geq 0$ there exist $\sigma_{i,j}(Y) \in \Int(o_L,o_L)$ such that
\[ P_j(Ys) = \sum\limits_{i=0}^j \sigma_{i,j}(Y) P_i(s).\]
\end{lemma}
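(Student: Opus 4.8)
The plan is to recognize that Lemma \ref{PolySigmas} is essentially a restatement of Corollary \ref{MatrixCoeffs}, transported across the identifications already set up in the excerpt. Recall that in Corollary \ref{MatrixCoeffs} we showed that for each $j \geq i \geq 0$ and $a \in o_L$ there exist $\sigma_{ij}(a) \in o_L$ with $a \cdot u_j = P_j(as) = \sum_{i=0}^j \sigma_{ij}(a) P_i(s)$ inside $U(\cG) \otimes_{o_L} L = L[s]$. The content of the present lemma is twofold: first, that the scalars $\sigma_{i,j}(a)$, as $a$ ranges over $o_L$, are the values of a single polynomial $\sigma_{i,j}(Y) \in L[Y]$; second, that this polynomial is integer-valued, i.e.\ lies in $\Int(o_L,o_L)$.

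First I would establish the polynomial dependence on $a$. Working in $L[s][Y]$, substitute $Y s$ for $s$ in the degree-$j$ polynomial identity: the coefficients of $P_j(Ys)$ in the monomials $s^0, s^1, \ldots, s^j$ are honest polynomials in $Y$ of degree $\leq j$. Since $\{P_0(s), \ldots, P_j(s)\}$ is a $K$-basis for the space of polynomials in $s$ of degree $\leq j$ (each $P_i$ has degree exactly $i$ with leading coefficient $1/i!$), we may re-expand $P_j(Ys) = \sum_{i=0}^j \sigma_{i,j}(Y) P_i(s)$ where the $\sigma_{i,j}(Y)$ are the unique coefficients; the change-of-basis matrix is upper triangular with invertible diagonal entries over $L$, and its entries are polynomials in $Y$, so each $\sigma_{i,j}(Y) \in L[Y]$. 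Specializing $Y = a$ recovers the scalars of Corollary \ref{MatrixCoeffs}, which confirms that $\sigma_{i,j}(a)$ is precisely $\sigma_{i,j}$ evaluated at $a$.

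Next I would verify integrality. For every $a \in o_L$, Corollary \ref{MatrixCoeffs} tells us $\sigma_{i,j}(a) \in o_L$. Thus the polynomial $\sigma_{i,j}(Y) \in L[Y]$ takes values in $o_L$ on all of $o_L$, which is exactly the defining property of membership in $\Int = \Int(o_L,o_L)$. Hence $\sigma_{i,j}(Y) \in \Int(o_L,o_L)$ for all $j \geq i \geq 0$, completing the proof. (Equivalently, one could invoke Lemma \ref{CaseR=U}(2): the $\sigma_{i,j}(a)$ are the coefficients of $Z^j$ in $[a](Z)^i$, and since $[a](Z) \in o_L\dcroc{Z}$ one gets integrality directly; but the route through Corollary \ref{MatrixCoeffs} is cleaner here.)

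There is no serious obstacle: every ingredient has already been proved. The only point requiring a little care is the uniqueness and polynomiality of the expansion coefficients $\sigma_{i,j}(Y)$ — i.e.\ that inverting an upper-triangular transition matrix whose entries are polynomials in $Y$ again yields polynomials in $Y$ — which is immediate because the diagonal entries of that matrix are nonzero constants (namely the leading coefficients $i!/j! \cdot a^{\,?}$ are units in $L$, so no division by a nonconstant polynomial in $Y$ occurs). So the ``hard part'' is really just bookkeeping: making sure the objects $\sigma_{i,j}(Y)$ defined here are literally the same as the scalars named $\sigma_{ij}(a)$ in Corollary \ref{MatrixCoeffs}, which is what licenses the integrality conclusion.
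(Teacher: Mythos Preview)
Your proof is correct and follows essentially the same approach as the paper: both arguments reduce the integrality of the $\sigma_{i,j}$ to Corollary \ref{MatrixCoeffs}, and the paper itself remarks immediately after its proof that the lemma ``is just another way of rephrasing Corollary \ref{MatrixCoeffs}.'' The only cosmetic difference is that the paper routes through Lemma \ref{RhoDegJ} (applied with the regular basis $b_n = P_n$ of Example \ref{PnRegBas}) and then invokes the transcendence of $\Omega$ over $L$ to pass from the identity $P_j(Y\Omega) = \sum_i \rho_{i,j}(Y) P_i(\Omega)$ to the formal identity in $L[Y,s]$, whereas you work directly in $L[Y][s]$ from the start---your route is slightly more direct and avoids the detour through $\Omega$.
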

\begin{proof} By Example \ref{PnRegBas}, $\{P_n(\Omega) : n \geq 0\}$ forms a regular basis for the admissible subalgebra $\sum\limits_{n = 0}^\infty o_K P_n(\Omega)$ of $L[\Omega]$. Apply Lemma \ref{RhoDegJ} and use the transcendence of $\Omega$ over $L$.
\end{proof}

Of course this is just another way of rephrasing Corollary \ref{MatrixCoeffs}. We will now see that the matrix of polynomials $(\sigma_{i,j}(Y))_{i,j}$ is sparse as well.
\begin{proposition}\label{OldToNew} Let $j \geq 0$ and suppose that $0 \leq k \leq \ul{j}$.
\begin{enumerate} 
\item $\sigma_{k,\ul{j}}(Y) = 0$ if $k \not\equiv m \mod (q-1)$.
\item For each $i = 0,\ldots, j$ there exists $\tau^{(m)}_{i,j}(X) \in L[X]$ such that
\[\sigma_{\underline{i}, \underline{j}}(Y) = Y^m \cdot {\tau^{(m)}_{i,j}(Y^{q-1})}.\]
\end{enumerate}
\end{proposition}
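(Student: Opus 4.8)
The plan is to deduce both statements from three ingredients already available: the sparsity of $P_n$ recorded in Proposition \ref{CoeffOfPn}(1), the factorisation $P_{\ul{j}}(t) = t^m R^{(m)}_j(t^{q-1})$ of Corollary \ref{mRjDef}, and the defining relation $P_{\ul{j}}(Ys) = \sum_{k=0}^{\ul{j}} \sigma_{k,\ul{j}}(Y) P_k(s)$ from Lemma \ref{PolySigmas}.

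For part (1), I would equip $L[s]$ with the $\bZ/(q-1)$-grading in which $s^\ell$ is homogeneous of degree $\ell \bmod (q-1)$, and extend it coefficientwise to $L[Y][s] = \bigoplus_{c=0}^{q-2} V_c$. By Proposition \ref{CoeffOfPn}(1) each $P_k(s)$ is homogeneous of degree $k \bmod (q-1)$, and $P_{\ul{j}}(Ys) = \sum_\ell b^{(\ul{j})}_\ell Y^\ell s^\ell$ is homogeneous of degree $m$ because $b^{(\ul{j})}_\ell = 0$ unless $\ell \equiv \ul{j} \equiv m$. Comparing the homogeneous components of degree $c \neq m$ in the defining relation gives $\sum_{k \equiv c} \sigma_{k,\ul{j}}(Y) P_k(s) = 0$; since the polynomials $P_0(s),\dots,P_{\ul{j}}(s)$ have pairwise distinct degrees in $s$ and hence are $L[Y]$-linearly independent, this forces $\sigma_{k,\ul{j}}(Y) = 0$ whenever $k \not\equiv m \bmod (q-1)$.

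For part (2), part (1) reduces the defining relation to $P_{\ul{j}}(Ys) = \sum_{i=0}^j \sigma_{\ul{i},\ul{j}}(Y) P_{\ul{i}}(s)$. Substituting $P_{\ul{j}}(t) = t^m R^{(m)}_j(t^{q-1})$ and $P_{\ul{i}}(t) = t^m R^{(m)}_i(t^{q-1})$ from Corollary \ref{mRjDef} and cancelling $s^m$ in the domain $L[Y][s]$ yields
\[ Y^m R^{(m)}_j(Y^{q-1} s^{q-1}) = \sum_{i=0}^j \sigma_{\ul{i},\ul{j}}(Y)\, R^{(m)}_i(s^{q-1}). \]
Since $s^{q-1}$ is transcendental over $L[Y]$, I would replace it by a fresh indeterminate $X$ and work in $L[Y][X]$. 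Lemma \ref{r-Diagonal} gives $\deg_X R^{(m)}_i = i$ with leading coefficient $1/(\ul{i})!$, so $\{R^{(m)}_0(X),\dots,R^{(m)}_j(X)\}$ is an $L$-basis, hence an $L[Y]$-basis, of the polynomials of degree $\le j$. On the other hand every coefficient of $X^\ell$ in $Y^m R^{(m)}_j(Y^{q-1}X) = \sum_{\ell=0}^j \frac{r^{(m)}_{\ell,j}}{(\ul{\ell})!}\, Y^{m+(q-1)\ell} X^\ell$ is manifestly $Y^m$ times a polynomial in $Y^{q-1}$. Expanding each $X^\ell$ in the basis $\{R^{(m)}_i(X)\}$ — a change of basis with coefficients in $L$ — and matching the coefficients of $R^{(m)}_i(X)$ then expresses $\sigma_{\ul{i},\ul{j}}(Y)$ as an $L$-linear combination of those coefficients, so again of the form $Y^m\, \tau^{(m)}_{i,j}(Y^{q-1})$ with $\tau^{(m)}_{i,j}(X) \in L[X]$, as required.

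The whole argument is essentially bookkeeping around the sparsity of the $P_n$, and I do not expect a substantial obstacle; the only points needing care are the three ``triangular basis / uniqueness of expansion'' steps — for $\{P_k(s)\}$, for $\{R^{(m)}_i(X)\}$, and for the identification $X \leftrightarrow s^{q-1}$ — each of which must be carried out over the coefficient ring $L[Y]$ rather than over $L$.
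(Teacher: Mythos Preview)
Your proof is correct and follows essentially the same approach as the paper: both arguments exploit the $\bZ/(q-1)$-homogeneity of the $P_k(s)$ together with the factorisation $P_{\ul{j}}(t) = t^m R^{(m)}_j(t^{q-1})$, and then read off the $\sigma_{\ul{i},\ul{j}}(Y)$ as the coefficients of the $L[Y]$-basis $\{R^{(m)}_i\}$. The only cosmetic difference is that the paper divides through by $Y^m s^m$ and works in the Laurent polynomial ring $L[Y][s,s^{-1}] = \bigoplus_{c=0}^{q-2} s^c L[Y][s^{q-1},s^{1-q}]$, whereas you stay in the polynomial ring with a $\bZ/(q-1)$-grading and keep the factor $Y^m$ on the left-hand side; your explicit change-of-basis from $\{X^\ell\}$ to $\{R^{(m)}_i(X)\}$ unpacks what the paper leaves implicit when it says ``looking at the right hand side''.
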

\begin{proof} Using Lemma \ref{PolySigmas}, we have
\[ P_{\underline{j}}(Ys) = \sum_{k=0}^{\underline{j}} \sigma_{k, \underline{j}}(Y) P_k(s).\]
Dividing both sides by $Y^m s^m$ we obtain an equality of Laurent polynomials
\begin{equation}\label{mRj} R^{(m)}_j( Y^{q-1} s^{q-1} ) = \sum_{k=0}^{\underline{j}} Y^{-m} \sigma_{k, \underline{j}}(Y) \cdot s^{-m} P_k(s).\end{equation}
The left hand side of $(\ref{mRj})$ is a polynomial in $s^{q-1}$ with coefficients in $L[Y]$. The Laurent polynomial $s^{-m} P_k(s)$ lies in $s^{k-m}L[s^{q-1}, s^{1-q}]$ by Proposition \ref{CoeffOfPn}. Since
\[ L[Y][s, s^{-1}] = \bigoplus\limits_{c=0}^{q-2} s^c L[Y][s^{q-1},s^{1-q}],\]
looking at the component of the right hand side of $(\ref{mRj})$ that lies in $s^c L[Y][s^{q-1},s^{1-q}]$ for $c \in \{1,\cdots,q-2\}$ and then looking at the leading coeffiicent of $s^{-m} P_k(s)$ implies (1).

Using Corollary \ref{mRjDef}, we can now rewrite $(\ref{mRj})$ as follows:
\begin{equation}\label{mRj2} R^{(m)}_j( Y^{q-1} s^{q-1} ) = \sum_{i=0}^{j} Y^{-m} \sigma_{\underline{i}, \underline{j}}(Y) \cdot R^{(m)}_i(s^{q-1}).\end{equation}
Since the left hand side of $(\ref{mRj2})$ is now a polynomial in $Y^{q-1}$ with coefficients in $L[s^{q-1}]$, we deduce by looking at the right hand side of $(\ref{mRj2})$ that the \emph{a priori} Laurent polynomial $Y^{-m} \sigma_{\ul{i},\ul{j}}(Y)$ in $Y$ in fact lies in $L[Y^{q-1}]$. Part (2) follows.
\end{proof} 
Setting $t = s^{q-1}$ and $X = Y^{q-1}$, we deduce the following
\begin{corollary}\label{NewMCs} The polynomials $R^{(m)}_j(tX)$ satisfy 
\[ R^{(m)}_j(tX) = \sum\limits_{i=0}^j {\tau^{(m)}_{i,j}(X)} \hsp {R^{(m)}_i(t)}. \]
\end{corollary}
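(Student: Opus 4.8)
The plan is to deduce the identity from Lemma \ref{PolySigmas} together with both parts of Proposition \ref{OldToNew}, essentially by repackaging equation (\ref{mRj2}). First I would start from Lemma \ref{PolySigmas} with $j$ replaced by $\underline{j} = m + j(q-1)$, which gives the identity of polynomials in the two independent indeterminates $Y$ and $s$
\[ P_{\underline{j}}(Ys) = \sum_{k=0}^{\underline{j}} \sigma_{k,\underline{j}}(Y)\, P_k(s).\]
By Proposition \ref{OldToNew}(1), $\sigma_{k,\underline{j}}(Y) = 0$ unless $k \equiv m \bmod (q-1)$, so only the indices $k = \underline{i}$ with $0 \le i \le j$ survive, and the sum collapses to $P_{\underline{j}}(Ys) = \sum_{i=0}^{j} \sigma_{\underline{i},\underline{j}}(Y)\, P_{\underline{i}}(s)$.

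Next I would substitute the factorisations coming from Corollary \ref{mRjDef} and Proposition \ref{OldToNew}(2). Writing $P_{\underline{n}}(s) = s^m R^{(m)}_n(s^{q-1})$ we get $P_{\underline{j}}(Ys) = (Ys)^m R^{(m)}_j\big(Y^{q-1}s^{q-1}\big) = Y^m s^m R^{(m)}_j\big(Y^{q-1}s^{q-1}\big)$, while $P_{\underline{i}}(s) = s^m R^{(m)}_i(s^{q-1})$; and $\sigma_{\underline{i},\underline{j}}(Y) = Y^m \tau^{(m)}_{i,j}(Y^{q-1})$. Plugging these in and cancelling the common nonzero factor $Y^m s^m$ (legitimate in the polynomial ring $L[Y,s]$, or after passing to the Laurent ring as in (\ref{mRj})) yields
\[ R^{(m)}_j\big(Y^{q-1}s^{q-1}\big) = \sum_{i=0}^{j} \tau^{(m)}_{i,j}(Y^{q-1})\, R^{(m)}_i(s^{q-1}),\]
which is exactly (\ref{mRj2}) rewritten using Proposition \ref{OldToNew}(2).

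Finally I would make the change of variables $X = Y^{q-1}$, $t = s^{q-1}$. The point is that the $L$-algebra homomorphism $L[X,t] \to L[Y,s]$ sending $X \mapsto Y^{q-1}$ and $t \mapsto s^{q-1}$ is injective, since $Y^{q-1}$ and $s^{q-1}$ are algebraically independent over $L$; hence the displayed identity, which lives in the image subring $L[Y^{q-1},s^{q-1}]$, pulls back uniquely to the asserted identity $R^{(m)}_j(tX) = \sum_{i=0}^j \tau^{(m)}_{i,j}(X)\, R^{(m)}_i(t)$ in $L[X,t]$. There is no real obstacle here: the whole argument is bookkeeping around the sparsity already recorded in Proposition \ref{CoeffOfPn} and Proposition \ref{OldToNew}, and the only thing that needs a word of justification is the injectivity of the substitution, which is immediate.
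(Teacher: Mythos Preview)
Your proof is correct and follows essentially the same route as the paper: equation (\ref{mRj2}) is established in the proof of Proposition \ref{OldToNew}, and the corollary is obtained exactly by the substitution $X = Y^{q-1}$, $t = s^{q-1}$ that you describe. Your extra remark about the injectivity of $L[X,t] \to L[Y,s]$ makes explicit a point the paper leaves implicit, but otherwise the arguments coincide.
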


\begin{definition}\label{rTauDX} Consider the following infinite upper-triangular matrices.
\begin{enumerate}
\item $[r^{(m)}]_{ij} = r^{(m)}_{ij}$ for $j \geq i \geq 0$,
\item $\cT^{(m)}_{ij} = \tau_{i,j}^{(m)}(X)$, and
\item $\cD_X := \diag(1, X, X^2, \cdots)$.
\end{enumerate}
\end{definition}
\begin{lemma}\label{FirstMeq} We have the matrix equation
\[ r^{(m)} \cdot \cT^{(m)} = \cD_X \cdot r^{(m)}.\]
\end{lemma}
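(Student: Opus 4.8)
The plan is to derive this matrix identity directly from Corollary \ref{NewMCs}, by writing out both sides in terms of the scalars $r^{(m)}_{ij}$ and comparing coefficients of powers of $t$, then repackaging the result in matrix language. First I would note the (implicit) conventions $r^{(m)}_{ij} = 0$ unless $0 \le i \le j$ and $\tau^{(m)}_{i,j}(X) = 0$ unless $0 \le i \le j$, so that the matrices $r^{(m)}$, $\cT^{(m)}$ of Definition \ref{rTauDX} are genuinely upper triangular and every matrix product appearing below is computed by a finite sum.

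Next I would use Corollary \ref{mRjDef}, which gives $R^{(m)}_j(t) = \sum_{i=0}^j \frac{r^{(m)}_{ij}}{(\underline{i})!}\, t^i$, to expand both sides of the identity $R^{(m)}_j(tX) = \sum_{k=0}^j \tau^{(m)}_{k,j}(X)\, R^{(m)}_k(t)$ of Corollary \ref{NewMCs}. The left-hand side becomes $\sum_{i=0}^j \frac{r^{(m)}_{ij}}{(\underline{i})!}\, X^i t^i$, while interchanging the order of summation on the right-hand side gives
\[ \sum_{k=0}^j \tau^{(m)}_{k,j}(X) \sum_{i=0}^k \frac{r^{(m)}_{ik}}{(\underline{i})!}\, t^i = \sum_{i=0}^j \frac{t^i}{(\underline{i})!} \sum_{k=i}^j r^{(m)}_{ik}\, \tau^{(m)}_{k,j}(X). \]
Comparing the coefficient of $t^i$ on the two sides and clearing the common factor $(\underline{i})!$ yields, for all $0 \le i \le j$,
\[ X^i r^{(m)}_{ij} = \sum_{k=i}^j r^{(m)}_{ik}\, \tau^{(m)}_{k,j}(X). \]

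Finally I would translate this into the asserted matrix equation. By the definition of matrix multiplication together with the upper-triangularity noted above, the right-hand side is exactly the $(i,j)$ entry of $r^{(m)} \cdot \cT^{(m)}$; and since $\cD_X = \diag(1,X,X^2,\cdots)$, the left-hand side is exactly the $(i,j)$ entry of $\cD_X \cdot r^{(m)}$. As this holds for all $i,j \ge 0$, we obtain $r^{(m)} \cdot \cT^{(m)} = \cD_X \cdot r^{(m)}$. I do not expect any genuine obstacle here: the only points needing a little care are distinguishing the ``sparse'' index $\underline{i} = m + i(q-1)$ from the running index $i$ used to label rows and columns, and checking that the sum interchange is legitimate — which it is, since all sums involved are finite.
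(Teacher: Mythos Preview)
Your proof is correct and follows essentially the same approach as the paper: both expand Corollary \ref{NewMCs} using the explicit formula for $R^{(m)}_j(t)$ from Corollary \ref{mRjDef}, compare coefficients of powers of $t$, and then read off the resulting scalar identity as the $(i,j)$-entry of the claimed matrix equation. The only cosmetic differences are your choice of index names ($i,k$ versus the paper's $\ell,i$) and that the paper additionally remarks that $r^{(m)}$ is invertible (since $r^{(m)}_{jj}=1$), a fact not actually needed for the identity itself.
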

\begin{proof} Note that each matrix appearing on the right hand side has infinitely many rows and columns, but each one is also upper triangular, so matrix multiplcation makes sense. Moreover, because $r_{jj}^{(m)} = 1$ for all $j \geq 0$ by Lemma \ref{r-Diagonal}, the matrix $r^{(m)}$ is invertible, with inverse matrix having entries on $L$. 

Substitute the definition of $R^{(m)}_j(t)$ from Corollary \ref{mRjDef} into Corollary \ref{NewMCs} to obtain
\[  \sum\limits_{\ell=0}^j  \frac{ r^{(m)}_{\ell,j}}{(\underline{\ell})!} t^\ell X^\ell \quad = \quad \sum\limits_{i=0}^j {}\tau^{(m)}_{i,j}(X) \sum_{\ell=0}^i \frac{r^{(m)}_{\ell,i}}{(\ul{\ell})!} t^\ell.\]
Equate the coefficients of $t^\ell$ to get
\[ r^{(m)}_{\ell,j} X^\ell = \sum_{i=0}^j {}\tau^{(m)}_{i,j}(X) \cdot r^{(m)}_{\ell,i} .\]
The right hand side is the $(\ell,j)$-th entry of $r^{(m)} \cdot \cT^{(m)}$. The left hand side is the $(\ell,j)$-th entry of $\cD_X \cdot r^{(m)}$. The result follows. \end{proof}

The following two results on the coefficients $r_{i,j}^{(m)}$ are strictly speaking not needed for the calculations appearing in Appendix A, but they are nevertheless interesting in their own right.
\begin{lemma}\label{BigBrackets} For each $j \geq i \geq 0$, we have
\[ r_{i,j}^{(m)} = \left( \sum\limits_{\mathbf{k} \in Q_m(i,j)} \binom{\ul{i}}{k_0;k_1;\cdots} \pi^{\sum\limits_{\ell=1}^\infty k_\ell \left(\frac{q^\ell - 1}{q-1} - \ell\right)}\right) \cdot \pi^{i-j}.\]
\end{lemma}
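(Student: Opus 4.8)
The plan is to unwind the two defining conditions on a vector $\mathbf{k} \in Q_m(i,j)$ and substitute them into the exponent of $\pi$ appearing in the definition of $r_{i,j}^{(m)}$ from Definition \ref{DefOfQ}. Recall that $r_{i,j}^{(m)} = \sum_{\mathbf{k} \in Q_m(i,j)} \binom{\ul{i}}{k_0;k_1;\cdots} \pi^{-\sum_{\ell=1}^\infty \ell k_\ell}$, so the only thing to do is to rewrite the single exponent $-\sum_{\ell \geq 1} \ell k_\ell$ as $\sum_{\ell \geq 1} k_\ell\left(\frac{q^\ell-1}{q-1} - \ell\right) + (i-j)$, term by term inside the sum over $\mathbf{k}$. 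Since $i - j$ does not depend on $\mathbf{k}$, the factor $\pi^{i-j}$ can then be pulled out of the sum, giving exactly the claimed identity.

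First I would observe that for any $\mathbf{k} \in Q_m(i,j)$, the second defining relation reads $\sum_{\ell=1}^\infty k_\ell \left(\frac{q^\ell-1}{q-1}\right) = j - i$. Therefore
\[ -\sum_{\ell=1}^\infty \ell k_\ell = \sum_{\ell=1}^\infty k_\ell\left(\frac{q^\ell-1}{q-1} - \ell\right) - \sum_{\ell=1}^\infty k_\ell \left(\frac{q^\ell-1}{q-1}\right) = \sum_{\ell=1}^\infty k_\ell\left(\frac{q^\ell-1}{q-1} - \ell\right) - (j-i). \]
All the sums here are finite because $k_\ell = 0$ for $\ell$ large (this is part of the meaning of $\mathbf{k} \in \bN^\infty$ with $\sum_\ell k_\ell = \ul{i} < \infty$), so there is no convergence issue. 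Substituting this expression for the exponent of $\pi$ into the formula for $r_{i,j}^{(m)}$ and factoring out $\pi^{-(j-i)} = \pi^{i-j}$, which is independent of $\mathbf{k}$, yields the displayed formula.

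There is essentially no obstacle here: the statement is a purely formal rearrangement of the exponent, using only the two linear constraints that cut out $Q_m(i,j)$. The one point deserving a sentence of care is that the first constraint $\sum_{\ell=0}^\infty k_\ell = \ul{i}$ (as opposed to the second) is not actually used — it only guarantees finiteness and that the multinomial coefficient $\binom{\ul{i}}{k_0;k_1;\cdots}$ makes sense — so I would not invoke it beyond that. The proof is therefore a two-line computation; I would present the rewriting of the exponent as a short displayed equation as above and then conclude.

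\begin{proof}
Fix $j \geq i \geq 0$ and let $\mathbf{k} \in Q_m(i,j)$. By Definition \ref{DefOfQ}, only finitely many $k_\ell$ are non-zero, and $\sum_{\ell=1}^\infty k_\ell\left(\frac{q^\ell-1}{q-1}\right) = j-i$. Hence
\[ -\sum_{\ell=1}^\infty \ell k_\ell = \sum_{\ell=1}^\infty k_\ell\left(\frac{q^\ell-1}{q-1} - \ell\right) - \sum_{\ell=1}^\infty k_\ell\left(\frac{q^\ell-1}{q-1}\right) = \sum_{\ell=1}^\infty k_\ell\left(\frac{q^\ell-1}{q-1} - \ell\right) + (i-j). \]
Substituting this into the formula $r_{i,j}^{(m)} = \sum_{\mathbf{k} \in Q_m(i,j)} \binom{\ul{i}}{k_0;k_1;k_2;\cdots} \pi^{-\sum_{\ell=1}^\infty \ell k_\ell}$ from Definition \ref{DefOfQ}, and noting that the factor $\pi^{i-j}$ does not depend on $\mathbf{k}$, we obtain
\[ r_{i,j}^{(m)} = \left( \sum_{\mathbf{k} \in Q_m(i,j)} \binom{\ul{i}}{k_0;k_1;\cdots} \pi^{\sum_{\ell=1}^\infty k_\ell\left(\frac{q^\ell-1}{q-1} - \ell\right)}\right) \cdot \pi^{i-j}. \qedhere \]
\end{proof}
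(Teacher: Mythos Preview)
Your proof is correct and follows essentially the same approach as the paper: both use the second defining constraint $\sum_{\ell \geq 1} k_\ell \frac{q^\ell - 1}{q-1} = j - i$ on $\mathbf{k} \in Q_m(i,j)$ to rewrite the exponent of $\pi$ and then factor out $\pi^{i-j}$. The paper's version verifies the identity by starting from the right-hand side and simplifying back to the definition, whereas you start from the definition and expand, but the content is identical.
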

\begin{proof} Let $\mathbf{k} \in Q_m(i,j)$. Then $\sum_{\ell=1}^\infty k_\ell \left(\frac{q^\ell-1}{q-1}\right) = j - i$, and therefore
\[\pi^{\sum\limits_{\ell=1}^\infty k_\ell \left(\frac{q^\ell - 1}{q-1} - \ell\right)} \cdot \pi^{i-j} = \pi^{j-i} \cdot \pi^{-\sum\limits_{\ell=1}^\infty\ell k_\ell} \cdot \pi^{i-j} = \pi^{-\sum\limits_{\ell=1}^\infty\ell k_\ell}.\]
The result now follows from Definition \ref{DefOfQ}.\end{proof}

\begin{proposition}\label{IntegralMCs} Let $j \geq i \geq 0$. Then
\begin{enumerate}
\item $\pi^{j-i} \cdot r_{i,j}^{(m)} \in o_L$, and
\item $\pi^{j-i} \cdot r_{i,j}^{(m)} \equiv \binom{\ul{i}}{j-i} \mod \pi^{q-1}o_L$.
\end{enumerate}
\end{proposition}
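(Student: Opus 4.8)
\textbf{Proof plan for Proposition \ref{IntegralMCs}.} The plan is to exploit the formula from Lemma \ref{BigBrackets}, which expresses $\pi^{j-i} r_{i,j}^{(m)}$ as a sum over $\mathbf{k} \in Q_m(i,j)$ of terms $\binom{\ul{i}}{k_0;k_1;\cdots} \pi^{e(\mathbf{k})}$ where $e(\mathbf{k}) = \sum_{\ell \geq 1} k_\ell\bigl(\tfrac{q^\ell-1}{q-1} - \ell\bigr)$. The first step is to check that every exponent $e(\mathbf{k})$ is a non-negative integer: indeed $\tfrac{q^\ell - 1}{q-1} = 1 + q + \cdots + q^{\ell-1} \geq \ell$ for all $\ell \geq 1$, with equality only when $\ell = 1$ (or $\ell=0$, which does not occur in the sum), so each summand $k_\ell(\tfrac{q^\ell-1}{q-1}-\ell)$ is a non-negative integer. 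Since the multinomial coefficients are integers and $\pi \in o_L$, each term of the sum lies in $o_L$, which gives part (1).

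For part (2) the idea is to reduce the sum modulo $\pi^{q-1}$ and see which $\mathbf{k} \in Q_m(i,j)$ survive. A term contributes modulo $\pi^{q-1}$ only if $e(\mathbf{k}) < q-1$. Now $\tfrac{q^\ell-1}{q-1} - \ell$ equals $0$ for $\ell = 1$ and equals $q - 1$ for $\ell = 2$ (since $\tfrac{q^2-1}{q-1} - 2 = q+1-2 = q-1$), and is $\geq q-1$ for all $\ell \geq 2$ (it is increasing in $\ell$). Hence $e(\mathbf{k}) < q-1$ forces $k_\ell = 0$ for all $\ell \geq 2$, i.e.\ $\mathbf{k} = (k_0, k_1, 0, 0, \ldots)$ with $e(\mathbf{k}) = 0$. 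The defining conditions of $Q_m(i,j)$ then become $k_0 + k_1 = \ul{i}$ and $k_1 \cdot 1 = j - i$ (using $\tfrac{q-1}{q-1} = 1$), so $k_1 = j-i$ and $k_0 = \ul{i} - (j-i)$; this is a valid vector in $Q_m(i,j)$ precisely when $j - i \leq \ul{i}$, and when $j-i > \ul{i}$ there is no surviving term and one must separately check $\binom{\ul{i}}{j-i} = 0$ in that range, which holds by the usual convention for binomial coefficients. Thus modulo $\pi^{q-1}$ the whole sum collapses to the single multinomial coefficient $\binom{\ul{i}}{k_0; k_1} = \binom{\ul{i}}{\ul{i}-(j-i);\, j-i} = \binom{\ul{i}}{j-i}$, which is exactly the claim.

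The only genuinely delicate point is the bookkeeping around the boundary case $j - i > \ul{i}$ and making sure the ``convention'' $\binom{\ul{i}}{j-i} = 0$ is consistent with $Q_m(i,j)$ being empty of surviving terms — but in fact one should double-check whether $Q_m(i,j)$ can even be non-empty in that regime, since the condition $\sum_\ell k_\ell = \ul{i}$ together with $\sum_{\ell \geq 1} k_\ell\tfrac{q^\ell-1}{q-1} = j-i$ already imposes constraints; if $Q_m(i,j) = \emptyset$ then $r_{i,j}^{(m)} = 0$ and both statements are trivial. So the main (minor) obstacle is just to organize the case analysis cleanly rather than any real difficulty; the substance is the arithmetic inequality $1 + q + \cdots + q^{\ell-1} \geq \ell$ and the exact value $q-1$ at $\ell = 2$, both of which are immediate.
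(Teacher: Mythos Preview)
Your proof is correct and follows essentially the same route as the paper: both invoke Lemma \ref{BigBrackets} and analyze the exponents $\alpha_\ell := \tfrac{q^\ell-1}{q-1} - \ell$, showing $\alpha_\ell \geq 0$ for part (1) and isolating the single surviving term $\mathbf{k} = (\ul{i}-(j-i),\, j-i,\, 0,\, 0,\ldots)$ modulo $\pi^{q-1}$ for part (2). The only cosmetic difference is that the paper expands $\alpha_\ell = \binom{\ell}{2}(q-1) + \binom{\ell}{3}(q-1)^2 + \cdots$, which shows the slightly sharper fact that each $\alpha_\ell$ (and hence each exponent $e(\mathbf{k})$) is a non-negative \emph{multiple} of $q-1$; your inequality $\alpha_\ell \geq q-1$ for $\ell \geq 2$ is weaker but entirely sufficient here.
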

\begin{proof}(1) Note that for every $\ell \geq 1$ we have
\[\begin{array}{lll} \alpha_\ell := \frac{q^\ell - 1}{q-1} - \ell &=& \frac{ \left(1 + (q-1)\right)^\ell - 1}{q-1} - \ell = \frac{1 + \ell (q-1) + \binom{\ell}{2}(q-1)^2 + \cdots + (q-1)^\ell - 1}{q-1} - \ell\\
&=& \binom{\ell}{2}(q-1) + \binom{\ell}{3} (q-1)^2 + \cdots + (q-1)^{\ell-1}. \end{array}.\]
Thus $\alpha_\ell \geq 0$ always. Hence the expression in the big brackets in Lemma \ref{BigBrackets} lies in $o_L$.

(2) The exponent of $\pi$ appearing in the term in the sum corresponding to $\mathbf{k} \in Q_m(i,j)$ is equal to $\sum_{\ell=1}^\infty k_\ell \alpha_\ell$. It follows from the formula for $\alpha_\ell$ established above that $\alpha_1 = 0$. Hence this exponent is a positive multiple of $q-1$, \emph{unless} $k_\ell = 0$ for all $\ell \geq 2$. In this case, the exponent is $0$ and the corresponding term is equal to $\binom{\ul{i}}{j-i}$ because in this case $k_1 = \sum\limits_{\ell=1}^\infty k_\ell \frac{q^\ell-1}{q-1} = j - i$.
\end{proof}

\section{Consequences of the Katz isomorphism}
\label{consekatz}
\subsection{Equivariant endomorphisms of $L_\infty$}
\label{subequiv}

Throughout this {\S}, we assume that $L = \bQ_{p^2}$ and that $\pi=p$. 
In particular, $L_\infty$ is the completion of $L(\cG[p^\infty])$.
We recall the statement of the Katz isomorphism (Theorem \ref{thm:katzisom}): if $S$ is a $\pi$-adically complete $o_L$-algebra, then the map $\cK^\ast : \Hom_{o_L}(\cC^0_{\Gal}(o_L,o_{\Cp}), S) \to S \dcroc{Z}^{\psiqint}$ is an isomorphism. 

Note the following criterion.

\begin{lemma}
\label{suppsi}
A measure $\mu \in \Hom_{o_L}(\cC^0_{\Gal}(o_L,o_{\Cp}), S)$ is supported in $o_L^\times$ if and only if $\psi_q(\cK^\ast(\mu)) = 0$.
\end{lemma}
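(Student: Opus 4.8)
The plan is to use the explicit description of $\cK^\ast$ on generators $u_m$ together with the compatibility between $\psi_q$ on $S\dcroc{Z}$ and $\psi_{\cC}$ on $\cC$ established in Corollary \ref{cor:DualKatzPhiPsi}. Recall that, via the identification $\h{U}^\ast \cong S\dcroc{Z}$, the dual Katz map satisfies $\cK^\ast \psi_{\cC}^\ast = \psi_q \cK^\ast$. Hence $\psi_q(\cK^\ast(\mu)) = \cK^\ast(\psi_{\cC}^\ast(\mu))$, and since $\cK^\ast$ is injective (this is part of Theorem \ref{thm:katzisom}, but in fact injectivity of $\cK^\ast$ only needs $\cK$ to have dense image, which is cheap here; alternatively we may simply quote Theorem \ref{thm:katzisom} as $L = \bQ_{p^2}$), the condition $\psi_q(\cK^\ast(\mu)) = 0$ is equivalent to $\psi_{\cC}^\ast(\mu) = 0$.

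So it remains to show that $\psi_{\cC}^\ast(\mu) = 0$ if and only if $\mu$ is supported in $o_L^\times$. First I would make precise what ``supported in $o_L^\times$'' means: $\mu \in \Hom_{o_L}(\cC, S)$ is supported in $o_L^\times$ if it vanishes on every $f \in \cC$ with $f|_{o_L^\times} = 0$, equivalently (using $\Gal$-continuity and surjectivity of $\tau$, Lemma \ref{lem:tau-sur}) if it factors through the restriction map $\cC \to \cC^0_{\Gal}(o_L^\times, o_{\Cp})$. Next, unwind $\psi_{\cC}^\ast$: by definition $\langle \psi_{\cC}^\ast(\mu), f\rangle = \mu(\psi_{\cC}(f))$ for $f \in \cC$, and $\psi_{\cC}(f)(a) = \delta_{a \in \pi o_L} f(a/\pi)$. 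The operator $\varphi_{\cC} \colon \cC \to \cC$, $f \mapsto f(\pi \cdot)$, satisfies $\psi_{\cC} \circ \varphi_{\cC} = \id_{\cC}$ and $\varphi_{\cC} \circ \psi_{\cC}$ is the idempotent $e$ with $e(f)(a) = \delta_{a \in \pi o_L} f(a)$, i.e.\ multiplication by the indicator of $\pi o_L$; consequently $\cC = \varphi_{\cC}(\cC) \oplus \ker(\psi_{\cC})$, and $\ker(\psi_{\cC})$ is exactly the space of functions supported in $o_L^\times$ (here one uses $\tau$ surjective so that $\psi_{\cC}(f) = 0$ forces $f$ to vanish on $\bigcup_{n\ge 1}\pi^n o_L$, hence on $o_L \setminus o_L^\times$). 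Finally, since $\varphi_{\cC}$ is injective, $\psi_{\cC}^\ast(\mu) = 0 \iff \mu(\psi_{\cC}(\cC)) = 0 \iff \mu$ vanishes on a complement of $\varphi_{\cC}(\cC)$; but as $\psi_{\cC}$ restricted to $\varphi_{\cC}(\cC)$ is an isomorphism onto $\cC$, we get $\psi_{\cC}^\ast(\mu) = 0 \iff \mu$ vanishes on $\varphi_{\cC}(\cC)$. One then checks this is the same as saying $\mu$ kills the image of $e' := \id - e = \varphi_{\cC}\psi_{\cC}$... — actually the cleanest formulation: $\psi_{\cC}^\ast(\mu)=0$ means $\mu\circ\psi_{\cC}=0$; compose with $\varphi_{\cC}$ and use $\psi_{\cC}\varphi_{\cC}=\id$ to see $\mu\circ\psi_{\cC}=0 \iff \mu|_{\varphi_{\cC}(\cC)} = 0$, and compose on the other side to see this is equivalent to $\mu$ being supported on $\ker\psi_{\cC}$, i.e.\ on functions vanishing outside $o_L^\times$.

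The main obstacle, such as it is, will be bookkeeping with the two adjunctions and keeping straight which $\psi$/$\varphi$ acts on which space: the identities $\cK^\ast \psi_{\cC}^\ast = \psi_q \cK^\ast$ (Corollary \ref{cor:DualKatzPhiPsi}), $\psi_q \circ \varphi = 1$ (Corollary \ref{cor:FundPsiPhi}), and their $\cC$-side analogues must be invoked in the right order, and the translation between ``supported in $o_L^\times$'' as a property of the measure and ``lies in $\ker \psi_{\cC}$'' as a property of test functions needs the surjectivity of $\tau$ (available since $L = \bQ_{p^2}$, Lemma \ref{lem:tau-sur}) to go through cleanly. No hard analysis is involved: everything reduces to the direct-sum decomposition $\cC = \varphi_{\cC}(\cC) \oplus \cC^{\psi_{\cC}=0}$ and dualising it. I would also remark that $\psi_{\cC}(\cK^\ast)^{-1}$-free reformulations (in terms of $S\dcroc{Z}^{\psi_q=0}$ being the image of measures supported in $o_L^\times$) then follow, linking this lemma to the map $\cK$ of Theorem \ref{introdualoinfty}.
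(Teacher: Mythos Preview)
The paper states this lemma without proof, so there is no argument to compare against; your overall strategy --- use Corollary \ref{cor:DualKatzPhiPsi} and the injectivity of $\cK^\ast$ (Theorem \ref{thm:katzisom}) to reduce to the claim $\psi_{\cC}^\ast(\mu)=0 \Leftrightarrow \mu$ is supported in $o_L^\times$, and then analyse $\psi_{\cC}$ on $\cC$ directly --- is exactly the right one.

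The execution, however, has $\varphi_{\cC}$ and $\psi_{\cC}$ interchanged. From Definition \ref{def:KatzPsiPhi} one computes $(\varphi_{\cC}\psi_{\cC} f)(a) = (\psi_{\cC} f)(\pi a) = f(a)$ (since $\pi a \in \pi o_L$ always), so it is $\varphi_{\cC}\psi_{\cC}=\id_{\cC}$ that holds, while $(\psi_{\cC}\varphi_{\cC} f)(a) = \delta_{a\in\pi o_L}\, f(a)$ is the idempotent $e$. Thus $\psi_{\cC}$ is \emph{injective}, so $\ker\psi_{\cC}=0$ rather than the space of functions supported on $o_L^\times$; and $\varphi_{\cC}$ is \emph{surjective}, so $\varphi_{\cC}(\cC)=\cC$ and your ``decomposition'' $\cC=\varphi_{\cC}(\cC)\oplus\ker\psi_{\cC}$ is trivial. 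Your final chain (``$\mu\circ\psi_{\cC}=0 \iff \mu|_{\varphi_{\cC}(\cC)}=0$'') then would force $\mu=0$, which is clearly not the intended statement. The repair is immediate once the roles are straightened out: $\mu\circ\psi_{\cC}=0$ means $\mu$ vanishes on $\im(\psi_{\cC})$, and $\im(\psi_{\cC}) = \im(e) = \{g\in\cC : g|_{o_L^\times}=0\}$ (indeed any such $g$ satisfies $g=e(g)=\psi_{\cC}\varphi_{\cC}(g)$). Vanishing on this subspace is exactly the definition of $\mu$ being supported in $o_L^\times$, and no further decomposition of $\cC$ is needed.
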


There is the usual $G_L,*$ action on $o_{\Cp}\dcroc{X}$, and on $\Hom_{o_L}(\cC^0_{\Gal}(o_L,o_{\Cp}),o_{\Cp})$ it is given by $g^*(\mu)(f) = g(\mu(g^{-1}(f))) = g(\mu(a \mapsto f(\tau(g)^{-1} \cdot a))$ since $f$ is Gal continuous. In particular, Theorem \ref{thm:katzisom} applied with $S=o_{\Cp}$ implies the following.

\begin{corollary}
\label{olkpz}
We have 
\begin{enumerate}
\item $\Hom_{o_L}(\cC^0_{\Gal}(o_L,o_{\Cp}),o_{\Cp})^{G_L,*} =  \Lambda_L(\frX)^{\psiqint}$.
\item $\Hom_{o_L}(\cC^0_{\Gal}(o_L^\times,o_{\Cp}),o_{\Cp})^{G_L,*} =  \Lambda_L(\frX)^{\psi_q=0}$.
\end{enumerate}
\end{corollary}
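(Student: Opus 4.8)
The statement is Corollary~\ref{olkpz}: that $\Hom_{o_L}(\cC^0_{\Gal}(o_L,o_{\Cp}),o_{\Cp})^{G_L,*} = \Lambda_L(\frX)^{\psiqint}$ and $\Hom_{o_L}(\cC^0_{\Gal}(o_L^\times,o_{\Cp}),o_{\Cp})^{G_L,*} = \Lambda_L(\frX)^{\psi_q=0}$. Both assertions will follow by taking $G_L$-invariants (for the twisted action) in the Katz isomorphism of Theorem~\ref{thm:katzisom}, applied with the $\pi$-adically complete $o_L$-algebra $S = o_{\Cp}$. Concretely, Theorem~\ref{thm:katzisom} gives an $o_{\Cp}$-linear bijection $\cK^\ast : \Hom_{o_L}(\cC^0_{\Gal}(o_L,o_{\Cp}),o_{\Cp}) \xrightarrow{\ \sim\ } o_{\Cp}\dcroc{Z}^{\psiqint}$.

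\textbf{Step 1: equivariance of $\cK^\ast$.} First I would check that $\cK^\ast$ intertwines the natural $G_L$-action on the source with the twisted $G_L,*$-action on $o_{\Cp}\dcroc{Z}$. On the measure side the action is $g^*(\mu)(f) = g(\mu(g^{-1}f))$ with $(g^{-1}f)(a) = f(\tau(g)^{-1}a)$, as recorded just before the statement. On the power series side, recall from $(\ref{eq:ExplicitKatz})$ that $\langle u_m,\cK^\ast(\mu)\rangle = \mu(P_m(-\Omega)\cdot(-))$, i.e.\ $\cK^\ast(\mu) = \sum_{m\ge 0}\mu(a\mapsto P_m(-a\Omega))Z^m$ once one unravels the identification $\h U^\ast \cong o_{\Cp}\dcroc{Z}$; the Galois twist on $\Omega$ is $g(\Omega)=\tau(g)\Omega$, and $g(\Delta_a) = \Delta_{a\tau(g)}$ by Lemma~\ref{lem:KatzGalCts}, so a direct computation using $\cK^\ast(\mu)(a\mapsto\Delta_a) $-type pairings shows $\cK^\ast(g^*\mu) = g * \cK^\ast(\mu)$. (This is essentially the compatibility already used implicitly in Lemma~\ref{lem:KatzGalCts} and Corollary~\ref{cor:DualKatzPhiPsi}; I would state it as a short lemma.) Passing to $G_L$-invariants then gives $\cK^\ast : \Hom_{o_L}(\cC^0_{\Gal}(o_L,o_{\Cp}),o_{\Cp})^{G_L,*} \xrightarrow{\sim} (o_{\Cp}\dcroc{Z}^{\psiqint})^{G_L,*}$.

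\textbf{Step 2: identifying the right-hand side.} It remains to see that $(o_{\Cp}\dcroc{Z}^{\psiqint})^{G_L,*} = \Lambda_L(\frX)^{\psiqint}$. By definition $\Lambda_L(\frX) = o_{\Cp}\dcroc{Z}^{G_L,*} = o_\infty\dcroc{Z}^{G_L,*}$ (Corollary after Proposition~\ref{prop:TwistedAction}), and $\psi_q$ commutes with the twisted $G_L$-action (this is noted in the introduction and follows from the definition of $\psi_{\col}$ via $\zeta\oplus Z$ together with the fact that $[\tau(g)]$ commutes with the $\cG_1$-translations up to permutation of $\cG_1$). Hence an element $F\in o_{\Cp}\dcroc{Z}^{\psiqint}$ is $G_L,*$-fixed iff it lies in $\Lambda_L(\frX)$ and all its $\psi_q$-iterates lie in $o_{\Cp}\dcroc{Z}$; but if $F$ is $G_L,*$-fixed then so is each $\psi_q^n(F)$, so $\psi_q^n(F)\in o_{\Cp}\dcroc{Z}^{G_L,*} = \Lambda_L(\frX)$ automatically. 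This gives $(o_{\Cp}\dcroc{Z}^{\psiqint})^{G_L,*} = \Lambda_L(\frX)^{\psiqint}$, proving part (1). For part (2), use Lemma~\ref{suppsi}: restricting Galois measures to those supported in $o_L^\times$ corresponds, under $\cK^\ast$, exactly to imposing $\psi_q = 0$ (since $\psi_q^2 = \psi_q\circ\psi_q$, the condition $\psi_q(F)=0$ is equivalent to $F\in o_{\Cp}\dcroc{Z}^{\psiqint}$ with $\psi_q F = 0$), so intersecting the part-(1) isomorphism with $\{\psi_q = 0\}$ on both sides yields part (2); one checks $\cC^0_{\Gal}(o_L^\times,o_{\Cp})$ is the relevant quotient/subobject via the natural identification $\cC^0_{\Gal}(o_L^\times,o_{\Cp})\cong o_\infty$ and the restriction map $\cC^0_{\Gal}(o_L,o_{\Cp})\to\cC^0_{\Gal}(o_L^\times,o_{\Cp})$.

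\textbf{Main obstacle.} The only genuine content beyond bookkeeping is Step~1: verifying carefully that $\cK^\ast$ is $G_L$-equivariant for the twisted action, i.e.\ that the twist by $[\tau(g)^{-1}](Z)$ on the power-series side matches the twist $a\mapsto\tau(g)^{-1}a$ on the function side. This should come out cleanly from $g(\Delta_a) = \Delta_{a\tau(g)}$ and the $o_{\Cp}$-linearity of the pairing $\langle-,-\rangle$, but it requires being precise about how the identification $\h{U(\cG_{o_{\Cp}})}^\ast \cong o_{\Cp}\dcroc{Z}$ of Lemma~\ref{lem:BigPairing}(4) interacts with the $G_L$-actions on both factors. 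Everything else is a formal consequence of the already-established Katz isomorphism.
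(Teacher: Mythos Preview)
Your proposal is correct and follows exactly the paper's approach: the paper simply remarks that the twisted $G_L$-action on both sides is compatible (the sentence immediately preceding the corollary), then says ``In particular, Theorem~\ref{thm:katzisom} applied with $S=o_{\Cp}$ implies the following,'' with Lemma~\ref{suppsi} handling part~(2). You have supplied more detail on the equivariance check and on why $(o_{\Cp}\dcroc{Z}^{\psiqint})^{G_L,*} = \Lambda_L(\frX)^{\psiqint}$ than the paper does, but the argument is the same.
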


Since $L = \bQ_{p^2}$, the map $\tau$ is surjective. Let $\Gamma_L = \Gal(L(\cG[p^\infty])/L)$.

\begin{lemma}
\label{cogalinf}
The map $\cC^0_{\Gal}(o_L^\times,o_{\Cp}) \to o_\infty$ given by $f \mapsto f(1)$ is an isomorphism of $o_L$-modules.
\end{lemma}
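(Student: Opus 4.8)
The statement asserts that evaluation at $1$, namely $\mathrm{ev}_1 : \cC^0_{\Gal}(o_L^\times,o_{\Cp}) \to o_\infty$, $f \mapsto f(1)$, is an $o_L$-module isomorphism. The plan is to invoke the explicit description of $\Gal$-continuous functions on $o_L$ that was already set up in the excerpt, specialising it to functions on $o_L^\times$. Recall Lemma \ref{lem:explicitGalCont}: when $\tau$ is surjective (which holds here since $L = \bQ_{p^2}$, by Lemma \ref{lem:tau-sur}), the map $\eta : \cC \to o_L \oplus c_0(o_\infty)$, $\eta(f) = (f(0),(f(\pi^n)-f(0))_n)$, is an $o_L$-linear bijection, where $\cC = \cC^0_{\Gal}(o_L,o_{\Cp})$. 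The proof of that lemma, specialised to $o_L^\times$ in place of $o_L$ and dropping the ``$f(0)$'' component (since $0 \notin o_L^\times$ and there is no $\pi$-adic degeneration to worry about — $o_L^\times$ is already open and closed, so a $\Gal$-continuous function on it is locally constant in a suitable sense only up to the values it takes, not forced to be continuous towards $0$), will give exactly the claim; but I should argue directly rather than by quotation.

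First I would prove injectivity. Suppose $f \in \cC^0_{\Gal}(o_L^\times,o_{\Cp})$ satisfies $f(1)=0$. For any $a \in o_L^\times$, surjectivity of $\tau : G_L \to o_L^\times$ gives $\sigma \in G_L$ with $\tau(\sigma)=a$, and then the $\Gal$-continuity relation $\sigma(f(b)) = f(b\,\tau(\sigma))$ with $b=1$ yields $f(a) = f(\tau(\sigma)) = \sigma(f(1)) = 0$. Hence $f=0$. Second, surjectivity: given $x \in o_\infty$, define $f : o_L^\times \to o_{\Cp}$ by $f(a) := \sigma(x)$ whenever $\tau(\sigma)=a$. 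This is well-defined because if $\tau(\sigma)=\tau(\sigma')$ then $\sigma^{-1}\sigma' \in \ker\tau$, which fixes $o_\infty$ pointwise by Lemma \ref{lem:LOmegaDense} (as $o_\infty = o_{\Cp}^{\ker\tau}$), so $\sigma(x) = \sigma'(x)$. One checks $\Gal$-continuity of $f$: for $\rho \in G_L$ and $a = \tau(\sigma) \in o_L^\times$, $\rho(f(a)) = \rho\sigma(x) = f(\tau(\rho\sigma)) = f(\tau(\rho)\tau(\sigma)) = f(a\,\tau(\rho))$. It remains to verify that $f$ is genuinely continuous as a function $o_L^\times \to o_{\Cp}$: this follows because $\tau : G_L \to o_L^\times$ is a continuous surjection between profinite (resp.\ compact) groups, hence open, so a continuous section on small enough open subgroups exists, or more simply: the values $f(a)$ lie in $o_\infty$ by construction, and continuity of $\tau$ together with continuity of the $G_L$-action on $o_{\Cp}$ (restricted to the compact $o_L$-submodule $o_\infty$, where the action is through the finite-type quotient $\Gamma_L^{\LT}$-action in the limit) forces $f$ to be uniformly continuous. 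Finally $\mathrm{ev}_1(f) = f(1) = \sigma(x)$ for $\sigma \in \ker\tau$, i.e.\ $f(1) = x$. So $\mathrm{ev}_1$ is surjective, and $o_L$-linearity is immediate.

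The only genuinely delicate point is the continuity of the section $f$ constructed in the surjectivity step — one must be careful that ``$\sigma(x)$ depends continuously on $a = \tau(\sigma)$''. The clean way to handle this, which I would adopt, is to note that $x \in o_\infty = o_{\Cp}^{\ker\tau}$ means the stabiliser of $x$ in $G_L$ is open and contains $\ker\tau$, so $x$ is fixed by $\tau^{-1}(1+\pi^N o_L)$ for some $N$; then $f$ is constant on cosets of $1+\pi^N o_L$ in $o_L^\times$, hence locally constant, hence continuous. This sidesteps any topological subtlety and makes the argument elementary. Everything else is a routine unwinding of the definition of $\Gal$-continuity together with Lemma \ref{lem:LOmegaDense} and surjectivity of $\tau$.
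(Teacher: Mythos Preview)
Your approach is exactly the paper's: construct the inverse $x \mapsto f_x$ with $f_x(\tau(\sigma)) = \sigma(x)$, check well-definedness via $o_\infty = o_{\Cp}^{\ker\tau}$ (Lemma \ref{lem:LOmegaDense}), and use surjectivity of $\tau$ for injectivity of $\ev_1$. The paper's proof is two sentences and does not even address continuity, so your level of detail is already greater.

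However, your ``clean way'' to verify continuity is wrong. You claim that $x \in o_\infty = o_{\Cp}^{\ker\tau}$ forces the stabiliser of $x$ in $G_L$ to be \emph{open}; it does not. The stabiliser certainly contains $\ker\tau$, but $\ker\tau$ is closed of infinite index, not open, and for a generic $x \in o_\infty \setminus o_\tau$ (for instance $x = \Omega$ itself) the stabiliser is precisely $\ker\tau$. So $f$ is not locally constant in general. The correct argument is the one you gestured at first: the orbit map $G_L \to o_{\Cp}$, $\sigma \mapsto \sigma(x)$, is continuous (continuity of the Galois action on $\Cp$) and constant on $\ker\tau$-cosets; since $\tau$ is a continuous surjection between compact Hausdorff groups it is a topological quotient map, so the induced map $o_L^\times \cong G_L/\ker\tau \to o_{\Cp}$ is continuous. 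That is all you need.
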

\begin{proof}
This follows from the surjectivity of $\tau$. More precisely, if $x \in o_\infty$, let $f_x \in \cC^0_{\Gal}(o_L^\times,o_{\Cp})$ be given by $f_x(1) = x$ and $f_x(\tau(g)) = g(x)$. Every element of $\cC^0_{\Gal}(o_L^\times,o_{\Cp})$ is of this form. 
\end{proof}

Theorem \ref{thm:katzisom} applied with $S=o_L$ now gives us the following

\begin{theorem}
\label{dualoinfty}
The map $\cK^\ast$ gives rise to an $o_L$-linear isomorphism $o_\infty^* \simeq o_L \dcroc{Z}^{\psi_q=0}$.
\end{theorem}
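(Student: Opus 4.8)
The plan is to assemble the isomorphism from two ingredients that are already in place: the Katz isomorphism (Theorem~\ref{thm:katzisom}) applied with the $\pi$-adically complete $o_L$-algebra $S = o_L$, and the explicit identification of $\cC^0_{\Gal}(o_L^\times, o_{\Cp})$ with $o_\infty$ provided by Lemma~\ref{cogalinf}. Concretely, I would first invoke Theorem~\ref{thm:katzisom} with $S = o_L$: since $L = \bQ_{p^2}$, this gives that $\cK^\ast : \Hom_{o_L}(\cC^0_{\Gal}(o_L,o_{\Cp}), o_L) \to o_L\dcroc{Z}^{\psiqint}$ is an $o_L$-linear bijection. The point is then to cut down both sides to the ``$\psi_q = 0$'' part: on the source, one restricts to measures supported on $o_L^\times$, i.e.\ to $\Hom_{o_L}(\cC^0_{\Gal}(o_L^\times, o_{\Cp}), o_L)$, and on the target this corresponds exactly to $o_L\dcroc{Z}^{\psi_q = 0}$ by Lemma~\ref{suppsi} (the criterion that $\mu$ is supported in $o_L^\times$ iff $\psi_q(\cK^\ast(\mu)) = 0$). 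One should check that $\cK^\ast$ carries this subspace onto that subspace bijectively; this is immediate from Lemma~\ref{suppsi} together with the fact that $o_L\dcroc{Z}^{\psi_q=0} \subseteq o_L\dcroc{Z}^{\psiqint}$. Thus $\cK^\ast$ restricts to an isomorphism
\[ \cK^\ast : \Hom_{o_L}(\cC^0_{\Gal}(o_L^\times, o_{\Cp}), o_L) \xrightarrow{\ \sim\ } o_L\dcroc{Z}^{\psi_q = 0}. \]

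Next I would use Lemma~\ref{cogalinf}: the evaluation-at-$1$ map $\cC^0_{\Gal}(o_L^\times, o_{\Cp}) \to o_\infty$, $f \mapsto f(1)$, is an isomorphism of $o_L$-modules (this is where the surjectivity of $\tau$, valid since $d = 2$, is used). Applying the $o_L$-linear duality functor $\Hom_{o_L}(-, o_L)$ to this isomorphism yields an $o_L$-linear isomorphism $o_\infty^\ast = \Hom_{o_L}(o_\infty, o_L) \xrightarrow{\ \sim\ } \Hom_{o_L}(\cC^0_{\Gal}(o_L^\times, o_{\Cp}), o_L)$. Composing this with the restricted dual Katz map above gives the desired $o_L$-linear isomorphism $o_\infty^\ast \simeq o_L\dcroc{Z}^{\psi_q = 0}$. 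The composite is what the statement (and the surrounding text, via the map ``$\cK$'') calls ``$\cK^\ast$ giving rise to'' the isomorphism, so I would phrase the conclusion as: the dual of $f \mapsto f(1)$ followed by $\cK^\ast$ is the asserted isomorphism.

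There is essentially no hard analytic content here: the theorem is a formal consequence of two previously established results. The one point that requires a little care — and which I would single out as the only real step to get right — is the compatibility claim that $\cK^\ast$ sends $\Hom_{o_L}(\cC^0_{\Gal}(o_L^\times,o_{\Cp}),o_L)$ \emph{onto} $o_L\dcroc{Z}^{\psi_q=0}$, not merely into it. For the ``into'' direction one uses that $\psi_q \circ \cK^\ast = \cK^\ast \circ \psi_{\cC}^\ast$ (Corollary~\ref{cor:DualKatzPhiPsi}) together with $\psi_\cC$ killing functions supported on $o_L^\times$; for ``onto'' one takes $F \in o_L\dcroc{Z}^{\psi_q=0} \subseteq o_L\dcroc{Z}^{\psiqint}$, pulls it back to a Galois measure $\mu$ on $\cC^0_{\Gal}(o_L,o_{\Cp})$ via the Katz isomorphism, and checks using Lemma~\ref{suppsi} that $\psi_q(F) = 0$ forces $\mu$ to be supported on $o_L^\times$, hence $\mu$ lies in the subspace in question. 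Once this dictionary between ``$\psi_q = 0$'' and ``supported on $o_L^\times$'' is made precise, the rest is bookkeeping with duality functors, which commute with the relevant finite data since everything in sight is an $o_L$-module and $o_L$ is the (complete, discretely valued) coefficient ring.
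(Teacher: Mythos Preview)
Your proposal is correct and follows exactly the route the paper takes: the paper's one-line justification ``Theorem~\ref{thm:katzisom} applied with $S=o_L$ now gives us the following'' implicitly uses Lemma~\ref{suppsi} and Lemma~\ref{cogalinf} (stated immediately before) in precisely the way you describe. Your extra care in spelling out the ``onto'' direction via $o_L\dcroc{Z}^{\psi_q=0} \subseteq o_L\dcroc{Z}^{\psiqint}$ and pulling back through the Katz isomorphism is a welcome clarification of what the paper leaves tacit.
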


\begin{proposition}
\label{iwasprop}
The space $\Hom_{o_L}(\cC^0_{\Gal}(o_L^\times,o_{\Cp}),o_{\Cp})^{G_L,*}$ is naturally isomorphic to 
the space of $\Gamma_L$-equivariant $o_L$-linear maps $o_\infty \to o_\infty$.
\end{proposition}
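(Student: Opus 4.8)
The plan is to produce the isomorphism by combining Lemma \ref{cogalinf} with a standard ``semilinear descent'' identification. First I would unravel both sides. By Lemma \ref{cogalinf}, evaluation at $1$ gives an $o_L$-linear isomorphism $\mathrm{ev}_1 : \cC^0_{\Gal}(o_L^\times,o_{\Cp}) \xrightarrow{\sim} o_\infty$, and dualising (applying $\Hom_{o_L}(-,o_{\Cp})$) yields an isomorphism $\Hom_{o_L}(o_\infty,o_{\Cp}) \xrightarrow{\sim} \Hom_{o_L}(\cC^0_{\Gal}(o_L^\times,o_{\Cp}),o_{\Cp})$. I must check this is compatible with the relevant $G_L$-actions: on the right, the action is the twisted action $g^*(\mu)(f) = g(\mu(g^{-1}f))$ described in the text just before Corollary \ref{olkpz}, and on the left it should be the action $(g\cdot \ell)(x) = g(\ell(g^{-1}x))$ on $\Hom_{o_L}(o_\infty,o_{\Cp})$, where $G_L$ acts on $o_\infty$ through its quotient $\Gamma_L$ (recall $o_\infty = o_{\Cp}^{\ker\tau}$ by Lemma \ref{lem:LOmegaDense}, and $\ker\tau = \Gal(\overline{L}/L_\infty) $ corresponds via $\tau$ surjective to $\Gamma_L = \Gal(L(\cG[p^\infty])/L) \cong o_L^\times$). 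The compatibility of $\mathrm{ev}_1$ with these actions is a direct computation: for $g \in G_L$, $(g\cdot f)(\tau(g')) = $ etc., using Gal-continuity $g(f(a)) = f(\tau(g)a)$; this is the same style of bookkeeping already done in the proof of Lemma \ref{cogalinf}.

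Next I would pass to $G_L$-invariants. Taking $(-)^{G_L,*}$ on the right and $(-)^{G_L}$ on the left through the isomorphism above gives
\[ \Hom_{o_L}(\cC^0_{\Gal}(o_L^\times,o_{\Cp}),o_{\Cp})^{G_L,*} \;\cong\; \Hom_{o_L}(o_\infty,o_{\Cp})^{G_L}. \]
It remains to identify $\Hom_{o_L}(o_\infty,o_{\Cp})^{G_L}$ with the $o_L$-module of $\Gamma_L$-equivariant $o_L$-linear maps $o_\infty \to o_\infty$. The key point is the evident $o_L$-linear map
\[ \Hom_{o_L}^{\Gamma_L}(o_\infty,o_\infty) \longrightarrow \Hom_{o_L}(o_\infty,o_{\Cp})^{G_L}, \]
which is injective because $o_\infty \hookrightarrow o_{\Cp}$, and which I claim is surjective: given a $G_L$-invariant $\ell : o_\infty \to o_{\Cp}$, the invariance $g(\ell(g^{-1}x)) = \ell(x)$ for all $g$ means in particular that for $g \in \ker\tau$ (which fixes $o_\infty$ pointwise) we get $g(\ell(x)) = \ell(x)$, so $\ell(x) \in o_{\Cp}^{\ker\tau} = o_\infty$; hence $\ell$ factors through $o_\infty$, and the residual equivariance is exactly $\Gamma_L$-equivariance. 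This is the cleanest route and essentially the same argument that will reappear (for $K = L_\infty$ in place of $o_{\Cp}$) in Theorem \ref{introcontgalend}.

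The main obstacle, such as it is, is purely notational: keeping the two twisted actions straight and verifying that $\mathrm{ev}_1$ intertwines them with the correct normalisation (the twist by $\tau$ must match on the two sides, and one must be careful whether $g$ or $g^{-1}$ appears). There is no deep input here beyond Lemma \ref{cogalinf}, the surjectivity of $\tau$ (which holds since $L = \bQ_{p^2}$, by Lemma \ref{lem:tau-sur}), and the identification $o_{\Cp}^{\ker\tau} = o_\infty$ from Lemma \ref{lem:LOmegaDense}; the rest is a diagram chase. I would present it compactly, first stating the action-compatibility of $\mathrm{ev}_1$ as a displayed formula, then the two-line invariants argument, and finally the factoring-through-$o_\infty$ observation.
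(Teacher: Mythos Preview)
Your proposal is correct and follows essentially the same approach as the paper: both use Lemma \ref{cogalinf} to identify $\cC^0_{\Gal}(o_L^\times,o_{\Cp})$ with $o_\infty$ via $f \mapsto f(1)$, check compatibility with the $G_L$-actions, and observe that a $G_L$-invariant $o_{\Cp}$-valued map necessarily lands in $o_{\Cp}^{\ker\tau} = o_\infty$. The only cosmetic difference is that the paper writes out the two mutually inverse maps $\mu \mapsto T$ and $T \mapsto \mu$ explicitly, whereas you package the same computation as ``dualise, then take invariants''; your version is arguably cleaner, and in fact makes the point $T(x) \in o_\infty$ (which the paper leaves implicit) fully explicit.
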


\begin{proof}
If $x \in o_\infty$, let $f_x \in \cC^0_{\Gal}(o_L^\times,o_{\Cp})$ be as in the proof of Lemma \ref{cogalinf} above. If $\mu \in \Hom_{o_L}(\cC^0_{\Gal}(o_L^\times,o_{\Cp}),o_{\Cp})^{G_L,*}$, we define a map $T : o_\infty \to o_\infty$ by $T(x) = \mu(f_x)$. We have $f_{x+y} = f_x + f_y$ and $f_{ax} = a f_x$ if $a \in o_L$ so that $T$ is $o_L$-linear. In addition, $T$ is $\Gamma_L$-equivariant because $\mu$ is fixed under the $G_L,*$-action. Indeed, $g(T(x)) = g(\mu(f_x)) = \mu(g(f_x))$ and $g(f_x)(1) = g(x)$ so that $g(f_x) = f_{g(x)}$. Therefore, $g(T(x)) = T (g(x))$.

Conversely, a $\Gamma_L$-equivariant $o_L$-linear map $T : o_\infty \to o_\infty$ as above gives an element $\mu \in \Hom_{o_L}(\cC^0_{\Gal}(o_L^\times,o_{\Cp}),o_{\Cp})^{G_L,*}$ via $\mu(f_x)= T(x)$.
\end{proof}

Combining Corollary \ref{olkpz} and Proposition \ref{iwasprop}, we get the following.

\begin{theorem}
\label{contgalend}
We have $\End^{G_L}_{o_L}(o_\infty) \simeq \Lambda_L(\frX)^{\psi_q=0}$.
\end{theorem}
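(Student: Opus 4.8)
\textbf{Proof plan for Theorem \ref{contgalend}.}

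The statement $\End^{G_L}_{o_L}(o_\infty) \simeq \Lambda_L(\frX)^{\psi_q=0}$ is now essentially a formal consequence of the two preceding results, and the plan is simply to glue them together carefully. First I would recall that by Lemma \ref{cogalinf} the evaluation-at-$1$ map furnishes an isomorphism of $o_L$-modules $\cC^0_{\Gal}(o_L^\times,o_{\Cp}) \xrightarrow{\sim} o_\infty$, and that this isomorphism is $G_L$-equivariant once we equip $\cC^0_{\Gal}(o_L^\times,o_{\Cp})$ with the action $g(f) = (a \mapsto g(f(\tau(g)^{-1}a)))$ and $o_\infty$ with the restriction of the Galois action on $o_{\Cp}$ — this compatibility is exactly the identity $g(f_x) = f_{g(x)}$ verified inside the proof of Proposition \ref{iwasprop}. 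Dualizing, $\Hom_{o_L}(\cC^0_{\Gal}(o_L^\times,o_{\Cp}),o_{\Cp})$ becomes $\Hom_{o_L}(o_\infty,o_{\Cp})$ as a $G_L$-module (the $*$-twist on the source matching the natural action on the target), so passing to $G_L,*$-invariants gives
\[
\Hom_{o_L}(\cC^0_{\Gal}(o_L^\times,o_{\Cp}),o_{\Cp})^{G_L,*} \simeq \Hom_{o_L}(o_\infty,o_{\Cp})^{G_L}.
\]

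Next I would identify the right-hand side with $\End^{G_L}_{o_L}(o_\infty)$. The point is that a $G_L$-equivariant $o_L$-linear map $T: o_\infty \to o_{\Cp}$ automatically lands in $o_\infty$: for $x \in o_\infty$ we have $g(T(x)) = T(g(x)) = T(x)$ for all $g \in \ker\tau$, since $\ker\tau$ fixes $o_\infty$ pointwise by Lemma \ref{lem:LOmegaDense}; hence $T(x) \in o_{\Cp}^{\ker\tau} = o_\infty$. So $\Hom_{o_L}(o_\infty,o_{\Cp})^{G_L} = \End^{G_L}_{o_L}(o_\infty)$, where here $G_L$-equivariance of an endomorphism of $o_\infty$ means equivariance for the action of $\Gamma_L = G_L/\ker\tau$ (this is the same notion used in Proposition \ref{iwasprop}). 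Combining with the previous display,
\[
\Hom_{o_L}(\cC^0_{\Gal}(o_L^\times,o_{\Cp}),o_{\Cp})^{G_L,*} \simeq \End^{G_L}_{o_L}(o_\infty),
\]
which is precisely the content of Proposition \ref{iwasprop} (I would cite it directly rather than re-proving it, as it is already in the excerpt).

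Finally, I would invoke Corollary \ref{olkpz}(2), which states $\Hom_{o_L}(\cC^0_{\Gal}(o_L^\times,o_{\Cp}),o_{\Cp})^{G_L,*} = \Lambda_L(\frX)^{\psi_q=0}$; this is where the Katz isomorphism (Theorem \ref{thm:katzisom} with $S = o_{\Cp}$) together with Lemma \ref{suppsi} does the real work, relating the support-in-$o_L^\times$ condition on measures to the $\psi_q = 0$ condition on power series. Chaining the two isomorphisms gives $\End^{G_L}_{o_L}(o_\infty) \simeq \Lambda_L(\frX)^{\psi_q=0}$. I do not expect any genuine obstacle here: all the substance has been absorbed into Theorem \ref{thm:katzisom}, Corollary \ref{olkpz}, and Proposition \ref{iwasprop}. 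The only point requiring a little care is the bookkeeping of which twist of the Galois action appears where — in particular checking that the $*$-twist on $\Hom_{o_L}(\cC^0_{\Gal}(o_L^\times,o_{\Cp}),o_{\Cp})$ described just before Corollary \ref{olkpz} transports, under evaluation at $1$, to the \emph{untwisted} Galois action on $\Hom_{o_L}(o_\infty,o_{\Cp})$, which is exactly the compatibility already spelled out in the proof of Proposition \ref{iwasprop}.
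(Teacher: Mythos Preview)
Your proposal is correct and follows exactly the paper's approach: the theorem is stated as an immediate consequence of combining Corollary \ref{olkpz}(2) with Proposition \ref{iwasprop}, and that is precisely what you do. Your additional remark that a $G_L$-equivariant $o_L$-linear map $o_\infty \to o_{\Cp}$ automatically lands in $o_\infty = o_{\Cp}^{\ker\tau}$ is a helpful clarification making explicit why $\End^{G_L}_{o_L}(o_\infty)$ coincides with the $\Gamma_L$-equivariant endomorphisms of Proposition \ref{iwasprop}.
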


\begin{corollary}
\label{iwascrit}
We have $\Lambda_L(\frX) = o_L\dcroc{o_L}$ if and only if every $\Gamma_L$-equivariant $o_L$-linear map $o_\infty \to o_\infty$ comes from an element of $o_L\dcroc{\Gamma_L}$.
\end{corollary}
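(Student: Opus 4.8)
The plan is to derive Corollary \ref{iwascrit} from Theorem \ref{contgalend} together with Lemma \ref{psizall} (the reduction of the main question to its $\psi_q=0$ version), so the proof is essentially a matter of identifying the Iwasawa algebra $o_L\dcroc{\Gamma_L}$ inside $\End^{G_L}_{o_L}(o_\infty)$ with $o_L\dcroc{o_L^\times}$ inside $\Lambda_L(\frX)^{\psi_q=0}$ under the isomorphism of Theorem \ref{contgalend}. First I would recall that, since $L=\bQ_{p^2}$ and $\pi=p$, the character $\tau$ is surjective (Lemma \ref{lem:tau-sur}), so $\tau$ induces a topological isomorphism $\Gamma_L \cong o_L^\times$; this lets us write $o_L\dcroc{\Gamma_L}\cong o_L\dcroc{o_L^\times}$. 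I would then trace through the chain of identifications in Corollary \ref{olkpz}(2), Proposition \ref{iwasprop}, and Theorem \ref{contgalend} to check that the action of the element $\delta_a\in o_L\dcroc{o_L^\times}\subseteq \Lambda_L(\frX)^{\psi_q=0}$ on $o_\infty$, viewed as an element of $\End^{G_L}_{o_L}(o_\infty)$, is exactly the map $x\mapsto g(x)$ for any $g\in G_L$ with $\tau(g)=a$ — this is the compatibility already stated in the Introduction (the paragraph preceding Theorem \ref{introcritcge}), and it shows that the submodule $o_L\dcroc{o_L^\times}\subseteq \Lambda_L(\frX)^{\psi_q=0}$ corresponds precisely to the image of $o_L\dcroc{\Gamma_L}$ in $\End^{G_L}_{o_L}(o_\infty)$.

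Granting that dictionary, the corollary follows formally: by Lemma \ref{psizall}, $\Lambda_L(\frX)=o_L\dcroc{o_L}$ if and only if $\Lambda_L(\frX)^{\psi_q=0}=o_L\dcroc{o_L^\times}$; and by Theorem \ref{contgalend} the left-hand equality of the $\psi_q=0$ parts translates into the statement that $\End^{G_L}_{o_L}(o_\infty)$ equals the image of $o_L\dcroc{\Gamma_L}$, i.e. that every $\Gamma_L$-equivariant $o_L$-linear endomorphism of $o_\infty$ comes from $o_L\dcroc{\Gamma_L}$. One small point to dispatch along the way is that the composite isomorphism $o_L\dcroc{o_L^\times}\hookrightarrow\Lambda_L(\frX)^{\psi_q=0}\xrightarrow{\sim}\End^{G_L}_{o_L}(o_\infty)$ does land in (and surjects onto) the image of the natural map $o_L\dcroc{\Gamma_L}\to\End^{G_L}_{o_L}(o_\infty)$; this is immediate from the $\delta_a\mapsto(x\mapsto g(x))$ computation since the $\delta_a$ (for $a\in o_L^\times$) topologically generate $o_L\dcroc{o_L^\times}$ and the $g$ with $\tau(g)=a$ topologically generate the image of $o_L\dcroc{\Gamma_L}$, both maps being continuous for the relevant topologies.

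The only genuinely non-formal ingredient is the identification of the $\delta_a$-action, which ultimately rests on unwinding the Amice–Katz transform and the twisted Galois action: $\delta_a$ corresponds to $\Delta_a=\exp(a\Omega\log_{\LT}(Z))$, and for $g\in G_L$ one has $\sigma(\Delta_a)=\Delta_{a\tau(\sigma)}$ (as used in the proof of Lemma \ref{lem:KatzGalCts}), which is exactly the statement that multiplication by $\delta_a$ on $o_\infty\dcroc{Z}^{G_L,*}$ intertwines, via $\cK^\ast$, with the Galois element $g$ acting on the dual side. I expect this bookkeeping — making sure every arrow in Corollary \ref{olkpz}, Proposition \ref{iwasprop}, Lemma \ref{cogalinf} and Lemma \ref{suppsi} is oriented correctly and that the Galois and $o_L$-module structures match up — to be the main (though routine) obstacle; there is no deep new input beyond what Theorem \ref{contgalend} and Lemma \ref{psizall} already provide.

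\begin{proof}
Since $L = \bQ_{p^2}$, the character $\tau : G_L \to o_L^\times$ is surjective by Lemma \ref{lem:tau-sur}, and it identifies $\Gamma_L = \Gal(L(\cG[p^\infty])/L)$ topologically with $o_L^\times$; hence $o_L\dcroc{\Gamma_L} \cong o_L\dcroc{o_L^\times}$, and the natural map $o_L\dcroc{\Gamma_L} \to \End^{G_L}_{o_L}(o_\infty)$ becomes the map $o_L\dcroc{o_L^\times} \to \End^{G_L}_{o_L}(o_\infty)$ sending $\delta_a$ (for $a \in o_L^\times$) to the endomorphism $x \mapsto g(x)$ of $o_\infty$, where $g \in G_L$ is any element with $\tau(g) = a$.

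By Theorem \ref{contgalend} we have an isomorphism of $o_L$-modules $\End^{G_L}_{o_L}(o_\infty) \simeq \Lambda_L(\frX)^{\psi_q=0}$; unwinding the identifications in Corollary \ref{olkpz}(2), Proposition \ref{iwasprop} and Lemma \ref{cogalinf}, the endomorphism $x \mapsto g(x)$ corresponds to the element $\mu(\delta_a) = \Delta_a \in o_\infty\dcroc{Z}^{G_L,*}$, using that $\sigma(\Delta_a) = \Delta_{a\tau(\sigma)}$ for all $\sigma \in G_L$ (cf. the proof of Lemma \ref{lem:KatzGalCts}) and that $\psi_q(\Delta_a) = 0$ for $a \in o_L^\times$ by Lemma \ref{lem:PhiPsiDelta}(2). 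Therefore the isomorphism of Theorem \ref{contgalend} carries the image of $o_L\dcroc{\Gamma_L}$ in $\End^{G_L}_{o_L}(o_\infty)$ onto the subring $o_L\dcroc{o_L^\times} \subseteq \Lambda_L(\frX)^{\psi_q=0}$: both are generated, in a topology compatible with the isomorphism, by the images of the $\delta_a$ with $a \in o_L^\times$.

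Consequently, every $\Gamma_L$-equivariant $o_L$-linear map $o_\infty \to o_\infty$ comes from $o_L\dcroc{\Gamma_L}$ if and only if $\End^{G_L}_{o_L}(o_\infty)$ equals the image of $o_L\dcroc{\Gamma_L}$, which by Theorem \ref{contgalend} holds if and only if $\Lambda_L(\frX)^{\psi_q=0} = o_L\dcroc{o_L^\times}$. By Lemma \ref{psizall}, this last equality is equivalent to $\Lambda_L(\frX) = o_L\dcroc{o_L}$. This proves the corollary.
\end{proof}
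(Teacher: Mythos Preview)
Your proof is correct and follows essentially the same approach as the paper: both reduce to $\Lambda_L(\frX)^{\psi_q=0}=o_L\dcroc{o_L^\times}$ via Lemma~\ref{psizall}, invoke the isomorphism $\End^{G_L}_{o_L}(o_\infty)\simeq\Lambda_L(\frX)^{\psi_q=0}$ (you cite Theorem~\ref{contgalend}, the paper unwinds Corollary~\ref{olkpz} and Proposition~\ref{iwasprop} directly), and then verify that under this isomorphism the action of $g\in\Gamma_L$ corresponds to $\delta_{\tau(g)}$. The only cosmetic point is that Lemma~\ref{psizall} as stated gives only one implication; the converse ($\Lambda_L(\frX)=o_L\dcroc{o_L}\Rightarrow\Lambda_L(\frX)^{\psi_q=0}=o_L\dcroc{o_L^\times}$) is immediate from $o_L\dcroc{o_L}^{\psi_q=0}=o_L\dcroc{o_L^\times}$, which both you and the paper use without comment.
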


\begin{proof}
By Lemma \ref{psizall} below, we have $\Lambda_L(\frX) = o_L\dcroc{o_L}$ if and only if $\Lambda_L(\frX)^{\psi=0} = \Lambda(o_L^\times)$. If $\mu \in \Lambda_L(\frX)^{\psi=0}$, then it corresponds to an element of $\Hom_{o_L}(\cC^0_{\Gal}(o_L^\times,o_{\Cp}),o_{\Cp})^{G_L,*}$ by Corollary \ref{olkpz}. By Proposition \ref{iwasprop}, the element $\mu \in \Lambda_L(\frX)^{\psi=0}$ comes from an element $\nu \in o_L\dcroc{\Gamma_L}$. The element $\mu$ then corresponds to the image of $\nu$ in $\Lambda(o_L^\times)$ via $\tau$. Indeed, if $g \in \Gamma_L$ and $T$ is given by $x \mapsto g(x)$, then it corresponds to $\mu : f_x \mapsto g(x)$ and $g(x) = f_x(\tau(g))$ so that $\mu = \delta_{\tau(g)}$.
\end{proof}

Using Corollary \ref{iwascrit}, we get the following

\begin{theorem}
\label{critcge}
We have $\Lambda_L(\frX) = o_L \dcroc{o_L}$ if and only if every continuous $L$-linear and $G_L$-equivariant map $f : L_\infty \to L_\infty$ comes from the Iwasawa algebra $L \otimes_{o_L} o_L \dcroc{\Gamma_L}$.
\end{theorem}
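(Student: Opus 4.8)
The plan is to deduce Theorem \ref{critcge} from Corollary \ref{iwascrit} by passing between the integral and rational settings, i.e.\ between $\Gamma_L$-equivariant $o_L$-linear maps $o_\infty \to o_\infty$ and continuous $G_L$-equivariant $L$-linear maps $L_\infty \to L_\infty$. First I would observe that any continuous $G_L$-equivariant $L$-linear map $f : L_\infty \to L_\infty$ factors through its effect on $o_\infty$: after scaling $f$ by a power of $p$ we may assume $f(o_\infty) \subseteq o_\infty$, and $G_L$-equivariance of $f$ on $L_\infty$ is the same as $\Gamma_L$-equivariance of $f|_{o_\infty}$ since $\ker \tau = \Gal(\Cp/L_\infty)$ acts trivially on $o_\infty$ by Lemma \ref{lem:LOmegaDense} and $\Gamma_L = \Gal(L_\infty/L) = G_L/\ker\tau$ acts through the twisted action $\tau$. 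Conversely, a $\Gamma_L$-equivariant $o_L$-linear map $o_\infty \to o_\infty$ extends uniquely to a continuous $L$-linear $G_L$-equivariant map $L_\infty \to L_\infty$ because $o_\infty$ is an $o_L$-lattice in $L_\infty$ and $o_\infty \otimes_{o_L} L = L_\infty$. So the two spaces $\End^{G_L}_L(L_\infty)$ and $L \otimes_{o_L}\End^{\Gamma_L}_{o_L}(o_\infty)$ are naturally identified.

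Next I would do the same bookkeeping for the Iwasawa algebras: the natural map $L\otimes_{o_L} o_L\dcroc{\Gamma_L} \to \End_L(L_\infty)$ has image the $L$-span of the image of $o_L\dcroc{\Gamma_L} \to \End_{o_L}(o_\infty)$, simply because $o_L\dcroc{\Gamma_L}$ acts by $o_L$-linear endomorphisms preserving the lattice $o_\infty$. Hence ``every continuous $L$-linear $G_L$-equivariant map $L_\infty \to L_\infty$ comes from $L\otimes_{o_L} o_L\dcroc{\Gamma_L}$'' is equivalent to ``$L \otimes_{o_L} \bigl(\text{image of } o_L\dcroc{\Gamma_L}\bigr) = L \otimes_{o_L} \End^{\Gamma_L}_{o_L}(o_\infty)$'', which, since $\End^{\Gamma_L}_{o_L}(o_\infty)/\bigl(\text{image of }o_L\dcroc{\Gamma_L}\bigr)$ is a finitely generated — in fact, one should check, $p$-torsionfree or at worst bounded-torsion — $o_L$-module sitting inside the target, I would like to upgrade to the integral statement that the image of $o_L\dcroc{\Gamma_L}$ \emph{equals} $\End^{\Gamma_L}_{o_L}(o_\infty)$. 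This is exactly the hypothesis of Corollary \ref{iwascrit}, and Corollary \ref{iwascrit} says it holds iff $\Lambda_L(\frX) = o_L\dcroc{o_L}$. Combining, $\Lambda_L(\frX) = o_L\dcroc{o_L}$ implies the statement of Theorem \ref{critcge}.

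For the reverse implication — the statement of Theorem \ref{critcge} implies $\Lambda_L(\frX) = o_L\dcroc{o_L}$ — I would run the argument in reverse, but here care is needed: a priori the rational statement is weaker than the integral one, so I need to recover integrality. The cleanest route is to use that $\Lambda_L(\frX)$ is $p$-adically separated and complete (Lemma \ref{padic-complete}) and that $\Lambda_L(\frX)/o_L\dcroc{o_L}$ is $o_L$-torsionfree (the Corollary after Corollary \ref{1stNormCor}): if $f \in \Lambda_L(\frX)^{\psi_q=0}$ corresponds via Theorem \ref{contgalend} to a $\Gamma_L$-equivariant endomorphism $T$ of $o_\infty$, then by hypothesis $L \otimes T$ comes from $L\otimes_{o_L} o_L\dcroc{\Gamma_L}$, i.e.\ $p^N T$ lies in the image of $o_L\dcroc{\Gamma_L}$ for some $N$; translating back through Corollary \ref{olkpz}(2) and the identification $\Lambda(o_L^\times) = o_L\dcroc{o_L}^{\psi_q=0}$, this says $p^N f \in o_L\dcroc{o_L}$, and then torsionfreeness of $\Lambda_L(\frX)/o_L\dcroc{o_L}$ forces $f \in o_L\dcroc{o_L}$. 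Hence $\Lambda_L(\frX)^{\psi_q=0} = o_L\dcroc{o_L}^{\psi_q=0} = \Lambda(o_L^\times)$, and Lemma \ref{psizall} gives $\Lambda_L(\frX) = o_L\dcroc{o_L}$.

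The main obstacle I expect is precisely this passage from the rational to the integral statement in the reverse direction: one must be sure that ``comes from $L\otimes_{o_L} o_L\dcroc{\Gamma_L}$'' for the $L$-linear map, combined with the lattice structure, genuinely forces the integral endomorphism to be a scalar multiple of something in the image of $o_L\dcroc{\Gamma_L}$ — this is where the torsionfreeness of $\Lambda_L(\frX)/o_L\dcroc{o_L}$ is doing the real work, and I would want to state that reduction carefully rather than wave at it. The forward direction and all the identifications of $\Hom$/$\End$ spaces are essentially formal, given Theorem \ref{contgalend}, Corollary \ref{iwascrit}, Lemma \ref{lem:LOmegaDense} and Lemma \ref{psizall}.
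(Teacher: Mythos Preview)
Your proposal is correct and follows essentially the same route as the paper: reduce to Corollary \ref{iwascrit} via the identification of continuous $G_L$-equivariant $L$-linear endomorphisms of $L_\infty$ with $L \otimes_{o_L} \End^{\Gamma_L}_{o_L}(o_\infty)$, and handle the passage from the rational to the integral statement using torsionfreeness of $\Lambda_L(\frX)/o_L\dcroc{o_L}$ --- which is exactly Corollary \ref{tensint}, the one ingredient the paper cites in its (one-line) proof. Your third paragraph is the clean way to run the reverse implication; the tentative ``upgrade'' in your second paragraph is unnecessary for the forward direction (integral trivially implies rational) and you rightly replace it by the torsionfreeness argument for the reverse.
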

\begin{proof}Indeed, by Corollary \ref{tensint}, $\Lambda_L(\frX) \cap (L \otimes_{o_L} o_L \dcroc{o_L}) = o_L \dcroc{o_L}$.
\end{proof}

\begin{lemma}
\label{psizall}
If $\Lambda_L(\frX)^{\psi=0} = \Lambda(o_L^\times)$, then $\Lambda_L(\frX) = o_L\dcroc{o_L}$. 
\end{lemma}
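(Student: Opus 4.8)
The plan is to reduce to Theorem~\ref{intropsicrit}: it is enough to show that the hypothesis $\Lambda_L(\frX)^{\psi_q=0}=o_L\dcroc{o_L^\times}$ forces $\psi_q(\Lambda_L(\frX))\subseteq\Lambda_L(\frX)$. (Equivalently, and this is the form I would actually verify, using Proposition~\ref{critolpsi}, that every $f\in\Lambda_L(\frX)$ satisfies $f(\mathbf{1}_{a+\pi^n o_L})\in o_L$ for all $a\in o_L$ and $n\ge 0$; the inductive step along $n$ uses exactly the same splitting below.) First I would record the relevant splitting: for $f\in\Lambda_L(\frX)$ put $g:=f-\varphi(\psi_q(f))$. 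Since $\psi_q\circ\varphi=\mathrm{id}$ (Corollary~\ref{cor:FundPsiPhi}) one has $\psi_q(g)=0$, and $g\in\cO_L(\frX)$ because $\varphi$ and $\psi_q$ preserve $\cO_L(\frX)$. The only obstruction to $g$ — equivalently $\psi_q(f)$ — lying in $\Lambda_L(\frX)$ is a bounded power of $\pi$ in the denominator: because the different of $o_{\Cp}\dcroc{Z}$ over $\varphi(o_{\Cp}\dcroc{Z})$ is generated by $[\pi]'(Z)\in\pi\,o_{\Cp}\dcroc{Z}$, the normalised operator $\tilde\psi:=\pi^{-1}\psi_{\col}=(q/\pi)\psi_q$ is \emph{integral}, preserves $\Lambda_L(\frX)$ and $o_L\dcroc{o_L}$, satisfies $\tilde\psi\circ\varphi=(q/\pi)\cdot\mathrm{id}$, and has $\ker(\tilde\psi|_{\Lambda_L(\frX)})=\Lambda_L(\frX)^{\psi_q=0}$. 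So the hypothesis says $\ker(\tilde\psi|_{\Lambda_L(\frX)})=o_L\dcroc{o_L^\times}\subseteq o_L\dcroc{o_L}$.

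Applying this to $f$ gives $(q/\pi)f-\varphi(\tilde\psi f)\in o_L\dcroc{o_L^\times}$, and iterating (using $\tilde\psi$-invariance of $o_L\dcroc{o_L}$, $\varphi$-invariance of $o_L\dcroc{o_L}$, and the projection formula, Lemma~\ref{lem:projection-formula}) one obtains, for every $N\ge 1$, the inclusion $(q/\pi)^N f\in o_L\dcroc{o_L}+\varphi^N(\Lambda_L(\frX))$. I would then pass to $Q:=\Lambda_L(\frX)/o_L\dcroc{o_L}$. By the corollary following Corollary~\ref{1stNormCor} this module is $o_L$-torsion free; by Corollary~\ref{1stNormCor} the subring $o_L\dcroc{o_L}$ is $\pi$-adically closed in $\Lambda_L(\frX)$, so $Q$ is $\pi$-adically separated; and by Proposition~\ref{weak-compact} the space $\Lambda_L(\frX)$ is weakly compact with $o_L\dcroc{o_L}$ weakly closed in it. Using Lemma~\ref{lem:phizeroseq} — so that $\varphi^N$ carries $\Lambda_L(\frX)$ into arbitrarily small weak neighbourhoods of its subring of constants $o_L$ — together with weak closedness of $o_L\dcroc{o_L}$, one checks that $\bigcap_N\bigl(\varphi^N(\Lambda_L(\frX))+o_L\dcroc{o_L}\bigr)=o_L\dcroc{o_L}$, i.e. $\bigcap_N\overline{\varphi^N(\Lambda_L(\frX))}=0$ in $Q$; with the displayed inclusion this shows $\bigcap_N(q/\pi)^NQ=0$.

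The remaining, and genuinely hard, step is to promote this into $Q=0$. The induced map $\bar\varphi$ on $Q$ is injective (since $\tilde\psi\bar\varphi$ is multiplication by the nonzero scalar $q/\pi$); what one really needs is that $\bar\varphi$ is in fact \emph{surjective}, i.e. $\Lambda_L(\frX)=\varphi(\Lambda_L(\frX))+o_L\dcroc{o_L}$ — for then $Q=\bar\varphi^N(Q)$ for all $N$, whence $Q=\bigcap_N\overline{\varphi^N(\Lambda_L(\frX))}=0$ and $\Lambda_L(\frX)=o_L\dcroc{o_L}$ (so that by Theorem~\ref{intropsicrit} also $\psi_q(\Lambda_L(\frX))\subseteq\Lambda_L(\frX)$, completing the reduction). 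This surjectivity — equivalently, showing that $\psi_q$ introduces \emph{no} denominator when applied to a twisted-Galois-invariant power series — is exactly the place where the equality (not merely the automatic inclusion) $\Lambda_L(\frX)^{\psi_q=0}=o_L\dcroc{o_L^\times}$ has to be used in an essential, non-formal way, and it is the main obstacle; everything else is bookkeeping with $\varphi$, $\psi_q$, the projection formula, and the two topologies on $\Lambda_L(\frX)$. I would also keep in reserve the alternative of running the whole argument at the level of measures via Proposition~\ref{critolpsi} and Corollary~\ref{ololpsi}, applied to the sub-$o_L\dcroc{o_L}$-algebra of $\Lambda_L(\frX)$ one produces from the splitting, where the same bounded-denominator point is the crux.
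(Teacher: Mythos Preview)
Your proposal is not a complete proof: you explicitly identify the crucial step --- surjectivity of $\bar\varphi$ on $Q=\Lambda_L(\frX)/o_L\dcroc{o_L}$, equivalently that $\psi_q$ introduces no denominator on $\Lambda_L(\frX)$ --- as ``the main obstacle'' and leave it unresolved. Everything up to that point is correct bookkeeping, but the argument stops precisely where the content lies. The difficulty is structural: you try to produce an element of $\Lambda_L(\frX)^{\psi_q=0}$ from $f$ via the splitting $f-\varphi(\psi_q f)$, but this already requires $\psi_q f$ to be integral, which is circular; the $\tilde\psi$-iteration only trades this for an infinite intersection that you cannot collapse without the same missing input.

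The paper's proof avoids the circularity entirely with a one-line trick that you overlooked: rather than apply $\psi_q$ to anything, manufacture an element of $\Lambda_L(\frX)^{\psi_q=0}$ using only $\varphi$ and multiplication. Concretely, for $f\in\Lambda_L(\frX)$ the product $\delta_1\cdot\varphi(f)$ lies in $\Lambda_L(\frX)$ (both factors do, and it is a ring), and by the projection formula together with $\psi_q(\delta_1)=0$ (Lemma~\ref{lem:PhiPsiDelta}(2)) it is killed by $\psi_q$. The hypothesis then gives $\delta_1\cdot\varphi(f)\in o_L\dcroc{o_L^\times}\subset o_L\dcroc{o_L}$; since $\delta_1$ is a unit in $o_L\dcroc{o_L}$ (inverse $\delta_{-1}$), one gets $\varphi(f)\in o_L\dcroc{o_L}$, and finally $f=\psi_q\varphi(f)\in o_L\dcroc{o_L}$ because $\psi_q$ preserves $o_L\dcroc{o_L}$. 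The whole point is that multiplying $\varphi(f)$ by the unit $\delta_1$ lands you in the $\psi_q=0$ part \emph{for free}, with no integrality issue to resolve.
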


\begin{proof}
If $f \in \Lambda_L(\frX)$, then $\delta_1 \cdot \varphi(f) \in \Lambda_L(\frX)^{\psi=0}$. So $\varphi(f) \in o_L\dcroc{o_L}$ and $f=\psi_q\varphi(f) \in o_L\dcroc{o_L}$. 
\end{proof}

The following is pretty much in Fourquaux's PhD; it implies that there are no Tate trace maps $L_\infty \to L$ or $L_\infty \to L_n$ (recall that $L_\infty$ is the completion of $L(\cG[p^\infty])$).

\begin{proposition}
\label{fouphd}
Let $f : L_\infty \to L_\infty$ be a continuous, $\Gamma_L$-equivariant and $L$-linear map. If $f(L_\infty)$ is included in a finite field extension of $L$, then $f(1)=0$. 
\end{proposition}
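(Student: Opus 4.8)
The plan is to exploit the $\Gamma_L$-equivariance together with the structure of $L_\infty$ as a representation of $\Gamma_L$. Write $f(1) = x \in L_\infty$; by equivariance and $L$-linearity, $f$ is determined by $x$ on the dense subspace spanned by $\Gamma_L$-translates of $1$, and $f(g(1)) = g(x)$ for all $g \in \Gamma_L$. The hypothesis that $f(L_\infty)$ lies in a finite extension $M/L$ means in particular that $g(x) \in M$ for all $g \in \Gamma_L$, i.e. the $\Gamma_L$-orbit of $x$ spans a finite-dimensional $L$-subspace of $M$. Hence $x$ lies in a finite-dimensional $\Gamma_L$-stable $L$-subspace $V$ of $L_\infty$. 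First I would argue that such a $V$ must be contained in $\overline{L} \cap L_\infty = L_\tau$: a finite-dimensional $L$-subspace stable under the (dense image) action of $\Gamma_L$ on $L_\infty$ consists of vectors whose orbits are finite, hence (being continuous and with discrete, finite orbit) each such vector is fixed by an open subgroup of $\Gamma_L$, so lies in some $L_n = L(\cG[p^n])$; thus $V \subseteq L_n$ for some $n$, and $x \in L_n$.

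Next, the key point is to pin down which elements $x \in L_n$ can occur as $f(1)$. The ``continuity'' of $f$ forces a constraint coming from the fact that $1 \in L_\infty$ is a limit of elements on which $f$ is computed via the orbit; concretely, I would use that $f$, being $\Gamma_L$-equivariant, $L$-linear and continuous, corresponds under Theorem~\ref{contgalend} (more precisely via $\End^{G_L}_{o_L}(o_\infty) \simeq \Lambda_L(\frX)^{\psi_q=0}$, after rescaling to land in $o_\infty$) to an element of $\Lambda_L(\frX)^{\psi_q = 0}$, hence via the Katz isomorphism (Theorem~\ref{thm:katzisom}, Corollary~\ref{olkpz}(2)) to a $\Gal$-continuous measure $\mu$ on $o_L^\times$ with values in $o_{\Cp}$, fixed by the twisted $G_L$-action. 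Under the identification $\cC^0_{\Gal}(o_L^\times, o_{\Cp}) \simeq o_\infty$ of Lemma~\ref{cogalinf}, one has $f(x) = \mu(f_x)$; in particular $f(1) = \mu(f_1)$ and $f_1$ is the function $\tau(g) \mapsto g(\Omega^0) $... more usefully, $f(L_\infty) \subseteq M$ says $\mu(\cC^0_{\Gal}(o_L^\times, o_{\Cp})) \subseteq M$.

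The main obstacle — and the heart of the argument — is then to show $\mu = 0$ when its image is finite-dimensional over $L$. For this I would adapt the reasoning behind Proposition~\ref{foulocalg} referenced in the introduction: a nonzero such $f$ has image containing a basis of $L_n[\log\Omega]$ for some $n$, hence its image is \emph{not} finite-dimensional (indeed contains the transcendental element $\log\Omega$ up to the $L_n$-span), contradicting $f(L_\infty) \subseteq M$. So the only possibility is $f = 0$, whence $f(1) = 0$. If one prefers to avoid invoking the full strength of Proposition~\ref{foulocalg}, an alternative route is: the Newton-polygon estimates of Theorem~\ref{npvert}/Corollary~\ref{vpkpk} show that the $P_n(\Omega)$ generate an $o_L$-submodule of $o_{\Cp}$ whose image in $o_{\Cp}/o_\tau$ is ``spread out'' in valuation, so a measure $\mu$ with $\mu(P_n) = \langle \cK^\ast(\mu), u_n\rangle$ all lying in a fixed finite extension $M$ forces the corresponding power series $\cK^\ast(\mu) \in o_M\dcroc{Z}$, and then $\psi_q$-invariance plus $\mu$ being supported on $o_L^\times$ (so $\cK^\ast(\mu) \in o_M\dcroc{Z}^{\psi_q=0}$) together with the fact that $L_\infty/M$ is infinite forces $\cK^\ast(\mu)$ to be constant, hence zero since it has no constant term after truncation; this gives $\mu = 0$ and $f(1) = 0$. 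Either way the crux is converting ``finite image'' into a finiteness statement about a $\psi_q=0$ power series over $o_{\Cp}$ and deriving a contradiction with the infinitude of the Lubin–Tate tower.
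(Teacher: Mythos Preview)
Your approach has genuine gaps and misses a very short direct argument.

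First, a small error: you claim that a finite-dimensional $\Gamma_L$-stable $L$-subspace $V \subseteq L_\infty$ consists of vectors with finite $\Gamma_L$-orbit. This is false: the element $\log\Omega \in L_\infty$ satisfies $g(\log\Omega) = \log\Omega + \log\tau(g)$, so $L \oplus L\log\Omega$ is a two-dimensional $\Gamma_L$-stable subspace in which $\log\Omega$ has infinite orbit. (Your conclusion $x \in L_n$ is still salvageable directly from $x \in M \cap L_\infty$, but the route you chose is wrong.)

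More seriously, neither of your two proposed endgames is a proof. Invoking Proposition~\ref{foulocalg} is a forward reference to a strictly stronger statement that appears later in the paper; you have not supplied its argument, so this is circular as written. Your ``alternative route'' through the Katz isomorphism is only a sketch: you assert that $\cK^\ast(\mu) \in o_M\dcroc{Z}^{\psi_q=0}$ together with $[L_\infty:M]=\infty$ forces $\cK^\ast(\mu)$ to vanish, but you give no mechanism for this. One can in fact push such an argument through via Lemma~\ref{residue} and a $\pi$-adic d\'evissage, but this takes real work and you have not done it.

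The paper's proof is three lines and uses none of this machinery. The key object is $\log\Omega \in L_\infty$, which satisfies $(g-1)\log\Omega = \log\tau(g) \in L$ for every $g \in \Gamma_L$. Applying $f$ and using $L$-linearity plus $\Gamma_L$-equivariance gives
\[
(g-1)f(\log\Omega) = f((g-1)\log\Omega) = f(\log\tau(g)) = \log\tau(g)\cdot f(1).
\]
If $f(1)\neq 0$, the right-hand side is nonzero for $g$ in a neighbourhood of $1$ (minus $1$ itself), so $f(\log\Omega)$ is not fixed by any open subgroup of $\Gamma_L$, hence not in any finite extension of $L$, contradicting the hypothesis. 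You never need to show $f=0$; the proposition only asks for $f(1)=0$, and the single element $\log\Omega$ detects this.
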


\begin{proof}
We have $\log \Omega \in L_\infty$ and $(g-1) \log \Omega = \log \tau(g)$ if $g \in \Gamma_L$. Hence 
\[ (g-1)f(\log \Omega) = f((g-1) \log \Omega) = f(\log \tau(g)) = \log \tau(g) \cdot f(1). \] Therefore if $f(1) \neq 0$, then $f(\log \Omega)$ is a period for $\log \tau$, and in particular does not belong to a finite extension of $L$.
\end{proof}

Proposition \ref{fouphd} can be strengthened. Almost the same proof gives us the following.

\begin{proposition}
\label{fouplus}
Let $f : L_\infty \to L_\infty$ be a continuous, $\Gamma_L$-equivariant and $L$-linear map. 
If $f \neq 0$, then there exists $a_1 \neq 0$, $a_0 \in L(\cG[p^\infty])$ such that $f(L_\infty)$ contains $a_1 \log \Omega + a_0$.
\end{proposition}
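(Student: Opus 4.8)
The plan is to imitate the proof of Proposition~\ref{fouphd}, but carried out over a finite layer $L_n := L(\cG[p^n])$ of the tower rather than over $L$ itself; the argument will not care whether $f(1)$ vanishes, so it simultaneously subsumes Proposition~\ref{fouphd}. Write $\Gamma_{L_n} := \Gal(L_\infty/L_n)$. Since $L_\infty$ is the completion of $L(\cG[p^\infty]) = \bigcup_n L_n$, the subfield $L(\cG[p^\infty])$ is dense in $L_\infty$; as $f \neq 0$ is continuous, the nonempty open set $\{x \in L_\infty : f(x) \neq 0\}$ meets $L(\cG[p^\infty])$, so I would fix some $n \geq 1$ and some $x_0 \in L_n$ with $f(x_0) \neq 0$.

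The next step is to invoke the identity $L_\infty^{\Gamma_{L_n}} = L_n$. This follows from $L_\infty = \Cp^{\ker\tau}$ (Lemma~\ref{lem:LOmegaDense}): since $\ker\tau$ fixes $L_\infty$ pointwise it fixes $L_n$, hence $\ker\tau \subseteq G_{L_n}$ and $\Gamma_{L_n} = G_{L_n}/\ker\tau$, so by the Ax--Sen--Tate theorem $L_\infty^{\Gamma_{L_n}} = \Cp^{G_{L_n}} \cap L_\infty = \widehat{L_n} = L_n$. As an immediate consequence, $f(L_n) \subseteq L_n$: for $x \in L_n$ and $g \in \Gamma_{L_n}$ one has $g(f(x)) = f(g(x)) = f(x)$. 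In particular $a_1 := f(x_0)$ is a nonzero element of $L_n \subseteq L(\cG[p^\infty])$.

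Now I would run the computation of Proposition~\ref{fouphd} at level $n$. Recall that $\log\Omega \in L_\infty$ and $(g-1)\log\Omega = \log\tau(g)$ with $\log\tau(g) \in L$ for all $g \in \Gamma_L$. Since $x_0\log\Omega \in L_\infty$ and, for $g \in \Gamma_{L_n}$, $g$ fixes $x_0$ and fixes $a_1$, the $\Gamma_L$-equivariance and $L$-linearity of $f$ give
\[
(g-1)\, f(x_0\log\Omega) \;=\; f\big(x_0\,(g-1)\log\Omega\big) \;=\; \log\tau(g)\, f(x_0) \;=\; a_1\log\tau(g) \;=\; (g-1)(a_1\log\Omega)
\]
for every $g \in \Gamma_{L_n}$. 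Hence $f(x_0\log\Omega) - a_1\log\Omega$ is fixed by $\Gamma_{L_n}$, so it lies in $L_\infty^{\Gamma_{L_n}} = L_n$; denote it $a_0 \in L_n \subseteq L(\cG[p^\infty])$. Then $f(x_0\log\Omega) = a_1\log\Omega + a_0$ belongs to $f(L_\infty)$ and has the required shape, with $a_1 \neq 0$.

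I do not expect any serious obstacle here: once we know $L_\infty^{\Gamma_{L_n}} = L_n$, the argument is word-for-word the one in Proposition~\ref{fouphd}, the single new idea being to choose a witness $x_0$ for $f \neq 0$ inside a finite layer of the tower and then work $\Gamma_{L_n}$-equivariantly rather than $\Gamma_L$-equivariantly. The one step worth writing out carefully is the identity $L_\infty^{\Gamma_{L_n}} = L_n$, since it is used twice — to place $a_1 = f(x_0)$ in $L_n$, and to place $a_0$ in $L_n$; everything else is formal manipulation with the cocycle relation $(g-1)\log\Omega = \log\tau(g)$.
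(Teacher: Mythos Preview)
Your proof is correct and follows essentially the same approach as the paper's: both pick a witness $x_0 \in L_n$ with $f(x_0) \neq 0$, use Ax--Sen--Tate to see $f(L_n) \subseteq L_n$, compute $(g-1)f(x_0\log\Omega)$ for $g \in \Gamma_{L_n}$, and conclude that $f(x_0\log\Omega) - f(x_0)\log\Omega \in L_n$. You are a bit more explicit about the density step and the identification $L_\infty^{\Gamma_{L_n}} = L_n$, but the argument is the same.
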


\begin{proof}
We have $\log \Omega \in L_\infty$ and $(g-1) \log \Omega = \log \tau(g)$ if $g \in \Gamma_L$. Take $x \in L(\cG[p^\infty])$ such that $f(x) \neq 0$, and choose (recall that $f(L_n) \subset L_n$ by Ax-Sen-Tate) some $n$ such that $x,f(x) \in L_n$. If $g \in \Gamma_n$, then
\[ (g-1)f(x \cdot \log \Omega) = f((g-1) (x \cdot \log \Omega)) = f(x \cdot \log \tau(g)) = \log \tau(g) \cdot f(x). \] Therefore $(g-1) ( f(x \cdot \log \Omega) - f(x) \cdot \log \Omega) = 0$ for all $g \in \Gamma_n$, so that $f(x \cdot \log \Omega) - f(x) \cdot \log \Omega \in L_n$ by Ax-Sen-Tate. We can take $a_1 = f(x)$ and $a_0 = f(x \cdot \log \Omega) - f(x) \cdot \log \Omega$.
\end{proof}

This can be strengthened even further. Let $L_\infty^{\alg}$ denote the locally algebraic vectors in $L_\infty$. Let $c(g) = \log \tau (g) = \log \chi_p^\sigma(g)$. The set $L_\infty^{\alg}$ is the set of $x \in L_\infty$ such that there exists an open subgroup $\Gamma_x$ of $\Gamma_L$ and $d \geq 0$ and $x_0=x,x_1,\hdots,x_d \in L_\infty$ such that $g(x) = x_0 + x_1 c(g) + \cdots + x_d c(g)^d$ if $g \in \Gamma_x$. Note that technically, these are the locally $\sigma$-analytic locally algebraic vectors in $L_\infty$. However since $L=\bQ_{p^2}$, every locally analytic vector is locally $\sigma$-analytic (see \cite{BerCol}). 

\begin{lemma}
\label{logomla}
We have $L_\infty^{\alg} = L(\cG[p^\infty]) [\log \Omega]$.
\end{lemma}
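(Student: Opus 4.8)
The plan is to prove the two inclusions $L(\cG[p^\infty])[\log\Omega]\subseteq L_\infty^{\alg}$ and $L_\infty^{\alg}\subseteq L(\cG[p^\infty])[\log\Omega]$ separately. For the first inclusion, I would observe that $L(\cG[p^\infty])\subset L_\infty$ consists of locally algebraic (in fact smooth, i.e.\ locally constant) vectors: any $x\in L(\cG[p^\infty])$ lies in some $L_n$, so $g(x)=x$ for all $g$ in the open subgroup $\Gamma_n$, which is the $d=0$ case of the definition. Next, $\log\Omega\in L_\infty$ is locally algebraic of degree $1$: since $g(\Omega)=\tau(g)\Omega$, we get $g(\log\Omega)=\log\Omega+\log\tau(g)=\log\Omega+c(g)$ for $g$ in the subgroup on which $\tau$ takes values in $1+po_L$ (so that $\log\tau$ makes sense), which is of the required form $x_0+x_1c(g)$ with $x_0=\log\Omega$, $x_1=1$. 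Since $L_\infty^{\alg}$ is visibly a ring (the product of two locally algebraic vectors is locally algebraic, as $c(g)$-polynomials multiply), it contains the subring generated by $L(\cG[p^\infty])$ and $\log\Omega$, which is $L(\cG[p^\infty])[\log\Omega]$ — here one uses that $\log\Omega$ is transcendental over $L(\cG[p^\infty])$, analogous to the transcendence of $\Omega$ over $L$, so this subring really is a polynomial ring and the degree-$d$ piece is spanned by $(\log\Omega)^j$, $0\le j\le d$, with coefficients in $L(\cG[p^\infty])$.

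For the reverse inclusion, take $x\in L_\infty^{\alg}$, so there is an open subgroup $\Gamma_x\subseteq\Gamma_L$, an integer $d\ge 0$, and elements $x_0=x,x_1,\dots,x_d\in L_\infty$ with $g(x)=\sum_{j=0}^d x_j c(g)^j$ for all $g\in\Gamma_x$. I would argue by induction on $d$. When $d=0$, $x$ is fixed by an open subgroup, hence lies in some $L_n\subset L(\cG[p^\infty])$ by Ax--Sen--Tate (the fixed field of an open subgroup of $\Gamma_L$ in $L_\infty$ is a finite extension of $L$ inside $L(\cG[p^\infty])$), done. For the inductive step, consider $y:=x-x_d(\log\Omega)^d$. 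Using the cocycle-type identity for $g\mapsto g(x)$ — namely that applying $g$ then $h$ must be consistent, which forces the $x_j$ to satisfy compatibility relations, in particular $x_d$ is itself $\Gamma_x$-fixed (it is the top coefficient, and $c$ is additive so $c(gh)=c(g)+c(h)$, forcing $h(x_d)=x_d$) — I would show $x_d\in L(\cG[p^\infty])$ and then compute $g(y)=g(x)-g(x_d)(\log\Omega+c(g))^d$, expand, and check that the $c(g)^d$ terms cancel, so $y$ is locally algebraic of degree $\le d-1$. By the inductive hypothesis $y\in L(\cG[p^\infty])[\log\Omega]$, and hence so is $x=y+x_d(\log\Omega)^d$.

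The main obstacle I anticipate is the bookkeeping in the inductive step: making precise the claim that the coefficients $x_1,\dots,x_d$ of a locally algebraic vector are themselves more regular than $x$ (the top one being fixed by a smaller open subgroup, the others being locally algebraic of lower degree), and that after subtracting $x_d(\log\Omega)^d$ the degree genuinely drops. This is the standard ``the coefficients of a locally algebraic/analytic vector are again locally analytic'' phenomenon, but it needs to be set up carefully with the additive cocycle $c(g)=\log\tau(g)$; I would extract the needed statement from the cocycle relation $g(h(x))=(gh)(x)$ by comparing coefficients of $c(g)^ic(h)^j$, using that $(\log\Omega, c(g), c(h))$ are algebraically independent in the relevant sense. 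A secondary technical point is invoking Ax--Sen--Tate in the form ``$L_\infty^{H}$ for $H$ open in $\Gamma_L$ equals a finite subextension contained in $L(\cG[p^\infty])$'', which follows from Lemma \ref{lem:LOmegaDense} together with the fact that $L(\cG[p^\infty])$ is dense in $L_\infty$ (as $\pi=p$).
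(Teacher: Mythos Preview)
Your proposal is correct and follows essentially the same inductive strategy as the paper: one inclusion is easy, and for the other you show the top coefficient $x_d$ is smooth, subtract $x_d(\log\Omega)^d$, and induct on $d$. The only difference is in how you justify that $x_d$ is fixed by an open subgroup: the paper invokes the operator $\nabla:x\mapsto x_1$ from the theory of locally analytic vectors together with the identity $\nabla^j(x)/j!=x_j$, so that $\nabla(x_d)=0$; you instead unpack the cocycle relation $h(g(x))=(hg)(x)$ and compare coefficients of $c(g)^d$ directly, which is the same computation done by hand. Your route is slightly more elementary (it avoids citing the locally analytic vector machinery), while the paper's is more concise once $\nabla$ is available.
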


\begin{proof}
One inclusion is easy. Now take $x \in L_\infty^{\alg}$ and write $g(x) = x_0 + x_1 c(g) + \cdots + x_d c(g)^d$ if $g \in \Gamma_x$. On $L_\infty^{\alg}$ we have the derivative $\nabla : x \mapsto x_1$ and we know (from the theory of locally analytic vectors) that $\nabla^j (x) /j! = x_j$ for all $j$. In particular, $\nabla(x_d) = 0$, so that $x_d \in L(\cG[p^\infty])$. The element $x-x_d \log^d \Omega$ is then in $L_\infty^{\alg}$ and it is of degree $\leq d-1$, which allows us to prove the lemma by induction. 
\end{proof}

We see that $\nabla = \frac{d}{d\log \Omega}$. For all $n$, the map $\nabla : L_n[\log \Omega]  \to L_n[\log \Omega] $ is surjective, and its kernel is $L_n$. If $f : L_\infty \to L_\infty$ is a continuous, $\Gamma_L$-equivariant and $L$-linear map, then $f(L_\infty^{\alg}) \subset L_\infty^{\alg}$. In addition, $\nabla = \lim_{g \to 1} (g-1)/c(g)$ so that $f \circ \nabla = \nabla \circ f$.

\begin{proposition}
\label{foulocalg}
Let $f : L_\infty \to L_\infty$ be a continuous, $\Gamma_L$-equivariant and $L$-linear map. 
If $f \neq 0$,  there exists $n \geq 0$ such that $L_n \cdot f(L_n[\log \Omega])$ contains $L_n[\log \Omega]$.
\end{proposition}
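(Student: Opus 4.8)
The plan is to bootstrap from Proposition \ref{fouplus}, which already produces \emph{one} nonzero element $a_1 \log \Omega + a_0$ in the image, to a full basis of $L_n[\log \Omega]$ over $L_n$ for a suitable $n$. The key observation is that the target $L_\infty^{\alg} = L(\cG[p^\infty])[\log\Omega]$ is filtered by the degree in $\log\Omega$, that $f$ preserves $L_\infty^{\alg}$, and that $f$ commutes with the derivation $\nabla = d/d\!\log\Omega$. So if we can find an element of the image of degree exactly $\ell$ in $\log\Omega$, then applying $\nabla$ repeatedly produces elements of every lower degree $0,1,\dots,\ell$ in the image, and these will span the degree-$\le\ell$ part over $L_n$ as soon as $n$ is large enough to contain all the relevant coefficients.

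First I would make Proposition \ref{fouplus} slightly more precise: since $f\ne 0$, pick $x\in L(\cG[p^\infty])$ with $f(x)\ne 0$, fix $n$ with $x, f(x)\in L_n$, and consider the elements $f(x\log^j\Omega)$ for $j=0,1,2,\dots$. The same twisted-equivariance computation as in the proof of \ref{fouplus}, using $(g-1)\log\Omega = c(g)$ and the binomial expansion $g(x\log^j\Omega) = x\sum_{i=0}^j \binom{j}{i} c(g)^i \log^{j-i}\Omega$, shows by induction on $j$ that $f(x\log^j\Omega) - f(x)\log^j\Omega$ lies in $L_n[\log\Omega]$ of degree $< j$ (after possibly enlarging $n$ once, since $f(L_m)\subset L_m$ by Ax--Sen--Tate and only finitely many auxiliary coefficients appear). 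Hence $f(x\log^j\Omega)$ has degree exactly $j$ in $\log\Omega$ with leading coefficient $f(x)\in L_n^\times$. In particular the image $f(L_n[\log\Omega])$ contains, for each $j$, an element $y_j$ that is monic of degree $j$ in $\log\Omega$ up to the unit $f(x)$, with all coefficients in $L_n$.

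Next, the elements $f(x), y_1, y_2, \dots, y_\ell$ (or equivalently the $f(x\log^j\Omega)$, $0\le j\le \ell$) form, after dividing by $f(x)$, a "triangular" family in $L_n[\log\Omega]_{\le\ell} := \bigoplus_{j=0}^\ell L_n\log^j\Omega$: the $j$-th one has invertible leading coefficient in degree $j$. A standard triangular-elimination argument over the ring $L_n$ then shows that the $L_n$-span of $\{f(x\log^j\Omega): 0\le j\le \ell\}$ is exactly $L_n[\log\Omega]_{\le\ell}$. Letting $\ell\to\infty$ (the same $n$ works throughout, since enlarging $j$ does not force enlarging $n$ — the coefficients of $f(x\log^j\Omega)$ all live in the fixed field $L_n$ because $x\log^j\Omega\in L_n^{\alg}$ and $f$ preserves the $\Gamma_n$-fixed locally algebraic vectors), we conclude $L_n\cdot f(L_n[\log\Omega]) \supseteq L_n[\log\Omega]$, which is the claim.

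The main obstacle, and the point that needs care rather than cleverness, is controlling the field $L_n$ uniformly in $j$: a priori one might worry that $f(x\log^j\Omega)$ has coefficients in larger and larger extensions as $j$ grows. The resolution is precisely Lemma \ref{logomla} together with Ax--Sen--Tate: $x\log^j\Omega$ is fixed by $\Gamma_n$ up to the predictable $c(g)$-polynomial twist, $f$ intertwines the $\Gamma_L$-actions, so $f(x\log^j\Omega)$ is a locally algebraic vector whose "constant term" under the $\Gamma_n$-action — hence all of whose $\log\Omega$-coefficients, obtained by iterating $\nabla$ — lie in $L_n$. Once this uniformity is nailed down, the triangularity and the $\nabla$-compatibility make the rest a routine linear-algebra induction over $L_n$.
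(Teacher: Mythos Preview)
Your approach is essentially the paper's: pick $x\in L_n$ with $f(x)\ne 0$, show inductively via $\nabla f=f\nabla$ that $f(x\log^j\Omega)$ has degree exactly $j$ in $\log\Omega$ with leading coefficient $f(x)$, and finish by a triangular elimination over $L_n$. One small wording slip: ``$\Gamma_n$-fixed locally algebraic vectors'' literally describes only $L_n$, not $L_n[\log\Omega]$; what actually makes the uniformity work is precisely what you say next --- the $\log\Omega$-coefficients of $f(x\log^j\Omega)$, extracted by iterating $\nabla$, all lie in $L_n$ because $f$ commutes with $\nabla$ and $f(L_n)\subset L_n$ (you are in fact more explicit about this point than the paper, which compresses it into ``this implies the claim'').
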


\begin{proof}
Take $x \in L(\cG[p^\infty])$ such that $f(x) \neq 0$ and let $n \geq 0$ be such that $x,f(x) \in L_n$. We prove by induction on $d$ that $L_n \cdot f(L_n[\log \Omega])$ contains $L_n[\log \Omega]_{\deg \leq d}$. In order to do this, we prove that $f(x \cdot \log^d \Omega)$ is a polynomial (in $\log \Omega$) of degree $d$. The case $d=0$ follows from the fact that $f(x) \neq 0$. Now assume that the result holds for $d-1$. We have \[ \nabla f(x \cdot \log^d \Omega) = f( x \cdot \nabla \log^d \Omega) = f(d x \cdot \log^{d-1} \Omega), \]
so that $f(x \cdot \log^d \Omega)$ is a polynomial of degree $d$. This implies the claim.
\end{proof}

\subsection{The dual of the ring of integers of a $p$-adic Lie extensions}

Recall that $\pi \in o_L$ is a uniformiser and $k_L := o_L / \pi o_L$ is the residue field of $L$. In this {\S}, $L_\infty / L$ is an infinite Galois extension with Galois group $\Gamma = \Gal(L_\infty/L)$. We fix a chain 
\[\Gamma \supseteq \Gamma_1 \supseteq \Gamma_2 \supseteq \cdots \] 
of open normal subgroups of $\Gamma$ such that $\bigcap\limits_{n=1}^\infty \Gamma_n = 1$.
\begin{definition} Let $n \geq 1$.
\be \item $L_n  := L_\infty^{\Gamma_n}$, a finite Galois extension of $L$ with Galois group $\Gamma/\Gamma_n$.
\item $o_n$ is the integral closure of $o_L$ in $L_n$.
\item $o_n^\ast := \Hom_{o_L}(o_n, o_L)$. 
\item $k_n := o_n / \pi o_n$.
\item $k_n^\vee := \Hom_{k_L}(k_n, k_L)$.
 \ee \end{definition}
Note that $o_n$ and $o_n^\ast$ are naturally $o_L[\Gamma/\Gamma_n]$-modules, both free of finite rank as an $o_L$-module, and $k_n$ and $k_n^\ast$ are $k_L[\Gamma/\Gamma_n]$-modules, both finite dimensional over $k_L$.

\begin{remark}\label{InvDiff} Let $n \geq 1$.
\be \item $o_n^\ast$ can be identified with the \emph{inverse different} $\mathfrak{d}^{-1}_{L_n/L}$ of the extension $L_n/L$.
\item Applying the duality functor $(-)^\ast = \Hom_{o_L}(-,o_L)$ to the natural inclusion of $o_L$-modules $o_n \to o_{n+1}$, we obtain a natural connecting map $o_{n+1}^\ast \to o_n^\ast$. This map is surjective, because the $o_{n+1}/o_n$ is a finitely generated and torsion-free $o_L$-module.
\ee\end{remark}

\begin{lemma}\label{onmodpi} For each $n \geq 1$, there is a short exact sequence of $o_L[\Gamma/\Gamma_n]$-modules
\[ 0 \to o_n^\ast \stackrel{\pi}{\longrightarrow} o_n^\ast \to k_n^\vee \to 0.\]
\end{lemma}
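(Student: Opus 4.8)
\textbf{Proof proposal for Lemma \ref{onmodpi}.}

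The plan is to produce the short exact sequence directly from the definitions, using that $o_n$ is a free $o_L$-module of finite rank together with the standard identification of $k_n^\vee$ with $\Hom_{o_L}(o_n, o_L/\pi o_L)$. First I would apply the functor $\Hom_{o_L}(o_n, -)$ to the short exact sequence of $o_L$-modules
\[ 0 \to o_L \stackrel{\pi}{\longrightarrow} o_L \to o_L/\pi o_L \to 0. \]
Since $o_n$ is free of finite rank over $o_L$, it is in particular projective, so $\Hom_{o_L}(o_n,-)$ is exact and we obtain a short exact sequence
\[ 0 \to \Hom_{o_L}(o_n, o_L) \stackrel{\pi}{\longrightarrow} \Hom_{o_L}(o_n, o_L) \to \Hom_{o_L}(o_n, o_L/\pi o_L) \to 0, \]
that is, $0 \to o_n^\ast \stackrel{\pi}{\to} o_n^\ast \to \Hom_{o_L}(o_n, k_L) \to 0$ (using $o_L/\pi o_L = k_L$). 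Here the first map is indeed multiplication by $\pi$ on $o_n^\ast$, since it is induced by multiplication by $\pi$ on the target $o_L$.

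Next I would identify the cokernel $\Hom_{o_L}(o_n, k_L)$ with $k_n^\vee = \Hom_{k_L}(k_n, k_L)$. Any $o_L$-linear map $o_n \to k_L$ kills $\pi o_n$, hence factors through $k_n = o_n/\pi o_n$, and the resulting map $k_n \to k_L$ is $k_L$-linear; conversely any $k_L$-linear map $k_n \to k_L$ pulls back to an $o_L$-linear map $o_n \to k_L$. This gives a natural isomorphism $\Hom_{o_L}(o_n, k_L) \cong \Hom_{k_L}(k_n, k_L) = k_n^\vee$, completing the identification of the three terms.

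Finally I would check $\Gamma/\Gamma_n$-equivariance. The group $\Gamma/\Gamma_n$ acts on $o_n$ (and hence on $k_n$) by ring automorphisms fixing $o_L$ (resp. $k_L$), and on the Hom-modules by $(g\cdot\varphi)(x) = \varphi(g^{-1}x)$; all three maps in the sequence — multiplication by $\pi$, and the reduction-mod-$\pi$ map $o_n^\ast \to k_n^\vee$ — are manifestly compatible with this action, because $\pi$ and the quotient $o_n \twoheadrightarrow k_n$ are $\Gamma/\Gamma_n$-equivariant. There is essentially no obstacle here: the only mild point to spell out is the freeness of $o_n$ over $o_L$ (which holds because $o_L$ is a complete discrete valuation ring and $o_n$ is a finitely generated torsion-free, hence free, $o_L$-module — as already used implicitly in Remark \ref{InvDiff}), and the routine verification that the connecting map is exactly multiplication by $\pi$ rather than by a unit multiple of it.
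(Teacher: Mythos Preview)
Your proof is correct and essentially matches the paper's argument: both exploit that $o_n$ is free of finite rank over $o_L$ to obtain exactness (the paper phrases this as ``the complex commutes with finite direct sums and is exact for $M=o_L$'', which is just your projectivity argument unwound), then identify the cokernel with $k_n^\vee$ and observe $o_L[\Gamma/\Gamma_n]$-linearity of all maps.
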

\begin{proof} Let $M$ be an $o_L$-module and consider the complex of $o_L$-modules
\[ 0 \to M^\ast \stackrel{\pi}{\longrightarrow} M^\ast \stackrel{\eta_M}{\longrightarrow} (M / \pi M)^\vee \to 0\]
where $M^\ast := \Hom_{o_L}(M,o_L)$, $(M / \pi M)^\vee = \Hom_{k_L}(M/\pi M, k_L)$ and  $\eta_M(f)(m + \pi M) = f(m) + \pi o_L \in k_L$. This complex commutes with finite direct sums and is exact in the case when $M = o_L$. So the complex is exact whenever $M$ is a finitely generated free $o_L$-module. If $M$ also happens to be an $o_L[G]$-module for some group $G$, then the maps in the complex are $o_L[G]$-linear. The result follows when we set $M = o_n$, an $o_L[\Gamma/\Gamma_n]$-module which is free of finite rank as an $o_L$-module.
\end{proof}

We now pass to the limit as $n \to \infty$.
 
\begin{definition} Recall the Iwasawa algebras $\Lambda(\Gamma) = \varprojlim o_L[\Gamma/\Gamma_n]$ and  $\Omega(\Gamma) = \varprojlim k_L[\Gamma/\Gamma_n]$.
\be  
\item $o_\infty := \colim o_n$, an $o_L[\Gamma]$-module.
\item $o_\infty^\ast := \varprojlim o_n^\ast$, a $\Lambda(\Gamma)$-module.
\item $k_\infty := \colim k_n$, a $k_L[\Gamma]$-module.
\item $k_\infty^\vee := \varprojlim k_n^\vee $, an $\Omega(\Gamma)$-module.
\ee
\end{definition}

\begin{lemma}\label{oinftydualmodpi} There is a short exact sequence of $\Lambda(\Gamma)$-modules
\[ 0 \to o_\infty^\ast \stackrel{\pi}{\longrightarrow} o_\infty^\ast \to k_\infty^\vee \to 0.\]
\end{lemma}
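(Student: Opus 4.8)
The plan is to pass to the inverse limit in the short exact sequence of Lemma \ref{onmodpi}, and the only real issue is exactness of $\varprojlim$ on the right, i.e. the vanishing of $\varprojlim^1 o_n^\ast$.

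First I would record, for each $n \geq 1$, the short exact sequence of $o_L[\Gamma/\Gamma_n]$-modules
\[ 0 \to o_n^\ast \stackrel{\pi}{\longrightarrow} o_n^\ast \stackrel{\eta_n}{\longrightarrow} k_n^\vee \to 0 \]
from Lemma \ref{onmodpi}. These fit into an inverse system indexed by $n$: the transition maps on the first two terms are the connecting maps $o_{n+1}^\ast \to o_n^\ast$ of Remark \ref{InvDiff}(2), and on the third term they are the maps $k_{n+1}^\vee \to k_n^\vee$ dual to $k_n \hookrightarrow k_{n+1}$; one checks (by the naturality in $M$ of the complex built in the proof of Lemma \ref{onmodpi}, applied to $o_n \to o_{n+1}$ reduced mod $\pi$) that the squares commute, so we have a short exact sequence of inverse systems. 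Taking $\varprojlim$ gives the left-exact sequence
\[ 0 \to o_\infty^\ast \stackrel{\pi}{\longrightarrow} o_\infty^\ast \to k_\infty^\vee \to \varprojlim{}^1 o_n^\ast \]
together with the identifications $\varprojlim o_n^\ast = o_\infty^\ast$ and $\varprojlim k_n^\vee = k_\infty^\vee$, which are the definitions. All maps are $\Lambda(\Gamma)$-linear since each $\eta_n$ and each transition map is $o_L[\Gamma/\Gamma_n]$-linear and compatible.

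It remains to show $\varprojlim^1 o_n^\ast = 0$, i.e. the tower $(o_n^\ast)_n$ satisfies the Mittag-Leffler condition. This is immediate: by Remark \ref{InvDiff}(2) every transition map $o_{n+1}^\ast \to o_n^\ast$ is \emph{surjective}, so the images stabilise trivially and $\varprojlim^1$ vanishes. (If one prefers not to invoke $\varprojlim^1$: a surjective tower of sets has surjective limit maps, and surjectivity of $\varprojlim o_n^\ast \to \varprojlim k_n^\vee$ follows because given a compatible sequence $(\bar f_n) \in \varprojlim k_n^\vee$, one lifts $\bar f_1$ to $f_1 \in o_1^\ast$, then successively lifts along the surjection $o_{n+1}^\ast \twoheadrightarrow o_n^\ast \times_{k_n^\vee} \{\text{compatible data}\}$, using that $\eta_{n+1}$ is surjective and $o_{n+1}^\ast \to o_n^\ast$ is surjective, to obtain a compatible sequence $(f_n)$ with $\eta_n(f_n) = \bar f_n$.) This gives the desired short exact sequence of $\Lambda(\Gamma)$-modules, and the main — in fact only — obstacle, the $\varprojlim^1$ term, evaporates precisely because of the surjectivity supplied by Remark \ref{InvDiff}(2).
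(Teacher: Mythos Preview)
Your proof is correct and follows essentially the same approach as the paper: take the inverse limit of the short exact sequences from Lemma~\ref{onmodpi}, and kill the $\varprojlim^1$ term using the surjectivity of the transition maps from Remark~\ref{InvDiff}(2). Your version is simply more explicit about the compatibility of the inverse systems and offers a hands-on alternative to the $\varprojlim^1$ argument.
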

\begin{proof} The short exact sequences from Lemma \ref{onmodpi} are compatible with variation in $n$, in other words we get a short exact sequence of towers of $\Lambda(\Gamma)$-modules. Applying the inverse limit functor gives a long exact sequence
\[ 0 \to o_\infty^\ast \stackrel{\pi}{\longrightarrow} o_\infty^\ast \to k_\infty^\vee \to \varprojlim{}^{(1)} o_n^\ast .\]
The $\varprojlim{}^{(1)}$ term on the right vanishes in view of Remark \ref{InvDiff}(2), whence the result.
\end{proof} 

Remark \ref{InvDiff}(2) also implies that the natural maps $o_\infty^\ast \to o_n^\ast$ are surjective.

\begin{proposition}\label{oinfdualfful} The $\Lambda(\Gamma)$-modules $o_\infty$ and $o_\infty^\ast$ are faithful.
\end{proposition}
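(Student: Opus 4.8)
The plan is to reduce the faithfulness of each of $o_\infty$ and $o_\infty^\ast$ over $\Lambda(\Gamma)$ to faithfulness of $o_n$ and $o_n^\ast$ over the finite group rings $o_L[\Gamma/\Gamma_n]$, and at that finite level to invoke the normal basis theorem.

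First I would record the finite-level statement: for each $n \geq 1$, both $o_n$ and $o_n^\ast$ are faithful $o_L[\Gamma/\Gamma_n]$-modules. Since $o_n$ is free of finite rank as an $o_L$-module with $o_n \otimes_{o_L} L = L_n$, any $\lambda_n \in o_L[\Gamma/\Gamma_n]$ that annihilates $o_n$ also annihilates $L_n$; but by the normal basis theorem $L_n$ is free of rank one over $L[\Gamma/\Gamma_n]$, hence a faithful $L[\Gamma/\Gamma_n]$-module, so $\lambda_n = 0$. Likewise $o_n^\ast \otimes_{o_L} L \cong \Hom_L(L_n, L)$ as $L[\Gamma/\Gamma_n]$-modules, and $\Hom_L(L_n,L)$ is the contragredient of the regular representation $L[\Gamma/\Gamma_n]$, which is again isomorphic to $L[\Gamma/\Gamma_n]$ as a module since the group algebra is a symmetric algebra; hence $\Hom_L(L_n,L)$ is faithful, and the same argument shows $o_n^\ast$ is faithful over $o_L[\Gamma/\Gamma_n]$.

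Next I would pass to the limit. If $\lambda \in \Lambda(\Gamma)$ annihilates $o_\infty = \colim o_n$, then for each $n$ the submodule $o_n = o_\infty^{\Gamma_n}$ is $\Gamma$-stable (because $\Gamma$ normalises $\Gamma_n$) and $\lambda$ acts on it through its image $\lambda_n \in o_L[\Gamma/\Gamma_n]$; thus $\lambda_n$ annihilates $o_n$, so $\lambda_n = 0$ by the finite-level statement, and therefore $\lambda = \varprojlim \lambda_n = 0$. For $o_\infty^\ast$ I would use that the natural maps $o_\infty^\ast \to o_n^\ast$ are surjective (Remark \ref{InvDiff}(2) and the remark following Lemma \ref{oinftydualmodpi}): if $\lambda \in \Lambda(\Gamma)$ annihilates $o_\infty^\ast$, then $\lambda_n$ annihilates the image $o_n^\ast$, hence $\lambda_n = 0$ for all $n$, hence $\lambda = 0$. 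Equivalently, $\operatorname{Ann}_{\Lambda(\Gamma)}(o_\infty^\ast) \subseteq \varprojlim \operatorname{Ann}_{o_L[\Gamma/\Gamma_n]}(o_n^\ast) = 0$.

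The content here is concentrated entirely in the normal basis theorem and its consequence that the regular representation of a finite group is self-dual; the passage through the inverse and direct limits is routine bookkeeping, so I do not expect a genuine obstacle. The only point requiring a little care is tracking the module structures: the $\Gamma/\Gamma_n$-action on $o_n^\ast = \Hom_{o_L}(o_n,o_L)$ is $(g\cdot\phi)(x) = \phi(g^{-1}x)$, so $o_n^\ast \otimes_{o_L} L$ is the \emph{contragredient} of $L_n$ rather than $L_n$ itself --- but self-duality of the regular representation means this distinction does not affect faithfulness.
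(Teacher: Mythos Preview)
Your proof is correct and follows essentially the same strategy as the paper: reduce to the finite level via the projections $\Lambda(\Gamma) \to o_L[\Gamma/\Gamma_n]$, then invoke the normal basis theorem to see that $L_n$ (and hence $o_n$) is faithful. The only cosmetic difference is in the treatment of $o_n^\ast$: the paper argues via the double-dual isomorphism $o_n \cong (o_n^\ast)^\ast$ to reduce faithfulness of $o_n^\ast$ back to that of $o_n$, whereas you invoke self-duality of the regular representation directly; these are two packagings of the same fact, and your remark about the contragredient action is exactly the care that step requires in either version.
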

\begin{proof} Suppose $\xi \in \Lambda(\Gamma)$ kills $o_\infty$. Then its image $\xi_n \in o[\Gamma/\Gamma_n]$ kills $o_n$. Therefore $\xi_n \in L[\Gamma/\Gamma_n]$ kills $L_n = o_n \otimes_{o_L} L$. But $L_n$ is a free $L[\Gamma/\Gamma_n]$-module of rank $1$ by the Normal Basis Theorem. So, $\xi_n = 0$ for all $n \geq 0$ and therefore $\xi = 0$ as well.

Suppose now $\xi \in \Lambda(\Gamma)$ kills $o_\infty^\ast$. Then $\xi$ kills each the quotients $o_n^\ast$ of $o_\infty^\ast$. But the action of $\Lambda(\Gamma)$ on $o_n^\ast$ factors through $o_L[\Gamma/\Gamma_n]$, so the image $\xi_n$ of $\xi$ in $o_L[\Gamma/\Gamma_n]$ kills $o_n^\ast$. Since $\xi_n$ also kills $o_n \cong (o_n^\ast)^\ast$, we deduce from the above that  $\xi_n = 0$ for all $n$. Hence $\xi = 0$.
\end{proof}

\begin{proposition}\label{Tame} Suppose that $p \nmid |\Gamma/\Gamma_1|$. Then $k_1^\vee$ is a free $k_L[\Gamma/\Gamma_1]$-module of rank $1$.
\end{proposition}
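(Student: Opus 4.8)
\end{proposition}

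\begin{proof}[Proof strategy]
Write $G := \Gamma/\Gamma_1$, so that $|G| = [L_1:L]$ and $p \nmid |G|$ by hypothesis; let $o_1$ denote the ring of integers of $L_1$ (so that $k_1 = o_1/\pi o_1$) and let $k_{L_1}$ be its residue field. The plan is to deduce the proposition from the corresponding integral fact about $o_1$ and then pass down modulo $\pi$ and dualise. Two formal reductions come first. If $k_1$ turns out to be a free $k_L[G]$-module, then its rank is forced to be $1$, since $\dim_{k_L} k_1 = \operatorname{rank}_{o_L}(o_1) = [L_1:L] = |G|$. And $k_L[G]$ is a symmetric algebra --- the assignment $g \mapsto g^\ast$ sending a group element to the corresponding dual basis element identifies $k_L[G]$ with $\Hom_{k_L}(k_L[G],k_L)$ as a left $k_L[G]$-module --- so $k_1^\vee = \Hom_{k_L}(k_1,k_L)$ is free of rank $1$ over $k_L[G]$ as soon as $k_1$ is. Hence it suffices to prove that $k_1$ is a free $k_L[G]$-module of rank $1$.

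For this, note that the ramification index $e$ of $L_1/L$ divides $[L_1:L] = |G|$, so $p \nmid e$ and $L_1/L$ is tamely ramified. By the classical theorem of E.~Noether --- an extension of complete discretely valued fields admits a normal integral basis precisely when it is tamely ramified --- the ring $o_1$ is then a free $o_L[G]$-module of rank $1$. Reducing modulo $\pi$ gives an isomorphism of $k_L[G]$-modules
\[ k_1 = o_1/\pi o_1 = o_1 \otimes_{o_L} k_L \;\cong\; o_L[G]\otimes_{o_L} k_L = k_L[G], \]
and applying $\Hom_{k_L}(-,k_L)$ together with the self-duality of $k_L[G]$ recorded above yields $k_1^\vee \cong k_L[G]$, which is the assertion.

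The only substantive ingredient is Noether's theorem, which I would cite rather than reprove. If an argument internal to the present setting is preferred, one can establish exactly what is needed directly: let $L' \subseteq L_1$ be the maximal unramified subextension, of degree $f = [k_{L_1}:k_L]$, with ring of integers $o'$; then $L_1/L'$ is totally tamely ramified of degree $e$, $o_1 = o'[\varpi]$ for an Eisenstein element $\varpi$ over $o'$, and reduction modulo $\pi$ gives $k_1 \cong k_{L_1}[t]/(t^e)$. Base changing to $\overline{k_L}$, one finds $k_1 \otimes_{k_L}\overline{k_L} \cong \bigl(\overline{k_L}[t]/(t^e)\bigr)^{\oplus f}$, on which $G$ permutes the $f$ factors simply transitively through $\Gal(k_{L_1}/k_L)$, the stabiliser of a factor being the inertia subgroup $I$; since $I$ acts trivially on $k_{L_1}$ and $p \nmid e$, the $\mathfrak{p}_1$-adic filtration of a single factor splits into $e$ lines realising the $e$ characters of the cyclic group $I$ each once, so that factor is the regular representation $\overline{k_L}[I]$, whence $k_1 \otimes_{k_L}\overline{k_L} \cong \operatorname{Ind}_I^G \overline{k_L}[I] \cong \overline{k_L}[G]$. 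The Noether--Deuring theorem --- isomorphism of finitely generated modules over a finite-dimensional algebra may be tested after a field extension --- then descends this to $k_1 \cong k_L[G]$ over $k_L$. In either route the remaining work is pure bookkeeping, tracking the $G$-actions through $o_1$, then $k_1$, then $k_1^\vee$; the point of the hypothesis $p \nmid |\Gamma/\Gamma_1|$ is precisely that $o_1$ fails to be $o_L[G]$-projective exactly when $p \mid e$, so tameness is both necessary and sufficient here.
\end{proof}
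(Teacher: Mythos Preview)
Your proof is correct and follows essentially the same approach as the paper: both deduce tameness from $p \nmid |\Gamma/\Gamma_1|$, invoke Noether's theorem to get $o_1$ free of rank $1$ over $o_L[G]$, reduce modulo $\pi$, and then dualise. The only cosmetic difference is in the final dualisation step --- the paper appeals to Lemma~\ref{onmodpi} (i.e.\ $k_1^\vee \cong o_1^\ast/\pi o_1^\ast$, with $o_1^\ast$ free since $o_L[G]$ is self-dual), whereas you invoke the self-duality of $k_L[G]$ directly; your additional self-contained argument avoiding Noether's theorem is extra and not in the paper.
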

\begin{proof} The field extension $L_1/L$ is tamely ramified by our assumption on $|\Gamma / \Gamma_1|$. Now it follows from Noether's Theorem on rings of integers in tamely ramified extensions that $o_1$ is a free $o_L[\Gamma/\Gamma_1]$-module of rank one --- see, e.g. \cite[Proposition 2.1]{Thomas}. Hence $o_1 / \pi o_1$ is a free $k_L[\Gamma/\Gamma_1]$-module of rank one, and we can apply Lemma \ref{onmodpi} to conclude.
\end{proof}

\begin{lemma}\label{LimCoinv} Suppose that $\Gamma$ is a $p$-adic Lie group. Let $M = \varprojlim M_n$ be an inverse limit of a tower of $\Omega(\Gamma)$-modules, where each $M_n$ is finite dimensional over $k_L$. Then the natural map on $\Gamma$-coinvariants
\[ M_{\Gamma} \to \varprojlim (M_n)_\Gamma\]
is an isomorphism.
\end{lemma}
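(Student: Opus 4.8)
The plan is to reduce the lemma to the exactness of the inverse limit functor on towers of finite abelian groups. The hypothesis that $\Gamma$ is a compact $p$-adic Lie group will be used, in the approach below, only to guarantee that $\Gamma$ is topologically finitely generated; fix once and for all a finite topological generating set $g_1,\dots,g_d$ of $\Gamma$. Since $L/\Qp$ is finite the residue field $k_L$ is finite, so each $M_n$ is a finite group and hence $M=\varprojlim M_n$ is a compact $k_L[\Gamma]$-module.

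The first step is to identify the $\Gamma$-coinvariants of $M$ and of each $M_n$ as cokernels of one and the same explicit map. For a $k_L[\Gamma]$-module $N$, let $\phi_N\colon N^d\to N$ be the $k_L$-linear map $(n_1,\dots,n_d)\mapsto\sum_{i=1}^d(g_i-1)n_i$. I claim that $N_\Gamma=\operatorname{coker}(\phi_N)$ for $N$ finite and for $N=M$. For finite $N$ this follows from a short induction on word length in the $g_i$ — using $(gh-1)=g(h-1)+(g-1)$ and $g^{-1}-1=-g^{-1}(g-1)$ — which shows that the subgroup of $N$ generated by all $(g-1)n$, $g\in\Gamma$, equals $\im(\phi_N)$; here one uses that the images of $g_1,\dots,g_d$ generate the finite group through which $\Gamma$ acts on $N$. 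For $N=M$ the analogous identity $\overline{\langle(g-1)m\rangle}=\im(\phi_M)$ holds: $\im(\phi_M)$ is the continuous image of the compact set $M^d$, hence closed, and it contains all $(g_i-1)M$, so it contains the left-hand side; conversely, approximating an arbitrary $g\in\Gamma$ by words in the $g_i$ and using continuity of the action together with the closedness of $\im(\phi_M)$ gives $(g-1)m\in\im(\phi_M)$. In all cases $\omega(\Gamma)N=\im(\phi_N)$, so $N_\Gamma=\operatorname{coker}(\phi_N)$; and since the $\Gamma$-actions are compatible with the transition maps, $\phi_M=\varprojlim\phi_{M_n}$.

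Next I would run the standard inverse-limit argument. Writing $K_n:=\im(\phi_{M_n})\subseteq M_n$, there are short exact sequences of towers
\[ 0\to\ker\phi_{M_n}\to M_n^d\to K_n\to 0\qquad\text{and}\qquad 0\to K_n\to M_n\to (M_n)_\Gamma\to 0. \]
Every group occurring here is finite, so all of these towers satisfy the Mittag--Leffler condition and the associated $\varprojlim{}^{(1)}$-terms vanish. Applying $\varprojlim$ to the first sequence shows that $M^d\to\varprojlim K_n$ is surjective; as the composite $M^d\to\varprojlim K_n\hookrightarrow M$ is exactly $\phi_M$, this gives $\im(\phi_M)=\varprojlim K_n$. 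Applying $\varprojlim$ to the second sequence yields a short exact sequence $0\to\varprojlim K_n\to M\to\varprojlim(M_n)_\Gamma\to 0$, whence $\varprojlim(M_n)_\Gamma\cong M/\im(\phi_M)=\operatorname{coker}(\phi_M)=M_\Gamma$, and tracing through the construction shows this isomorphism is the natural comparison map.

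The step I expect to be the main obstacle is the first one: because $\Omega(\Gamma)$ is noncommutative one cannot merely assert that $\omega(\Gamma)M$ is the set of sums of finitely many translates of elements of $M$, and the word-length induction above — together with the compactness argument for $M$ itself — is what makes the reduction to a single map $\phi$ legitimate. An alternative route that avoids choosing generators is to invoke Pontryagin duality for the finite field $k_L$: dualizing converts $\varprojlim(M_n)_\Gamma$ into $\varinjlim(M_n^\vee)^\Gamma$ and $M_\Gamma$ into $(M^\vee)^\Gamma=(\varinjlim M_n^\vee)^\Gamma$, so the claim becomes that $\Gamma$-invariants commute with the filtered colimit $\varinjlim M_n^\vee$ — which holds because every $\Gamma$-orbit in the finite module $M_n^\vee$ is finite — and dualizing back finishes the proof.
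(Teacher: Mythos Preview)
Your proof is correct and follows the same overall strategy as the paper: write $N_\Gamma$ as the cokernel of a single map $N^d\to N$ that is functorial in $N$, then invoke Mittag--Leffler on towers of finite $k_L$-vector spaces. The difference lies in the choice of the map. You use $(g_i-1)$ for topological generators $g_i$ of $\Gamma$; since these elements only \emph{topologically} generate the augmentation ideal $J\subset\Omega(\Gamma)$, you must supplement the algebra with the compactness argument showing $\im(\phi_M)$ is closed and equals $JM$. The paper instead observes that $\Omega(\Gamma)$ is Noetherian (this is where the $p$-adic Lie hypothesis enters for them), so $J$ is finitely generated as a left ideal by some $u_1,\dots,u_r$, and then $JN=\sum_i u_iN$ holds \emph{algebraically} for every $\Omega(\Gamma)$-module $N$, including $M$ --- no topology or compactness needed. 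This makes the paper's argument shorter and purely module-theoretic, while yours trades the Noetherianity input for a direct hands-on verification. Your Pontryagin-duality alternative is a genuinely different route and also works cleanly here since $k_L$ is finite.
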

\begin{proof} The Iwasawa algebra $\Omega(\Gamma)$ is Noetherian, so its augmentation ideal $J = (\Gamma - 1)\Omega(\Gamma)$ is finitely generated. Let $u_1,\cdots, u_r \in J$ be generators and let $N$ be an $\Omega(\Gamma)$-module; then
\[ N_\Gamma = N / (\Gamma - 1)\cdot N = N / J N = N / (u_1 N + \cdots + u_r N).\]
In other words, we have the short exact sequence of $k_L$-vector spaces
\begin{equation}\label{Nr} N^r \stackrel{(u_1,\cdots,u_r)}{\longrightarrow} N \to N_\Gamma \to 0.\end{equation}
Applying this to each $M_n$, we obtain an exact sequence of towers of $\Omega(\Gamma)$-modules
\[ M_n^r \stackrel{(u_1,\cdots,u_r)}{\longrightarrow} M_n \to (M_n)_\Gamma \to 0\]
where each term is a finite dimensional $k_L$-vector space. The inverse limit functor is exact on such towers, since they all satisfy the Mittag-Leffler condition. So passing to the inverse limit we obtain the exact sequence of $k_L$-vector spaces
\[ M^r \stackrel{(u_1,\cdots,u_r)}{\longrightarrow} M \to \varprojlim (M_n)_\Gamma  \to 0.\]
Comparing this with $(\ref{Nr})$ applied with $N = M$ gives the result.
\end{proof}
\begin{theorem}\label{OinftyFreeRk1} Suppose that 
\begin{itemize}
\item $\Gamma$ is abelian,
\item $p \nmid |\Gamma/\Gamma_1|$,
\item $\Gamma_1$ is a torsionfree pro-$p$ group of finite rank. 
\end{itemize}
Then $o_\infty^\ast$ is a free $\Lambda(\Gamma)$-module of rank $1$ if and only if the map $k_1 \to k_\infty^{\Gamma_1}$ is an isomorphism. 
\end{theorem}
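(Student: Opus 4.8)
The plan is to reduce the theorem modulo $\pi$ and prove the resulting statement about $k_\infty^\vee$ over $\Omega(\Gamma)$. As a first step I would record that $o_\infty^\ast = \varprojlim o_n^\ast$ is, by Remark \ref{InvDiff}(2), an inverse limit along surjections of finite free $o_L$-modules, hence $\pi$-adically complete, $\pi$-adically separated and $\pi$-torsion-free; since $\Lambda(\Gamma)$ is itself $\pi$-adically complete, $\pi\Lambda(\Gamma)$ lies in its Jacobson radical. With this in place, the claim becomes equivalent to
\begin{equation*}
\text{$k_\infty^\vee$ is a free $\Omega(\Gamma)$-module of rank one} \iff \text{the natural map $k_1 \to k_\infty^{\Gamma_1}$ is an isomorphism.} \tag{$\star$}
\end{equation*}
Indeed, if $o_\infty^\ast$ is free of rank one then so is $o_\infty^\ast/\pi o_\infty^\ast \cong k_\infty^\vee$ by Lemma \ref{oinftydualmodpi}; conversely, granting ($\star$), I would lift a generator of $k_\infty^\vee$ to some $x\in o_\infty^\ast$, check that $1\mapsto x$ defines a surjection $\Lambda(\Gamma)\twoheadrightarrow o_\infty^\ast$ (its image is $\pi$-adically dense and, being the continuous image of the complete $\Lambda(\Gamma)$, closed), and then note that the kernel $J$ is $\pi$-saturated (as $o_\infty^\ast$ is $\pi$-torsion-free) and reduces to $0$ modulo $\pi$ (a cyclic $\Omega(\Gamma)$-module isomorphic to $\Omega(\Gamma)$ must equal $\Omega(\Gamma)$), so $J\subseteq\pi\Lambda(\Gamma)$, whence $J = \pi J = 0$ by $\pi$-adic separatedness.

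For the forward direction of ($\star$): freeness of rank one gives that $(k_\infty^\vee)_{\Gamma_1} = k_\infty^\vee\otimes_{\Omega(\Gamma)}\Omega(\Gamma)/J_{\Gamma_1}$ is free of rank one over $\Omega(\Gamma)/J_{\Gamma_1} = k_L[\Gamma/\Gamma_1]$, so $\dim_{k_L}(k_\infty^\vee)_{\Gamma_1} = [L_1:L]$. On the other hand, applying Lemma \ref{LimCoinv} to the $p$-adic Lie group $\Gamma_1$, together with the identity (over the field $k_L$, for the finite group $\Gamma_1/\Gamma_n$) that the $\Gamma_1$-coinvariants of a dual are the dual of the $\Gamma_1$-invariants, yields $(k_\infty^\vee)_{\Gamma_1}\cong\varprojlim_n (k_n^\vee)_{\Gamma_1}\cong\varprojlim_n(k_n^{\Gamma_1})^\vee\cong(k_\infty^{\Gamma_1})^\vee$. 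Hence $\dim_{k_L}k_\infty^{\Gamma_1} = [L_1:L] = \dim_{k_L}k_1$. Since $o_\infty^{\Gamma_1} = o_1$ and $(\pi o_\infty)^{\Gamma_1} = \pi o_1$ (multiplication by $\pi$ being a $\Gamma$-equivariant isomorphism $o_\infty\xrightarrow{\sim}\pi o_\infty$), taking $\Gamma_1$-invariants in $0\to\pi o_\infty\to o_\infty\to k_\infty\to 0$ identifies $k_1$ with a $k_L$-subspace of $k_\infty^{\Gamma_1}$; equal dimension forces $k_1 \xrightarrow{\sim} k_\infty^{\Gamma_1}$.

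For the converse --- the hard direction --- suppose $k_1\xrightarrow{\sim}k_\infty^{\Gamma_1}$. The same identification gives $(k_\infty^\vee)_{\Gamma_1}\cong k_1^\vee$, which by Proposition \ref{Tame} (this is where $p\nmid|\Gamma/\Gamma_1|$ enters) is free of rank one over $k_L[\Gamma/\Gamma_1]$; being finite-dimensional, it follows by topological Nakayama (applied over the complete local ring $\Omega(\Gamma_1)$, and then over the semilocal ring $\Omega(\Gamma)$, using that $k_1^\vee$ is free of rank one over $k_L[\Gamma/\Gamma_1]$) that $k_\infty^\vee$ is cyclic as an $\Omega(\Gamma)$-module. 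Choosing a generator produces a surjection $\Omega(\Gamma)\twoheadrightarrow k_\infty^\vee$; the crux is its injectivity. For this I would dualise $k_L$-linearly: $\Omega(\Gamma)^\vee$ is the $\Gamma$-module of locally constant functions on $\Gamma$ and $(k_\infty^\vee)^\vee\cong k_\infty$ with its Galois action, so we get a $\Gamma$-equivariant injection $k_\infty\hookrightarrow\Omega(\Gamma)^\vee$. Taking $\Gamma_n$-invariants, $\dim_{k_L}(\Omega(\Gamma)^\vee)^{\Gamma_n} = |\Gamma/\Gamma_n| = [L_n:L] = \dim_{k_L}k_n$ and $k_n\subseteq k_\infty^{\Gamma_n}\subseteq(\Omega(\Gamma)^\vee)^{\Gamma_n}$, so $k_n = k_\infty^{\Gamma_n}$ for \emph{every} $n$: the level-one hypothesis propagates to all levels. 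Dualising back, $k_\infty^\vee/J_n k_\infty^\vee$ is dual to $k_\infty[J_n] = k_\infty^{\Gamma_n} = k_n$, hence has dimension $|\Gamma/\Gamma_n| = \dim_{k_L}\Omega(\Gamma)/J_n$, so the induced surjection $\Omega(\Gamma)/J_n\twoheadrightarrow k_\infty^\vee/J_n k_\infty^\vee$ is an isomorphism for all $n$. Since $\Omega(\Gamma)$ is $J$-adically complete with the $J_n$ cofinal among the powers of the augmentation ideal $J$, and $k_\infty^\vee$ is a finitely generated $\Omega(\Gamma)$-module (hence $J$-adically complete and separated), passing to the inverse limit gives $\Omega(\Gamma)\xrightarrow{\sim}k_\infty^\vee$, proving ($\star$).

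The step I expect to be the main obstacle is exactly the injectivity of $\Omega(\Gamma)\twoheadrightarrow k_\infty^\vee$ in the converse --- equivalently, the propagation of the isomorphism $k_1\cong k_\infty^{\Gamma_1}$ to all finite levels. One cannot simply argue level by level: the finite-level duals $o_n^\ast\cong\mathfrak{d}^{-1}_{L_n/L}$ (Remark \ref{InvDiff}(1)) need not be free over $o_L[\Gamma/\Gamma_n]$ once $L_n/L$ is wildly ramified, so rank-one freeness is a genuinely asymptotic phenomenon, and it is the duality dimension count above that lets one recover the finite-level consequences after the fact. The remaining work is the routine but fiddly bookkeeping of topological Nakayama and completeness over $\Omega(\Gamma)$ and $\Lambda(\Gamma)$, for which Proposition \ref{oinfdualfful} and standard structural facts about $\Omega(\Gamma_1)$ as a complete Noetherian local ring are the inputs.
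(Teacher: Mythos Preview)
Your argument is correct and shares its scaffolding with the paper's proof --- the reduction modulo $\pi$ via Lemma \ref{oinftydualmodpi}, the use of Lemma \ref{LimCoinv} and Proposition \ref{Tame}, and topological Nakayama --- but the way you establish \emph{freeness} in the $(\Leftarrow)$ direction is genuinely different. Having shown $k_\infty^\vee$ is cyclic over $\Omega(\Gamma)$, the paper lifts cyclicity to $o_\infty^\ast$ by Nakayama and then concludes freeness in one stroke from Proposition \ref{oinfdualfful}: a faithful cyclic module over a commutative ring is free of rank one, and faithfulness comes cheaply from the Normal Basis Theorem. You instead remain mod $\pi$ and prove $\Omega(\Gamma)\twoheadrightarrow k_\infty^\vee$ is injective directly, by dualising to an embedding $k_\infty\hookrightarrow\Omega(\Gamma)^\vee$ and comparing dimensions of $\Gamma_n$-invariants; this propagates the hypothesis to $k_n = k_\infty^{\Gamma_n}$ for \emph{all} $n$, which is strictly more information than the paper extracts at this point. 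Your $(\Rightarrow)$ direction is also marginally cleaner: the identification $(k_\infty^\vee)_{\Gamma_1}\cong(k_\infty^{\Gamma_1})^\vee$ plus a dimension count avoids the appeal to Proposition \ref{Tame} that the paper makes there. In summary, the paper's route is shorter (faithfulness is a one-line input), while yours yields the propagation statement as a byproduct and bypasses Proposition \ref{oinfdualfful} entirely. Two minor points: the surjectivity of $\Lambda(\Gamma)\to o_\infty^\ast$ in your lift step is better justified by compactness (profinite image is closed) than by completeness alone, and the double dual $(k_\infty^\vee)^\vee\cong k_\infty$ should be read as the \emph{continuous} dual.
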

 \begin{proof} $(\Leftarrow)$ Note that the connecting maps $k_n \to k_{n+1}$ in the colimit $k_\infty := \colim k_n$ are injective: if $x + \pi o_n \in k_n$ maps to zero in $k_{n+1}$ then there is $y \in o_{n+1}$ such that $x = \pi y$; but then $y \in L_n \cap o_{n+1} = o_n$ and hence $x = \pi y \in \pi o_n$. Under our hypothesis that $k_1 \to k_\infty^{\Gamma_1}$ is an isomorphism, it follows that for each $n \geq 1$, the map  $k_n^{\Gamma_1} \to k_{n+1}^{\Gamma_1}$ is an isomorphism. Applying the $(-)^\vee = \Hom_{k_L}(-,k_L)$ functor, we deduce that for each $n \geq 1$, the map on $\Gamma_1$-coinvariants
\[(k_{n+1}^\vee)_{\Gamma_1} \to (k_n^\vee)_{\Gamma_1}\] 
 is an isomorphism. Now, Lemma \ref{LimCoinv} tells us that
 \[ (k_\infty^\vee)_{\Gamma_1} \cong \varprojlim (k_n^\vee)_{\Gamma_1}.\]
Since the maps in the tower of $\Gamma_1$-coinvariants are all isomorphisms, we conclude that the natural map of $k[\Gamma/\Gamma_1]$-modules
\[(k_\infty^\vee)_{\Gamma_1} \to k_1^\vee\] 
must be an isomorphism. Now $k_1^\vee$ is a cyclic $k_L[\Gamma/\Gamma_1]$-module by Proposition \ref{Tame} and the ideal $J \Omega(\Gamma)$ generated by the augmentation ideal $J$ of $\Omega(\Gamma_1)$ is topologically nilpotent in the sense that $J^n \to 0$ as $n \to \infty$, because $\Gamma_1$ is assumed to be pro-$p$. In this situation we can apply the Nakayama Lemma for compact $\Lambda$-modules --- see \cite[Corollary to Theorem 3]{BalHow} --- to deduce that $k_\infty^\vee$ is a cyclic $\Omega(\Gamma)$-module: any lift of a $k_L[\Gamma/\Gamma_1]$-module generator for $k_1^\vee$ to $k_\infty^\vee$ will generate it as an $\Omega(\Gamma)$-module.
 
Now $o_\infty^\ast / \pi o_\infty^\ast \cong k_\infty^\vee$ by Lemma \ref{oinftydualmodpi}. The $\Lambda(\Gamma)$-module $o_\infty^\ast$ is profinite and $\pi^n \to 0$ as $n \to \infty$ in $\Lambda(\Gamma)$, so applying the Nakayama Lemma again, we conclude that $o_\infty^\ast$ is a cyclic $\Lambda(\Gamma)$-module.

Since $o_\infty^\ast$ is a faithful $\Lambda(\Gamma)$-module by Proposition \ref{oinfdualfful} and since $\Gamma$ is abelian, we deduce that $o_\infty^\ast$ must be a free $\Lambda(\Gamma)$-module of rank $1$.  

$(\Rightarrow)$ We reverse the argument above. Assume $o_\infty^\ast$ is a free $\Lambda(\Gamma)$-module of rank $1$. Then Lemma \ref{oinftydualmodpi} implies that $k_\infty^\vee$ is a free $\Omega(\Gamma)$-module of rank $1$. Hence $(k_\infty^\vee)_{\Gamma_1}$ is a free $k[\Gamma/\Gamma_1]$-module of rank $1$. By Lemma \ref{LimCoinv} we have $(k_\infty^\vee)_{\Gamma_1} \cong \varprojlim (k_n^\vee)_{\Gamma_1}$ and the connecting maps in the tower $(k_n^\vee)_{\Gamma_1}$  are surjective, with the bottom term being $(k_1^\vee)_{\Gamma_1} = k_1^\vee$. Since this is a free $k_L[\Gamma/\Gamma_1]$-module of rank $1$ by Proposition \ref{Tame}, the natural map $(k_\infty^\vee)_{\Gamma_1} \to k_1^\vee$ from the inverse limit to the bottom term is a surjection between two free $k_L[\Gamma/\Gamma_1]$-modules of rank $1$. So it is also an isomorphism. Dualising shows that $k_1 \to k_\infty^{\Gamma_1}$ is an isomorphism as well. \end{proof}

\begin{lemma}\label{OdualNotFreeRank1} In the situation of Proposition \ref{OinftyFreeRk1}, suppose that $o_\infty^\ast$ is a free $\Lambda(\Gamma)$-module of rank $1$. Then $L_n/L$ is tamely ramified for all $n \geq 1$. \end{lemma}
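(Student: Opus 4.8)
The plan is to transport the hypothesis on $o_\infty^\ast$ down to each finite layer $o_n^\ast$ and then apply the converse direction of Noether's theorem on the Galois module structure of rings of integers. Fix $n \geq 1$; recall $\Gamma/\Gamma_n = \Gal(L_n/L)$, so $|\Gamma/\Gamma_n| = [L_n:L]$. The key step is the identification $o_n^\ast \cong (o_\infty^\ast)_{\Gamma_n}$ as $o_L[\Gamma/\Gamma_n]$-modules. The natural projection $o_\infty^\ast = \varprojlim_m o_m^\ast \to o_n^\ast$ is surjective by Remark \ref{InvDiff}(2), and since $\Gamma_n$ acts trivially on $o_n^\ast$ it factors through a surjection $(o_\infty^\ast)_{\Gamma_n} \twoheadrightarrow o_n^\ast$. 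Assuming $o_\infty^\ast$ is free of rank $1$ over $\Lambda(\Gamma)$, I would use the identification $\Lambda(\Gamma)/\langle \gamma - 1 : \gamma \in \Gamma_n\rangle \cong o_L[\Gamma/\Gamma_n]$ to write $(o_\infty^\ast)_{\Gamma_n} = o_\infty^\ast \otimes_{\Lambda(\Gamma)} o_L[\Gamma/\Gamma_n]$, which is therefore free of rank $1$ over $o_L[\Gamma/\Gamma_n]$, hence $o_L$-free of rank $[L_n:L]$. Since $o_n^\ast = \Hom_{o_L}(o_n, o_L)$ is also $o_L$-free of rank $[L_n:L]$, the surjection $(o_\infty^\ast)_{\Gamma_n} \twoheadrightarrow o_n^\ast$ is a map between $o_L$-free modules of equal finite rank, and its kernel --- an $o_L$-submodule of a free module of rank $0$ --- must vanish. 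Hence $o_n^\ast$ is free of rank $1$ over $o_L[\Gamma/\Gamma_n]$.

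To conclude, note that $o_L[\Gamma/\Gamma_n]$ is a symmetric $o_L$-algebra (like any group algebra of a finite group over a commutative ring), so the $o_L$-linear dual of a free $o_L[\Gamma/\Gamma_n]$-module is again free of the same rank. Applying this to $o_n = \Hom_{o_L}(o_n^\ast, o_L)$ shows that $o_n$ is a free, in particular projective, $o_L[\Gamma/\Gamma_n]$-module. Since $L$ is a local field, the converse half of Noether's theorem (cf. \cite[Proposition 2.1]{Thomas} and the references therein) then forces $L_n/L$ to be tamely ramified; as $n$ was arbitrary, this proves the lemma.

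I expect the point demanding most care to be that the base-change surjection $(o_\infty^\ast)_{\Gamma_n} \twoheadrightarrow o_n^\ast$ is genuinely an isomorphism rather than a proper quotient --- this is precisely what the $o_L$-rank count above secures --- together with pinning down the exact, less frequently quoted, converse statement of Noether's theorem over the local field $L$. Both are routine, but should be written out carefully.
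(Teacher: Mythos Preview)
Your proposal is correct and follows essentially the same route as the paper: pass to $\Gamma_n$-coinvariants, use the surjection $(o_\infty^\ast)_{\Gamma_n}\twoheadrightarrow o_n^\ast$ together with an $o_L$-rank count to get an isomorphism, then dualise and invoke Noether's criterion via \cite[Proposition 2.1]{Thomas}. The only cosmetic difference is that the paper cites \cite[Lemma]{AB} for the duality step where you appeal to the symmetry of the group algebra; both arguments are equivalent.
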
 
\begin{proof} Consider the $\Gamma_n$-coinvariants of $o_\infty^\ast$. This must be a free rank $1$ $o_L[\Gamma/\Gamma_n]$-module by assumption. On the other hand, by construction, there's a surjective $o_L[\Gamma/\Gamma_n]$-linear map
\[ (o_\infty^\ast)_{\Gamma_n} \to o_n^\ast\]
(see the remark just before Proposition \ref{oinfdualfful}). Both sides are free $o_L$-modules of rank $[L_n:L]$, so this surjective map must actually be an isomorphism by the rank-nullity theorem. So, $o_n^\ast$ is a free rank $1$ $o_L[\Gamma/\Gamma_n]$-module. But then using, for example \cite[Lemma]{AB}, we see that
\[   o_n = \Hom_{o_L}(o_n^\ast, o_L) = \Hom_{o_L[\Gamma/\Gamma_n]}( o_n^\ast, o_L[\Gamma/\Gamma_n] )\]
must also be a free rank $1$ $o_L[\Gamma/\Gamma_n]$-module. In other words, $o_n$ has an integral normal basis, so by \cite[Proposition 2.1]{Thomas} $L_n/L$ must be tamely ramified.\end{proof}

The following result, which may be of independent interest, shows that the hypothesis that the action map $\rho : \Omega(\Gamma) \to \End_{\Omega(\Gamma)}(k_\infty^\vee)$ is an isomorphism has strong implications about ramification behaviour in the tower $L_\infty/L$.

\begin{lemma} Suppose that in the situation of Proposition \ref{OinftyFreeRk1}, we have $\Gamma_1 = \Gamma$ and that the action map $\rho : \Omega(\Gamma) \to \End_{\Omega(\Gamma)}(k_\infty^\vee)$ is an isomorphism. Then $L_n/L$ is tamely ramified for all $n \geq 1$.
\end{lemma}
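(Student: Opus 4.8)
The plan is to reduce the assertion to the criterion in Theorem~\ref{OinftyFreeRk1}. Since we assume $\Gamma_1 = \Gamma$, we have $L_1 = L_\infty^{\Gamma_1} = L_\infty^\Gamma = L$, so $o_1 = o_L$ and $k_1 = k_L$; also $\Gamma = \Gamma_1$ is a torsionfree pro-$p$ abelian group of finite rank (so $\Gamma \cong \Zp^r$, although only "abelian" and "pro-$p$" will really be used). If we can show $k_\infty^\Gamma = k_L$, then the natural inclusion $k_1 = k_L \hookrightarrow k_\infty^\Gamma = k_\infty^{\Gamma_1}$ is an isomorphism, so the $(\Leftarrow)$ direction of Theorem~\ref{OinftyFreeRk1} (whose hypotheses hold here, since $p \nmid |\Gamma/\Gamma_1| = 1$) shows that $o_\infty^\ast$ is a free $\Lambda(\Gamma)$-module of rank $1$, and then Lemma~\ref{OdualNotFreeRank1} gives that $L_n/L$ is tamely ramified for all $n \geq 1$. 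So the whole content is the equality $k_\infty^\Gamma = k_L$.

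To get there, I would first transport the hypothesis on $\rho$ across Iwasawa duality. The Pontryagin-type functor $M \mapsto M^\vee$ is an exact contravariant equivalence between discrete $k_L[\Gamma]$-modules and compact $\Omega(\Gamma)$-modules, with $(k_\infty)^\vee = k_\infty^\vee$ and $(k_\infty^\vee)^\vee = k_\infty$. It induces a ring anti-isomorphism $\End_{k_L[\Gamma]}(k_\infty) \xrightarrow{\ \sim\ } \End_{\Omega(\Gamma)}(k_\infty^\vee)^{\mathrm{op}}$ which, because $\Gamma$ is abelian and the canonical anti-involution $g \mapsto g^{-1}$ of $\Omega(\Gamma)$ is bijective, carries the image of the action map $\Omega(\Gamma) \to \End_{k_L[\Gamma]}(k_\infty)$ bijectively onto the image of $\rho$. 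Hence the surjectivity of $\rho$ (injectivity will not be needed) is equivalent to the statement that every $k_L[\Gamma]$-linear endomorphism of $k_\infty$ is multiplication by an element of $\Omega(\Gamma)$.

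Now observe that $k_\infty = \colim o_n/\pi o_n$ is naturally a $k_L[\Gamma]$-algebra, $\Gamma$ acting by ring automorphisms, with unit $1 \in k_\infty$ the image of $1 \in o_L$; this unit is $\Gamma$-fixed, and since $\Gamma$ acts trivially on it we have $\Omega(\Gamma)\cdot 1 = k_L\cdot 1$ inside $k_\infty$. Suppose, for contradiction, that there is $\xi \in k_\infty^\Gamma$ with $\xi \notin k_L$. Because $\xi$ is $\Gamma$-invariant and multiplication is $\Gamma$-equivariant, the map $m_\xi\colon k_\infty \to k_\infty$, $x \mapsto \xi x$, is $k_L$-linear and $k_L[\Gamma]$-linear, i.e. $m_\xi \in \End_{k_L[\Gamma]}(k_\infty)$. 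By the previous paragraph, $m_\xi$ is multiplication by some $\lambda \in \Omega(\Gamma)$; evaluating at $1$ gives $\xi = m_\xi(1) = \lambda\cdot 1 \in \Omega(\Gamma)\cdot 1 = k_L$, a contradiction. Therefore $k_\infty^\Gamma = k_L$, and the reduction in the first paragraph completes the argument.

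The only delicate point is the duality bookkeeping of the second paragraph — identifying the two natural maps out of $\Omega(\Gamma)$ and checking that surjectivity is preserved (this is exactly where commutativity of $\Gamma$ enters); everything after that is a short direct computation. If one prefers to avoid invoking Theorem~\ref{OinftyFreeRk1}, one can instead argue from $k_\infty^\Gamma = k_L$ as follows: dualizing gives $(k_\infty^\vee)_\Gamma \cong k_L$, so $k_\infty^\vee$ is a cyclic $\Omega(\Gamma)$-module by the topological Nakayama lemma (valid since $\Gamma$ is pro-$p$); then $o_\infty^\ast = \varprojlim o_n^\ast$ is a cyclic $\Lambda(\Gamma)$-module by Lemma~\ref{oinftydualmodpi} together with Nakayama again, and being faithful by Proposition~\ref{oinfdualfful} over the commutative domain $\Lambda(\Gamma)$ it must be free of rank $1$, whence Lemma~\ref{OdualNotFreeRank1} applies.
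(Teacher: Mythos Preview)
Your proof is correct and follows the same overall architecture as the paper's: both reduce to showing $k_\infty^{\Gamma} = k_L$ and then invoke Theorem~\ref{OinftyFreeRk1} together with Lemma~\ref{OdualNotFreeRank1}. The difference lies in how that equality is established. The paper takes $a \in k_\infty^{\Gamma}$, writes the multiplication map $\ell_a$ as $\rho(b)$ for some $b \in \Omega(\Gamma)$, and then uses the \emph{injectivity} of $\rho$ to transfer the algebraicity of $a$ over $k_L$ to $b$; since $\Gamma = \Gamma_1$ forces $\Omega(\Gamma)$ to be a formal power series ring over $k_L$, whose only $k_L$-algebraic elements are constants, one gets $b \in k_L$ and hence $a \in k_L$. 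Your argument instead transports the \emph{surjectivity} of $\rho$ across Pontryagin duality to the action map $\Omega(\Gamma) \to \End_{k_L[\Gamma]}(k_\infty)$, writes $m_\xi$ as the action of some $\lambda \in \Omega(\Gamma)$, and evaluates at the $\Gamma$-fixed unit $1 \in k_\infty$ to obtain $\xi = \lambda \cdot 1 = \epsilon(\lambda) \in k_L$ via the augmentation. This is a genuinely different and in some ways cleaner route: it avoids the algebraicity argument in $\Omega(\Gamma)$ entirely and, as you observe, does not use injectivity of $\rho$ at all. The duality bookkeeping you flag is indeed routine (commutativity of $\Gamma$ makes the anti-involution harmless), and your alternative ending via Nakayama essentially reproves the relevant direction of Theorem~\ref{OinftyFreeRk1}, so citing it directly as you do first is preferable.
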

\begin{proof} Let $a \in k_\infty^{\Gamma_1}$ and consider the multiplication-by-$a$ map $\ell_a : k_\infty \to k_\infty$. Since $a$ is fixed by $\Gamma = \Gamma_1$, this map is $\Omega(\Gamma)$-linear. By our assumption on $\rho$, we can find some $b \in \Omega(\Gamma)$ such that $\rho(b) = a$. Now $a$ is algebraic over $k_L$ and $\rho$ is injective by assumption, so $b \in \Omega(\Gamma)$ must be algebraic over $k_L$ as well. Since $\Gamma = \Gamma_1$, the mod-$p$ Iwasawa algebra $\Omega(\Gamma)$ is a power series ring over $k_L$ in finitely many variables. The only elements of such a power series ring that are algebraic over $k_L$ are constants. Hence $b \in k_L$ and so $a \in k_L = k_1$ since $\Gamma = \Gamma_1$. Hence $k_\infty^{\Gamma_1} = k_1$. Now the result follows from Theorem \ref{OinftyFreeRk1} and Lemma \ref{OdualNotFreeRank1}. \end{proof} 
Returning to the setting of \S \ref{KatzApps}, we have the following conclusion.
\begin{corollary}\label{PerrinRiou} Suppose that $L = \bQ_{p^2}$ and $\pi = p$, and let $\cG$ be the Lubin-Tate formal group attached to $\pi$. We have $L_\infty = L(\cG[p^\infty])$; let $\Gamma_L^{LT} = \Gal(L_\infty/L)$. Then $o_L \dcroc{Z}^{\psi_q=0}$ is \emph{not} a free $o_L \dcroc{\Gamma_L^{\LT}}$-module of rank $1$.
\end{corollary}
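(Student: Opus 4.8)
The plan is to transport the question to the ring of integers $o_\infty$ via the Katz isomorphism, and then to contradict the wild ramification of the Lubin--Tate tower. Set $\Gamma := \Gamma_L^{\LT} = \Gal(L_\infty/L)$. Since $L = \bQ_{p^2}$, the character $\tau$ is surjective (Lemma \ref{lem:tau-sur}), and as $L_\infty = \Cp^{\ker\tau}$ it identifies $\Gamma$ with $o_L^\times$, so that $o_L\dcroc{\Gamma_L^{\LT}} = \Lambda(\Gamma)$ is the Iwasawa algebra of $\Gamma$. By Theorem \ref{dualoinfty}, together with the matching of module structures used in the proof of Corollary \ref{iwascrit} (under which an element $g \in \Gamma$ acts on $o_\infty = o_{\Cp}^{\ker\tau}$ exactly as it does on $o_L\dcroc{Z}^{\psi_q=0}$ via $\tau$), the map $\cK_1^\ast$ is an isomorphism of $\Lambda(\Gamma)$-modules
\[ o_\infty^\ast := \Hom_{o_L}(o_\infty, o_L) \stackrel{\cong}{\longrightarrow} o_L\dcroc{Z}^{\psi_q=0}. \]
Hence it suffices to show that $o_\infty^\ast$ is not a free $\Lambda(\Gamma)$-module of rank $1$.

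To do this, write $o_n$ for the ring of integers of $L_n := L(\cG[\pi^n])$ and $\Gamma_n := \Gal(L_\infty/L_n)$. Since $\pi = p$ we have $L_\infty = L(\cG[p^\infty])$, so the $\Gamma_n$ are open, normal in $\Gamma$ (as $\Gamma$ is abelian) and satisfy $\bigcap_n \Gamma_n = 1$; moreover $o_\infty = \colim_n o_n$, whence $o_\infty^\ast = \varprojlim_n o_n^\ast$ with surjective transition maps by Remark \ref{InvDiff}(2). Suppose for a contradiction that $o_\infty^\ast$ were a free $\Lambda(\Gamma)$-module of rank $1$. Then, running the argument in the proof of Lemma \ref{OdualNotFreeRank1} --- which only uses that each $\Gamma_n$ is normal in $\Gamma$ and that $(o_\infty^\ast)_{\Gamma_n} \twoheadrightarrow o_n^\ast$ --- the coinvariants $(o_\infty^\ast)_{\Gamma_n}$ are free of rank $1$ over $o_L[\Gamma/\Gamma_n]$, hence the surjection onto $o_n^\ast$ is a surjection of free $o_L$-modules of equal rank $[L_n:L]$ and so an isomorphism; dualising (via \cite[Lemma]{AB}) shows that $o_n$ is a free $o_L[\Gal(L_n/L)]$-module of rank $1$, i.e. $L_n/L$ has a normal integral basis and is therefore tamely ramified by \cite[Proposition 2.1]{Thomas}. (For $p$ odd one may alternatively quote Lemma \ref{OdualNotFreeRank1} verbatim, taking $\Gamma_1 \cong 1 + p o_L$, which is torsion-free pro-$p$ of finite rank with $\Gamma/\Gamma_1 \cong k_L^\times$ of order prime to $p$.) But classical Lubin--Tate theory shows that $L_n = L(\cG[\pi^n])/L$ is totally ramified of degree $q^{n-1}(q-1)$, which for $n \geq 2$ is divisible by $q = p^2$; thus $L_2/L$ is wildly ramified, a contradiction. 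Therefore $o_\infty^\ast$, and hence $o_L\dcroc{Z}^{\psi_q=0}$, is not a free $o_L\dcroc{\Gamma_L^{\LT}}$-module of rank $1$.

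The one step requiring genuine care is the first: checking that the isomorphism of Theorem \ref{dualoinfty} is $\Lambda(\Gamma)$-linear and not merely $o_L$-linear --- equivalently, that the natural $\Gamma$-action on the dual $o_\infty^\ast$ is carried by $\cK_1^\ast$ to the twisted Galois action on $o_L\dcroc{Z}^{\psi_q=0}$, with $\tau : \Gamma \xrightarrow{\sim} o_L^\times$ built in. This is precisely the compatibility already exploited in Corollary \ref{iwascrit} (it is why $o_L\dcroc{o_L^\times} \subseteq \Lambda_L(\frX)^{\psi_q=0}$ corresponds to $o_L\dcroc{\Gamma_L^{\LT}}$ acting on $o_\infty$). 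Everything after that is the identification $o_\infty^\ast = \varprojlim_n o_n^\ast$, the homological bookkeeping of the preceding subsection, and the standard fact that the Lubin--Tate extensions $L(\cG[\pi^n])/L$ are wildly ramified for $n \geq 2$.
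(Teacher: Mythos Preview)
Your proof is correct and follows essentially the same route as the paper: transfer the question to $o_\infty^\ast$ via Theorem \ref{dualoinfty}, then contradict freeness using the wild ramification of $L_n/L$ for $n \geq 2$ exactly as in Lemma \ref{OdualNotFreeRank1}. You are in fact more careful than the paper on two points: you flag that the isomorphism of Theorem \ref{dualoinfty} must be $\Lambda(\Gamma)$-equivariant (the paper's proof takes this for granted), and you note that the hypotheses of Theorem \ref{OinftyFreeRk1} are not actually used in the proof of Lemma \ref{OdualNotFreeRank1}, so the argument goes through uniformly in $p$.
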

\begin{proof} It is well known that $L_n/L$ is not tamely ramified for any $n \geq 2$. Hence $o_\infty^\ast$ is not a free $\Lambda(\Gamma_L^{LT})$-module of rank $1$ by Lemma \ref{OdualNotFreeRank1}. Since $\cG$ is self-dual, the tower $L_\infty/L$ coincides with the one defined at Definition \ref{DualTowerDef}(1). The result now follows from Theorem \ref{introdualoinfty}.\end{proof}

\subsection{The operator $\psi$ and the span of the $P_n$}
\label{subspan}

We now turn to some consequences of the Katz isomorphism for the span of the $P_n$,
where $P_n$ is the element of $\cC^0_{\Gal}(o_L,o_{\Cp})$ given by 
$a \mapsto P_n(a \cdot \Omega)$. The Katz map $\cK^\ast : \Hom_{o_L}(\cC^0_{\Gal}(o_L,o_{\Cp}), S) 
\to S \dcroc{Z}^{\psiqint}$ is then given by $\mu \mapsto \sum_{n \geq 0} \mu(P_n) Z^n$.

\begin{proposition}
\label{pndense}
The $L$-span of the $P_n$ is dense in the $L$-Banach space $\cC^0_{\Gal}(o_L,\Cp)$.
\end{proposition}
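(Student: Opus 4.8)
The plan is to deduce this from the injectivity of the dual Katz map, i.e. from Theorem \ref{thm:katzisom}(1) applied with $S = o_L$ (available here since we are in \S\ref{subspan}, where $L = \bQ_{p^2}$), by a Hahn--Banach argument that exploits the fact that $L$ is discretely valued. First recall the set-up: for $u_m \in \widehat U$ the function $\cK(u_m)\colon a \mapsto \langle u_m, \Delta_a\rangle = P_m(a\Omega)$ is exactly the element $P_m$ appearing in the statement, and it lies in $\cC = \cC^0_{\Gal}(o_L,o_{\Cp}) \subseteq C = \cC^0_{\Gal}(o_L,\Cp)$ by Lemma \ref{lem:KatzGalCts}. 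Note that $C$ is a Banach space over $L$ (the constraint $\sigma(f(a)) = f(a\tau(\sigma))$ is $L$-linear, though not $\Cp$-linear), with unit ball $\cC$ and $C = \bigcup_k p^{-k}\cC$. Since a subspace of an $L$-Banach space is dense iff the only continuous $L$-linear functional vanishing on it is zero, it suffices to prove: if $\phi\colon C \to L$ is continuous and $L$-linear with $\phi(P_m) = 0$ for all $m \geq 0$, then $\phi = 0$.

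So suppose $\phi$ is such a functional. As $\phi$ is bounded, $\phi(\cC)$ is a bounded subset of $L$, hence contained in $p^{-k}o_L$ for some $k$; replacing $\phi$ by $p^k\phi$ (which does not affect the hypothesis or conclusion) we may assume $\phi(\cC) \subseteq o_L$, so that $\phi$ restricts to an element of $\cC^\ast = \Hom_{o_L}(\cC^0_{\Gal}(o_L,o_{\Cp}),o_L)$. Under the identification $\widehat U^\ast \cong o_L\dcroc{Z}$ coming from Lemma \ref{lem:extendToS} and Lemma \ref{lem:BigPairing}(4), the dual Katz map sends $\phi$ to the power series whose $m$-th coefficient is $\phi(\cK(u_m)) = \phi(P_m)$; hence $\cK^\ast(\phi) = \sum_{m \geq 0}\phi(P_m)Z^m = 0$. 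By Theorem \ref{thm:katzisom}(1) the map $\cK^\ast$ is injective, so $\phi = 0$ in $\cC^\ast$, and therefore $\phi = 0$ on $C = \bigcup_k p^{-k}\cC$. This proves the proposition.

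The substantive input is entirely Theorem \ref{thm:katzisom}(1), established in \S\ref{Qp^2KatzSection}; the Hahn--Banach step is legitimate because $L$ is complete and discretely valued, hence spherically complete, so every proper closed subspace of the $L$-Banach space $C$ is annihilated by a nonzero continuous $L$-linear functional. Consequently the ``hard part'' is not in the present argument but was already absorbed into the proof of the Katz isomorphism; what remains to verify is bookkeeping only, namely that $\cK(u_m)$ is the Galois-continuous function $a \mapsto P_m(a\Omega)$ and that $C$ is genuinely an $L$-Banach space with unit ball $\cC$ so that the functional-theoretic criterion for density applies. (Equivalently, this shows that the dense-image assertion of Theorem \ref{intropncogal} holds, since the image of $\cC^0(L)$ contains the $L$-span of the $P_n$.)
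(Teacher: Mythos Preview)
Your proof is correct and is essentially the same argument as the paper's: both produce a nonzero continuous $L$-linear functional vanishing on all $P_m$ (you via Hahn--Banach over the spherically complete field $L$, the paper via the existence of a closed complement to a proper closed subspace) and then contradict the injectivity of $\cK^\ast$ from Theorem \ref{thm:katzisom}(1). Your version is simply more explicit about where the contradiction comes from.
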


\begin{proof}
Let $W$ denote the closure of the $L$-span of the $P_n$ in $\cC^0_{\Gal}(o_L,\Cp)$. If $W \neq \cC^0_{\Gal}(o_L,\Cp)$, then it has a closed complement in $\cC^0_{\Gal}(o_L,\Cp)$ and we can find a measure $\mu \neq 0$ that is zero on $W$ (and hence on all of the $P_n$). This is a contradiction.
\end{proof}

\begin{remark}
\label{pndbis}
There is another proof of this result. Indeed, locally analytic functions are dense in $\cC^0(o_L,\Cp)$ and for locally analytic functions, we have the generalized Mahler expansion of \cite[Theorem 4.7]{ST}. So it is enough to prove that locally analytic and Gal continuous functions are dense in $\cC^0_{\Gal}(o_L,\Cp)$. A Gal-continuous function is determined by $(f(p^n))_{n=0}^\infty$ where each $f(p^n) \in L_\infty$ and $f(0) \in L$ and $f(p^n) \to f(0)$. We can approximate each $f(p^n)$ by an element of $L_\infty$ and this way, we can show that Gal-continuous locally constant functions are dense in the Gal-continuous functions. More precisely, given a sequence $\{f_n\}$ as above and some $k \geq 0$, we have $f_n - f_\infty \in p^k o_{\Cp}$ for all $n \geq n(k)$, so we replace these $f_n$ by $f_\infty$, and approximate the others to within $p^{-k}$. 
\end{remark}

We now choose a coordinate $X$ on $\LT$ such that $[p]_{\LT}(X)=pX+X^q$. The polynomials $P_i$ depend on the choice of coordinate. However, the $o_L$-module $\oplus_{i=0}^n o_L \cdot P_i$ is independent of the coordinate.  Given this choice of coordinate, we have formulas and estimates for $\psi_q$ in \cite[\S 2A]{FX}.

\begin{lemma}
\label{psinilp}
If $k \geq 1$, then $\psi_q(X^k) \in L[X]_{k-1}$.
\end{lemma}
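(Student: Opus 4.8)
The claim is that $\psi_q(X^k) \in L[X]_{k-1}$ for all $k \geq 1$, i.e. that $\psi_q$ applied to a monomial of degree $k \geq 1$ produces a polynomial of degree at most $k-1$. The natural approach is to combine the Projection Formula from Lemma \ref{lem:projection-formula} with the fundamental equation $\psi_q \circ \varphi = \id$ from Corollary \ref{cor:FundPsiPhi}, together with an induction on $k$, using that the coordinate has been chosen so that $\varphi(X) = [p]_{\LT}(X) = pX + X^q$.

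First I would record the base cases that follow immediately from the definitions: $\psi_q(1) = 1$ (Corollary \ref{cor:FundPsiPhi}), and for $1 \le k \le q-1$ the power series $X^k$ lies in $L[X]$ of degree $< q$, so by the Weierstrass decomposition of Lemma \ref{Weierstrass}(1) its $\varphi(X)$-quotient is zero; using the explicit formula $\varphi(\psi_{\col}(F))(Z) = \sum_{\zeta \in \cG_1} F(\zeta \oplus Z)$ one sees that $\psi_{\col}(X^k)$ has degree at most $k-1$ — indeed, $\sum_{\zeta} (\zeta \oplus X)^k$ is a polynomial in $X$ of degree $k$ whose leading coefficient is $q$ times the leading coefficient of $X^k$ computed mod lower-degree terms, but then dividing by $\varphi(X) = X^q + pX$ in the Weierstrass sense... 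Actually the cleanest route: $\psi_q(X^k)$ for $k < q$ is obtained by writing $\sum_\zeta (\zeta \oplus X)^k = \varphi(X) \cdot H(X) + 0$ is wrong since the left side has degree $k < q$; instead $\sum_\zeta(\zeta\oplus X)^k$ itself, having degree $k < q$, must already be $\varphi(\psi_{\col}(X^k))$, but $\varphi$ raises degree by a factor $q$, forcing $\psi_{\col}(X^k)$ to have degree $\le (k)/q < 1$, hence to be constant. So actually for $1 \le k \le q-1$, $\psi_q(X^k)$ is a constant, which is certainly in $L[X]_{k-1}$ (as $k-1 \ge 0$). I would state this carefully.

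For the inductive step with $k \geq q$, I would use the division $\varphi(X) = X^q + pX$ to write $X^q = \varphi(X) - pX$, hence $X^k = X^{k-q}\varphi(X) - p X^{k-q+1}$. Applying $\psi_q$ and the Projection Formula (Lemma \ref{lem:projection-formula}) with $F = X^{k-q}$, $G = X$, noting $X^{k-q} = \varphi(G')$ is not needed — rather we apply it as $\psi_q(X^{k-q}\varphi(X)) = \psi_q(X^{k-q}) \cdot X$. This gives
\[ \psi_q(X^k) = \psi_q(X^{k-q}) \cdot X - p\,\psi_q(X^{k-q+1}). \]
By the inductive hypothesis (valid for all exponents $< k$, which includes $k-q \ge 0$ and $k-q+1 \le k-1$ since $q \ge 2$), $\psi_q(X^{k-q})$ has degree $\le \max(k-q-1, 0) \le k-q$ — wait, I need the case $k - q = 0$ handled by $\psi_q(1) = 1$ separately, giving $\psi_q(X^q) = X - p\,\psi_q(X)$, degree $\le 1 = q - 1$ only if $q = 2$; for general $q$, $\psi_q(X) $ is constant so $\psi_q(X^q)$ has degree $\le 1 \le q-1$. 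Good. In general $\deg(\psi_q(X^{k-q})\cdot X) \le (k-q)+1 = k-q+1 \le k-1$ since $q \ge 2$, and $\deg(\psi_q(X^{k-q+1})) \le k-q \le k-1$. Hence $\deg \psi_q(X^k) \le k-1$, completing the induction.

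\textbf{Main obstacle.} The genuine content is bookkeeping: confirming the low-degree base cases ($1 \le k < q$) really land in $L[X]$ and not just $L\dcroc{X}$, and checking the degree bound survives the recursion $\psi_q(X^k) = X\,\psi_q(X^{k-q}) - p\,\psi_q(X^{k-q+1})$ uniformly in $q$. I expect the subtlety to be organizing the induction so that the hypothesis is invoked only at strictly smaller exponents and the $k=q$ boundary case is treated correctly; the Projection Formula and $\psi_q\varphi = \id$ do all the real work, and the reference to \cite[\S 2A]{FX} presumably supplies an alternative explicit derivation if one prefers to avoid the induction. No deep obstacle is anticipated.
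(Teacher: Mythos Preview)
The paper's proof of this lemma is simply a citation to \cite[Proposition 2.2]{FX}, so you are supplying considerably more detail than the paper does. Your inductive step via the recursion $\psi_q(X^k) = X\,\psi_q(X^{k-q}) - p\,\psi_q(X^{k-q+1})$ is correct (this recursion is also recorded explicitly in the paper at Remark~\ref{formpsi}(4)), and the degree-bookkeeping you sketch goes through.

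The gap is in your base case $1 \le k \le q-1$. You assert that $\sum_{\zeta \in \cG_1}(\zeta \oplus X)^k$ ``has degree $k < q$'', but $\zeta \oplus X$ is a \emph{power series} in $X$ (the formal group law is not polynomial), so this sum is a priori only a power series; your degree-comparison with $\varphi$ therefore does not go through as written. A clean fix: by Weierstrass division (Lemma~\ref{Weierstrass}(1)), $\{1, X, \ldots, X^{q-1}\}$ is a basis for $o_L\dcroc{X}$ over $\varphi(o_L\dcroc{X})$, and the characteristic polynomial of multiplication-by-$X$ on this free module is $T^q + pT - \varphi(X)$ (since $X^q + pX = \varphi(X)$). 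Its elementary symmetric functions satisfy $e_1 = \cdots = e_{q-2} = 0$, so Newton's identities give the power sums $p_k = \Tr(X^k) = \varphi(\psi_{\col}(X^k))$ directly: $p_k = 0$ for $1 \le k \le q-2$ and $p_{q-1} = (1-q)p$, both constants. Hence $\psi_q(X^k)$ is constant for $1 \le k \le q-1$, exactly matching Remark~\ref{formpsi}(1),(2), and with these base cases in hand your induction completes the argument.
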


\begin{proof}
See \cite[Proposition 2.2]{FX}.
\end{proof}

Let $c^0(A)$ denote the set of sequences $\{c_n\}_{n \geq 0}$ with $c_n \in A$ and $c_n \to 0$ ($A=o_L$ or $L$).

\begin{corollary}
\label{pnindep}
The map $c^0(o_L) \to \cC^0_{\Gal}(o_L,o_{\Cp})$ given by $\{c_i\}_{i \geq 0} \mapsto \sum_{i \geq 0} c_i P_i$ is injective, as well as the same map $c^0(L) \to \cC^0_{\Gal}(o_L,\Cp)$.
\end{corollary}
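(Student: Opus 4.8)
The plan is to reduce everything to the injectivity of the Katz map $\cK:\h{U}\to\cC$, which holds here by Corollary \ref{cor: Qp2Katz} since $d=[L:\Qp]=2$. The key identification is $P_n=\cK(u_n)$: since $\Delta_a=\sum_{m\ge 0}P_m(a\Omega)Z^m$ and $\langle u_n,Z^m\rangle=\delta_{nm}$ by Definition \ref{HypDef}(5), we get $\cK(u_n)(a)=\langle u_n,\Delta_a\rangle=P_n(a\Omega)$, i.e.\ $\cK(u_n)$ is exactly the function $P_n\in\cC=\cC^0_{\Gal}(o_L,o_{\Cp})$.

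First I would check that $\cK$ is $p$-adically continuous, in fact norm non-increasing: writing $u=\sum_m a_m u_m\in p^k\h{U}$ with all $a_m\in p^ko_L$, the formula of Lemma \ref{lem:BigPairing}(1) gives $\cK(u)(a)=\sum_m a_mP_m(a\Omega)\in p^ko_\infty$ for every $a\in o_L$, since $\Delta_a\in o_\infty\dcroc{Z}$; hence $\|\cK(u)\|_\infty\le p^{-k}$. Consequently, for any zero sequence $\{c_i\}\in c^0(o_L)$ both series $\sum_i c_iu_i\in\h{U}$ and $\sum_i c_iP_i\in\cC$ converge, and continuity together with the $o_L$-linearity of $\cK$ give $\cK\!\left(\sum_i c_iu_i\right)=\sum_i c_i\cK(u_i)=\sum_i c_iP_i$.

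The first assertion then follows at once: if $\sum_i c_iP_i=0$, then $\cK\!\left(\sum_i c_iu_i\right)=0$, so $\sum_i c_iu_i=0$ in $\h{U}$ by injectivity of $\cK$; pairing against $Z^n$ and using the bilinearity of $\langle-,-\rangle$ (Lemma \ref{lem:BigPairing}), or equivalently the uniqueness of the expansion $(\ref{eq:HUGS})$, yields $c_n=0$ for all $n$. For the statement over $L$: given $\{c_i\}\in c^0(L)$ with $\sum_i c_iP_i=0$, the condition $c_i\to 0$ forces $p^kc_i\in o_L$ for all $i$ and some fixed $k$, so $\{p^kc_i\}\in c^0(o_L)$ satisfies $\sum_i (p^kc_i)P_i=0$, whence $p^kc_i=0$ and thus $c_i=0$ for every $i$.

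I do not expect a real obstacle here: the substantive input (injectivity of $\cK$) is already in hand, and the only points needing care are the continuity of $\cK$ — used to commute $\cK$ past an infinite sum — and the fact that $\{u_n\}$ is a topological $o_L$-basis of $\h{U}$, both of which are immediate from the earlier material. In particular this argument uses neither Lemma \ref{psinilp} nor the special coordinate $X$; those enter only in the finer statements (dense image, failure of surjectivity, and the coefficient estimates) proved afterwards.
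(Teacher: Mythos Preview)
Your proof is correct and takes a genuinely more direct route than the paper's. The paper argues via the dual Katz isomorphism $\cK^\ast:\cC^\ast\stackrel{\cong}{\to} o_L\dcroc{X}^{\psiqint}$ of Theorem~\ref{thm:katzisom}: Lemma~\ref{psinilp} gives, for each $k$, an integer $n=n(k)$ with $p^{n}X^k\in o_L\dcroc{X}^{\psiqint}$, and the corresponding measure $\mu_k\in\cC^\ast$ satisfies $\mu_k(P_i)=p^n\delta_{ik}$; applying $\mu_k$ to $\sum_i c_iP_i=0$ then forces $c_k=0$. You bypass $\cK^\ast$, the $\psi_q$-operator, and the special coordinate entirely by observing that the map $\{c_i\}\mapsto\sum_i c_iP_i$ is precisely $\cK$ precomposed with the tautological identification $c^0(o_L)\cong\h{U}$ from $(\ref{eq:HUGS})$, so injectivity is immediate from Corollary~\ref{cor: Qp2Katz}. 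What the paper's detour buys is an explicit family of measures separating the $P_i$, together with control over the $p$-power needed; this is exactly the input recycled in the quantitative statements that follow (Lemma~\ref{psiqpk}, Proposition~\ref{hupk}, Lemma~\ref{expsin}, Corollary~\ref{notbij}). Your argument is the cleaner one for the bare injectivity claim, but it would not on its own produce those refinements.
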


\begin{proof}
Lemma \ref{psinilp} implies that for all $k \geq 0$, there exists $n=n(k)$ such that $p^n X^k \in o_L\dcroc{X}^{\psiqint}$. Let $\mu$ be the corresponding measure. We have $\mu(\sum_{i \geq 0} c_i P_i) = p^n c_k$ hence if $\sum_{i \geq 0} c_i P_i = 0$, then $c_k = 0$. The second assertion follows from the first.
\end{proof}

\begin{lemma}
\label{psiqpk}
If $k \geq 1$, then $\psi_q(p^k \cdot o_L[X]_{q^k}) \subset p^{k-1} \cdot o_L[X]_{q^{k-1}}$.
\end{lemma}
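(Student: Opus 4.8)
The plan is to reduce Lemma~\ref{psiqpk} to two uniform estimates for the action of $\psi_q$ on the monomials $X^j$, namely:
\begin{itemize}
\item[(A)] $p\cdot\psi_q(X^j)\in o_L[X]$ for every $j\ge 0$;
\item[(B)] $\deg\psi_q(X^j)\le\lceil j/q\rceil$ for every $j\ge 0$.
\end{itemize}
Granting (A) and (B), the lemma is immediate: if $F=\sum_{j=0}^{q^k}a_jX^j$ with all $a_j\in o_L$, then $\psi_q(p^kF)=p^{k-1}\sum_j a_j\bigl(p\,\psi_q(X^j)\bigr)$ lies in $p^{k-1}o_L[X]$ by (A), and each $\psi_q(X^j)$ occurring has degree $\le\lceil j/q\rceil\le\lceil q^k/q\rceil=q^{k-1}$ by (B), so $\psi_q(p^kF)\in p^{k-1}o_L[X]_{q^{k-1}}$.

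To prove (A) and (B) I would use strong induction on $j$, with the following recursion as the engine. Since $\varphi(X)=[p]_{\LT}(X)=pX+X^q$, we have $X^q=\varphi(X)-pX$ in $o_L[X]$; hence for $m\ge1$ and $0\le r<q$, writing $X^{qm+r}=X^r(\varphi(X)-pX)^m$, expanding by the binomial theorem (and using $\varphi(X)^i=\varphi(X^i)$), then applying the projection formula $\psi_q\bigl(G\,\varphi(H)\bigr)=\psi_q(G)\,H$ (Lemma~\ref{lem:projection-formula}) together with $\psi_q\circ\varphi=\mathrm{id}$ (Corollary~\ref{cor:FundPsiPhi}), one obtains
\[
\psi_q\bigl(X^{qm+r}\bigr)=\sum_{i=0}^{m}\binom{m}{i}(-p)^{m-i}\,\psi_q\bigl(X^{r+m-i}\bigr)\,X^i .
\]
Each exponent $r+m-i$ on the right is at most $m+q-1$, which is $<qm+r$ for $m\ge1$, so this reduces $\psi_q(X^j)$ for $j\ge q$ to values of $\psi_q$ on strictly lower powers; the only genuine base cases are $0\le j\le q-1$. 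There $\psi_q(1)=1$, and the needed estimates $p\,\psi_q(X^r)\in o_L[X]$ and $\deg\psi_q(X^r)\le 1$ for $1\le r\le q-1$ come from the explicit formulas and valuation bounds of Fourquaux--Xie \cite[\S 2A]{FX} (which refine Lemma~\ref{psinilp}); alternatively they can be checked directly, using $\psi_q(F)(0)=q^{-1}\sum_{\zeta\in\cG_1}F(\zeta)$ (this vanishes for $1\le r\le q-2$ and has valuation $-1$ for $r=q-1$) together with an inspection of the higher coefficients via the formal group law. Feeding these base estimates into the recursion, (A) follows because for $i<m$ the factor $(-p)^{m-i}$ already absorbs the single power of $p$ in $\psi_q(X^{r+m-i})$, while for $i=m$ the term is $\psi_q(X^r)X^m$, handled by the base case; and (B) follows from the elementary inequalities $\lceil(r+m-i)/q\rceil+i\le\lceil(qm+r)/q\rceil$ for $0\le i\le m$, which I would check by substituting $t=m-i$ and verifying $\lceil(r+t)/q\rceil\le t+1$.

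The main obstacle is the degree part of the base case: Lemma~\ref{psinilp} only gives $\psi_q(X^r)\in L[X]_{r-1}$, which is too weak for (B) as soon as $r\ge 3$, so one genuinely needs the sharper bound $\deg\psi_q(X^r)\le 1$ for all $r\le q-1$ — precisely the content of the finer Fourquaux--Xie estimates (or of the explicit computation with the power sums $\sum_{\zeta\in\cG_1}\zeta^r$ and the formal group law of $\cG$). Once that input is secured, everything else is routine binomial and rounding bookkeeping.
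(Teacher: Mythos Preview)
Your proposal is correct. The paper's own proof is the single sentence ``This follows from \cite[Proposition 2.2]{FX}'', so you are not taking a different route but rather unpacking what that citation contains: the two uniform estimates (A) and (B), proved via the binomial recursion coming from $X^q=\varphi(X)-pX$ and the projection formula, with the base cases $0\le r\le q-1$ being exactly the explicit values recorded in Remark~\ref{formpsi} (which the paper likewise attributes to \cite{FX}).

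One sharpening for the vague spot you flag: for the base case you do not need to ``inspect higher coefficients via the formal group law'' term by term. The translates $\zeta\oplus Z$ for $\zeta\in\cG_1$ are exactly the $q$ roots (in $W$) of $[p](W)-[p](Z)=W^q+pW-(Z^q+pZ)$, so the power sums $p_r=\sum_{\zeta}(\zeta\oplus Z)^r$ are determined by Newton's identities from the elementary symmetric functions of this polynomial; since $e_1=\cdots=e_{q-2}=0$, one gets $p_r=0$ for $1\le r\le q-2$ and $p_{q-1}=(1-q)p$, whence $\psi_q(X^r)=0$ for $1\le r\le q-2$ and $\psi_q(X^{q-1})=(1-q)/p$. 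This gives both $p\,\psi_q(X^r)\in o_L$ and $\deg\psi_q(X^r)\le 0$ for all $r<q$ in one stroke, without any further appeal to \cite{FX}.
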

\begin{proof}
This follows from \cite[Proposition 2.2]{FX}.
\end{proof}

Let $H_n \subset L[\Omega]$ denote the set of $P(\Omega)$ such that $\deg P \leq n$ and $P(a \Omega) \in o_{\Cp}$ for all $a \in o_L$. Obviously, $U_n = \oplus_{i=0}^n o_L \cdot P_i(\Omega)  \subset H_n$. Let $\mu_i : \cC^0_{\Gal}(o_L,o_{\Cp}) \to L$ be the measure corresponding to $X^i$, so that $\mu_i(P_j) = \delta_{ij}$.

\begin{proposition}
\label{hupk}
If $Q(\Omega) = \sum_{i=0}^n c_i P_i(\Omega) \in H_n$, then $c_i \in p^{-m} o_L$ if $i \leq q^m$.
\end{proposition}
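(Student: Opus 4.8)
The plan is to read off the coefficient $c_i$ by pairing $Q$ against a carefully chosen \emph{integral} Galois measure coming from the Katz isomorphism, Theorem~\ref{thm:katzisom}. Fix $i$ with $0\le i\le n$ and let $m\ge 0$ satisfy $i\le q^m$; we must show $p^m c_i\in o_L$. First I would note that the function $f_Q\colon a\mapsto Q(a\Omega)=\sum_{j=0}^n c_j\,P_j(a\Omega)$ is $\Gal$-continuous (because each $P_j$ is and $Q$ has coefficients in $L$), and that the hypothesis $Q(\Omega)\in H_n$ says exactly that $f_Q$ takes values in $o_{\Cp}$; thus $f_Q\in\cC=\cC^0_{\Gal}(o_L,o_{\Cp})$, and inside $\cC^0_{\Gal}(o_L,\Cp)=\cC\otimes_{o_L}L$ we have $f_Q=\sum_{j=0}^n c_j\,P_j$. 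The second, and decisive, ingredient is a measure $\nu\in\Hom_{o_L}(\cC,o_L)$ with $\nu(P_j)=p^m\delta_{ij}$ for all $j\ge 0$. Granting its existence, extend $\nu$ $L$-linearly and evaluate at $f_Q$: since $f_Q\in\cC$ we get $p^m c_i=\sum_{j=0}^n c_j\,\nu(P_j)=\nu(f_Q)\in o_L$, which is the claim.

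To produce $\nu$ I would invoke Theorem~\ref{thm:katzisom} with $S=o_L$: the dual Katz map $\cK^\ast\colon\Hom_{o_L}(\cC,o_L)\xrightarrow{\ \sim\ }o_L\dcroc{X}^{\psiqint}$ is an isomorphism sending $\mu$ to $\sum_{k\ge 0}\mu(P_k)X^k$, so it suffices to prove the key point that $p^m X^i\in o_L\dcroc{X}^{\psiqint}$ whenever $i\le q^m$; then $\nu:=(\cK^\ast)^{-1}(p^m X^i)$ does the job. By Remark~\ref{rem:explicitpsiint} this means $\psi_q^\ell(p^m X^i)\in o_L\dcroc{X}$ for all $\ell\ge 0$. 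Since $i\le q^m$, we have $p^m X^i\in p^m o_L[X]_{q^m}$, and iterating Lemma~\ref{psiqpk} exactly $m$ times gives $\psi_q^\ell(p^m X^i)\in p^{m-\ell}o_L[X]_{q^{m-\ell}}$ for $0\le\ell\le m$; in particular $\psi_q^m(p^m X^i)\in o_L[X]_1=o_L\oplus o_LX$. For $\ell>m$ it is then enough to know that $\psi_q$ maps $o_L[X]_1$ into $o_L\dcroc{X}$; here I would show $\psi_q(X)=0$: it is a constant by Lemma~\ref{psinilp}, and evaluating the defining relation $\varphi(\psi_{\col}(X))(Z)=\sum_{\zeta\in\cG_1}(\zeta\oplus Z)$ at $Z=0$ (using $\varphi(G)(0)=G(0)$ since $[\pi](0)=0$) gives $\psi_q(X)(0)=\tfrac1q\sum_{\zeta\in\cG_1}\zeta=0$, the last equality because for $L=\bQ_{p^2}$ one has $q=p^2\ge 3$, so the $\pi$-torsion points of $\cG$ sum to zero. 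Combined with $\psi_q(1)=1$ (Corollary~\ref{cor:FundPsiPhi}) this gives $\psi_q(o_L[X]_1)\subseteq o_L$, hence $\psi_q^\ell(p^m X^i)\in o_L$ for all $\ell>m$ and the key point follows.

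The genuine difficulty is concentrated in this last point: controlling the $p$-power denominators that pile up under iterated application of $\psi_q$. This is exactly where the specific coordinate with $[p](X)=pX+X^q$ and the explicit $\psi_q$-estimates of \cite[\S 2A]{FX} (packaged here as Lemmas~\ref{psinilp} and~\ref{psiqpk}) are essential; the rest of the argument is formal bookkeeping with the Katz pairing. One expects the bound to be sharp, which is the content of Corollary~\ref{hnotu}.
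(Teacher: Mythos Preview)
Your proof is correct and follows exactly the paper's approach: use the Katz isomorphism to translate the problem into showing $p^m X^i\in o_L\dcroc{X}^{\psiqint}$ for $i\le q^m$, which is obtained from Lemma~\ref{psiqpk}. The paper states this last step in one line, while you spell out the iteration and the base case $\psi_q(o_L\oplus o_L X)\subseteq o_L$ via $\psi_q(X)=0$; this extra care is warranted, since Lemma~\ref{psiqpk} only applies for $k\ge 1$ and one does need to say something once the iteration reaches $o_L[X]_1$.
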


\begin{proof}
We have $Q(\Omega) \in \cC^0_{\Gal}(o_L,o_{\Cp})$. 
By Lemma \ref{psiqpk}, $p^m X^i \in o_L\dcroc{X}^{\psiqint}$ if $i \leq q^m$, and hence $p^m \mu_i  \in \Hom_{o_L}(\cC^0_{\Gal}(o_L,o_{\Cp}),o_L)$ for all $0 \leq i \leq q^m$. Hence $p^m c_i \in o_L$.
\end{proof}

\begin{corollary}
\label{hupkcoro}
We have $H_{q^k} \subset p^{-k} U_{q^k}$.
\end{corollary}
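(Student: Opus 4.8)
The plan is to deduce this directly from Proposition \ref{hupk}; the corollary is simply the case $m=k$ of that proposition, taken uniformly over all degrees $i \le q^k$.

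First I would take an arbitrary element $Q(\Omega) \in H_{q^k}$. By definition $Q(Y) \in L[Y]$ has degree at most $q^k$, and since $\deg P_i(Y) = i$ the polynomials $P_0(Y), \dots, P_{q^k}(Y)$ form an $L$-basis of the space of polynomials of degree $\le q^k$. Hence we may write $Q(\Omega) = \sum_{i=0}^{q^k} c_i P_i(\Omega)$ with $c_i \in L$, and this expression exhibits $Q(\Omega)$ as an element of $H_{q^k}$ in precisely the form required by Proposition \ref{hupk}.

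Then I would apply Proposition \ref{hupk} with $n = q^k$ and $m = k$. Every index $i$ in the range $0 \le i \le q^k$ trivially satisfies $i \le q^k$, so the proposition gives $c_i \in p^{-k} o_L$ for all such $i$. Therefore $Q(\Omega) \in \bigoplus_{i=0}^{q^k} p^{-k} o_L\, P_i(\Omega) = p^{-k} U_{q^k}$, which is exactly the claim.

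There is no real obstacle here: the entire content sits in Proposition \ref{hupk}, which itself rests on the $\psi_q$-estimate recalled in Lemma \ref{psiqpk} (from \cite{FX}) together with the Katz isomorphism (Theorem \ref{thm:katzisom}) identifying $\Hom_{o_L}(\cC^0_{\Gal}(o_L,o_{\Cp}),o_L)$ with $o_L\dcroc{X}^{\psiqint}$. Corollary \ref{hupkcoro} is a one-line consequence once that machinery is in place.
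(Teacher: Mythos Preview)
Your proposal is correct and matches the paper's intended argument: the corollary is stated without proof immediately after Proposition~\ref{hupk}, and your deduction (apply the proposition with $n=q^k$, $m=k$ so that every index $i\le q^k$ is covered) is exactly the one-line reasoning the reader is expected to supply.
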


Let $\psi_p = p \cdot \psi_q$ so that $\psi_p( o_L\dcroc{X} ) \subset o_L\dcroc{X}$.

\begin{lemma}
\label{psipmodp}$\psi_p(X^{qk+(q-1)}) = X^k \bmod{p}$ and $\psi_p(X^m)=0 \bmod{p}$ if $m \neq -1 \bmod{q}$.
\end{lemma}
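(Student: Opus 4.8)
The plan is to reduce everything modulo $p$ to a finite computation on the monomials $X^0,\dots,X^{q-1}$, where the statement can be read off from the description of $\psi_{\col}$ in terms of the $\pi$-torsion points of $\cG$ together with the sparseness of the formal group law. First I would note that $\psi_p=p\psi_q$ is $o_L$-linear and preserves $A:=o_L\dcroc X$, so it induces an additive operator $\overline{\psi}_p$ on $\overline A:=k_L\dcroc X$; and since the coordinate satisfies $[p](X)=pX+X^q$, the operator $\varphi$ reduces mod $p$ to the $q$-power substitution $\overline\varphi\colon\overline F\mapsto\overline F(X^q)$, so $\overline A$ is free over $\overline\varphi(\overline A)=k_L\dcroc{X^q}$ with basis $1,X,\dots,X^{q-1}$. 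The projection formula (Lemma \ref{lem:projection-formula}) gives $\psi_p(F\varphi(G))=\psi_p(F)G$, hence $\overline{\psi}_p(\overline F\,\overline\varphi(\overline G))=\overline{\psi}_p(\overline F)\,\overline G$; writing $m=qk+r$ with $0\le r\le q-1$ we have $X^m=X^r\overline\varphi(X^k)$ in $\overline A$, so $\overline{\psi}_p(X^m)=X^k\,\overline{\psi}_p(X^r)$. Thus it suffices to prove $\overline{\psi}_p(X^r)=\delta_{r,q-1}$ for $0\le r\le q-1$: granting this, $\overline{\psi}_p(X^{qk+q-1})=X^k$ while $\overline{\psi}_p(X^m)=0$ for $m\not\equiv-1\bmod q$, which is the assertion. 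The case $r=0$ is immediate: $\psi_q(1)=1$ by Corollary \ref{cor:FundPsiPhi}, so $\psi_p(1)=p$ and $\overline{\psi}_p(1)=0$.

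Next I would treat $1\le r\le q-1$. Set $P:=\psi_p(X^r)$, a polynomial in $o_L[X]$ of degree $\le r-1$ by Lemma \ref{psinilp}. Since $\psi_{\col}=q\psi_q=p\psi_p$, the defining relation $\varphi(\psi_{\col}(F))=\sum_{\zeta\in\cG_1}F(\zeta\oplus X)$ gives $pP([p](X))=\sum_{\zeta\in\cG_1}(\zeta\oplus X)^r$. The $q$ points of $\cG_1$ are $0$ together with the $q-1$ roots of $X^{q-1}+p$ (because $[p](X)=X(X^{q-1}+p)$), so for every $a\ge0$ the power sum $\sum_{\zeta\in\cG_1}\zeta^a$ equals $q$ if $a=0$, equals $(q-1)(-p)^{a/(q-1)}$ if $a\ge1$ and $(q-1)\mid a$, and equals $0$ otherwise. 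Expanding $(\zeta\oplus X)^r$ in powers of $\zeta$ and using $(\zeta\oplus X)^r|_{\zeta=0}=X^r$, this gives
\[ pP([p](X))=qX^r+(q-1)\sum_{\ell\ge1}(-p)^\ell G_\ell(X), \]
where $G_\ell(X)\in o_L[X]$ is the coefficient of $\zeta^{(q-1)\ell}$ in $(\zeta\oplus X)^r$. Dividing by $p$, using $q=p^2$, and reducing mod $p$ makes the term $qX^r$ and all terms with $\ell\ge2$ vanish, leaving $\overline P(X^q)=\overline{G_1}(X)$ in $k_L[X]$ (using $-(q-1)\equiv1\bmod p$).

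Finally I would identify $\overline{G_1}$. The key input is that for this coordinate the formal group law is sparse: every monomial $\zeta^iX^j$ occurring in $\zeta\oplus X$ has $i+j\equiv1\bmod(q-1)$; this I would check directly by comparing degrees on the two sides of $\log_{\LT}([p](X))=p\log_{\LT}(X)$, which forces $\log_{\LT}$ to be supported in degrees $\equiv1\bmod(q-1)$. Hence every monomial of $(\zeta\oplus X)^r$ has total degree $\equiv r\bmod(q-1)$, so the coefficient of $\zeta^{q-1}X^j$ in it vanishes unless $j\equiv r\bmod(q-1)$; in particular $\overline{G_1}$ is supported on such $j$, with constant term equal to the coefficient of $\zeta^{q-1}$ in $\zeta^r$, i.e. $\delta_{r,q-1}$. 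But $\overline{G_1}(X)=\overline P(X^q)$ shows $\overline{G_1}$ is supported on exponents divisible by $q$, and since $\deg\overline P\le r-1$ only on exponents $qi$ with $0\le i\le r-1$; for $i\ge1$ one has $qi\equiv i\bmod(q-1)$ with $1\le i\le r-1\le q-2$, so $qi\equiv r\bmod(q-1)$ is impossible. Therefore $\overline{G_1}=\delta_{r,q-1}$ is constant, hence $\overline P=\delta_{r,q-1}$, i.e. $\overline{\psi}_p(X^r)=\delta_{r,q-1}$, completing the reduction.

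The genuinely delicate step will be the last paragraph — organizing the sum over $\cG_1$ so that the power sums $\sum_\zeta\zeta^a$ appear and then controlling $\overline{G_1}$ via the sparseness of the formal group law; the rest is bookkeeping built on Lemmas \ref{psinilp}, \ref{lem:projection-formula} and Corollary \ref{cor:FundPsiPhi}. As an alternative to that paragraph, one could instead read $\psi_q(X^r)\bmod p$ for $0\le r\le q-1$ directly off the explicit formulas of Fourquaux--Xie \cite[\S 2A]{FX} that already underlie Lemmas \ref{psinilp} and \ref{psiqpk}.
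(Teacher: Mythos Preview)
Your proof is correct. The paper's own proof is simply the one-line citation ``This follows from \cite[Proposition 2.2]{FX}'', which is exactly the alternative you flag in your final sentence. Your route --- reducing via the projection formula to $\overline{\psi}_p(X^r)$ for $0\le r\le q-1$, then expanding $\varphi\psi_{\col}(X^r)=\sum_{\zeta\in\cG_1}(\zeta\oplus X)^r$, evaluating the power sums $\sum_\zeta\zeta^a$ (this is Lemma~\ref{sumomeg}), and finally using the $(q-1)$-sparseness of the formal group law together with the degree bound from Lemma~\ref{psinilp} to pin down $\overline{G_1}$ --- yields a self-contained argument that avoids the external reference. What you gain is independence from \cite{FX}; what you pay is length.

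One small remark: your justification of sparseness (``comparing degrees on the two sides of $\log_{\LT}([p](X))=p\log_{\LT}(X)$, which forces $\log_{\LT}$ to be supported in degrees $\equiv1\bmod(q-1)$'') does work by induction on the degree, but you should also say why this passes from $\log_{\LT}$ to the group law $F(Y,X)$ itself. A shorter argument: for each $(q-1)$-th root of unity $\eta\in o_L^\times$ one has $[\eta](X)=\eta X$ (both sides have the same linear term and commute with $[p](X)=pX+X^q$), so $F(\eta Y,\eta X)=[\eta](F(Y,X))=\eta F(Y,X)$, which forces $i+j\equiv1\pmod{q-1}$ for every monomial $Y^iX^j$ occurring in $F$.
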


\begin{proof}
This follows from \cite[Proposition 2.2]{FX}.
\end{proof}

\begin{corollary}
\label{notbij}
The map $c^0(L) \to \cC^0_{\Gal}(o_L,\Cp)$ is not surjective.
\end{corollary}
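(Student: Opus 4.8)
The plan is to deduce Corollary \ref{notbij} from the Katz isomorphism together with the $\psi_p$-estimates of \cite{FX}. Write $T\colon c^0(L)\to\cC^0_{\Gal}(o_L,\Cp)$ for the map $\{c_i\}_i\mapsto\sum_i c_i P_i$. It is a continuous injective $L$-linear map of $L$-Banach spaces: injectivity is Corollary \ref{pnindep}, and it is norm-decreasing for the sup-norms because $\|P_i\|_{\sup}\le 1$ (each $P_i(a\Omega)$ lies in $o_{\Cp}$). I will argue by contradiction: if $T$ were surjective, then by the open mapping theorem it would be a topological isomorphism, and hence so would its transpose $T'\colon\cC^0_{\Gal}(o_L,\Cp)'\to c^0(L)'=\ell^\infty(L)$.

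Next I would identify $T'$ concretely. The unit ball of $\cC^0_{\Gal}(o_L,\Cp)'$ is $\Hom_{o_L}(\cC^0_{\Gal}(o_L,o_{\Cp}),o_L)$, which Theorem \ref{thm:katzisom} (with $S=o_L$) identifies with $o_L\dcroc{Z}^{\psiqint}$ via $\mu\mapsto\sum_n\mu(P_n)Z^n$. Under this identification, and using that the unit ball of $c^0(L)'=\ell^\infty(L)$ is the set of sequences valued in $o_L$, i.e. $o_L\dcroc{Z}$, the transpose $T'$ becomes the map ``take the coefficient sequence'', that is, the inclusion $o_L\dcroc{Z}^{\psiqint}\hookrightarrow o_L\dcroc{Z}$ (since $(T'\mu)(e_i)=\mu(P_i)$ is the $i$-th coefficient of the power series attached to $\mu$). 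Because $T'$ is a topological isomorphism, $(T')^{-1}$ is bounded, so there is an integer $k_0\ge 0$ with $p^{k_0}o_L\dcroc{Z}\subseteq o_L\dcroc{Z}^{\psiqint}$.

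Finally I would contradict this inclusion. Working with the coordinate $X$ (for which $[p]_{\LT}(X)=pX+X^q$, and $o_L\dcroc{Z}^{\psiqint}=o_L\dcroc{X}^{\psiqint}$), iterate Lemma \ref{psipmodp}: from $\psi_p(X^{q^j-1})\equiv X^{q^{j-1}-1}\bmod p$ for $j\ge 1$ one gets $\psi_p^m(X^{q^m-1})\equiv 1\bmod p$ for all $m\ge 0$, so $\psi_p^m(X^{q^m-1})$ has a unit constant term. Fix $m>k_0$. Since $\psi_q=p^{-1}\psi_p$, we have $\psi_q^m(p^{k_0}X^{q^m-1})=p^{\,k_0-m}\psi_p^m(X^{q^m-1})$, whose constant term has valuation $k_0-m<0$; hence $\psi_q^m(p^{k_0}X^{q^m-1})\notin o_L\dcroc{X}$, and so $p^{k_0}X^{q^m-1}\notin o_L\dcroc{X}^{\psiqint}$ by Remark \ref{rem:explicitpsiint}. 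Since $p^{k_0}X^{q^m-1}\in p^{k_0}o_L\dcroc{X}$, this contradicts the inclusion obtained above. Therefore $T$ is not surjective.

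I do not expect any serious obstacle: the Katz isomorphism and the $\psi_p$-computations of \cite{FX} carry essentially all the weight, and the remaining bookkeeping (open mapping theorem, description of the transpose, the elementary iteration of Lemma \ref{psipmodp}) is routine. The one point that needs care is that a \emph{uniform} denominator $k_0$ is required: individually each $p^m X^{q^m-1}$ does lie in $o_L\dcroc{Z}^{\psiqint}$ by Lemma \ref{psiqpk}, so the contradiction only appears once one knows the exponent $k_0$ can be chosen independently of $m$ — and it is precisely the open mapping theorem (applied to $T$ and dualized) that supplies this uniformity.
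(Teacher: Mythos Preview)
Your proof is correct and follows essentially the same route as the paper: dualize via the Katz isomorphism and then use the mod-$p$ computation of $\psi_p$ from Lemma \ref{psipmodp} to show that $o_L\dcroc{X}^{\psiqint}[1/p]\subsetneq o_L\dcroc{X}[1/p]$. The only difference is packaging: the paper exhibits the single element $f=1+X^{q-1}+X^{q^2-1}+\cdots$, for which $\psi_p(f)\equiv f\bmod p$ and hence $\psi_q^n(f)\in p^{-n}f+p^{1-n}o_L\dcroc{X}$, so $f\notin o_L\dcroc{X}^{\psiqint}[1/p]$; you instead use the individual monomials $X^{q^m-1}$ (the terms of that very $f$) and invoke the open mapping theorem to extract a uniform $k_0$ before reaching the same contradiction.
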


\begin{proof}
By Corollary \ref{pnindep}, it is injective. If it is a bijection, then the continuous dual of $\cC^0_{\Gal}(o_L,\Cp)$ is naturally isomorphic to $o_L\dcroc{X} [1/p]$ via the map $\mu \mapsto \sum_{n \geq 0} \mu(P_n) X^n$. However by the Katz isomorphism, the image of this map is $o_L\dcroc{X}^{\psiqint} [1/p]$.

Take $f(X) = 1 + X^{q-1} + X^{q^2-1} + \cdots$. Lemma \ref{psipmodp} implies that $\psi_p(f) = f \bmod{p}$ and hence $\psi_p^n(f)  = f \bmod{p}$. We therefore have $\psi_q^n(f) \in p^{-n} f + p^{-(n-1)} o_L\dcroc{X}$ for all $n \geq 1$, so that $f(X)$ is not in $o_L\dcroc{X}^{\psiqint} [1/p]$. Hence $o_L\dcroc{X} [1/p] \neq o_L\dcroc{X}^{\psiqint} [1/p]$.
\end{proof}

In order to say more using Katz' result, we need more elements of $o_L\dcroc{X}^{\psiqint}$. There is $o_L\dcroc{X}^{\psi_q=0}$, which contains $X^i$ for $1 \leq i \leq q-2$ and $pX^{q-1}+(q-1)$ and hence $(\oplus_{i=1}^{q-2} X^i \cdot \varphi_q(o_L\dcroc{X})) \oplus (pX^{q-1}+(q-1)) \cdot \varphi_q(o_L\dcroc{X})$. If $f_n(X) \in (X \cdot o_L\dcroc{X})^{\psiqint}$ and the $b_n$ are in $o_L$, then $\sum_{n \geq 0} b_n \varphi_q^n(f_n) \in o_L\dcroc{X}^{\psiqint}$ as well (the sum converges for the weak topology, and $\psi_q$ is continuous for that topology).  For example, if $f(X) \in (X \cdot o_L\dcroc{X})^{\psi_q=0}$, then $\sum_{n \geq 0} \varphi_q^n(f) \in o_L\dcroc{X}^{\psi_q=1}$.

\begin{remark}
\label{formpsi}
We have 
\begin{enumerate}
\item $\psi_q(X^i)=0$ if $1 \leq i \leq q-2$ and $q+1 \leq i \leq 2q-3$ and $2q+1 \leq i \leq 3q-4$
\item $\psi_q(1)=1$ and $\psi_q(X^{q-1})=(1-q)/p$ and $\psi_q(X^q)=X$
\item $\psi_q(X^{2q-2})=q-1$ and $\psi_q(X^{2q-1})=X(1/p-2p)$ and $\psi_q(X^{2q})=X^2$
\item More generally, $\psi_q(X^k)=X\psi_q(X^{k-q})-p \psi_q(X^{k+1-q})$
\end{enumerate}
\end{remark}

\begin{lemma}
\label{expsin}
We have $p^k X^{q^k-1} \in o_L\dcroc{X}^{\psiqint}$, but not $p^{k-1} X^{q^k-1}$.
\end{lemma}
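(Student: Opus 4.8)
The plan is to treat the two assertions separately: the membership $p^kX^{q^k-1}\in o_L\dcroc{X}^{\psiqint}$ will follow from iterating the degree bound of Lemma \ref{psiqpk}, while the failure of $p^{k-1}X^{q^k-1}$ to lie in $o_L\dcroc{X}^{\psiqint}$ will be detected by computing $\psi_q^k$ of it modulo $p$ via the formulas of Lemma \ref{psipmodp}. Throughout I assume $k\geq 1$ (for $k=0$ the statement is vacuous), and recall that $q=p^2$ since $L=\bQ_{p^2}$, so in particular $1\leq q-2$.

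For the first part, note that $q^k-1\leq q^k$, so $p^kX^{q^k-1}\in p^k\cdot o_L[X]_{q^k}$. Applying Lemma \ref{psiqpk} successively with the parameters $k,k-1,\dots,1$ (all $\geq 1$) gives
\[ \psi_q^{j}\bigl(p^kX^{q^k-1}\bigr)\in p^{k-j}\cdot o_L[X]_{q^{k-j}}\subset o_L\dcroc{X}\qquad\text{for }0\leq j\leq k, \]
and in particular $\psi_q^k(p^kX^{q^k-1})\in o_L[X]_1$. For the remaining iterates it suffices to observe that $\psi_q$ maps $o_L[X]_1$ into $o_L$: indeed $\psi_q(1)=1$ and $\psi_q(X)=0$ by Remark \ref{formpsi}(1), and then $\psi_q(o_L)=o_L\psi_q(1)=o_L$. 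Hence $\psi_q^n(p^kX^{q^k-1})\in o_L\dcroc{X}$ for every $n\geq 0$, so $p^kX^{q^k-1}\in o_L\dcroc{X}^{\psiqint}$ by the criterion of Remark \ref{rem:explicitpsiint}.

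For the second part, I would work modulo $p$ using $\psi_p=p\psi_q$, which preserves $o_L\dcroc{X}$ and hence induces an $o_L$-linear operator on $k_L\dcroc{X}$. The key computation is an induction based on the identity $q^{j}-1=q(q^{j-1}-1)+(q-1)$: since $\psi_p(X^{qm+(q-1)})\equiv X^m\bmod p$ by Lemma \ref{psipmodp}, and since $\psi_p$ sends $po_L\dcroc{X}$ into $po_L\dcroc{X}$, a short induction on $n$ shows
\[ \psi_p^n\bigl(X^{q^k-1}\bigr)\equiv X^{q^{k-n}-1}\bmod p\qquad\text{for }0\leq n\leq k, \]
where at the last step $q-1=q\cdot 0+(q-1)$ gives $\psi_p(X^{q-1})\equiv 1\bmod p$. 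Thus $\psi_p^k(X^{q^k-1})=1+pg$ for some $g\in o_L\dcroc{X}$. Since $\psi_p^k=p^k\psi_q^k$, this yields
\[ \psi_q^k\bigl(p^{k-1}X^{q^k-1}\bigr)=p^{k-1}p^{-k}(1+pg)=p^{-1}+g, \]
whose constant term $p^{-1}+g(0)$ has valuation $\vp(p^{-1}+g(0))=-1$, so it does not lie in $o_L\dcroc{X}$. Therefore $p^{k-1}X^{q^k-1}\notin o_L\dcroc{X}^{\psiqint}$.

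I do not anticipate a genuine obstacle here; the only point needing a little care is the inductive step for $\psi_p^n(X^{q^k-1})\bmod p$, where one must check both that the exponents $q^{k-n}-1$ remain of the form $qm+(q-1)$ at every stage (guaranteed by the identity above for $1\leq n\leq k$, and by $q-1=q\cdot 0+(q-1)$ at the bottom) and that the ``error term'' $p\cdot\psi_p(h)$ contributed at each step stays in $po_L\dcroc{X}$, which is immediate from $\psi_p(o_L\dcroc{X})\subseteq o_L\dcroc{X}$ and $o_L$-linearity.
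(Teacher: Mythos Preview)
Your proof is correct, and it takes a different route from the paper's. For the inclusion $p^kX^{q^k-1}\in o_L\dcroc{X}^{\psiqint}$ you simply iterate the degree bound of Lemma~\ref{psiqpk}; for the exclusion you track $\psi_p^n(X^{q^k-1})\bmod p$ using Lemma~\ref{psipmodp}. The paper instead establishes a single recursion
\[
\psi_q(X^{q^k-1}) \in \tfrac{1}{p}X^{q^{k-1}-1} + o_L\dcroc{X}^{\psiqint},
\]
by writing $X^{q^k-1}=\varphi_q(X^{q^{k-1}})\cdot X^{-1} - (\varphi_q(X^{q^{k-1}})-X^{q^k})X^{-1}$, computing $\psi_q(1/X)=1/(pX)$ via the projection formula (this uses $q=p^2$), and bounding the second term with Lemma~\ref{psiqpk}; that one recursion then yields both assertions by induction on $k$. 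Your approach is more direct and uses only the two lemmas already on the shelf; the paper's buys the slightly finer information that the error term at each step is already $\psi_q$-integral, not merely in $o_L\dcroc{X}$.
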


\begin{proof}
Recall that $\psi_q(X^{q-1})=(1-q)/p$. This implies that $\psi_q(1/X) = \psi_q((X^{q-1}+p)/\varphi_q(X)) = 1/pX$. If $k \geq 1$, then \[ \binom{q^{k-1}}{i} \cdot p^i = \binom{q^{k-1}-1}{i-1} \cdot q^{k-1} p^i / i \in p^k o_L. \] 
This implies that $\varphi_q(X^{q^{k-1}}) \in X^{q^k} + p^k X o_L[X]_{q^k-1}$. By Lemma \ref{psiqpk}, we have 
\[ \psi_q(X^{q^k-1}) = \psi_q\left(\frac{\varphi_q(X^{q^{k-1}}) + X^{q^k} - \varphi_q(X^{q^{k-1}})}{X} \right) \in \frac{X^{q^{k-1}-1}}{p} + o_L\dcroc{X}^{\psiqint}. \]
This implies the Lemma by induction on $k$.
\end{proof}

\begin{corollary}
\label{hnotu}
There is an $h \in H$ in which the coefficient of $P_{q^k-1}$ is in $p^{-k} o_L^\times$.
\end{corollary}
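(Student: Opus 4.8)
Write $N:=q^{k}-1$ and let $\mu_N\colon\cC\to L$ (with $\cC:=\cC^0_{\Gal}(o_L,o_{\Cp})$) be the functional dual to $P_N$ introduced just before Proposition \ref{hupk}, characterised by $\cK^{\ast}(\mu_N)=X^{N}$, so that $\mu_N(P_i)=\delta_{Ni}$, where $P_i$ denotes the function $a\mapsto P_i(a\Omega)$. Since $\Omega$ is transcendental over $L$, the $P_i(\Omega)$ are $L$-linearly independent, so every $h\in H$ has a unique expansion $h=\sum_i c_i P_i(\Omega)$ and the coefficient of $P_N$ equals $c_N=\mu_N(h)$. Hence the corollary is equivalent to $\mu_N(H)\not\subseteq p^{1-k}o_L$; since $\mu_N(H)\subseteq p^{-k}o_L$ by Proposition \ref{hupk}, and since $\mu_N(H)$ is an $o_L$-submodule of $L$, this is in turn equivalent to $\mu_N(H)=p^{-k}o_L$. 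So the plan is: first compute $\mu_N(\cC)$, then show $H$ is $p$-adically dense in $\cC$ and transport the computation.

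First I would pin down $\mu_N(\cC)$. By Lemma \ref{psiqpk} (or directly Lemma \ref{expsin}) we have $p^{k}X^{N}\in o_L\dcroc{X}^{\psiqint}$, so by the Katz isomorphism (Theorem \ref{thm:katzisom} with $S=o_L$) the functional $\nu:=p^{k}\mu_N=(\cK^{\ast})^{-1}(p^{k}X^{N})$ genuinely lies in $\Hom_{o_L}(\cC,o_L)$, and therefore $\nu(\cC)$ is an ideal of $o_L$. If that ideal were contained in $p\,o_L$, then $\nu/p$ would again be an $o_L$-linear functional $\cC\to o_L$, and applying $\cK^{\ast}$ would exhibit $p^{k-1}X^{N}$ in $o_L\dcroc{X}^{\psiqint}$, contradicting the second assertion of Lemma \ref{expsin}. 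Hence $\nu(\cC)=o_L$ and $\mu_N(\cC)=p^{-k}o_L$. Moreover $\nu$, and hence $\mu_N$, is continuous for the $p$-adic topology on $\cC$, since $\nu(p^{n}\cC)\subseteq p^{n}o_L$.

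The remaining point — and the only content beyond bookkeeping — is that $H$ is $p$-adically dense in $\cC$. Given $f\in\cC$, Proposition \ref{pndense} provides $g$ in the $L$-span of the functions $a\mapsto P_i(a\Omega)$ with $\|g-f\|_\infty$ as small as desired; once $\|g-f\|_\infty\le 1$ we automatically get $\|g\|_\infty\le\max(\|f\|_\infty,\|g-f\|_\infty)\le 1$, so $g$ is a polynomial function taking values in $o_{\Cp}$, i.e. $g\in H$. (This is the step where the naive attempt — pulling back via Katz an element of $\cC$ that witnesses $p^{k-1}X^{N}\notin o_L\dcroc{X}^{\psiqint}$ — only yields a Galois‑continuous function rather than a polynomial one; the free norm bound on the approximants is exactly what repairs this.) Combining this density with the continuity of $\mu_N$ gives that $\mu_N(H)$ is dense in $\mu_N(\cC)=p^{-k}o_L$; being a submodule of $p^{-k}o_L$, it must therefore equal $p^{-k}o_L$, so there is $h\in H$ with $\mu_N(h)\in p^{-k}o_L^{\times}$, and by the first paragraph the coefficient of $P_{q^{k}-1}$ in this $h$ lies in $p^{-k}o_L^{\times}$, as required.
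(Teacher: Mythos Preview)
Your proof is correct and follows essentially the same approach as the paper: both use Lemma \ref{expsin} together with the Katz isomorphism to see that the functional $\mu_N$ (the paper calls it $c_{q^k-1}$) hits $p^{-k}o_L^\times$ on $\cC$, and then use Proposition \ref{pndense} to approximate by an element of $H$. The only difference is cosmetic: the paper picks a single $f\in\cC$ with $\mu_N(f)\in p^{-k}o_L^\times$ and approximates it directly, whereas you phrase the same step as ``$H$ is $p$-adically dense in $\cC$, so $\mu_N(H)$ is a dense $o_L$-submodule of $\mu_N(\cC)=p^{-k}o_L$''.
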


\begin{proof}
Let $c_{q^k-1} \in \cC^0_{\Gal}(o_L,\Cp)^*$ be the linear form corresponding to $X^{q^k-1}$. There is an $f \in \cC^0_{\Gal}(o_L,o_{\Cp})$ such that $c_{q^k-1}(f) \in p^{-k} o_L^\times$ (if it was in $p^{1-k} o_L$ for all $f$, then $p^{k-1} c_{q^k-1}$ would be an integral linear form, and we'd have $p^{k-1} X^{q^k-1} \in o_L\dcroc{X}^{\psiqint}$. This is not the case by lemma \ref{expsin}). By Corollary \ref{pndense}, the $L$-span of the $P_n$ is dense in $\cC^0_{\Gal}(o_L,\Cp)$. Therefore there is an $h \in H$ such that $\| f - h \| \leq p^{-1}$. We then have $c_{q^k-1}(h) \in p^{-k} o_L^\times$.
\end{proof}

\section{Other criteria}
\label{secother}

We indicate how to prove Theorems \ref{introderivcrit} and \ref{introfinext}. 

\subsection{The Lubin-Tate derivative}

As we said in the Introduction, Theorem \ref{introderivcrit} follows from Theorem \ref{intropsicrit} and Proposition \ref{psikatz} below.

\begin{lemma}
\label{sumomeg} The sum $\sum_{[p](\omega)=0} \omega^n$ is $q$ if $n=0$, it is $0$ if $(q-1) \nmid n$, and it is $(q-1)(-p)^k$ if $n=(q-1)k$ with $k \geq 1$.
\end{lemma}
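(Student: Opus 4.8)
\textbf{Proof strategy for Lemma \ref{sumomeg}.}

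The plan is to compute the power sums $S_n := \sum_{[p](\omega)=0}\omega^n$ over the $p$-torsion group $\cG_1$ of the Lubin--Tate formal group by relating them to the coefficients of the Frobenius power series $[p](X) = pX + X^q$, via Newton's identities. First I would note that the $q$ elements $\omega$ with $[p](\omega)=0$ are precisely the roots of the polynomial $[p](X)/X = X^{q-1}+p$, counted with multiplicity $q-1$ for the root $\omega=0$ and simple roots for the $q-1$ nonzero $p$-torsion points. Hence the multiset of roots consists of $0$ (with multiplicity $q-1$) together with the $q-1$ roots $\zeta$ of $X^{q-1}+p=0$. Therefore $S_0 = q$ trivially (all $q$ roots contribute $1$), and for $n\geq 1$ we have $S_n = \sum_{\zeta^{q-1}=-p}\zeta^n$, since the zero root contributes nothing.

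Next I would evaluate $T_n := \sum_{\zeta^{q-1}=-p}\zeta^n$ directly. The $q-1$ values $\zeta$ are $\zeta_0\mu$ where $\mu$ ranges over the $(q-1)$-st roots of unity and $\zeta_0$ is a fixed root with $\zeta_0^{q-1}=-p$. Then $T_n = \zeta_0^n \sum_{\mu^{q-1}=1}\mu^n$, and the inner sum is $q-1$ if $(q-1)\mid n$ and $0$ otherwise, by the standard orthogonality of roots of unity. When $n=(q-1)k$ with $k\geq 1$, we get $T_n = (q-1)\zeta_0^{(q-1)k} = (q-1)(\zeta_0^{q-1})^k = (q-1)(-p)^k$. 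This yields exactly the three stated cases: $S_0=q$; $S_n=0$ when $(q-1)\nmid n$; and $S_n=(q-1)(-p)^k$ when $n=(q-1)k$, $k\geq 1$.

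One technical point worth addressing carefully is that all of this should be read inside a splitting field of $X^{q-1}+p$ over $L$ (or over $\Cp$), where the roots $\zeta$ genuinely exist; the resulting power sums $S_n$ are symmetric functions of the roots and hence lie in $o_L$, consistent with the stated answer. Alternatively, and perhaps more cleanly, one can avoid choosing roots at all and use Newton's identities applied to $f(X)=X^{q-1}+p$: writing $p_n$ for the power sums of its roots, the identities give $p_n = 0$ for $1\leq n\leq q-2$, $p_{q-1} = -(q-1)p$ (from the constant term), and the recursion $p_{n+q-1} = -p\,p_n$ for all $n\geq 1$ (since $f$ has no intermediate coefficients), which immediately propagates to $p_{(q-1)k} = (q-1)(-p)^k$ and $p_n=0$ when $(q-1)\nmid n$. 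I do not expect any real obstacle here; the only thing to be slightly careful about is bookkeeping the multiplicity of the root $0$ when passing from the full $p$-torsion to the reduced polynomial $X^{q-1}+p$, and keeping the $n=0$ case separate from $n\geq 1$.
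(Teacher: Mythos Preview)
Your core argument in the second paragraph is correct and essentially identical to the paper's proof: factor $[p](X)=X(X^{q-1}+p)$, fix one root $\zeta_0$ of $X^{q-1}+p$, write the others as $\zeta_0\mu$ with $\mu^{q-1}=1$, and use orthogonality of the $(q-1)$-st roots of unity.

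There is, however, a slip in your first paragraph: the root $\omega=0$ of $[p](X)$ has multiplicity $1$, not $q-1$ (and $0$ is not a root of $[p](X)/X=X^{q-1}+p$ at all). The multiset of zeros of $[p](X)$ is $\{0\}$ together with the $q-1$ simple roots of $X^{q-1}+p$, giving $q$ elements total. Fortunately this slip does not propagate: your computation of $S_0=q$ and of $S_n$ for $n\geq 1$ uses the correct count. The Newton's-identities alternative you sketch also works but is unnecessary here.
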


\begin{proof}
Since $[p](T)=p T + T^q$, the sum is over $0$ and the roots of $T^{q-1} = -p$. If $\lambda$ is one of the roots, the set of all the roots is $\{ \eta \lambda \}_{\eta^{q-1}=1}$. The result follows (for $n=0$ it is a convention). 
\end{proof}

\begin{proposition}
\label{psikatz}
Assume that $L=\bQ_{p^2}$ and that $\pi=p$. Let $\lambda= \Omega^{q-1} / p(q-1)! \in o_{\Cp}^\times$.

If $f(Z) \in o_{\Cp}\dcroc{Z}$, then $\varphi \psi_q(f) - \lambda \cdot D^{q-1}(f) \in o_{\Cp}\dcroc{Z}$.
\end{proposition}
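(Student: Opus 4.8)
The idea is to verify the congruence by a direct computation with the Lubin--Tate group, using the explicit coordinate $[p](Z)=pZ+Z^q$, and to show that the "bad" (non-integral) parts of $\varphi\psi_q(f)$ and of $\lambda\cdot D^{q-1}(f)$ cancel. First I would reduce to the case $f(Z)=Z^n$ for $n\geq 0$ by $o_{\Cp}$-linearity and $\langle\pi,Z\rangle$-adic continuity of both $\varphi\psi_q$ and $D^{q-1}$: each of these operators on $o_{\Cp}\dcroc{Z}_L$ is continuous (for $\psi_q$ this is Lemma \ref{lem:PsiCts} applied to $S=o_{\Cp}$, together with $\varphi$ being $\frm$-adically continuous; for $D^{q-1}=\Omega^{-(q-1)}\partial^{q-1}$ one checks directly that $\partial=\log_{\LT}'(Z)^{-1}d/dZ$ raises the $\frm$-adic order in a controlled way because $\log_{\LT}'(Z)\in 1+Z o_L\dcroc{Z}$). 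So it suffices to prove $\varphi\psi_q(Z^n)-\lambda\cdot D^{q-1}(Z^n)\in o_{\Cp}\dcroc{Z}$ for every $n$.

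Next I would compute $\varphi\psi_q(Z^n)$ using the defining formula $\varphi(\psi_{\col}(F))(Z)=\sum_{\zeta\in\cG_1}F(\zeta\oplus Z)$ and $\psi_q=\frac1q\psi_{\col}$. Expanding $(\zeta\oplus Z)^n$ and summing over the $q$ points $\zeta$ of $\cG_1=\{[p](\omega)=0\}$, the only surviving contributions are governed by the power sums $\sum_{[p](\omega)=0}\omega^m$, which Lemma \ref{sumomeg} evaluates: these vanish unless $(q-1)\mid m$, in which case $\sum\omega^{(q-1)k}=(q-1)(-p)^k$ for $k\geq 1$ and $=q$ for $k=0$. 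Because $\zeta\oplus Z = Z + \zeta\cdot(\text{unit}) + (\text{higher order in }\zeta, Z)$, one sees that modulo $o_{\Cp}\dcroc{Z}$ the dangerous term comes only from the $k=1$ contribution (the single factor of $\zeta^{q-1}=-p$ divided by $q$), i.e. from differentiating $(\cdot\oplus Z)^n$ exactly $q-1$ times in the $\zeta$-variable at $\zeta=0$. This is precisely the invariant derivative: the coefficient of the first-order part of $\zeta\oplus Z$ as a function of $\zeta$ is $\log_{\LT}'(Z)^{-1}$, so the $(q-1)$-st $\zeta$-Taylor coefficient of $(\zeta\oplus Z)^n$ at $\zeta=0$ is $\frac{1}{(q-1)!}\partial^{q-1}(Z^n)$ up to terms that only contribute integrally after dividing by $q$. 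I would make this precise by writing $\zeta\oplus Z = \exp_{\LT}(\log_{\LT}(\zeta)+\log_{\LT}(Z))$ and Taylor-expanding in $u=\log_{\LT}(\zeta)$; then $\partial^j$ acting on a power series in $Z$ is exactly the $j$-th Taylor coefficient in $u$, times $j!$.

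Assembling: $q\cdot\varphi\psi_q(Z^n) = \sum_{\zeta}(\zeta\oplus Z)^n \equiv q\cdot Z^n + (q-1)(-p)\cdot\frac{1}{(q-1)!}\partial^{q-1}(Z^n) \pmod{p^2 o_{\Cp}\dcroc{Z}\text{-type terms}}$, where the higher power-sum terms $k\geq 2$ carry $(-p)^k$ and stay integral even after dividing by $q$ (here I use $\pi=p$, $q=p^2$, so $(-p)^k/q = \pm p^{k-2}\in o_L$ for $k\geq 2$, and the $k\geq 2$ terms also involve $\partial^{(q-1)k}$ which only makes things more integral). Dividing by $q=p$ (note $p\mid q$): $\varphi\psi_q(Z^n) \equiv Z^n - \frac{p}{q}\cdot\frac{1}{(q-1)!}\partial^{q-1}(Z^n)$. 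Since $q=p^2$, $\frac{p}{q}=\frac1p$, so $\varphi\psi_q(Z^n)-Z^n \equiv -\frac{1}{p(q-1)!}\partial^{q-1}(Z^n) = -\Omega^{q-1}\cdot\frac{1}{p(q-1)!}\cdot\Omega^{-(q-1)}\partial^{q-1}(Z^n) = -\lambda\cdot D^{q-1}(Z^n)$ modulo $o_{\Cp}\dcroc{Z}$; the sign discrepancy I will track carefully, and the statement as phrased records $\varphi\psi_q(f)-\lambda D^{q-1}(f)\in o_{\Cp}\dcroc{Z}$ (so one absorbs $Z^n\in o_{\Cp}\dcroc{Z}$ and the sign into the definition of $\lambda$, or notes $\lambda\in o_{\Cp}^\times$ so $\pm\lambda$ serves equally). \textbf{The main obstacle} will be the careful bookkeeping in the second and third steps: identifying exactly which terms in $\sum_\zeta(\zeta\oplus Z)^n$ fail to be integral after dividing by $q=p^2$, and matching the surviving non-integral term precisely with $\partial^{q-1}/(p(q-1)!)$ rather than with some other constant multiple of $\partial^{q-1}$ --- this requires knowing that $\log_{\LT}'(Z)\in o_L\dcroc{Z}^\times$ (so $\partial$ preserves $o_{\Cp}\dcroc{Z}$) and controlling the contributions of the $\zeta$-Taylor coefficients of order $>q-1$ which, crucially, all carry enough powers of $p$ to be harmless.
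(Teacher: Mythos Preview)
Your approach is essentially the paper's: expand $\varphi\psi_q(f)(Z)=\tfrac1q\sum_{\zeta\in\cG_1}f(\zeta\oplus Z)$ as a power series in $\zeta$, apply the power sums of Lemma \ref{sumomeg}, and identify the unique non-integral contribution (the $k=1$ term) with $\partial^{q-1}/(q-1)!$.

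The paper's execution is cleaner in one respect. Instead of reducing to monomials and expanding $(\zeta\oplus Z)^n$ by hand, it invokes Katz's Taylor formula $f(Z\oplus Y)=\sum_{n\geq 0}Y^nP_n(\partial)f(Z)$ directly. This formula, valid for any $f\in o_{\Cp}\dcroc{Z}$, already encodes the fact that each $\zeta$-Taylor coefficient $P_n(\partial)f(Z)$ lies in $o_{\Cp}\dcroc{Z}$ (because the group law is integral), so the $k\geq 2$ terms are integral after dividing by $q$ for free. Your justification for these higher terms --- ``they involve $\partial^{(q-1)k}$'' --- is imprecise: the $(q-1)k$-th Taylor coefficient is $P_{(q-1)k}(\partial)f$, not $\partial^{(q-1)k}f/((q-1)k)!$, and for $k\geq 2$ these differ substantially. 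What saves you is exactly the integrality of $P_n(\partial)$, which your roundabout route via $u=\log_{\LT}(\zeta)$ would eventually recover, but that detour is counterproductive since the power sums in Lemma \ref{sumomeg} are in $\zeta$, not in $u$. Similarly the reduction to $f=Z^n$ is unnecessary once you have the Katz formula.

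Finally, you dropped a factor of $(q-1)$ in your assembly step: the $k=1$ contribution is $\tfrac{(q-1)(-p)}{q}P_{q-1}(\partial)f=-(q-1)\lambda\, D^{q-1}f$, not $-\lambda\,D^{q-1}f$. You rightly flagged constant-tracking as the main obstacle; since $q-1\in o_L^\times$, this does not affect the application to Theorem \ref{introderivcrit}.
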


\begin{proof}
Recall from \cite[p. 667]{Ka2} that $f(Z \oplus Y) = \sum_{n \geq 0} Y^n P_n(\partial) f(Z)$. We have $\varphi \psi_q(f)(Z) = 1/q \cdot \sum_{[p](\omega)=0} f(Z \oplus \omega)$, so that 
\[ \varphi \psi_q(f)(Z) = \frac 1q \sum_{[p](\omega)=0} \sum_{n \geq 0}  \omega^n P_n(\partial) f(Z) = \frac 1q \sum_{n \geq 0} \left( \sum_{[p](\omega)=0} \omega^n \right) P_n(\partial) f(Z). \]
By Lemma \ref{sumomeg}, the $\sum \omega^n$ for $n$ not divisible by $q-1$ are zero, and  the $\sum \omega^n$ for $n=(q-1)k$ are divisible by $q$ except when $k=1$. Hence
\[ \varphi \psi_q(f) - \frac 1q (q-1) (-p) P_{q-1}(\partial)(f) \in o_{\Cp}\dcroc{Z}. \]
The proposition now follows from the fact that \[ P_{q-1}(\partial) = \frac{\partial^{q-1}}{(q-1)!} = pD^{q-1} \cdot \frac{\Omega^{q-1}}{p(q-1)!} = pD^{q-1} \cdot \lambda. \qedhere \]
\end{proof}

\subsection{Changing the base field}

We now turn to Theorem \ref{introfinext}. If $K$ is a subfield of $L$, we also have a character variety $\frX$ for $K$; write $\frX_K$ and $\frX_L$. An $L$-analytic character $\eta : o_L \to \Cp^\times$ can be restricted to $o_K$, and it is then $K$-analytic. This gives a rigid analytic map $\frX_L \to \frX_K$. This map in turn gives rise to a map $\res_{L/K} : \OO_{\Cp}(\frX_K) \to \OO_{\Cp}(\frX_L)$, which sends bounded functions to bounded functions, and $\OO_M(\frX_K)$ to $\OO_M(\frX_L)$ for all closed subfields $L \subset M \subset \Cp$.

\begin{lemma}
\label{resinj}
On bounded functions, $\res_{L/K} : \OO_{\Cp}^b(\frX_K) \to \OO_{\Cp}^b(\frX_L)$ is injective.
\end{lemma}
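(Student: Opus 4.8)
The plan is to base change everything to $\Cp$ and invoke the Schneider--Teitelbaum uniformisation, reducing the claim to the identity theorem on the open unit disc. After base change along $K \hookrightarrow \Cp$ and $L \hookrightarrow \Cp$, the restriction-of-characters morphism $\frX_L \to \frX_K$ induces a morphism of rigid analytic curves $\phi \colon \frX_L \times_L \Cp \to \frX_K \times_K \Cp$, and $\res_{L/K}$ is its pullback $\phi^\ast$ on global functions. By the uniformisation, $\frX_L \times_L \Cp$ is isomorphic to the open unit disc $\bfB$ via $\kappa$, and $\frX_K \times_K \Cp$ is likewise isomorphic to an open unit disc, with a global coordinate $W$. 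Thus $\phi$ is determined by $h := \phi^\ast(W) \in \OO(\bfB)$, which takes values in the open disc, and $\phi^\ast(g)$ is obtained from $g \in \OO(\frX_K \times_K \Cp)$ by substituting $h$ for $W$.

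First I would check that $h$ is non-constant, equivalently that $\phi$ is non-constant on $\Cp$-points. On $\Cp$-points $\phi$ sends an $L$-analytic character $\eta$ of $o_L$ to $\eta_{|o_K}$, and the trivial character of $o_L$ is sent to the trivial character of $o_K$. On the other hand, if $z \in \frm_{\Cp}$ is not a zero of $\Delta_1(Z) - 1 = \sum_{m \geq 1} P_m(\Omega) Z^m$, then $\kappa(z)(1) = \Delta_1(z) \neq 1$; since $1 \in \Zp \subseteq o_K$, this shows that $\kappa(z)_{|o_K}$ is non-trivial. As recalled in \S\ref{npsec}, the zeros of $\Delta_1(Z)-1$ are torsion points of $\cG$ and hence form a discrete subset of $\frm_{\Cp}$, so such $z$ exist. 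Therefore $\phi$ takes at least two distinct values and $h$ is non-constant.

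Finally, since we are in characteristic zero, the non-constant rigid analytic function $h$ on $\bfB$ has non-vanishing derivative outside a discrete subset, so we may pick $z_0 \in \bfB(\Cp)$ with $h'(z_0) \neq 0$. By the inverse function theorem for convergent power series, $\phi$ then restricts to an isomorphism from a sufficiently small open disc around $z_0$ onto an open disc $V$ around $\phi(z_0)$ inside $\frX_K \times_K \Cp$. Consequently, if $g \in \OO_{\Cp}^b(\frX_K)$ satisfies $\res_{L/K}(g) = \phi^\ast(g) = 0$, then $g$ vanishes on $V$; since $\frX_K \times_K \Cp$ is an open unit disc, the identity theorem forces $g = 0$. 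Hence $\res_{L/K}$ is injective on $\OO_{\Cp}^b(\frX_K)$ (in fact $\phi^\ast$ is injective on all of $\OO_{\Cp}(\frX_K)$).

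The point to be careful about — and the reason the uniformisation is genuinely needed — is that $\phi$ is in general neither injective on $\Cp$-points (when $L/K$ is ramified, many non-trivial $L$-analytic characters of $o_L$ restrict to the trivial character of $o_K$) nor surjective onto all $K$-analytic characters of $o_K$, so a direct argument on points will not work. What saves the argument is that the image of $\phi$, however small it might look, still contains an honest open disc, which is incompatible with a non-zero holomorphic function on $\frX_K \times_K \Cp$ vanishing on it. Beyond this, the remaining verifications (that $\res_{L/K}$ really is $\phi^\ast$ after base change, and the rigid-analytic inverse function theorem) are routine.
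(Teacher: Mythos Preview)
Your argument is correct and in fact proves the stronger statement that $\phi^\ast$ is injective on all of $\OO_{\Cp}(\frX_K)$, but it is heavier than the paper's approach, and your closing remark misjudges the situation. The paper gives precisely a direct argument on points: since $o_K$ is a $\Zp$-module direct summand of $o_L$, every \emph{torsion} (locally constant, finite-order) character of $o_K$ extends to a torsion character of $o_L$, so the image of $\phi$ on $\Cp$-points contains all torsion points of $\frX_K$. A non-zero bounded holomorphic function on the open unit disc has only finitely many zeros (clear denominators to land in $o_{\Cp}\dcroc{Z}$ and look at the Newton polygon), whereas the torsion points form an infinite set; hence $f$ vanishing on all of them forces $f=0$. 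This is where boundedness is genuinely used and why the lemma is stated only for $\OO_{\Cp}^b$.

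So the trade-off is: the paper's route is a two-line arithmetic argument (direct summand $\Rightarrow$ torsion characters extend $\Rightarrow$ bounded $f$ with infinitely many zeros is zero), but it needs boundedness; your route invokes the uniformisation for both $\frX_K$ and $\frX_L$ plus the rigid-analytic inverse function theorem, but it yields injectivity on all holomorphic functions. Your claim that ``a direct argument on points will not work'' is too pessimistic --- surjectivity onto all $K$-analytic characters does fail, but surjectivity onto torsion characters is exactly what the direct-summand observation buys, and for bounded functions that is enough.
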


\begin{proof}
Suppose that $f \in \OO_{\Cp}^b(\frX_K)$ is zero on the restriction to $o_K$ of every $L$-analytic character of $o_L$. Since $o_K$ is a direct summand of $o_L$, every torsion character of $o_K$ extends to a torsion character of $o_L$. Hence $f$ is zero on all torsion characters of $o_K$. This implies that $f=0$ as $f$ is bounded.
\end{proof}

If $\mu$ is a distribution on $o_K$,  we define a distribution $\res_{L/K}(\mu)$  on $o_L$ as follows: if $f \in \cC^{an}(o_L)$, we let $\res_{L/K}(\mu)(f) = \mu(f {\mid}_{o_K})$. This is compatible with the above map if we view elements of $\OO_{\Cp}(\frX)$ as distributions.

\begin{lemma}
\label{resdist}
If $\mu$ is a distribution on $o_K$, whose image under $\res_{L/K}(\mu)$ is a measure on $o_L$, then there exists a measure $\tilde{\mu}$ on $o_K$ such that $\mu = \tilde{\mu}$ on $\LC(o_K)$.
\end{lemma}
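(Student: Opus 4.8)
The statement is that a distribution $\mu$ on $o_K$ whose restriction-induced distribution $\res_{L/K}(\mu)$ on $o_L$ happens to be a measure must already be a measure on $o_K$, at least after testing against locally constant functions. The plan is to reduce being a measure to a boundedness statement on evaluations against characteristic functions of cosets, and then to transport this boundedness from $o_L$ back to $o_K$ using the explicit relationship between characteristic functions on $o_K$ and on $o_L$.

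First I would recall the measure-theoretic criterion: a distribution $\nu$ on a compact $o_F$-group is a measure if and only if $|\nu(\mathbf 1_{a+\mathfrak c})|$ is bounded uniformly over all $a$ and all open subgroups $\mathfrak c$ (equivalently, over all sufficiently small $\mathfrak c$ of the form $\pi_F^n o_F$ and their translates), since the $\mathbf 1_{a+\mathfrak c}$ span $\LC(o_F)$ and a measure is exactly a bounded functional on $\cC^0(o_F)$ with respect to the sup norm. So the hypothesis that $\res_{L/K}(\mu)$ is a measure on $o_L$ says precisely that $|\mu(\mathbf 1_{b+\pi^n o_L}|_{o_K})|$ is bounded uniformly in $b\in o_L$, $n\ge 0$. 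The goal is then to deduce that $|\mu(\mathbf 1_{a+\mathfrak c})|$ is bounded over cosets $\mathfrak c$ in $o_K$, because that will let us extend $\mu$ to a continuous functional $\tilde\mu$ on $\cC^0(o_K)$, i.e.\ a measure, agreeing with $\mu$ on $\LC(o_K)$ — which is what is asserted.

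The technical heart is to express $\mathbf 1_{a+\mathfrak c}|_{o_K}$ — wait, rather: express $\mathbf 1_{a+\mathfrak c}$ for a coset in $o_K$ as (a combination of) restrictions to $o_K$ of characteristic functions of cosets in $o_L$. Since $o_K$ is a direct summand of $o_L$ as a $\Zp$-module, write $o_L = o_K \oplus M$ for some complementary $o_K$-stable (indeed $\Zp$-) submodule $M$; then for a coset $a + \mathfrak c$ with $a\in o_K$ and $\mathfrak c = \pi_K^n o_K$, one has $(a + \mathfrak c + \mathfrak c_L')\cap o_K = a + \mathfrak c$ for a suitable open subgroup $\mathfrak c_L'$ of $o_L$ with $\mathfrak c_L' \cap o_K = \mathfrak c$ and $\mathfrak c_L'$ containing a translate structure compatible with $M$; concretely one can take $\mathfrak c_L' = \mathfrak c \oplus (M \cap \pi_K^n o_L)$ or simply a small enough $\pi^m o_L$ intersected appropriately. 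The upshot is an identity of the shape
\[
\mathbf 1_{a+\mathfrak c} = \left(\mathbf 1_{a + \mathfrak c_L}\right)\big|_{o_K}
\]
for a single coset $a+\mathfrak c_L$ in $o_L$ (choosing $\mathfrak c_L$ so that $\mathfrak c_L\cap o_K = \mathfrak c$ and $(a+\mathfrak c_L)\cap o_K = a+\mathfrak c$, which is possible since $a\in o_K$ and $o_K$ is a direct summand). Then $\mu(\mathbf 1_{a+\mathfrak c}) = \mu(\mathbf 1_{a+\mathfrak c_L}|_{o_K}) = \res_{L/K}(\mu)(\mathbf 1_{a+\mathfrak c_L})$, and the right side is bounded by hypothesis. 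This gives the required uniform bound, hence $\mu$ extends to a measure $\tilde\mu$ on $o_K$; by construction $\tilde\mu$ and $\mu$ agree on all $\mathbf 1_{a+\mathfrak c}$, hence on $\LC(o_K)$.

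The main obstacle I anticipate is purely bookkeeping: making precise the choice of the complementary submodule $M$ and of the cosets $\mathfrak c_L$ in $o_L$ so that intersecting with $o_K$ returns exactly the desired coset $a + \mathfrak c$ (and not a union of cosets, which would still be fine but messier), and checking that the family of $\mathfrak c_L$ one obtains as $\mathfrak c$ ranges over a cofinal system of open subgroups of $o_K$ is itself a legitimate family of cosets in $o_L$ over which the measure hypothesis gives uniform control. None of this is deep — it is the statement that restriction of locally constant functions along a direct-summand inclusion is surjective onto locally constant functions, at the level of coset indicator functions — but it needs to be written carefully. Once that is in place, the extension step (a bounded functional on the dense subspace $\LC(o_K)\subset \cC^0(o_K)$ extends uniquely to $\cC^0(o_K)$, i.e.\ is a measure) is standard non-archimedean functional analysis.
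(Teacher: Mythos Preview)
Your proposal is correct and follows essentially the same approach as the paper: both use that $o_K$ is a direct summand of $o_L$ to extend locally constant functions on $o_K$ to locally constant functions on $o_L$ without increasing the sup norm, then invoke the boundedness of $\res_{L/K}(\mu)$ to conclude that $\mu$ is bounded on $\LC(o_K)$ and hence extends to a measure. The only cosmetic difference is that the paper works directly with an arbitrary $f\in\LC(o_K)$ and a norm-preserving extension $\tilde f\in\LC(o_L)$, whereas you carry out the equivalent argument at the level of individual coset indicators (which is the same thing, as you note, once one writes $f$ as a disjoint sum of such indicators).
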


\begin{proof}
Let $f$ be a locally constant function on $o_K$. Since $o_K$ is a direct summand in $o_L$, we can extend $f$ to a locally constant function $\tilde{f}$ on $o_L$, in a way that the sup norm of $\tilde{f}$ on $o_L$ is the sup norm of $f$ on $o_K$. Since $\res_{L/K}(\mu)$ is a measure, there exists $C$ such that $\| \res_{L/K}(\mu)(g) \|_{o_L} \leq C \cdot \| g \|_{o_L}$ for all locally constant functions $g$ on $o_L$. We then have \[ \| \mu(f) \|_{o_K} =  \| {\res_{L/K}(\mu)(\tilde{f})} \|_{o_L}  \leq C \cdot \| \tilde{f} \|_{o_L} = C \cdot \| f \|_{o_K}. \] 
We can now let $\tilde{\mu}(f) = \mu(f)$ for any $f \in \LC(o_K)$. The above estimate shows that $\tilde{\mu}$ extends continuously to $\cC^0(o_K)$.
\end{proof}

\begin{proposition}
\label{ltok}
If $\OO_L^b(\frX_L) = L \otimes_{o_L} \Lambda(o_L)$, then $\OO_L^b(\frX_K) = L \otimes_{o_K} \Lambda(o_K)$.
\end{proposition}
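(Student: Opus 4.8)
The plan is to transport the question up to $\frX_L$ via the restriction map $\res_{L/K}$, apply the hypothesis there, descend again using Lemma \ref{resdist}, and conclude using that a bounded function on $\frX_K$ is determined by its values on torsion characters. The inclusion $L\otimes_{o_K}\Lambda(o_K)\subseteq\OO_L^b(\frX_K)$ is free of charge: the Amice--Katz transform $o_K\dcroc{o_K}\to\Lambda_K(\frX_K)$ is $o_K$-linear and $\OO_L^b(\frX_K)$ is an $L$-vector space, so I would concentrate on the reverse inclusion.

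Given $f\in\OO_L^b(\frX_K)$, first I would write $f=\cF(\mu)$ for a distribution $\mu\in D^{K-\an}(o_K,L)$. Since $\res_{L/K}$ carries bounded functions to bounded functions (as recalled just before Lemma \ref{resinj}), $\res_{L/K}(f)\in\OO_L^b(\frX_L)$, which by hypothesis equals $L\otimes_{o_L}\Lambda(o_L)=o_L\dcroc{o_L}[\pi^{-1}]$; hence $\pi^N\res_{L/K}(f)$ is an honest measure on $o_L$ for some $N\geq 0$. Using the compatibility (recalled before Lemma \ref{resdist}) of $\res_{L/K}$ on holomorphic functions with the operation $\mu\mapsto\res_{L/K}(\mu)$ on distributions, $\res_{L/K}(\pi^N\mu)$ is then a measure on $o_L$, so Lemma \ref{resdist} applied to $\pi^N\mu$ yields a measure $\nu$ on $o_K$ with $\pi^N\mu=\nu$ on $\LC(o_K)$. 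Setting $\tilde\mu:=\pi^{-N}\nu\in L\otimes_{o_K}\Lambda(o_K)$, I obtain $\mu=\tilde\mu$ on locally constant functions.

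Finally I would compare $f$ with $g:=\cF(\tilde\mu)\in L\otimes_{o_K}\Lambda(o_K)\subseteq\OO_L^b(\frX_K)$. Every torsion character $\chi$ of $o_K$ is locally constant, so $f(\chi)=\mu(\chi)=\tilde\mu(\chi)=g(\chi)$; thus $f-g$ is a bounded function on $\frX_K$ vanishing on all torsion characters of $o_K$. Exactly as in the last step of the proof of Lemma \ref{resinj} --- where a bounded function on the character variety vanishing on every torsion character is shown to be zero, via the Newton polygon of the associated element of $o_{\Cp}\dcroc{Z}$ under the Schneider--Teitelbaum uniformization --- this forces $f=g$, hence $f\in L\otimes_{o_K}\Lambda(o_K)$, completing the proof. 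The only non-formal ingredient is this last rigidity statement (boundedness forces determination by torsion characters); everything else is bookkeeping with Lemmas \ref{resinj} and \ref{resdist} together with clearing a power of $\pi$, so I do not expect a serious obstacle beyond making sure the restriction-of-distributions dictionary is applied consistently.
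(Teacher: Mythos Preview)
Your proof is correct and follows essentially the same route as the paper: push $\mu$ up to $o_L$ via $\res_{L/K}$, invoke the hypothesis and Lemma~\ref{resdist} to produce $\tilde\mu$, then kill the difference $\mu-\tilde\mu$ using that a bounded function vanishing on all torsion characters is zero. The only cosmetic difference is in the last step: the paper applies $\res_{L/K}$ once more to $\mu-\tilde\mu$, observes that its image is a measure on $o_L$ annihilating $\LC(o_L)$ and hence is zero, and then invokes Lemma~\ref{resinj}; you instead apply the torsion-character rigidity directly on $\frX_K$. Your explicit clearing of $\pi^N$ before invoking Lemma~\ref{resdist} is in fact a small improvement in precision over the paper's write-up.
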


\begin{proof}
If $\mu \in \OO_L^b(\frX_K)$, then $\mu$ can be seen as a distribution on $o_K$, and it gives rise via $\res_{L/K}$ to an element of $L \otimes_{o_L} \Lambda(o_L)$. By Lemma \ref{resdist}, there is a measure $\tilde{\mu}$ on $o_K$ such that $\mu = \tilde{\mu}$ on $\LC(o_K)$. The image of the distribution $\mu-\tilde{\mu}$ under $\res_{L/K}$ belongs to $L \otimes_{o_L} \Lambda(o_L)$ and is zero on locally constant functions, hence $\res_{L/K}(\mu-\tilde{\mu} ) = 0$. By Lemma \ref{resinj}, $\mu=\tilde{\mu}$ and hence $\mu$ is a measure on $o_K$.
\end{proof}

\begin{theorem}
\label{finext}
If $K/L$ is finite and if $\Lambda_K(\frX_K) = o_K \dcroc{o_K}$, then $\Lambda_L(\frX_L) = o_L \dcroc{o_L}$.
\end{theorem}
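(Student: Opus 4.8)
The plan is to deduce this from Proposition \ref{ltok} by a base-change argument, the only extra work being to pass between power-bounded and bounded functions and to descend from $K$-coefficients back to $L$-coefficients. Note that the roles of the two fields in \S\ref{secother} are opposite to those in the present statement: there the ambient field is $L$ and $K$ is a subfield of it, whereas here $L \subseteq K$. Accordingly, everything below is an application of the results of \S\ref{secother} after the substitution ``$L \rightsquigarrow K$, $K \rightsquigarrow L$'', which is legitimate since $o_L$ is a $\Zp$-module direct summand of $o_K$ (because $o_K/o_L$ is torsion-free).

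First I would record that $\OO_K^b(\frX_K) = \bigcup_{n \geq 0} \pi_K^{-n}\,\Lambda_K(\frX_K)$: any bounded function has finite supremum norm, so a suitable $\pi_K$-power multiple of it is power-bounded, while the reverse inclusion is clear. Hence the hypothesis $\Lambda_K(\frX_K) = o_K\dcroc{o_K} = \Lambda(o_K)$ gives $\OO_K^b(\frX_K) = K \otimes_{o_K} \Lambda(o_K)$. Applying Proposition \ref{ltok} (with the substitution above) then yields $\OO_K^b(\frX_L) = K \otimes_{o_L} \Lambda(o_L) = K \otimes_{o_L} o_L\dcroc{o_L}$, viewed as subrings of $\OO_K(\frX_L) = D^{L-\an}(o_L,K)$.

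It remains to descend. Since extension of scalars from $L$ to $K$ does not change the set of $\Cp$-points of $\frX_L$, it preserves supremum norms, so $\OO_L^b(\frX_L) \subseteq \OO_K^b(\frX_L)$ and therefore $\Lambda_L(\frX_L) \subseteq \OO_L(\frX_L) \cap \bigl(K \otimes_{o_L} o_L\dcroc{o_L}\bigr)$ inside $\OO_K(\frX_L) = K \otimes_L \OO_L(\frX_L)$. Now I would choose an $L$-basis $e_1 = 1, e_2, \dots, e_r$ of $K$ and decompose $\OO_K(\frX_L) = \bigoplus_{i=1}^r e_i\,\OO_L(\frX_L)$ and $K \otimes_{o_L} o_L\dcroc{o_L} = \bigoplus_{i=1}^r e_i\bigl(L \otimes_{o_L} o_L\dcroc{o_L}\bigr)$, using that $o_L\dcroc{o_L}$ injects into $\OO_L(\frX_L)$ and is $\pi_L$-torsion-free, so this injectivity persists after $-\otimes_L K$. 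Intersecting the first summand $\OO_L(\frX_L)$ with $\bigoplus_i e_i\bigl(L \otimes_{o_L} o_L\dcroc{o_L}\bigr)$ collapses everything onto $L \otimes_{o_L} o_L\dcroc{o_L}$, so $\Lambda_L(\frX_L) \subseteq L \otimes_{o_L} o_L\dcroc{o_L}$, and Corollary \ref{tensint} then forces $\Lambda_L(\frX_L) \subseteq o_L\dcroc{o_L}$. Together with the always-valid inclusion $o_L\dcroc{o_L} \subseteq \Lambda_L(\frX_L)$ (the Amice-Katz transform), this gives the desired equality.

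The substantial content --- transferring information between $\frX_K$ and $\frX_L$ --- is already packaged in Proposition \ref{ltok} and its supporting Lemmas \ref{resinj} and \ref{resdist}. The only genuinely delicate point in the plan above is the final descent step: one must check that a function on $\frX_L$ which happens to be defined over $L$ and to lie in the $K$-linear span of $o_L\dcroc{o_L}$ already lies in the $L$-linear span --- i.e.\ that no cancellation across the basis $\{e_i\}$ can occur --- which is immediate once things are written out over an $L$-basis of $K$, but is the place where care about which coefficient ring is in play is needed.
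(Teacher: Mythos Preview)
Your proof is correct and is exactly the argument the paper intends: the paper states Theorem \ref{finext} immediately after Proposition \ref{ltok} without proof, leaving the reader to apply that proposition (with the role reversal you correctly identify) and then descend from $K$-coefficients to $L$-coefficients. Your descent step via an $L$-basis of $K$ together with Corollary \ref{tensint} is precisely what is needed to complete the argument.
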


% code-appendix
%---
%\pagebreak
\appendix
\label{Appendix}
\section{An algorithm for whether the $\sigma_{i, j}$'s span $\Int(o_L, o_L)$}
%\input{bddappendix.tex}
%---

\newcommand{\und}{\underline}
\newcommand{\ui}{\underline{i}}
\newcommand{\uj}{\underline{j}}
\newcommand{\Capp}{\operatorname{Cap}}

\theoremstyle{definition}
\newtheorem*{definition*}{Definition}

\begin{center}
    \textsc{Drago\cb{s} Cri\cb{s}an and Jingjie Yang}
\end{center}
%\date{Summer 2021}
%\maketitle

\stoptocwriting
\subsection*{Acknowledgements}
The authors took part in an Oxford summer project together with Shashidhara Balla, Jonathan Medcalf and William Whitehead. Financial support from the LMS, Brasenose College, University College, Merton College and the Mathematical Institute, Oxford is gratefully acknowledged.

\subsection{Introduction}
Let $\mathbb{Q}_p \subseteq L \subsetneq \mathbb{C}_p$ be a field of finite degree $d$ over $\mathbb{Q}_p$, 
$o_L$ the ring of integers of $L$,
$\pi \in o_L$ a fixed prime element,
and $q := |o_L / \pi_L o_L|$ the dimension of the residue field.

For an $o_L$-submodule $S$ of $L[Y]$ and an integer $n$, let $S_n = \{ f \in S : \deg(f) < n\}$. 

Recall that the polynomials $P_n(Y)$ are defined by 
$$
    \exp(Y \cdot \log_{\text{LT}}(Z) ) = \sum_{n=0}^{\infty} P_n(Y) Z^n.
$$

We will choose the coordinate $Z$ such that $\log_{\text{LT}}(Z) = \sum_{k=0}^{\infty} \pi^{-k} Z^{q^k}$. 

Define the upper-triangular matrix $\left( \sigma_{i,j} \right)_{i,j\geq 0}$ with entries in $L[Y]$ by
$$
    P_j(Ys) = \sum_{i=0}^j \sigma_{i,j}(Y) P_i(s).
$$

By Lemmas \ref{PolySigmas} and \ref{RhoDegJ}, we know that $\sigma_{i,j}(Y) \in \Int(o_L,o_L)$ and that $\deg(\sigma_{i,j}(Y)) \leq j$. The question is whether the $o_L$-linear span of $\left\{ \sigma_{i,j}(Y) : 0\leq i \leq j \right\}$ equals $\Int(o_L,o_L)$. In this write-up we develop an algorithm to check whether $\bigl( \Int(o_L,o_L) \bigr)_n$ is contained in the $o_L$-linear span of $\left\{ \sigma_{i,j}(Y) : 0\leq i \leq j < N \right\}$ for some fixed $N$, where for convenience we require $q-1 \mid N$.

\subsection{Theory}
\subsubsection{Reduction to $\tau_{i,j}^{(a)}$}
To ease notation, for a fixed $a\in \left\{0,1,\ldots, q-2 \right\}$, we denote $\ui=a+(q-1)i$.

By Proposition \ref{OldToNew}(2), there exist upper-triangular matrices $\tau_{i,j}^{(a)}(Y)$ such that
\begin{align}
    \sigma_{\ui,\uj}(Y) = Y^a \cdot \tau_{i,j}^{(a)}(Y^{q-1}).
    \label{OldToNew2}
\end{align}

\begin{definition}
For a polynomial $P(x)$, we denote by $\gamma_n(P)$ the coefficient of $x^n$ in $P$.
\end{definition}

\begin{definition}
Let $M$ be the $o_L$-linear span of $\left\{ \sigma_{i,j}(Y) : 0\leq i \leq j \right\}$. For a fixed $a$, let $M^{(a)}$ be the $o_L$-linear span of $\left\{ \sigma_{\ui,\uj}(Y) : 0\leq i \leq j \right\}$. Let $S^{(a)}$ be the $o_L$-linear span of $\left\{ \tau_{i,j}^{(a)}(Y) : 0\leq i \leq j \right\}$.
\end{definition}

\begin{lemma}\label{CheckSpanThroughCoeff}
    Let $(f_b^{(a)})_{b \geq 0}$ be a regular basis for $S^{(a)}$ --- that is, each $f_b^{(a)}$ has degree $b$. 
    Then, $M=\Int(o_L,o_L)$ if and only if for all $a \in \{0,1, \ldots q-2\}$ and $b\geq 0$, we have $$\nu_{\pi} (\gamma_b ( f_b^{(a)})) = -w_q(a+b(q-1)).$$
\end{lemma}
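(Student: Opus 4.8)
The plan is to put a \emph{regular basis} --- one polynomial of each degree --- on both $M$ and $\Int := \Int(o_L,o_L)$, and then to compare the two bases via a lower-triangular change-of-basis matrix over $o_L$. Since $M \subseteq \Int$, equality will hold exactly when that matrix lies in $\GL(o_L)$, i.e.\ when all its diagonal entries are units, and the asserted condition $\nu_\pi(\gamma_b(f^{(a)}_b)) = -w_q(a+b(q-1))$ is precisely the degree-by-degree reformulation of this unit condition.

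First I would record the block structure of $(\sigma_{i,j})$. By Proposition~\ref{OldToNew} one has $\sigma_{i,j}(Y) = 0$ unless $i \equiv j \pmod{q-1}$, and for $i \equiv j \equiv a \pmod{q-1}$ the identity $(\ref{OldToNew2})$ reads $\sigma_{\ui,\uj}(Y) = Y^a \tau^{(a)}_{i,j}(Y^{q-1})$. Writing $\iota_a \colon L[X] \to L[Y]$ for the injective $o_L$-linear map $g(X) \mapsto Y^a g(Y^{q-1})$, this says $M^{(a)} = \iota_a(S^{(a)})$; and since $M^{(0)},\dots,M^{(q-2)}$ are supported on pairwise disjoint sets of monomials, $M = \bigoplus_{a=0}^{q-2} M^{(a)}$. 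Moreover $M \subseteq \Int$ by Lemma~\ref{PolySigmas}.

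Next I would build the two regular bases. The matrix identity of Lemma~\ref{FirstMeq}, combined with $r^{(a)}_{j,j}=1$ (Lemma~\ref{r-Diagonal}) and upper-triangularity, forces $\tau^{(a)}_{j,j}(X) = X^j$; hence $o_L[X] \subseteq S^{(a)}$, while at the same time $S^{(a)} \subseteq T^{(a)} := \{g \in L[X] : \iota_a(g) \in \Int\}$ and $\iota_a$ identifies $T^{(a)} \cap L[X]_{\le b}$ with an $o_L$-submodule of the finitely generated free module $\Int \cap L[Y]_{\le a+b(q-1)}$. Consequently each $S^{(a)} \cap L[X]_{\le b}$ is free of rank $b+1$, the family is nested with torsion-free successive quotients, and so a regular basis $(f^{(a)}_b)_{b \ge 0}$ of $S^{(a)}$ exists; any two such agree in top coefficient up to a unit, so $\nu_\pi(\gamma_b(f^{(a)}_b))$ is well-defined. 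Applying $\iota_a$ and collecting over $a$ produces a family $h_n := \iota_a(f^{(a)}_b)$, indexed by $n = a + b(q-1)$ with $0 \le a \le q-2$, which is an $o_L$-basis of $M$ with $\deg h_n = n$ and leading coefficient $\gamma_b(f^{(a)}_b)$. On the $\Int$-side I would take the Lagrange polynomials $f_0, f_1, \dots$ attached to a $\pi$-ordering $\{\alpha_0,\alpha_1,\dots\}$ of $o_L$ (Lemma~\ref{IntBasis}): $\deg f_n = n$ and its leading coefficient is $\big(\prod_{i<n}(\alpha_n-\alpha_i)\big)^{-1}$, whose valuation is $-w_q(n)$ by the defining property of a $\pi$-ordering (recalling that $w_q(n) = \nu_\pi\big(\prod_{i<n}(\alpha_n-\alpha_i)\big)$, the valuation of Bhargava's generalized factorial $n!_{o_L}$).

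Finally I would run the triangular comparison. Writing $h_n = \sum_{m\le n}\lambda_{n,m} f_m$ with $\lambda_{n,m} \in o_L$ (legitimate since $h_n \in \Int$ and $\{f_m\}$ is an $o_L$-basis of $\Int$) and comparing leading coefficients yields, for $n = a+b(q-1)$,
\[ \nu_\pi\!\big(\gamma_b(f^{(a)}_b)\big) \;=\; \nu_\pi(\lambda_{n,n}) \;-\; w_q(n). \]
Since the $h_k$ (resp.\ the $f_k$) have pairwise distinct degrees, no top-degree cancellation can occur, so $\{h_k : k \le n\}$ and $\{f_k : k \le n\}$ are $o_L$-bases of $M \cap L[Y]_{\le n}$ and $\Int \cap L[Y]_{\le n}$ respectively; hence $M = \Int$ holds iff every truncated matrix $(\lambda_{m,l})_{m,l\le n}$ is in $\GL_{n+1}(o_L)$, iff --- by triangularity --- each $\lambda_{n,n}$ is a unit, iff $\nu_\pi(\gamma_b(f^{(a)}_b)) = -w_q(a+b(q-1))$ for all $a \in \{0,\dots,q-2\}$ and all $b \ge 0$. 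That is the lemma. All the substantive ingredients are imported --- the sparsity and block decomposition of $(\sigma_{i,j})$ together with $\tau^{(a)}_{j,j} = X^j$ from Lemma~\ref{FirstMeq}, the integrality $\sigma_{i,j} \in \Int$ from Lemma~\ref{PolySigmas}, and the Lagrange-basis computation of the content $-w_q(n)$ of degree-$n$ integer-valued polynomials --- so the only point requiring care is the otherwise routine nested-lattice bookkeeping, specifically the existence of a regular basis for $S^{(a)}$ and the independence of $\nu_\pi(\gamma_b(f^{(a)}_b))$ from its choice, both handled as indicated above.
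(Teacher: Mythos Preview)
Your proof is correct and follows essentially the same route as the paper: decompose $M = \bigoplus_{a} M^{(a)}$ via sparsity, identify $M^{(a)} = Y^a \cdot S^{(a)}(Y^{q-1})$, pass to a regular basis of $M$ built from the $f_b^{(a)}$, and compare with a regular basis of $\Int(o_L,o_L)$ degree by degree. The paper's own proof is terser --- it simply asserts the final equivalence without spelling out the triangular change-of-basis argument --- whereas you make explicit the comparison with the Lagrange basis and the unit condition on the diagonal entries, and you also justify the existence of a regular basis for $S^{(a)}$ (which the lemma statement in fact takes as hypothesis, so this is extra but harmless).
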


\begin{proof}
For a fixed $a\in \{ 0,1, \ldots q-2\}$, by (\ref{OldToNew2}), we have $\gamma_{s} (\sigma_{i,j}(Y)) = 0$ if $s \not\equiv j \pmod{q-1}$. So, by definition, $M=\bigoplus_{a=0}^{q-2} M^{(a)}$.

We write $S^{(a)} (Y^{q-1}) = \{ f(Y^{q-1}) : f \in S^{(a)} \}$. Equation~(\ref{OldToNew2}) shows that
$$
    M^{(a)} = Y^a \cdot N^{(a)} (Y^{q-1}).
$$

Having chosen a regular basis $(f_b^{(a)})_{b \geq 0}$, these give regular bases $\left( f_b^{(a)} (Y^{q-1}) \right)_{b\geq 0}$ for $S^{(a)} (Y^{q-1})$.
 
So, we get regular bases $\left(Y^a f_b^{(a)} (Y^{q-1}) \right)_{b\geq 0}$ for $M^{(a)}$ and thus a regular basis $\{Y^a f_b^{(a)} (Y^{q-1}) : a\in \{ 0,1, \ldots q-2\}, b \geq 0 \}$ for $M$.

Then, $M=\Int(o_L,o_L)$ is equivalent to $\nu_{\pi} (\gamma_{a+b(q-1)}(Y^a f_b^{(a)} (Y^{q-1}))) = -w_q(a+b(q-1))$, which is equivalent to $\nu_{\pi} (\gamma_b ( f_b^{(a)})) = -w_q(a+b(q-1))$.
\end{proof}

Let $n=a+b(q-1)$, where $a,b$ are integers, with $a \in \{0,1, \ldots q-2\}$. The proof above shows that a polynomial of degree $n$ with $\pi$-valuation of leading term equal to $-w_q(n)$ exists in $M_N$ if and only a polynomial of degree $b$ with the same valuation of leading term exists in $S^{(a)}_{N / (q-1)}$. So, the strategy will be to compute regular bases for $S^{(a)}_{N / (q-1)}$.

\subsubsection{A formula for $\tau_{i,j}^{(a)}$}\label{TauFormula}

One advantage of this approach is that the matrices $\tau^{(a)}_{i,j}(Y)$ can be computed quickly. Recall Definition \ref{DefOfQ} (where we merely change notation, calling $m$ by $a$ instead):

\begin{definition}
For each $j\geq i \geq 0$, let
\begin{align*}
    Q_a(i,j) &:= \left\{ \textbf{k} \in \mathbb{N}^{\infty} : \sum_{\ell=0}^{\infty} k_\ell = \ui, \sum_{\ell=1}^{\infty} k_\ell \left( \frac{q^\ell-1}{q-1} \right) = j-i \right\}; \\
    r_{i,j}^{(a)} &:= \sum_{\textbf{k} \in Q_a(i,j)} \binom{\ui}{k_0;k_1; \ldots} \cdot \pi^{-\sum_{\ell=1}^{\infty} \ell \cdot k_\ell}.
\end{align*}
\end{definition}
Define the upper triangular matrix $(D_{i,j})_{i,j}$ of coefficients as follows:
\begin{definition}
Let $D_{i,j} = i! \gamma_{i} P_j(Y)$.
\end{definition}
This does not depend on $a$. From Proposition \ref{derivpm}, we obtain the following recursion formula, valid for $i\geq 1$:
$$
    D_{i,j} = \sum_{r \geq 0} \pi^{-r} D_{i-1, j-q^r},
$$
with the initial conditions being $D_{0,j} = \delta_{0,j}$.

Now, by Proposition \ref{CoeffOfPn}(2) it follows that $r_{i,j}^{(a)} = D_{\ui,\uj}$. 
To tie this back to $\tau_{i, j}^{(a)}$, we recall from Definition \ref{rTauDX}(3) the notation $\mathcal{D}_Y := \text{diag}(1,Y,Y^2, \ldots)$. Then, Lemma \ref{FirstMeq} gives $\tau^{(a)} = (r^{(a)})^{-1} \cdot \mathcal{D}_Y \cdot r^{(a)}$. This gives a fast algorithm to compute the matrices $\tau^{(a)}$, as the recurrence relation for $D$ allows us to compute $r^{(a)}$ easily.

\subsubsection{Gaussian elimination over a (discrete) valuation ring}
Let $R$ be a (discrete) valuation ring and let $A$ be an $m \times n$ matrix with entries in $R$. We define notions of elementary row operations and row echelon form over $R$, similarly to the definitions over a field.

\begin{definition}
    Given a matrix $A$ as above, the elementary row operations are as follows.
    \begin{enumerate}
        \item Swap two rows.
        \item Multiply an entire row by a unit in $R$.
        \item Add an $R$-multiple of a row to another row.
    \end{enumerate}
\end{definition}

\begin{lemma}\label{row-op-preserves-span}
    Performing elementary row operations on a matrix preserves its $R$-row span. 
\end{lemma}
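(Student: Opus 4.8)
The plan is to verify, for each of the three types of elementary row operation separately, that the $R$-submodule of $R^n$ spanned by the rows is unchanged. Write $r_1, \dots, r_m$ for the rows of $A$ and let $M := Rr_1 + \cdots + Rr_m$ denote its $R$-row span. In each case I would argue the inclusion ``$\subseteq$'' by checking that each new row already lies in $M$, and the reverse inclusion by exhibiting each old row as an $R$-linear combination of the new rows; equivalently, by noting that each elementary operation admits an inverse that is again an elementary operation of the same type, so that applying the argument in both directions yields equality of spans.

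Concretely: for a swap of rows $i$ and $k$, the new collection of rows is a permutation of the old one, so the two spans literally coincide, and the operation is its own inverse. For multiplication of row $i$ by a unit $u \in R^\times$, the new rows are $r_1,\dots, ur_i,\dots, r_m$, with $ur_i \in M$ clear; conversely $r_i = u^{-1}(ur_i)$ lies in the new span because $u^{-1}\in R$. For adding $c r_k$ to $r_i$ with $c\in R$ and $i\neq k$, the new rows are $r_1,\dots, r_i + c r_k,\dots, r_m$, with $r_i + cr_k \in M$ clear; conversely $r_i = (r_i + cr_k) - cr_k$ lies in the new span.

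The only place where the hypotheses on $R$ play any role is the second operation, where one needs $u^{-1}$ to belong to $R$; this is exactly why operation (2) is restricted to units rather than to arbitrary nonzero elements, and it is the sole point that would fail over a general integral domain. Apart from that, nothing about the valuation ring structure is used. There is no real obstacle here — the statement is a formal consequence of the definitions — and I would conclude by observing that, since each elementary operation preserves the row span, so does any finite composition of them, which is what is needed for Gaussian elimination over $R$.
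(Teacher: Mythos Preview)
Your proof is correct. It differs slightly in presentation from the paper's argument: the paper encodes each elementary row operation as left multiplication by an $m \times m$ matrix $B$ with entries in $R$, observes that in each case $B$ is invertible over $R$, and concludes that $BA$ and $A$ have the same $R$-row span. Your approach unpacks this by checking the two inclusions of spans directly for each operation type and explicitly exhibiting the inverse operation. The two arguments are equivalent in content; the paper's version is terser and scales immediately to compositions, while yours makes transparent exactly where the unit hypothesis in operation (2) is used, which you helpfully point out.
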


\begin{proof}
    For each elementary row operation on $A$, we define an $m \times m$ matrix $B$ with entries in $R$ such that the result of applying the elementary row operation on $A$ is $B A$. Observe that in each case, $B$ is invertible, so $BA$ has the same $R$-row span as $A$.
\end{proof}

\begin{lemma}[Gaussian Elimination]
    Let $A$ be a matrix as above. Assume that $m\geq n$ and that $A$ has rank $n$. Then, one can perform a sequence of elementary row operations on $A$ to produce an upper-triangular matrix of rank $n$.
    \label{gaussian_elimination}
\end{lemma}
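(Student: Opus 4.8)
The statement to prove is Lemma~\ref{gaussian_elimination}: given an $m \times n$ matrix $A$ over a discrete valuation ring $R$ with $m \ge n$ and $\operatorname{rank} A = n$, one can reach an upper-triangular matrix of rank $n$ by elementary row operations. This is the standard Gaussian elimination argument, but carried out over $R$ rather than a field, so the plan is to induct on the number of columns while being careful about where the pivot is found and why the valuation lets us clear entries below it.

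\begin{proof}
We argue by induction on $n$, the case $n = 0$ being vacuous. Suppose $n \ge 1$. Since $A$ has rank $n \ge 1$, its first column is not the zero vector (otherwise the rank would be at most $n-1$, as the remaining columns span a space of dimension $\le n-1$). Among the nonzero entries $a_{i,1}$ in the first column, choose one, say in row $k$, whose $\pi$-valuation $\nu_\pi(a_{k,1})$ is minimal; such a choice exists because valuations are bounded below by $0$ and take values in $\mathbb{Z}_{\ge 0}$. Apply a row swap (operation~1) to move this row to the top. Now for every other row $i$, the entry $a_{i,1}$ is divisible by $a_{k,1}$ in $R$: indeed either $a_{i,1} = 0$, or $\nu_\pi(a_{i,1}) \ge \nu_\pi(a_{k,1})$, and in a DVR this means $a_{i,1}/a_{k,1} \in R$. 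Hence we may subtract $(a_{i,1}/a_{k,1})$ times the first row from row $i$ (operation~3) for each $i \ge 2$, producing a matrix whose first column is $(a_{k,1}, 0, \ldots, 0)^{\mathsf T}$ with $a_{k,1} \neq 0$.

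By Lemma~\ref{row-op-preserves-span}, none of these operations changes the $R$-row span, and since they are all invertible over $R$ (as in the proof of that lemma), the rank is unchanged and still equal to $n$. Write the resulting matrix in block form
\[
\begin{pmatrix} a_{k,1} & v \\ 0 & A' \end{pmatrix},
\]
where $v$ is a $1 \times (n-1)$ row vector and $A'$ is an $(m-1) \times (n-1)$ matrix over $R$. Since the full matrix has rank $n$ and its first column contributes exactly $1$ to the rank (the pivot $a_{k,1}$ being nonzero), the submatrix $A'$ must have rank $n-1$; also $m - 1 \ge n - 1$. By the induction hypothesis, a sequence of elementary row operations brings $A'$ to an upper-triangular matrix of rank $n-1$. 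Performing the corresponding operations on rows $2, \ldots, m$ of the big matrix (which do not disturb the first row or the zeros in the first column) yields an upper-triangular $m \times n$ matrix, still of rank $n$. This completes the induction.
\end{proof}

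The only point that needs care beyond the field case — and the main thing to get right — is the divisibility step: over a field one simply scales the pivot to $1$, but over $R$ we instead choose the pivot of \emph{minimal valuation} so that every other entry in that column is an $R$-multiple of it, which is exactly what makes operation~3 legal with coefficients in $R$. Everything else (tracking that rank and $m \ge n$ are preserved, and that the inductive step applies to the trailing submatrix) is routine bookkeeping.
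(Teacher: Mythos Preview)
Your proof is correct and follows essentially the same approach as the paper's own proof: both proceed by induction on the number of columns, select a pivot of minimal valuation in the first column so that clearing below it stays in $R$, and then recurse on the trailing $(m-1)\times(n-1)$ block. The paper phrases it as an algorithm rather than an explicit induction, but the content is identical.
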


\begin{proof}
    We will exhibit an algorithm that puts $A$ in the required form. 
    
    We start with the leftmost column. As $A$ has rank $n$, there is a non-zero entry on column $1$. Pick the one with minimal valuation and swap rows, so that the entry on column $0$ with minimal valuation is on position $(0,0)$. Let the new matrix be $B$.
    
    Then, for each row $i \geq 1$, subtract $\frac{b_{i0}}{b_{00}} \times \text{(row $0$)}$ from row $i$. After all of these operations, the matrix has block form:
    \begin{align*}
        \left[ 
        \begin{array}{c|c} 
            b_{00} & * \\ 
            \hline 
            0 & A' 
        \end{array} 
        \right]
    \end{align*}
    where $*$ denotes some $1 \times (n-1)$ matrix, and $A'$ is an $(m-1)\times (n-1)$ matrix. Observe that, as $A$ had rank $n$ and the elementary row operations don't change the rank, $A'$ will have rank $n-1$.
    
    Now, we can inductively apply the same procedure to $A'$. Observe that all row operations on $A'$ extend to row operations on the whole matrix that don't change the block structure (as the corresponding entries in the first column are all $0$'s). By construction, the end result is an upper-triangular matrix, which has the same rank as the initial matrix $A$.
\end{proof}

\begin{comment}
Now, we use elementary row operations to put $A$ in row echelon form. The algorithm is somewhat similar to the classical Gaussian elimination over a field. We have some rows, at the top of the matrix, which are already good (i.e. they are part of the final row echelon form). All the other rows will be bad. At the beginning, no row is good. We will only operate on bad rows, so we call an entry bad if it is on an bad row.

For each (column) $k$ from $1$ to $m$:
\begin{enumerate}
    \item If all bad entries in column $k$ are $0$, completely ignore the column.
    \item If there are non-zero bad entries in column $k$, find the element of minimum valuation among them. Let it be on row $i$.
    \item Swap row $i$ and the first bad row $i_0$.
    \item Subtract the correct multiple of row $i_0$ from all the other bad rows, such that all but the $i_0$-th bad entries in column $k$ are $0$ (this is possible, as the entry on line $i_0$ has minimum valuation among the other bad elements on column $k$).
    \item Mark row $
    i_0$ as good.
\end{enumerate}
\end{comment}

\subsection{Implementation}
We focus on the totally ramified extension $L = \mathbb{Q}_p(p^{1/d})$ and the unramified extension of degree~$d$,
where we take the prime $p$, the degree $d$, and the cutoff $N$ as input parameters.

Fix $a \in \{0, 1, \dots, q - 2\}$.
Firstly, we compute the matrices $(\tau^{(a)})_{0 \leq  i \leq j < N/(q-1)}$ following the method discussed in Section~\ref{TauFormula}.
Then, for $s = 0, \dots, N/(q-1) - 1$, we will appeal to the following result to inductively compute a basis $(g^{(a), s}_b)_{0 \leq b \leq s}$ for the $o_L$-span of $\{ \tau_{i, j}^{(a)} : 0 \leq i \leq j \leq s \}$, with each $g^{(a), s}_b$ having degree $b$.

\begin{proposition}
    Fix $s \geq 0$, and let $(g^{(a), s-1}_b)_{0 \leq b \leq s-1}$ be a basis for the $o_L$-span of $\{ \tau_{i, j}^{(a)} : 0 \leq i \leq j \leq s-1 \}$ such that each $g^{(a), s-1}_b$ has degree $b$.
    
    Record the coefficients of these polynomials $g^{(a), s-1}_*$ in $s$ row vectors, and append $s + 1$ new row vectors obtained from the coefficients of $\tau_{*, s}^{(a)}$ to obtain the $(2s + 1) \times (s + 1)$ matrix
    $$
    B := 
    \begin{blockarray}{ccccc}
    Y^s & Y^{s-1} &  & 1\\
    \begin{block}{(cccc)c}
        \bullet & * & \cdots & * & \tau_{s, s}^{(a)} \\
         & \bullet & \cdots & * & g_{s-1}^{(a), s-1}\\
         & & \ddots & \vdots \\
         & & & \bullet & g_0^{(a), s-1}\\
        * & * & \cdots & * & \tau_{0, s}^{(a)} \\
        \vdots & \vdots &  & \vdots \\
        * & * & \cdots & * & \tau_{s-1, s}^{(a)}\\
    \end{block}
    \end{blockarray}
    $$
    with coefficients in $L$.
    The $\bullet$'s are non-zero (where $B_{s, 0} \neq 0$ because $\sigma_{\underline{s}, \underline{s}} = Y^{\underline{s}}$ by Lemma \ref{PolySigmas} which by Equation~\ref{OldToNew2} implies that $\tau_{s, s}^{(a)} = Y^{s}$), so $B$ has rank $s+1$.
    
    Bring the full-rank matrix $B$ to upper-triangular form $B'$ using Gaussian elimination over the discrete valuation ring $o_L$ as per Lemma~\ref{gaussian_elimination}.
    Then 
    \begin{enumerate}[(i)]
        \item we can define the new polynomials $g_s^{(a), s}, g_{s-1}^{(a), s}, \dots, g_{0}^{(a), s}$ by reading off the first $s + 1$ rows of $B'$,
        so that each $g^{(a), s}_b$ has degree $b$ and $(g^{(a), s}_b)_{0 \leq b \leq s}$ form a basis for the $o_L$-span of $\{ \tau_{i, j}^{(a)} : 0 \leq i \leq j \leq s \}$;
        \item for each $b = 0, \dots, s-1$, the $\pi$-adic valuation of the leading coefficient in the new polynomial $g_b^{(a), s}$ is at most that of the old polynomial $g_b^{(a), s-1}$.
    \end{enumerate}
\end{proposition}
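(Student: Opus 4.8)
The proof proceeds by analyzing the structure of the matrix $B'$ obtained from Gaussian elimination. The plan is to argue that the row operations of Lemma~\ref{gaussian_elimination}, when applied to a matrix whose rows have the triangular shape displayed, produce an output whose first $s+1$ rows again have strictly decreasing degrees $s, s-1, \dots, 0$, and that the leading coefficients can only gain $\pi$-divisibility, never lose it.

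First I would set up the bookkeeping. Index the columns of $B$ by the monomials $Y^s, Y^{s-1}, \dots, Y^0$ from left to right, so that ``degree'' of a row vector means the position of its leftmost nonzero entry (counting $Y^s$ as position $0$). The key structural observation is that among the $2s+1$ rows of $B$, the subset consisting of $g_{s-1}^{(a),s-1}, \dots, g_0^{(a),s-1}$ together with $\tau_{s,s}^{(a)} = Y^s$ already forms a ``staircase'': there is exactly one row of each degree $0, 1, \dots, s$. The remaining $s$ rows $\tau_{0,s}^{(a)}, \dots, \tau_{s-1,s}^{(a)}$ are extra rows of various degrees $\le s$. Since $\deg g_b^{(a),s-1} = b$ for $0 \le b \le s-1$ by the inductive hypothesis and $\deg \tau_{s,s}^{(a)} = s$, the matrix $B$ has rank $s+1$; Gaussian elimination over $o_L$ then returns an upper-triangular $B'$ of rank $s+1$, so its first $s+1$ rows have leftmost nonzero entries in columns $0, 1, \dots, s$ respectively, i.e. degrees exactly $s, s-1, \dots, 0$. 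This immediately gives part (i): reading off these rows yields polynomials $g_b^{(a),s}$ of degree $b$, and since row operations preserve the $o_L$-row span by Lemma~\ref{row-op-preserves-span}, these span the same $o_L$-module as the rows of $B$, which is by construction the $o_L$-span of all $\tau_{i,j}^{(a)}$ with $0 \le i \le j \le s$ (note $g_b^{(a),s-1}$ lies in this span by the inductive hypothesis).

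For part (ii), the point is to track the leading coefficient in column $s-b$ (the $Y^b$ column) through the elimination. Here I would invoke the minimal-valuation pivot selection built into the algorithm of Lemma~\ref{gaussian_elimination}: at the step where the pivot in the $Y^b$-column is chosen, it is chosen among all remaining rows with a nonzero entry in that column to have \emph{minimal} $\pi$-adic valuation. Among the candidate rows at that stage is (the image of) $g_b^{(a),s-1}$ itself, whose entry in the $Y^b$ column is its leading coefficient --- this row has not yet been used as a pivot, because pivots for columns $0,\dots,s-b-1$ were chosen from rows of strictly larger degree, and $g_b^{(a),s-1}$ has degree exactly $b$ so its entries in those earlier columns vanish and it is untouched by those earlier elimination steps. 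Hence the valuation of the pivot chosen for column $Y^b$ is $\le v_\pi$ of the leading coefficient of $g_b^{(a),s-1}$, which is precisely the claim, since the pivot becomes (up to a unit) the leading coefficient of $g_b^{(a),s}$.

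The main obstacle I anticipate is the careful justification that $g_b^{(a),s-1}$ is genuinely ``still available'' as a pivot candidate when column $Y^b$ is processed --- that is, that it has not been modified into something of different degree by the earlier elimination steps for columns $Y^s, \dots, Y^{b+1}$. This requires checking that in those earlier steps the pivot rows all had degree strictly greater than $b$, so subtracting multiples of them from $g_b^{(a),s-1}$ affects only columns to the left of $Y^b$, leaving the $Y^b$-entry (and the vanishing of entries in columns $Y^s,\dots,Y^{b+1}$) intact. This is a consequence of the staircase structure and of the fact that the algorithm processes columns left to right, but spelling it out cleanly — perhaps by an auxiliary induction on the column index — is the delicate part; everything else is routine linear algebra over $o_L$.
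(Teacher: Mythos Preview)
Your proposal is correct and follows essentially the same approach as the paper's proof: part (i) via rank preservation and Lemma~\ref{row-op-preserves-span}, part (ii) via tracking the row $g_b^{(a),s-1}$ through the elimination and invoking minimal-valuation pivot selection. The ``obstacle'' you flag at the end is in fact simpler than you anticipate: since $g_b^{(a),s-1}$ has a zero entry in each of the columns $0,\dots,s-b-1$, the multiplier $b_{i0}/b_{00}$ in the elimination step is zero, so the row is literally unchanged (not merely unchanged in the relevant column) --- this is exactly how the paper dispatches it, and your own middle paragraph already says this correctly.
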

\begin{proof}
    By Lemma~\ref{gaussian_elimination} the upper-triangular matrix $B'$ still has rank $s+1$, so it has only non-zero elements on its main diagonal.
    Hence for each $b = 0, 1, \dots, s$, the polynomial $g_{b}^{(a), s}$ obtained by reading off the $b$-th row has degree $b$.
    Then of course these polynomials are linearly independent.
    Also they are the only non-zero rows in $B'$, so by Lemma~\ref{row-op-preserves-span} their $o_L$-span is the same as that of the rows of $B$, which by construction is precisely the $o_L$-span of $\{\tau_{i, j}^{(a)} : 0 \leq 1 \leq j \leq s\}$, giving (i).
    
    Now fix $0 \leq b \leq s-1$, and consider what happens to the $b$-th column when we reduce $B$ to $B'$.
    Observe that in the proof of Lemma~\ref{gaussian_elimination}, when we operate on the $j$-th column for $j = 0, \dots, s-b-1$, as the row for $g_b^{(a), s-1}$ has a $0$ entry in the $j$-th column,
    it is neither chosen to be the pivot row nor altered as we subtract off multiples of the pivot row.
    Thus when we operate on the $(s-b)$-th column to determine the $(s-b)$-th row and column of $B'$, the leading coefficient of $g_b^{(a), s-1}$ must be a candidate for the pivot.
    But the pivot $B'_{s-b, s-b}$ is chosen to have minimal valuation, so $\nu_\pi(\gamma_b(g_b^{(a), s-1})) \geq \nu_\pi(B'_{s-b, s-b})$.
    Now $B'_{s-b, s-b} = \gamma_b(g_b^{(a), s})$ by definition, giving (ii).
\end{proof}

For $b$ fixed, it follows that 
$$
\nu_\pi(\gamma_b(g_b^{(a), s})), \quad s = b, b+1, \dots
$$ 
is a non-increasing sequence. Moreover, as $g_b^{(a), s} \in S^{(a)}$ can be written as an $o_L$-linear combination of the $f_i^{(a)}$'s and each $f_i^{(a)}$ is of degree $i$, we must have $g_b^{(a), s} = \sum_{0 \leq i \leq b} \lambda_i f_i^{(a)}$ for some $\lambda_i \in o_L$;
by looking at the leading coefficient, it follows that $$\nu_\pi(\gamma_b(g_b^{(a), s})) \geq \nu_\pi(\gamma_b(f_b^{(a)})) \geq -w_q(a + b(q-1)).$$
These observations motivate us to look at the following

\begin{definition}
    For $n = a + b(q - 1)$, let $s_0(n)$ be the minimal $s \geq b$ such that $(g_b^{(a), s})_{0 \leq b \leq s}$ satisfies $\nu_\pi(\gamma_b(g_b^{(a), s})) = -w_q(n)$, if such $s$ exists;
    otherwise set $s_0(n) = \infty$.
    \label{s_0_def}
\end{definition}

Then whenever $s \geq s_0(n)$ in the computations, we can immediately conclude that the equality $\nu_\pi(\gamma_b (f_b^{(a)})) = -w_q(a+b(q-1))$ in Lemma~\ref{CheckSpanThroughCoeff} holds for this $n = a + b(q - 1)$.

We may thus make a small optimisation: at any stage $s$, if $s \geq s_0(a + b(q-1))$ for all $0 \leq b < d$
then we can just drop the last $d$ columns when carrying out Gaussian elimination.
Indeed for all $s' > s$ it is unnecessary to compute $(g_b^{(a), s'})_{0 \leq b < d}$ as the $\pi$-adic valuation of each leading term has already hit the desired minimum,
and to compute the leading terms of $(g_b^{(a), s'})_{d \leq b \leq s'}$ we do not need the lower-order terms in the last $d$ columns.

\subsection{Data}
\begin{figure}[H]
\makebox[\textwidth][c]{\includegraphics[width=1.2\textwidth]{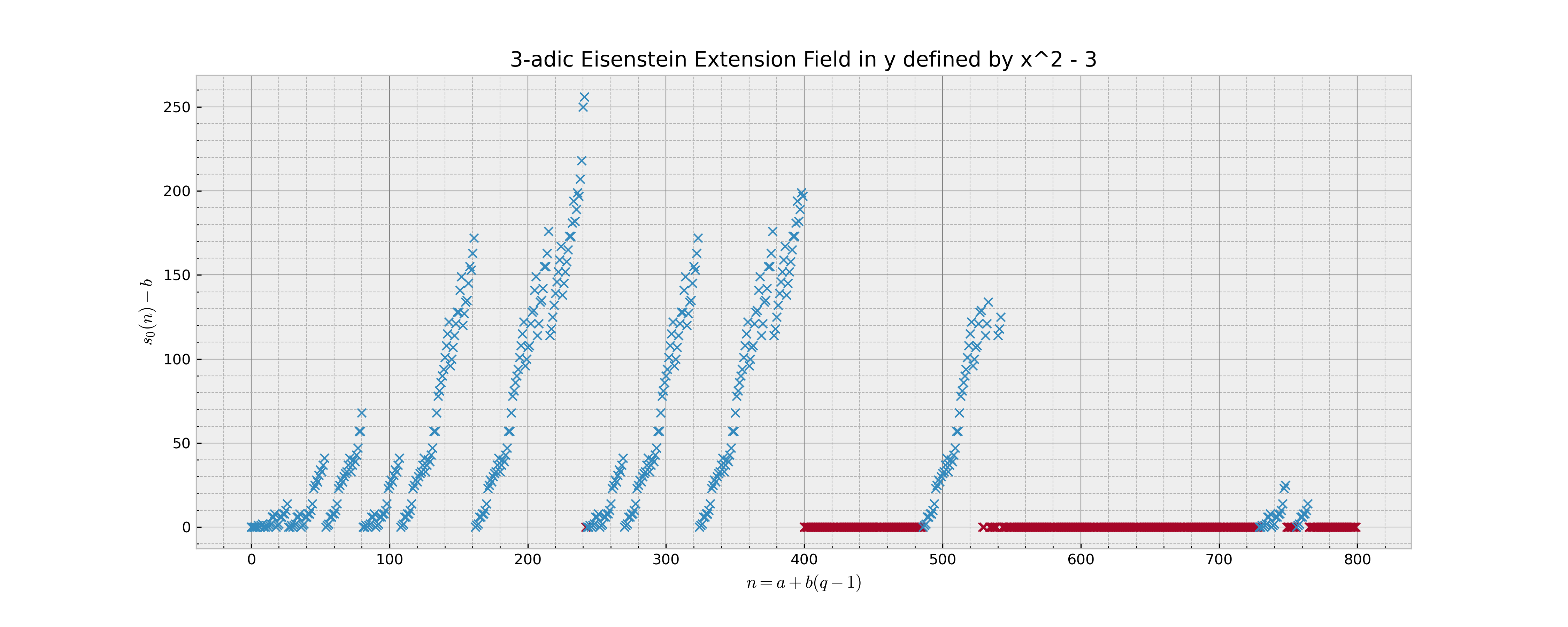}}
\caption{\texttt{extension = "3,2,800,ram"} --- $s_0(n)$ in the quadratic ramified extension $\mathbb{Q}_3(\sqrt 3)$ for $n < 800$. Red points are the $n$'s for which $s_0(n) \geq 800$.}\label{3-2-ram}
\end{figure}

For reference, the computations in Figure~\ref{3-2-ram} took
\begin{itemize}
    \item $227.04$ seconds for $D$;
    \item $616.45$ seconds for $\tau^{(0)}$ and $616.43$ seconds for $\tau^{(1)}$;
    \item $0.20$ seconds for $s=50$, $1.89$ seconds for $s=100$, $6.15$ seconds for $s=150$, $12.09$ seconds for $s=200$, etc.\ for $a = 0$, and slightly less for $a = 1$.
\end{itemize}

We see that $s_0(n) - b$ seems to depend on the $p$-adic digits of $n$; we only managed to prove a special case of this pattern, which we will discuss below.
Nonetheless, the data do suggest that $s_0(n)$ is finite for every $n$ and hence that $\Int(o_L, o_L)$ is spanned by the $\sigma_{i, j}$'s as an $o_L$-module.

A similar pattern emerges for larger $p$ and unramified extensions: see Figures~\ref{17-2-ram} and \ref{5-3-unram} below.

More data and plots can be found \href{https://github.com/Team-Konstantin/Bounded-Functions-on-Character-Varieties/tree/writeup}{on our GitHub repository}.

\begin{figure}[H]
\makebox[\textwidth][c]{\includegraphics[width=1.2\textwidth]{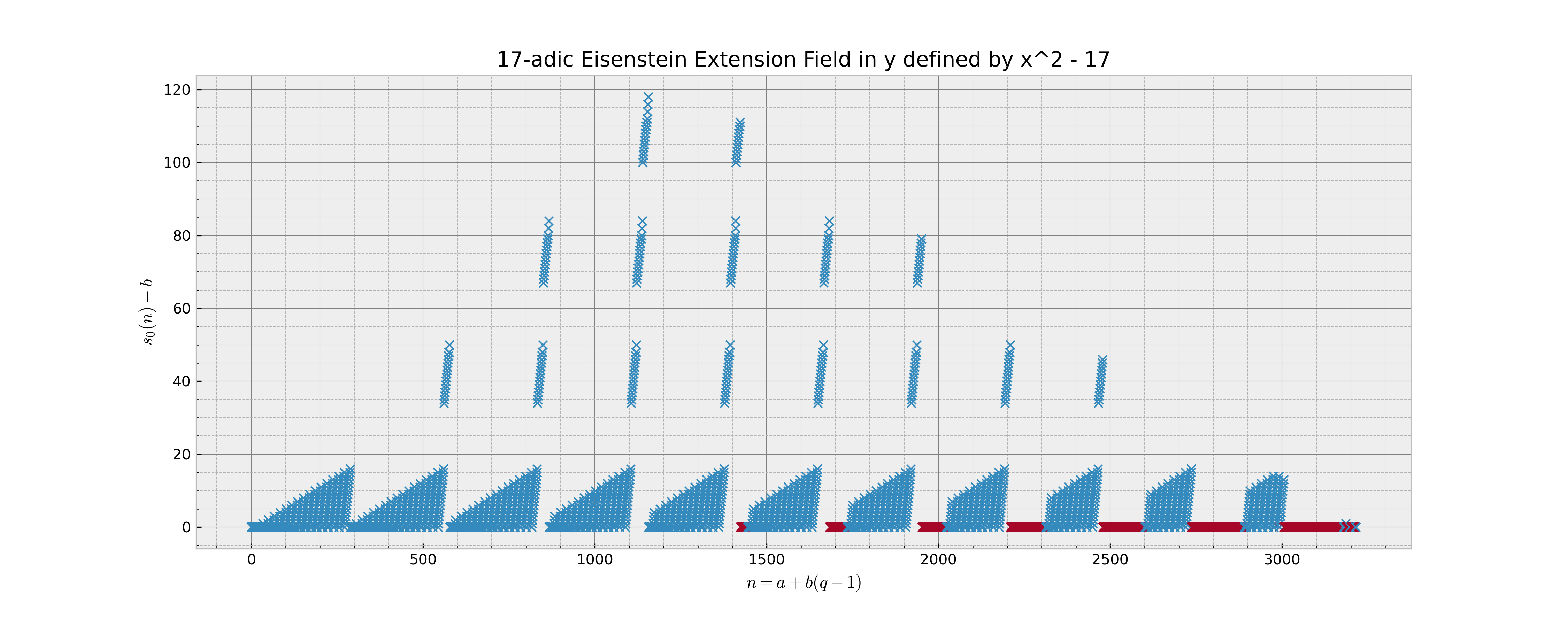}}
\caption{\texttt{extension = "17,2,3216,ram"} --- $s_0(n)$ in the quadratic ramified extension $\mathbb{Q}_{17}(\sqrt{17})$ for $n < 3216$. Note that red points are the $n$'s for which $s_0(n) \geq 3216$ --- not enough computation was done to unveil the pattern for the larger $n$'s!}
\label{17-2-ram}
\end{figure}

\begin{figure}[H]
\makebox[\textwidth][c]{\includegraphics[width=1.2\textwidth]{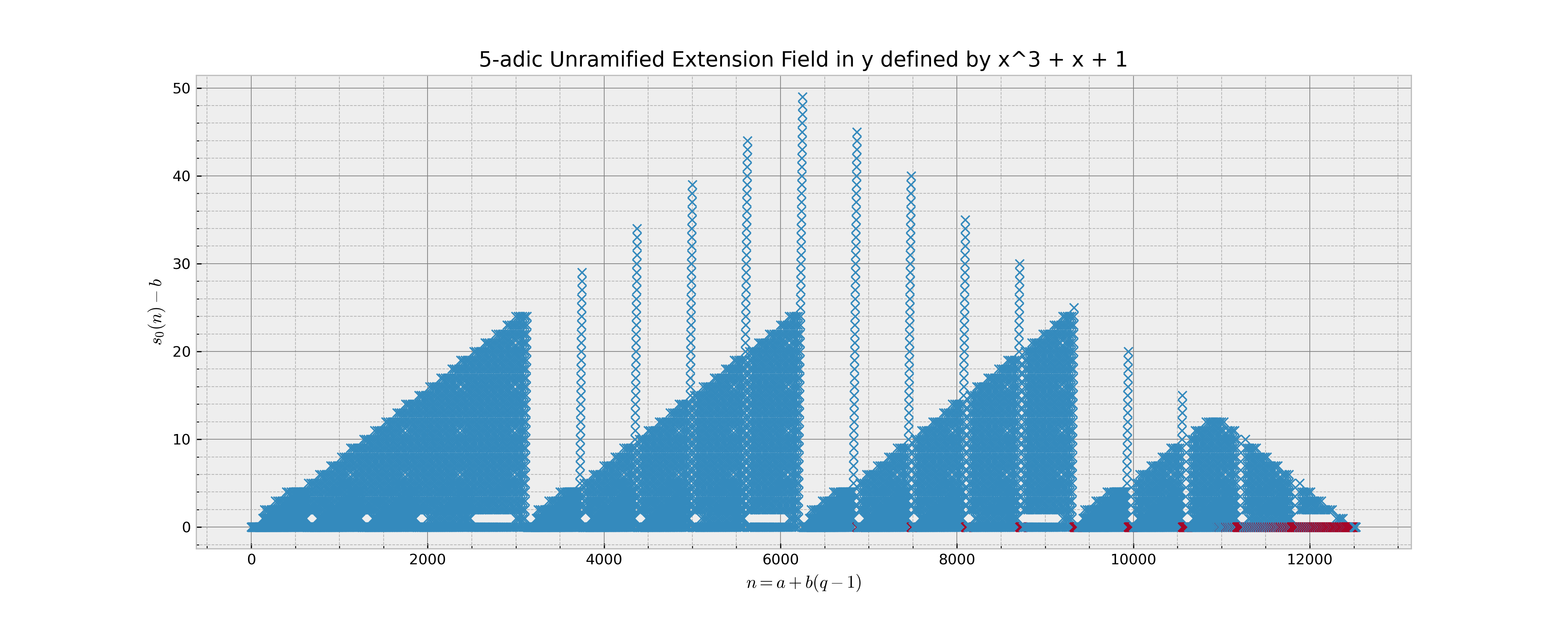}}
\caption{\texttt{extension = "5,3,12524,unram"} --- $s_0(n)$ in the cubic unramified extension of $\mathbb{Q}_5$ for $n < 12524$. Again, note how the red points --- the $n$'s for which $s_0(n) \geq 12524$ --- give the illusion of $s_0(n)-b$ decreasing.}
\label{5-3-unram}
\end{figure}

\subsection{Some results}

\begin{definition}
Given a natural number $n$, let $s_q(n)$ be the sum of digits of $n$ in base $q$.
\end{definition}

Recall Definition~\ref{s_0_def}:
\begin{definition*}
For $n = a + b(q - 1)$, let $s_0(n)$ be the minimal $s \geq b$ such that $(g_b^{(a), s})_{0 \leq b \leq s}$ satisfies $\nu_\pi(\gamma_b(g_b^{(a), s})) = -w_q(n)$, if such $s$ exists; otherwise set $s_0(n) = \infty$.
\end{definition*}

We define the following more intuitive quantity:
\begin{definition}
    For $n = a + b(q - 1)$, let $\Capp(n) = a + b s_0(n)$.
    Alternatively, $\Capp(n)$ is the minimal $N \geq n$ such that the $o_L$-span of $\left\{ \sigma_{i, j} : 0\leq i \leq j \leq N \right\}$ contains a polynomial of degree $n$ and $\pi$-valuation of the leading term $-w_q(n)$.
    \label{Cap_n}
\end{definition}

\noindent Here, the equivalence of the two definitions follows from the definition of $s_0(n)$.

Let $n = a + b(q-1)$. Analysing the computational results, we are led to believe that, if $s_q(n) < p $, then $s_0(n) = b$. This is made clear by the following:
\begin{theorem}
    Let $n$ be a positive integer such that $s_q(n) < p$. Let $j=n$ and $i=s_q(n)$. Then $\sigma_{i,j}$ is a polynomial of degree $n$, with $\pi$-valuation of leading term equal to $-w_q(n)$.
    \label{s_q(n)<p}
\end{theorem}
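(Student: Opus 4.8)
The plan is to identify the degree-$n$ coefficient of $\sigma_{s_q(n),n}$ with a coefficient of a power of the Lubin--Tate exponential, compute its $\pi$-adic valuation by Lagrange inversion, and close the gap with a Kummer-type carry count; the hypothesis $s_q(n)<p$ will be used only to discard a possible contribution of $\vp(s_q(n))$.

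First I would show, for every $0\le i\le n$, that the coefficient of $Y^n$ in $\sigma_{i,n}(Y)$ equals the coefficient of $W^n$ in $\exp_{\LT}(W)^i$, where $\exp_{\LT}$ is the compositional inverse of $\log_{\LT}(Z)=\sum_{k\ge0}\pi^{-k}Z^{q^k}$. Indeed, by Lemma~\ref{CaseR=U}(2) the formal power series $\sum_{j\ge i}\sigma_{i,j}(Y)Z^j$ agrees with $[a](Z)^i=\exp_{\LT}(a\log_{\LT}(Z))^i$ at every $Y=a\in o_L$, and hence, $o_L$ being infinite, equals $\exp_{\LT}(Y\log_{\LT}(Z))^i$ in $L[Y]\dcroc{Z}$; writing $\exp_{\LT}(W)=\sum_{r\ge1}e_rW^r$ (with $e_1=1$), the coefficient of $Y^n$ in this series is $\bigl(\sum_{r_1+\dots+r_i=n}e_{r_1}\cdots e_{r_i}\bigr)\log_{\LT}(Z)^n=\bigl([W^n]\exp_{\LT}(W)^i\bigr)\log_{\LT}(Z)^n$, whose coefficient of $Z^n$ is $[W^n]\exp_{\LT}(W)^i$ because $\log_{\LT}(Z)=Z+O(Z^q)$. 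In particular, by Lemma~\ref{RhoDegJ}(2), $\sigma_{i,n}$ has degree exactly $n$ as soon as $[W^n]\exp_{\LT}(W)^i\ne0$.

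Next, set $i=s_q(n)$ and apply Lagrange--B\"urmann inversion:
\[ [W^n]\exp_{\LT}(W)^{s_q(n)}=\frac{s_q(n)}{n}\,[Z^{n-s_q(n)}]\Bigl(\frac{Z}{\log_{\LT}(Z)}\Bigr)^{n}=\frac{s_q(n)}{n}\,[Z^{n-s_q(n)}](1+\phi(Z))^{-n}, \]
where $\phi(Z)=\sum_{k\ge1}\pi^{-k}Z^{q^k-1}$. Here $n-s_q(n)$ is a non-negative multiple of $q-1$, say $n-s_q(n)=(q-1)w$ with $w=w_q(n)$. Expanding $(1+\phi)^{-n}=\sum_{m\ge0}\binom{-n}{m}\phi^m$ and collecting the coefficient of $Z^{(q-1)w}$ produces a sum of terms indexed by multisets $\{k_1,\dots,k_m\}$ of integers $\ge1$ with $\sum_t(q^{k_t}-1)=(q-1)w$, each term being $\binom{-n}{m}\cdot(\text{multinomial})\cdot\pi^{-\sum_t k_t}$. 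Because the ratio $k/(q^k-1)$ is strictly decreasing in $k\ge1$ (equivalently, because $\log_{\LT}(Z)=Z+\pi^{-1}\log_{\LT}(Z^q)$ is self-similar), the multiset maximizing $\sum_t k_t$ is uniquely the all-ones multiset, for which $m=\sum_t k_t=w$; this ``all-ones term'' equals $\pm\binom{n+w-1}{w}\pi^{-w}$ and has valuation $e\,\vp\binom{n+w-1}{w}-w$.

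It remains to show that this all-ones term survives (its contribution is not cancelled by the others) and that its valuation is exactly $-w-\vpi(s_q(n)/n)$. Two facts reduce this to pure number theory. On the one hand, since $\sigma_{s_q(n),n}\in\Int(o_L,o_L)$ has degree $\le n$, its leading coefficient has valuation $\ge -w_q(n)$ (Lemma~\ref{IntBasis}, Corollary~\ref{RegularBasis}); combined with the displayed formula and $\vpi(s_q(n)/n)=-e\,\vp(n)$ — valid because $s_q(n)<p$ forces $\vp(s_q(n))=0$ — this already gives $\vp\binom{n+w-1}{w}\ge\vp(n)$. On the other hand, by Legendre's formula $\vp\binom{n+w-1}{w}=\tfrac1{p-1}\bigl(s_p(w)+s_p(n-1)-s_p(n+w-1)\bigr)$; using the base-$q$ shape $n+w=\sum_{j\ge0}d_j\tfrac{q^{j+1}-1}{q-1}$ (where $n=\sum_j d_jq^j$) and a carry analysis one proves the matching upper bound $\vp\binom{n+w-1}{w}\le\vp(n)$. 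The hard part is precisely this carry bound \emph{together with} ruling out cancellation: one must estimate the $\pi$-valuations of all the non-all-ones terms finely enough to see that none can reach the valuation of the all-ones term, which seems to require exploiting the self-similar recursion $\log_{\LT}(Z)=Z+\pi^{-1}\log_{\LT}(Z^q)$ and inducting on the number of base-$q$ digits of $n$. Granting these, $[Z^{n-s_q(n)}](1+\phi)^{-n}$ has valuation $e\,\vp(n)-w$, whence $\gamma_n(\sigma_{s_q(n),n})$ has valuation $-e\,\vp(n)+e\,\vp(n)-w=-w=-w_q(n)$; in particular it is nonzero, so $\deg\sigma_{s_q(n),n}=n$, as claimed.
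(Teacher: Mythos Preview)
Your reduction of the leading coefficient of $\sigma_{i,n}$ to $[W^n]\exp_{\LT}(W)^i$ and the subsequent Lagrange--B\"urmann step are correct, and this is a genuinely different route from the paper's. But the proof is incomplete exactly where you say it is, and the gap is real rather than cosmetic. The bound $\sum_t k_t\le w$ (with equality only for the all-ones multiset) shows that the non-all-ones contributions to $[Z^{(q-1)w}](1+\phi)^{-n}$ lie in $\pi^{-(w-1)}o_L$, so one knows the sum only modulo $\pi^{-(w-1)}o_L$. When $p\mid n$, however, the all-ones term $\pm\binom{n+w-1}{w}\pi^{-w}$ has $\pi$-valuation $e\,\vp\binom{n+w-1}{w}-w\ge e-w\ge -(w-1)$, so it no longer strictly dominates and you cannot read off the valuation of the sum. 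Even granting the (plausible, and numerically supported) carry identity $\vp\binom{n+w-1}{w}=\vp(n)$, you would still need to pin down $[Z^{(q-1)w}](1+\phi)^{-n}$ up to a unit, not just modulo $\pi^{-(w-1)}o_L$; the self-similarity $\log_{\LT}(Z)=Z+\pi^{-1}\log_{\LT}(Z^q)$ you mention does not obviously give this without substantial further work.

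The paper's argument avoids this difficulty by using a different decomposition. Writing $[Y](Z)=\sum_{k\ge1}c_k(Y)Z^k$, one has $\sigma_{i,j}=\sum_{n_1+\cdots+n_i=j}c_{n_1}\cdots c_{n_i}$. Since each $c_k\in\Int(o_L,o_L)$ has degree $\le k$ with leading-term valuation $\ge -w_q(k)$, and since $w_q(n_1)+\cdots+w_q(n_i)\le w_q(n)$ with equality precisely when there is no base-$q$ carry, the only products that can contribute to the $Y^n$-coefficient at valuation $-w_q(n)$ are those with every $n_t$ a power of $q$. For those, de~Shalit--Iceland show $c_{q^k}$ has degree exactly $q^k$ and leading valuation exactly $-w_q(q^k)$. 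The number of such ordered tuples is the multinomial $\binom{i}{b_0,\ldots,b_r}$ with $i=s_q(n)<p$, hence a $p$-adic unit, and the conclusion follows at once. The hypothesis $s_q(n)<p$ enters here as a divisibility condition on a \emph{small} multinomial, which is where your Lagrange-inversion approach loses traction: it converts the problem into a divisibility question about $\binom{n+w-1}{w}$ together with a cancellation estimate, both of which are harder to control.
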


Recall the definition of the polynomials $c_n(Y)$ from \cite{dSIce}:
\begin{align*}
    [Y](t) = \sum_{n=1}^{\infty} c_n(Y) t^n
\end{align*}

\noindent Translating the definition of the polynomials $\sigma_{i,j}(Y)$ and using Lemma \ref{PolySigmas}, we get:
\begin{align*}
([Y](t))^i = \left( \sum_{n=1}^{\infty} c_n(Y) t^n \right)^i = \sum_{j=i}^{\infty} \sigma_{i,j}(Y) t^j.
\end{align*}

\noindent Using the binomial theorem, this gives:
\begin{align*}
    \sigma_{i,j} = \sum_{n_1 + n_2 + \ldots +n_i = j} c_{n_1} c_{n_2} \ldots c_{n_i}
\end{align*}

Of course, for $i=1$ we obtain $\sigma_{1,j}=c_j$. So, the proof of the Theorem 3.1 in \cite{dSIce} shows that $\Capp(n)=n$ for $n$ equal to some power of $q$. We will extend this result to all $n$ that have $s_q(n) < p$, where $s_q(n)$ is the sum of digits of $n$, written in base $q$. For this, we need the following lemma:

\begin{lemma}
\label{w_q inequality}
Let $n_1, n_2, \ldots, n_i$ be positive integers. Then, $w_q(n_1) + w_q(n_2) + \ldots + w_q(n_i) \leq w_q(n_1 + n_2 + \ldots + n_i)$. Equality holds if and only if $s_q(n_1) + s_q(n_2) + \ldots + s_q(n_i) = s_q(n_1 + n_2 + \ldots + n_i)$, that is, if there is "no carrying" in the sum $n_1 + n_2 + \ldots + n_i$.
\end{lemma}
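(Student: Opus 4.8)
The plan is to reduce the statement to the elementary subadditivity of the base-$q$ digit sum, via the Legendre-type formula for $w_q$. First I would record the identity
\[
w_q(n)\;=\;\sum_{k\ge 1}\Bigl\lfloor \tfrac{n}{q^{k}}\Bigr\rfloor\;=\;\frac{n-s_q(n)}{q-1},
\]
the second equality being the usual computation: writing $n=\sum_{j\ge 0}a_j q^{j}$ in base $q$ with $0\le a_j<q$, one gets $\sum_{k\ge 1}\lfloor n/q^{k}\rfloor=\sum_{j\ge 1}a_j\frac{q^{j}-1}{q-1}=\frac{n-s_q(n)}{q-1}$.

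Next, put $n:=n_1+\cdots+n_i$. By the identity above,
\[
w_q(n)-\sum_{t=1}^{i}w_q(n_t)\;=\;\frac{1}{q-1}\Bigl(\sum_{t=1}^{i}s_q(n_t)-s_q(n)\Bigr).
\]
Since $q\ge 2$, this shows at once that the inequality $\sum_t w_q(n_t)\le w_q(n)$ is equivalent to $s_q(n)\le\sum_t s_q(n_t)$, and that equality in the former holds if and only if $s_q(n)=\sum_t s_q(n_t)$. So the whole lemma reduces to proving that for positive integers $n_1,\dots,n_i$ one has $s_q\bigl(\sum_t n_t\bigr)\le\sum_t s_q(n_t)$, with equality precisely when there is no carrying in the base-$q$ addition, i.e. each column sum of the base-$q$ digits is $<q$.

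For this I would work with the column sums directly rather than iterating a two-term estimate. Write $n_t=\sum_{j\ge 0}a_{t,j}q^{j}$ in base $q$ and set $c_j:=\sum_{t=1}^{i}a_{t,j}$, so only finitely many $c_j$ are nonzero, $\sum_t s_q(n_t)=\sum_j c_j$, and $n=\sum_j c_j q^{j}$. It then suffices to show that for any sequence $(c_j)_{j\ge 0}$ of non-negative integers, almost all zero, the genuine base-$q$ digit sum of $\sum_j c_j q^{j}$ is $\le\sum_j c_j$, with equality exactly when $c_j<q$ for all $j$ --- and this last condition is precisely ``no carrying''. I would prove this by induction on $\sum_j c_j$: if some $c_{j_0}\ge q$, replacing $c_{j_0}$ by $c_{j_0}-q$ and $c_{j_0+1}$ by $c_{j_0+1}+1$ leaves $\sum_j c_j q^{j}$ unchanged while decreasing $\sum_j c_j$ by $q-1\ge 1$, so the induction hypothesis gives the desired strict drop; if instead $c_j<q$ for all $j$, then $\sum_j c_j q^{j}$ is already the base-$q$ expansion and the digit sum equals $\sum_j c_j$. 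Tracing equality back through the induction forces that no reduction step was ever performed, i.e. $c_j<q$ for all $j$. (For $i=2$ one could instead quote Kummer's theorem, that the number of base-$q$ carries in $m+n$ equals $\frac{s_q(m)+s_q(n)-s_q(m+n)}{q-1}$, and then induct on $i$; the column-sum argument has the advantage of treating all $i$ uniformly and of matching the ``no carrying'' phrasing directly.)

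The argument is routine; the only point deserving care is the equivalence in the last step between additivity of $s_q$ over the $i$ summands and the single $i$-fold ``no carrying'' condition, which is exactly why I route the proof through the column sums $c_j$. That bookkeeping, together with invoking the correct normalization $w_q(n)=\frac{n-s_q(n)}{q-1}$, is the main (and only minor) obstacle.
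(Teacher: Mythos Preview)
Your proof is correct and follows essentially the same route as the paper: both reduce via the identity $w_q(n)=\frac{n-s_q(n)}{q-1}$ to the subadditivity of the base-$q$ digit sum, and both identify equality with the ``no carrying'' condition. The paper simply asserts that the digit-sum inequality ``can be checked by direct calculations or by induction'', whereas you supply the column-sum induction explicitly; this extra detail is fine but not a different approach.
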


\begin{proof}
Direct calculations show that
\begin{align*}
    w_q(n) = \frac{n-s_q(n)}{q-1}
\end{align*}
Substituting into our inequality, we need to prove
\begin{align*}
    s_q(n_1) + s_q(n_2) + \ldots + s_q(n_i) \geq s_q(n_1 + n_2 + \ldots + n_i)
\end{align*}
which can be checked by direct calculations or by induction. Equality holds in the initial inequality if and only if it holds here, which is to say there is "no carrying" in the sum $n_1 + n_2 + \ldots + n_i$.
\end{proof}

Now, we are ready for:

\begin{proof}[Proof of Theorem \ref{s_q(n)<p}.]

Recall that
\begin{align*}
    \sigma_{i,j} = \sum_{n_1 + n_2 + \ldots +n_i = j} c_{n_1} c_{n_2} \ldots c_{n_i}
\end{align*}
where each $c_k$ is a polynomial of degree at most $k$, with $\pi$-valuation of the leading term at least $-w_q(n)$ (as it is in $\text{Int}(o_L,o_L)$).

Let's look at each of the terms $c_{n_1} c_{n_2} \ldots c_{n_i}$. As each $c_k$ has degree at most $k$, this contributes to the coefficient of $Y^k$ in $\sigma_{i,j}$ if and only if $\text{deg}(c_{n_1}) = n_1, \text{deg}(c_{n_2}) = n_2, \ldots, \text{deg}(c_{n_i}) = n_i$. For the moment, assume this is the case. Then, the coefficient of $Y^n$ in this product is the product of leading coefficients of the $c_{n_i}$'s, which has $\pi$-valuation at least $-(w_q(n_1) + w_q(n_2) + \ldots + w_q(n_i))$. Now, using Lemma~\ref{w_q inequality}, this is at least $-w_q(n_1 + n_2 + \ldots + n_i) = -w_q(n)$, with equality if and only if $s_q(n_1) + s_q(n_2) + \ldots + s_q(n_i) = s_q(n) = i$, so the $n_i$'s are powers of $q$. That is, the only contribution to the coefficient of $Y^n$ in $\sigma_{i,j}$ that has small enough valuation comes from permutations of the unique way of writing $n$ as a sum of $i$ powers of $q$. In other words, if $n=b_r b_{r-1} \ldots b_1 b_{0(q)}$ is the writing of $n$ in base $q$, then the only terms that have a possible contribution are obtained when $(n_1,n_2, \ldots, n_i)$ is a permutation of $(q^0, q^0, \ldots, q^1, \ldots, q^r)$, where each $q^k$ appears $b_k$ times. 

But, by \cite{dSIce}, when $k$ is a power of $q$, $c_k$ is a polynomial of degree exactly $k$, with $\pi$-valuation of leading term exactly $-w_q(k)$. So, when $(n_1,n_2, \ldots, n_i)$ is a permutation as above, the product $c_{n_1} c_{n_2} \ldots c_{n_i}$ is a polynomial of degree $n$, with $\pi$-valuation of leading term equal to $-w_q(n)$. Moreover, as proved before, if $(n_1,n_2, \ldots, n_i)$ is not such a permutation, the product $c_{n_1} c_{n_2} \ldots c_{n_i}$ has the coefficient of $Y^n$ either $0$ or of $\pi$-valuation larger than $-w_q(n)$.

As there are $\binom{i}{b_0,b_1, \ldots, b_r}$ such permutations, with $p \nmid \binom{i}{b_0,b_1, \ldots, b_r}$ (because $i<p$ by the initial assumption on $n$), the final sum $\sigma_{i,j}$ has degree $n$, with $\pi$-valuation of leading term $-w_q(n)$.
\end{proof}

Definition~\ref{Cap_n} then gives:

\begin{corollary}
Let $n$ be a positive integer such that $s_q(n) < p$. Then $\Capp(n)=n$.
\end{corollary}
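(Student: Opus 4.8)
The plan is to deduce this corollary directly from Theorem~\ref{s_q(n)<p}, using the second (equivalent) description of $\Capp(n)$ recorded in Definition~\ref{Cap_n}. According to that description, $\Capp(n)$ is the \emph{least} $N \geq n$ for which the $o_L$-span of $\{\sigma_{i,j} : 0 \le i \le j \le N\}$ contains a polynomial of degree exactly $n$ whose leading coefficient has $\pi$-adic valuation equal to $-w_q(n)$. In particular $\Capp(n) \geq n$ holds by construction, so it suffices to show that $N = n$ already satisfies the required condition.

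To exhibit such a polynomial for $N = n$, I would apply Theorem~\ref{s_q(n)<p} with $i = s_q(n)$ and $j = n$; the hypothesis $s_q(n) < p$ is exactly what the theorem asks for. Since $s_q(n) \le n$ for every positive integer $n$, the pair $(i,j) = (s_q(n), n)$ satisfies $0 \le i \le j \le n$, so $\sigma_{s_q(n),n}$ is one of the generators of the $o_L$-span of $\{\sigma_{i,j} : 0 \le i \le j \le n\}$. The theorem states precisely that $\sigma_{s_q(n),n}$ has degree $n$ and that the $\pi$-adic valuation of its leading coefficient equals $-w_q(n)$. Hence the defining condition for $\Capp(n)$ is met already at $N = n$, which together with $\Capp(n) \ge n$ forces $\Capp(n) = n$.

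There is essentially no obstacle here: the entire content is already contained in Theorem~\ref{s_q(n)<p}, and the only elementary point to check is the inequality $s_q(n) \le n$, which guarantees that $\sigma_{s_q(n), n}$ is a legitimate generator in the sense that its row index does not exceed its column index. One could equivalently phrase the argument through the parameter $s_0(n)$ of Definition~\ref{s_0_def}: the same input (together with $s_q(n) \equiv n \pmod{q-1}$, so that $\sigma_{s_q(n),n}$ is of ``type $a$'') shows that $s_0(n)$ attains its minimal allowed value, which is the content of the equivalence of the two descriptions of $\Capp(n)$.
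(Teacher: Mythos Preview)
Your argument is correct and matches the paper's approach: the corollary is stated immediately after Theorem~\ref{s_q(n)<p} with the one-line justification ``Definition~\ref{Cap_n} then gives'', which is exactly the deduction you spell out. The only additional detail you make explicit is the check $s_q(n)\le n$ ensuring $(i,j)=(s_q(n),n)$ is an admissible index pair, which is fine.
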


The numerical data suggests that this is the largest set on which $\Capp(n)=n$.

\subsection{SageMath Code} (tested on Sage 9.4)

\definecolor{codegreen}{rgb}{0,0.6,0}
\definecolor{codegray}{rgb}{0.5,0.5,0.5}
\definecolor{codepurple}{rgb}{0.58,0,0.82}
\definecolor{backcolour}{rgb}{1,1,1}
\lstdefinestyle{mystyle}{
    backgroundcolor=\color{backcolour},
    commentstyle=\color{codegreen},
    keywordstyle=\color{magenta},
    numberstyle=\tiny\color{codegray},
    stringstyle=\color{codepurple},
    basicstyle=\ttfamily\footnotesize,
    breakatwhitespace=false,
    breaklines=false,
    captionpos=b,
%    keepspaces=true,
    numbers=left,
    numbersep=5pt,
    showspaces=false,
    showstringspaces=false,
    showtabs=false,
    tabsize=4
}
\lstset{basicstyle=\ttfamily\footnotesize}
\lstset{style=mystyle}
\lstinputlisting[language=Python]{check_span.sage}

\resumetocwriting

%\bibliographystyle{amsalpha}
%\bibliography{Bounded}

\providecommand{\bysame}{\leavevmode\hbox to3em{\hrulefill}\thinspace}
\providecommand{\MR}{\relax\ifhmode\unskip\space\fi MR }
% \MRhref is called by the amsart/book/proc definition of \MR.
\providecommand{\MRhref}[2]{%
  \href{http://www.ams.org/mathscinet-getitem?mr=#1}{#2}
}
\providecommand{\href}[2]{#2}

\end{document}